\title{A Pirashvili-type theorem for functors on non-empty finite sets}
\author{Geoffrey Powell}
\address{LAREMA, UMR 6093 du CNRS et de  l'Universit\'e d’Angers, SFR MathSTIC, 2 Bd Lavoisier 49045 Angers, France
}
\email{Geoffrey.Powell@math.cnrs.fr}
\urladdr{http://math.univ-angers.fr/~powell/}
\author{Christine Vespa}
\address{Universit\'e de Strasbourg, Institut de Recherche Math\'ematique Avanc\'ee, Strasbourg, France.}
\email{vespa@math.unistra.fr}
\urladdr{http://irma.math.unistra.fr/~vespa/}
\keywords{functors on non-empty finite sets; category of finite sets and surjections; comonad on $\Gamma$-modules; Koszul complex; $\finj \op$-cohomology.
}
\subjclass[2000]{18G15, 18A25, 18G60.}
\thanks{This work was partially supported by the ANR Project {\em ChroK}, {\tt 
ANR-16-CE40-0003}.}
\newtheorem{THM}{Theorem}
\newtheorem{COR}[THM]{Corollary}
\newtheorem{thm}{Theorem}[section]
\newtheorem{prop}[thm]{Proposition}
\newtheorem{cor}[thm]{Corollary}
\newtheorem{lem}[thm]{Lemma}
\theoremstyle{definition}
\newtheorem{defn}[thm]{Definition}
\newtheorem{exam}[thm]{Example}
\theoremstyle{remark}
\newtheorem{rem}[thm]{Remark}
\newtheorem{nota}[thm]{Notation}
\newtheorem{hyp}[thm]{Hypothesis}
\renewcommand{\epsilon}{\varepsilon}
\renewcommand{\theta}{\vartheta}
\newcommand{\kbar}[1][-]{\overline{\kring[#1]}}
\newcommand{\f}{\mathcal{F}}
\newcommand{\cala}{\mathscr{A}}
\newcommand{\calc}{\mathscr{C}}
\newcommand{\cald}{\mathscr{D}}
\newcommand{\fcatk}[1][\calc]{\f (#1; \kring)}
\newcommand{\nat}{\mathbb{N}}
\newcommand{\zed}{\mathbb{Z}}
\newcommand{\ext}{\mathrm{Ext}}
\newcommand{\sym}{\mathfrak{S}}
\newcommand{\rat}{\mathbb{Q}}
\newcommand{\op}{^\mathrm{op}}
\newcommand{\ob}{\mathsf{Ob}\hspace{3pt}}
\newcommand{\kring}{\mathbbm{k}}
\newcommand{\aut}{\mathrm{Aut}}
\newcommand{\fin}{\mathbf{Fin}}
\newcommand{\finne}{{\overline{\fin}}}
\newcommand{\sets}{\mathbf{Set}}
\newcommand{\n}{\mathbf{n}}
\newcommand{\filt}{\mathfrak{f}}
\renewcommand{\hom}{\mathrm{Hom}}
\newcommand{\dash}{\hspace{-3pt}-\hspace{-3pt}}
\newcommand{\modules}{\mathrm{mod}}
\newcommand{\twist}{t^{* \otimes}}
\newcommand{\extbar}{\overline{\ext}}
\newcommand{\cperm}{C_{\sf{perm}}}
\newcommand{\prt}{\mathfrak{Part}}
\newcommand{\sgn}{\mathsf{sign}}
\newcommand{\sgnrep}{\mathrm{sgn}}
\newcommand{\epfn}{\mathrm{Rk}}
\newcommand{\fout}[1][\gr]{\mathcal{F}^{\mathrm{Out}} (#1; \kring)}
\newcommand{\gr}{\mathbf{gr}}
\newcommand{\A}{\mathfrak{a}}
\newcommand{\ak}{\A_\kring}
\newcommand{\orient}{\mathsf{Or}}
\newcommand{\oclass}{\omega}
\newcommand{\fb}{\mathbf{\Sigma}}
\newcommand{\finj}{\mathbf{FI}}
\newcommand{\zext}{\mathbf{Z}}
\newcommand{\id}{\mathrm{Id}}
\newcommand{\gcmnd}{{\perp^{\hspace{-2pt} \scriptscriptstyle \Gamma}}}
\newcommand{\fbcmnd}{{\perp^{\hspace{-2pt} {\scriptscriptstyle \fb}}}}
\newcommand{\ord}{\mathbf{\Delta}}
\newcommand{\aord}{\ord_{\mathrm{aug}}}
\newcommand{\cosimp}{\mathfrak{C}}
\newcommand{\kz}{\mathsf{Kz}\ }
\newcommand{\cbb}{\mathbb{C}}
\newcommand{\krank}{\mathrm{rank}_\kring}
\newcommand{\Oz}{{\mathbf{\Omega}}}
\newcommand{\dk}{\mathfrak{DK}}
\newcommand{\cochain}{\mathrm{coCh}}
\newcommand{\trunc}{\tau}
\numberwithin{equation}{section}
\begin{document}

\begin{abstract}
Pirashvili’s Dold-Kan type theorem for finite pointed sets follows from the identification in terms of surjections of the morphisms between the tensor powers of a functor playing the role of the augmentation ideal; these functors are projective. 

We give an unpointed analogue of this result: namely,  we compute the morphisms between the tensor powers of the corresponding functor in the unpointed context. We also calculate the Ext groups between such objects, in particular showing that these functors are not projective; this is an important difference between the pointed and unpointed contexts.

This work is motivated by our functorial analysis of the higher Hochschild homology of a wedge of circles.
\end{abstract}

\maketitle
\section{Introduction}

 The aim of this paper is to study the category of functors from the category $\finne$ of finite non-empty sets and {\em all} morphisms to the category of modules over a fixed commutative ring $\kring$. We denote this category of functors by $\fcatk[\finne]$; more generally,  $\fcatk[\calc]$ (or the category of $\calc$-modules) denotes the category  of functors from an essentially small category $\calc$ to $\kring$-modules.

 Our main motivation for studying $\fcatk[\finne]$ comes from our functorial approach to higher Hochschild homology to a wedge of circles  \cite{2018arXiv180207574P}, as indicated later in this introduction.
 
In particular, this motivates our introduction in Definition \ref{defn:kbar} of the objects $\kbar$ of $\fcatk[\finne]$: for $X$ a finite non-empty set, $\kbar[X]$ is the kernel of the natural `augmentation' $\kring[X] \rightarrow \kring$. These are fundamental building blocks of $\fcatk[\finne]$.
 
The main result of the paper is the following:
 
 \begin{THM}[Theorem \ref{thm:complex_Cab}]
\label{THM:main}
For $a,b \in \nat$, there are isomorphisms
\begin{eqnarray*}
\hom_{\fcatk[\finne]}(\kbar^{\otimes a}, \kbar^{\otimes b} ) 
& \cong & 
\left\{ 
\begin{array}{ll}
\kring^{ \oplus \epfn (b,a)} 
& b\geq a >0 \\
0 & b<a ;
\end{array}
\right.
\\
\ext^1 _{\fcatk[\finne]}(\kbar^{\otimes a}, \kbar^{\otimes b} )
&\cong & 
\left\{ 
\begin{array}{ll}
\kring & b = a+1 \\
0 & \mbox{otherwise,}
\end{array}
\right.
\end{eqnarray*}
where $\epfn (b,a)$ is a combinatorially-defined function of $b,a$ (cf. Proposition \ref{cor:inductive_rho}).
\end{THM}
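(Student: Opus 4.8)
The strategy is to identify $\hom$ and $\ext^{1}$ with the degree‑$0$ and degree‑$1$ cohomology of an explicit finite cochain complex $C^{\bullet}_{a,b}$ of free $\kring$‑modules built from sets of surjections, using the adjunction between $\finne$ and $\Gamma$ underlying the comonad $\gcmnd$ together with Pirashvili's identification of $\hom_{\fcatk[\Gamma]}(t^{\otimes a},t^{\otimes b})$ with the free $\kring$‑module on the set $\mathrm{Surj}(\mathbf{b},\mathbf{a})$ of surjections $\mathbf{b}\twoheadrightarrow\mathbf{a}$ (here $t$ is the reduced free $\Gamma$‑module, which is projective). First I would construct a Koszul‑type coresolution of $\kbar^{\otimes b}$. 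Because every object of $\finne$ is non‑empty, the augmentation $\kring[-]\to\underline{\kring}$ is an epimorphism, so $\kbar=\ker(\kring[-]\to\underline{\kring})$ is the degree‑zero cohomology of the two‑term complex $[\,\kring[-]\to\underline{\kring}\,]$ (with $\kring[-]$ in degree $0$ and $\underline{\kring}$ in degree $1$). Tensoring together $b$ copies of this complex and applying the Künneth theorem objectwise (all values being $\kring$‑free) gives a finite coresolution $\kbar^{\otimes b}\xrightarrow{\ \sim\ }\mathsf{Kz}^{\bullet}$ with $\mathsf{Kz}^{k}\cong\big(\kring[-]^{\otimes(b-k)}\big)^{\oplus\binom{b}{k}}$ for $0\le k\le b$ and differential induced by the augmentation; this is the Koszul complex of the statement.

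The next step reduces everything to surjections. One checks, as in the comonadic part of the paper, that $u^{*}\kbar\cong t$ in $\fcatk[\Gamma]$ and $i^{*}t^{\otimes c}\cong\kring[-]^{\otimes c}$ in $\fcatk[\finne]$, so that each term of $\mathsf{Kz}^{\bullet}$ is of the form $i^{*}t^{\otimes c}$. Since $u^{*}\dashv i^{*}$ with $u^{*},i^{*}$ exact and $u^{*}\kbar^{\otimes a}\cong t^{\otimes a}$ projective in $\fcatk[\Gamma]$, one gets for all $c$ that $\hom_{\fcatk[\finne]}(\kbar^{\otimes a},\kring[-]^{\otimes c})\cong\hom_{\fcatk[\Gamma]}(t^{\otimes a},t^{\otimes c})\cong\kring[\mathrm{Surj}(\mathbf{c},\mathbf{a})]$ (by Pirashvili's theorem; in particular this vanishes for $c<a$) and $\ext^{>0}_{\fcatk[\finne]}(\kbar^{\otimes a},\kring[-]^{\otimes c})\cong\ext^{>0}_{\fcatk[\Gamma]}(t^{\otimes a},t^{\otimes c})=0$. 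Hence $\mathsf{Kz}^{\bullet}$ is a coresolution of $\kbar^{\otimes b}$ by $\ext_{\fcatk[\finne]}(\kbar^{\otimes a},-)$‑acyclic objects, so $\ext^{n}_{\fcatk[\finne]}(\kbar^{\otimes a},\kbar^{\otimes b})\cong H^{n}(C^{\bullet}_{a,b})$ with $C^{k}_{a,b}:=\hom_{\fcatk[\finne]}(\kbar^{\otimes a},\mathsf{Kz}^{k})\cong\kring[\mathrm{Surj}(\mathbf{b-k},\mathbf{a})]^{\oplus\binom{b}{k}}$: a complex of free $\kring$‑modules, defined over $\zed$, supported in degrees $0\le k\le b-a$, whose differential is the explicit combinatorial map induced by the augmentation (one can alternatively read off such a complex, up to cofree replacement, from the cobar construction of $\gcmnd$). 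This already settles both assertions when $b<a$ — the complex is zero — and, when $b=a$, shows $C^{\bullet}_{a,a}$ is concentrated in degree $0$ with $C^{0}_{a,a}=\kring[\mathrm{Surj}(\mathbf{a},\mathbf{a})]=\kring[\sym_{a}]$, whence $\hom_{\fcatk[\finne]}(\kbar^{\otimes a},\kbar^{\otimes a})\cong\kring[\sym_{a}]$ (so $\epfn(a,a)=a!$) and $\ext^{>0}_{\fcatk[\finne]}(\kbar^{\otimes a},\kbar^{\otimes a})=0$.

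It remains to compute $H^{0}$ and $H^{1}$ of $C^{\bullet}_{a,b}$ for $b>a$. When $b=a+1$ the complex is $[\,\kring[\mathrm{Surj}(\mathbf{a+1},\mathbf{a})]\to\kring[\sym_{a}]^{\oplus(a+1)}\,]$ in degrees $0,1$, and I would compute its cokernel directly, matching the surviving class with the Yoneda extension $0\to\kbar^{\otimes(a+1)}\to\kbar^{\otimes a}\otimes\kring[-]\to\kbar^{\otimes a}\to 0$ obtained by tensoring the basic exact sequence with $\kbar^{\otimes a}$. When $b\ge a+2$ I would prove $H^{1}(C^{\bullet}_{a,b})=0$ and compute $H^{0}(C^{\bullet}_{a,b})$ by induction on $b$: splitting each $\mathsf{Kz}^{k}$ according to whether a fixed element of $\mathbf{b}$ lies in the indexing $k$‑subset exhibits $C^{\bullet}_{a,b}$ as an extension of a shift of $C^{\bullet}_{a,b-1}$ by $C^{\bullet}_{a,b-1}$, and the associated long exact sequence, together with the base case $b=a$, both yields the vanishing of $H^{1}$ and produces the inductive formula of Proposition~\ref{cor:inductive_rho} for $\epfn(b,a)$, the $\kring$‑rank of $H^{0}(C^{\bullet}_{a,b})$. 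Carrying $\kring$‑freeness of $H^{0}$ and $H^{1}$ through the induction lets one deduce the stated isomorphisms over an arbitrary $\kring$ from the case $\kring=\zed$.

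Steps one and two are formal once the comonadic framework and Pirashvili's theorem are available; the substance is the combinatorial analysis of $C^{\bullet}_{a,b}$ in step three. Its differential mixes the surjection‑sets for different source cardinalities, so the decomposition used in the induction must be chosen compatibly with the differential. I expect the vanishing $H^{1}(C^{\bullet}_{a,b})=0$ for $b\ge a+2$ — equivalently, that among the $\kbar^{\otimes a}$ there are no first‑order extensions beyond the evident ones between adjacent tensor degrees — to be the main obstacle, whereas the identification of the generator of $\ext^{1}$ when $b=a+1$ and the bookkeeping that produces the recursion for $\epfn$ should be comparatively routine.
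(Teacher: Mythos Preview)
Your route to the complex $C^{\bullet}_{a,b}$ is correct and, in fact, more economical than the paper's: you build an $\ext(\kbar^{\otimes a},-)$-acyclic coresolution of $\kbar^{\otimes b}$ directly from the Koszul tensor power of $[\kring[-]\to\overline{\kring}]$ and invoke the adjunction $\theta^{*}\dashv(-)_{+}^{*}$ together with the projectivity of $(t^{*})^{\otimes a}$. The paper instead passes through the comonad $\gcmnd$ on $\fcatk[\Gamma]$, compares it with the comonad $\fbcmnd$ on $\fcatk[\fb\op]$ (all of Section~5), and only then lands on the Koszul complex $\kz\kring\hom_{\Oz}(-,\mathbf{a})$; moreover that route only identifies $H^{0}$ and $H^{1}$, whereas yours gives $\ext^{n}\cong H^{n}(C^{\bullet}_{a,b})$ for every $n$. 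The two complexes agree (the augmentation $P^{\finne}_{\mathbf{c}}\to P^{\finne}_{\mathbf{c-1}}$ is $P^{\finne}$ applied to the inclusion $\mathbf{c-1}\hookrightarrow\mathbf{c}$, and under the adjunction/Pirashvili isomorphism this induces exactly the restriction map on surjections that defines the Koszul differential).

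The gap is in step~3. Your proposed splitting, by whether the fixed element $x\in\mathbf{b}$ lies in the indexing subset $S$, does yield a subcomplex isomorphic to $C^{\bullet}_{a,b-1}[1]$ (this is the paper's $\mathbb{D}$, Lemma~\ref{lem:identify_subcomplexes}), but the quotient is \emph{not} $C^{\bullet}_{a,b-1}$: its degree-$k$ term consists of surjections $\mathbf{b}'\twoheadrightarrow\mathbf{a}$ with $x\in\mathbf{b}'$, and these are indexed by $\hom_{\Oz}(\mathbf{b-k},\mathbf{a})$, not $\hom_{\Oz}(\mathbf{b-1-k},\mathbf{a})$. A short exact sequence built only from $C^{\bullet}_{a,b-1}$ and its shift cannot produce the recursion $\epfn(b,a)=a\,\epfn(b-1,a-1)+(a-1)\,\epfn(b-1,a)$ of Proposition~\ref{cor:inductive_rho}, which involves \emph{both} $(b-1,a)$ and $(b-1,a-1)$. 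What is actually needed is a further decomposition of the quotient according to the value $f(x)\in\mathbf{a}$; the paper implements this via the subcomplexes $\mathbb{C}_{y}\subset\widetilde{\mathbb{C}_{y}}$ and the filtration $\filt_{y}\cbb$ (Definition~\ref{defn:subcomplexes}), shows $\mathbb{C}_{y}\cong\mathbb{C}(\mathbf{b-1},\mathbf{a-1})$ and that $\mathbb{C}_{y}\hookrightarrow\widetilde{\mathbb{C}_{y}}$ is a quasi-isomorphism (Proposition~\ref{prop:equivalence_cohom}), and runs a double induction on $a$ and $b-a$ with base cases $a=1$ (Proposition~\ref{prop:case_a=1_b>1}) and control of the top degree $H^{b-a}$ (Proposition~\ref{prop:top_degree}). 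The top-degree identification is essential: it is what forces the relevant connecting maps in the long exact sequences to be isomorphisms, which in turn yields the vanishing of the intermediate cohomology. A single induction on $b$ with $a$ fixed, as you outline, cannot close.
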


This  leads to the following:

\begin{COR} (Corollary \ref{cor:non-proj_inj})
\label{Cor-Intro}
For $a \in \mathbb{N}$, $\kbar^{\otimes a}$ is not projective in $\fcatk[\finne]$.
\end{COR}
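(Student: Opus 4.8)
The plan is to read this off directly from the $\ext^1$-computation in Theorem~\ref{thm:complex_Cab}. Recall the elementary fact that if an object $P$ of an abelian category is projective, then every short exact sequence $0 \to M \to E \to P \to 0$ splits, so that $\ext^1(P, M) = 0$ for every object $M$. Consequently, to show that $\kbar^{\otimes a}$ is not projective in $\fcatk[\finne]$ it suffices to exhibit a single object $M$ with $\ext^1_{\fcatk[\finne]}(\kbar^{\otimes a}, M) \neq 0$.

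First I would take $M = \kbar^{\otimes (a+1)}$. By Theorem~\ref{thm:complex_Cab} applied with $b = a+1$, one has $\ext^1_{\fcatk[\finne]}(\kbar^{\otimes a}, \kbar^{\otimes (a+1)}) \cong \kring$, which is non-zero since $\kring$ is a non-zero commutative ring. Hence there is a non-split extension of $\kbar^{\otimes a}$ by $\kbar^{\otimes (a+1)}$ in $\fcatk[\finne]$, and $\kbar^{\otimes a}$ cannot be projective. The same argument handles the case $a = 0$, where $\kbar^{\otimes 0}$ is the constant functor $\kring$ and Theorem~\ref{thm:complex_Cab} (with $b = 1$) still gives $\ext^1 \cong \kring \neq 0$.

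I do not expect any genuine obstacle: all the difficulty has been absorbed into Theorem~\ref{thm:complex_Cab}, and the corollary is a formal consequence. The only point worth stressing is the contrast with the pointed setting recalled in the abstract, where the analogous tensor powers are projective; if one wanted to make the non-projectivity concrete one could unwind the proof of Theorem~\ref{thm:complex_Cab} into an explicit non-split short exact sequence $0 \to \kbar^{\otimes (a+1)} \to E \to \kbar^{\otimes a} \to 0$, but for the present statement citing the theorem is the efficient route.
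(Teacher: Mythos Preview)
Your proposal is correct and follows essentially the same route as the paper: the proof of Corollary~\ref{cor:non-proj_inj} in the paper also deduces non-projectivity of $\kbar^{\otimes a}$ directly from the non-vanishing $\ext^1_{\fcatk[\finne]}(\kbar^{\otimes a}, \kbar^{\otimes a+1}) \cong \kring$ supplied by Theorem~\ref{thm:complex_Cab}, uniformly for all $a \in \nat$.
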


These results should be compared with the \textit{pointed} situation studied previously by Pirashvili, where the category $\finne$ should be replaced by the category $\Gamma$  of finite \textit{pointed} sets.  
In \cite{Phh}, Pirashvili gave an equivalence of categories between  the categories $\fcatk[\Gamma]$ and $\fcatk[\Oz]$ where $\Oz$ is the category of finite sets and surjections. The proof of this result is based on using a certain family of \textit{projective} objects $(t^*)^{\otimes n}$ in  $\fcatk[\Gamma]$, for $n\in \nat$. More precisely, the equivalence follows from  the identification, for $a, b \in \nat$ \cite[(1.10.2)]{Phh}
\begin{equation}\label{DK-Pira-Intro}
\hom_{\fcatk[\Gamma]} ((t^*)^{\otimes a}, (t^*)^{\otimes b}) 
\cong 
\kring \hom_{\Oz} (\mathbf{b}, \mathbf{a})
\end{equation}
where, for $m \in \nat$, 
$\mathbf{m} := \{1, \ldots ,m \}$. We refer to this here as Pirashvili's theorem. 

The forgetful functor 
 $
 \theta : \Gamma 
 \rightarrow 
 \finne
 $
 (i.e., forgetting the chosen basepoint)  induces an exact functor $\theta^* : \fcatk[\finne] \rightarrow \fcatk[\Gamma]$ satisfying $\theta^* \kbar \cong t^*$ and, more generally, $\theta^* (\kbar^{\otimes a})  \cong (t^*)^{\otimes a}$, for $a \in \nat$.
 
 Hence the first part of Theorem \ref{THM:main}  can be considered as the unpointed version of the isomorphism (\ref{DK-Pira-Intro}). However,  Corollary \ref{Cor-Intro} exhibits a major difference between  $(t^*)^{\otimes a}$ and  the $\finne$-modules $\kbar^{\otimes a}$, for $a \in \nat$: the former are projective whereas the latter are not.
 
We make the link with Pirashvili's result more explicit in Corollary \ref{cor:Rba}, where we identify $\hom_{\fcatk[\finne]} (\kbar^{\otimes a},\kbar^{\otimes b}) $ as a submodule of $\kring \hom_\Oz (\mathbf{b}, \mathbf{a})$, more precisely as the kernel of an explicit map given by further structure on $\kring \hom_{\Oz} (-, \mathbf{a})$. Theorem \ref{THM:main} determines the rank of this kernel.

The results are refined by taking into account the natural action of the group $\sym_a \times \sym_b\op$  on $\hom_{\fcatk[\finne]}(\kbar^{\otimes a}, \kbar^{\otimes b} )$. For instance:

\begin{THM}
(Proposition \ref{prop:hom_equivariance})
For $a<b \in \nat^*$ and $\kring = \rat$ there is an isomorphism of (virtual) $\sym_a \times \sym_b\op$-representations
\begin{eqnarray*}
\hom_{\fcatk[\finne]}(\kbar^{\otimes a}, \kbar^{\otimes b} )
&\cong &
(-1)^{b-a+1} [\sgnrep_{\sym_a} \otimes \sgnrep_{\sym_b}] 
\ + 
\\
&&
\sum_{t=0}^{b-a}(-1)^t
\big[\big(
\kring \hom_\Oz (\mathbf{b - t}, \mathbf{a}) 
\otimes 
\sgnrep_{\sym_t} 
\big)\uparrow _{\sym_{b-t}\op \times \sym_t \op}^{\sym_b\op}
\big].
\end{eqnarray*}
\end{THM}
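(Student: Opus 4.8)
The plan is to read the desired identity off the Euler characteristic, in the Grothendieck group of virtual $\sym_a \times \sym_b\op$-representations, of the cochain complex furnished by Theorem \ref{thm:complex_Cab}. Recall that the proof of that theorem presents $\hom_{\fcatk[\finne]}(\kbar^{\otimes a}, \kbar^{\otimes b})$ as the degree-zero cohomology of an explicit (Koszul-type) cochain complex $C^\bullet$, concentrated in cohomological degrees $0, 1, \dots, b-a$, whose cohomology is itself concentrated in degrees $0$ and $b-a$. Since this complex is constructed functorially in the source and target tensor factors, $\sym_a$ acts by permuting the $a$ copies of $\kbar$ in the source and $\sym_b$ by permuting the $b$ copies in the target; these actions commute with the differentials, and the identification $H^0(C^\bullet) \cong \hom_{\fcatk[\finne]}(\kbar^{\otimes a}, \kbar^{\otimes b})$ is one of $\sym_a \times \sym_b\op$-modules. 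As $\rat[\sym_a \times \sym_b]$ is semisimple, it therefore suffices to identify, $\sym_a \times \sym_b\op$-equivariantly, each term $C^t$ and the top cohomology group $H^{b-a}(C^\bullet)$.

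First I would promote the description of the terms $C^t$, established non-equivariantly in the proof of Theorem \ref{thm:complex_Cab}, to a $\sym_a \times \sym_b\op$-equivariant one. The $\sym_a$-action, induced through the target $\mathbf{a}$ by postcomposition of surjections, causes no trouble; the content is the $\sym_b\op$-action, and in particular the sign twist. Tracking the relabelling action of $\sym_b$ through the construction, one sees that $C^t$ is assembled from surjections onto $\mathbf{a}$ out of the $(b-t)$-element subsets of $\mathbf{b}$, the $t$-element complement contributing --- through the Koszul signs built into the differential of $C^\bullet$ --- a $\sgnrep_{\sym_t}$-twist. This yields the $\sym_a \times \sym_b\op$-equivariant isomorphism
\[
C^t \;\cong\; \big(\kring\hom_\Oz(\mathbf{b - t},\mathbf{a}) \otimes \sgnrep_{\sym_t}\big)\uparrow_{\sym_{b-t}\op \times \sym_t\op}^{\sym_b\op},
\]
which is a matter of bookkeeping with the two symmetric-group actions and the signs.

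Next I would identify $H^{b-a}(C^\bullet)$ equivariantly, which is the crux. Since $\hom_\Oz(\mathbf{a},\mathbf{a}) = \sym_a$, the top term is $C^{b-a} \cong \big(\kring[\sym_a] \otimes \sgnrep_{\sym_{b-a}}\big)\uparrow_{\sym_a\op \times \sym_{b-a}\op}^{\sym_b\op}$, and there is a natural surjection $\pi : C^{b-a} \twoheadrightarrow \sgnrep_{\sym_a} \otimes \sgnrep_{\sym_b}$ given by signed summation over $\sym_a$ in the $\mathbf{a}$-variable, the identity on the $\sgnrep_{\sym_{b-a}}$-factor, and the counit of induction (using that $\sgnrep_{\sym_b}$ restricts to $\sgnrep_{\sym_a} \otimes \sgnrep_{\sym_{b-a}}$ along $\sym_a \times \sym_{b-a} \hookrightarrow \sym_b$). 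One checks that the image of the last differential $d^{b-a-1} : C^{b-a-1} \to C^{b-a}$ is exactly $\ker \pi$: the inclusion of the image in $\ker\pi$ is the assertion that $\pi \circ d^{b-a-1} = 0$, a direct verification from the explicit form of the top differential; equality then follows because $H^{b-a}(C^\bullet)$ is one-dimensional --- for $b = a+1$ this is the computation $\ext^1_{\fcatk[\finne]}(\kbar^{\otimes a},\kbar^{\otimes b}) \cong \kring$ of Theorem \ref{THM:main}, and in general it is forced by that theorem's rank formula (Proposition \ref{cor:inductive_rho}) together with the Euler characteristic of $C^\bullet$. Establishing $\pi \circ d^{b-a-1} = 0$ --- equivalently, pinning down the last differential of $C^\bullet$ precisely enough --- is the main obstacle.

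Granting the above, the result is pure bookkeeping: in the Grothendieck group of virtual $\sym_a \times \sym_b\op$-representations, using $H^i(C^\bullet) = 0$ for $i \notin \{0, b-a\}$,
\begin{align*}
[\hom_{\fcatk[\finne]}(\kbar^{\otimes a}, \kbar^{\otimes b})] = [H^0(C^\bullet)]
&= \sum_{t=0}^{b-a} (-1)^t [C^t] - (-1)^{b-a}\, [H^{b-a}(C^\bullet)] \\
&= \sum_{t=0}^{b-a}(-1)^t \big[\big(\kring\hom_\Oz(\mathbf{b - t},\mathbf{a}) \otimes \sgnrep_{\sym_t}\big)\uparrow_{\sym_{b-t}\op \times \sym_t\op}^{\sym_b\op}\big] \\
&\quad + (-1)^{b-a+1}\big[\sgnrep_{\sym_a}\otimes\sgnrep_{\sym_b}\big],
\end{align*}
which is exactly the claimed formula.
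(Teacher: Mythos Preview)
Your proposal is correct and follows the same Euler--Poincar\'e characteristic argument as the paper (packaged there as Theorem \ref{thm:cohom_equivariance}): the equivariant identification of the terms $C^t$ is Lemma \ref{lem:Cbb_action}, and the identification of $H^{b-a}$ as $\sgnrep_{\sym_a}\otimes\sgnrep_{\sym_b}$---including your surjection $\pi$ and the verification that it annihilates boundaries---is exactly Proposition \ref{prop:top_degree}. Your roundabout appeal to the rank formula for the one-dimensionality of $H^{b-a}$ is unnecessary, since this is already part of Theorem \ref{thm:cohomology_NC} (invoked in the proof of Theorem \ref{thm:complex_Cab}).
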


As indicated above, this project was motivated by our work on higher Hochschild homology of a wedge of circles  \cite{2018arXiv180207574P}. For coefficients taken  in  $\fcatk[\Gamma]$, Hochschild homology gives a close relationship with $\fcatk[\gr]$, functors on the category $\gr$ of finitely-generated free groups; in particular, it leads  to interesting representations of the automorphism groups of free groups. For coefficients taken  in $\fcatk[\finne]$, this Hochschild homology is related to representations of \textit{outer} automorphism groups of free groups \cite{MR3982870} and corresponds, in the functorial context, to \textit{outer functors} \cite{ 2018arXiv180207574P}, which are functors on which inner automorphisms of free groups act trivially.

The study of  the category of outer functors $\fout$ and its r\^ole in the study of the  non-pointed version of higher Hochschild homology is of significant interest, as witnessed by the relationship with hairy graph homology \cite{MR3982870}. To understand higher Hochschild homology in the unpointed context, one of our goals is to understand the functor cohomology groups $\ext^i _{\fout}(\ak^{\otimes a},\ak^{\otimes b})$, where $\ak$ is induced by abelianization. There are  natural morphisms
\[
\ext^i _{\fout}(\ak^{\otimes a},\ak^{\otimes b})
\rightarrow 
\ext^i_{\fcatk[\gr]}(\ak^{\otimes a}, \ak^{\otimes b})
\]
 where the codomain is given by the main result of  \cite{V_ext}:
 \[
\ext^i_{\fcatk[\gr]}(\ak^{\otimes a}, \ak^{\otimes b})
\cong 
\left\{ 
\begin{array}{ll}
\kring \hom_\Oz (\mathbf{b}, \mathbf{a}) & i = b-a \\
0 & \mbox{otherwise,}
\end{array}
\right.
\]
which should be viewed  as a cohomological avatar of the relationship between $\Gamma$-modules and functors on the category $\gr$ explained before.
 
We expect that the image of the above morphism, for $i=b-a$, should be  closely related to $\hom _{\fcatk[\finne]} (\kbar^{\otimes a}, \kbar^{\otimes b})$. As evidence for this, in \cite{PV_2019} we prove that, if $\kring $ is a field,
 $$
 \ext^1 _{\fout}(\ak^{\otimes a},\ak^{\otimes a+1}) \cong  \hom_{\fcatk[\finne]}(\kbar^{\otimes a}, \kbar^{\otimes a+1} ).
 $$  

This result explains why we focus here on the computation of $\hom$ and $\ext^1$; the case of higher $\ext$ groups, which involves further material, will be presented elsewhere.

\bigskip
{\bf Strategy of  proof and organization of the paper.} We denote by $\finj$ the category of finite sets and injections and by  $\fb$ the category of finite sets and bijections.
\begin{enumerate}
\item 
We begin by giving a comonadic description of morphisms in $\fcatk[\finne]$ using the Barr-Beck theorem. This  identifies $ \hom_{\fcatk[\finne]}(\kbar^{\otimes a}, \kbar^{\otimes b} ) $ as the equalizer of a pair of maps,  or equivalently, as the kernel of an explicit map (see Remark \ref{Rem-noyau}). 

Unfortunately, this is inaccessible to direct study. To get around this difficulty, we introduce, in Section \ref{sect:new},  a cosimplicial object, extending the previous equalizer diagram. This arises via the general cosimplicial object that is constructed from a comonad in Appendix \ref{sect:homextseq}.

The $0$th and $1$st cohomology groups of this cosimplicial object correspond respectively to $\hom_{\fcatk[\finne]}(\kbar^{\otimes a}, \kbar^{\otimes b} ) $ and $\ext^1 _{\fcatk[\finne]}(\kbar^{\otimes a}, \kbar^{\otimes b} )$. The rest of the paper is largely devoted to computing the cohomology of this cosimplicial object.
\item 
The main result of Section \ref{sect:new} is an isomorphism between the above cosimplicial object and another one, that is defined in terms of the category $\Oz$. The latter is given by  a cobar-type cosimplicial construction (see Section \ref{subsect:cobar}). 

More precisely, in Section \ref{sect:koszul}, for $F$ an $\finj \op$-module  we introduce the cosimplicial $\fb\op$-module $\cosimp^\bullet F$. For $F=\kring \hom_\Oz (-, \mathbf{a})$  (which is an  $\finj \op$-module by Proposition \ref{prop:hom_oz_functor_finjop}) we obtain the cosimplicial object $\cosimp^\bullet \kring \hom_\Oz (-, \mathbf{a})$, which is the second cosimplicial object above. 

Section \ref{sect:new} provides the technical underpinnings of the paper.
\item 
In Section \ref{sect:koszul}, for $F$ an $\finj \op$-module we introduce the Koszul complex $\kz F$ 
in $\fb\op$-modules and we prove that this complex is quasi-isomorphic to the normalized cochain complex associated to the opposite of the cosimplicial object $\cosimp^\bullet F$. 
\item 
The heart of the paper is Section \ref{sect:cohomology}, in which we compute the cohomology of the Koszul complex  of $\kring \hom_\Oz (-, \mathbf{a})$, for $a \in \nat^*$. The key result is that its homology is concentrated in its top and bottom degrees (see Theorem \ref{thm:cohomology_NC}), so that the explicit calculation follows by considering the Euler-Poincar\'e characteristic of the complex. 
\item 
These results are combined in Section \ref{sect:homext} to prove Theorem \ref{THM:main}.
\end{enumerate}

Section \ref{sect:koszul} relates our calculations to  $\finj\op$-cohomology, which is introduced in 
Section \ref{FIop-cohom}. This is the dual notion of the $\finj$-homology  introduced in \cite{MR3654111}. The Koszul complex $\kz F$ is the $\finj\op$-version of the Koszul complex considered in 
\cite[Theorem 2]{MR3603074} for $\finj$-modules and Theorem \ref{thm:norm_vs_kz} is the analogue of \cite[Theorem 2]{MR3603074}. Thus Theorem \ref{thm:cohomology_NC}  can be interpreted as the computation of the $\finj\op$-cohomology of the $\finj\op$-module  $\kring \hom_\Oz (-, \mathbf{a})$ as follows:

\begin{THM}\label{FI-op-cohom}
For $a, b, n \in \nat$, 
$$H^n_{\finj\op} (\kring \hom_\Oz (-, \mathbf{a}))(\mathbf{b})=\left\{ 
\begin{array}{ll}
 \kring^{\oplus \epfn (b,a)}  & \text{\ if\  }  b\geq a>0  \text{\ and\ } n=0 \\
 \kring & \text{\ if\  } (b> a\geq 0  \text{\ and\ } n=b-a) \text{\ or\ } (a=b=n=0) \\
0 & \mbox{otherwise.}
\end{array}
\right.
$$
\end{THM}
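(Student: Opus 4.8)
The plan is to obtain Theorem \ref{FI-op-cohom} by assembling the three principal inputs announced in the strategy. First, I would use the results of Sections \ref{FIop-cohom} and \ref{sect:koszul} to replace $\finj\op$-cohomology by Koszul cohomology: by construction $H^n_{\finj\op}(F)$ is computed from the $\finj\op$-analogue of the Koszul resolution, and Theorem \ref{thm:norm_vs_kz} (the $\finj\op$-version of \cite[Theorem 2]{MR3603074}) identifies this with the cohomology of the Koszul complex $\kz F$ of $\fb\op$-modules. Specializing to $F = \kring \hom_\Oz(-,\mathbf{a})$, which is a genuine $\finj\op$-module by Proposition \ref{prop:hom_oz_functor_finjop}, reduces the statement to computing $H^n(\kz \kring \hom_\Oz(-,\mathbf{a}))(\mathbf{b})$ for all $a,b,n \in \nat$.

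Next I would invoke Theorem \ref{thm:cohomology_NC}, which asserts that this Koszul cohomology is concentrated in the extreme degrees. For $b < a$ the complex is trivial, so the cohomology vanishes in all degrees; for $b \geq a > 0$ it lives only in degrees $0$ and $b-a$, which already accounts for all the "otherwise" (i.e.\ vanishing) cases, including $n > b-a$ where the complex is zero and $0 < n < b-a$ where Theorem \ref{thm:cohomology_NC} gives vanishing. It then remains to identify the two surviving groups. For $n = b-a$ with $b > a$, I would read off from the explicit description of the top of the Koszul complex that the top cohomology is a single copy of $\kring$; this is consistent with the $\ext^1$ statement of Theorem \ref{THM:main} in the case $b = a+1$. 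With the rank of $H^{b-a}$ in hand, the rank of $H^0$ in the range $b \geq a > 0$ is forced by the Euler-Poincaré characteristic of the complex $\kz\kring \hom_\Oz(-,\mathbf{a})(\mathbf{b})$: each of its terms is built from $\kring \hom_\Oz(\mathbf{b-t},\mathbf{a})$ and the exterior-type construction on $\kbar$, hence has an explicitly computable $\kring$-rank, and the resulting alternating sum is by definition the combinatorial function $\epfn(b,a)$ of Proposition \ref{cor:inductive_rho}. Finally the degenerate case $a = b = 0$ is handled directly from the definitions, giving $\kring$ in degree $0$.

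I expect the main obstacle to be the last circle of arguments: the identification $H^{b-a} \cong \kring$ for $b > a$, and the verification that the Euler-Poincaré count of the Koszul complex reproduces exactly the function $\epfn(b,a)$ of Proposition \ref{cor:inductive_rho}. Both rest on Theorem \ref{thm:cohomology_NC} together with careful bookkeeping of the ranks of the modules $\kring \hom_\Oz(\mathbf{b-t},\mathbf{a})$; once that bookkeeping is organized correctly the theorem follows formally. The only other point requiring care is matching indices and signs in passing between the normalized cochain complex of $\cosimp^\bullet F$ and $\kz F$ through Theorem \ref{thm:norm_vs_kz}, in particular the degree reversal involved in $\cosimp\op$, but this is routine.
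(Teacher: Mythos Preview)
Your approach is correct and is essentially the paper's own route: Corollary \ref{cor:kos_FIop_cohom} (which packages Proposition \ref{prop:FIop-cohom} with Theorem \ref{thm:norm_vs_kz}) identifies $H^*_{\finj\op}(F)$ with $H^*(\kz F)$, and then Theorem \ref{thm:cohomology_NC} together with the elementary boundary cases of Lemma \ref{lem:case_a=b} (for $b<a$, $b=a$, and $a=0$) gives exactly the table in the statement. Two small corrections to your write-up: $\epfn(b,a)$ is not literally the Euler--Poincar\'e characteristic but $\chi(\cbb(\mathbf{b},\mathbf{a})) - (-1)^{b-a}$, accounting for the rank-one top cohomology already identified in Proposition \ref{prop:top_degree}; and the cases $b=a>0$ and $a=0<b$ are not covered by Theorem \ref{thm:cohomology_NC} itself but by Lemma \ref{lem:case_a=b}, so you should invoke that lemma explicitly rather than only the degenerate case $a=b=0$.
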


\tableofcontents

\subsection{Notation}
\begin{itemize}
\item $\kring$ denotes a commutative ring with unit.
\item
$\nat$ denotes the non-negative integers and $\nat^*$ the positive integers.
\item 
\label{nota:sgnrep}
For $n \in \nat$, $\sym_n$ denotes the symmetric group on $n$ letters and $\sgnrep_{\sym_n}$  the sign representation, with underlying module $\kring$ and $\sigma \in \sym_n$ acting via the signature $\sgn (\sigma)$. 
\item 
For $n \in \nat$, $\mathbf{n}$ denotes the set $\{1, 2, \ldots, n\}$ (by convention $\mathbf{0}=\emptyset$); we draw the reader's attention to the fact that $\mathbf{n + m}$ denotes the set $\{1, \ldots, n+m\}$ (respectively $\mathbf{n - m}$ denotes $\{1, \ldots, n-m\}$ if $n>m$).
\item 
The notation $F : \calc \rightleftarrows \cald : G$ for an adjunction always indicates that $F$ is the left adjoint to $G$ (i.e., $F \dashv G$ in category-theoretic notation).
\end{itemize}

\begin{rem}
The set $\mathbf{b}$ is equipped with the canonical  total order. Arbitrary subsets of $\mathbf{b}$ occur in the text, denoted variously by $\mathbf{b}'$, $\mathbf{b}''$ , $\mathbf{b}^{(t)}_i$\ldots ; these inherit a total order from $\mathbf{b}$.  (We never consider the finite set associated to a natural number $b'$, for example, so this notation should not lead to confusion.)
\end{rem}

\begin{nota}
\label{nota:ordinals}
Let $\aord$ denote the skeleton of the category of finite ordinals with objects $[n]$ indexed by integers $n \geq -1$, so that the object indexed by $-1$ corresponds to $\emptyset$ and the usual category of non-empty finite ordinals $\ord$ is the full subcategory on objects indexed by $\nat$.  

Denote by $\delta_i$ and $\sigma_j$ the face and degeneracy morphisms of $\aord\op$ and $d_i$, $s_j$ the coface and codegeneracy morphisms of $\aord$.
\end{nota}

Recall that there is an  involution of $\aord$ given by reversing the order on finite ordinals, which restricts to an involution of $\ord$ (cf. \cite[8.2.10]{MR1269324}).

\begin{nota}
\label{nota:opp_simp}
For $\calc$ a category, let 
\begin{eqnarray*}
\op &:& \ord\op \calc \rightarrow \ord \op\calc 
\\
\op &:& \ord \calc \rightarrow \ord \calc
\end{eqnarray*}
denote the  {\em opposite} structure functors on simplicial (respectively cosimplicial) objects, induced by the composition with the involution of $\ord$. 

Explicitly, if $A_\bullet$ is a simplicial object in $\calc$ with face operators $\delta_i: A_n \to A_{n-1}$ and degeneracy operators $\sigma_i: A_n \to A_{n+1}$ for $i \in \{0, 1, \ldots, n\}$, $(A_\bullet)^{op}$ is the simplicial object in $\calc$ with face operators $\tilde{\delta_i}: A_n \to A_{n-1}$ and degeneracy operators $\tilde{\sigma_i}: A_n \to A_{n+1}$, where  $\tilde{\delta_i}=\delta_{n-i}$ and $\tilde{\sigma_i}=\sigma_{n-i}$.

This notation is also  applied for augmented objects.
\end{nota}

\begin{nota}
\label{nota:shift}
For $C_*$ a chain complex (respectively $C^*$ a cochain complex), we use the convention for shifting given in \cite[1.2.8]{MR1269324}, i.e., 
$$C[p]_n=C_{n+p} \qquad (\mbox{respectively } \ C[p]^n=C^{n-p}).$$
\end{nota}

%\input{cat_set}
%%%%%%%%%%%%%%%%%%%%%%%%%%%%%%%%%%%%%%%%%%%%%%%%%%%%%%%%%%%%%%%
\section{Categories of sets}
\label{sect:background}

Let $\sets$ denote the category of sets and $\sets_*$ that of pointed sets. Consider the commutative diagram of categories and forgetful functors 
\[
\xymatrix{
 & \fb\ar@{^(->}[dl]\ar@{^(->}[dr]
 \\
\Oz
\ar@{^(->}[dr]
&&
\finj 
\ar@{^(->}[dl]
\\
\finne \ar@{^(->}[r]
&
\fin
\ar@{^(->}[r]
&
\sets
\\
\Gamma 
\ar[u]^\theta 
\ar@{^(->}[rr]
&
&
\sets_*
\ar[u]_{\mathrm{forget}}
&
}
\]
where
\begin{enumerate}
\item  $\fin \subset \sets$ is the full subcategory of  finite  sets;
\item 
 $\finne \subset \fin$ the full subcategory of non-empty finite sets;
 \item  
  $\Gamma \subset \sets_*$  the full subcategory of finite pointed 
sets;
\item 
$\Oz$ the category of  finite sets and surjections;
\item
 $\finj$ the category of finite sets and injections;
 \item 
  $\fb \subset \fin$ the subcategory of finite sets and bijections;
  \item 
   the functor $\theta$ forgets the marking of the basepoint.
  \end{enumerate}

\begin{rem}
The functor $\theta : \Gamma \rightarrow \finne$ is essentially surjective and faithful but is not full.
\end{rem}

\begin{rem}
\label{rem:Oz}
By definition,  
\begin{enumerate}
\item 
$\hom_\Oz (\mathbf{0}, \mathbf{0} ) = \{ * \}$, 
$\hom _\Oz (\mathbf{a}, \mathbf{0} ) = \emptyset =  \hom _\Oz (\mathbf{0}, \mathbf{a} )$ for $a>0$ and
$$\hom_{\Oz} (\mathbf{b}, \mathbf{1}) = \left\{ \begin{array}{ll}
\{ * \} & b>0 \\
\emptyset & b=0,
\end{array}
\right.
$$ where, for $b>0$, the morphism is given by the unique surjective map $\mathbf{b} \twoheadrightarrow \mathbf{1}$;
\item 
the symmetric monoidal structure on $\fin$ given by disjoint union of finite sets induces a symmetric monoidal structure on $\Oz$.
\end{enumerate}
\end{rem}

\begin{rem}
In \cite{Pdk}, the category of \textit{non-empty} finite sets and surjections is denoted by $\Omega$ whereas, in \cite{Phh}, $\Omega$ denotes the category of finite sets and surjections. Here we adopt the second convention, denoting this category $\Oz$.
\end{rem}

 \begin{lem}
 \label{lem:theta_finne_adjunction}
There is an adjunction $ (-)_+ : \fin \rightleftarrows \Gamma : \mathrm{forget} $, that restricts to an adjunction 
$
 (-)_+ : \finne \rightleftarrows \Gamma : \theta$, 
 where  the functor $(-)_+$ sends  a finite set $X$ to $X_+$, with $+$ as basepoint.
 
The adjunction unit $\mathrm{Id}_{\finne} \rightarrow \theta \big((-)_+\big)$   for  a finite set $X \in \ob \finne$ is the natural inclusion 
$X \hookrightarrow X_+.$

 For a finite pointed set $(Z,z)\in \ob \Gamma$, the adjunction counit $(\theta)_+ \rightarrow \mathrm{id}_\Gamma$ is the natural morphism 
$
(\theta Z)_+ 
\rightarrow 
Z 
$ in $\Gamma$ that extends the identity on $Z$ by sending $+$ to $z$. 
 \end{lem}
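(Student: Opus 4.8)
The plan is to establish each part of the adjunction statement by unwinding the defining universal properties, since everything is essentially formal. First I would verify the adjunction $(-)_+ : \fin \rightleftarrows \Gamma : \mathrm{forget}$ at the level of hom-sets: for a finite set $X$ and a pointed finite set $(Z,z)$, one has a natural bijection
\[
\hom_\Gamma(X_+, (Z,z)) \;\cong\; \hom_\sets(X, Z),
\]
because a pointed map $X_+ \to Z$ is forced to send $+$ to $z$ and is otherwise an arbitrary set map on $X$; naturality in both variables is immediate. This is the standard "free pointed set" adjunction restricted to the finite setting, so no finiteness issue arises (if $X$ is finite then so is $X_+$, and if $(Z,z)$ is finite then so is its underlying set).

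Next I would check that this adjunction restricts along the inclusions $\finne \hookrightarrow \fin$ and along the identification of $\mathrm{forget}$ with $\theta$ on the relevant subcategory. The point is that for $X \in \ob\finne$ (so $X \neq \emptyset$), the underlying set of $X_+$ is still non-empty, and more to the point $\theta(X_+) = X_+$ as an object of $\finne$; conversely, for $(Z,z) \in \ob\Gamma$ the set $\theta Z = Z$ is non-empty since it contains $z$. Thus both functors preserve the relevant subcategories, and the hom-set bijection above restricts verbatim to give
\[
\hom_\Gamma(X_+, (Z,z)) \;\cong\; \hom_{\finne}(X, \theta Z),
\]
natural in $X \in \finne$ and $(Z,z) \in \Gamma$. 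This establishes $(-)_+ \dashv \theta$.

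Finally I would identify the unit and counit explicitly by tracing the identity morphisms through the hom-set bijection, as is routine. For the unit at $X \in \ob\finne$: the identity in $\hom_\Gamma(X_+, X_+)$ corresponds under the bijection to the map $X \to \theta(X_+) = X_+$ which is the restriction of $\mathrm{id}_{X_+}$ to $X$, i.e., the inclusion $X \hookrightarrow X_+$. For the counit at $(Z,z) \in \ob\Gamma$: the identity in $\hom_{\finne}(\theta Z, \theta Z)$ corresponds to the pointed map $(\theta Z)_+ \to Z$ which is the identity on $\theta Z = Z$ and sends $+$ to $z$ — this is well-defined as a morphism of $\Gamma$ precisely because it is forced to send the basepoint $+$ to the basepoint $z$. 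One then confirms that these are natural transformations and satisfy the triangle identities, which follows automatically from the construction via the hom-set bijection.

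There is no real obstacle here; the only thing to be careful about is the bookkeeping at the empty set — checking that the restriction to $\finne$ is legitimate, i.e., that neither functor can leave the subcategory of non-empty sets — and this is immediate since $X_+$ always contains the basepoint and $\theta Z$ always contains $z$.
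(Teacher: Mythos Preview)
Your proof is correct. The paper does not actually supply a proof for this lemma, treating it as a standard fact; your argument via the hom-set bijection and tracing identities to identify the unit and counit is exactly the routine verification one would expect.
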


 \begin{rem}
The categories $\fin$ and $\Gamma$ have small 
skeleta with objects $\n \in \ob \fin$ and $\n_+ \in \ob 
\Gamma$, for $n \in \nat$.
 \end{rem}

The following underlines the fact that $\hom_{\Oz} (\mathbf{b}, \mathbf{a})$ can be understood in terms of the fibres of maps:

\begin{lem}
\label{lem:surjections_fibres}
For $a>0$ and any non-empty subset $\mathbf{b}' \subset \mathbf{b}$, 
there is a bijection between $\hom_{\Oz} (\mathbf{b}', \mathbf{a})$ and the set of ordered partitions of $\mathbf{b}'$ into $|\mathbf{a}|=a$ disjoint, non-empty subsets, via 
\[
( f : \mathbf{b}'\twoheadrightarrow \mathbf{a}) 
\mapsto 
(f^{-1} (1), f^{-1} (2), \ldots , f^{-1} (a) ). 
\]

In particular, the ordered sets:
\begin{eqnarray}
\label{eqn:order_bprime}
f^{-1} (1), f^{-1} (2), \ldots , f^{-1} (a)
\\
\label{eqn:order_b}
f^{-1} (1), f^{-1} (2), \ldots , f^{-1} (a) , \mathbf {b}\backslash \mathbf{b}'
\end{eqnarray} 
define permutations of $\mathbf{b}'$ and $\mathbf{b}$ respectively, 
where each fibre $f^{-1}(i)$, $i \in \mathbf{a}$, and $\mathbf{b}\backslash \mathbf{b}'$  inherit their order  from $\mathbf{b}$.
\end{lem}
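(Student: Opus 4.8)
The plan is to prove the two assertions in turn; both are elementary, the content being purely a matter of unwinding definitions once the bookkeeping is fixed.

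\emph{The bijection.} First I would check that $\Phi(f) := (f^{-1}(1), f^{-1}(2), \ldots, f^{-1}(a))$ really is an ordered partition of $\mathbf{b}'$ into $a$ non-empty blocks: the fibres of a function are pairwise disjoint and cover the source, and each fibre is non-empty precisely because $f$ is surjective. Conversely, an ordered partition $(B_1, \ldots, B_a)$ of $\mathbf{b}'$ into non-empty blocks determines a map $\Psi(B_1, \ldots, B_a) := g : \mathbf{b}' \to \mathbf{a}$ by sending $x$ to the unique $i$ with $x \in B_i$; well-definedness is the partition property, surjectivity is non-emptiness of the blocks, and by construction $g^{-1}(i) = B_i$. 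Thus $\Phi$ and $\Psi$ are mutually inverse, which gives the claimed bijection. (The hypothesis $\mathbf{b}' \neq \emptyset$ is what makes $\hom_\Oz(\mathbf{b}', \mathbf{a})$ non-empty and the blocks genuinely non-empty, compatibly with Remark \ref{rem:Oz}.)

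\emph{The permutations.} Since $\mathbf{b}$ carries its canonical total order, every subset of $\mathbf{b}$ — in particular each fibre $f^{-1}(i)$, the subset $\mathbf{b}'$ itself, and the complement $\mathbf{b} \backslash \mathbf{b}'$ — inherits a total order, hence is canonically order-isomorphic to $\{1, \ldots, k\}$ with $k$ its cardinality. I would then list the elements of $f^{-1}(1), \ldots, f^{-1}(a)$ consecutively, each fibre in its inherited order; because these fibres partition $\mathbf{b}'$, the resulting sequence $(x_1, \ldots, x_{|\mathbf{b}'|})$ contains every element of $\mathbf{b}'$ exactly once. Reading $j \mapsto x_j$ and pre-composing with the order isomorphism $\mathbf{b}' \xrightarrow{\sim} \{1, \ldots, |\mathbf{b}'|\}$ yields a bijection $\mathbf{b}' \to \mathbf{b}'$, i.e. the permutation (\ref{eqn:order_bprime}). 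Appending the block $\mathbf{b} \backslash \mathbf{b}'$, in its inherited order, to the end of the sequence extends it to a listing of every element of $\mathbf{b}$ exactly once, which under the same reading is the permutation (\ref{eqn:order_b}) of $\mathbf{b}$.

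\emph{Main obstacle.} There is essentially none: the only point deserving care is the identification of an ordered list of the elements of a finite set with an element of its symmetric group, which is exactly why the ambient total order on $\mathbf{b}$ matters — without it the fibres would be mere sets and the ``permutation'' would depend on an arbitrary ordering chosen inside each block. All the remaining verifications are immediate from the definitions of function, fibre, surjection, and partition.
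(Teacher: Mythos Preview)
Your proof is correct. The paper states this lemma without proof, treating it as an elementary observation; your argument spells out exactly the definitional unwinding that justifies it, and there is nothing to add.
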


%\input{func_set}
 %%%%%%%%%%%%%%%%%%%%%%%%%%%%
\section{Functor categories on categories of sets}
\label{sect:gamma_finne_modules}
 
In this section, we first review functors on $\Gamma$, before considering the corresponding structures for $\finne$. Then we compare the functor categories on $\Gamma$ and on $\finne$, obtaining the comonad 
$\gcmnd : \fcatk[\Gamma] \rightarrow \fcatk[\Gamma]$ in Notation \ref{nota:gcmnd}. Finally, we recall the relationship between (contravariant) functors on $\finj$ and on $\fb$, obtaining the comonad $\fbcmnd : \fcatk[\fb\op]\rightarrow \fcatk[\fb\op]$ introduced in Notation \ref{nota:fbcmnd}. The comonads $\gcmnd$ and $\fbcmnd$ play a crucial role in the paper.

 We begin  with some recollections concerning functor categories in order to fix notation. 
 
For $\calc$ an (essentially) small category, $\fcatk[\calc]$ denotes the category of functors from $\calc$ to $\kring$-modules. (This category  may also be  referred to as the category of $\calc$-modules when the codomain is clear from the context.) This is a Grothendieck abelian category with structure inherited from $\kring$-modules; moreover the tensor product over $\kring$  induces a symmetric monoidal structure $\otimes$ on $\fcatk[\calc]$ with unit the constant functor $\kring$.

If $\psi : \calc \rightarrow \cald$ is a functor between small categories, the functor given by precomposition with $\psi$ is denoted $\psi ^*  : \fcatk[\cald] \rightarrow \fcatk [\calc]$. This is an exact functor, which is symmetric monoidal. 

\begin{nota} \label{proj}
For $X$ an object of a small category $\calc$, $P^\calc_X$ denotes the  projective of $\fcatk[\calc]$ given by $\kring \hom_\calc (X, -)$, the composite of the corepresentable functor $\hom_\calc (X, -)$ with the free $\kring$-module functor. 
\end{nota}

The projectives $P^\calc_X$, as $X$ ranges over a set of representatives of the isomorphism classes of objects of $\calc$, form a family of projective generators of $\fcatk[\calc]$. 

%%%%%%%%%%%%%%%%%%%%%%%%%%%%%%%%%%%
\subsection{The functor category $ \fcatk[\Gamma] $ }

The functor category $ \fcatk[\Gamma] $ has been studied by Pirashvili and his co-authors (see for example \cite{Phh,Pdk}). We begin this section by some basic facts concerning this category.

 Since $\Gamma$ is a pointed category, one has the splitting:

\begin{lem}
\label{lem:Gamma_mod_reduced}
There is an equivalence of categories 
$
 \fcatk[\Gamma] \cong \overline{\fcatk[\Gamma]} \times \kring \dash \modules,
$ 
where $\overline{\fcatk[\Gamma]}$ is the full subcategory of reduced objects (i.e., those vanishing on $\mathbf{0}_+$).
 \end{lem}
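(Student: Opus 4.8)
The statement is the standard splitting of a functor category indexed by a pointed category into its ``reduced'' part and the values on the zero object. The plan is to write down the projection and inclusion functors explicitly and check they are mutually inverse equivalences.

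\emph{First step: identify the two projection functors.} Evaluation at $\mathbf{0}_+$ gives an exact functor $\eval_{\mathbf{0}_+} : \fcatk[\Gamma] \to \kring\dash\modules$. For the reduced part, note that for any $F \in \fcatk[\Gamma]$ the basepoint-inclusion $\mathbf{0}_+ \to X_+$ and the unique retraction $X_+ \to \mathbf{0}_+$ in $\Gamma$ are natural in $X$; applying $F$ and taking cokernel of $F(\mathbf{0}_+) \to F(X_+)$ — equivalently, the kernel of the retraction $F(X_+) \to F(\mathbf{0}_+)$ — defines a reduced functor $\overline F$, because $F(X) \cong \overline F(X) \oplus F(\mathbf{0}_+)$ naturally once one uses that the composite $\mathbf{0}_+ \to X \to \mathbf{0}_+$ is the identity. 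This yields a functor $R : \fcatk[\Gamma] \to \overline{\fcatk[\Gamma]}$, and together $(R, \eval_{\mathbf{0}_+})$ give a functor $\fcatk[\Gamma] \to \overline{\fcatk[\Gamma]} \times \kring\dash\modules$.

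\emph{Second step: construct the inverse.} In the other direction, send a pair $(G, M)$ with $G$ reduced and $M$ a $\kring$-module to the functor $X \mapsto G(X) \oplus M$, with morphisms acting via $G$ on the first summand and via the identity on $M$ (this is well defined precisely because every morphism of $\Gamma$ preserves basepoints, so there is a canonical way to extend; more abstractly, $M$ here is the constant functor, which is an object of $\fcatk[\Gamma]$, and one is taking a direct sum with the functor $\theta^* $-inflated back — but it is cleanest to just describe it by hand). One then checks that this construction is functorial in $(G,M)$.

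\emph{Third step: the two composites are naturally isomorphic to the identity.} Starting from $(G, M)$, the evaluation at $\mathbf{0}_+$ of $X \mapsto G(X)\oplus M$ is $G(\mathbf{0}_+)\oplus M = M$ since $G$ is reduced, and the reduced part is $G$ again; this is immediate. Starting from $F$, one needs the natural isomorphism $F(X) \cong \overline F(X) \oplus F(\mathbf{0}_+)$, which is exactly the splitting induced by the idempotent endomorphism of $F$ coming from the natural transformation $X \mapsto (X \to \mathbf{0}_+ \to X)$; the image of this idempotent is the constant-functor summand and its complement is $\overline F$. The only thing to verify is naturality in $F$, which is routine since everything in sight is built from natural transformations. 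I do not expect any genuine obstacle here: the whole argument is the observation that $\fcatk[\Gamma]$ is the category of modules over a ring with a distinguished idempotent (or, categorically, that $\Gamma$ has a zero object, hence a split idempotent on every corepresentable), and the only mild care needed is to phrase the reduced-part functor so that it is manifestly exact and additive. If a slicker argument is preferred, one can instead cite the general principle that for a pointed small category $\calc$ with zero object $0$, $\fcatk[\calc] \simeq \overline{\fcatk[\calc]} \times \fcatk[\mathrm{pt}]$ via the recollement associated with the inclusion of the full subcategory on $0$, but the direct splitting above is self-contained.
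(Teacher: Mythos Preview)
Your proof is correct and spells out in full the standard splitting argument that the paper merely alludes to: the paper states the lemma without proof, prefacing it only with ``Since $\Gamma$ is a pointed category, one has the splitting,'' so your explicit construction of the projection and inclusion functors is exactly the content being invoked.
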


The projectives given in Notation \ref{proj} have the following property:

\begin{prop}
\label{prop:coprod_PGamma}
\ 
For $m,n \in \nat$, there is an isomorphism
$
P^\Gamma_{\mathbf{m+n}_+}
\cong 
P^\Gamma_{\mathbf{m}_+}
\otimes 
P^\Gamma_{\mathbf{n}_+}
$. 
In particular, for $n \in \nat$, one has $P^\Gamma_{\mathbf{n}_+}
\cong (P^\Gamma_{\mathbf{1}_+})^{\otimes n}$.
\end{prop}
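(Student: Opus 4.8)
The plan is to deduce both isomorphisms from the universal property of the representable functors together with the fact that disjoint union (with basepoints identified, i.e. the wedge $\vee$) is the coproduct in $\Gamma$. The key observation is that in a category with finite coproducts, the corepresentable functor on a coproduct splits as a "tensor" of corepresentables in the target symmetric monoidal category, provided the monoidal structure is the pointwise one coming from $\kring$-modules.

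First I would record the set-level statement: for finite pointed sets $Z$, the Yoneda description gives $\hom_\Gamma(\mathbf{m}_+ \vee \mathbf{n}_+, Z) \cong \hom_\Gamma(\mathbf{m}_+, Z) \times \hom_\Gamma(\mathbf{n}_+, Z)$ naturally in $Z$, since $\mathbf{m}_+ \vee \mathbf{n}_+$ is the coproduct of $\mathbf{m}_+$ and $\mathbf{n}_+$ in $\Gamma$ (the wedge of pointed sets); concretely $\mathbf{m}_+ \vee \mathbf{n}_+ \cong \mathbf{m+n}_+$ as pointed sets. Next I would apply the free $\kring$-module functor, using the natural isomorphism $\kring[S \times T] \cong \kring[S] \otimes_\kring \kring[T]$ for sets $S, T$. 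This yields
\[
P^\Gamma_{\mathbf{m+n}_+}(Z) = \kring \hom_\Gamma(\mathbf{m+n}_+, Z) \cong \kring\hom_\Gamma(\mathbf{m}_+, Z) \otimes_\kring \kring\hom_\Gamma(\mathbf{n}_+, Z) = \big(P^\Gamma_{\mathbf{m}_+} \otimes P^\Gamma_{\mathbf{n}_+}\big)(Z),
\]
naturally in $Z \in \ob \Gamma$, which is exactly the asserted isomorphism in $\fcatk[\Gamma]$ since $\otimes$ on the functor category is defined pointwise. The second statement follows by an immediate induction on $n$, with base case $P^\Gamma_{\mathbf{0}_+} \cong \kring$ (the constant functor, which is the unit for $\otimes$) since $\mathbf{0}_+$ is the zero object of $\Gamma$ and hence $\hom_\Gamma(\mathbf{0}_+, Z)$ is a one-point set for all $Z$.

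There is no serious obstacle here; the only point requiring a little care is the identification $\mathbf{m+n}_+ \cong \mathbf{m}_+ \vee \mathbf{n}_+$ and the verification that this is the categorical coproduct in $\Gamma$ (equivalently, that $\Gamma$ is closed under finite coproducts inside $\sets_*$), together with checking naturality of all the isomorphisms above — all of which are routine. One should also note that the isomorphism $P^\Gamma_{\mathbf{m}_+} \otimes P^\Gamma_{\mathbf{n}_+} \cong P^\Gamma_{\mathbf{n}_+} \otimes P^\Gamma_{\mathbf{m}_+}$ is compatible with the symmetry of $\otimes$ and the block transposition isomorphism $\mathbf{m+n}_+ \cong \mathbf{n+m}_+$, so the decomposition $P^\Gamma_{\mathbf{n}_+} \cong (P^\Gamma_{\mathbf{1}_+})^{\otimes n}$ can be taken to be $\sym_n$-equivariant, which will matter for the equivariant refinements later in the paper.
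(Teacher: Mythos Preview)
Your proof is correct and follows exactly the approach the paper takes: the paper's proof consists of the single sentence ``Use that the object $\mathbf{m+n}_+$ of $\Gamma$ is isomorphic to the coproduct $\mathbf{m}_+ \bigvee \mathbf{n}_+$ in $\Gamma$,'' and you have simply spelled out the details of how this coproduct identification, together with $\kring[S\times T]\cong \kring[S]\otimes \kring[T]$, yields the isomorphism of functors. Your additional remarks on naturality and $\sym_n$-equivariance are correct and useful, though not required for the statement as given.
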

\begin{proof}
Use that the  object $\mathbf{m+n}_+$ of $\Gamma$ is isomorphic to the coproduct $\mathbf{m}_+ \bigvee \mathbf{n}_+$ in $\Gamma$.
\end{proof} 

One of the most important results concerning the category $\fcatk[\Gamma]$ is Pirashvili's  Dold-Kan type theorem for $\Gamma$-modules (see Theorem \ref{thm:DK_equiv}).
To recall this, we introduce the following:

\begin{defn}
\label{defn:t*}
(Cf. \cite{Phh}.) 
Let $t^* \in \ob \fcatk[\Gamma]$ be the quotient of $P^\Gamma _{\mathbf{1}_+}$ given by $t^* (\mathbf{n}_+) := \kring [\mathbf{n}_+]/ \kring [\mathbf{0}_+]$.
\end{defn}

One has the following, in which the image of the generator $[x] \in \kring[\mathbf{n}_+]$ (corresponding to $x \in \mathbf{n}_+$) in $t^* (\mathbf{n}_+)$ is written $[[x]]$, which is zero if and only if $x=+$.

\begin{lem}
\cite{Phh}
\label{lem:t*_projective}
The functor $t^*$ is projective in $\fcatk[\Gamma]$ and there is a canonical splitting $P^\Gamma_{\mathbf{1}_+} \cong t^* \oplus \kring$.

Explicitly, for $(Z, z)$ a finite pointed set, the inclusion $t^* (Z) \cong \kring [Z]/ \kring [z] \hookrightarrow  P^\Gamma_{\mathbf{1}_+} (Z) \cong \kring [Z] $ is induced on generators by $[[y]] \mapsto [y] - [z]$, for $y \in Z \backslash \{z \}$, and the projection $P^\Gamma_{\mathbf{1}_+} (Z) \cong \kring [Z] \rightarrow \kring \cong \kring [\mathbf{0}_+]$ is induced by the morphism $Z \rightarrow \mathbf{0}_+$ to the terminal object of $\Gamma$. 
\end{lem}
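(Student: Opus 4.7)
The plan is to construct the claimed splitting explicitly and then deduce projectivity of $t^*$ formally. By Yoneda, evaluating $P^\Gamma_{\mathbf{1}_+}$ on a finite pointed set $(Z,z)$ identifies $P^\Gamma_{\mathbf{1}_+}(Z)$ with the free $\kring$-module on the set of basepoint-preserving maps $\mathbf{1}_+ \to Z$, which is naturally in bijection with $Z$ (such a map is determined by the image of the unique non-basepoint element); I write $[y]$ for the basis element corresponding to $y \in Z$.

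First I would define the canonical projection $\pi : P^\Gamma_{\mathbf{1}_+} \to P^\Gamma_{\mathbf{0}_+} \cong \kring$ induced by precomposition with the unique morphism $\mathbf{0}_+ \to \mathbf{1}_+$ in $\Gamma$; explicitly $\pi_Z([y]) = 1$ for every $y \in Z$. A natural section $\kring \to P^\Gamma_{\mathbf{1}_+}$ is provided by $1 \mapsto [z]$, naturality being a consequence of the fact that morphisms of $\Gamma$ preserve basepoints.

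Next I would define $\iota : t^* \to P^\Gamma_{\mathbf{1}_+}$ on generators by $\gen{y} \mapsto [y] - [z]$. This is well-defined, since $\gen{z} = 0$ and $[z]-[z]=0$, and naturality with respect to a morphism $f : (Z,z) \to (Z',z')$ of $\Gamma$ reduces to the identity
\[
f_*([y]-[z]) \;=\; [f(y)] - [z'],
\]
which is consistent with $t^*(f)(\gen{y}) = \gen{f(y)}$ in both cases $f(y) = z'$ (both sides vanish) and $f(y) \neq z'$. A natural retraction is furnished by the evident quotient map $P^\Gamma_{\mathbf{1}_+} \to t^*$, $[y] \mapsto \gen{y}$, which sends $[y]-[z]$ to $\gen{y} - \gen{z} = \gen{y}$. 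Combining these data with the immediate identity $\pi \circ \iota = 0$ yields the canonical splitting $P^\Gamma_{\mathbf{1}_+} \cong t^* \oplus \kring$.

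Projectivity of $t^*$ then follows formally: $P^\Gamma_{\mathbf{1}_+}$ is projective as a representable functor, and $t^*$, being a direct summand of a projective object, is itself projective. The only substantive verification is the naturality of $\iota$, where one needs to handle the case $f(y) = z'$ separately; everything else is a matter of unwinding definitions.
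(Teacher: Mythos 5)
Your proof is correct and arrives at the same splitting $P^\Gamma_{\mathbf{1}_+} \cong t^* \oplus \kring$ as the paper, with projectivity of $t^*$ deduced as a retract of the representable projective. The paper obtains the splitting in one line by invoking the categorical decomposition $\fcatk[\Gamma] \cong \overline{\fcatk[\Gamma]} \times \kring\dash\modules$ of Lemma \ref{lem:Gamma_mod_reduced} (of which $t^* \oplus \kring$ is precisely the instance for $P^\Gamma_{\mathbf{1}_+}$) and then identifies the maps explicitly; you instead build the idempotent splitting by hand and check naturality directly, which is a more elementary route to the same statement.
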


\begin{proof}
The canonical splitting is provided by Lemma \ref{lem:Gamma_mod_reduced} and the remaining statements follow by explicit identification of the functors. 
\end{proof}

Lemma \ref{lem:t*_projective} together with Proposition \ref{prop:coprod_PGamma} lead directly to the following Proposition,  in which $(t^* )^{\otimes 0}=\kring$: 

\begin{prop}
\label{prop:proj_gen_Gamma}
 \cite{Phh}
 The set $\{(t^* )^{\otimes a}  | \ a \in \nat \}$ is a set of projective generators of $\fcatk[\Gamma]$. Restricting to $a \in \nat^*$ gives projective generators of $\overline{\fcatk[\Gamma]}$. 
\end{prop}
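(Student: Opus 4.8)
The final statement to prove is Proposition~\ref{prop:proj_gen_Gamma}: the set $\{(t^*)^{\otimes a} \mid a \in \nat\}$ is a set of projective generators of $\fcatk[\Gamma]$, and restricting to $a \in \nat^*$ gives projective generators of $\overline{\fcatk[\Gamma]}$.

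\textbf{Proof proposal.} The plan is to combine the two inputs the paper has just established: Lemma~\ref{lem:t*_projective}, which gives the canonical splitting $P^\Gamma_{\mathbf{1}_+} \cong t^* \oplus \kring$ and the projectivity of $t^*$, and Proposition~\ref{prop:coprod_PGamma}, which gives $P^\Gamma_{\mathbf{n}_+} \cong (P^\Gamma_{\mathbf{1}_+})^{\otimes n}$. First I would check that each $(t^*)^{\otimes a}$ is projective: since the tensor product over $\kring$ is exact in each variable and a tensor product of projectives is projective (using that $P^\calc_X \otimes P^\calc_Y \cong P^\calc_{X \sqcup Y}$ for the symmetric monoidal structure on $\fcatk[\Gamma]$ coming from the coproduct $\bigvee$ on $\Gamma$, as in Proposition~\ref{prop:coprod_PGamma}), and $t^*$ is a direct summand of the projective $P^\Gamma_{\mathbf{1}_+}$, it follows that $(t^*)^{\otimes a}$ is a direct summand of $(P^\Gamma_{\mathbf{1}_+})^{\otimes a} \cong P^\Gamma_{\mathbf{a}_+}$, hence projective.

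Next I would verify the generating property. The projectives $\{P^\Gamma_{\mathbf{n}_+} \mid n \in \nat\}$ form a set of projective generators of $\fcatk[\Gamma]$, by the general fact recorded after Notation~\ref{proj} (representables are projective generators). Now expand $P^\Gamma_{\mathbf{n}_+} \cong (P^\Gamma_{\mathbf{1}_+})^{\otimes n} \cong (t^* \oplus \kring)^{\otimes n}$; distributing the tensor product over the direct sum expresses $P^\Gamma_{\mathbf{n}_+}$ as a finite direct sum of terms each isomorphic to $(t^*)^{\otimes a}$ for various $a$ with $0 \le a \le n$ (here $(t^*)^{\otimes 0} = \kring$). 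Consequently every $P^\Gamma_{\mathbf{n}_+}$ is a direct summand of a finite direct sum of objects from $\{(t^*)^{\otimes a} \mid a \in \nat\}$; since a set of objects whose direct sums receive epimorphisms onto — equivalently, which split off — a known set of projective generators is itself a set of projective generators, the family $\{(t^*)^{\otimes a}\}$ generates $\fcatk[\Gamma]$.

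For the reduced statement, I would use the splitting $\fcatk[\Gamma] \cong \overline{\fcatk[\Gamma]} \times \kring\dash\modules$ of Lemma~\ref{lem:Gamma_mod_reduced}: the functor $(t^*)^{\otimes a}$ is reduced for $a \in \nat^*$ (it vanishes on $\mathbf{0}_+$, since $t^*(\mathbf{0}_+) = \kring[\mathbf{0}_+]/\kring[\mathbf{0}_+] = 0$ and tensor powers then vanish on $\mathbf{0}_+$ too), while $(t^*)^{\otimes 0} = \kring$ is the constant functor, which corresponds to the $\kring\dash\modules$ factor. Restricting the generation statement above to the reduced factor, and discarding the constant summand $\kring$, shows that $\{(t^*)^{\otimes a} \mid a \in \nat^*\}$ is a set of projective generators of $\overline{\fcatk[\Gamma]}$. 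I do not expect any genuine obstacle here; the only point requiring a little care is the bookkeeping in the distribution $(t^* \oplus \kring)^{\otimes n} \cong \bigoplus_{a} (t^*)^{\otimes a}{}^{\oplus \binom{n}{a}}$, i.e.\ making sure the symmetric monoidal structure is the coproduct-induced one so that Proposition~\ref{prop:coprod_PGamma} applies, and noting that direct summands of projective generators are projective generators.
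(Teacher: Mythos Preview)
Your proposal is correct and is exactly the route the paper intends: the text states that Lemma~\ref{lem:t*_projective} together with Proposition~\ref{prop:coprod_PGamma} lead directly to the result, and your argument spells out precisely that deduction (expanding $(P^\Gamma_{\mathbf{1}_+})^{\otimes n}\cong (t^*\oplus\kring)^{\otimes n}$ and using Lemma~\ref{lem:Gamma_mod_reduced} for the reduced part).
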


The category $\fcatk[\Gamma]$ is symmetric monoidal for the structure induced by $\otimes$ on $\kring$-modules and $\Oz$ has a symmetric monoidal structure (see Remark \ref{rem:Oz}).

In the following result, $\kring \Oz $ denotes the $\kring$-linearization of the category $\Oz$. 

\begin{thm}
\label{thm:DK_equiv}
\cite{Phh}
There is a $\kring$-linear, symmetric monoidal embedding 
\[
(\kring \Oz)\op
\rightarrow 
\fcatk[\Gamma]
\]
that is induced by $\mathbf{1} \mapsto t^*$. This is fully faithful and induces an equivalence of categories between ${\fcatk[\Gamma]}$ and ${\fcatk[\Oz]}$. 

 In particular,  for $a, b\in \nat$,  there is a natural isomorphism
\[
\hom_{\fcatk[\Gamma]}((t^*)^{\otimes a}, (t^*)^{\otimes b} ) 
\cong 
\kring \hom_\Oz (\mathbf{b}, \mathbf{a}). 
\]

\end{thm}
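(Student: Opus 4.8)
The plan is to establish the fully faithful embedding $(\kring\Oz)\op \to \fcatk[\Gamma]$ first, then deduce the equivalence and the $\hom$-computation as corollaries. The key input is Proposition \ref{prop:proj_gen_Gamma}: the objects $(t^*)^{\otimes a}$, $a \in \nat$, form a set of projective generators of $\fcatk[\Gamma]$. By a standard Morita-type argument, once we know the full subcategory on these projective generators is equivalent (as a $\kring$-linear category) to $(\kring\Oz)\op$, the functor $F \mapsto \big(a \mapsto \hom_{\fcatk[\Gamma]}((t^*)^{\otimes a}, F)\big)$ will give the desired equivalence $\fcatk[\Gamma] \xrightarrow{\sim} \fcatk[\Oz]$, since $\fcatk[\Gamma]$ is a Grothendieck category with a set of small projective generators indexed by $\Oz$.

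So the heart of the matter is the natural isomorphism $\hom_{\fcatk[\Gamma]}((t^*)^{\otimes a}, (t^*)^{\otimes b}) \cong \kring\hom_\Oz(\mathbf{b}, \mathbf{a})$, together with identifying the composition. First I would handle the case $a = 1$: since $P^\Gamma_{\mathbf{1}_+} \cong t^* \oplus \kring$ (Lemma \ref{lem:t*_projective}) and $\hom_{\fcatk[\Gamma]}(P^\Gamma_{\mathbf{1}_+}, G) \cong G(\mathbf{1}_+)$ by Yoneda, one reads off $\hom_{\fcatk[\Gamma]}(t^*, G) \cong \overline{G(\mathbf{1}_+)}$, the reduced part. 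Applying this with $G = (t^*)^{\otimes b}$ and using $(t^*)^{\otimes b}(\mathbf{1}_+) = t^*(\mathbf{1}_+)^{\otimes b} = \kring$, one gets $\hom_{\fcatk[\Gamma]}(t^*, (t^*)^{\otimes b}) \cong \kring = \kring\hom_\Oz(\mathbf{b}, \mathbf{1})$ for $b > 0$. For general $a$, I would use Proposition \ref{prop:coprod_PGamma} and Lemma \ref{lem:t*_projective} to decompose $(t^*)^{\otimes a} = (P^\Gamma_{\mathbf{1}_+} \ominus \kring)^{\otimes a}$ — more precisely, expand $(t^* \oplus \kring)^{\otimes a} \cong \bigoplus_{S \subseteq \mathbf{a}} (t^*)^{\otimes |S|}$ and invert this to express $(t^*)^{\otimes a}$ as an alternating combination of the $P^\Gamma_{\mathbf{m}_+}$. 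Then $\hom_{\fcatk[\Gamma]}(P^\Gamma_{\mathbf{m}_+}, (t^*)^{\otimes b}) \cong (t^*)^{\otimes b}(\mathbf{m}_+) = (t^*(\mathbf{m}_+))^{\otimes b} \cong \kring[\mathbf{m}]^{\otimes b} \cong \kring[\mathbf{m}^{\times b}]$ by Yoneda and Definition \ref{defn:t*}. Tracking the inclusion–exclusion over subsets of $\mathbf{a}$ identifies $\hom_{\fcatk[\Gamma]}((t^*)^{\otimes a}, (t^*)^{\otimes b})$ with the submodule of $\kring[\mathbf{a}^{\times b}]$ spanned by those functions $\mathbf{b} \to \mathbf{a}$ hitting each coordinate — i.e., surjections — giving $\kring\hom_\Oz(\mathbf{b}, \mathbf{a})$.

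The remaining checks are: that these isomorphisms are natural in $a$ and $b$ (functoriality in $\Gamma$-maps between the $\mathbf{m}_+$ corresponds, under Yoneda, to the contravariant action of $\Oz$-morphisms, which is why the embedding lands in $(\kring\Oz)\op$), that composition in $\fcatk[\Gamma]$ matches composition of surjections, and that the resulting functor is symmetric monoidal — the last following from Proposition \ref{prop:coprod_PGamma}, which translates disjoint union in $\Oz$ to $\otimes$ of the $t^*$-powers. I expect the main obstacle to be the bookkeeping in the inclusion–exclusion step: carefully matching the alternating sum of the representables $P^\Gamma_{\mathbf{m}_+}$ with the "surjectivity" condition cutting out $\hom_\Oz(\mathbf{b},\mathbf{a}) \subset \hom_{\fin}(\mathbf{b},\mathbf{a})$, and verifying that this identification is compatible with composition on the nose rather than merely up to isomorphism. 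This is essentially the content of \cite[(1.10.2)]{Phh}, and I would cite Pirashvili for the precise combinatorial verification while indicating the structure of the argument as above.
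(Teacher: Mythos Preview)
Your proposal is sound, and in fact you supply more than the paper does: the paper gives no proof of this theorem, simply citing it from Pirashvili \cite{Phh}. Your outline---projective generators plus a Morita-type argument, with the $\hom$-computation done via Yoneda for $a=1$ and then extended---is exactly the shape of Pirashvili's original argument, and your final sentence acknowledging this is appropriate.

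One point of comparison worth noting. For the passage from $a=1$ to general $a$, you propose inclusion--exclusion: expand $P^\Gamma_{\mathbf{a}_+} \cong (t^*\oplus\kring)^{\otimes a}$, apply Yoneda to get $\hom(P^\Gamma_{\mathbf{a}_+},(t^*)^{\otimes b})\cong \kring[\hom_{\fin}(\mathbf{b},\mathbf{a})]$, and then M\"obius-invert over subsets $S\subseteq\mathbf{a}$ to isolate the surjections. The paper, in making the isomorphism explicit immediately after the theorem (Lemma \ref{lem:mor_Oz}, Definition \ref{defn:dk_map}, Proposition \ref{prop:explicit_DK_morphism}), takes the dual route: it decomposes a surjection $\mathbf{b}\twoheadrightarrow\mathbf{a}$ into its fibres and uses the symmetric monoidal structure to write the corresponding morphism as a tensor product $\bigotimes_{i=1}^a \xi_{b_i}$ of the $a=1$ generators. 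Both approaches are correct; the paper's has the advantage of producing the explicit map $\dk_{\mathbf{a},\mathbf{b}}$ directly, which is what is needed downstream (e.g.\ in Proposition \ref{prop:iso-cosimp}), while yours makes the cardinality identification more transparent. If you want to align with the paper's later use, it is worth recording the tensor-product description as well.
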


%%%%%%%%%%%%%%%%%%%%%%%%%%%%%%%%%%%%%%%%%%%%%%%%%%%%%%%%%%%%%%%%%%%%%%%%%%%%
%\subsection{Complements on Theorem \ref{thm:DK_equiv}}

In Section \ref{sect:new} it is necessary to have an explicit description of the natural isomorphism given in the previous Theorem. So, the rest of this section is devoted to make explicit the isomorphism 
\[
\kring \hom_{\Oz} (\mathbf{b}, \mathbf{a}) 
\stackrel{\cong}{\rightarrow} 
\hom_{\fcatk[\Gamma]}((t^*)^{\otimes a}, (t^*)^{\otimes b} ).
\]

For $a=0$, we have:
\[
\hom_{\fcatk[\Gamma]}  (\kring , (t^*)^{\otimes b} )
\cong 
\left\{ 
\begin{array}{ll}
\kring & b=0\\
0 & b>0, 
\end{array}
\right.
\]
where, for $b=0$, a generator is given by the identity $\mathrm{Id}_\kring: \kring \to \kring$.

For $a>0$, understanding the morphisms $ \hom_{\fcatk[\Gamma]}  (t^* , (t^*)^{\otimes b} ) $ is a key ingredient.  By Lemma \ref{lem:t*_projective}, for $(Z,z)$ a finite pointed set, $ \{[y] - [z]\  |\ y \in Z \backslash \{z \} \}$ is a basis $t^*(Z)$. We use this basis in the following:

\begin{lem}
\label{lem:hom_Oz_a=1}
For $b \in \nat$,
\[
\hom_{\fcatk[\Gamma]}  (t^* , (t^*)^{\otimes b} )
\cong 
\left\{ 
\begin{array}{ll}
\kring & b>0\\
0 & b=0, 
\end{array}
\right.
\]
where, for $b>0$, a generator is given by the morphism $\xi_b: t^* \rightarrow (t^*)^{\otimes b}$ given upon evaluation on $(Z,z)$ by 
$ [y]-[z] \mapsto ([y]-[z])^{\otimes b}$, 
 where $y \in Z \backslash \{z \}$.  
 
 In particular, the action of $\sym_b$ on $\hom_{\fcatk[\Gamma]}  (t^* , (t^*)^{\otimes b} )$ induced by the place permutation action on $(t^*)^{\otimes b}$ is trivial. 
\end{lem}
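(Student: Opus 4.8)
The plan is to compute $\hom_{\fcatk[\Gamma]}(t^*, (t^*)^{\otimes b})$ directly, exploiting projectivity of $t^*$. Since $t^*$ is a direct summand of the standard projective $P^\Gamma_{\mathbf{1}_+} = \kring\hom_\Gamma(\mathbf{1}_+, -)$ (Lemma \ref{lem:t*_projective}), the Yoneda lemma gives $\hom_{\fcatk[\Gamma]}(P^\Gamma_{\mathbf{1}_+}, G) \cong G(\mathbf{1}_+)$ naturally in $G \in \fcatk[\Gamma]$, and under the splitting $P^\Gamma_{\mathbf{1}_+} \cong t^* \oplus \kring$ this decomposes as $\hom(t^*, G) \oplus \hom(\kring, G)$. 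Applying this with $G = (t^*)^{\otimes b}$, and using that $(t^*)^{\otimes b}(\mathbf{1}_+) = \big(t^*(\mathbf{1}_+)\big)^{\otimes b} = \kring^{\otimes b} \cong \kring$ (since $t^*(\mathbf{1}_+) = \kring[\mathbf{1}_+]/\kring[\mathbf{0}_+] \cong \kring$), together with $\hom_{\fcatk[\Gamma]}(\kring, (t^*)^{\otimes b}) \cong (t^*)^{\otimes b}(\mathbf{0}_+) = 0$ for $b>0$ (by reducedness, Lemma \ref{lem:Gamma_mod_reduced} and Proposition \ref{prop:proj_gen_Gamma}), I conclude $\hom_{\fcatk[\Gamma]}(t^*, (t^*)^{\otimes b}) \cong \kring$ for $b>0$. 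For $b=0$, $(t^*)^{\otimes 0} = \kring$ is the constant (hence reduced-trivial, i.e. non-reduced) functor and $\hom_{\fcatk[\Gamma]}(t^*, \kring) = 0$ since $t^*$ is reduced while $\kring$ lies in the other factor of the splitting of Lemma \ref{lem:Gamma_mod_reduced}; alternatively one reads this off the same Yoneda computation noting the $\kring$-summand of $P^\Gamma_{\mathbf{1}_+}$ absorbs the whole Hom into $\kring$.

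Next I would identify the generator explicitly. Tracing through the Yoneda isomorphism, a morphism $t^* \to (t^*)^{\otimes b}$ corresponds to an element of $(t^*)^{\otimes b}(\mathbf{1}_+)$, and the canonical generator of $(t^*)^{\otimes b}(\mathbf{1}_+) \cong \kring$ is $[[1]]^{\otimes b}$ where $[[1]]$ is the image of the non-basepoint element. The corresponding natural transformation $\xi_b$ is then determined by naturality: for any pointed set $(Z,z)$ and any $y \in Z\setminus\{z\}$, pick the map $\mathbf{1}_+ \to Z$ sending the non-basepoint to $y$; functoriality of both sides forces $\xi_b$ to send the basis element $[y]-[z]$ of $t^*(Z)$ (equivalently $[[y]]$) to $([y]-[z])^{\otimes b}$. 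Since every basis element of $t^*(Z)$ arises this way, $\xi_b$ is completely described by this formula, and it is a generator because it corresponds to a generator of $\kring$ under the isomorphism.

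Finally, for the $\sym_b$-equivariance statement: $\sym_b$ acts on $(t^*)^{\otimes b}$ by permuting tensor factors, hence acts on $\hom_{\fcatk[\Gamma]}(t^*, (t^*)^{\otimes b}) \cong \kring$. Since $\xi_b$ sends $[y]-[z]$ to the symmetric tensor $([y]-[z])^{\otimes b}$, which is fixed by every place permutation, $\xi_b$ is a fixed point; as $\xi_b$ generates the free rank-one module, the action is trivial. The main (and really only) obstacle here is bookkeeping: making sure the Yoneda isomorphism is chased through the splitting $P^\Gamma_{\mathbf{1}_+} \cong t^* \oplus \kring$ correctly so that the "diagonal" element $[[1]]^{\otimes b}$ is genuinely the one that generates the $t^*$-summand's contribution and not contaminated by the constant summand — but this is straightforward given the explicit description of the splitting in Lemma \ref{lem:t*_projective}. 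No deep input is needed; the result is essentially a corollary of projectivity of $t^*$ plus Yoneda.
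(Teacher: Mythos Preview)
Your proof is correct and follows essentially the same approach as the paper: both use the splitting $P^\Gamma_{\mathbf{1}_+} \cong t^* \oplus \kring$ together with Yoneda's Lemma to compute $\hom_{\fcatk[\Gamma]}(P^\Gamma_{\mathbf{1}_+}, (t^*)^{\otimes b}) \cong (t^*)^{\otimes b}(\mathbf{1}_+) \cong \kring$, then use reducedness (Lemma~\ref{lem:Gamma_mod_reduced}) to kill the $\kring$-summand's contribution and identify the generator via $([1]-[+])^{\otimes b}$. Your write-up is somewhat more detailed in tracing the Yoneda correspondence through to the explicit formula for $\xi_b$ on an arbitrary $(Z,z)$, but the underlying argument is the same.
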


\begin{proof}
If $b=0$, since $t^*$ is a reduced $\Gamma$-module, the statement follows from Lemma \ref{lem:Gamma_mod_reduced}. 
 For $b>0$,  
 the inclusion $t^* \subset P^{\Gamma}_{\mathbf{1}_+} \cong t^* \oplus \kring$ (where the isomorphism is given by Lemma \ref{lem:t*_projective}) induces an isomorphism 
$$
\hom_{\fcatk[\Gamma]} ( P^{\Gamma}_{\mathbf{1}_+}, (t^*)^{\otimes b} )  \cong 
\hom_{\fcatk[\Gamma]}  (t^* , (t^*)^{\otimes b} ) $$
 by Lemma \ref{lem:Gamma_mod_reduced}.  Yoneda's Lemma gives that the left hand side is $\kring$, with trivial $\sym_b$-action, where the given generator corresponds to the element $([1]-[+])^{\otimes b}  \in t^*(\mathbf{1}_+)^{\otimes b}$.
\end{proof}

The symmetric monoidal structure of $\Oz$ (see Remark \ref{rem:Oz}) gives the following:

\begin{lem}
\label{lem:mor_Oz}
For $a \in \nat^*$ and $ b  \in \nat$ the symmetric monoidal structure of $\Oz$ induces a $\sym_b\op$-equivariant isomorphism 
\[
\coprod_{\substack { (b_i)\in (\nat^*)^{a} \\ \sum_{i=1}^a b_i = b }}
\big( \prod_{i=1}^a  \hom_{\Oz} (\mathbf{b_i}, \mathbf{1})\big)
 \uparrow_{\prod_{i=1}^a \sym_{b_i}}^{\sym_b} 
\stackrel{\cong}{\rightarrow }
\hom_{\Oz} (\mathbf{b}, \mathbf{a} ),
\]
in which $\hom_{\Oz} (\mathbf{b_i}, \mathbf{1}) \cong\{ * \}$.
\end{lem}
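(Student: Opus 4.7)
The plan is to decompose $\hom_{\Oz}(\mathbf{b},\mathbf{a})$ as a right $\sym_b$-set (i.e.\ a $\sym_b\op$-set under precomposition) by partitioning surjections according to the sizes of their fibres. By Lemma~\ref{lem:surjections_fibres}, a surjection $f : \mathbf{b} \twoheadrightarrow \mathbf{a}$ is completely determined by the ordered tuple of its fibres $(f^{-1}(1),\dots,f^{-1}(a))$, each non-empty; in particular the tuple of sizes $(b_i)_{i=1}^a := (|f^{-1}(i)|)_{i=1}^a$ is an element of $(\nat^*)^a$ with $\sum b_i = b$. Precomposition by any $\sigma \in \sym_b$ preserves fibre sizes, so this assignment gives a $\sym_b$-equivariant decomposition
\[
\hom_{\Oz}(\mathbf{b},\mathbf{a}) \;=\; \coprod_{\substack{(b_i) \in (\nat^*)^a \\ \sum b_i = b}} \hom_{\Oz}(\mathbf{b},\mathbf{a})_{(b_i)},
\]
where $\hom_{\Oz}(\mathbf{b},\mathbf{a})_{(b_i)}$ denotes the subset of surjections with prescribed fibre sizes $(b_i)$.

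Next, for each composition $(b_i)$ I would construct a distinguished element $f_{(b_i)}$ of $\hom_{\Oz}(\mathbf{b},\mathbf{a})_{(b_i)}$ using the symmetric monoidal structure on $\Oz$. Namely, take the disjoint union of the unique morphisms $\mathbf{b_i} \twoheadrightarrow \mathbf{1}$ (cf.\ Remark~\ref{rem:Oz}) to obtain a surjection $\bigsqcup_{i=1}^a \mathbf{b_i} \twoheadrightarrow \bigsqcup_{i=1}^a \mathbf{1} = \mathbf{a}$, and transport the source along the canonical order-preserving bijection $\bigsqcup_i \mathbf{b_i} \cong \mathbf{b}$. The element $f_{(b_i)}$ then sends the $i$th block of consecutive integers $\{b_1+\cdots+b_{i-1}+1,\dots,b_1+\cdots+b_i\}$ to $i$, and this assembly is precisely what the summand on the left-hand side of the claimed isomorphism produces (it is the image of the generator of $\prod_i \hom_{\Oz}(\mathbf{b_i},\mathbf{1})$ before inducing up).

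Now I would verify two things. First, the right $\sym_b$-action on $\hom_{\Oz}(\mathbf{b},\mathbf{a})_{(b_i)}$ given by $f \mapsto f \circ \sigma$ is transitive: given any $f$ with fibre sizes $(b_i)$, the ordered concatenation $f^{-1}(1),\dots,f^{-1}(a)$ is a permutation of $\mathbf{b}$ (as in \eqref{eqn:order_bprime} of Lemma~\ref{lem:surjections_fibres}), and precomposing $f_{(b_i)}$ with this permutation yields $f$. Second, the stabilizer of $f_{(b_i)}$ is exactly the Young subgroup $\prod_{i=1}^a \sym_{b_i}$, since $f_{(b_i)} \circ \sigma = f_{(b_i)}$ is equivalent to $\sigma$ preserving each fibre setwise. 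Hence
\[
\hom_{\Oz}(\mathbf{b},\mathbf{a})_{(b_i)} \;\cong\; \sym_b \big/ \prod_{i=1}^a \sym_{b_i} \;=\; \big(\prod_{i=1}^a \hom_{\Oz}(\mathbf{b_i},\mathbf{1})\big) \uparrow_{\prod_i \sym_{b_i}}^{\sym_b}
\]
as right $\sym_b$-sets, and summing over $(b_i)$ yields the stated isomorphism.

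There is no genuine obstacle here; the argument is essentially a rephrasing of Lemma~\ref{lem:surjections_fibres} as a statement about $\sym_b$-orbits. The only point requiring care is to check that the map out of the induced set, as built abstractly from the monoidal structure, indeed coincides with the concrete map sending a representative coset $\sigma \prod \sym_{b_i}$ to the surjection $f_{(b_i)} \circ \sigma$, so that the source is exactly the coproduct of induced sets appearing in the statement.
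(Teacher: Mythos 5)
Your proposal is correct and follows essentially the same route as the paper: both define the map via the symmetric monoidal structure of $\Oz$ (disjoint union of the unique surjections $\mathbf{b_i}\twoheadrightarrow\mathbf{1}$) and then reduce to Lemma~\ref{lem:surjections_fibres}, which identifies surjections with their ordered fibre partitions. The paper leaves the transitivity-and-stabilizer verification implicit, whereas you spell it out explicitly, and the $b=0$ edge case (both sides empty) is handled automatically by your fibre-size decomposition even though you don't call it out separately.
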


\begin{proof}
For $b=0$, the two sides of the bijection are the empty set.
For $b \not = 0$, the component of the morphism indexed by $(b_i) \in (\nat^*)^a$ is induced by the set map
\begin{eqnarray*}
\prod_{i=1}^a 
\hom_{\Oz} (\mathbf{b_i}, \mathbf{1})
&\rightarrow& 
\hom_{\Oz} (\mathbf{b}, \mathbf{a})
\\
(\xi_i) &
\mapsto 
& 
\mathbf{b}\cong \amalg_{i=1}^a \mathbf{b_i} 
\stackrel{\amalg_{i=1} ^a \xi_i}{\longrightarrow} 
\amalg_{i=1}^a \mathbf{1} 
\cong
\mathbf{a}.
\end{eqnarray*}

This induces the given isomorphism, since a surjection $f : \mathbf{b} \twoheadrightarrow \mathbf{a}$ is uniquely determined by the partition of $\mathbf{b}$ given by the fibres $\mathbf{b}_i:= f^{-1}(i)$ (cf. Lemma \ref{lem:surjections_fibres}).
\end{proof}

This allows the following definition to be given. 

\begin{defn}
\label{defn:dk_map}
Let $\dk_{\mathbf{a}, \mathbf{b}}: \hom_\Oz (\mathbf{b}, \mathbf{a}) \rightarrow \hom_{\fcatk[\Gamma]} ((t^*)^{\otimes a} , (t^*)^{\otimes b})$, for $a, b \in \nat$, be the $\sym_a \times \sym_b\op$-equivariant map determined by the following:
\begin{enumerate}
\item
$\dk_{\mathbf{0}, \mathbf{0}}(\id_{\mathbf{0}}) := \id_{{\kring}}$; 
\item 
for $a>0$, $b>0$ and a  sequence $(b_i) \in (\nat^*)^{a}$ such that $\sum_{i=1}^a b_i = b$,   the restriction of $\dk_{\mathbf{a}, \mathbf{b}}$ to $\prod_{i=1}^a  \hom_{\Oz} (\mathbf{b_i}, \mathbf{1}) \subset \hom_{\Oz} (\mathbf{b}, \mathbf{a})$ (using the inclusion given by Lemma \ref{lem:mor_Oz}), 
\[
\prod_{i=1}^a  \hom_{\Oz} (\mathbf{b_i}, \mathbf{1})
\rightarrow 
\hom _{\fcatk[\Gamma]}((t^*)^{\otimes a}, (t^*)^{\otimes b} )
\]
sends the one point set on the left to the morphism $\bigotimes_{i=1}^a (t^* \rightarrow (t^*)^{\otimes b_i})$, the tensor product of the generators  $\xi_{b_i}$ given by Lemma \ref{lem:hom_Oz_a=1}.
\end{enumerate}
\end{defn}

\begin{prop}
\label{prop:explicit_DK_morphism}
For $a, b\in \nat$, the $\kring$-linear extension  of $\dk_{\mathbf{a}, \mathbf{b}}$ induces a $\sym_a \times \sym_b\op$-equivariant isomorphism
\[
\kring \dk_{\mathbf{a}, \mathbf{b}} : 
\kring \hom_{\Oz} (\mathbf{b}, \mathbf{a}) 
\stackrel{\cong}{\rightarrow} 
\hom_{\fcatk[\Gamma]}((t^*)^{\otimes a}, (t^*)^{\otimes b} ).
\]
\end{prop}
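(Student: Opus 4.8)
The plan is to compute $\hom_{\fcatk[\Gamma]}((t^*)^{\otimes a}, (t^*)^{\otimes b})$ directly, using only the splitting $P^\Gamma_{\mathbf{1}_+} \cong t^* \oplus \kring$ of Lemma~\ref{lem:t*_projective}, the isomorphism $P^\Gamma_{\mathbf{a}_+} \cong (P^\Gamma_{\mathbf{1}_+})^{\otimes a}$ of Proposition~\ref{prop:coprod_PGamma}, and Yoneda's lemma, and then to check that the natural basis which emerges is precisely the image of $\kring \dk_{\mathbf{a}, \mathbf{b}}$. The case $a = 0$ is immediate: $(t^*)^{\otimes 0} = \kring$ is reduced, so $\hom_{\fcatk[\Gamma]}(\kring, (t^*)^{\otimes b})$ vanishes for $b > 0$ and is $\kring \cdot \id_\kring$ for $b = 0$, matched by $\dk_{\mathbf{0}, \mathbf{0}}(\id_{\mathbf{0}}) = \id_\kring$. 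So assume $a > 0$; the argument below then applies verbatim for every $b \geq 0$, the case $b = 0$ being the empty-tensor degeneration.

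Let $e_1 \in \End_{\fcatk[\Gamma]}(P^\Gamma_{\mathbf{1}_+})$ be the idempotent with image $t^*$ afforded by the splitting; under Yoneda it is the element $\id_{\mathbf{1}_+} - c \in \kring\End_\Gamma(\mathbf{1}_+)$, where $c : \mathbf{1}_+ \to \mathbf{1}_+$ is the constant map onto the basepoint. Put $e_a := e_1^{\otimes a} \in \End_{\fcatk[\Gamma]}(P^\Gamma_{\mathbf{a}_+})$, an idempotent with image $(t^*)^{\otimes a}$ (the projection onto the $S = \mathbf{a}$ summand of $P^\Gamma_{\mathbf{a}_+} \cong \bigoplus_{S \subseteq \mathbf{a}} (t^*)^{\otimes S}$). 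Expanding the tensor product, $e_a = \sum_{S \subseteq \mathbf{a}} (-1)^{|S|} c_S$, where $c_S : \mathbf{a}_+ \to \mathbf{a}_+$ is the identity off $S$ and sends each element of $S$ to the basepoint (and we write $c_S$ also for the induced endomorphism of $P^\Gamma_{\mathbf{a}_+}$). Applying $\hom_{\fcatk[\Gamma]}(-, (t^*)^{\otimes b})$ and Yoneda identifies $\hom_{\fcatk[\Gamma]}((t^*)^{\otimes a}, (t^*)^{\otimes b})$ with the image of the idempotent $e_a^* = \sum_{S \subseteq \mathbf{a}} (-1)^{|S|} (t^*)^{\otimes b}(c_S)$ acting on $(t^*)^{\otimes b}(\mathbf{a}_+) = (t^*(\mathbf{a}_+))^{\otimes b}$. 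This last module is free with basis $\{[[\phi]] := [[\phi(1)]] \otimes \cdots \otimes [[\phi(b)]] \mid \phi : \mathbf{b} \to \mathbf{a}\}$, using that $t^*(\mathbf{a}_+)$ is free on $[[1]], \ldots, [[a]]$; a direct check shows $(t^*)^{\otimes b}(c_S)$ fixes $[[\phi]]$ when $S \cap \phi(\mathbf{b}) = \emptyset$ and annihilates it otherwise, whence $e_a^*([[\phi]]) = \big( \sum_{S \subseteq \mathbf{a} \setminus \phi(\mathbf{b})} (-1)^{|S|} \big)[[\phi]]$, which equals $[[\phi]]$ if $\phi$ is surjective and $0$ otherwise. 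Therefore $\hom_{\fcatk[\Gamma]}((t^*)^{\otimes a}, (t^*)^{\otimes b})$ is free over $\kring$ with basis $\{[[\phi]] \mid \phi \in \hom_\Oz(\mathbf{b}, \mathbf{a})\}$.

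It then remains to recognise this basis as the image of $\kring \dk_{\mathbf{a}, \mathbf{b}}$. Under the injection $\hom_{\fcatk[\Gamma]}((t^*)^{\otimes a}, G) \hookrightarrow G(\mathbf{a}_+)$ given by $\psi \mapsto \psi_{\mathbf{a}_+}([[1]] \otimes \cdots \otimes [[a]])$ (evaluation on the image of $\id_{\mathbf{a}_+}$, whose range is exactly $\mathrm{im}(e_a^*)$), I would compute $\dk_{\mathbf{a}, \mathbf{b}}(\phi)$ for $\phi \in \hom_\Oz(\mathbf{b}, \mathbf{a})$ with ordered fibres $(\mathbf{b}_1, \ldots, \mathbf{b}_a)$: by Definition~\ref{defn:dk_map} it is $\bigotimes_{i=1}^a \xi_{b_i}$ postcomposed with the reindexing isomorphism $\bigotimes_i (t^*)^{\otimes b_i} \cong (t^*)^{\otimes b}$ attached to the ordered partition via Lemma~\ref{lem:mor_Oz}, and $\xi_{b_i}([[i]]) = [[i]]^{\otimes b_i}$ by Lemma~\ref{lem:hom_Oz_a=1}; inserting $[[1]] \otimes \cdots \otimes [[a]]$ and reindexing places $[[i]]$ into slot $j$ precisely when $\phi(j) = i$, so the value is $[[\phi]]$. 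Hence $\kring \dk_{\mathbf{a}, \mathbf{b}}$ is exactly the isomorphism $\phi \mapsto [[\phi]]$. Equivariance is then transparent: $\sigma \in \sym_a$ acts on $[[\phi]]$ through $(t^*)^{\otimes b}(\sigma)$, giving $[[\sigma \phi]]$, while the place-permutation action of $\tau \in \sym_b$ carries $[[\phi]]$ to $[[\phi \tau^{-1}]]$, matching the post- and pre-composition actions on $\hom_\Oz(\mathbf{b}, \mathbf{a})$.

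The one point requiring care is the bookkeeping in the last paragraph: one must match the reindexing isomorphism built into $\dk_{\mathbf{a}, \mathbf{b}}$ via Lemma~\ref{lem:mor_Oz} with the Yoneda identification $P^\Gamma_{\mathbf{a}_+} \cong (P^\Gamma_{\mathbf{1}_+})^{\otimes a}$, verifying that the fibres of $\phi$, in their prescribed order, land in the correct tensor slots; everything else is a routine expansion. As an alternative one could instead invoke Theorem~\ref{thm:DK_equiv}, which already shows both sides are free $\kring$-modules of rank $|\hom_\Oz(\mathbf{b}, \mathbf{a})|$, and then merely check that $\kring \dk_{\mathbf{a}, \mathbf{b}}$ is surjective, since a surjection between free modules of equal finite rank over a commutative ring is an isomorphism; but the computation above is self-contained and does not need Theorem~\ref{thm:DK_equiv}.
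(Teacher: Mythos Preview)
Your argument is correct and complete, but it takes a genuinely different route from the paper. The paper's proof is two lines: it observes that equivariance is built into Definition~\ref{defn:dk_map} and then simply invokes Pirashvili's result, Theorem~\ref{thm:DK_equiv}, to conclude that the map is an isomorphism. You instead give a self-contained computation: realise $(t^*)^{\otimes a}$ as the image of the idempotent $e_a = (\id - c)^{\otimes a}$ on $P^\Gamma_{\mathbf{a}_+}$, apply Yoneda to land in $(t^*)^{\otimes b}(\mathbf{a}_+)$, and compute the image of $e_a^*$ explicitly to produce the basis $\{[[\phi]] : \phi \in \hom_\Oz(\mathbf{b},\mathbf{a})\}$. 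This has the advantage of not depending on Theorem~\ref{thm:DK_equiv} at all; indeed your calculation furnishes an independent proof of the $\hom$-identification asserted there. The paper's approach buys brevity at the cost of relying on the cited result.

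One small slip to fix: in your $a=0$ paragraph you write that $(t^*)^{\otimes 0} = \kring$ is reduced, but the constant functor $\kring$ is not reduced (it takes value $\kring$ on $\mathbf{0}_+$). The vanishing $\hom_{\fcatk[\Gamma]}(\kring,(t^*)^{\otimes b}) = 0$ for $b>0$ holds because $(t^*)^{\otimes b}$ is reduced, so that under the splitting of Lemma~\ref{lem:Gamma_mod_reduced} the two functors live in complementary factors. Your conclusion is unaffected.
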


\begin{proof}
The construction of the map $\dk_{\mathbf{a}, \mathbf{b}}$ ensures that the $\kring$-linearization is $\sym_a \times \sym_b\op$-equivariant. The fact that it is an isomorphism follows from Pirashvili's result, Theorem \ref{thm:DK_equiv}.
\end{proof}

\begin{rem}
We will see in Proposition \ref{prop:iso-FB-mod} that the maps $\dk_{\mathbf{a}, \mathbf{b}}$ define an isomorphism of $\fb \times \fb\op$-modules.
\end{rem}

%%%%%%%%%%%%%%%%%%%%%%%%%
\subsection{The functor category $ \fcatk [\finne]$ }
\label{section:F(Fin)}
 
 The aim of this section is to give an analysis of the category $ \fcatk [\finne]$  parallel to that of $\fcatk[\Gamma]$ given in the previous section.

Whilst $\emptyset$ is the initial object of $\fin$, it is not final, so that $\fin$ is not pointed and there is no splitting analogous to that of Lemma \ref{lem:Gamma_mod_reduced}. Instead one has:

\begin{prop}
\label{prop:restrict_finne}
The restriction functor $\fcatk[\fin] \rightarrow \fcatk [\finne]$ induced by 
$\finne \subset \fin$ is exact and symmetric monoidal and has exact left adjoint 
 given by extension by zero (i.e., for $F \in \ob \fcatk [\finne]$, defining $F 
(\emptyset) : = 0$). 

Via these functors, $\fcatk[\finne]$ is equivalent to the full subcategory of 
 reduced objects of $\fcatk[\fin]$ (functors $G$ such that $G (\emptyset) 
=0$). 
\end{prop}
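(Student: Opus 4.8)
The plan is to exhibit the left adjoint explicitly and then deduce all the claimed properties by elementary inspection. First I would define the functor $Z : \fcatk[\finne] \rightarrow \fcatk[\fin]$ by extension by zero: for $F \in \ob\fcatk[\finne]$, set $(ZF)(\emptyset) := 0$ and $(ZF)(X) := F(X)$ for $X$ non-empty, with the structure morphisms of $F$ on morphisms between non-empty sets, and the zero map whenever $\emptyset$ is the source (there are no morphisms in $\fin$ with source a non-empty set and target $\emptyset$, so this is well-defined and functorial). The adjunction $\hom_{\fcatk[\fin]}(ZF, G) \cong \hom_{\fcatk[\finne]}(F, G|_{\finne})$ is then immediate: a natural transformation $ZF \to G$ is determined by its components on non-empty sets (the component at $\emptyset$ being forced to be the zero map $0 \to G(\emptyset)$), and naturality of such a family with respect to all morphisms of $\fin$ is equivalent to naturality with respect to morphisms of $\finne$, since any square in $\fin$ involving $\emptyset$ commutes automatically (again because $\emptyset$ is initial but never a non-trivial target).

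Next I would verify exactness of the restriction functor $(-)|_{\finne}$ and of $Z$. Restriction is exact because (co)limits in functor categories are computed objectwise, and $(-)|_{\finne}$ just forgets the value at $\emptyset$; similarly $Z$ is exact since inserting a zero object in the value at $\emptyset$ preserves objectwise kernels and cokernels. For the symmetric monoidal claim: the tensor product in $\fcatk[\fin]$ and $\fcatk[\finne]$ is computed objectwise over $\kring$ with unit the constant functor $\kring$, and $\emptyset$ remains the monoidal unit of $\fin$ under disjoint union; so restriction along the monoidal inclusion $\finne \hookrightarrow \fin$ is strong symmetric monoidal — this is the general fact, already used in the excerpt, that precomposition along a (symmetric) monoidal functor is (symmetric) monoidal. (Note $Z$ itself is not monoidal, since $Z\kring$ is not the unit; only the restriction is claimed to be monoidal, and that is what I would prove.)

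Finally, for the equivalence with reduced objects: let $\mathcal{R} \subset \fcatk[\fin]$ denote the full subcategory of functors $G$ with $G(\emptyset) = 0$. The composite $\fcatk[\finne] \xrightarrow{Z} \fcatk[\fin]$ lands in $\mathcal{R}$, and I would check that $Z$ followed by restriction is the identity on $\fcatk[\finne]$, while restriction followed by $Z$ is naturally isomorphic to the identity on $\mathcal{R}$ — indeed for $G \in \mathcal{R}$ the functor $Z(G|_{\finne})$ agrees with $G$ on non-empty sets and both send $\emptyset$ to $0$, and the structure maps agree, giving $Z(G|_{\finne}) = G$ on the nose. Hence $Z$ and $(-)|_{\finne}$ restrict to mutually inverse equivalences between $\fcatk[\finne]$ and $\mathcal{R}$. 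None of the steps presents a genuine obstacle; the only point requiring a moment's care — the ``main obstacle'' such as it is — is the bookkeeping that there are no morphisms in $\fin$ out of a non-empty set into $\emptyset$, which is exactly what makes extension by zero functorial and what makes naturality over $\fin$ reduce to naturality over $\finne$.
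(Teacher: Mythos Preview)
Your proof is correct and complete; the paper states this proposition without proof, treating it as routine. One small remark: your aside that ``$\emptyset$ remains the monoidal unit of $\fin$ under disjoint union'' and the reference to a ``monoidal inclusion'' are beside the point here---the symmetric monoidal structure on $\fcatk[\calc]$ is the objectwise tensor over $\kring$ with unit the constant functor $\kring$, so precomposition with \emph{any} functor between small categories is symmetric monoidal (as the paper records just before this proposition), with no monoidality hypothesis on the functor $\finne \hookrightarrow \fin$ required.
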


\begin{nota}
Let $\overline{\kring}$ denote the constant functor of $\fcatk[\finne]$ taking value $\kring$. (This notation is introduced so as to avoid potential confusion with the constant functor of $\fcatk[\fin]$, which is denoted by $\kring$.)
\end{nota}

\begin{prop}
\label{prop:overline_k_injective}
The following conditions are equivalent:
\begin{enumerate}
\item 
$\kring$ is injective as a $\kring$-module; 
\item 
the constant functor $\overline{\kring}$ is injective in $\fcatk[\finne]$;
\item 
the constant functor $\kring$ is injective in $\fcatk[\fin]$. 
\end{enumerate}
\end{prop}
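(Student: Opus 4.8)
The strategy is to prove the cycle of implications $(3) \Rightarrow (2) \Rightarrow (1) \Rightarrow (3)$, using the adjunctions already established in the excerpt and the fact that the relevant restriction/extension functors are exact.

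\textbf{Step 1: $(3) \Rightarrow (2)$.} By Proposition \ref{prop:restrict_finne}, the restriction functor $\rho : \fcatk[\fin] \rightarrow \fcatk[\finne]$ is exact and admits an exact left adjoint $\zeta$ (extension by zero). Hence $\rho$ preserves injectives: if $I$ is injective in $\fcatk[\fin]$, then $\hom_{\fcatk[\finne]}(-, \rho I) \cong \hom_{\fcatk[\fin]}(\zeta(-), I)$ is exact, being a composite of the exact functor $\zeta$ with the exact functor $\hom_{\fcatk[\fin]}(-, I)$. Since $\rho$ of the constant functor $\kring$ of $\fcatk[\fin]$ is precisely $\kbar$ (both are the constant functor with value $\kring$, evaluated on non-empty sets), this gives $(3) \Rightarrow (2)$.

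\textbf{Step 2: $(2) \Rightarrow (1)$.} Evaluation at a fixed non-empty finite set, say $\mathbf{1}$, gives an exact functor $\mathrm{ev}_{\mathbf{1}} : \fcatk[\finne] \rightarrow \kring\dash\modules$ which has an exact left adjoint, namely $M \mapsto M \otimes_\kring P^{\finne}_{\mathbf{1}}$ (free functor on a $\kring$-module placed at $\mathbf{1}$); exactness of this left adjoint holds because $P^{\finne}_{\mathbf{1}} = \kring\hom_{\finne}(\mathbf{1}, -)$ is the constant functor $\kbar$ — indeed $\hom_{\finne}(\mathbf{1}, X)$ is just the underlying set $X$, so $P^\finne_{\mathbf 1}=\kring[-]$, which is not constant; one should instead use that evaluation $\mathrm{ev}_X$ always has exact left adjoint $M \mapsto M \otimes_\kring P^\finne_X$ for \emph{any} object $X$, and the functor $P^\finne_X$ is flat over $\kring$ levelwise, so the left adjoint is exact whenever we only need exactness, which holds here since $\kring$ is the ground ring. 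Thus $\mathrm{ev}_X$ preserves injectives. Now choose $X$ so that $\kbar(X) = \kring$ with the module structure of $\kring$ over itself — any non-empty $X$ works since $\kbar$ is constant — and conclude that $\kring = \mathrm{ev}_X(\kbar)$ is an injective $\kring$-module.

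\textbf{Step 3: $(1) \Rightarrow (3)$.} Here the cleanest route is to exhibit the constant functor $\kring$ of $\fcatk[\fin]$ as a direct summand of an honestly injective object built from $\kring$. Since $\emptyset$ is initial in $\fin$, the constant functor $\kring$ is corepresented: $\kring \cong \hom_{\fin}(\emptyset, -)$, hence $\kring = P^{\fin}_\emptyset$ is \emph{projective}. That is not what we want; instead, observe that the functor $\mathrm{ev}_\emptyset : \fcatk[\fin] \to \kring\dash\modules$ has a right adjoint $R$ (right Kan extension along $\{\emptyset\} \hookrightarrow \fin$), and $R$ sends injective $\kring$-modules to injective objects of $\fcatk[\fin]$ provided $\mathrm{ev}_\emptyset$ is exact — which it is. Finally one checks that $R(\kring) \cong \kring$, the constant functor: indeed $R(N)(X) = \hom_{\kring}(\kring\hom_\fin(X,\emptyset), N)$, and $\hom_\fin(X,\emptyset)$ is a single point when $X = \emptyset$ and empty otherwise, so $R(N)(\emptyset) = N$ and $R(N)(X) = \hom_\kring(0,N) = 0$ for $X \neq \emptyset$ — this is \emph{not} the constant functor. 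So $R(\kring)$ is the ``skyscraper'' at $\emptyset$, not $\kring$, and this approach computes the wrong object. The correct device: use instead the \emph{left} Kan extension issue is dual. Concretely, $\kring \in \fcatk[\fin]$ is the constant functor; to show it is injective when $\kring$ is an injective module, note that for any $F \in \fcatk[\fin]$ one has $\hom_{\fcatk[\fin]}(F, \kring) \cong \hom_\kring(\mathrm{colim}_{\fin} F, \kring)$ by adjunction (constant functor is right adjoint to colimit), and $\mathrm{colim}_{\fin}(-)$ is right exact but not exact, so this does not immediately give injectivity either. The resolution is that $\fin$ has $\emptyset$ as initial object, so $\mathrm{colim}_{\fin} F \cong F(\emptyset)$ \emph{is} exact in $F$; therefore $\hom_{\fcatk[\fin]}(F,\kring) \cong \hom_\kring(F(\emptyset), \kring)$, which is exact in $F$ precisely when $\kring$ is an injective $\kring$-module. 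This proves $(1)\Rightarrow(3)$, and running the same identification for $\finne$ — where $\finne$ has \emph{no} initial object, so instead one uses that $\kbar$ is a quotient/summand — handles the remaining edge.

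\textbf{Main obstacle.} The delicate point is Step 3 / the interplay with $\finne$: unlike $\fin$, the category $\finne$ has no initial object, so one cannot simply identify $\hom(F, \kbar)$ with evaluation at a single object. The honest argument is that $\colim_{\fin} F = F(\emptyset)$ because $\emptyset$ is initial, giving $\hom_{\fcatk[\fin]}(-,\kring) \cong \hom_\kring(\mathrm{ev}_\emptyset(-),\kring)$ as an exact-in-the-first-variable-iff-$\kring$-injective statement, which yields $(1)\Leftrightarrow(3)$ directly; then $(2)\Leftrightarrow(3)$ follows from Step 1 together with its converse, noting that the exact left adjoint $\zeta$ to $\rho$ satisfies $\rho\zeta = \mathrm{id}$, so $\kbar$ injective in $\fcatk[\finne]$ forces $\kring = \zeta\kbar$-adjunct computations to show $\kring$ is injective in $\fcatk[\fin]$ as well. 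I would organize the final write-up around the single clean equivalence $\hom_{\fcatk[\fin]}(F, \kring) \cong \hom_{\kring}(F(\emptyset), \kring)$ and the adjunction $(\zeta, \rho)$, which together collapse all three conditions.
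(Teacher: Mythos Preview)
Your Step~3 contains a genuine error: you claim that because $\emptyset$ is initial in $\fin$ one has $\colim_{\fin} F \cong F(\emptyset)$, and hence $\hom_{\fcatk[\fin]}(F,\kring) \cong \hom_\kring(F(\emptyset),\kring)$. This is backwards. An initial object computes the \emph{limit}, not the colimit; it is a \emph{terminal} object that computes the colimit. The correct identity is $\colim_{\fin} F \cong F(\mathbf{1})$, since $\mathbf{1}$ is terminal in $\fin$, yielding $\hom_{\fcatk[\fin]}(F,\kring) \cong \hom_\kring(F(\mathbf{1}),\kring)$. Your ``main obstacle'' --- that $\finne$ lacks an initial object --- then evaporates: $\mathbf{1}$ is terminal in $\finne$ as well, so the same identity holds there and gives $(1)\Leftrightarrow(2)$ and $(1)\Leftrightarrow(3)$ directly, with no need for the adjunction gymnastics.

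This corrected argument is exactly the paper's proof: observe that $\mathbf{1}$ is terminal in both $\fin$ and $\finne$, so in each case the constant functor is $\kring^{\hom(-,\mathbf{1})}$, and Yoneda (in its contravariant form for mapping-space functors) gives $\hom(F,\kring^{\hom(-,\mathbf{1})}) \cong \hom_\kring(F(\mathbf{1}),\kring)$, which is exact in $F$ precisely when $\kring$ is self-injective. Your Steps~1 and~2 are correct (modulo the stream-of-consciousness presentation) but unnecessary once Step~3 is fixed; the single terminal-object observation handles all three conditions at once.
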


\begin{proof}
Since $\mathbf{1}$  is the terminal object of both  $\fin$ and $\finne$, in both cases, the constant functor is isomorphic to the maps from $\hom (-, \mathbf{1})$ to $\kring$, $\kring^{\hom (-, \mathbf{1})} $, where $\hom$ is calculated respectively in $\finne$ and $\fin$. The equivalence of the statements then follows by using Yoneda's Lemma.
\end{proof}
 
 Proposition \ref{prop:coprod_PGamma} has the following analogue for $ \fcatk [\finne]$; the proof is similar.
 
 \begin{prop}
\label{prop:coprod_Pfinne}
For $m,n \in \nat^*$, there is an isomorphism
$
P^\finne_{\mathbf{m+n}}
\cong 
P^\finne_{\mathbf{m}}
\otimes 
P^\finne_{\mathbf{n}}
$.  
In particular, for $n \in \nat^*$, one has $P^\finne_{\mathbf{n}}
\cong (P^\finne_{\mathbf{1}})^{\otimes n}$.
\end{prop}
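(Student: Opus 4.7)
The plan is to mimic the proof of Proposition \ref{prop:coprod_PGamma} verbatim, replacing the wedge-sum (coproduct in $\Gamma$) by the disjoint union (coproduct in $\finne$). Concretely, the key observation is that in $\finne$, as in $\fin$, the coproduct of two non-empty finite sets is their disjoint union, and the canonical bijection $\mathbf{m} \sqcup \mathbf{n} \cong \mathbf{m+n}$ realises $\mathbf{m+n}$ as the coproduct of $\mathbf{m}$ and $\mathbf{n}$. Note the restriction $m,n \in \nat^*$ is necessary precisely because $\emptyset \notin \ob \finne$, so one cannot form the coproduct with an empty factor inside $\finne$.

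From here the argument is essentially formal. Applying the universal property of the coproduct in $\finne$ gives a natural bijection, for any $X \in \ob \finne$,
\[
\hom_\finne(\mathbf{m+n}, X) \;\cong\; \hom_\finne(\mathbf{m}, X) \times \hom_\finne(\mathbf{n}, X).
\]
Passing to free $\kring$-modules turns the cartesian product on the right into a tensor product over $\kring$, giving a natural isomorphism
\[
P^\finne_{\mathbf{m+n}}(X) \;\cong\; P^\finne_{\mathbf{m}}(X) \otimes_\kring P^\finne_{\mathbf{n}}(X),
\]
which is precisely the value at $X$ of $P^\finne_{\mathbf{m}} \otimes P^\finne_{\mathbf{n}}$ for the pointwise tensor structure on $\fcatk[\finne]$. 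Naturality in $X$ is automatic from the naturality of all the identifications used.

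The second assertion then follows by a straightforward induction on $n \in \nat^*$: the case $n=1$ is trivial, and the inductive step applies the first assertion with $m=n-1$, using $\mathbf{n} \cong \mathbf{n-1} \sqcup \mathbf{1}$. No real obstacle is anticipated; the only point to watch is the hypothesis $m,n > 0$, which is exactly what makes the disjoint union stay inside $\finne$ and which prevents the statement from extending to $\fin$ at $\mathbf{0}$ (where one would instead recover the unit $\kring$ of the symmetric monoidal structure).
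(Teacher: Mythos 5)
Your proof is correct and follows exactly the same route the paper takes: identify $\mathbf{m+n}$ as the coproduct $\mathbf{m}\sqcup\mathbf{n}$ in $\finne$, apply Yoneda/the universal property to split the hom-set, and use that linearization turns products of sets into tensor products of free $\kring$-modules. The paper's own proof is just the one-line remark that the argument for Proposition \ref{prop:coprod_PGamma} carries over with disjoint union replacing wedge, which is precisely what you did.
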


We introduce the functor $\kbar \in \ob \fcatk[\finne]$, which is the analogue of the functor $t^* \in \ob \fcatk[\Gamma]$ introduced in Definition \ref{defn:t*}. 

\begin{defn}
\label{defn:kbar}
Let $\kbar$ be the object of $\fcatk [\finne]$ defined as the kernel of the surjection $P^\finne _{\mathbf{1}} \twoheadrightarrow \overline{\kring}$ in $\fcatk[\finne]$ which is induced by the map to the terminal object of $\finne$.
 (Explicitly, $P^\finne _{\mathbf{1}}(\mathbf{n}) \cong \kring [\mathbf{n}]$ and the canonical map $\mathbf{n} \rightarrow \mathbf{1}$ induces $\kring [\mathbf{n}] \twoheadrightarrow \kring [\mathbf{1}] = \kring$.)
\end{defn}

This gives the defining short exact sequence:
\begin{eqnarray}
\label{eqn:def_kbar}
0
\rightarrow 
\kbar
\rightarrow 
P^\finne_{\mathbf{1}}
\rightarrow 
\overline{\kring}
\rightarrow 0.
\end{eqnarray}

The functor $\kbar$ plays an essential r\^ole in analysing the category of $\finne$-modules. The principal difference between $t^* \in \ob \fcatk[\Gamma]$ and $\kbar \in \ob \fcatk [\finne]$ is that the functor $\kbar$ is not projective (see  Corollary \ref{cor:non-proj_inj}). We record immediately the following important observation:

\begin{prop}
\label{prop:ses_kbar_not_split}
The short exact sequence (\ref{eqn:def_kbar}) does not split, hence 
\begin{enumerate}
\item 
$\overline{\kring}$ is not projective in $\fcatk[\finne]$; 
\item 
the short exact sequence represents a non-trivial class in $\ext^1 _{\fcatk[\finne]} (\overline{\kring}, \kbar)$. 
\end{enumerate}
\end{prop}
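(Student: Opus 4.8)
The plan is to show that the short exact sequence (\ref{eqn:def_kbar}) does not split, from which both items follow immediately: if $\overline{\kring}$ were projective the sequence would split, proving (1) (contrapositive), and a short exact sequence represents the zero class in $\ext^1$ precisely when it splits, proving (2).

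To see that (\ref{eqn:def_kbar}) does not split, I would evaluate on small objects of $\finne$ and track the $\fb$-action (equivalently, the action of the symmetric groups $\aut_{\finne}(\mathbf{n}) = \sym_n$). First I would compute the three functors on $\mathbf{1}$: here $P^\finne_{\mathbf{1}}(\mathbf{1}) \cong \kring$, $\overline{\kring}(\mathbf{1}) = \kring$, and the augmentation $\kring[\mathbf{1}] \to \kring$ is an isomorphism, so $\kbar(\mathbf{1}) = 0$. Thus on $\mathbf{1}$ the sequence is trivially split and gives no information; the obstruction must live in higher degree. Next I would evaluate on $\mathbf{2}$: $P^\finne_{\mathbf{1}}(\mathbf{2}) \cong \kring[\mathbf{2}]$ is the regular representation of $\sym_2$, $\kbar(\mathbf{2}) \cong \kring$ with $\sym_2$ acting by the sign representation (it is spanned by $[1]-[2]$), and $\overline{\kring}(\mathbf{2}) = \kring$ with trivial action. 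So on $\mathbf{2}$ the sequence is $0 \to \sgnrep_{\sym_2} \to \kring[\sym_2] \to \kring \to 0$.

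The key point is then: a splitting of (\ref{eqn:def_kbar}) in $\fcatk[\finne]$ would in particular give, upon evaluation at $\mathbf{2}$, a $\sym_2$-equivariant splitting of $0 \to \sgnrep_{\sym_2} \to \kring[\sym_2] \to \kring \to 0$, i.e. a $\sym_2$-equivariant section $s : \kring \to \kring[\sym_2]$ of the augmentation. Such a section sends $1$ to a $\sym_2$-fixed element of $\kring[\sym_2]$ of augmentation $1$; the fixed submodule is spanned by $[1]+[2]$, which has augmentation $2$. Hence a section exists if and only if $2$ is invertible in $\kring$ — and for general commutative $\kring$ (in particular $\kring = \zed$, or $\kring$ of characteristic $2$) it does not exist. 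Actually, to make the statement hold for an \emph{arbitrary} commutative ring $\kring$, I would instead use the naturality more cleverly: a splitting in the functor category must be natural with respect to \emph{all} morphisms in $\finne$, not just automorphisms. I would exploit the non-injective self-map of $\mathbf{2}$, i.e. the constant map $c : \mathbf{2} \to \mathbf{2}$ with image $\{1\}$. Then $P^\finne_{\mathbf{1}}(c) : \kring[\mathbf{2}] \to \kring[\mathbf{2}]$ sends both $[1]$ and $[2]$ to $[1]$, so it kills $\kbar(\mathbf{2}) = \kring\langle [1]-[2]\rangle$ and acts as the identity on $\overline{\kring}(\mathbf{2})$ after the obvious identifications; chasing a hypothetical natural section $s$ through the naturality square for $c$ forces $s_{\mathbf{2}}(1) \in \kring[\mathbf{2}]$ to have difference of coefficients killed by $c$ while mapping to $1$ — combined with naturality for the transposition, this pins $s_{\mathbf{2}}(1)$ down to something incompatible with the augmentation being $1$ over a general ring. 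The cleanest route is the transposition argument above giving the result whenever $2$ is not invertible, together with the observation that this already suffices to conclude the sequence is non-split \emph{as a sequence of functors over $\zed$}, and the general statement then follows because the defining sequence is obtained from the $\zed$-linear one by base change along $\zed \to \kring$ and a split sequence stays split.

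The main obstacle is purely one of bookkeeping: making sure the identification $\kbar(\mathbf{2}) \cong \sgnrep_{\sym_2}$ and the description of the augmentation map are set up so that the equivariant (or natural) non-existence of a section is manifest, and making sure the argument is phrased to cover an arbitrary commutative ring $\kring$ rather than only those in which $2$ is a non-unit. No deep input is needed beyond Yoneda and the explicit description of $P^\finne_{\mathbf{1}}$ and $\kbar$ already recorded in Definition \ref{defn:kbar}.
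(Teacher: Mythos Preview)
Your $\sym_2$-equivariance argument at $\mathbf{2}$ is correct but, as you note, only obstructs a splitting when $2$ is not a unit in $\kring$. The patch you propose at the end --- prove non-splitting over $\zed$ and then base change --- does not work: base change along $\zed \to \kring$ preserves \emph{split} sequences, not \emph{non-split} ones. A non-split sequence over $\zed$ can very well become split after tensoring with $\kring$ (think of $0 \to \zed \stackrel{2}{\to} \zed \to \zed/2 \to 0$ after $-\otimes \rat$). So as written the argument has a genuine gap for rings in which $2$ is invertible.

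Your instinct to use naturality with respect to non-invertible maps is the right one, and if you had carried the constant-map idea through it would finish the job: writing $s_{\mathbf{2}}(1)=a[1]+b[2]$, naturality for the constant self-map of $\mathbf{2}$ with value $1$ forces $a[1]+b[2]=(a+b)[1]$, hence $b=0$, while the constant map with value $2$ forces $a=0$, contradicting $a+b=1$. The paper's proof is a slicker version of the same idea: since $\kbar(\mathbf{1})=0$, any section must satisfy $s_{\mathbf{1}}(1)=[1]$; then naturality with respect to the two maps $\mathbf{1}\rightrightarrows\mathbf{2}$ forces $s_{\mathbf{2}}(1)=[1]$ and $s_{\mathbf{2}}(1)=[2]$ simultaneously. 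This avoids the $\sym_2$-representation computation entirely and works uniformly over any $\kring$.
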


\begin{proof}
The morphism $P^\finne_{\mathbf{1}} \twoheadrightarrow \overline{\kring}$ is an isomorphism when evaluated on $\mathbf{1}$ hence, were a section to exist, it would send  $1 \in \overline{\kring}$ to the generator $[\mathbf{1}]$. However, this morphism cannot extend to a natural transformation from the constant functor $\overline{\kring}$ to  $P^\finne_{\mathbf{1}}$, since $[\mathbf{1}]$ is  not invariant under morphisms of $\finne$. The remaining statements follow immediately.
\end{proof}

%%%%%%%%%%%%
\subsection{Comparison of the functor categories $ \fcatk[\Gamma] $ and $ \fcatk [\finne]$}
\label{section:Comparison-FGamma-FFin}
In this section, we relate the categories $ \fcatk[\Gamma] $ and $ \fcatk [\finne]$ using the functors $\theta$ and $(-)_+$ introduced in Section \ref{sect:background}.

Recall that a functor $\psi$ is conservative if a morphism $f$ is an isomorphism if and only if $\psi (f)$ is.

\begin{prop}
\label{prop:adjoints_theta}
 The adjunction $(-)_+ : \finne \rightleftarrows \Gamma : \theta$ 
induces an adjunction
 \[
  \theta^* :  \fcatk[\finne ] \rightleftarrows \fcatk[\Gamma] : (-)_+^* 
 \]
of exact functors that are symmetric monoidal. 
Moreover:
\begin{enumerate}
\item 
$\theta^* : \fcatk[\finne]\rightarrow \fcatk[\Gamma]$ is faithful and conservative; 
\item 
$\theta^*$ sends projectives to projectives. (Explicitly, for $n \in \nat^*$, $\theta^* P^\finne_{\mathbf{n}}  \cong P^\Gamma_{\mathbf{n}_+}$.)
\end{enumerate}
\end{prop}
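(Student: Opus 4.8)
The plan is to deduce every assertion from the single adjunction of Lemma~\ref{lem:theta_finne_adjunction} together with standard properties of precomposition functors. Precomposition, $\calc \mapsto \fcatk[\calc]$ and $\psi \mapsto \psi^*$, is $2$-functorial in the category variable (reversing functors, preserving natural transformations), so it carries adjunctions to adjunctions; thus $(-)_+ \dashv \theta$ yields an adjunction $\theta^* : \fcatk[\finne] \rightleftarrows \fcatk[\Gamma] : (-)_+^*$. Concretely, this adjunction is the mate correspondence: to $\alpha \colon \theta^* F \to G$ one associates the natural transformation $F \to (-)_+^* G$ whose component at $X \in \ob \finne$ is
\[
F(X) \xrightarrow{\ F(\eta_X)\ } F(\theta(X_+)) \xrightarrow{\ \alpha_{X_+}\ } G(X_+),
\]
where $\eta_X \colon X \hookrightarrow X_+$ is the unit of Lemma~\ref{lem:theta_finne_adjunction}; conversely, to $\beta \colon F \to (-)_+^* G$ one associates $\theta^* F \to G$ with component at $Z \in \ob \Gamma$ the composite $F(\theta Z) \xrightarrow{\beta_{\theta Z}} G((\theta Z)_+) \xrightarrow{G(\epsilon_Z)} G(Z)$, with $\epsilon_Z$ the counit of Lemma~\ref{lem:theta_finne_adjunction}; the triangle identities for $(-)_+ \dashv \theta$ show these assignments are mutually inverse and natural in $F$ and $G$. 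That $\theta^*$ and $(-)_+^*$ are exact and symmetric monoidal is the general fact, already recorded for precomposition functors, that (co)limits and the tensor product in a functor category to $\kring$-modules are computed objectwise and the monoidal unit is the constant functor, all of which precomposition preserves.

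For part (1), I would use that the family of evaluation functors $\{\eval_X\}_{X \in \ob \finne}$ to $\kring$-modules is jointly faithful and jointly conservative, and that it suffices to let $X$ range over a set of representatives of the isomorphism classes of $\finne$. Since $\theta$ is essentially surjective (as noted above), every isomorphism class of $\finne$ is represented by some $\theta Z$ with $Z \in \ob \Gamma$. As $\eval_Z \circ \theta^* = \eval_{\theta Z}$, a morphism $\phi$ of $\fcatk[\finne]$ with $\theta^* \phi = 0$ has $\phi_{\theta Z} = 0$ for all $Z$, hence $\phi_X = 0$ for all $X$ by naturality, so $\phi = 0$; and if $\theta^* \phi$ is an isomorphism then each $\phi_{\theta Z}$, and hence each $\phi_X$, is an isomorphism, so $\phi$ is an isomorphism. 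Thus $\theta^*$ is faithful and conservative.

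For part (2), since $(-)_+^*$ is exact its left adjoint $\theta^*$ preserves projectives: for $P$ projective in $\fcatk[\finne]$ the functor $\hom_{\fcatk[\Gamma]}(\theta^* P, -) \cong \hom_{\fcatk[\finne]}(P, (-)_+^*(-))$ is exact. For the explicit identification, fix $n \in \nat^*$; for $Z \in \ob \Gamma$ one has
\[
(\theta^* P^\finne_{\mathbf{n}})(Z) = \kring \hom_\finne(\mathbf{n}, \theta Z) \cong \kring \hom_\Gamma(\mathbf{n}_+, Z) = P^\Gamma_{\mathbf{n}_+}(Z),
\]
the middle isomorphism being the $\kring$-linearisation of the adjunction bijection of Lemma~\ref{lem:theta_finne_adjunction}, which is natural in $Z$; hence $\theta^* P^\finne_{\mathbf{n}} \cong P^\Gamma_{\mathbf{n}_+}$.

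The content is entirely formal, so there is no genuine obstacle; the step demanding the most care is fixing the handedness of the adjunction $\theta^* \dashv (-)_+^*$ and checking (or citing) that the two mate constructions are mutually inverse via the triangle identities for $(-)_+ \dashv \theta$. Once the adjunction is in place, exactness, symmetric monoidality, faithfulness, conservativity and preservation of projectives all follow immediately from the general principles recalled above.
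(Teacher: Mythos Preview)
Your proof is correct and follows essentially the same approach as the paper: both derive the adjunction from the $2$-functoriality of precomposition, obtain faithfulness and conservativity of $\theta^*$ from the essential surjectivity of $\theta$, and deduce preservation of projectives from the exactness of the right adjoint $(-)_+^*$. You simply spell out more of the formal details (the mate correspondence, the evaluation-functor argument) than the paper does.
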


\begin{proof} The first statement is general: 
precomposition with a functor induces an exact, symmetric monoidal functor;  precomposition with an adjunction yields an adjunction, with the r\^ole of the adjoints reversed. 

The fact that $\theta^*$ is faithful and conservative follows from the fact that $\theta$ is essentially surjective and faithful. (This is the reason for restricting attention to $\finne$.)

Finally, it follows formally from the adjunction that $\theta^*$ sends projectives to projectives. The explicit identification follows from the adjunction isomorphism $\hom_{\finne} (\mathbf{n}, \theta(X)) \cong \hom_{\Gamma} (\mathbf{n}_+, X)$, for $X$ a finite pointed set.
\end{proof} 

\begin{prop}
\label{prop:tstar_kbar}
\ 
\begin{enumerate}
\item 
Applying $\theta^*$ to the short exact sequence (\ref{eqn:def_kbar}) yields the canonical (split) short exact sequence  in $\fcatk[\Gamma]$:
\[
0
\rightarrow 
t^* 
\rightarrow 
 P^\Gamma_{\mathbf{1}_+}
 \rightarrow 
 \kring 
 \rightarrow 
 0;
 \]
\item
 for $a \in \nat$, there is an isomorphism $\theta^* (\kbar^{\otimes a}) \cong (t^* )^{\otimes a}$.   
 \end{enumerate}
\end{prop}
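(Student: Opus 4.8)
The plan is to deduce both parts from the exactness and monoidal properties of $\theta^*$ recorded in Proposition \ref{prop:adjoints_theta}, together with the explicit identifications of $\theta^* P^\finne_{\mathbf{1}}$ and $\theta^*\overline{\kring}$.

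\emph{Part (1).} Since $\theta^*$ is exact (Proposition \ref{prop:adjoints_theta}), applying it to the short exact sequence (\ref{eqn:def_kbar}) yields a short exact sequence
\[
0 \rightarrow \theta^* \kbar \rightarrow \theta^* P^\finne_{\mathbf{1}} \rightarrow \theta^* \overline{\kring} \rightarrow 0
\]
in $\fcatk[\Gamma]$. Now $\theta^* P^\finne_{\mathbf{1}} \cong P^\Gamma_{\mathbf{1}_+}$ by Proposition \ref{prop:adjoints_theta}(2), and $\theta^* \overline{\kring} \cong \kring$ because precomposition with $\theta$ sends the constant functor $\overline{\kring}$ on $\finne$ to the constant functor $\kring$ on $\Gamma$ (a constant functor precomposes to a constant functor with the same value). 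The first thing I would check is that, under these identifications, the surjection $\theta^* P^\finne_{\mathbf{1}} \twoheadrightarrow \theta^*\overline{\kring}$ becomes the canonical augmentation $P^\Gamma_{\mathbf{1}_+} \to \kring$ of Lemma \ref{lem:t*_projective}: this is because the surjection in (\ref{eqn:def_kbar}) is induced by the map to the terminal object $\mathbf{1}$ of $\finne$, $\theta$ preserves this terminal object ($\mathbf{1}_+$ is terminal in $\Gamma$), so precomposition carries it to the map induced by $\mathbf{1}_+ \to \mathbf{0}_+$, which is precisely the augmentation of Lemma \ref{lem:t*_projective}. Hence the kernel $\theta^*\kbar$ is canonically $t^*$, and the sequence is the one displayed, which splits by Lemma \ref{lem:t*_projective}. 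This gives the isomorphism $\theta^*\kbar \cong t^*$ as a by-product of the $a=1$ case of part (2).

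\emph{Part (2).} This is immediate from the fact that $\theta^*$ is symmetric monoidal (Proposition \ref{prop:adjoints_theta}): for $a \in \nat$, a symmetric monoidal functor sends $\kbar^{\otimes a}$ to $(\theta^*\kbar)^{\otimes a}$, and the latter is $(t^*)^{\otimes a}$ by part (1) (with the conventions $\kbar^{\otimes 0} = \overline{\kring}$, $(t^*)^{\otimes 0} = \kring$ and $\theta^*\overline{\kring} \cong \kring$ handled by the $a=0$ case).

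\emph{Expected obstacle.} There is no real difficulty here; the only point requiring care is the bookkeeping in part (1) identifying the morphism $\theta^* P^\finne_{\mathbf{1}} \to \theta^*\overline{\kring}$ with the standard augmentation $P^\Gamma_{\mathbf{1}_+}\to\kring$, i.e.\ checking that the two descriptions of the relevant map through terminal objects agree, so that the kernel is literally $t^*$ of Definition \ref{defn:t*} rather than merely abstractly isomorphic to it. Everything else is a formal consequence of the functor $\theta^*$ being exact and symmetric monoidal.
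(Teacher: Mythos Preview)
Your proof is correct and follows the same approach as the paper: identify $\theta^*(P^\finne_{\mathbf{1}} \twoheadrightarrow \overline{\kring})$ with the canonical projection $P^\Gamma_{\mathbf{1}_+} \twoheadrightarrow \kring$, use exactness of $\theta^*$ for part (1), and use that $\theta^*$ is symmetric monoidal for part (2). One small slip: the terminal object of $\Gamma$ is $\mathbf{0}_+$, not $\mathbf{1}_+$ (cf.\ Lemma \ref{lem:t*_projective}); this does not affect the argument, since what matters is that $\theta$ sends $\mathbf{0}_+$ to $\mathbf{1}$ and hence carries the unique map $Z \to \mathbf{0}_+$ to the unique map $\theta Z \to \mathbf{1}$.
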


\begin{proof}
It is clear that, with respect to the identification of the action of $\theta^*$ on projectives given in Proposition \ref{prop:adjoints_theta}, $\theta^*(P^\finne _{\mathbf{1}} \twoheadrightarrow \overline{\kring})$ is the projection $P^\Gamma _{\mathbf{1}_+} \twoheadrightarrow \kring$. Since $\theta^*$ is exact, this gives the first statement,  together with the isomorphism $\theta^* (\kbar) \cong t^*$.  

The second statement then follows from the fact that $\theta^*$ is symmetric monoidal.
\end{proof}

In the rest of the section we  establish the comonadic description of $\fcatk[\finne]$ using the Barr-Beck theorem (see Theorem \ref{thm:barr_beck}).

\begin{nota}
\label{nota:gcmnd}
Let $\gcmnd : \fcatk[\Gamma] \rightarrow \fcatk[\Gamma]$ be the comonad given by  $\gcmnd =\theta^* \circ (-)_+^* $.   
\end{nota}

One has the following  immediate consequence of Proposition \ref{prop:adjoints_theta}: 

\begin{cor}
\label{cor:perp_sym}
The functor $\gcmnd : \fcatk[\Gamma]\rightarrow \fcatk[\Gamma]$ is exact and symmetric monoidal. 
\end{cor}

The structure maps of the comonad $\gcmnd$ are described in the following Proposition.

\begin{prop}
\label{prop:comonad-gamma-explicit}
Let $F \in \ob \fcatk[\Gamma]$ and $(Y,y)$ be a finite pointed set.
\begin{enumerate}
\item 
The counit $\epsilon^\Gamma_{F}: \gcmnd F(Y)=F(Y_+) \rightarrow  F(Y)$ is given by $F(\alpha)$ where $\alpha$ extends the identity on $Y$ by sending $+$ to $y$.
\item  
Iterating $\gcmnd$, labelling the functors $\gcmnd$ and their associated basepoint, one has 
\[
\gcmnd_1 \gcmnd_2 F (Y) 
= 
F ((Y_{+_1}) _{+_2}).
\]
\item 
The diagonal $\Delta: \gcmnd F (Y) \rightarrow \gcmnd_1 \gcmnd_2 F (Y)$ is induced by the  pointed map $Y_+ \rightarrow (Y_{+_1}) _{+_2}$ given by the identity on $Y$ and sending $+$ to $+_2$. 
\item 
The counit $\gcmnd_1 \epsilon^\Gamma_{F} : \gcmnd_1 \gcmnd_2 F (Y) \rightarrow \gcmnd F(Y)$ is given by sending $+_2$ to $+$ and the counit $\epsilon^{\Gamma}_{\gcmnd_2 F} : \gcmnd_1 \gcmnd_2 F (Y)\rightarrow \gcmnd F(Y)$  by sending $+_1$ to $y$.  
\end{enumerate}
\end{prop}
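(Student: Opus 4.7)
The plan is to unravel the definitions. The comonad $\gcmnd = \theta^* \circ (-)_+^*$ on $\fcatk[\Gamma]$ arises from the functor-category adjunction $\theta^* \dashv (-)_+^*$ of Proposition \ref{prop:adjoints_theta}, so its counit and comultiplication are given, respectively, by the counit of that adjunction and by the standard formula $L\unit R$ (for a comonad $LR$ coming from $L \dashv R$). Both the unit and the counit of $\theta^* \dashv (-)_+^*$ are obtained, via the Hom-set bijection, from the unit and counit of the underlying category-level adjunction $(-)_+ \dashv \theta$ already described in Lemma \ref{lem:theta_finne_adjunction}.

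First I would establish the explicit form of these functor-category natural transformations. A standard Yoneda computation shows that for $F \in \fcatk[\finne]$ and $X \in \finne$, the unit $\unit_F(X) \colon F(X) \to F(X_+)$ is $F$ applied to the inclusion $X \hookrightarrow X_+$; dually, for $G \in \fcatk[\Gamma]$ and $(Y,y) \in \Gamma$, the counit $\counit^\Gamma_G(Y) \colon G(Y_+) \to G(Y)$ is $G$ applied to the pointed map $Y_+ \to Y$ that extends the identity on $Y$ by sending $+$ to $y$. Part~(1) is precisely this description of the counit of the adjunction, and part~(2) is the formal unravelling of $(\theta^* (-)_+^*) \circ (\theta^* (-)_+^*)$ applied to $Y$: each $(-)_+^*$ adjoins a basepoint, and labelling the one produced by the outer (resp.\ inner) $\gcmnd$-factor $+_1$ (resp.\ $+_2$) matches the claimed formula.

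For part~(3), the comultiplication is $\diag_G = \theta^* (\unit_{(-)_+^* G})$. Evaluating at $Y$ and absorbing the outer $\theta^*$ gives $\unit_{(-)_+^* G}$ evaluated at $\theta Y$, which by the first step is $G$ applied to the pointed map $(\iota_{\theta Y})_+ \colon (\theta Y)_+ \to ((\theta Y)_+)_+$ produced by the functor $(-)_+ \colon \finne \to \Gamma$ from the inclusion $\iota_{\theta Y} \colon \theta Y \hookrightarrow (\theta Y)_+$. On underlying pointed sets this map is the identity on $Y$ and sends the source-basepoint (the $+$ of $Y_+$) to the target-basepoint (the $+_2$ of $(Y_{+_1})_{+_2}$), as claimed.

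Part~(4) reduces to bookkeeping. For $\gcmnd_1 \counit^\Gamma_F$: functoriality of $\gcmnd$ expresses it as whiskering, so its value at $Y$ is $\counit^\Gamma_F$ evaluated at $(\theta Y)_+ = Y_{+_1}$, whose basepoint is $+_1$; by part~(1) this collapses the newly-added basepoint $+_2$ onto $+_1$. For $\counit^\Gamma_{\gcmnd_2 F}$: part~(1) applied to the functor $\gcmnd_2 F$ gives $(\gcmnd_2 F)$ applied to $\alpha_Y \colon Y_+ \to Y$, and unfolding $\gcmnd_2 F(g) = F((\theta g)_+)$ turns this into $F$ applied to the pointed extension $(Y_{+_1})_{+_2} \to Y_{+_2}$ which fixes $+_2$ and sends $+_1$ to $y$. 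None of these steps presents a real obstacle; the only care needed is to keep the labels $+_1$ and $+_2$ consistent across parts~(3) and~(4).
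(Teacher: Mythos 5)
Your proof is correct and follows essentially the same route as the paper: both deduce the comonad structure by unravelling the functor-category adjunction $\theta^* \dashv (-)_+^*$ and the underlying category-level adjunction $(-)_+ \dashv \theta$ from Lemma \ref{lem:theta_finne_adjunction}. Your version simply spells out the standard ``precomposition reverses adjoints'' computation and the comultiplication formula $L\unit R$ in more detail than the paper, which treats these as immediate.
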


\begin{proof}
By Proposition \ref{prop:adjoints_theta}, the adjunction giving rise to $\gcmnd$ is induced by the adjunction  $(-)_+ : \finne \rightleftarrows \Gamma : \theta$, hence the structure morphisms of the comonad $\gcmnd$ can be deduced from Lemma \ref{lem:theta_finne_adjunction}.

Note that, by definition of $\gcmnd$, we have:
$$\gcmnd_1 \gcmnd_2 F (Y) =F \circ (-)_{+_2} \circ \theta \circ   (-)_{+_1} \circ   \theta(Y)=F ((Y_{+_1}) _{+_2}).$$
\end{proof}

The Barr-Beck theorem (see, for example, \cite[Theorem 4.3.8]{KS} for the dual monadic statement) then gives the 
following:

\begin{thm}
\label{thm:barr_beck}
The category $\fcatk[\finne]$ is equivalent to the category $\fcatk[\Gamma]_\gcmnd$ of 
$\gcmnd$-comodules in $\fcatk[\Gamma]$. 
\end{thm}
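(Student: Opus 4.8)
The plan is to apply the comonadic form of the Barr--Beck theorem to the adjunction $\theta^* : \fcatk[\finne] \rightleftarrows \fcatk[\Gamma] : (-)_+^*$ established in Proposition \ref{prop:adjoints_theta}, with $\theta^*$ playing the role of the left adjoint whose comonadicity is to be verified. The associated comonad on $\fcatk[\Gamma]$ is precisely $\gcmnd = \theta^* \circ (-)_+^*$ of Notation \ref{nota:gcmnd}, so the conclusion will be the asserted equivalence $\fcatk[\finne] \simeq \fcatk[\Gamma]_\gcmnd$ with the comparison functor sending $F \in \ob \fcatk[\finne]$ to $\theta^* F$ equipped with its canonical $\gcmnd$-comodule structure coming from the adjunction unit.

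The key steps are the following. First I would recall the precise hypotheses of the (dual) Barr--Beck theorem in the form cited, \cite[Theorem 4.3.8]{KS}: the left adjoint $\theta^*$ is comonadic provided (i) $\theta^*$ is conservative and (ii) $\fcatk[\finne]$ admits, and $\theta^*$ preserves, equalizers of $\theta^*$-split pairs (equivalently, of $(-)_+^*\theta^*$-split cofork diagrams). Condition (i) is immediate: Proposition \ref{prop:adjoints_theta}(1) states that $\theta^*$ is faithful and conservative, and conservativity is exactly what is needed. For condition (ii), the crucial point is that $\theta^*$ is exact: it is defined by precomposition with the functor $\theta$, and precomposition functors between categories of $\kring$-module-valued functors are exact because (co)limits in such functor categories are computed objectwise in $\kring$-modules. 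Since $\fcatk[\finne]$ is a Grothendieck abelian category it has all equalizers (indeed all limits), and an exact functor between abelian categories preserves all finite limits, in particular equalizers — a fortiori the $\theta^*$-split ones. Thus both hypotheses hold and Barr--Beck applies, giving that the comparison functor $\fcatk[\finne] \to \fcatk[\Gamma]_\gcmnd$ is an equivalence.

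The main obstacle — really the only point requiring care — is matching conventions: the version of Barr--Beck in \cite{KS} is stated monadically (for a right adjoint and its induced monad), so one must dualize correctly, tracking that here it is the \emph{left} adjoint $\theta^*$ that is to be shown comonadic, that the relevant comonad is $\gcmnd$ acting on the codomain category $\fcatk[\Gamma]$, and that the split cofork condition is placed on the appropriate pair. Once the dictionary is set up, there is nothing combinatorial left: conservativity is quoted, exactness is a one-line consequence of the objectwise computation of limits in functor categories, and the identification of the induced comonad with $\gcmnd$ is Notation \ref{nota:gcmnd} by definition. Hence the proof is short.

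\begin{proof}
Consider the adjunction $\theta^* : \fcatk[\finne] \rightleftarrows \fcatk[\Gamma] : (-)_+^*$ of Proposition \ref{prop:adjoints_theta}, with left adjoint $\theta^*$. The comonad on $\fcatk[\Gamma]$ associated to this adjunction is $\theta^* \circ (-)_+^* = \gcmnd$. By the comonadic form of the Barr--Beck theorem (the dual of \cite[Theorem 4.3.8]{KS}), it suffices to check that $\theta^*$ is conservative and that $\fcatk[\finne]$ has, and $\theta^*$ preserves, equalizers of $\theta^*$-split pairs of parallel morphisms. The first condition holds by Proposition \ref{prop:adjoints_theta}(1). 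For the second, $\fcatk[\finne]$ is a Grothendieck abelian category, so it has all equalizers, and $\theta^*$, being precomposition with $\theta$, is exact (limits in both functor categories are computed objectwise in $\kring$-modules), hence preserves all equalizers, in particular the $\theta^*$-split ones. Therefore the comparison functor $\fcatk[\finne] \to \fcatk[\Gamma]_\gcmnd$ is an equivalence of categories.
\end{proof}
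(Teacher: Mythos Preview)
Your proof is correct and takes essentially the same approach as the paper, which simply invokes the Barr--Beck theorem (citing the dual monadic statement in \cite[Theorem 4.3.8]{KS}) without spelling out the verification of the hypotheses. You have supplied the details the paper leaves implicit: conservativity from Proposition \ref{prop:adjoints_theta}(1) and preservation of equalizers from exactness of $\theta^*$.
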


\begin{rem}
\label{rem:barr_beck}
The Barr-Beck correspondence can be understood explicitly as sketched below. 

Suppose that $G \in \ob \fcatk[\finne]$. Let $\psi_{\theta^*G}:=\theta^*(\eta_G): \theta^*G \to \gcmnd \theta^*G$ where $\eta: Id \to (-)_+^* \circ \theta^*$ is the unit of the adjunction given in Proposition \ref{prop:adjoints_theta}. Explicitly, for $X \in \finne$, $(\eta_G)_X=G(i_X): G(X) \to G(X \amalg \{ + \})$ where $i_X: X \to X \amalg \{ + \}$ is the canonical inclusion.
Then $(\theta^*G, \psi_{\theta^*G})$ is a $\gcmnd$-comodule and the correspondence $G \mapsto (\theta^*G, \psi_{\theta^*G})$ defines a functor $\fcatk[\finne] \to \fcatk[\Gamma]_\gcmnd$.

Suppose that $F \in \ob \fcatk[\Gamma]_\gcmnd$; the associated functor on $\finne$ is given for $n \in \nat^*$ by 
$ 
\mathbf{n} 
\mapsto 
F ((\mathbf{n-1})_+)
$. 
This corresponds to choosing a basepoint  for each object of the skeleton of $\finne$. 

The $\gcmnd$-comodule structure allows the full functoriality to be recovered. For $X \in \ob \Gamma$ a  finite pointed set, one has the morphisms
\[
F(X) 
\rightarrow 
\gcmnd F(X) \cong F(X_+) 
\rightarrow 
F(X),
\]
where the first is the comodule structure map and the second is induced by sending the added basepoint $+$ to the basepoint of $X$. The middle term is independent of the basepoint of $X$ and these morphisms induce a {\em canonical} isomorphism between $F(X,x)$ and $F(X,y)$ for any basepoints $x, y \in X$. 
\end{rem}

%%%%%%%%%%%%%%%%%%%%%%%%%%%%%%%%%%%%%%%%%%%%%%%%%%%%%%%%%%%%%%%%%%%%%%%%%%%%%%%

%%%%%%%%%
\subsection{Comparison of the functor categories $ \fcatk[\finj\op] $ and $ \fcatk [\fb\op]$}
\label{sect:fbfi}

In this section we relate the categories  $ \fcatk[\finj\op] $ and $ \fcatk [\fb\op]$. 

We begin by exhibiting the right adjoint of the restriction functor $\downarrow : \fcatk[\finj\op] \rightarrow \fcatk[\fb\op]$ induced by $\fb \op \subset \finj\op$.
Consider the functor in $\fcatk[\fb\op \times \finj]$ given by $(\mathbf{b}, \mathbf{n}) \mapsto \kring \hom_{\finj} (\mathbf{b}, \mathbf{n})$. (Observe that $\kring \hom_{\finj} (\mathbf{b}, \mathbf{n})= 0$ for $b >n$.) Hence, for $G$ a $\fb\op$-module, 
\[
\mathbf{n} \mapsto 
\hom _{\fcatk[\fb\op]} (\kring \hom_{\finj} ( \cdot , \mathbf{n}) , G (\cdot))
\]
is an $\finj\op$-module and the right hand side identifies as 
\begin{eqnarray}
\label{eqn:uparrow}
\bigoplus _{b\leq n} 
\hom _{\sym_b\op} (\kring \hom_{\finj} ( \mathbf{b} , \mathbf{n}) , G (\mathbf{b}))
\subset 
\bigoplus _{b\leq n} 
\hom _{\kring} (\kring \hom_{\finj} ( \mathbf{b} , \mathbf{n}) , G (\mathbf{b}))
.
\end{eqnarray}

\begin{defn}
\label{defn:uparrow}
Let $\uparrow: \fcatk[\fb\op] \to \fcatk[\finj\op]$ be the functor given by 
\[
\uparrow G := \Big (\mathbf{n} \mapsto 
\hom_{\fcatk[\fb\op]}  (\kring \hom_{\finj} ( \cdot , \mathbf{n}) , G (\cdot))
\Big)
\]
for $G \in \ob \fcatk[\fb\op]$.
\end{defn}

\begin{prop}
\label{prop:right_adjoint_fbfi}
The functor $\uparrow: \fcatk[\fb\op] \to \fcatk[\finj\op]$ is the right adjoint of the restriction functor $\downarrow : \fcatk[\finj\op] \rightarrow \fcatk[\fb\op]$. In other words, for $F \in \ob  \fcatk[\finj\op]$ and  $G \in \ob  \fcatk[\fb\op]$
there is a natural isomorphism:
\[
\hom _{\fcatk[\fb\op]} (\downarrow F, G) 
\cong 
\hom_{ \fcatk[\finj\op]} (F, 
\uparrow G).
\]
\end{prop}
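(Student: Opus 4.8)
The plan is to prove this by a standard adjunction argument, verifying directly that the pair $(\downarrow, \uparrow)$ satisfies the hom-set isomorphism via Yoneda-type reductions. First I would reduce to representable $\finj\op$-modules. Every object $F \in \fcatk[\finj\op]$ is a colimit of representables $P^{\finj\op}_{\mathbf{n}} = \kring \hom_{\finj\op}(\mathbf{n}, -) = \kring \hom_{\finj}(-, \mathbf{n})$, and both $\downarrow$ and $\hom$ send colimits in the first variable to limits appropriately, so it suffices to establish the natural isomorphism when $F = P^{\finj\op}_{\mathbf{n}}$. On that side, Yoneda's lemma gives $\hom_{\fcatk[\finj\op]}(P^{\finj\op}_{\mathbf{n}}, \uparrow G) \cong (\uparrow G)(\mathbf{n})$, which by Definition \ref{defn:uparrow} is precisely $\hom_{\fcatk[\fb\op]}(\kring \hom_{\finj}(\cdot, \mathbf{n}), G(\cdot))$. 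On the other side, $\downarrow P^{\finj\op}_{\mathbf{n}}$ is by definition the restriction of $\kring \hom_{\finj}(-, \mathbf{n})$ to $\fb\op$, i.e., exactly the $\fb\op$-module $\mathbf{b} \mapsto \kring \hom_{\finj}(\mathbf{b}, \mathbf{n})$ appearing in Definition \ref{defn:uparrow}. Hence $\hom_{\fcatk[\fb\op]}(\downarrow P^{\finj\op}_{\mathbf{n}}, G) = \hom_{\fcatk[\fb\op]}(\kring \hom_{\finj}(\cdot, \mathbf{n}), G)$, and the two sides literally coincide.

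Next I would check naturality and that the isomorphism so obtained is indeed the canonical adjunction isomorphism, not merely a bijection for each $\mathbf{n}$. This amounts to verifying that the composite
$\hom_{\fcatk[\fb\op]}(\downarrow F, G) \to \hom_{\fcatk[\finj\op]}(F, \uparrow G)$
is natural in both $F$ and $G$; naturality in $G$ is immediate from the construction of $\uparrow G$ as an internal-hom-type object, and naturality in $F$ follows from the compatibility of the representable-case isomorphism with morphisms of representables, again by Yoneda. An alternative, perhaps cleaner, route is to exhibit the unit and counit directly: the unit $\eta_F : F \to \uparrow \downarrow F$ is, on $\mathbf{n}$, the map $F(\mathbf{n}) \to \hom_{\fcatk[\fb\op]}(\kring\hom_\finj(\cdot,\mathbf{n}), F(\cdot))$ adjoint to the action of $\finj$-morphisms, and the counit $\varepsilon_G : \downarrow \uparrow G \to G$ is, on $\mathbf{b}$, evaluation at the identity $\id_{\mathbf{b}} \in \hom_\finj(\mathbf{b},\mathbf{b})$; one then checks the two triangle identities, which are routine bookkeeping with the $\sym_b$-equivariance.

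The main obstacle — really the only subtlety — is the passage from representables to general $F$: one must be careful that $\downarrow$ commutes with the relevant colimits (it does, being a restriction functor, hence exact and cocontinuous) and that $\hom_{\fcatk[\fb\op]}(-, G)$ and $\hom_{\fcatk[\finj\op]}(-, \uparrow G)$ both convert these colimits into the same limits, so that the isomorphism established on a generating family of projectives extends. Concretely, writing $F$ as a coequalizer of sums of representables $\bigoplus P^{\finj\op}_{\mathbf{n}_j} \rightrightarrows \bigoplus P^{\finj\op}_{\mathbf{m}_i} \to F$ and applying both functors yields the same diagram of $\kring$-modules up to the natural isomorphism already constructed, and the five lemma (or simply left-exactness of $\hom$) finishes the argument. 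I expect this verification to be short, since all the ingredients — $\downarrow$ is exact and preserves all colimits as a restriction along $\fb\op \hookrightarrow \finj\op$, $P^{\finj\op}_{\mathbf{n}}$ form a set of projective generators, and the defining formula for $\uparrow$ is tailor-made to match — are already in place in the excerpt.
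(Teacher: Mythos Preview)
Your argument is correct, but it follows a different route from the paper's own proof. You apply Yoneda in the variable $F$: reduce to the representables $P^{\finj\op}_{\mathbf{n}} = \kring\hom_{\finj}(-,\mathbf{n})$, observe that on these the two sides coincide by definition of $\uparrow$, and then extend to arbitrary $F$ by expressing it as a colimit of representables and using that $\downarrow$ preserves colimits. The paper instead keeps $F$ general throughout and manipulates the right-hand side directly: it unpacks $\hom_{\fcatk[\finj\op]}(F,\uparrow G)$ using the decomposition of $\uparrow G(\mathbf{n})$ as a sum over $b\leq n$, applies the tensor--hom adjunction in $\kring$-modules to swap the two hom's, and then invokes Yoneda in the \emph{other} variable, namely on the covariant projective $\kring\hom_{\finj}(\mathbf{b},-)$, to collapse the inner hom to $\hom_\kring(F(\mathbf{b}),G(\mathbf{b}))$. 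Reassembling over $b$ with the $\sym_b$-equivariance yields $\hom_{\fcatk[\fb\op]}(\downarrow F,G)$. Your approach is conceptually cleaner and makes naturality in $F$ an output of the colimit extension; the paper's approach avoids the reduction step and the colimit bookkeeping at the cost of a slightly longer chain of adjunction manipulations. Both are standard and neither has a gap.
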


\begin{proof}
We analyse $\hom_{ \fcatk[\finj\op] } (F (-),\hom _{\fcatk[\fb\op]} (\kring \hom_{\finj} ( \cdot , -) , G (\cdot)))$, based upon the identification in equation (\ref{eqn:uparrow}), which gives:
\[
\hom_{ \fcatk[\finj\op] } (F (-),\hom _{\fcatk[\fb\op]} (\kring \hom_{\finj} ( \cdot , -) , G (\cdot)))
\]
\[
\cong
\bigoplus _{b\leq n} 
\hom_{ \fcatk[\finj\op] } (F (-),
\hom _{\sym_b\op} (\kring \hom_{\finj} ( \mathbf{b} , -) , G (\mathbf{b})) 
).
\]

Fixing $b\in \nat$ and neglecting the $\sym_b$-action, we consider:
\[
\hom_{ \fcatk[\finj\op] } (F (-),\hom _{\kring} (\kring \hom_{\finj} (\mathbf{b}, - ) , G (\mathbf{b})))
.
\]

Fixing $n \in \nat$ and neglecting the functoriality with respect to $\finj$, one has the standard {\em natural} adjunction isomorphisms for $\kring$-modules:
\begin{eqnarray*}
 \hom_{\kring} (F (\mathbf{n}),\hom _{\kring} (\kring \hom_{\finj} (\mathbf{b}, \mathbf{n} ) , G (\mathbf{b})))
&\cong & 
\hom_{\kring} (F (\mathbf{n}) \otimes \kring \hom_{\finj} (\mathbf{b}, \mathbf{n} ) , G (\mathbf{b}))
\\
& \cong &
\hom_{\kring} ( \kring \hom_{\finj} (\mathbf{b}, \mathbf{n}) ,\hom_{\kring} (F (\mathbf{n}),  G (\mathbf{b}))).
\end{eqnarray*}

Then, taking into account the functoriality with respect to $\finj$ gives the first isomorphism below:
\begin{eqnarray*}
\hom_{\fcatk[\finj\op]} (F (-),\hom _{\kring} (\kring \hom_{\finj} (\mathbf{b}, - ) , G (\mathbf{b})))
&\cong& 
\hom_{\fcatk[\finj]} (\kring \hom_{\finj} (\mathbf{b}, - ), \hom _{\kring} ( F(-) , G (\mathbf{b})))
\\
&\cong &
\hom _{\kring} ( F(\mathbf{b}) , G (\mathbf{b})),
\end{eqnarray*}
the second isomorphism being given by Yoneda's lemma. 

Finally, taking into account the naturality with respect to $\fb\op$, one arrives at the required isomorphism.
\end{proof}

One can identify the values taken by the functor $\uparrow: \fcatk[\fb\op] \to \fcatk[\finj\op]$ by the following:

\begin{prop}
\label{prop:right_adjoint_fbfi-explicit}
For  $n\in \nat$ and $G$ a $\fb\op$-module,
\begin{enumerate}
\item
 there is a natural isomorphism:
 \[
\uparrow G(\mathbf{n})
 \cong 
 \bigoplus_{\mathbf{n}' \subseteq \mathbf{n}} G (\mathbf{n}');
 \]
\item 
\label{item:finj_op_structure_uparrow}
with respect to these isomorphisms, the morphism $\uparrow G (\mathbf{n}) \rightarrow \uparrow G (\mathbf{l})$ induced by  $f : \mathbf{l} \to \mathbf{n}$ in $\finj $ has restriction to the factor  $G(\mathbf{n}') \subset \uparrow G(\mathbf{n})$ indexed by  $\mathbf{n}' \subseteq \mathbf{n}$ given by 
 \begin{enumerate}
 \item 
 zero if $\mathbf{n'} \not \subseteq f(\mathbf{l})$; 
 \item 
if $\mathbf{n}' \subseteq f (\mathbf{l})$,  the composite
 \[
 \xymatrix{
 G(\mathbf{n}')
 \ar[rr]^{G(f | _{f^{-1} (\mathbf{n}')})}
 &&
 G(f^{-1} (\mathbf{n}'))
 \ar@{^(->}[r] 
 &
\uparrow G (\mathbf{l}),  
 }
 \]
 where $f|_{f^{-1} (\mathbf{n}')}: f^{-1} (\mathbf{n}') \stackrel{\cong}{\rightarrow} \mathbf{n}'$. 
 \end{enumerate}
\end{enumerate}
 \end{prop}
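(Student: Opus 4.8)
The plan is to prove both statements by exploiting the description of $\uparrow$ as the right adjoint of the restriction functor $\downarrow$, together with the behaviour of $\downarrow$ on the projective generators $P^{\finj\op}_{\mathbf{n}} = \kring \hom_{\finj}(-, \mathbf{n})$ of $\fcatk[\finj\op]$. First I would establish part (1) by computing $\uparrow G(\mathbf{n})$ via Yoneda: by definition $\uparrow G(\mathbf{n}) = \hom_{\fcatk[\fb\op]}(\kring \hom_{\finj}(\cdot, \mathbf{n}), G(\cdot))$, so I need to understand the $\fb\op$-module $\downarrow P^{\finj\op}_{\mathbf{n}} = \kring \hom_{\finj}(-, \mathbf{n})$. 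An injection $\mathbf{b} \hookrightarrow \mathbf{n}$ is the same data as its image, a subset $\mathbf{n}' \subseteq \mathbf{n}$ with $|\mathbf{n}'| = b$, together with a bijection $\mathbf{b} \xrightarrow{\cong} \mathbf{n}'$. Hence as a $\fb\op$-module, $\kring \hom_{\finj}(-, \mathbf{n})$ decomposes as $\bigoplus_{\mathbf{n}' \subseteq \mathbf{n}} \kring \hom_{\fb}(-, \mathbf{n}')$, a direct sum of representable $\fb\op$-modules indexed by subsets of $\mathbf{n}$. Applying $\hom_{\fcatk[\fb\op]}(-, G)$ to this decomposition and using Yoneda's lemma in $\fcatk[\fb\op]$ (here $\hom_{\fcatk[\fb\op]}(\kring\hom_{\fb}(-,\mathbf{n}'), G) \cong G(\mathbf{n}')$) gives the natural isomorphism $\uparrow G(\mathbf{n}) \cong \bigoplus_{\mathbf{n}' \subseteq \mathbf{n}} G(\mathbf{n}')$ of part (1). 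I should be careful to check this identification is natural in $G$, which is immediate from naturality of Yoneda.

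Next I would prove part (2) by tracking how a morphism $f : \mathbf{l} \to \mathbf{n}$ in $\finj$ acts. The $\finj\op$-module structure on $\uparrow G$ is, by definition of $\uparrow$ as a functor on $\fcatk[\fb\op]$-valued homs, induced by precomposition: $f$ induces $\kring\hom_{\finj}(\cdot, f) : \kring\hom_{\finj}(\cdot, \mathbf{l}) \to \kring\hom_{\finj}(\cdot, \mathbf{n})$, and $\uparrow G(f)$ is the map $\hom_{\fcatk[\fb\op]}(\kring\hom_{\finj}(\cdot,\mathbf{n}), G) \to \hom_{\fcatk[\fb\op]}(\kring\hom_{\finj}(\cdot,\mathbf{l}), G)$ given by precomposition with this. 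So I must compute, for a subset $\mathbf{n}' \subseteq \mathbf{n}$ and the corresponding summand inclusion $\kring\hom_{\fb}(\cdot, \mathbf{n}') \hookrightarrow \kring\hom_{\finj}(\cdot, \mathbf{n})$ (classified by the inclusion injection $\iota_{\mathbf{n}'} : \mathbf{n}' \hookrightarrow \mathbf{n}$), how the composite with $\kring\hom_{\finj}(\cdot, f)$ decomposes along the summands of $\kring\hom_{\finj}(\cdot, \mathbf{l})$. Chasing an injection $g : \mathbf{b} \hookrightarrow \mathbf{n}'$ (equivalently a bijection onto its image $\mathbf{n}' $, precomposed appropriately), its image under $\kring\hom_{\finj}(\cdot, f)$ is the injection $f \circ \iota_{\mathbf{n}'} \circ g : \mathbf{b} \hookrightarrow \mathbf{n}$, whose image in $\mathbf{n}$ is $f(\mathbf{n}')$; but this landing in $\mathbf{n}$ sits in the summand indexed by the subset $f(\mathbf{n}')$, which lies in the decomposition of $\kring\hom_{\finj}(\cdot, \mathbf{n})$ rather than $\kring\hom_{\finj}(\cdot, \mathbf{l})$ — so I need the subtler point that the \emph{source} decomposition of $\uparrow G(\mathbf{l})$ is indexed by subsets $\mathbf{l}'\subseteq \mathbf{l}$, and the relevant one is $\mathbf{l}' = f^{-1}(\mathbf{n}')$. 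Since $f$ is injective, $f$ restricts to a bijection $f^{-1}(\mathbf{n}') \xrightarrow{\cong} \mathbf{n}' \cap f(\mathbf{l})$, which equals $\mathbf{n}'$ exactly when $\mathbf{n}' \subseteq f(\mathbf{l})$, and otherwise $f^{-1}(\mathbf{n}')$ is a proper preimage; when $\mathbf{n}' \not\subseteq f(\mathbf{l})$ no injection with image $\mathbf{n}'$ factors through $f$, giving the zero case. Unwinding the Yoneda identifications then yields that $\uparrow G(f)$ restricted to the $G(\mathbf{n}')$-summand is $G$ applied to the bijection $f|_{f^{-1}(\mathbf{n}')}$ followed by the inclusion of the $G(f^{-1}(\mathbf{n}'))$-summand, precisely the formula stated.

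The main obstacle I anticipate is purely bookkeeping: keeping straight the variance and the two separate direct-sum decompositions (one over subsets of $\mathbf{n}$ indexing $\uparrow G(\mathbf{n})$, one over subsets of $\mathbf{l}$ indexing $\uparrow G(\mathbf{l})$) while chasing an element through the chain of Yoneda and adjunction isomorphisms, and correctly identifying which summand of the source a given basis injection of $\kring\hom_{\finj}(\cdot,\mathbf{l})$ maps into after applying $\kring\hom_{\finj}(\cdot,f)$. To make this manageable I would phrase everything at the level of the representable $\fb\op$-modules and their summand inclusions (classified by injections), rather than chasing individual elements of $G$, invoking naturality to transport the computation. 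A clean way to organize it is to first record the decomposition $\kring\hom_{\finj}(\cdot, \mathbf{n}) \cong \bigoplus_{\mathbf{n}'\subseteq\mathbf{n}}\kring\hom_{\fb}(\cdot,\mathbf{n}')$ as an isomorphism of $\fb\op$-modules and describe the map $\kring\hom_{\finj}(\cdot, f)$ in terms of these summands, and only then apply the contravariant functor $\hom_{\fcatk[\fb\op]}(-, G)$; both parts then drop out by Yoneda.
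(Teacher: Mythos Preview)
Your proposal is correct and follows essentially the same approach as the paper: decompose $\kring\hom_{\finj}(-,\mathbf{n})$ as a direct sum of representable $\fb\op$-modules $\bigoplus_{\mathbf{n}'\subseteq\mathbf{n}}\kring\hom_{\fb}(-,\mathbf{n}')$ (the paper phrases this as saying $\hom_{\finj}(\mathbf{b},\mathbf{n})$ is a free right $\sym_b$-set on the subsets of $\mathbf{n}$ of cardinality $b$), apply Yoneda for part~(1), and for part~(2) describe the map $\kring\hom_{\finj}(-,f)$ on these summands before dualizing. The organizational fix you propose in your final paragraph---first compute $\kring\hom_{\finj}(\cdot,f)$ on summands (it sends $\kring\hom_{\fb}(\cdot,\mathbf{l}')$ to $\kring\hom_{\fb}(\cdot,f(\mathbf{l}'))$ via postcomposition with $f|_{\mathbf{l}'}$), then apply $\hom_{\fcatk[\fb\op]}(-,G)$---is exactly how the paper proceeds and cleanly resolves the directional confusion in your middle paragraph.
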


\begin{proof}
Consider the first statement. We have 
$$\uparrow G(\mathbf{n}) =\hom_{\fcatk[\fb\op]}  (\kring \hom_{\finj} ( - , \mathbf{n}) , G (-))
\Big)
\cong 
\bigoplus _{b\leq n} 
\hom _{\sym_b\op} (\kring \hom_{\finj} ( \mathbf{b} , \mathbf{n}) , G (\mathbf{b})).
$$

For $b, n\in \nat$, $\hom_{\finj} (\mathbf{b}, \mathbf{n})$ is a free $\sym_b\op$-set on the set of subsets $\mathbf{n}'\subseteq \mathbf{n}$ of cardinal $b$. More precisely, 
\[
\hom_{\finj} (\mathbf{b}, \mathbf{n}) 
\cong 
\coprod _{\substack{\mathbf{n}' \subseteq \mathbf{n}\\
|\mathbf{n}'|=b}}
\hom_{\finj} (\mathbf{b}, \mathbf{n}') 
\]
where $\hom_{\finj} (\mathbf{b}, \mathbf{n}') \cong \sym_b$ as a right $\sym_b\op$-set. 
Hence: 
$$\uparrow G(\mathbf{n})
\cong 
 \bigoplus _{b\leq n}  \bigoplus_{\substack{\mathbf{n}' \subseteq \mathbf{n}\\
|\mathbf{n}'|=b}} G(\mathbf{b})
\cong  
\bigoplus _{b\leq n}  \bigoplus_{\substack{\mathbf{n}' \subseteq \mathbf{n}\\
|\mathbf{n}'|=b}} G(\mathbf{n}')
\cong 
\bigoplus_{\mathbf{n}' \subseteq \mathbf{n}} G(\mathbf{n}').$$

The second statement follows by considering the inclusion $\hom_{\finj} (-, \mathbf{l}) \hookrightarrow \hom_{\finj} (-, \mathbf{n})$ induced by $f$. Evaluated on $\mathbf{b}$, this corresponds to the inclusion 
\[
\coprod _{\substack{\mathbf{l}' \subseteq \mathbf{l}\\
|\mathbf{l}'|=b}}
\hom_{\finj} (\mathbf{b}, \mathbf{l}') 
\hookrightarrow 
\coprod _{\substack{\mathbf{n}' \subseteq f(\mathbf{l})\\
|\mathbf{n}'|=b}}
\hom_{\finj} (\mathbf{b}, \mathbf{n}')
\subseteq 
\coprod_{\substack{\mathbf{n}' \subseteq \mathbf{n}\\
|\mathbf{n}'|=b}}
\hom_{\finj} (\mathbf{b}, \mathbf{n}'),
\]
where the first map sends $\hom_{\finj} (\mathbf{b}, \mathbf{l}')$ to $\hom_{\finj} (\mathbf{b}, f(\mathbf{l}'))$
by postcomposition with  $f|_{\mathbf{l}'}$.

The result follows.
\end{proof}

\begin{rem}
For simplicity of notation, in the rest of the paper, for $F \in \fcatk[\finj\op]$, the restriction of $F$ to $\fb\op$ will be denoted by $F$ instead of $\downarrow F$.
\end{rem}
\begin{nota}
\label{nota:fbcmnd}
Let $\fbcmnd : \fcatk[\fb\op]\rightarrow \fcatk[\fb\op]$ be the comonad 
associated to the adjunction of Proposition \ref{prop:right_adjoint_fbfi} (i.e., $\fbcmnd=\downarrow \circ \uparrow)$, 
with structure morphisms $\Delta : \fbcmnd \rightarrow \fbcmnd \fbcmnd $ and counit $\epsilon : \fbcmnd \rightarrow \mathrm{Id}$. 
\end{nota}

\begin{prop}
\label{prop:identify_fbcmnd}
Let  $G \in \ob \fcatk[\fb\op]$ and $b \in \nat$.
\begin{enumerate}
\item
\label{item:p1}
There is a natural isomorphism:
\begin{eqnarray}
\label{eqn:fbcmnd}
\fbcmnd G(\mathbf{b}) \cong 
\bigoplus_{\mathbf{b}' \subseteq \mathbf{b} } G (\mathbf{b}').
\end{eqnarray}
\item 
\label{prop:identify_fbcmnd2}
The counit $\epsilon^\Sigma_G : \fbcmnd G (\mathbf{b}) \rightarrow G (\mathbf{b}) $ identifies as the projection 
 $ 
 \bigoplus_{\mathbf{b}' \subseteq \mathbf{b}} G(\mathbf{b}')
 \twoheadrightarrow 
 G(\mathbf{b}) 
 $
onto the summand indexed by $\mathbf{b}$. 
\item 
\label{prop:identify_fbcmnd3}
There is a natural  isomorphism
$
\fbcmnd \fbcmnd G (\mathbf{b}) 
\cong 
\bigoplus _{\mathbf{b}'' \subseteq \mathbf{b}' \subseteq \mathbf{b}}
G(\mathbf{b}'')$.
\item 
\label{prop:identify_fbcmnd4}
$\Delta : \fbcmnd G(\mathbf{b}) \rightarrow \fbcmnd \fbcmnd G(\mathbf{b})$ identifies as the morphism 
\[
 \bigoplus_{\mathbf{l}' \subseteq \mathbf{b}} G(\mathbf{l}')
\rightarrow
\bigoplus _{\mathbf{b}'' \subseteq \mathbf{b}' \subseteq \mathbf{b}}
G(\mathbf{b}'')
\]
with component 
$
G(\mathbf{l}')
\rightarrow
G(\mathbf{b}''),
$
the identity if $\mathbf{l}' = \mathbf{b}''$ and zero otherwise.
\end{enumerate}
\end{prop}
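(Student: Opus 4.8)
The plan is to derive all four statements from Proposition~\ref{prop:right_adjoint_fbfi-explicit} together with the standard formulas for the comonad attached to the adjunction $\downarrow \dashv \uparrow$ of Proposition~\ref{prop:right_adjoint_fbfi}. Write $\unit : \id \to \uparrow\downarrow$ for the unit of this adjunction; then the comonad $\fbcmnd = \downarrow \circ \uparrow$ has counit $\epsilon^\Sigma$ equal to the adjunction counit and comultiplication $\Delta_G = \downarrow(\unit_{\uparrow G})$. Since $\downarrow$ is merely restriction along $\fb\op \subset \finj\op$, the $\fb\op$-module $\fbcmnd G$ agrees objectwise with $\uparrow G$, so statement (1) is exactly Proposition~\ref{prop:right_adjoint_fbfi-explicit}(1); applying (1) first to $G$ and then to the $\fb\op$-module $\fbcmnd G$ yields $\fbcmnd\fbcmnd G(\mathbf{b}) \cong \bigoplus_{\mathbf{b}' \subseteq \mathbf{b}} \fbcmnd G(\mathbf{b}') \cong \bigoplus_{\mathbf{b}'' \subseteq \mathbf{b}' \subseteq \mathbf{b}} G(\mathbf{b}'')$, which is statement (3).

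For statement (2), I would make the counit $\epsilon^\Sigma_G : \downarrow\uparrow G \to G$ explicit. Transporting $\id_{\uparrow G}$ through the adjunction isomorphism of Proposition~\ref{prop:right_adjoint_fbfi} shows that, evaluated on $\mathbf{b}$, it sends a natural transformation $\phi \in \uparrow G(\mathbf{b}) = \hom_{\fcatk[\fb\op]}(\kring\hom_{\finj}(\cdot, \mathbf{b}), G(\cdot))$ to $\phi_{\mathbf{b}}(\id_{\mathbf{b}}) \in G(\mathbf{b})$; that is, it is evaluation at the identity injection. In the decomposition of Proposition~\ref{prop:right_adjoint_fbfi-explicit}(1) the summand of $\uparrow G(\mathbf{b})$ indexed by $\mathbf{b}' \subseteq \mathbf{b}$ consists of (the values on) injections with image $\mathbf{b}'$, and for the summand $\mathbf{b}' = \mathbf{b}$ the order-preserving identification used to transport values to $G(\mathbf{b})$ is the identity. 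Since $\id_{\mathbf{b}}$ has image $\mathbf{b}$, evaluation at it kills every summand with $\mathbf{b}' \subsetneq \mathbf{b}$ and is the identity on the summand $G(\mathbf{b})$, which is statement (2).

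For statement (4), I would first identify the adjunction unit $\unit_F : F \to \uparrow\downarrow F$ for $F \in \ob\fcatk[\finj\op]$: transporting $\id_{\downarrow F}$ through the adjunction isomorphism shows that $\unit_F(x)$ is the natural transformation sending an injection $f : \mathbf{b} \hookrightarrow \mathbf{n}$ to $F(f)(x) \in F(\mathbf{b})$, and under Proposition~\ref{prop:right_adjoint_fbfi-explicit}(1) its component in the summand $F(\mathbf{n}') \subseteq \uparrow\downarrow F(\mathbf{n})$ is the $\finj\op$-structure map $F(\mathbf{n}' \hookrightarrow \mathbf{n}) : F(\mathbf{n}) \to F(\mathbf{n}')$. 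Applying this with $F = \uparrow G$ and using $\Delta_G = \downarrow(\unit_{\uparrow G})$, the map $\Delta_G : \uparrow G(\mathbf{b}) \to \bigoplus_{\mathbf{b}' \subseteq \mathbf{b}} \uparrow G(\mathbf{b}')$ has $\mathbf{b}'$-component the $\finj$-structure map $\uparrow G(\mathbf{b}' \hookrightarrow \mathbf{b})$. Plugging the inclusion $\mathbf{b}' \hookrightarrow \mathbf{b}$ into the explicit formula of Proposition~\ref{prop:right_adjoint_fbfi-explicit}(2) — in which $f^{-1}(-)$ is trivial and $f|_{f^{-1}(-)}$ is an identity because $f$ is an inclusion — the restriction of $\uparrow G(\mathbf{b}' \hookrightarrow \mathbf{b})$ to the summand $G(\mathbf{l}') \subseteq \uparrow G(\mathbf{b})$ (for $\mathbf{l}' \subseteq \mathbf{b}$) is zero unless $\mathbf{l}' \subseteq \mathbf{b}'$, in which case it is the identity onto the summand of $\uparrow G(\mathbf{b}') = \bigoplus_{\mathbf{b}'' \subseteq \mathbf{b}'} G(\mathbf{b}'')$ indexed by $\mathbf{b}'' = \mathbf{l}'$. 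Reading off the $G(\mathbf{l}') \to G(\mathbf{b}'')$ components after summing over $\mathbf{b}'$ gives precisely: the identity if $\mathbf{l}' = \mathbf{b}''$ and zero otherwise, which is statement (4).

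I expect the only genuine work to lie in the bookkeeping of the middle two paragraphs: pinning down precisely how the adjunction unit and counit look under the isomorphisms of Proposition~\ref{prop:right_adjoint_fbfi-explicit}, and keeping straight which summand of $\uparrow G(\mathbf{b})$ corresponds to which subset of $\mathbf{b}$, so that the order-preserving identifications introduced in the proof of that proposition visibly drop out for inclusions and identities. As an independent check, one can verify directly from the descriptions in (2) and (4) that the comonad counit identities $\epsilon^\Sigma_{\fbcmnd G} \circ \Delta_G = \id = \fbcmnd(\epsilon^\Sigma_G) \circ \Delta_G$ hold.
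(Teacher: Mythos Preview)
Your proof is correct and follows essentially the same approach as the paper's own proof, which is much terser: the paper simply cites Proposition~\ref{prop:right_adjoint_fbfi-explicit} for statements (1) and (2), iterates (1) to get (3), and for (4) says that $\Delta$ is induced by the $\finj\op$-structure of $\uparrow G$ described in Proposition~\ref{prop:right_adjoint_fbfi-explicit}~(\ref{item:finj_op_structure_uparrow}). Your explicit identification of the counit as evaluation at $\id_{\mathbf{b}}$ and of the unit via the $\finj\op$-structure maps is exactly the unwinding that the paper leaves to the reader.
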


\begin{proof}
The first two statements follow from Proposition \ref{prop:right_adjoint_fbfi-explicit}.

Since $\fbcmnd G \in \ob \fcatk[\fb\op]$, we can apply (\ref{eqn:fbcmnd}) to this functor to obtain 
$$\fbcmnd (\fbcmnd G) (\mathbf{b}) \cong \bigoplus _{\mathbf{b}' \subseteq \mathbf{b}} (\fbcmnd G)(\mathbf{b}').$$ 
Applying  (\ref{eqn:fbcmnd}) to each term $\fbcmnd G (\mathbf{b}') $ gives the expression for $\fbcmnd \fbcmnd G (\mathbf{b}) $.

The natural morphism $\Delta : \fbcmnd G \rightarrow \fbcmnd \fbcmnd G$ is induced by the $\finj\op$-module structure of $\uparrow G$, which is given by Proposition \ref{prop:right_adjoint_fbfi-explicit} (\ref{item:finj_op_structure_uparrow}). This leads to the stated identification.
\end{proof}

\begin{rem}
\label{rem:reindexation2}
The decomposition of $ \fbcmnd_0 \fbcmnd_1 G$ (we label the functors for clarity) given in Proposition \ref{prop:identify_fbcmnd} (\ref{prop:identify_fbcmnd3}) can be reindexed as follows:
$$  \fbcmnd_0 (\fbcmnd_1 G) (\mathbf{b}) 
\cong 
\bigoplus_{\mathbf{b}= \mathbf{b}^{(1)}_\Sigma \amalg \mathbf{b}^{\Sigma}_0 }
\fbcmnd_1 G (\mathbf{b}^{(1)}_\Sigma)
\cong
\bigoplus_{\mathbf{b}= (\mathbf{b}^{(2)} _\Sigma\amalg \mathbf{b}^{\Sigma}_1) \amalg \mathbf{b}^{\Sigma}_0 }
G (\mathbf{b}^{(2)}_\Sigma).  
$$
The sum is indexed by ordered decompositions of $\mathbf{b}$ into three subsets (possibly empty). With respect to the decomposition given in Proposition \ref{prop:identify_fbcmnd} (\ref{prop:identify_fbcmnd3}), we have  $\mathbf{b}^{(2)}_\Sigma := \mathbf{b}''$, $\mathbf{b}^{\Sigma}_1:=\mathbf{b}'\backslash \mathbf{b}''$ and $\mathbf{b}^{\Sigma}_0:=\mathbf{b}\backslash \mathbf{b}'$. The indices in $\mathbf{b}^{\Sigma}_0, \mathbf{b}^{\Sigma}_1$ record from which application of $\fbcmnd$ the subset $\mathbf{b}^{\Sigma}_i$ arises. More generally, we have:
\begin{equation}
\label{iteration-Sigma}
 \fbcmnd_0 \fbcmnd_1 \ldots \fbcmnd_\ell G (\mathbf{b}) 
\cong
\bigoplus_{\mathbf{b}=(\ldots ((\mathbf{b}^{(\ell+1)}_\Sigma \amalg \mathbf{b}^{\Sigma}_\ell) \amalg \mathbf{b}^{\Sigma}_{\ell-1})) \ldots \amalg \mathbf{b}^{\Sigma}_1) \amalg \mathbf{b}^{\Sigma}_0 }
G (\mathbf{b}^{(\ell+1)}_\Sigma).  
\end{equation}
\end{rem}

The general theory of comonads associated to an adjunction implies that, if $F \in \ob \fcatk[\finj\op]$, the underlying  object $F \in \ob \fcatk[\fb\op]$ has a canonical $\fbcmnd$-comodule structure.

\begin{prop}
\label{prop:fi_comod}
For $F \in \ob \fcatk[\finj\op]$, the  $\fbcmnd$-comodule structure  of the underlying  object $\downarrow F \in \ob \fcatk[\fb\op]$ (denoted below simply by $F$) identifies   with respect to the isomorphism (\ref{eqn:fbcmnd}) of Proposition \ref{prop:identify_fbcmnd} as follows. For $b \in \nat$, the structure morphism $\psi_F : F  \rightarrow \fbcmnd F $ is given by 
\[
\psi _F : F (\mathbf{b}) 
\rightarrow 
\bigoplus 
_{\mathbf{b}' \subseteq \mathbf{b}} 
F (\mathbf{b}')
\]
with component $F (\mathbf{b}) \rightarrow F(\mathbf{b}') $ given by the $\finj\op$ structure of $F$ for $\mathbf{b}' \subset \mathbf{b}$. 
\end{prop}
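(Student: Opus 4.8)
The plan is to unwind the general formula for the comodule structure associated to an adjunction and then identify each piece concretely using the explicit descriptions established earlier in this subsection. Recall that for an adjunction $\downarrow\, \dashv\, \uparrow$ with associated comonad $\fbcmnd = \downarrow \circ \uparrow$, the cofree $\fbcmnd$-comodule on an object $G \in \fcatk[\fb\op]$ is $\uparrow G$ (viewed in $\fcatk[\fb\op]$ via $\downarrow$), and for $F \in \fcatk[\finj\op]$ the canonical $\fbcmnd$-comodule structure on $\downarrow F$ is obtained by applying $\downarrow$ to the adjunction unit $F \to \uparrow \downarrow F$. So the first step is to make this unit explicit.

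Next I would identify the unit $\unit_F : F \to \uparrow \downarrow F$ of the adjunction of Proposition \ref{prop:right_adjoint_fbfi}. By the proof of that Proposition, the adjunction isomorphism $\hom_{\fcatk[\fb\op]}(\downarrow F, G) \cong \hom_{\fcatk[\finj\op]}(F, \uparrow G)$ is obtained by a chain of standard hom-tensor adjunctions together with Yoneda's lemma; tracing the identity of $\downarrow F$ through this chain, the component of $\unit_F$ at $\mathbf{b}$ is the map $F(\mathbf{b}) \to \hom_{\fcatk[\fb\op]}(\kring \hom_{\finj}(\cdot, \mathbf{b}), F(\cdot))$ that sends $x \in F(\mathbf{b})$ to the natural transformation whose value on $\mathbf{b}'$ and on a morphism $g : \mathbf{b}' \to \mathbf{b}$ in $\finj$ is $F(g)(x) \in F(\mathbf{b}')$. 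Composing with the identification of Proposition \ref{prop:right_adjoint_fbfi-explicit}(1), namely $\uparrow \downarrow F(\mathbf{b}) \cong \bigoplus_{\mathbf{b}' \subseteq \mathbf{b}} F(\mathbf{b}')$ where the summand $\mathbf{b}'$ records the subset $\hom_{\finj}(\mathbf{b}', \mathbf{b}') \subseteq \hom_{\finj}(\mathbf{b}', \mathbf{b})$ picking out the inclusion $\mathbf{b}' \hookrightarrow \mathbf{b}$, the component landing in the factor indexed by $\mathbf{b}' \subseteq \mathbf{b}$ is precisely $F(\iota)$ for $\iota : \mathbf{b}' \hookrightarrow \mathbf{b}$ the inclusion, i.e.\ the $\finj\op$-structure map of $F$ associated to $\mathbf{b}' \subseteq \mathbf{b}$. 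Since $\downarrow$ is just restriction, $\psi_F = \downarrow \unit_F$ has exactly this description after identifying $\fbcmnd F(\mathbf{b}) = \downarrow \uparrow \downarrow F(\mathbf{b}) \cong \bigoplus_{\mathbf{b}' \subseteq \mathbf{b}} F(\mathbf{b}')$ via (\ref{eqn:fbcmnd}), which is the asserted statement.

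The main obstacle is bookkeeping: one must check that the chosen identification of $\uparrow G(\mathbf{n})$ with $\bigoplus_{\mathbf{n}' \subseteq \mathbf{n}} G(\mathbf{n}')$ in Proposition \ref{prop:right_adjoint_fbfi-explicit} is compatible with the way the generator of $\hom_{\finj}(\mathbf{b}, \mathbf{n}')$ corresponding to a given inclusion was normalized, so that the summand indexed by $\mathbf{b}'$ really does correspond to the \emph{inclusion} $\mathbf{b}' \hookrightarrow \mathbf{b}$ and not to some other injection; this amounts to re-reading the free $\sym_b\op$-set decomposition $\hom_{\finj}(\mathbf{b},\mathbf{n}) \cong \coprod_{|\mathbf{n}'|=b} \hom_{\finj}(\mathbf{b},\mathbf{n}')$ used there. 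A clean way to avoid an error-prone direct computation is instead to verify the claimed $\psi_F$ is a $\fbcmnd$-comodule structure map (coassociativity against $\Delta$ of Proposition \ref{prop:identify_fbcmnd}(4) and counitality against $\epsilon^\Sigma$ of Proposition \ref{prop:identify_fbcmnd}(2), both of which hold immediately from functoriality of $F$ on $\finj\op$), and then invoke uniqueness: the comparison functor $\fcatk[\finj\op] \to \fcatk[\fb\op]_{\fbcmnd}$ sends $F$ to $\downarrow F$ equipped with $\downarrow \unit_F$, and the above computation shows $\downarrow \unit_F$ agrees with the stated formula on the summand indexed by $\mathbf{b} \subseteq \mathbf{b}$ (where it is forced to be the identity by counitality), so one only needs the remaining summands, which follow from naturality of the unit with respect to the inclusions $\mathbf{b}' \hookrightarrow \mathbf{b}$ in $\finj$.
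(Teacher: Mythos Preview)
Your proposal is correct. The paper itself gives no proof for this proposition, treating it as an immediate consequence of the explicit descriptions of $\uparrow$ and $\fbcmnd$ established in Propositions \ref{prop:right_adjoint_fbfi-explicit} and \ref{prop:identify_fbcmnd}; your argument is precisely the natural unwinding of those descriptions, tracing the adjunction unit through Yoneda. The alternative you sketch at the end (verify the comodule axioms directly and appeal to uniqueness) is also valid but is more work than simply reading off the unit, since the identification of the summand indexed by $\mathbf{b}'$ with the inclusion $\mathbf{b}' \hookrightarrow \mathbf{b}$ is already built into the isomorphism of Proposition \ref{prop:right_adjoint_fbfi-explicit}(1).
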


The Barr-Beck theorem implies:

\begin{thm}
\label{thm:barr_beck_finjop}
The category  $\fcatk[\finj\op]$ is equivalent to the category $\fcatk[\fb\op]_\fbcmnd$ of $\fbcmnd$-comodules in $\fcatk[\fb\op]$. 
\end{thm}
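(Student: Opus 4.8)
The plan is to invoke the Barr--Beck theorem directly, having set up all the necessary hypotheses in the preceding results. Recall that the (dual, comonadic) Barr--Beck theorem states that if $F : \calc \rightleftarrows \cald : G$ is an adjunction with $F \dashv G$, inducing a comonad $\comonad = F G$ on $\cald$, then the canonical comparison functor $\calc \to \cald_\comonad$ to the category of $\comonad$-comodules is an equivalence provided that: $\calc$ has equalizers of $G$-split pairs (which it does, being a Grothendieck abelian category), and $G$ is conservative and preserves equalizers of $G$-split pairs (equivalently, in the abelian setting, $G$ is exact and conservative). So the whole proof reduces to checking these three conditions for the restriction functor $\downarrow : \fcatk[\finj\op] \to \fcatk[\fb\op]$, whose right adjoint is $\uparrow$ by Proposition \ref{prop:right_adjoint_fbfi}.

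The key steps, in order, are as follows. First, I would note that $\fcatk[\fb\op]$ is a Grothendieck abelian category (stated in Section \ref{sect:gamma_finne_modules}), hence has all limits, in particular equalizers of arbitrary pairs. Second, I would check that $\downarrow$ is exact: this is immediate since precomposition with the inclusion functor $\fb\op \subset \finj\op$ is exact (kernels and cokernels of natural transformations are computed objectwise). Third, I would check that $\downarrow$ is conservative: a natural transformation $\phi : F_1 \to F_2$ in $\fcatk[\finj\op]$ is an isomorphism if and only if $\phi_{\mathbf{n}}$ is an isomorphism of $\kring$-modules for every $n$, and since the objects of (a skeleton of) $\fb\op$ are exactly the objects $\mathbf{n}$, $n \in \nat$, this condition is detected by $\downarrow F_1 \to \downarrow F_2$. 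Fourth, since $\downarrow$ is exact it in particular preserves equalizers, so the hypotheses of Barr--Beck are met, yielding the equivalence $\fcatk[\finj\op] \simeq \fcatk[\fb\op]_\fbcmnd$. Finally, one observes that the comonad arising from this adjunction is by definition $\fbcmnd = \downarrow \circ \uparrow$ (Notation \ref{nota:fbcmnd}), and that the comparison functor sends $F$ to $\downarrow F$ equipped with the $\fbcmnd$-comodule structure described in Proposition \ref{prop:fi_comod}, so the identification is the expected one.

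I do not expect any genuine obstacle here: the statement is a formal consequence of the Barr--Beck theorem, and all the nontrivial input (the adjunction, the explicit description of $\uparrow$ and of $\fbcmnd$, and the comodule structure on restrictions of $\finj\op$-modules) has already been established in Propositions \ref{prop:right_adjoint_fbfi}, \ref{prop:right_adjoint_fbfi-explicit}, \ref{prop:identify_fbcmnd} and \ref{prop:fi_comod}. The only point requiring a moment's care is the conservativity of $\downarrow$, which hinges precisely on the fact that $\fb$ and $\finj$ have the same objects, so that restriction to $\fb\op$ remembers all the values of a functor; this is the exact analogue of the role played by the essential surjectivity of $\theta$ in Proposition \ref{prop:adjoints_theta} and Theorem \ref{thm:barr_beck}. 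One could alternatively present the proof as a direct transcription of the argument for Theorem \ref{thm:barr_beck}, emphasizing that the two situations are formally parallel, with $(\theta^*, (-)_+^*)$ replaced by $(\downarrow, \uparrow)$; I would include a sentence to that effect so the reader sees the pattern.
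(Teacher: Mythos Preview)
Your approach is correct and is exactly the paper's: invoke the comonadic Barr--Beck theorem for the adjunction $\downarrow \dashv \uparrow$, the hypotheses being immediate since $\downarrow$ is exact and conservative (the latter because $\fb$ and $\finj$ share the same objects). One small slip: in your abstract statement of Barr--Beck you put the conservativity and equalizer-preservation conditions on the right adjoint $G$, but for the comonadic form they belong on the \emph{left} adjoint $F$ (and indeed you then correctly verify them for $\downarrow = F$, not for $\uparrow$); just fix the letter in your statement so it matches the check you actually perform.
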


Proposition \ref{prop:fbcmnd_coaug} below shows that the comonad $\fbcmnd$ is equipped with a natural transformation $\eta : \id \rightarrow \fbcmnd $,  which can be considered as a form of coaugmentation. This form  of additional structure is considered in Appendix \ref{subsect:cobar}; it is important that this satisfies the Hypothesis \ref{hyp:coaugmentation}.

\begin{prop}
\label{prop:fbcmnd_coaug}
There is a natural transformation $\eta : \id \rightarrow \fbcmnd $  of endofunctors of $\fcatk[\fb\op]$, where the morphism $\eta_G : G(\mathbf{b})  \rightarrow \fbcmnd G(\mathbf{b}) $, for $G \in \ob \fcatk[\fb\op]$ and $b \in \nat$, is given by
\[
G(\mathbf{b}) 
\rightarrow 
\bigoplus_{\mathbf{b}' \subseteq \mathbf{b}}
G(\mathbf{b}'),
\]
the inclusion of the factor indexed by $\mathbf{b}'=\mathbf{b}$. 

Hence the following  diagrams commute:
\[
\xymatrix{
G 
\ar[r]^{\eta_G}
\ar[d]_{\eta_G}
&
\fbcmnd G
\ar[d]^{\Delta_G}
&
 G
\ar[r]^{\eta_{G}}
\ar[rd]_{1_{G}}
&
 \fbcmnd G 
\ar[d]^{\epsilon^\Sigma_{ G}}
%%%%%%%%%%%%%%%%%%%%%%%%%%%%
\\
\fbcmnd G
\ar[r]_{\eta_{(\fbcmnd G)}}
&
\fbcmnd \fbcmnd G 
&
%%%%%%%%%%%%%%%%%%%%%%%%
&
G.
}
\]

Thus, the structure $(\fcatk[\fb\op]; \fbcmnd, \Delta, \epsilon, \eta)$ satisfies Hypothesis \ref{hyp:coaugmentation}. 
\end{prop}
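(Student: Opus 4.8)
The plan is to verify the two commuting diagrams directly using the explicit descriptions of $\eta$, $\Delta$, $\epsilon^\Sigma$ provided by Proposition \ref{prop:identify_fbcmnd} and the statement of Proposition \ref{prop:fbcmnd_coaug} itself; the compatibility with Hypothesis \ref{hyp:coaugmentation} is then a formal consequence. First I would fix $G \in \ob \fcatk[\fb\op]$ and $b \in \nat$ and trace the generator of the summand $G(\mathbf{b}) \subseteq G(\mathbf{b})$ around the triangle on the right: $\eta_G$ places a copy of $G(\mathbf{b})$ in the summand of $\fbcmnd G(\mathbf{b}) \cong \bigoplus_{\mathbf{b}' \subseteq \mathbf{b}} G(\mathbf{b}')$ indexed by $\mathbf{b}' = \mathbf{b}$, and by Proposition \ref{prop:identify_fbcmnd} (\ref{prop:identify_fbcmnd2}) the counit $\epsilon^\Sigma_G$ is precisely the projection onto that summand; hence the composite is the identity, which is the triangle identity required.

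Next I would treat the square. Using the reindexing of Remark \ref{rem:reindexation2}, $\fbcmnd \fbcmnd G(\mathbf{b}) \cong \bigoplus_{\mathbf{b}'' \subseteq \mathbf{b}' \subseteq \mathbf{b}} G(\mathbf{b}'')$. Going around via $\eta_G$ then $\Delta_G$: $\eta_G$ lands in the $\mathbf{b}' = \mathbf{b}$ summand, and by Proposition \ref{prop:identify_fbcmnd} (\ref{prop:identify_fbcmnd4}) the component $G(\mathbf{l}') \to G(\mathbf{b}'')$ of $\Delta_G$ is the identity when $\mathbf{l}' = \mathbf{b}''$ and zero otherwise, so with $\mathbf{l}' = \mathbf{b}$ the only surviving target is the summand indexed by $\mathbf{b}'' = \mathbf{b}' = \mathbf{b}$, with the identity map. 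Going around the other way via $\eta_G$ then $\eta_{(\fbcmnd G)}$: the second application of $\eta$ is the inclusion into the outermost top summand, i.e. the one indexed by the subset arising from the last application of $\fbcmnd$ being all of $\mathbf{b}$, which in the notation of Remark \ref{rem:reindexation2} means $\mathbf{b}^{\Sigma}_0 = \emptyset$, hence again the summand $\mathbf{b}'' = \mathbf{b}' = \mathbf{b}$ with the identity. The two composites agree, so the square commutes.

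Finally, I would note that Hypothesis \ref{hyp:coaugmentation} (referenced from Appendix \ref{subsect:cobar}) asks exactly for a comonad equipped with a natural transformation $\eta : \id \rightarrow \fbcmnd$ making these two diagrams commute (coassociativity/compatibility with $\Delta$ and the counit identity), so the verification above establishes that $(\fcatk[\fb\op]; \fbcmnd, \Delta, \epsilon, \eta)$ satisfies it. The only mild subtlety — and the step I expect to require the most care — is bookkeeping the identification $\fbcmnd\fbcmnd G(\mathbf{b}) \cong \bigoplus_{\mathbf{b}'' \subseteq \mathbf{b}' \subseteq \mathbf{b}} G(\mathbf{b}'')$ consistently on both paths of the square, i.e. checking that ``the summand indexed by $\mathbf{b}$ for the outer $\fbcmnd$'' and ``the summand indexed by $\mathbf{b}$ for the inner $\fbcmnd$'' are matched correctly under the reindexing, since $\eta$ applied at the two different levels inserts into a priori different-looking summands; once this indexing is pinned down via Remark \ref{rem:reindexation2}, everything collapses to the single summand $\mathbf{b}'' = \mathbf{b}' = \mathbf{b}$.
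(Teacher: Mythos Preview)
Your proposal is correct and is exactly the approach the paper takes: the paper's proof is the single sentence ``This is proved by direct verification, using Proposition \ref{prop:identify_fbcmnd},'' and you have simply spelled out that verification. Your caveat about the indexing in $\fbcmnd\fbcmnd G(\mathbf{b})$ is well placed but, as you note, once the identification of Proposition \ref{prop:identify_fbcmnd} (\ref{prop:identify_fbcmnd3})--(\ref{prop:identify_fbcmnd4}) is fixed both paths visibly land in the summand $\mathbf{b}'' = \mathbf{b}' = \mathbf{b}$.
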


\begin{proof}
This is proved by direct verification, using Proposition \ref{prop:identify_fbcmnd}. 
\end{proof}

\begin{rem}
The constructions of this Section apply verbatim to functors from $\fb\op$ to any abelian category. 
\end{rem}

\section{The Koszul complex}
\label{sect:koszul}

The purpose of this section is to introduce the Koszul complex $\kz F$ 
in $\fb\op$-modules,  where $F$ is an $\finj \op$-module, and to identify its cohomology.
For this, we apply the general construction given in Appendix \ref{cosimplicial} in our specific situation. More precisely, Proposition \ref{prop:fbcmnd_coaug} shows that the comonad $(\fbcmnd, \Delta, \epsilon)$ on $\fcatk[\fb\op]$ is equipped with a coaugmentation $\eta : \id \rightarrow \fbcmnd $  satisfying Hypothesis \ref{hyp:coaugmentation} and Theorem \ref{thm:barr_beck_finjop} shows that the $\finj\op$-module $F$ can be considered equivalently as a $\fbcmnd$-comodule in $\fcatk[\fb\op]$. Hence, one can form the cosimplicial $\fb\op$-module $\cosimp^\bullet F$, as in Proposition \ref{prop:cosimp_comod}, which has the following form:
\begin{equation}
\label{cosimplicial-obj-1}
\xymatrix{
F
\ar@<.75ex>[rr]|{d^0=\eta_F}
\ar@<-.75ex>[rr]| {d^1=\psi_F}
&&
\fbcmnd F
\ar@/_1pc/@<-.5ex>[ll]|{ s^0=\epsilon_F}
\ar@<1.5ex>[rr]|{ d^0=\eta_{\perp F}}
\ar[rr]|{ d^1=\Delta_F}
\ar@<-1.5ex>[rr]| {d^2=\perp \psi_F
}
&&
\fbcmnd \fbcmnd F
\ar@/_1pc/@<-2.5ex>[ll]|{s^0=\epsilon_{\perp F}}
\ar@/_1pc/@<-1ex>[ll]|{s^1=\perp \epsilon_F}
\ar@{.>}[rr]
&&
\fbcmnd \fbcmnd \fbcmnd F
\ldots  \ .
\ar@{.>}@<-1ex>@/_1pc/[ll]
}
\end{equation}
In order to compare this cosimplicial object to the Koszul complex, we consider its opposite $(\cosimp^\bullet F)\op $.
Considering the normalized subcomplex associated to a cosimplicial object (see Appendix  \ref{sect:app_norm}), the main result of the Section is the following Theorem, in which $\kz F$ denotes the Koszul complex introduced in Section \ref{subsect:kzF}:

\begin{thm}
\label{thm:norm_vs_kz}
For $F \in \ob \fcatk[\finj\op]$, there is a natural surjection of complexes of $\fb\op$-modules
\[
N (\cosimp^\bullet F)\op 
\twoheadrightarrow 
\kz F
\]
which is a quasi-isomorphism, where $(\cosimp^\bullet F)\op $ is the {\em opposite}  cosimplicial $\fb\op$-module to $\cosimp^\bullet F$ (cf. Notation \ref{nota:opp_simp}). 
\end{thm}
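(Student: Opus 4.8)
The plan is to make the comonadic cosimplicial object $\cosimp^\bullet F$ completely explicit using the identifications of Proposition \ref{prop:identify_fbcmnd} and Proposition \ref{prop:fi_comod}, and then to read off the normalized subcomplex directly. First I would unwind the description of $(\cosimp^\bullet F)^n = \fbcmnd^{\,n+1}F$ via the reindexation in Remark \ref{rem:reindexation2}: evaluated on $\mathbf{b}$, this is $\bigoplus F(\mathbf{b}^{(n+1)}_\Sigma)$, the sum running over ordered decompositions $\mathbf{b} = \mathbf{b}^{(n+1)}_\Sigma \amalg \mathbf{b}^\Sigma_n \amalg \cdots \amalg \mathbf{b}^\Sigma_0$ into $n+2$ (possibly empty) subsets. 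With this bookkeeping the cofaces $d^i$ have a clean combinatorial description: $d^0 = \eta$ inserts an empty subset $\mathbf{b}^\Sigma_\bullet$ in the outermost slot; the intermediate $d^i$ are induced by $\Delta$ and merge two adjacent slots (keeping the partition of the "surviving" subset trivial, i.e. forcing the extra piece to be empty, by Proposition \ref{prop:identify_fbcmnd}(4)); and the last coface $d^{n+1}$ is induced by $\psi_F$, which is the only map that genuinely moves part of $\mathbf{b}^{(n+1)}_\Sigma$ into a new subset using the $\finj\op$-structure of $F$ (Proposition \ref{prop:fi_comod}). The codegeneracies $s^j$, induced by $\epsilon$, delete an empty slot and are zero on summands where that slot is non-empty.

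Next I would pass to the opposite cosimplicial object, which by Notation \ref{nota:opp_simp} reverses the order of the slots; this is a cosmetic relabelling whose only purpose is to arrange that the "interesting" differential (the one involving the $\finj\op$-structure) ends up as $d^0$, matching whatever sign/indexing convention is used for $\kz F$ in Section \ref{subsect:kzF}. Then I would compute the normalized subcomplex $N(\cosimp^\bullet F)^{\mathrm{op}}$: by definition $N^n$ is the intersection of the kernels of all codegeneracies (in cohomological degree $n$; see Appendix \ref{sect:app_norm}), and since each $s^j$ is the projection killing precisely those summands whose $j$-th inserted slot is empty, the intersection of these kernels picks out exactly the summands in which every slot $\mathbf{b}^\Sigma_0,\dots,\mathbf{b}^\Sigma_{n-1}$ is \emph{non-empty}. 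So $N^n(\cosimp^\bullet F)^{\mathrm{op}}(\mathbf{b}) \cong \bigoplus F(\mathbf{b}')$ over chains of non-empty "gaps", which I expect to match term-for-term the Koszul complex $\kz F$ in degree $n$ — that is exactly the content of the asserted surjection $N(\cosimp^\bullet F)^{\mathrm{op}} \twoheadrightarrow \kz F$: it is the quotient of $N^n$ by the further summands coming from the extra coordinate present in the cosimplicial picture but absent from $\kz F$, and I would check it is a chain map by comparing the restricted differential with the Koszul differential.

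The quasi-isomorphism statement is where the real work lies. The standard fact (cf. Appendix \ref{sect:app_norm}, Dold–Kan) is that $N(\cosimp^\bullet F)$ is quasi-isomorphic — indeed chain-homotopy-equivalent — to the full cochain complex of the cosimplicial object, so the issue is only to identify the cohomology of $N(\cosimp^\bullet F)^{\mathrm{op}}$ with that of the quotient $\kz F$, i.e. to show the kernel of the surjection is acyclic. I would exhibit an explicit contracting homotopy on this kernel complex; the natural candidate uses the coaugmentation $\eta$ satisfying Hypothesis \ref{hyp:coaugmentation} (the extra degeneracy supplied by $\eta$, which is exactly why Proposition \ref{prop:fbcmnd_coaug} was needed). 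Concretely, the "redundant" summands are those in which the outermost surviving slot coincides (after the inner decomposition is taken trivial) with the data already recorded by $F(\mathbf{b}')$; the homotopy reabsorbs that slot, and the homotopy identity follows from the two commuting squares of Proposition \ref{prop:fbcmnd_coaug} together with the comonad axioms. The main obstacle, and the step I would budget the most care for, is keeping the signs and the slot-reindexing under the opposite functor consistent across the three complexes — the full cochain complex, its normalization, and $\kz F$ — so that the contracting homotopy is genuinely a homotopy and not merely a degreewise splitting; getting the Koszul sign convention in Section \ref{subsect:kzF} to line up with the alternating sum of cofaces is the kind of thing that is easy to state and fiddly to verify.
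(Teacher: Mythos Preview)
Your setup through the identification of the normalized complex is essentially right and matches Propositions \ref{prop:cosimplicial-structure-CF} and \ref{prop:norm_cosimp_F}, modulo an off-by-one ($\cosimp^n F = (\fbcmnd)^n F$, not $(\fbcmnd)^{n+1}F$) and a misdescription of the intermediate cofaces: the $\Delta$-induced cofaces genuinely \emph{split} a slot into two pieces in all possible ways, not merely insert an empty piece (Proposition \ref{prop:identify_fbcmnd}(\ref{prop:identify_fbcmnd4})).

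The real gap is twofold. First, the surjection is not what you describe. The complexes $N(\cosimp^\bullet F)\op$ and $\kz F$ do \emph{not} match term-for-term: in degree $t$ the normalized complex has one summand $F(\mathbf{b}^{(t)})$ for each ordered decomposition of $\mathbf{b}\setminus\mathbf{b}^{(t)}$ into $t$ non-empty subsets of \emph{arbitrary} cardinality, whereas $(\kz F)^t(\mathbf{b})$ has a single summand $F(\mathbf{b}^{(t)}) \otimes \orient(\mathbf{b}\setminus\mathbf{b}^{(t)})$ for each subset $\mathbf{b}^{(t)}$ of size $b-t$. The surjection of Proposition \ref{prop:kz_quotient} is a two-step map: first project onto the summands where every $\mathbf{b}_i^{(t)}$ is a singleton (these are indexed by a subset together with an ordering of its complement), then pass to the signature quotient $\kring\aut(\mathbf{t}') \twoheadrightarrow \orient(\mathbf{t}')$. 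Neither step is ``killing one extra coordinate''; the kernel is large and has nontrivial combinatorics.

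Second, and more seriously, your proposed contracting homotopy via $\eta$ cannot work: by Proposition \ref{prop:fbcmnd_coaug}, $\eta$ inserts an \emph{empty} slot, and the normalized subcomplex is precisely characterised by all slots being non-empty, so $\eta$ does not preserve it (indeed $\tilde{d}^{t+1}=\eta$ already vanishes on $N$, cf.\ Proposition \ref{prop:norm_cosimp_F}). There is no purely comonadic shortcut here. The paper instead filters $N(\cosimp^\bullet F)\op(\mathbf{b})$ by $|\mathbf{b}^{(t)}|$ (Lemma \ref{lem:NC_filter}); the associated graded decomposes as $\bigoplus_{\mathbf{b}'} F(\mathbf{b}') \otimes N(\cosimp^\bullet \mathbbm{1})\op(\mathbf{b}\setminus\mathbf{b}')$, reducing everything to the special case $F = \mathbbm{1}$. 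That case is the substantive input you are missing: $N(\cosimp^\bullet \mathbbm{1})\op(X)$ is identified (Proposition \ref{prop:compare_Cperm_Norm}) with the complex $\cperm(X)$ of ordered partitions of $X$, and its cohomology is computed topologically (Proposition \ref{prop:acyclicity}) by exhibiting $\cperm(X)$, up to sign twists, as the cellular cochain complex of the permutohedron $|\Pi_X|$. Contractibility of the permutohedron forces $H^*(\cperm(X)) \cong \orient(X)$ in top degree, and a spectral sequence for the filtration finishes the argument.
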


\begin{rem}
The complex $\kz F$ is the $\finj\op$-version of the Koszul complex considered in 
\cite[Theorem 2]{MR3603074} for $\finj$-modules, where it is shown that this has the same cohomology as 
$N (\cosimp^\bullet F)\op$ by using  \cite{MR3654111}.

To keep this paper self-contained, we give a direct proof of Theorem \ref{thm:norm_vs_kz}, rather than deriving it from \cite[Theorem 2]{MR3603074}. The relation between the Koszul complex and $\finj\op$-cohomology is explained in Section \ref{FIop-cohom}.
\end{rem}

%%%%%%%%%%%%%%%%%%%%%%%%%%%%%%%%%%%%%%%%%%%%%%%%%%%%%
\subsection{The Koszul complex $\kz F$}
\label{subsect:kzF}

In this section we construct, for an $\finj \op$-module $F$, the complex $\kz F$ in $\fb\op$-modules.
To keep track of the signs in the construction, we introduce the following notation:

\begin{nota}
\label{nota:orient}
For $S$ a finite set,
\begin{enumerate}
\item
denote by $\orient (S)$ the orientation module associated to $S$, i.e.,   the free, rank one $\kring$-module $\Lambda^{|S|} (\kring S)$ (the top exterior power), where $\kring S$ denotes the free $\kring$-module generated by $S$;
\item 
$\orient (S)$ is considered as having cohomological degree $|S|$; 
\item 
if $S$ is ordered with elements $x_1 < \ldots < x_{|S|}$, let $\oclass (S) \in \orient (S)$ denote the element:
\[
\oclass (S):= x_1 \wedge \ldots \wedge x_{|S|}.
\]
\end{enumerate} 
\end{nota}

\begin{rem}\ 
\begin{enumerate}
\item 
By construction, $\orient(S)$ is  a $\sym_{|S|}$-module; it is isomorphic to the signature representation $\sgnrep_{\sym_{|S|}}$. In particular, we may consider it either as  a left or a right  $\sym_{|S|}$-module without ambiguity.
\item 
The orientation modules $\orient (\mathbf{b})$, $b \in \nat$, assemble to give a $\fb\op$-module. 
\item
 If $S$ is ordered, then $\oclass (S)$ gives a canonical generator of $\orient (S)$.
 \end{enumerate}
\end{rem}

\begin{defn}
\label{defn:Cbb}
For $t \in \nat$ and $b\in \nat$,
\begin{enumerate}
\item 
let $(\kz F)^t(\mathbf{b}) $ be the $\sym_b\op$-module
\[
\bigoplus_{\substack{ \mathbf{b}^{(t)} \subset \mathbf{b}\\
|\mathbf{b}^{(t)}|=b-t }}
F(\mathbf{b}^{(t)})
\otimes 
\orient (\mathbf{b} \backslash \mathbf{b}^{(t)}),
\]
where $g \in \sym_b$ sends the summand indexed by $\mathbf{b}^{(t)}$ to that indexed by $g^{-1}(\mathbf{b}^{(t)}) $ via the morphism
\[
F(\mathbf{b}^{(t)})
\otimes 
\orient (\mathbf{b} \backslash \mathbf{b}^{(t)})
\rightarrow 
F (g^{-1}(\mathbf{b}^{(t)}))
\otimes 
\orient (g^{-1}(\mathbf{b} \backslash \mathbf{b}^{(t)}))
\]
given by the action of $g|_{g^{-1}(\mathbf{b}^{(t)}) }$ on the first tensor factor and that of $g |_{g^{-1}(\mathbf{b} \backslash \mathbf{b}^{(t)}) }$ on the second factor.
\item 
Let $d : (\kz  F) ^t (\mathbf{b})
\rightarrow  (\kz  F) ^{t +1} (\mathbf{b})$ be the morphism of $\kring$-modules given on  $Y \otimes \alpha$, where $Y \in F(\mathbf{b}^{(t)})$  and $\alpha \in \orient (\mathbf{b}\backslash \mathbf{b}^{(t)})$, by 
\[
d \big (Y \otimes \alpha  \big) 
= 
\sum_{x \in \mathbf{b}^{(t)}} 
\iota_x^* Y 
\otimes (x \wedge \alpha), 
\]
where $\iota_x : \mathbf{b}^{(t)} \backslash \{ x\}
\hookrightarrow \mathbf{b}^{(t)}$ is the inclusion. 
\end{enumerate}
\end{defn}

This construction is compatible with the action of the group $\sym_b$:

\begin{lem}
\label{lem:Cbb_action}
For $t \in \nat$ and $b \in \nat$
\begin{enumerate}
\item 
there is an isomorphism of $\sym_b\op$-modules:
 \[
(\kz F)^t (\mathbf{b}) 
 \cong 
\big(
F(\mathbf{b - t})
\otimes 
\orient (\mathbf{t}') 
\big)\uparrow _{\sym_{b-t}\op \times \sym_t \op}^{\sym_b\op},
\]
for $\mathbf{b-t} \subset \mathbf{b}$ the canonical inclusion with complement $\mathbf{t}'$;
\item 
the morphism $d : (\kz F)^t  (\mathbf{b})
\rightarrow (\kz F)^{t+1} (\mathbf{b}) $ 
is a morphism of $\sym_b\op$-modules.
\end{enumerate}
\end{lem}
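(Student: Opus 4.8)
The plan is to verify both statements by unwinding the definitions in Definition \ref{defn:Cbb}, using the explicit description of the $\sym_b\op$-action on $(\kz F)^t(\mathbf{b})$ given there.

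For the first statement, I would argue as follows. The summands of $(\kz F)^t(\mathbf{b})$ are indexed by the subsets $\mathbf{b}^{(t)} \subseteq \mathbf{b}$ of cardinality $b-t$; these form a single transitive $\sym_b$-set, and the stabiliser of the canonical subset $\mathbf{b-t} \subset \mathbf{b}$ (with complement $\mathbf{t}'$) is precisely the Young subgroup $\sym_{b-t} \times \sym_t \subset \sym_b$ (acting on $\mathbf{b-t}$ and $\mathbf{t}'$ respectively). The summand indexed by $\mathbf{b-t}$ is $F(\mathbf{b-t}) \otimes \orient(\mathbf{t}')$, and by the description of the action in Definition \ref{defn:Cbb} the subgroup $\sym_{b-t}\op \times \sym_t\op$ acts on this summand through the $\fb\op$-structure of $F$ on the first factor and the $\fb\op$-structure of $\orient$ on the second. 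Hence $(\kz F)^t(\mathbf{b})$ is the induced module $\big(F(\mathbf{b-t}) \otimes \orient(\mathbf{t}')\big)\uparrow_{\sym_{b-t}\op \times \sym_t\op}^{\sym_b\op}$, as claimed; one chooses a set of coset representatives to match the indexing of the summands by subsets $\mathbf{b}^{(t)}$.

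For the second statement, I need to check that $d$ is $\sym_b\op$-equivariant, i.e.\ that $d(g \cdot (Y \otimes \alpha)) = g \cdot d(Y \otimes \alpha)$ for $g \in \sym_b$ and $Y \otimes \alpha$ in the summand indexed by $\mathbf{b}^{(t)}$. Both sides land in $(\kz F)^{t+1}(\mathbf{b})$, whose summands are indexed by subsets of cardinality $b-t-1$. On the left, $g$ first sends $Y \otimes \alpha$ to the summand indexed by $g^{-1}(\mathbf{b}^{(t)})$, and then $d$ sums over $x' \in g^{-1}(\mathbf{b}^{(t)})$ the terms $\iota_{x'}^*(g|\cdot Y) \otimes (x' \wedge g| \cdot \alpha)$; on the right, $d$ first produces $\sum_{x \in \mathbf{b}^{(t)}} \iota_x^* Y \otimes (x \wedge \alpha)$ and then $g$ acts on each term. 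Reindexing the left-hand sum via $x' = g^{-1}(x)$ and using the naturality of the restriction maps $\iota_x^*$ with respect to the isomorphisms induced by $g$ (i.e.\ $\iota_{g^{-1}(x)} = g|^{-1} \circ \iota_x \circ g|$ up to the evident identifications) together with the fact that $g$ acts on $\orient(\mathbf{b}\backslash(\mathbf{b}^{(t)}\backslash\{x\}))$ compatibly with the wedge — namely $g|(x \wedge \alpha) = g^{-1}(x) \wedge g|\alpha$ — shows the two expressions agree term by term.

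The only mild subtlety, and the step I expect to require the most care, is the bookkeeping of the signs in the orientation modules: one must confirm that the permutation $g$ acts on $\orient(\mathbf{b}\backslash\mathbf{b}^{(t)})$ and on the one-dimensional piece added by $x \wedge (-)$ in a way that is strictly compatible, so that no spurious signs appear when reindexing. Since $g$ restricts to an honest bijection on each relevant subset and the orientation module is functorial for bijections, this compatibility is automatic, but it is worth spelling out that the action of $g$ on $x \wedge \alpha \in \orient(\mathbf{b}\backslash(\mathbf{b}^{(t)}\backslash\{x\}))$ is exactly $g|(x) \wedge g|(\alpha)$, which is what makes the term-by-term matching work. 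Everything else is a routine diagram chase using Definition \ref{defn:Cbb}.
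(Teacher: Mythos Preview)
Your proposal is correct and follows essentially the same approach as the paper: the paper also observes that the inclusion of the summand $F(\mathbf{b-t}) \otimes \orient(\mathbf{t}')$ is $(\sym_{b-t} \times \sym_t)\op$-equivariant and hence induces up to the claimed isomorphism, and for the equivariance of $d$ it simply appeals to the explicit formula in Definition~\ref{defn:Cbb}. Your write-up is considerably more detailed than the paper's (which is quite terse here), but the underlying argument is the same.
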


\begin{proof}
The inclusion 
$
F (\mathbf{b - t})
\otimes 
\orient (\mathbf{t}') 
\hookrightarrow 
(\kz F) ^t(\mathbf{b}) 
$ 
corresponding to the summand indexed by $\mathbf{b-t}$ is $(\sym_{b-t} \times \sym_t)\op$-equivariant, thus induces up to a morphism of $\sym_b\op$-modules. It is straightforward to check that this is an isomorphism, as required.

It remains to check that the morphism $d$ is equivariant with respect to the $\sym_b\op$ action. This follows  from the explicit formula for $d$ given in Definition \ref{defn:Cbb}.
\end{proof}

\begin{prop}
\label{prop:kz_ch_cx}
The construction $(\kz F, d)$ is a cochain complex in $\fcatk[\fb\op]$. Hence, $H^* (\kz F)$ takes values in $\nat$-graded $\fb\op$-modules.
\end{prop}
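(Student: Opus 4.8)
The plan is to verify that $(\kz F, d)$ is a cochain complex in $\fcatk[\fb\op]$, which amounts to two checks: first, that each $d : (\kz F)^t \rightarrow (\kz F)^{t+1}$ is a morphism in $\fcatk[\fb\op]$, i.e.\ is natural with respect to bijections; second, that $d \circ d = 0$. The first point is already established: Lemma \ref{lem:Cbb_action}(2) states precisely that $d$ is a morphism of $\sym_b\op$-modules for each $b$, and since the $\fb\op$-module structure on $\kz F$ is by definition the assembly of these $\sym_b\op$-actions (with no nontrivial morphisms between distinct objects of $\fb$), naturality is automatic. So the content of the proof is the identity $d^2 = 0$.

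For $d^2 = 0$, I would argue by direct computation on a generator $Y \otimes \alpha$ with $Y \in F(\mathbf{b}^{(t)})$ and $\alpha \in \orient(\mathbf{b} \setminus \mathbf{b}^{(t)})$. Applying $d$ twice gives
\[
d^2(Y \otimes \alpha) = \sum_{x \in \mathbf{b}^{(t)}} \ \sum_{y \in \mathbf{b}^{(t)} \setminus \{x\}} \iota_y^* \iota_x^* Y \otimes (y \wedge x \wedge \alpha),
\]
where I have used that, after applying $d$ once, the new summand is indexed by $\mathbf{b}^{(t)} \setminus \{x\}$ so the inner sum runs over its elements. The key observations are: (i) for distinct $x, y \in \mathbf{b}^{(t)}$, functoriality of $F$ on $\finj\op$ gives $\iota_y^* \iota_x^* Y = \iota_x^* \iota_y^* Y$ — both are the restriction of $Y$ along the inclusion $\mathbf{b}^{(t)} \setminus \{x,y\} \hookrightarrow \mathbf{b}^{(t)}$, independent of the order in which the two points are removed; and (ii) $y \wedge x \wedge \alpha = - (x \wedge y \wedge \alpha)$ in the exterior power $\orient(\mathbf{b} \setminus (\mathbf{b}^{(t)} \setminus \{x,y\}))$. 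Hence the terms for the ordered pair $(x,y)$ and the ordered pair $(y,x)$ are negatives of one another, and the double sum cancels in pairs, giving $d^2 = 0$.

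The last sentence of the statement then follows formally: a cochain complex in the abelian category $\fcatk[\fb\op]$ has cohomology objects $H^t(\kz F) = \ker d / \operatorname{im} d$ in $\fcatk[\fb\op]$, and since $(\kz F)^t = 0$ for $t < 0$ (and for $t > b$ upon evaluation on $\mathbf{b}$), the cohomology is concentrated in non-negative degrees, i.e.\ takes values in $\nat$-graded $\fb\op$-modules. I do not expect any serious obstacle here; the only point requiring care is bookkeeping the sign in the wedge product and confirming that the two restriction maps $\iota_x^*, \iota_y^*$ genuinely commute, which is exactly the statement that $\mathbf{b}^{(t)} \setminus \{x,y\}$ sits inside $\mathbf{b}^{(t)}$ via a single well-defined injection — a consequence of $F$ being a functor on $\finj\op$.
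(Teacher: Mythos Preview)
Your proof is correct and follows essentially the same approach as the paper: a direct computation of $d^2(Y\otimes\alpha)$ showing that the contributions from ordered pairs $(x,y)$ and $(y,x)$ cancel via the antisymmetry of the wedge and the commutativity of the restrictions $\iota_x^*\iota_y^* = \iota_y^*\iota_x^*$. The paper's version is only cosmetically different, writing the double sum directly as a sum over unordered pairs $\{x,y\}\subset\mathbf{b}^{(t)}$ with the two wedge terms grouped together.
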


\begin{proof}
It suffices to show that $d$ is a differential. Using the notation of Definition \ref{defn:Cbb}, 
\[
d^2 \big (Y \otimes \alpha  \big) 
= 
\sum_{\{x, y \} \subset  \mathbf{b}^{(t)}} 
\iota^*_{x,y} Y
\otimes \big ( (y \wedge x \wedge \alpha) + (x \wedge y \wedge \alpha) \big) , 
\]
where $\iota_{x,y}$ denotes the inclusion  $\mathbf{b}^{(t)} \backslash \{x, y \} \subset \mathbf{b}^{(t)} $. This is zero, as required. 
\end{proof}

The following Proposition will be used in Section \ref{sect:cohomology}.

\begin{prop} 
\label{Koszul-exact}
The Koszul construction defines an exact functor
$$\kz: \fcatk[\finj \op] \to \cochain(\fcatk[\fb\op]),$$
 where $\cochain(\fcatk[\fb\op])$ is the category of cochain complexes in $\fcatk[\fb\op]$.
\end{prop}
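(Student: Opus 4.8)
The plan is to check exactness degreewise. Since $\cochain(\fcatk[\fb\op])$ has a degreewise (and objectwise) abelian structure, a sequence of cochain complexes is exact if and only if it is exact in each cohomological degree $t$, and a sequence in $\fcatk[\fb\op]$ is exact if and only if it is exact after evaluation on each object $\mathbf{b}$. So it suffices to prove that, for every fixed $t \in \nat$ and $b \in \nat$, the functor
\[
F \longmapsto (\kz F)^t(\mathbf{b}) = \bigoplus_{\substack{\mathbf{b}^{(t)} \subseteq \mathbf{b} \\ |\mathbf{b}^{(t)}| = b-t}} F(\mathbf{b}^{(t)}) \otimes \orient(\mathbf{b}\backslash \mathbf{b}^{(t)})
\]
is exact as a functor $\fcatk[\finj\op] \to \kring\dash\modules$. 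Since the differentials of $\kz F$ are natural in $F$, the assignment $F \mapsto \kz F$ is automatically a functor to $\cochain(\fcatk[\fb\op])$; only exactness requires argument.

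First I would note that, for a fixed subset $\mathbf{b}^{(t)} \subseteq \mathbf{b}$, the functor $F \mapsto F(\mathbf{b}^{(t)})$ is exact: it is the evaluation functor $\eval_{\mathbf{b}^{(t)}} : \fcatk[\finj\op] \to \kring\dash\modules$, and evaluation functors on a functor category are exact because (co)limits, and in particular kernels and cokernels, are computed objectwise in $\fcatk[\finj\op]$. Next, tensoring over $\kring$ with the fixed rank-one free $\kring$-module $\orient(\mathbf{b}\backslash\mathbf{b}^{(t)})$ preserves exactness, since a free (hence flat) module is exact for $\otimes_\kring$; concretely $\orient(\mathbf{b}\backslash\mathbf{b}^{(t)}) \cong \kring$ as a $\kring$-module, so this tensor factor only twists the $\sym_b\op$-action and does not affect underlying exactness. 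Finally, a finite direct sum of exact functors is exact. Combining these three observations shows $F \mapsto (\kz F)^t(\mathbf{b})$ is exact, which is what we needed.

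There is essentially no obstacle here: the statement is a formal consequence of the explicit degreewise description of $\kz F$ in Definition \ref{defn:Cbb} together with the fact that $\kz F$ is built from $F$ only through evaluations, a fixed flat tensor factor, and finite direct sums. The one point worth stating carefully is that exactness in $\cochain(\fcatk[\fb\op])$ is checked degreewise and objectwise, so that the problem genuinely reduces to the exactness of the evaluation functors $\eval_{\mathbf{b}^{(t)}}$; everything else is immediate.
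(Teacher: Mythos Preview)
Your proof is correct and follows essentially the same approach as the paper: both reduce to the degreewise, objectwise exactness of $F \mapsto (\kz F)^t(\mathbf{b})$, and both invoke that $\orient(\mathbf{b}\backslash\mathbf{b}^{(t)})$ is free of rank one (hence flat) to conclude. The paper's version is simply more terse.
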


\begin{proof}
 The functoriality is clear from the construction. Moreover, the functor $F \mapsto (\kz F)^t (\mathbf{b})$ as in Definition \ref{defn:Cbb} is exact, since  the underlying $\kring$-module of each $\orient (\mathbf{b} \backslash \mathbf{b}^{(t)})$ is flat (more precisely, free of rank one). This implies that $\kz$ is exact considered as a functor to cochain complexes. 
\end{proof}

%%%%%%%%%%%%%%%%%%%%%%%%%%%%%%%%%%%%%%%%%%%%%
\subsection{The cosimplicial $\fb\op$-module $(\cosimp^\bullet F)\op$ and its normalized cochain complex}
In this section, we identify the cosimplicial $\fb\op$-module $(\cosimp^\bullet F)\op$ and its associated normalized cochain complex $N (\cosimp^\bullet F)\op$.

In order to describe $(\cosimp^\bullet F)\op$, we will use the following decompositions, given by Remark \ref{rem:reindexation2} (\ref{iteration-Sigma}), distinguishing the indices of $(\fbcmnd)^{\ell} F (\mathbf{b}) $ by a $\tilde{\ }$:
$$
 (\fbcmnd)^{\ell+1} F (\mathbf{b}) = \fbcmnd_0 \fbcmnd_1 \ldots \fbcmnd_\ell F (\mathbf{b}) 
\cong
\bigoplus_{\mathbf{b}=(\ldots ((\mathbf{b}^{(\ell+1)}_\Sigma \amalg \mathbf{b}^{\Sigma}_\ell) \amalg \mathbf{b}^{\Sigma}_{\ell-1})) \ldots \amalg \mathbf{b}^{\Sigma}_1) \amalg \mathbf{b}^{\Sigma}_0 }
F (\mathbf{b}^{(\ell+1)}_\Sigma)$$
$$ (\fbcmnd)^{\ell} F (\mathbf{b}) = \fbcmnd_0 \fbcmnd_1 \ldots \fbcmnd_{\ell-1} F (\mathbf{b}) 
\cong
\bigoplus_{\mathbf{b}=(\ldots ((\tilde{\mathbf{b}}^{(\ell)}_\Sigma \amalg \tilde{\mathbf{b}}^{\Sigma}_{\ell-1}) \amalg \tilde{\mathbf{b}}^{\Sigma}_{\ell-2})) \ldots \amalg \tilde{\mathbf{b}}^{\Sigma}_1) \amalg \tilde{\mathbf{b}}^{\Sigma}_0 }
F (\tilde{\mathbf{b}}^{(\ell)}_\Sigma)$$

The structure of $\cosimp^\bullet F$ is described in (\ref{cosimplicial-obj-1}) and, by Notation \ref{nota:opp_simp}, $(\cosimp^\bullet F)\op$  is the cosimplicial object with coface operators $\tilde{d}^i: (\fbcmnd)^{\ell} F \rightarrow (\fbcmnd)^{\ell+1} F$ and codegeneracy operators $\tilde{s}^j: (\fbcmnd)^{\ell+1} F \rightarrow (\fbcmnd)^{\ell} F$   satisfying $\tilde{s}^j=s^{\ell-j}$ and $\tilde{d}^j=d^{\ell+1-j}$.
\begin{prop}
\label{prop:cosimplicial-structure-CF}
The structure morphisms of the cosimplicial  $\fb\op$-module $(\cosimp^\bullet F)\op$ are given by the following:
\begin{enumerate}
\item 
for $0 \leq j \leq \ell$, the restriction of $\tilde{s}^j: (\fbcmnd)^{\ell+1} F \rightarrow (\fbcmnd)^{\ell} F$ to the factor indexed by $\mathbf{b}= \mathbf{b}_\Sigma^{(\ell+1)} \amalg \coprod_{i=0}^{\ell} \mathbf{b}^{\Sigma}_{i}$ is:
\begin{enumerate}
\item 
$0$  if $\mathbf{b}^\Sigma_{j} \neq \emptyset$;
\item
if $\mathbf{b}^\Sigma_{j} = \emptyset$, the projection onto the direct summand of $(\fbcmnd)^{\ell} F$ such that
\begin{eqnarray*}
\tilde{\mathbf{b}}^{(\ell)}_\Sigma&= & \mathbf{b}^{(\ell +1)}_\Sigma\\
\tilde{\mathbf{b}}^{\Sigma}_i&=& 
\left\{
\begin{array}{ll}
\mathbf{b}^{\Sigma}_i & i < j \\
\mathbf{b}^{\Sigma}_{i+1} & j \leq  i \leq \ell;
\end{array}
\right. 
\end{eqnarray*}
\end{enumerate}
\item 
for $0 \leq i \leq \ell +1$,  the restriction of $\tilde{d}^i: (\fbcmnd)^{\ell} F \rightarrow (\fbcmnd)^{\ell+1} F$ to the factor indexed by $\mathbf{b}= \tilde{\mathbf{b}}_\Sigma^{(\ell)} \amalg \coprod_{i=0}^{\ell-1} \tilde{\mathbf{b}}^{\Sigma}_{i}$  is given as follows:

\begin{enumerate}
\item 
for $i=0$, it maps to the direct summand of $(\fbcmnd)^{\ell+1} F$ such that 
$\tilde{\mathbf{b}}_{\Sigma}^{(\ell)}=\mathbf{b}_{\Sigma}^{(\ell+1)}  \amalg \mathbf{b}^{\Sigma}_{0}$
and 
$\tilde{\mathbf{b}}^{\Sigma}_{i}=\mathbf{b}^{\Sigma}_{i+1}$ ($0 \leq i \leq \ell-1$) 
via the injection $\mathbf{b}_{\Sigma}^{(\ell+1)} \hookrightarrow \tilde{\mathbf{b}}_{\Sigma}^{(\ell)}$.
\item 
for $0 < i \leq \ell$, is the identity map to each direct summand of $(\fbcmnd)^{\ell+1} F$  such that $\tilde{\mathbf{b}}_{\Sigma}^{(\ell)}=\mathbf{b}_{\Sigma}^{(\ell+1)}$ and 
\[
\tilde{\mathbf{b}}^{\Sigma}_j = 
\left\{
\begin{array}{ll}
\mathbf{b}_j^{\Sigma}   &   j <i \\
\mathbf{b}^{\Sigma}_i \amalg \mathbf{b}^{\Sigma}_{i+1} & j=i \\
\mathbf{b}^{\Sigma}_{j+1} & i<j \leq \ell-1 ;
\end{array}
\right. 
\]
it is zero to the other summands of  $(\fbcmnd)^{\ell+1} F$;
\item 
\label{prop:cosimplicial-structure-CF-2c}
for $i=\ell +1$, is the identity map to the direct summand of $(\fbcmnd)^{\ell+1} F$ corresponding to  $\mathbf{b}_{\Sigma}^{(\ell+1)}=\tilde{\mathbf{b}}_{\Sigma}^{(\ell)}$ and  $\mathbf{b}^{\Sigma}_i= \tilde{\mathbf{b}}^{\Sigma}_i$ for $0 \leq i < \ell$ and $\mathbf{b}_{\ell}^{\Sigma}= \emptyset$; it is zero to the other summands of  $(\fbcmnd)^{\ell+1} F$.
\end{enumerate}
\end{enumerate}
\end{prop}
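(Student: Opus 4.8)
The plan is to unwind the definition of the opposite cosimplicial object in terms of the structure maps of $\cosimp^\bullet F$, which are recorded in diagram (\ref{cosimplicial-obj-1}), namely $d^0 = \eta$ (at each level), $d^{\ell} = $ the "innermost" comodule structure map $\perp^{\ell-1}\psi_F$, the intermediate $d^i$ given by the diagonals $\perp^{i-1}\Delta$, and the codegeneracies $s^j = \perp^j \epsilon$. Under the reindexing of Notation \ref{nota:opp_simp}, $\tilde d^i = d^{\ell+1-i}$ and $\tilde s^j = s^{\ell-j}$, so $\tilde d^0$ corresponds to the innermost comodule structure map, $\tilde d^{\ell+1}$ to $d^0 = \eta$, and $\tilde s^j$ corresponds to the codegeneracy $\epsilon$ applied at the $(\ell-j)$-th slot. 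First I would translate each of these three families of maps, one at a time, through the explicit combinatorial decompositions (\ref{iteration-Sigma}): for $(\fbcmnd)^{\ell}F$ the summands are indexed by ordered decompositions $\mathbf{b} = \tilde{\mathbf{b}}^{(\ell)}_\Sigma \amalg \tilde{\mathbf{b}}^\Sigma_{\ell-1}\amalg\cdots\amalg\tilde{\mathbf{b}}^\Sigma_0$, and for $(\fbcmnd)^{\ell+1}F$ by decompositions with one more block.

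The computation then proceeds map by map. For the codegeneracies $\tilde s^j$: by Proposition \ref{prop:identify_fbcmnd}(\ref{prop:identify_fbcmnd2}) the counit $\epsilon^\Sigma$ is the projection onto the "full" summand, so applying $\perp^{\ell-j}\epsilon$ kills any summand whose $\mathbf{b}^\Sigma_j$ block (the block created at the $j$-th application, which must be collapsed) is non-empty, and on the remaining summands simply deletes that empty block, shifting the indices $\mathbf{b}^\Sigma_i$ for $i\geq j$ down by one; this is exactly statement (1). For the coface $\tilde d^{\ell+1} = d^0 = \eta$: by Proposition \ref{prop:fbcmnd_coaug} the coaugmentation $\eta$ is the inclusion of the top summand, which in the reindexed picture means it sends the summand indexed by $(\tilde{\mathbf{b}}^{(\ell)}_\Sigma;\tilde{\mathbf{b}}^\Sigma_{\ell-1},\ldots,\tilde{\mathbf{b}}^\Sigma_0)$ isomorphically to the summand with the same $\mathbf{b}^{(\ell+1)}_\Sigma = \tilde{\mathbf{b}}^{(\ell)}_\Sigma$, the same $\mathbf{b}^\Sigma_i$ for $i<\ell$, and $\mathbf{b}^\Sigma_\ell = \emptyset$, and is zero elsewhere — this is statement (2c). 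For $\tilde d^0$, which is the innermost comodule structure map $\psi_F$ applied to the functor $(\fbcmnd)^{\ell}F$ viewed as an $\finj\op$-module, I would use Proposition \ref{prop:fi_comod}: $\psi$ sums over subsets $\mathbf{b}' \subseteq \mathbf{b}$ the $\finj\op$-structure maps; tracing through which subset becomes the new innermost block $\mathbf{b}^{(\ell+1)}_\Sigma$ and which part becomes $\mathbf{b}^\Sigma_0$ yields the injection $\mathbf{b}^{(\ell+1)}_\Sigma \hookrightarrow \tilde{\mathbf{b}}^{(\ell)}_\Sigma$ of statement (2a). The intermediate cofaces $\tilde d^i$ for $0<i\leq\ell$ come from $\perp^{\ell-i}\Delta$; by Proposition \ref{prop:identify_fbcmnd}(\ref{prop:identify_fbcmnd4}) the diagonal $\Delta$ duplicates a block, so in the reindexed picture this has the effect of merging two consecutive blocks $\mathbf{b}^\Sigma_i \amalg \mathbf{b}^\Sigma_{i+1}$ into one, as in statement (2b), and is zero on summands where the bookkeeping is incompatible.

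The main obstacle I expect is purely bookkeeping: keeping the two indexing conventions straight — the iterated-comonad decomposition (\ref{iteration-Sigma}), in which blocks are labelled by which application of $\fbcmnd$ produced them, versus the opposite-cosimplicial reindexing $\tilde d^i = d^{\ell+1-i}$, $\tilde s^j = s^{\ell-j}$ — and verifying that the index shifts quoted in (2a)–(2c) and (1) are precisely the ones forced by this relabelling, including getting the off-by-one boundary cases ($i=0$ versus $i=\ell+1$, $j=0$ versus $j=\ell$) right. A secondary point requiring care is that a single coface operator $d^i$ of $\cosimp^\bullet F$ involves $\perp^{i-1}$ applied to a structure map, so one must check that prefixing $i-1$ copies of $\fbcmnd$ — which acts on decompositions by appending blocks at the left (outermost) end — correctly produces the "identity on the unaffected outer blocks, structure map on the inner part" description; this is where Remark \ref{rem:reindexation2} and the naturality of all the structure transformations are used. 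Once the dictionary is fixed, each clause is a direct, if tedious, verification, and no further idea is needed.
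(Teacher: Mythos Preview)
Your proposal is correct and follows essentially the same route as the paper: both unwind the general cosimplicial structure of Proposition~\ref{prop:cosimp_comod} through the opposite reindexing of Notation~\ref{nota:opp_simp}, and then read off each face and degeneracy using the explicit descriptions of $\epsilon$, $\Delta$, $\psi$, $\eta$ from Propositions~\ref{prop:identify_fbcmnd}, \ref{prop:fi_comod}, and~\ref{prop:fbcmnd_coaug}, together with the block-labelling of Remark~\ref{rem:reindexation2}. One small caution: your phrase ``$\psi_F$ applied to the functor $(\fbcmnd)^\ell F$ viewed as an $\finj\op$-module'' could be misread as $\psi_{(\fbcmnd)^\ell F}$ rather than $\fbcmnd^\ell \psi_F$; the map you want (and correctly describe afterwards) is the latter, so be careful with that wording when you write it out.
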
 

\begin{proof}
The general forms of  $s^j: (\fbcmnd)^{\ell+1} F \rightarrow (\fbcmnd)^{\ell} F$ and $d^i: (\fbcmnd)^{\ell} F \rightarrow (\fbcmnd)^{\ell+1} F$ are given in Proposition \ref{prop:cosimp_comod}. Using Notation \ref{nota:opp_simp} (recalled before the statement), we deduce the explicit forms given of $\tilde{s}^j$ and $\tilde{d}^i$ using Proposition \ref{prop:identify_fbcmnd} (\ref{prop:identify_fbcmnd2}) and (\ref{prop:identify_fbcmnd4}), Proposition \ref{prop:fi_comod} and Proposition \ref{prop:fbcmnd_coaug}.
\end{proof}

For the version of the normalized cochain complex used in the following, see Definition \ref{defn:normalized}.
\begin{prop}
\label{prop:norm_cosimp_F}
For $F \in \ob \fcatk[\finj\op]$, the associated normalized cochain complex $ 
N (\cosimp^\bullet F)\op
$ evaluated on $\mathbf{b}$, for $b \in \nat$, 
 has terms:
\[
(N(\cosimp^\bullet F)\op) ^t (\mathbf{b}) = 
\bigoplus_{\substack{\mathbf{b} = \mathbf{b}^{(t)} \amalg \coprod_{i=0}^{t -1} \mathbf{b}_i^{(t)}
\\
{\mathbf{b}^{(t)}_i \neq \emptyset}}}
F (\mathbf{b}^{(t)}). 
\]
In particular, $(N(\cosimp^\bullet F)\op) ^t (\mathbf{b}) =0$ if $t <0$ or if $t >|\mathbf{b}|$.

The differential $d: ( N (\cosimp^\bullet F)\op)^t \rightarrow (N(\cosimp^\bullet F)\op) ^{t +1}$ is given by $\sum_{i=0}^{t }(-1)^i  \tilde{d}^i=(-1)^{t+1}\sum_{i=1}^{t+1}(-1)^i {d}^i$.
\end{prop}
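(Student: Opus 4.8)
The plan is to extract both the description of the terms and of the differential directly from the explicit cosimplicial structure obtained in Proposition \ref{prop:cosimplicial-structure-CF}, combined with the general recipe for the normalized cochain complex recalled in Definition \ref{defn:normalized}. First I would recall that $(N(\cosimp^\bullet F)\op)^t$ is, by definition, the intersection of the kernels of the codegeneracies $\tilde s^j$ for $0 \le j \le t-1$ inside $(\fbcmnd)^{t+1}F$ (equivalently, the complementary summand to the sum of the images of the cofaces, depending on which normalization convention Definition \ref{defn:normalized} uses; both give the same subfunctor). Using the decomposition of $(\fbcmnd)^{t+1}F(\mathbf{b})$ into summands indexed by ordered decompositions $\mathbf{b} = \mathbf{b}^{(t)}_\Sigma \amalg \mathbf{b}^\Sigma_{t-1} \amalg \cdots \amalg \mathbf{b}^\Sigma_0$ from Remark \ref{rem:reindexation2}(\ref{iteration-Sigma}) (reindexing $\ell+1 = t+1$, i.e. $\ell = t$), Proposition \ref{prop:cosimplicial-structure-CF}(1) tells us that $\tilde s^j$ kills exactly the summands with $\mathbf{b}^\Sigma_j = \emptyset$ and is injective (a projection onto a sub-indexing) on those with $\mathbf{b}^\Sigma_j \neq \emptyset$. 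Hence $\bigcap_{j=0}^{t-1}\ker \tilde s^j$ is the sum of the summands with all $\mathbf{b}^\Sigma_i \neq \emptyset$ for $0 \le i \le t-1$; relabelling $\mathbf{b}^{(t)}_\Sigma$ as $\mathbf{b}^{(t)}$ and $\mathbf{b}^\Sigma_i$ as $\mathbf{b}^{(t)}_i$ gives precisely the stated formula. The vanishing for $t<0$ is definitional, and for $t > |\mathbf{b}|$ it is immediate since one cannot decompose a set of cardinality $b$ into $t+1$ nonempty pieces.

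Next I would treat the differential. By the standard formula, the differential of the normalized complex associated to $(\cosimp^\bullet F)\op$ is the alternating sum $\sum_{i=0}^{t+1}(-1)^i \tilde d^i$ of the cofaces, restricted to the normalized subobject; but on the normalized subobject the top coface $\tilde d^{t+1}$ acts as zero (its image, by Proposition \ref{prop:cosimplicial-structure-CF}(2)(\ref{prop:cosimplicial-structure-CF-2c}), lands in summands with $\mathbf{b}^\Sigma_t = \emptyset$, hence outside the normalized part), so the differential reduces to $\sum_{i=0}^{t}(-1)^i \tilde d^i$. Finally, translating from the $\tilde d^i$ back to the original cofaces $d^i$ of $\cosimp^\bullet F$ via the relation $\tilde d^i = d^{\,t+1-i}$ from Notation \ref{nota:opp_simp} (recalled just before Proposition \ref{prop:cosimplicial-structure-CF}), the substitution $i \mapsto t+1-i$ turns $\sum_{i=0}^{t}(-1)^i \tilde d^i$ into $\sum_{i=1}^{t+1}(-1)^{t+1-i} d^i = (-1)^{t+1}\sum_{i=1}^{t+1}(-1)^i d^i$, which is the asserted identity.

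The only genuinely delicate point is bookkeeping: making sure the reindexing of Remark \ref{rem:reindexation2}(\ref{iteration-Sigma}) is applied with the correct value of $\ell$, that the codegeneracy indices $0 \le j \le t-1$ match exactly the conditions $\mathbf{b}^{(t)}_i \neq \emptyset$ for $0 \le i \le t-1$, and that the sign manipulation $i \mapsto t+1-i$ in the alternating sum is carried out consistently with the normalization convention of Definition \ref{defn:normalized}. I expect this sign/index reconciliation to be the main obstacle, though it is entirely routine once the conventions are pinned down; no substantive new idea is needed beyond Propositions \ref{prop:cosimplicial-structure-CF} and the appendix material on normalized cochain complexes.
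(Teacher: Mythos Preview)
Your approach is exactly the paper's: read off the normalized subcomplex from the kernels of the codegeneracies in Proposition \ref{prop:cosimplicial-structure-CF}(1), drop $\tilde d^{t+1}$ using (2c), and reindex via $\tilde d^i = d^{t+1-i}$. Two bookkeeping slips to correct: in cosimplicial degree $t$ the object is $(\fbcmnd)^t F$, not $(\fbcmnd)^{t+1}F$ (so the relevant reindexing is $\ell+1=t$, i.e.\ $\ell=t-1$); and you have inverted the two cases of Proposition \ref{prop:cosimplicial-structure-CF}(1) --- $\tilde s^j$ vanishes precisely on the summands with $\mathbf{b}^\Sigma_j \neq \emptyset$, so those (not the ones with $\mathbf{b}^\Sigma_j = \emptyset$) lie in $\ker \tilde s^j$, and your stated conclusion is then immediate rather than obtained by a double negation.
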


\begin{proof}
The identification of the normalized subcomplex follows  from the form of the codegeneracies $\tilde{s}^j$ given in Proposition \ref{prop:cosimplicial-structure-CF}.
 The vanishing statement follows immediately. 

By Definition \ref{defn:normalized},  the differential on $N (\cosimp^\bullet F)\op$ is the restriction of that on the cochain complex associated to $(\cosimp^\bullet F)\op$, i.e., $\sum_{i=0}^{t + 1}(-1)^i  \tilde{d}^i$. By Proposition \ref{prop:cosimplicial-structure-CF} (\ref{prop:cosimplicial-structure-CF-2c}) the restriction of $\tilde{d}^{t+1}$ to $N (\cosimp^\bullet F)\op$ is zero.
\end{proof}

%%%%%%%%%%%%%%%%%%%%%%%%%%%%%%%%%%%%%%%%%%%%%%%%%%%%%%%%%%%%%%%%%%%%%%%%%%%%%%
\subsection{Relating the Koszul complex and the normalized cochain complex} 

In this section, we define a natural surjection from $N (\cosimp^\bullet F)\op$ to the Koszul complex $\kz F$. The proof that this surjection is a quasi-isomorphism is given in the subsequent sections.

To define this natural surjection, we use the following direct summand of the underlying $\nat$-graded object of the normalized complex $N(\cosimp^\bullet F)\op$; we stress that it is not in general a direct summand as a complex.

\begin{nota}
For $F \in \ob \fcatk[\finj\op]$ and $t, b\in \nat$, let $\overline{(N(\cosimp^\bullet F)\op) ^t (\mathbf{b})}$ be the direct summand of $(N(\cosimp^\bullet F)\op) ^t (\mathbf{b})$ consisting of terms such that $|\mathbf{b}_i ^{(t)} |=1$ for each $0 \leq i \leq t-1$ (so that $|\mathbf{b}^{(t)}|= b-t$). 
\end{nota}

The $\sym_b\op$-action on $\overline{(N(\cosimp^\bullet F)\op) ^t (\mathbf{b})}$  is described as follows, in a form suitable for comparison with the Koszul complex:

\begin{prop}
\label{prop:NcosimpF_b_action}
For $F \in \ob \fcatk[\finj\op]$ and $b,t \in \nat$,
\begin{enumerate}
\item 
$\overline{(N(\cosimp^\bullet F)\op) ^t (\mathbf{b})}$ is a direct summand of $(N(\cosimp^\bullet F)\op) ^t (\mathbf{b})$ as $\sym_b\op$-modules; 
\item 
 there is an isomorphism of $\sym_b\op$-modules:
$$
\big(
F (\mathbf{b - t}) 
\otimes 
\kring \aut (\mathbf{t}') 
\big)\uparrow _{\sym_{b-t}\op \times \sym_t \op}^{\sym_b\op}
\stackrel{\cong}{\rightarrow}
\overline{(N(\cosimp^\bullet F)\op)^t (\mathbf{b})}
,$$
 for $\mathbf{b-t} \subset \mathbf{b}$ the canonical inclusion with complement $\mathbf{t}'$.
\end{enumerate}
\end{prop}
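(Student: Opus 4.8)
The plan is to unravel the definitions on both sides and exhibit the isomorphism explicitly on the summand corresponding to the canonical inclusion $\mathbf{b-t} \subset \mathbf{b}$, then induce up. By Proposition \ref{prop:norm_cosimp_F}, a typical summand of $(N(\cosimp^\bullet F)\op)^t(\mathbf{b})$ is indexed by an ordered decomposition $\mathbf{b} = \mathbf{b}^{(t)} \amalg \coprod_{i=0}^{t-1} \mathbf{b}_i^{(t)}$ with each $\mathbf{b}_i^{(t)} \neq \emptyset$, and the summand is $F(\mathbf{b}^{(t)})$. The sub-object $\overline{(N(\cosimp^\bullet F)\op)^t(\mathbf{b})}$ retains exactly those summands with $|\mathbf{b}_i^{(t)}| = 1$ for every $i$; so a summand is indexed by a choice of $(t)$-element subset $\mathbf{b}^{(t)\mathrm{c}} := \mathbf{b}\setminus\mathbf{b}^{(t)}$ together with a \emph{total ordering} of that subset (the $i$-th element being the unique element of $\mathbf{b}^{(t)}_i$). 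Such data is precisely a subset of size $t$ together with a bijection to $\mathbf{t}$, i.e.\ an element of $\kring\aut(\mathbf{t}')$ after linearization, with $\mathbf{t}'$ the complement of the canonical $\mathbf{b-t}$. So the underlying graded $\kring$-module identification is essentially a bookkeeping exercise.

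First I would prove part (1): that $\overline{(N(\cosimp^\bullet F)\op)^t(\mathbf{b})}$ is a $\sym_b\op$-subrepresentation. For this I need the description of the $\sym_b\op$-action on $(\fbcmnd)^{t+1}F(\mathbf{b})$, and more precisely on the normalized subquotient, which comes from the reindexed decomposition in Remark \ref{rem:reindexation2}: $\sym_b$ permutes the elements of $\mathbf{b}$, hence permutes the pieces $\mathbf{b}^{(t)}_\Sigma, \mathbf{b}^\Sigma_0, \ldots$ by transporting them, i.e.\ $g$ sends the summand indexed by $(\mathbf{b}^{(t)}, \mathbf{b}^{(t)}_0, \ldots, \mathbf{b}^{(t)}_{t-1})$ to the one indexed by $(g^{-1}\mathbf{b}^{(t)}, g^{-1}\mathbf{b}^{(t)}_0, \ldots)$ via the induced maps on $F$. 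Since this transport preserves the cardinalities $|\mathbf{b}^{(t)}_i|$, it preserves the condition ``all $|\mathbf{b}^{(t)}_i| = 1$''; hence $\overline{(N(\cosimp^\bullet F)\op)^t(\mathbf{b})}$ is stable under $\sym_b\op$ and is a direct summand as such (the complementary summand, those decompositions with some $|\mathbf{b}^{(t)}_i| > 1$, being likewise stable).

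For part (2), I would take the distinguished summand indexed by the canonical inclusion $\mathbf{b-t} \hookrightarrow \mathbf{b}$ with the complement $\mathbf{t}'$ ordered, say, by its ordering inherited from $\mathbf{b}$; this summand is $F(\mathbf{b-t})$, and more generally the summands whose underlying subset $\mathbf{b}^{(t)}$ equals $\mathbf{b-t}$ (but with $\mathbf{t}'$ ordered arbitrarily) give $F(\mathbf{b-t}) \otimes \kring\aut(\mathbf{t}')$, with $\sym_t\op$ acting on $\aut(\mathbf{t}')$ by reordering and $\sym_{b-t}\op$ acting through the $F(\mathbf{b-t})$ factor via its $\fb\op$-structure. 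The stabilizer in $\sym_b$ of the subset $\mathbf{b-t}$ (together with the partition into $\mathbf{b-t}$ and $\mathbf{t}'$) is exactly $\sym_{b-t}\times\sym_t$, and by the transport description above the full $\sym_b\op$-module $\overline{(N(\cosimp^\bullet F)\op)^t(\mathbf{b})}$ is freely induced from this subgroup on the distinguished block. This yields the asserted isomorphism $\big(F(\mathbf{b-t}) \otimes \kring\aut(\mathbf{t}')\big)\uparrow_{\sym_{b-t}\op\times\sym_t\op}^{\sym_b\op} \xrightarrow{\cong} \overline{(N(\cosimp^\bullet F)\op)^t(\mathbf{b})}$.

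The main obstacle is purely notational: keeping the two parallel indexing conventions straight — the ``$\mathbf{b}^{(t)}_i$'' notation of Proposition \ref{prop:norm_cosimp_F} versus the reindexed ``$\mathbf{b}^\Sigma_i$'' notation of Remark \ref{rem:reindexation2} — and correctly matching the $\sym_t\op$-action on the ordered complement with the action on $\kring\aut(\mathbf{t}')$ (as opposed to $\orient(\mathbf{t}')$, which is the point of contrast with Lemma \ref{lem:Cbb_action} and foreshadows where the differential will differ). There is no substantive difficulty beyond carefully tracking that $\sym_b$ permutes the one-element blocks $\mathbf{b}^{(t)}_i$ and that this matches the right regular action defining the induced representation; the argument is essentially the observation that an ordered sequence of $t$ distinct elements of $\mathbf{b}$ together with an element of $F$ on the remaining $b-t$ elements is an $\sym_b$-torsor's worth of data over the $(\sym_{b-t}\times\sym_t)$-set of such configurations based at the canonical flag.
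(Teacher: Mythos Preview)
Your proposal is correct and follows essentially the same approach as the paper: identify the distinguished summand indexed by the canonical inclusion $\mathbf{b-t}\subset\mathbf{b}$ with its complement $\mathbf{t}'$ given the inherited ordering, observe that this inclusion is $\sym_{b-t}\op\times\sym_t\op$-equivariant, induce up to $\sym_b\op$, and verify by inspection that the resulting map is an isomorphism. The paper's proof is terser (part (1) is dismissed as ``clear'' and part (2) proceeds directly from the single summand $F(\mathbf{b-t})$ rather than the block $F(\mathbf{b-t})\otimes\kring\aut(\mathbf{t}')$), but the underlying argument is the same.
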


\begin{proof}
The first statement is clear. 

The inclusion $\mathbf{b-t} \subset \mathbf{b}$ induces a $\sym_{b-t}\op$-equivariant inclusion 
$
F (\mathbf{b-t} ) \hookrightarrow \overline{(N(\cosimp^\bullet F)\op)^t (\mathbf{b})}
$
corresponding to the summand indexed by taking  $\mathbf{b}_i ^{(t)} = \{b-t+1+i\}$ for $0 \leq i \leq  t-1$.  This induces a morphism of $\sym_b\op$-modules, as in the statement. By inspection, this is an isomorphism. 
\end{proof}

\begin{prop}
\label{prop:kz_quotient}
There is a  surjection $
N (\cosimp^\bullet F)\op
\twoheadrightarrow 
\kz F $ of cochain complexes in $\fcatk[\fb\op]$ 
given, for $t,b \in \nat$, as the composite 
\[
(N(\cosimp^\bullet F)\op) ^t (\mathbf{b}) 
\twoheadrightarrow 
\overline{(N(\cosimp^\bullet F)\op) ^t (\mathbf{b})}
\twoheadrightarrow 
(\kz F)^t(\mathbf{b}) , 
\]
where the first morphism is the projection of Proposition \ref{prop:NcosimpF_b_action} and the second is 
\[
\overline{(N(\cosimp^\bullet F)\op) ^t (\mathbf{b})}
\cong 
\big(
F (\mathbf{b - t}) 
\otimes 
\kring \aut (\mathbf{t}') 
\big)\uparrow _{\sym_{b-t}\op \times \sym_t \op}^{\sym_b\op}
\twoheadrightarrow
\big(
F(\mathbf{b - t})
\otimes 
\orient (\mathbf{t}') 
\big)\uparrow _{\sym_{b-t}\op \times \sym_t \op}^{\sym_b\op}
\cong 
(\kz F)^t (\mathbf{b}), 
\]
induced by the surjection  $
\kring \aut (\mathbf{t}')
\twoheadrightarrow 
\orient (\mathbf{t}') 
$
 of $\sym_t\op$-modules
that sends the generator $[\mathrm{id}_{\mathbf{t}'}]$ to $\oclass (\mathbf{t}')$.

Explicitly,  an element $Y \in F (\mathbf{b}^{(t)})$  that represents a class of $(N (\cosimp^\bullet  F)\op)^t (\mathbf{b})$ in the summand indexed by $(\mathbf{b}^{(t)}; \mathbf{b}_0^{(t)} , \ldots , \mathbf{b}_{t-1} ^{(t)})$ in the decomposition given in Proposition \ref{prop:norm_cosimp_F}
 is sent to zero, unless $|\mathbf{b}_i ^{(t)}|=1$ for all $i$,  $0 \leq i \leq t-1$, when it is sent to 
\[
Y \otimes (\mathbf{b}_0^{(t)} \wedge \ldots \wedge 
\mathbf{b}_{t-1}^{(t)} ) 
\in 
F(\mathbf{b}^{(t)})  \otimes \orient (\mathbf{b}\backslash \mathbf{b}^{(t)}) \subset 
(\kz F)^t (\mathbf{b}).
\]
\end{prop}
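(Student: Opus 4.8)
The plan is to verify the three claims of Proposition \ref{prop:kz_quotient} in turn: that the stated composite is well-defined, that it is a surjection of cochain complexes (i.e.\ commutes with the differentials), and that the explicit formula is correct. The last of these is essentially immediate once the first two are in place: the composite is, by construction, the projection onto the summand $\overline{(N(\cosimp^\bullet F)\op)^t (\mathbf{b})}$ followed by the quotient $\kring\aut(\mathbf{t}') \twoheadrightarrow \orient(\mathbf{t}')$ induced up to $\sym_b\op$, and unwinding the identification of Proposition \ref{prop:NcosimpF_b_action} on a summand indexed by $(\mathbf{b}^{(t)}; \mathbf{b}_0^{(t)},\ldots,\mathbf{b}_{t-1}^{(t)})$ with all $|\mathbf{b}_i^{(t)}|=1$ sends $Y$ to $Y \otimes (\mathbf{b}_0^{(t)} \wedge \cdots \wedge \mathbf{b}_{t-1}^{(t)})$, the wedge recording the order in which the one-element blocks were split off. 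The map sending $[\mathrm{id}_{\mathbf{t}'}]$ to $\oclass(\mathbf{t}')$ is $\sym_t\op$-equivariant because $\sigma \in \sym_t$ acts on $\kring\aut(\mathbf{t}')$ by multiplication and on $\orient(\mathbf{t}')$ by $\sgn(\sigma)$, and $[\mathrm{id}\cdot\sigma] = [\sigma] \mapsto \sigma\cdot\oclass(\mathbf{t}') = \sgn(\sigma)\oclass(\mathbf{t}')$; hence it induces up to a well-defined $\sym_b\op$-map, so the composite is a morphism of $\fb\op$-modules in each degree.

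The first step, then, is to record that each of the two displayed arrows is a morphism of $\sym_b\op$-modules: the projection onto $\overline{(N(\cosimp^\bullet F)\op)^t (\mathbf{b})}$ is one by Proposition \ref{prop:NcosimpF_b_action}(1), and the induced-up quotient is one by the equivariance just discussed together with Proposition \ref{prop:NcosimpF_b_action}(2) and Lemma \ref{lem:Cbb_action}(1). Since all the constructions are natural in $\mathbf{b}\in\fb\op$, this already gives a morphism of $\fb\op$-modules in each cohomological degree, and surjectivity in each degree is clear from the explicit formula, as the summands with all blocks of size one already surject onto $(\kz F)^t(\mathbf{b})$.

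The substantive step is compatibility with differentials: one must check that for $Y$ in the summand of $(N(\cosimp^\bullet F)\op)^t(\mathbf{b})$ indexed by $(\mathbf{b}^{(t)}; \mathbf{b}_0^{(t)}, \ldots, \mathbf{b}_{t-1}^{(t)})$, the image of $dY$ under the quotient equals $d$ of the image of $Y$. Here I would use the formula $d = \sum_{i=0}^{t}(-1)^i \tilde d^i$ from Proposition \ref{prop:norm_cosimp_F} and the explicit description of the cofaces $\tilde d^i$ from Proposition \ref{prop:cosimplicial-structure-CF}(2). The key observations are: first, if some $\mathbf{b}_i^{(t)}$ already has cardinality $\geq 2$, both sides are zero, so we may assume $Y$ lies in $\overline{(N(\cosimp^\bullet F)\op)^t(\mathbf{b})}$ — but even then $\tilde d^i Y$ for $0<i\leq t$ lands in a summand with a block of size $2$ (the merged block $\mathbf{b}_{i-1}^{(t)}\amalg \mathbf{b}_i^{(t)}$), hence is killed by the projection onto $\overline{(N(\cosimp^\bullet F)\op)^{t+1}}$; only $\tilde d^0$ survives. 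Second, $\tilde d^0$ corresponds to $\tilde{\mathbf{b}}^{(t)}_\Sigma = \mathbf{b}^{(t)}_\Sigma \amalg \mathbf{b}^\Sigma_0$ via the injection, and reindexing this through the comodule structure $\psi_F$ (Proposition \ref{prop:fi_comod}) decomposes as a sum over $x \in \mathbf{b}^{(t)}$ of terms where $x$ is split off as the new innermost one-element block, contributing $\iota_x^* Y$ tensored with $x \wedge (\mathbf{b}_0^{(t)}\wedge\cdots\wedge\mathbf{b}_{t-1}^{(t)})$ after applying $\tilde d^0$ again — matching the Koszul differential $d(Y\otimes\alpha) = \sum_x \iota_x^* Y \otimes (x\wedge\alpha)$ of Definition \ref{defn:Cbb}. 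I expect the main obstacle to be bookkeeping the signs and the precise reindexing of blocks under the composite $\tilde d^0$ then projection-and-quotient: one must confirm that splitting off $x$ as a new block in position $0$, which shifts all existing one-element blocks up by one index, introduces exactly the wedge reordering making the signs agree with $(-1)^i$ in $d = \sum(-1)^i\tilde d^i$, so that the image of $\tilde d^0 Y$ is $\sum_{x\in\mathbf{b}^{(t)}} \iota_x^* Y \otimes (x \wedge \mathbf{b}_0^{(t)}\wedge\cdots\wedge\mathbf{b}_{t-1}^{(t)})$ with no extraneous sign.
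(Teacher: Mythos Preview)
Your overall strategy matches the paper's, but there is a factual error in your description of the cofaces $\tilde d^i$ for $0 < i \leq t$: they do not merge adjacent blocks. By Proposition \ref{prop:cosimplicial-structure-CF}(2)(b) (and more transparently via Definition \ref{defn:cperm} together with Proposition \ref{prop:compare_Cperm_Norm}), $\tilde d^i$ \emph{splits} one of the blocks $\mathbf{b}_j^{(t)}$ into an ordered pair of nonempty pieces. On an element of $\overline{(N(\cosimp^\bullet F)\op)^t(\mathbf{b})}$ where every block is a singleton, a singleton cannot be split into two nonempty pieces, so $\tilde d^i$ contributes nothing in the normalized complex; your conclusion that only $\tilde d^0$ survives is therefore correct, but for this reason and not because of a ``merged block of size $2$''. (Likewise, $\tilde d^0$ splits off an arbitrary nonempty subset of $\mathbf{b}^{(t)}$ as the new block, not just a singleton; only after projection to $\kz F$ do the non-singleton contributions vanish.)

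More seriously, your assertion ``if some $\mathbf{b}_i^{(t)}$ already has cardinality $\geq 2$, both sides are zero'' is precisely the subtle point and cannot be asserted without justification. The image of $Y$ is certainly zero, but the image of $dY$ is not obviously so: if exactly one block, say $\mathbf{b}_j^{(t)}=\{u,v\}$, has size $2$ and all others are singletons, then the coface that splits this block produces two terms lying in $\overline{(N(\cosimp^\bullet F)\op)^{t+1}(\mathbf{b})}$, corresponding to the ordered splittings $(\{u\},\{v\})$ and $(\{v\},\{u\})$, both with the same sign. The paper emphasises that the kernel of the projection onto $\overline{(N(\cosimp^\bullet F)\op)^*(\mathbf{b})}$ is \emph{not} stable under $d$ for exactly this reason. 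What makes the argument work is that under the further quotient to $\kz F$ these two terms map to $Y \otimes (\cdots \wedge u \wedge v \wedge \cdots)$ and $Y \otimes (\cdots \wedge v \wedge u \wedge \cdots)$, which cancel by antisymmetry of the wedge. Once you correct the description of $\tilde d^i$ as splitting rather than merging and supply this cancellation argument, your proof is complete and coincides with the paper's.
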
 
 
\begin{proof}
Lemma \ref{lem:Cbb_action} together with 
Proposition \ref{prop:NcosimpF_b_action} imply that the morphism $N (\cosimp^\bullet  F)\op \twoheadrightarrow \kz F$ is a surjection of $\nat$-graded $\fb\op$-modules. 

It remains to prove that this surjection is compatible with the respective differentials. First observe that, in general, the kernel of the projection 
\[
(N (\cosimp^\bullet F)\op) ^* (\mathbf{b}) \twoheadrightarrow 
\overline{(N (\cosimp^\bullet F)\op) ^* (\mathbf{b})} 
\]
is {\em not} stable under the differential of $N (\cosimp^\bullet F)\op$. This is due to the fact that a coface map of $(\cosimp^\bullet  F)\op (\mathbf{b}) $  can split a set $\mathbf{b}_i^{(t)}$ with two elements $\{u, v\}$ into the 
ordered decompositions as singletons $(\{u\}, \{v \})$ and $(\{v\}, \{u\})$. However, these contributions will vanish on passing to $\kz F$, due to the relation $u \wedge v = - v \wedge u$. 
Using the previous observation, one verifies that the kernel (denoted here simply by $\ker$) of the morphism $(N (\cosimp^\bullet F)\op)^* (\mathbf{b}) \twoheadrightarrow (\kz F)^* (\mathbf{b})$ is stable under the differential. 

 The quotient differential on $ (N (\cosimp^\bullet F)\op)^* (\mathbf{b}) /\ker $ is induced by the coface operator $\tilde{d}^0$ on $(\cosimp^\bullet F)\op(\mathbf{b})$.

By inspection, this quotient complex is isomorphic to $(\kz F)^* (\mathbf{b})$ via the given morphism. 
\end{proof}

%%%%%%%%%%%%%%%%%%%%%%%%%%%%%%%%%%%%%%%%%%%%%%%%%%
\subsection{The complex of ordered partitions}
\label{subsect:permtohedra}

The key to proving Theorem \ref{thm:norm_vs_kz}, i.e., that $N(\cosimp^\bullet F)\op  \twoheadrightarrow \kz F$  is a quasi-isomorphism,
 is the case $F= \mathbbm{1} \in \ob \fcatk[\finj\op]$, where $\mathbbm{1}$ is the following $\finj\op$-module:
 
 \begin{nota}
 \label{notation-1}
 Denote by $\mathbbm{1}$ the $\finj\op$-module given by  $\mathbbm{1}(\mathbf{0})= \kring$ and $\mathbbm{1}(\mathbf{b})=0$ for $b>0$, so that the only morphism that acts non-trivially on $\mathbbm{1}$ is the identity on $\mathbf{0}$. 
 \end{nota}

The aim of this section is to give an explicit description of  the complex $N (\cosimp^\bullet \mathbbm{1})\op$ in terms of the complex given by ordered partitions (see Proposition \ref{prop:compare_Cperm_Norm}) described below. 

Henceforth in this section, $X$ denotes a non-empty finite set.

\begin{nota}
For  $t \in \nat^*$, let 
$\prt_{t} X$ be the set of ordered partitions of $X$ into $t$ non-empty subsets. 
\end{nota}

Consider the refinement of ordered partitions as follows: 

\begin{defn}
\label{def-refinement}
Let $\mathfrak{p}
= (\mathfrak{p}_1, \ldots , \mathfrak{p}_t) 
\in \prt_t (X)$ and  $\mathfrak{q} = (\mathfrak{q}_1, \ldots , \mathfrak{q}_s) \in  \prt_s (X)$, then $\mathfrak{q}$ is an ordered refinement of $\mathfrak{p}$, denoted 
 $\mathfrak{q} \leq \mathfrak{p}$,  if there exists an order-preserving surjection $\alpha : \mathbf{s} \twoheadrightarrow \mathbf{t}$ such that 
\[
\mathfrak{p}_i = \bigcup_{j \in \alpha^{-1} (i)} \mathfrak{q}_j.
\]
(In particular, this requires that $s \geq t$,  with equality if and only if $\mathfrak{p} = \mathfrak{q}$.)
\end{defn}

The following is clear:

\begin{lem}
\label{lem:prt_poset}
The set $\prt_* (X):= \amalg _{t \in \nat^*} \prt_t (X)$ is a poset under refinement of ordered partitions. 
\end{lem}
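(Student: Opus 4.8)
The plan is to verify directly that the relation $\leq$ of Definition \ref{def-refinement} is reflexive, antisymmetric and transitive on $\prt_*(X)$; all three are immediate from the definition, which is why the statement is recorded without proof. I would organize the verification as follows.

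First, reflexivity: for $\mathfrak{p} \in \prt_t(X)$ the identity $\id_{\mathbf{t}} : \mathbf{t} \twoheadrightarrow \mathbf{t}$ is an order-preserving surjection witnessing $\mathfrak{p} \leq \mathfrak{p}$. Next, transitivity: suppose $\mathfrak{r} \leq \mathfrak{q} \leq \mathfrak{p}$, with $\mathfrak{r} \in \prt_u(X)$, $\mathfrak{q} \in \prt_s(X)$, $\mathfrak{p} \in \prt_t(X)$, witnessed by order-preserving surjections $\beta : \mathbf{u} \twoheadrightarrow \mathbf{s}$ and $\alpha : \mathbf{s} \twoheadrightarrow \mathbf{t}$. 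Then $\alpha \circ \beta : \mathbf{u} \twoheadrightarrow \mathbf{t}$ is again an order-preserving surjection, and substituting $\mathfrak{q}_j = \bigcup_{k \in \beta^{-1}(j)} \mathfrak{r}_k$ into $\mathfrak{p}_i = \bigcup_{j \in \alpha^{-1}(i)} \mathfrak{q}_j$ yields $\mathfrak{p}_i = \bigcup_{k \in (\alpha \circ \beta)^{-1}(i)} \mathfrak{r}_k$, so $\mathfrak{r} \leq \mathfrak{p}$.

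For antisymmetry I would invoke the observation already recorded in Definition \ref{def-refinement}: a relation $\mathfrak{q} \leq \mathfrak{p}$ forces the number of parts of $\mathfrak{q}$ to be at least that of $\mathfrak{p}$, with equality precisely when $\mathfrak{p} = \mathfrak{q}$. Hence $\mathfrak{q} \leq \mathfrak{p}$ and $\mathfrak{p} \leq \mathfrak{q}$ together give $s = t$ and therefore $\mathfrak{p} = \mathfrak{q}$. (Concretely, an order-preserving surjection between finite ordinals of the same cardinality is the identity, so the defining equalities reduce to $\mathfrak{p}_i = \mathfrak{q}_i$.) There is no genuine obstacle here; the only point worth isolating is this cardinality monotonicity, which is exactly the parenthetical remark in Definition \ref{def-refinement} and which I would either quote or reprove in one line.
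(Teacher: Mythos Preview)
Your verification is correct and matches the paper's treatment: the lemma is stated as clear and given no proof, and your three checks (reflexivity via the identity, transitivity via composition of order-preserving surjections, antisymmetry via the cardinality observation already noted in Definition~\ref{def-refinement}) are exactly the routine details one would supply if pressed.
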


\begin{exam}
\label{exam:P3}
For $X=\{1, 2, 3\}$ the poset $\prt_* (X)$ is given by:

{\scalebox{0.7}{
$
\xymatrix{
&&&(\{1\},\{2\},\{3\})\ar[dr]\ar[dl]&&&\\
(\{1\},\{3\},\{2\})\ar[rr]\ar[rd]&&(\{1\},\{2,3\})\ar[rd]&&(\{1,2\},\{3\})\ar[dl]&&(\{2\},\{1\},\{3\})\ar[ll]\ar[ld]\\
&(\{1,3\},\{2\})\ar[rr]&&(\{1,2,3\})&&(\{2\},\{1,3\})\ar[ll]\\
(\{3\},\{1\},\{2\})\ar[rr]\ar[ru]&&(\{3\},\{1,2\})\ar[ru]&&(\{2,3\},\{1\})\ar[lu]&&(\{2\},\{3\},\{1\})\ar[lu]\ar[ll]\\
&&&(\{3\},\{2\},\{1\}) \ar[lu] \ar[ru]&&&
}
$
}}
\end{exam}

The following Lemma highlights  properties of the poset $(\prt_* (X), \leq )$ that are exploited in the proof of Proposition \ref{prop:acyclicity}. 

\begin{lem}
\label{lem:permutohedron_facet_inclusion_codim2}
\ 
\begin{enumerate}
\item 
For $\mathfrak{p}=(\mathfrak{p}_1, \ldots , \mathfrak{p}_{|X|}),  
\mathfrak{p}'=(\mathfrak{p}'_1, \ldots , \mathfrak{p}'_{|X|}) \in \prt_{|X|}(X)$, there exists $\mathfrak{q} \in \prt_{|X|-1}(X)$ such that $\mathfrak{p}\leq \mathfrak{q}$  and $\mathfrak{p}'\leq \mathfrak{q}$ if and only if  there exists $ i\in \{1, \ldots, |X|-1\}$ such that $\mathfrak{p}_i=\mathfrak{p}'_{i+1}$, $\mathfrak{p}_{i+1}=\mathfrak{p}'_i$ and $\mathfrak{p}_j=\mathfrak{p}'_j$ for $j \not\in \{i,i+1\}$.
\item
Let $\mathfrak{p}=(\mathfrak{p}_1, \ldots , \mathfrak{p}_{t}) \in \prt_t(X)$, for $1\leq t \leq |X|-2$, and $\mathfrak{r} \in \prt_{t+2}(X)$ such that $\mathfrak{r} \leq \mathfrak{p}$. 
 Then 
 \[
 | \{ \mathfrak{q} \in \prt_{t+1} (X)| \mathfrak{r} \leq \mathfrak{q} \leq \mathfrak{p}
 \}
 | = 2.
 \]
\end{enumerate}

\end{lem}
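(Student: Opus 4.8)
The plan is to translate refinement of ordered partitions into the combinatorics of order-preserving surjections of finite ordinals, and then to reduce both assertions to a short case analysis. First I would recall from Definition~\ref{def-refinement} that a relation $\mathfrak{q}\leq \mathfrak{p}$ in $\prt_*(X)$ is witnessed by a \emph{unique} order-preserving surjection $\alpha:\mathbf{s}\twoheadrightarrow\mathbf{t}$ with $\mathfrak{p}_i=\bigcup_{j\in\alpha^{-1}(i)}\mathfrak{q}_j$, and that, conversely, any order-preserving surjection $\mathbf{s}\twoheadrightarrow\mathbf{t}$ together with any $\mathfrak{q}\in\prt_s(X)$ produces such a $\mathfrak{p}$. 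Such a surjection is the same data as a decomposition of $\mathbf{s}$ into $t$ non-empty consecutive intervals (its fibres), and composition of surjections corresponds to coarsening of interval decompositions; in particular an order-preserving surjection $\mathbf{s}\twoheadrightarrow\mathbf{t}$ with $s=t+1$ has exactly one fibre of size $2$, the others being singletons.

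For part~(1), I would observe that an element of $\prt_{|X|}(X)$ is just a linear ordering of the singleton subsets of $X$, and that any $\mathfrak{q}\in\prt_{|X|-1}(X)$ with $\mathfrak{p}\leq\mathfrak{q}$ is obtained from $\mathfrak{p}$ by merging a single adjacent pair $(\mathfrak{p}_i,\mathfrak{p}_{i+1})$ of blocks, so that $\mathfrak{q}$ has a unique two-element block, at position $i$. If $\mathfrak{q}$ is refined by both $\mathfrak{p}$ and $\mathfrak{p}'$, then this position $i$ is the same for both; hence $\mathfrak{p}$ and $\mathfrak{p}'$ agree with $\mathfrak{q}$, and so with each other, in every position $m\notin\{i,i+1\}$, while $\{\mathfrak{p}_i,\mathfrak{p}_{i+1}\}=\mathfrak{q}_i=\{\mathfrak{p}'_i,\mathfrak{p}'_{i+1}\}$ as two-element subsets of $X$. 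Since both partitions consist of singletons, this leaves only $\mathfrak{p}=\mathfrak{p}'$ or $(\mathfrak{p}_i,\mathfrak{p}_{i+1})=(\mathfrak{p}'_{i+1},\mathfrak{p}'_i)$ with all other blocks equal, which (for $\mathfrak{p}\neq\mathfrak{p}'$) is exactly the asserted adjacent transposition. Conversely, if $\mathfrak{p}$ and $\mathfrak{p}'$ differ by such a transposition at position $i$, merging the $i$-th and $(i+1)$-st blocks of $\mathfrak{p}$ yields a common coarsening in $\prt_{|X|-1}(X)$.

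For part~(2), I would write $\mathfrak{r}\leq\mathfrak{p}$ via the order-preserving surjection $\alpha:\mathbf{t+2}\twoheadrightarrow\mathbf{t}$. Its $t$ fibres are non-empty consecutive intervals of total length $t+2$, so exactly one of the following holds: (A) one fibre has size $3$ and the rest are singletons; or (B) two fibres have size $2$ and the rest are singletons. An intermediate $\mathfrak{q}$ with $\mathfrak{r}\leq\mathfrak{q}\leq\mathfrak{p}$ is the same as a factorization $\alpha=\beta\circ\gamma$ into order-preserving surjections $\gamma:\mathbf{t+2}\twoheadrightarrow\mathbf{t+1}$ and $\beta:\mathbf{t+1}\twoheadrightarrow\mathbf{t}$: indeed $\gamma$ (with its unique size-$2$ fibre) determines $\mathfrak{q}$ by $\mathfrak{q}_\ell=\bigcup_{m\in\gamma^{-1}(\ell)}\mathfrak{r}_m$, distinct $\gamma$ give distinct $\mathfrak{q}$ since the $\mathfrak{r}_m$ are non-empty, and a compatible $\beta$ exists (and is then unique, since $\gamma$ is surjective) precisely when the size-$2$ fibre of $\gamma$ is contained in a single fibre of $\alpha$. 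Hence the number of valid $\mathfrak{q}$ equals the number of adjacent pairs $\{m,m+1\}\subseteq\mathbf{t+2}$ lying inside one fibre of $\alpha$; this is $2$ in case~(A) (the two length-$2$ sub-intervals of the size-$3$ fibre) and $2$ in case~(B) (one inside each size-$2$ fibre), which is the claim.

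The translation of $\leq$ into interval decompositions is routine bookkeeping; the one delicate point I anticipate is the claim in part~(2) that \emph{every} intermediate $\mathfrak{q}$ arises from a factorization of $\alpha$ and that the factorization is faithfully recorded by the single adjacent pair of $\mathfrak{r}$-blocks that gets merged---i.e.\ that no merge outside the fibres of $\alpha$ can occur and that $\beta$ is then forced---which follows from surjectivity of $\gamma$ together with the fact that the blocks of $\mathfrak{p}$ coarsen those of $\mathfrak{r}$ exactly along $\alpha$. Everything else is a finite check.
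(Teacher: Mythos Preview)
Your proof is correct and follows essentially the same approach as the paper: both arguments translate the refinement relation into order-preserving surjections of finite ordinals, reduce part~(1) to the observation that a one-step coarsening merges a unique adjacent pair, and reduce part~(2) to counting factorizations of $\alpha:\mathbf{t+2}\twoheadrightarrow\mathbf{t}$ via the same two-case analysis (one fibre of size~$3$, or two fibres of size~$2$). Your write-up is in fact slightly more careful than the paper's in two places---you explicitly flag the degenerate case $\mathfrak{p}=\mathfrak{p}'$ in part~(1), and you justify the bijection between intermediate $\mathfrak{q}$'s and factorizations in part~(2)---whereas the paper leaves these implicit.
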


\begin{proof}
For the forward implication of the  first point, 
suppose that there exists $\mathfrak{q} \in \prt_{|X|-1}(X)$ such that  $\mathfrak{p}\leq \mathfrak{q}$  and $\mathfrak{p}'\leq \mathfrak{q}$. By Definition \ref{def-refinement}, we have order-preserving surjections $\alpha: X\to |X|-1$ and $\alpha': X\to |X|-1$ such that 
\begin{equation}
\label{eqn-preuve-lm}
\mathfrak{q}_k = \coprod _{j \in \alpha^{-1} (k)} \mathfrak{p}_j \qquad \textrm{and} \qquad \mathfrak{q}_k = \coprod_{j \in \alpha'^{-1} (k)} \mathfrak{p}'_j.
\end{equation}
Since $\alpha$ and $\alpha'$ are order preserving surjections there is $i, j\in \{1, \ldots, |X|-1\}$ such that $\alpha^{-1}(i) = \{i , i+1 \}$ and $\alpha'^{-1}(j) = \{j , j+1 \}$ with all other fibres of cardinal one. We deduce from (\ref{eqn-preuve-lm}) that $i=j$ and $\mathfrak{q}_i = \mathfrak{p}_i \amalg \mathfrak{p}_{i+1}= \mathfrak{p}'_i \amalg \mathfrak{p}'_{i+1}$ so $\mathfrak{p}_i=\mathfrak{p}'_{i+1}$, $\mathfrak{p}_{i+1}=\mathfrak{p}'_i$ and $\mathfrak{p}_j=\mathfrak{p}'_j$ for $j \not\in \{i,i+1\}$.

For the converse, pick $\mathfrak{p}$ and $\mathfrak{p}'$ as in the statement. The following partition $\mathfrak{q} \in \prt_{|X|-1}(X)$ satisfies the inequalities of the statement:
$$\mathfrak{q}_j=\mathfrak{p}_j\ \mathrm{ for }\  1\leq j \leq i-1,\  \mathfrak{q}_i=\mathfrak{p}_i \amalg \mathfrak{p}_{i+1},\  \mathfrak{q}_j=\mathfrak{p}_{j-1}\ \mathrm{ for } \ i+1\leq j \leq |X|-1.$$

For the second point, by hypothesis, $\mathfrak{r} \leq \mathfrak{p}$. By Definition \ref{def-refinement}, this   corresponds to giving $\mathfrak{r}$ together with an order-preserving surjection $\alpha : \mathbf{t+2} \twoheadrightarrow \mathbf{t}$.
 To prove the result, it suffices to show that there are precisely two possible factorizations of $\alpha$  via order-preserving surjections:
\[
\mathbf{t+2} \stackrel{\alpha'}{\twoheadrightarrow} \mathbf{t+1} \stackrel{\alpha''}{\twoheadrightarrow} 
\mathbf{t}.
\]
The factorization condition implies that $\alpha''$ is uniquely determined by $\alpha'$. 

 There are two possibilities:
\begin{enumerate}
\item 
there exists $i \in \mathbf{t}$ such that $|\alpha^{-1}(i)|=3$, all other fibres having cardinal one; 
\item 
there exists $i<j \in \mathbf{t}$ such that  $|\alpha^{-1}(i)|=|\alpha^{-1}(j)|=2$, with all other fibres of cardinal one. 
\end{enumerate}

In the first case, one has either $\alpha'^{-1}(i) = \{i , i+1 \}$ or $\alpha'^{-1}(i+1) = \{i+1 , i+2 \}$, with all other fibres of cardinal one. In the second case, either $\alpha'^{-1}(i) = \{i , i+1 \}$ or $\alpha'^{-1}(j) = \{j , j+1 \}$, again with all other fibres of cardinal one. 
\end{proof}

\begin{defn}
\label{defn:cperm}
Let $\cperm (X)$ denote the cochain complex with 
\[
\cperm (X)^t := 
\left\{ 
\begin{array}{ll}
\kring [\prt_{t} X] & 1 \leq t \leq |X|\\
0 & \mbox{otherwise}.
\end{array}
\right.
\]

The differential $d : \cperm (X)^t \rightarrow \cperm (X)^{t+1}$ is the alternating sum $\sum_{i=1}^t (-1)^i \delta^i$, where $\delta^i$ sends a generator $[\mathfrak{p}]$ 
corresponding to the ordered partition $\mathfrak{p}= (\mathfrak{p}_1, \ldots, \mathfrak{p}_i, \ldots , \mathfrak{p}_{t} ) $ to the sum of ordered partitions of the form $(\mathfrak{p}_1, \ldots, \mathfrak{p}'_i, \mathfrak{p}''_i, \ldots , \mathfrak{p}_{t} )$ where $\mathfrak{p}_i = \mathfrak{p}'_i \amalg \mathfrak{p}''_i$. 
\end{defn}

\begin{rem}
\label{rem:diff_pm1_or_0}
The element $d[\mathfrak{p}]$, for $\mathfrak{p}\in \prt_t (X)$, is a signed sum (i.e., a linear combination with coefficients in $\{ \pm 1 \}$) of the 
generators $[\mathfrak{q}]$ where $\mathfrak{q} \in \prt_{t+1} (X)$ and $\mathfrak{q} \leq \mathfrak{p}$. 
\end{rem}

\begin{exam}
\label{exam:xperm}
\ 
\begin{enumerate}
\item 
$\cperm(\emptyset)= \kring$ in cohomological degree zero, by convention; 
\item 
$\cperm(\{1 \}) \cong \kring$ in cohomological degree one; 
\item 
$\cperm(\{1, 2\})$ is the complex $\kring \stackrel{\mathrm{diag}}{\rightarrow} \kring ^{\oplus 2}$ concentrated in cohomological degrees one and two; this has cohomology $\kring$ concentrated in degree $2$;
\item 
$\cperm (\{1,2,3 \})$ has the form $\kring \stackrel{\mathrm{diag}}{\rightarrow} \kring ^{\oplus 6} \rightarrow \kring ^{\oplus 6}$ in cohomological degrees $1, 2, 3$; this has cohomology $\kring$ concentrated in degree $3$. Here, the generators are given by the ordered partitions:
\[
\begin{tabular}{|l|l|l|}
\hline
\mbox{degree 1} &
\mbox{degree 2} &
\mbox{degree 3} 
\\
\hline
(\{1,2,3\}) & 
(\{1\},\{2,3\})&
(\{1\},\{2\},\{3\})
\\
&
(\{2,3\},\{1\})&
(\{2\},\{1\},\{3\})
 \\
&
(\{2\},\{1,3\})&
 (\{2\},\{3\},\{1\})
 \\
 &
(\{1,3\},\{2\})&
 (\{3\},\{2\},\{1\})
 \\
&
(\{3\},\{1,2\}) &
(\{3\},\{1\},\{2\})
\\
&
(\{1,2\},\{3\})&
(\{1\},\{3\},\{2\})
\\
\hline
\end{tabular}
\]
and, for instance, the differential on $(\{1 \}, \{2,3\})$ is given by the operator $\delta^2$ which sends the generator to the sum of 
$(\{1 \}, \{2\}, \{3\})$ and $(\{1\}, \{3\}, \{2\})$.
\end{enumerate}
\end{exam}

\begin{prop}
\label{prop:compare_Cperm_Norm}
For $X$ a non-empty finite set, there is an isomorphism of complexes
\[
(N (\cosimp^\bullet \mathbbm{1})\op) (X) 
\cong 
\cperm (X).
\]
\end{prop}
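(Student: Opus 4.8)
The plan is to match the two complexes term-by-term, then check that the differentials agree. For the term-by-term identification, recall from Proposition \ref{prop:norm_cosimp_F} that, for $\mathbbm{1}$ the $\finj\op$-module of Notation \ref{notation-1},
\[
(N(\cosimp^\bullet \mathbbm{1})\op)^t(X) = \bigoplus_{\substack{X = X^{(t)} \amalg \coprod_{i=0}^{t-1} X_i^{(t)} \\ X_i^{(t)} \neq \emptyset}} \mathbbm{1}(X^{(t)}).
\]
Since $\mathbbm{1}(X^{(t)}) = \kring$ if $X^{(t)} = \emptyset$ and $0$ otherwise, only the summands with $X^{(t)} = \emptyset$ survive, and these are precisely indexed by ordered decompositions $X = \coprod_{i=0}^{t-1} X_i^{(t)}$ into $t$ non-empty subsets, i.e.\ by $\prt_t(X)$. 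Hence $(N(\cosimp^\bullet \mathbbm{1})\op)^t(X) \cong \kring[\prt_t X]$ for $1 \le t \le |X|$, and both sides vanish otherwise (using the vanishing in Proposition \ref{prop:norm_cosimp_F} together with the convention in Example \ref{exam:xperm} for the empty set, and noting $|X| \ge 1$). This gives the required isomorphism of $\nat$-graded $\kring$-modules; it is natural in $X \in \fb\op$ since a bijection $X \to X'$ permutes the indexing sets of ordered partitions compatibly.

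Next I would compare differentials. By Proposition \ref{prop:norm_cosimp_F}, the differential on $N(\cosimp^\bullet \mathbbm{1})\op$ is $\sum_{i=0}^t (-1)^i \tilde{d}^i$, and by Proposition \ref{prop:cosimplicial-structure-CF} the coface operators, after restricting to the surviving summands (those with all $X^{(t)}$-component empty), act as follows: $\tilde{d}^0$ involves the injection $X_\Sigma^{(t)} \hookrightarrow \tilde X_\Sigma^{(t-1)}$, but since $X_\Sigma^{(t)}$ must be empty to remain in the normalized complex and $\tilde X_\Sigma^{(t-1)}$ must also be empty (on the target side after passing to $\mathbbm{1}$), $\tilde d^0$ lands in the zero summand; thus $\tilde d^0$ contributes nothing. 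For $0 < i \le t-1$, the map $\tilde d^i$ (recall $\tilde d^j = d^{t+1-j}$, or directly from Proposition \ref{prop:cosimplicial-structure-CF}(2b)) merges two consecutive blocks $X_i^\Sigma, X_{i+1}^\Sigma$ into one. Transposing to the cochain indexing of $\cperm(X)$, where $\delta^i$ \emph{splits} the $i$-th block of an ordered partition into an ordered pair of non-empty subsets, these are adjoint/dual operations: the coface $\tilde d^i$ on $N(\cosimp^\bullet \mathbbm{1})\op$, read going from degree $t-1$ to degree $t$, corresponds exactly to the splitting operator $\delta^i$ on $\cperm(X)$ going from $\prt_{t-1}(X)$ to $\prt_t(X)$. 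Matching the index ranges and signs — the alternating sum $\sum_{i=1}^t(-1)^i\delta^i$ of Definition \ref{defn:cperm} against $\sum_i (-1)^i \tilde d^i$ after discarding the vanishing $i=0$ (and, via Proposition \ref{prop:cosimplicial-structure-CF}(2c), the top term) — gives the commuting square.

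The main obstacle I anticipate is purely bookkeeping: getting the reindexing between the "merge blocks" description of the cosimplicial cofaces (indexed by ordered decompositions $X = X_\Sigma^{(\ell+1)} \amalg \coprod X_i^\Sigma$, with degree shifting the number of $\fbcmnd$'s) and the "split blocks" description of the $\cperm$ differential (indexed directly by ordered partitions, degree $=$ number of parts) to line up with the correct signs, and checking that the discarded terms ($\tilde d^0$ and $\tilde d^{t+1}$) genuinely vanish on the normalized, $\mathbbm{1}$-supported summands. The small examples in Example \ref{exam:xperm} — in particular the complex $\kring \xrightarrow{\mathrm{diag}} \kring^{\oplus 2}$ for $X = \{1,2\}$, where the diagonal records that the singleton partition $(\{1,2\})$ splits into $(\{1\},\{2\})$ and $(\{2\},\{1\})$ — provide a useful sanity check that the signs and multiplicities come out right, and I would verify the general pattern against these. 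Once the identification of generators and the commutation of differentials are both in hand, the isomorphism of complexes follows.
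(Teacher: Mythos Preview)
Your approach is essentially the same as the paper's: identify the terms via Proposition \ref{prop:norm_cosimp_F} (only summands with $X^{(t)}=\emptyset$ survive since $\mathbbm{1}$ vanishes on non-empty sets), then match the differentials by comparing $\tilde{d}^i$ with $\delta^i$ using Proposition \ref{prop:cosimplicial-structure-CF}.

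One point of confusion to clean up: you describe $\tilde{d}^i$ as \emph{merging} two consecutive blocks, and then invoke an ``adjoint/dual'' relationship to reconcile this with the splitting operator $\delta^i$. This is backwards. The coface $\tilde{d}^i : (\fbcmnd)^\ell \mathbbm{1} \to (\fbcmnd)^{\ell+1}\mathbbm{1}$ goes from fewer blocks to more blocks; reading Proposition \ref{prop:cosimplicial-structure-CF}(2b), the source block $\tilde{\mathbf{b}}^\Sigma_i$ is \emph{split} into the two target blocks $\mathbf{b}^\Sigma_i \amalg \mathbf{b}^\Sigma_{i+1}$. So $\tilde{d}^i$ and $\delta^i$ are the \emph{same} splitting operation (up to the evident reindexing of blocks), and the identification is direct --- no duality is involved. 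Once you correct this, the ``bookkeeping obstacle'' you anticipate largely evaporates: both sides have $\tilde{d}^0$ (resp.\ the top coface) vanishing for the reasons you give, and the remaining cofaces line up with the $\delta^i$ on the nose, with matching signs.
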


\begin{proof}
By Proposition \ref{prop:norm_cosimp_F} and Notation  \ref{notation-1}, since $\mathbbm{1}$ vanishes on non-empty sets,
 we have:
\[
(N (\cosimp^\bullet \mathbbm{1})\op)^t (X) 
= 
\bigoplus_{\substack{X= \mathbf{b}^{(t)} \amalg \coprod_{i=0}^{t -1} \mathbf{b}_i^{(t)}
\\
{\mathbf{b}^{(t)}_i \neq \emptyset}}}
\mathbbm{1} (\mathbf{b}^{(t)})
=
\bigoplus_{\substack{X= \coprod_{i=0}^{t -1} \mathbf{b}_i^{(t)}
\\
{\mathbf{b}^{(t)}_i \neq \emptyset}}}
\kring
\cong 
\kring [\prt_{t} X]
\]
and the coface $\tilde{d}^0: (\fbcmnd)^{\ell} \mathbbm{1} \rightarrow (\fbcmnd)^{\ell+1}  \mathbbm{1} $ is zero. So, using Proposition \ref{prop:norm_cosimp_F}, the differential of $(N (\cosimp^\bullet \mathbbm{1})\op)$ is given by $\sum_{i=1}^{t }(-1)^i  \tilde{d}^i$.

For $1\leq i\leq t$, comparing the definition of $\delta^i$ given in Definition \ref{defn:cperm} and the explicit description of $\tilde{d}^i: (\fbcmnd)^{\ell} \mathbbm{1} \rightarrow (\fbcmnd)^{\ell+1}  \mathbbm{1}$ given in Proposition \ref{prop:cosimplicial-structure-CF}, we obtain that $\tilde{d}^i$  corresponds, via the above isomorphism, to $\delta^i$.
\end{proof}

%%%%%%%%%%%
\subsection{Cohomology of the complex of ordered partitions}
Let $X$ be a non-empty finite set.
To calculate the cohomology of $\cperm (X)$, we relate it to a cellular complex as follows. The permutohedron $\Pi_X$ is an (abstract) $(|X|-1)$-dimensional polytope, with $k$-faces in bijection with $\prt_{|X|-k} X$ (see \cite[Example 0.10]{MR1311028}).  In particular, the vertices (i.e., $0$-faces) are indexed by elements of $\prt_{|X|} (X)$ (i.e.,  by permutations of the set $X$).

We record the necessary information on the face inclusions of $\Pi_X$:

\begin{lem}
\label{lem:permutohedron_face_inclusions}
Let $\mathfrak{p}=(\mathfrak{p}_1, \ldots , \mathfrak{p}_{t}),\mathfrak{q}=  (\mathfrak{q}_1, \ldots , \mathfrak{q}_{t+1})  \in \prt_* (X)$, for $1 \leq t < |X|$,  be  ordered partitions of the finite non-empty set $X$.  Then $\mathfrak{q} $ is a face of  $\mathfrak {p}$ if and only if $\mathfrak{q}\leq \mathfrak{p}$. 
\end{lem}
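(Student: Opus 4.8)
The plan is to unwind the combinatorial description of faces of the permutohedron $\Pi_X$ given in \cite[Example 0.10]{MR1311028} and match it against the poset structure $(\prt_*(X), \leq)$ of Lemma \ref{lem:prt_poset}. Recall the standard identification: the $k$-faces of $\Pi_X$ are in bijection with $\prt_{|X|-k}(X)$, so an ordered partition $\mathfrak{p} \in \prt_t(X)$ corresponds to a face $F_\mathfrak{p}$ of dimension $|X|-t$, and an ordered partition $\mathfrak{q} \in \prt_{t+1}(X)$ corresponds to a face $F_\mathfrak{q}$ of dimension $|X|-t-1$, i.e. of codimension one inside a $(|X|-t)$-dimensional face. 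Thus the statement is exactly the claim that $F_\mathfrak{q}$ is a codimension-one face of $F_\mathfrak{p}$ (a facet of $F_\mathfrak{p}$) if and only if $\mathfrak{q} \leq \mathfrak{p}$ in the sense of Definition \ref{def-refinement}.

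First I would recall concretely how the face poset of $\Pi_X$ is described: a face $F_\mathfrak{p}$ associated with $\mathfrak{p} = (\mathfrak{p}_1, \ldots, \mathfrak{p}_t)$ is itself combinatorially a product of permutohedra $\Pi_{\mathfrak{p}_1} \times \cdots \times \Pi_{\mathfrak{p}_t}$, and its faces are obtained by independently refining each block $\mathfrak{p}_i$ into an ordered partition; equivalently, the faces of $F_\mathfrak{p}$ are precisely the $\mathfrak{r} \in \prt_*(X)$ with $\mathfrak{r} \leq \mathfrak{p}$, and $\dim F_\mathfrak{r} = |X| - \ell$ where $\mathfrak{r} \in \prt_\ell(X)$. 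From this description, $\mathfrak{q}$ being a face of $\mathfrak{p}$ is equivalent to $\mathfrak{q} \leq \mathfrak{p}$ for \emph{all} pairs, and the hypothesis $\mathfrak{q} \in \prt_{t+1}(X)$, $\mathfrak{p} \in \prt_t(X)$ forces the codimension to be exactly one, so no further argument is needed to pin down that it is a \emph{facet}.

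Concretely, for the forward direction: if $F_\mathfrak{q}$ is a (codimension-one) face of $F_\mathfrak{p}$, then under the product decomposition $F_\mathfrak{p} = \Pi_{\mathfrak{p}_1}\times\cdots\times\Pi_{\mathfrak{p}_t}$ a facet is obtained by replacing exactly one factor $\Pi_{\mathfrak{p}_i}$ by one of its facets, which corresponds to splitting $\mathfrak{p}_i$ into an ordered pair $(\mathfrak{p}'_i, \mathfrak{p}''_i)$ with $\mathfrak{p}_i = \mathfrak{p}'_i \amalg \mathfrak{p}''_i$; inserting this in place gives exactly an ordered partition $\mathfrak{q}$ with $\mathfrak{q} \leq \mathfrak{p}$ (the witnessing order-preserving surjection $\alpha: \mathbf{t+1} \twoheadrightarrow \mathbf{t}$ has the single fibre $\alpha^{-1}(i) = \{i, i+1\}$). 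For the reverse direction, given $\mathfrak{q} \leq \mathfrak{p}$ with $\mathfrak{q} \in \prt_{t+1}(X)$, the order-preserving surjection $\alpha: \mathbf{t+1}\twoheadrightarrow\mathbf{t}$ must have exactly one fibre of size two (all others singletons), which again displays $\mathfrak{q}$ as obtained from $\mathfrak{p}$ by splitting a single block, hence $F_\mathfrak{q}$ is a facet of $F_\mathfrak{p}$. I would phrase this by citing the product structure of permutohedral faces from \cite{MR1311028}, reducing the whole lemma to the elementary observation about which order-preserving surjections $\mathbf{t+1}\twoheadrightarrow\mathbf{t}$ exist.

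The main obstacle, such as it is, is bookkeeping: making sure the dimension/codimension conventions ($k$-faces $\leftrightarrow$ $\prt_{|X|-k}X$) line up correctly with the indexing in Definition \ref{def-refinement}, and being careful that ``face'' here means ``face of the closed polytope $F_\mathfrak{p}$'' rather than a proper face, so that the degenerate case $\mathfrak{q} = \mathfrak{p}$ is excluded by the strict inequality $t < t+1$ in the cardinalities. There is no deep content — everything follows from the standard combinatorics of the permutohedron — so the proof should be short, essentially a translation between the geometric and order-theoretic descriptions of the face poset.
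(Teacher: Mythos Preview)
Your proposal is correct. The paper does not actually give a proof of this lemma: it is stated as a recorded fact about the face poset of the permutohedron, with the implicit justification being the reference to \cite[Example 0.10]{MR1311028}. Your argument is a correct and standard unwinding of that reference, using the product decomposition $F_\mathfrak{p} \cong \Pi_{\mathfrak{p}_1}\times\cdots\times\Pi_{\mathfrak{p}_t}$ of faces and the description of facets of a permutohedron as ordered bipartitions; this is exactly the combinatorics encoded in Ziegler's description, so your proof is what the paper's citation points to.
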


 Lemma \ref{lem:permutohedron_facet_inclusion_codim2} can be rephrased as follows:
 
 \begin{lem}
 \label{lem:permutohedron_facet_inclusion_codim2-reformulation}
 \ 
 \begin{enumerate}
\item  
\label{item:premutohedra_part1}
Two vertices of $\Pi_X$ are linked by a $1$-face if and only if they differ by a transposition of adjacent elements in the corresponding ordered lists of elements of $X$. 
\item 
\label{item:premutohedra_part2}
Let $\mathfrak{p}$ be a $n$-face of $\Pi_X$ with $2 \leq n\leq |X|-1$; a $(n-2)$-face $\mathfrak{r}$ of $\mathfrak{p}$ lies in precisely two $(n-1)$-faces of $\mathfrak{p}$. 
\end{enumerate}
   \end{lem}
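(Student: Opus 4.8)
The plan is simply to translate both assertions into the language of ordered partitions via the dictionary of Lemma \ref{lem:permutohedron_face_inclusions} and then quote Lemma \ref{lem:permutohedron_facet_inclusion_codim2}. Recall that, under the identification of the $k$-faces of $\Pi_X$ with $\prt_{|X|-k}(X)$, the face relation corresponds to the refinement order $\leq$: Lemma \ref{lem:permutohedron_face_inclusions} gives this for faces of adjacent dimension, and since the face poset of a polytope is graded by dimension and a face of a face is a face, transitivity of $\leq$ (Lemma \ref{lem:prt_poset}) extends the identification to faces of arbitrary codimension (this is the standard description of the face lattice of the permutohedron, cf. \cite[Example 0.10]{MR1311028}). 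In particular the vertices of $\Pi_X$ are the elements of $\prt_{|X|}(X)$, i.e.\ the total orderings of $X$, all of whose blocks are singletons.

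For the first point, a $1$-face of $\Pi_X$ is an element $\mathfrak{q}\in\prt_{|X|-1}(X)$, and a vertex $\mathfrak{p}\in\prt_{|X|}(X)$ lies on it exactly when $\mathfrak{p}\leq\mathfrak{q}$. Since $\mathfrak{q}$ has a unique block of size two, say $\{u,v\}$, there are precisely two such $\mathfrak{p}$ — the two ways of ordering that block — so a $1$-face is indeed a segment with two endpoints. Hence two vertices $\mathfrak{p},\mathfrak{p}'$ are linked by a $1$-face if and only if there is $\mathfrak{q}\in\prt_{|X|-1}(X)$ with $\mathfrak{p}\leq\mathfrak{q}$ and $\mathfrak{p}'\leq\mathfrak{q}$, and Lemma \ref{lem:permutohedron_facet_inclusion_codim2}(1) rewrites this condition as: the ordered lists $\mathfrak{p}$ and $\mathfrak{p}'$ agree except that two adjacent entries are interchanged. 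This is exactly the assertion.

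For the second point, set $t:=|X|-n$, so that the hypothesis $2\leq n\leq|X|-1$ becomes $1\leq t\leq|X|-2$, which is precisely the range allowed in Lemma \ref{lem:permutohedron_facet_inclusion_codim2}(2). Then the $n$-face $\mathfrak{p}$ is an element of $\prt_t(X)$, the $(n-2)$-face $\mathfrak{r}$ is an element of $\prt_{t+2}(X)$ with $\mathfrak{r}\leq\mathfrak{p}$, and the $(n-1)$-faces of $\mathfrak{p}$ containing $\mathfrak{r}$ are exactly the $\mathfrak{q}\in\prt_{t+1}(X)$ with $\mathfrak{r}\leq\mathfrak{q}\leq\mathfrak{p}$. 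Lemma \ref{lem:permutohedron_facet_inclusion_codim2}(2) says there are precisely two of these, which is the claim.

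There is no genuine obstacle here; the statement is a pure reformulation. The only points requiring care are keeping the indexing convention straight ($k$-faces $\leftrightarrow\prt_{|X|-k}(X)$, so that codimension corresponds to the number of blocks and the substitution $t=|X|-n$ lands in the right range), and checking that the face relation between non-adjacent ranks of $\Pi_X$ is still governed by refinement of ordered partitions, which follows from the gradedness of the face poset together with Lemma \ref{lem:permutohedron_face_inclusions} and transitivity of $\leq$.
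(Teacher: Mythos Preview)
Your proof is correct and matches the paper's approach exactly: the paper introduces this lemma with the sentence ``Lemma \ref{lem:permutohedron_facet_inclusion_codim2} can be rephrased as follows'' and gives no separate proof, so the intended argument is precisely the dictionary translation you carry out.
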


The polytope $\Pi_X$ has a geometric realization $|\Pi_X|$ as the convex hull in $\mathbb{R}^{|X|}$ of the vectors with pairwise distinct coordinates from $\{1, 2, \ldots, |X| \}$; in particular, $|\Pi_X|$ is contractible. Moreover, by the above, the geometric polytope $|\Pi_X|$ is equipped with a cellular structure with the cells of dimension $k$ indexed by the elements of $\prt_{|X|-k} (X)$, for $0\leq k \leq |X|-1$. The cellular structure is understood by using Lemma \ref{lem:permutohedron_face_inclusions}.

\begin{exam}
\label{exam:P3-2}
The geometric realization of the  permutohedron $\Pi_{\{1, 2, 3\}}$ is a hexagon with vertices indexed by the elements of $\sym_3$:
\[
\xymatrix{
&
(1,2,3)
\ar@{-}[r]^{(\{1,2\},3)}
\ar@{-}[ld]_{(1,\{2,3\})}
&
(2,1,3)
\ar@{-}[rd]^{(2,\{1,3\})}
\\
(1,3,2)
\ar@{-}[rd]_{(\{1,3\},2)}
&
\ar@{}[r]|{(\{1,2,3\})}
&&
(2,3,1)
\ar@{-}[ld]^{(\{2,3\},1)}
\\
&
(3,1,2)
\ar@{-}[r]_{(3,\{1,2\})}
&
(3,2,1)
}
\]
where, for notational clarity, the braces have been omitted from singletons.

In this case, the $0$-faces are the vertices, the $1$-faces are the edges and the $2$-face is the interior of the hexagon. Note that a $0$-face lies in precisely two $1$-faces.
\end{exam}

In the following Proposition, $\cperm (X)$ is considered with  {\em homological} degree and $[-|X|]$ denotes the shift of homological degree (i.e., $\cperm (X)[-|X|]_t=\cperm (X)_{t-|X|} $ by Notation \ref{nota:shift}).

\begin{prop}
\label{prop:acyclicity}
Let $X$ be a non-empty finite set, then there is an isomorphism of chain complexes
\[
\gamma_X :
\cperm (X)[-|X|]_*
\stackrel{\cong}{\rightarrow}
C^{\mathrm{cell}}_*(|\Pi_X|; \kring),
\]
where  $C^{\mathrm{cell}}_*(|\Pi_X|; \kring)$ is the cellular complex of $|\Pi_X|$ with coefficients in $\kring$.

In particular, $\cperm(X)$ has cohomology $\kring$  concentrated in degree $|X|$. 
\end{prop}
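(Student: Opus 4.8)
The plan is to deduce the cohomology of $\cperm(X)$ from the (transparent) homology of the contractible space $|\Pi_X|$, by identifying the shifted complex $\cperm(X)[-|X|]$ with the cellular chain complex $C^{\mathrm{cell}}_*(|\Pi_X|;\kring)$. First I would match the underlying graded modules: the $k$-faces of $|\Pi_X|$ are indexed by $\prt_{|X|-k}(X)$ for $0\le k\le|X|-1$, so $C^{\mathrm{cell}}_k(|\Pi_X|;\kring)=\kring[\prt_{|X|-k}(X)]$, while unwinding the shift convention of Notation \ref{nota:shift} (and regarding the cochain complex $\cperm(X)$ as a chain complex via $\cperm(X)_n:=\cperm(X)^{-n}$) gives $\cperm(X)[-|X|]_k=\cperm(X)^{|X|-k}=\kring[\prt_{|X|-k}(X)]$ in the same range. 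So degreewise both complexes are the free $\kring$-module on the faces of $\Pi_X$, and I would \emph{define} $\gamma_X$ on basis elements by $[\mathfrak p]\mapsto\lambda(\mathfrak p)\,e_{\mathfrak p}$, where $e_{\mathfrak p}$ is the oriented cellular chain carried by the face $\mathfrak p$ and $\lambda:\prt_*(X)\to\{\pm1\}$ is a sign function, to be chosen so that $\gamma_X$ is a chain map.

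Next I would note that the two differentials have the same \emph{shape}. By Definition \ref{defn:cperm} and Remark \ref{rem:diff_pm1_or_0}, $d_{\cperm}[\mathfrak p]$ is a $\{\pm1\}$-linear combination of the $[\mathfrak q]$ with $\mathfrak q\in\prt_{t+1}(X)$ and $\mathfrak q\le\mathfrak p$, which by Lemma \ref{lem:permutohedron_face_inclusions} are precisely the facets of the face $\mathfrak p$; and since $|\Pi_X|$ is a convex polytope, hence a regular CW complex, $\partial e_{\mathfrak p}$ is likewise a $\{\pm1\}$-combination of the $e_{\mathfrak q}$ over those same facets. Moreover $\cperm(X)$ genuinely is a complex: $d^2=0$ is checked as in Proposition \ref{prop:kz_ch_cx}, the contributions cancelling in pairs because, by Lemma \ref{lem:permutohedron_facet_inclusion_codim2}(2) (equivalently Lemma \ref{lem:permutohedron_facet_inclusion_codim2-reformulation}(\ref{item:premutohedra_part2})), each length-two interval $\mathfrak r\le\mathfrak p$ in $\prt_*(X)$ contains exactly two intermediate ordered partitions; and the same ``diamond'' condition forces $\partial^2=0$. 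Writing $\varepsilon,\varepsilon'$ for the $\pm1$ incidence coefficients of the two complexes, it follows that the product of the four coefficients of $\varepsilon$ (resp. of $\varepsilon'$) around any such diamond equals $-1$.

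Then I would solve for $\lambda$: the equation $\gamma_X d_{\cperm}=\partial\gamma_X$ amounts, since all coefficients are units, to $\lambda(\mathfrak p)\lambda(\mathfrak q)=\varepsilon'(\mathfrak p,\mathfrak q)\varepsilon(\mathfrak p,\mathfrak q)$ for every covering pair $\mathfrak q\lessdot\mathfrak p$ in $\prt_*(X)$; the right-hand side, viewed as a $\{\pm1\}$-valued function on the edges of the Hasse diagram, has trivial product around every diamond by the previous step, and one can build the desired $\lambda$ by downward induction on the rank of the poset (starting from $\lambda\equiv1$ on the unique top face), the diamond condition and the standard connectedness of the facet graph of each face of a polytope making the constraints on each new value consistent --- alternatively, the obstruction to the existence of $\lambda$ lies in a cohomology group controlled by the contractible space $|\Pi_X|$, and so vanishes. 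With such a $\lambda$, $\gamma_X$ is an isomorphism of chain complexes, so $H_*(\cperm(X)[-|X|])\cong H_*^{\mathrm{cell}}(|\Pi_X|;\kring)\cong H_*(|\Pi_X|;\kring)$, which is $\kring$ in degree $0$ and $0$ otherwise since $|\Pi_X|$ is contractible; re-indexing through the shift, $H^s(\cperm(X))=H_{|X|-s}(\cperm(X)[-|X|])$ is $\kring$ for $s=|X|$ and vanishes otherwise.

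The delicate point is the sign matching of the third step --- reconciling the internal signs of $\cperm(X)$, which come from the cofaces $\delta^i$ of Definition \ref{defn:cperm}, with the geometric incidence numbers of the cellular complex of the permutohedron. Everything else is bookkeeping with the face poset of $\Pi_X$ and with the (co)homological degree conventions.
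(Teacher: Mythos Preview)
Your proposal is correct and follows essentially the same strategy as the paper: match the underlying graded modules via the face indexing of $|\Pi_X|$, observe that both differentials are $\{\pm 1\}$-combinations over the same covering relations, and construct the sign function $\lambda$ inductively using the diamond property of Lemma~\ref{lem:permutohedron_facet_inclusion_codim2}(2). The paper runs the induction in the opposite direction (from vertices upward, starting with $\lambda(\sigma)=\sgn(\sigma)$ and, for each $n$-face $\mathfrak p$, arguing that the cycle $\gamma_X d[\mathfrak p]_{\mathrm{perm}}$ must equal $\pm d[\mathfrak p]_{\mathrm{cell}}$ via a connectedness-of-the-facet-graph contradiction), whereas you descend from the top cell and phrase the compatibility as a cocycle/coboundary problem; but the substance is the same. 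One small slip: going top-down, the consistency you need at a face $\mathfrak q$ involves the \emph{cofaces} of $\mathfrak q$ (the atoms of the interval $[\mathfrak q,\top]$, itself the face lattice of a polytope), so the relevant connectedness is that of the coface graph of $\mathfrak q$ rather than ``the facet graph of each face'' --- the fact is equally standard, but the phrasing should match your direction of induction.
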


\begin{proof}
The case $|X|=1$ is clear, hence we suppose that $|X| \geq 2$. 

The  cellular complex $C^{\mathrm{cell}}_*(|\Pi_X|; \kring)$ has underlying graded $\kring$-module which is free on   $\prt_* (X)$, so that there is an isomorphism of $\kring$-modules
\begin{equation}
\label{preuve-Prop4.26}
\cperm (X)^{|X|-t} 
\cong 
C^{\mathrm{cell}}_t(|\Pi_X|; \kring).
\end{equation}

Moreover, for $[\mathfrak{p}]_{\mathrm{cell}}$ the generator corresponding to $\mathfrak{p} \in \prt_t (X)$, $d [\mathfrak{p}]_{\mathrm{cell}}$ is a signed sum of the 
generators $[\mathfrak{q}]_{\mathrm{cell}}$, where $\mathfrak{q} \in \prt_{t+1} (X)$ and $\mathfrak{q} \leq \mathfrak{p}$. (Here and in the following, the subscripts indicate in which  complex the generators live.) With the regrading used here, this is analogous to the behaviour observed in Remark \ref{rem:diff_pm1_or_0}; the only difference being in the signs which may occur.

In homological degree zero, the isomorphism (\ref{preuve-Prop4.26}) is given by 
\begin{eqnarray*}
\cperm (X) ^{|X|} &\rightarrow & C^{\mathrm{cell}}_0(|\Pi_X|; \kring) 
\\
\ [\sigma]_{\mathrm{perm}} & \mapsto & \sgn (\sigma)[\sigma]_{\mathrm{cell}},
\end{eqnarray*}
 using the identification of ordered partitions of $X$ into singletons with the symmetric group $\mathrm{Aut} (X)$ to define $\sgn(\sigma)$.

To prove the statement, we consider the cellular filtration of $|\Pi_X|$. More precisely, we prove, by induction on $n$, that we have an isomorphism of chain complexes
\[
\xymatrix{
\trunc_{\leq n}\cperm (X)[-|X|]_*
\ar[r]^{\gamma_X}_\cong &
\trunc_{\leq n}C^{\mathrm{cell}}_*(|\Pi_X|; \kring)
}
\]
such that, at each degree, $\gamma_X [\mathfrak{p}]_{\mathrm{perm}} = \pm [\mathfrak{p}]_{\mathrm{cell}}$ and
where $\trunc_{\leq n}C_*$ is the truncation of the complex $C_*$ defined by $(\trunc_{\leq n}C_*)_i=0 $ if $i>n$ and $(\trunc_{\leq n}C_*)_i=C_i $ if $i\leq n$.
 
To prove the initial step, note that the $1$-skeleton of $|\Pi_X|$ can be given the structure of an oriented graph, where each edge is oriented from the vertex with negative signature to that of positive signature; this relies crucially upon the fact that edges are indexed by   transpositions (see Lemma  \ref{lem:permutohedron_facet_inclusion_codim2-reformulation} (\ref{item:premutohedra_part1})). This serves to make explicit the signs appearing in the differential of the associated chain complex. 

It follows that the identity on $\prt_{|X|-1} (X)$  induces an isomorphism of chain complexes 
$\gamma_X: \trunc_{\leq 1}\cperm (X)[-|X|]_*
\to \trunc_{\leq 1}C^{\mathrm{cell}}_*(|\Pi_X|; \kring)
$:
\[
\xymatrix{
\cperm (X)^{|X|-1} 
\ar[d]_{\cong}^{[\mathfrak{p}]_{\mathrm{perm}} \mapsto [\mathfrak{p}]_{\mathrm{cell}}}
\ar[r]^d
&
\cperm (X)^{|X|}
\ar[d]_{\cong}^{[\sigma]_{\mathrm{perm}} \mapsto  \sgn (\sigma)[\sigma]_{\mathrm{cell}}} 
\\
C^{\mathrm{cell}}_1(|\Pi_X|; \kring) 
\ar[r]_d
&
C^{\mathrm{cell}}_0(|\Pi_X|; \kring) .
}
\] 

For $n\geq 2$ assume that $\gamma_X:  \trunc_{\leq n-1}\cperm (X)[-|X|]_*
\to \trunc_{\leq n-1}C^{\mathrm{cell}}_*(|\Pi_X|; \kring)
$ is defined and is an isomorphism. To prove the inductive step, choose $\mathfrak{p} \in \prt_{|X|-n} (X)$; as observed above, the differential of the generator  $[\mathfrak{p}]_{\mathrm{cell}} \in C^{\mathrm{cell}}_n(|\Pi_X|; \kring)$  is a signed sum of the generators $[\mathfrak{q}]_{\mathrm{cell}}$  corresponding to its $(n-1)$-faces:
\[
d  [\mathfrak{p}]_{\mathrm{cell}} = \sum_{\substack{\mathfrak{q} \in \prt_{|X|-n+1} (X) \\ \mathfrak{q}\leq \mathfrak{p}}} \eta_{\mathfrak{q}}  [\mathfrak{q}]_{\mathrm{cell}},
\]
 with $\eta_{\mathfrak{q}}\in \{\pm 1 \}.
$

Now, by the inductive hypothesis, we have an isomorphism $\gamma_X : \cperm (X)^{|X|-n+1} \stackrel{\cong}{\rightarrow} C_{n-1}^{\mathrm{cell}}(|\Pi_X|; \kring) $ that is compatible with the differential and which has the form $[\mathfrak{q}]_{\mathrm{perm}} \mapsto \pm [\mathfrak{q}] _{\mathrm{cell}}$.
 Hence, by Remark \ref{rem:diff_pm1_or_0}:
 
 \[
\gamma_Xd[\mathfrak{p}]_{\mathrm{perm}}= \sum_{\substack{\mathfrak{q} \in \prt_{|X|-n+1} (X) \\ \mathfrak{q}\leq \mathfrak{p}}} \alpha_{\mathfrak{q}}  [\mathfrak{q}]_{\mathrm{cell}},
\]
 with $\alpha_{\mathfrak{q}}\in \{\pm 1 \}.
$; moreover, $\gamma_Xd[\mathfrak{p}]_{\mathrm{perm}}$ is a cycle. 
 
We claim that $ \gamma_X d[\mathfrak{p}]_{\mathrm{perm}} =\epsilon _{\mathfrak{p}} d[\mathfrak{p}]_{\mathrm{cell}}$, for some  $\epsilon_{\mathfrak{p}} \in \{ \pm 1 \}$. If $ \gamma_X d[\mathfrak{p}]_{\mathrm{perm}} = d[\mathfrak{p}]_{\mathrm{cell}}$, take $\epsilon _{\mathfrak{p}}=1$. Otherwise, consider 
$$
 \gamma_X d[\mathfrak{p}]_{\mathrm{perm}} +  d[\mathfrak{p}]_{\mathrm{cell}}=\sum_{\substack{\mathfrak{q} \in \prt_{|X|-n+1} (X) \\ \mathfrak{q}\leq \mathfrak{p}}} \kappa_{\mathfrak{q}}[\mathfrak{q}]_{\mathrm{cell}},
 $$
 where $\kappa_{\mathfrak{q}} = \alpha_{\mathfrak{q}}+\eta_ {\mathfrak{q}}$.

By the hypothesis, there is some $\mathfrak{q} \in \prt_{|X|-n+1} (X)$ such that
 $\kappa_{\mathfrak{q}} =0$.
 
If $ \gamma_X d[\mathfrak{p}]_{\mathrm{perm}} +  d[\mathfrak{p}]_{\mathrm{cell}} \not =0$, there is some $\mathfrak{q}'$ such that $\kappa_{\mathfrak{q}'}\not =0$.
 Using  Lemma \ref{lem:permutohedron_facet_inclusion_codim2-reformulation} (\ref{item:premutohedra_part2}), we can find such a pair $(\mathfrak{q}, \mathfrak{q}')$ such that $\mathfrak{q}$ and $\mathfrak{q}'$ have a $(n-2)$-face in common, denoted by $\mathfrak{q} \cap \mathfrak{q}'$; moreover, $\mathfrak{q}, \mathfrak{q}'$ are the only  $(n-1)$-faces of which $\mathfrak{q} \cap \mathfrak{q}'$ is a face. We deduce that $d(\mathfrak{q} \cap \mathfrak{q}') \not = 0$: this contradicts the fact that $ \gamma_X d[\mathfrak{p}]_{\mathrm{perm}} +  d[\mathfrak{p}]_{\mathrm{cell}}$ is a cycle, therefore   $ \gamma_X d[\mathfrak{p}]_{\mathrm{perm}} +  d[\mathfrak{p}]_{\mathrm{cell}}=0$. Taking $\epsilon _{\mathfrak{p}}=-1$ in this case establishes the claim.

  By the uniqueness property of the cycle $d[\mathfrak{p}]_{\mathrm{cell}}$ explained above, it follows that $ \gamma_X d[\mathfrak{p}]_{\mathrm{perm}} =\epsilon _{\mathfrak{p}} d[\mathfrak{p}]_{\mathrm{cell}}$, where $\epsilon_{\mathfrak{p}} \in \{ \pm 1 \}$. Thus, an extension of $\gamma_X$ compatible with the differential and of the required form is given  by defining $\gamma_X [\mathfrak{p}]_{\mathrm{perm}}:= \epsilon _{\mathfrak{p}} [\mathfrak{p}]_{\mathrm{cell}}$. Applying this argument for all cells $\mathfrak{p}$ of dimension $n$ completes the inductive step.    

The isomorphism of chain complexes $\gamma_X$ induces an isomorphism in homology. Now  $|\Pi_X|$ is contractible, hence has homology $\kring$ concentrated in degree zero. Returning to the cohomological, unshifted grading, this implies that 
 $\cperm (X)$ has cohomology isomorphic to $\kring$ concentrated in cohomological degree $|X|$. 
\end{proof} 

\begin{cor}
\label{cor:cohom_cperm}
There is an isomorphism of cohomologically-graded $\fb\op$-modules 
\[
H^* (N (\cosimp^\bullet \mathbbm{1})\op) 
\cong 
\orient.
\]
\end{cor}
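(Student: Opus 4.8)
The plan is to compute the cohomology object-wise over the groupoid $\fb$. Since $\fb\op$ is a groupoid whose skeleton has objects $\mathbf{b}$ ($b\in\nat$) with automorphism groups $\sym_b$, a cohomologically-graded $\fb\op$-module is nothing but a cohomologically-graded $\sym_b$-module for each $b\in\nat$; and $\orient$ corresponds to the collection $\{\sgnrep_{\sym_b}\}_{b\in\nat}$, with $\orient(\mathbf{b})\cong\sgnrep_{\sym_b}$ placed in cohomological degree $b$ (cf. the remark following Notation \ref{nota:orient}). So it suffices to show that, for each $b\in\nat$, the graded $\sym_b$-module $H^*\big((N(\cosimp^\bullet\mathbbm{1})\op)(\mathbf{b})\big)$ is $\kring$ concentrated in degree $b$, with $\sym_b$ acting through the signature. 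The case $b=0$ is immediate: by Proposition \ref{prop:norm_cosimp_F} and Notation \ref{notation-1}, $(N(\cosimp^\bullet\mathbbm{1})\op)^t(\mathbf{0})$ is $\mathbbm{1}(\emptyset)=\kring$ for $t=0$ and $0$ otherwise, so its cohomology is $\kring$ in degree $0$, matching $\orient(\emptyset)=\kring$.

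For $b>0$, I would invoke Proposition \ref{prop:compare_Cperm_Norm} to identify $(N(\cosimp^\bullet\mathbbm{1})\op)(\mathbf{b})\cong\cperm(\mathbf{b})$, and then Proposition \ref{prop:acyclicity}, which gives that $\cperm(\mathbf{b})$ has cohomology $\kring$ concentrated in degree $b$. The remaining point is to pin down the $\sym_b$-action on this top cohomology. Here I would unwind the isomorphism $\gamma_{\mathbf{b}}$ of Proposition \ref{prop:acyclicity}: identifying $\prt_b(\mathbf{b})$ with $\aut(\mathbf{b})=\sym_b$, it sends $[\sigma]_{\mathrm{perm}}$ to $\sgn(\sigma)[\sigma]_{\mathrm{cell}}$ in the bottom homological degree of the cellular complex of the contractible polytope $|\Pi_{\mathbf{b}}|$. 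Since $|\Pi_{\mathbf{b}}|$ is connected, $H_0$ is generated by the class of any single vertex $[\sigma]_{\mathrm{cell}}$; transporting back through $\gamma_{\mathbf{b}}^{-1}$ (and using $\sgn(\sigma)^2=1$), the group $H^b(\cperm(\mathbf{b}))$ is generated by the common class $c$ of the cocycles $\sgn(\sigma)[\sigma]_{\mathrm{perm}}$, $\sigma\in\sym_b$.

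Finally, the $\sym_b=\aut(\mathbf{b})$-action arising from the $\fb\op$-module structure of $\cperm$ permutes the degree-$b$ generators by relabelling an ordered partition into singletons, i.e. $\tau\cdot[\sigma]_{\mathrm{perm}}=[\tau\sigma]_{\mathrm{perm}}$; hence
\[
\tau\cdot c=\big[\sgn(\sigma)[\tau\sigma]_{\mathrm{perm}}\big]
=\sgn(\tau)\,\big[\sgn(\tau\sigma)[\tau\sigma]_{\mathrm{perm}}\big]=\sgn(\tau)\,c,
\]
so $\sym_b$ acts on $H^b(\cperm(\mathbf{b}))$ by the sign representation. Assembling over all $b\in\nat$ then yields the asserted isomorphism of cohomologically-graded $\fb\op$-modules $H^*(N(\cosimp^\bullet\mathbbm{1})\op)\cong\orient$. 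I expect the only genuine difficulty to be bookkeeping rather than substance: keeping the shift conventions of Proposition \ref{prop:acyclicity} straight and, above all, correctly tracking the factor $\sgn(\sigma)$ through $\gamma_{\mathbf{b}}$, since it is exactly this sign that forces the $\sym_b$-action on the top cohomology to be the signature rather than the trivial representation. Everything else is formal.
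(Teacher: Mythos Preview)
Your proposal is correct and follows essentially the same approach as the paper: both invoke Propositions \ref{prop:compare_Cperm_Norm} and \ref{prop:acyclicity}, treat $\mathbf{0}$ separately, and identify the $\sym_b$-action on the top cohomology via the sign. The paper is terser, simply noting that the isomorphism is induced by the map $\cperm(X)^{|X|}\to\kring$, $[\sigma]\mapsto\sgn(\sigma)$; your unwinding of $\gamma_{\mathbf{b}}$ amounts to the same thing, and your explicit check that $\tau\cdot c=\sgn(\tau)c$ is a welcome elaboration (modulo the harmless ambiguity of whether the $\fb\op$-action is by left or right multiplication, which does not affect the signature).
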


\begin{proof}
This follows from Propositions \ref{prop:compare_Cperm_Norm} and \ref{prop:acyclicity}, noting that the result holds evaluated on $\mathbf{0}$ by inspection. For $X$ a non-empty finite set,  the isomorphism in cohomology is induced by the morphism of $\kring$-modules 
$\cperm (X)^{|X|}
\rightarrow  \kring $ given by $[\sigma]  \mapsto  \sgn (\sigma)$.
\end{proof}

%%%%%%%%%%%%%%%%%%%%%%%%%%%%%%%%%%%%%%%%%%%%%%%%%%%%%%%%%%%%%%%%%%%%%%%%%%%%
\subsection{Proof of Theorem \ref{thm:norm_vs_kz} }
\label{subsect:cohom_iso}

In this section we pass from the case $\mathbbm{1}$ treated in Corollary \ref{cor:cohom_cperm} to that of an arbitrary $\finj\op$-module $F$. 

The complex $(N (\cosimp^\bullet F)\op)^* (\mathbf{b}) $  has a finite filtration given as follows:

\begin{lem}
\label{lem:NC_filter}
Let $F$ be an $\finj\op$-module and $b \in\nat$. 
For $s \in \nat$, let $\filt^s (N (\cosimp^\bullet F)\op)^* (\mathbf{b}) \subset(N (\cosimp^\bullet F)\op)^* (\mathbf{b}) $ be the graded $\kring$-submodule
:
\[
\filt^s (N(\cosimp^\bullet F)\op) ^t (\mathbf{b}) = 
\bigoplus_{\substack{\mathbf{b} = \mathbf{b}^{(t)} \amalg \coprod_{i=0}^{t -1} \mathbf{b}_i^{(t)}
\\
{\mathbf{b}^{(t)}_i \neq \emptyset}
\\
|\mathbf{b}^{(t)} |\leq b-s 
}}
F (\mathbf{b}^{(t)}). 
\]

Then   $\filt^s (N (\cosimp^\bullet F)\op)^* (\mathbf{b}) $ is a subcomplex and  there is a finite filtration 
\[
0 
\subset 
\filt^b (N (\cosimp^\bullet F)\op)^* (\mathbf{b})
\subset 
\filt^{b-1} (N (\cosimp^\bullet F)\op)^* (\mathbf{b})
\subset \ldots
 \subset
\filt^0 (N (\cosimp^\bullet F)\op)^* (\mathbf{b})
=(N (\cosimp^\bullet F)\op)^* (\mathbf{b}).
\]

Moreover, the quotient complex 
$\filt^s (N (\cosimp^\bullet F)\op)^* (\mathbf{b})/
\filt^{s+1} (N (\cosimp^\bullet F)\op)^* (\mathbf{b})$
 decomposes as the direct sum of complexes
\[
\bigoplus 
_{\substack{
\mathbf{b}' \subset \mathbf{b} \\
|\mathbf{b}'|=b-s}}
F(\mathbf{b}') 
\otimes
(N(\cosimp^\bullet \mathbbm{1})\op)^*  (\mathbf{b}\backslash \mathbf{b}').
\]
\end{lem}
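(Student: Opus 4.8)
The plan is to read everything off the explicit combinatorial description of $(N(\cosimp^\bullet F)\op)^*(\mathbf{b})$ in Proposition \ref{prop:norm_cosimp_F} together with the formulae for the coface operators $\tilde d^i$ in Proposition \ref{prop:cosimplicial-structure-CF}. The single structural fact driving the whole argument concerns the behaviour of the differential $d=\sum_{i=0}^{t}(-1)^i\tilde d^i$ with respect to the ``support'' $\mathbf{b}^{(t)}$ of a summand $F(\mathbf{b}^{(t)})$: the operators $\tilde d^1,\ldots,\tilde d^t$ leave $\mathbf{b}^{(t)}$ unchanged (each merely splits one of the subsets $\mathbf{b}_i^{(t)}$ into two), whereas $\tilde d^0$ carves a new subset $\mathbf{b}_0^{(t+1)}$ out of $\mathbf{b}^{(t)}$, and this subset is non-empty because we are working inside the \emph{normalized} complex; hence $\tilde d^0$ strictly decreases $|\mathbf{b}^{(t)}|$, and $d$ can only weakly decrease it. I would establish this observation first, as a short preliminary inside the proof.

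Granting it, the assertion that $\filt^s$ is a subcomplex is immediate, since its defining condition $|\mathbf{b}^{(t)}|\leq b-s$ is stable under $d$. That the filtration is decreasing is clear from the definition, and finiteness and exhaustiveness are pure bookkeeping: $\filt^0$ is everything because every summand of degree $t$ satisfies $|\mathbf{b}^{(t)}|\leq b-t\leq b$, while $\filt^{b+1}=0$ because $|\mathbf{b}^{(t)}|\geq 0$; this yields the stated finite filtration.

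For the subquotients, I would argue as follows. By construction $(\filt^s/\filt^{s+1})^t(\mathbf{b})$ is the sum of the summands $F(\mathbf{b}^{(t)})$ with $|\mathbf{b}^{(t)}|=b-s$ \emph{exactly}; sorting these by the value $\mathbf{b}':=\mathbf{b}^{(t)}$ (a subset of cardinal $b-s$), what remains as indexing data is an ordered partition of $\mathbf{b}\setminus\mathbf{b}'$ into $t$ non-empty parts, with coefficient $F(\mathbf{b}')$. Since $\mathbbm{1}$ is supported on $\emptyset$, Proposition \ref{prop:norm_cosimp_F} applied to $\mathbbm{1}$ (equivalently Proposition \ref{prop:compare_Cperm_Norm}) identifies $(N(\cosimp^\bullet\mathbbm{1})\op)^t(\mathbf{b}\setminus\mathbf{b}')$ with the free $\kring$-module on exactly those ordered partitions, giving the claimed isomorphism of $\nat$-graded $\kring$-modules. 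To upgrade it to an isomorphism of complexes, I would note that the differential induced on the subquotient discards precisely the $\tilde d^0$-contributions (those are the ones strictly lowering $|\mathbf{b}^{(t)}|$, hence landing in $\filt^{s+1}$), so it equals $\sum_{i=1}^{t}(-1)^i\tilde d^i$; this is support-preserving, hence respects the direct sum over $\mathbf{b}'$, and on the factor indexed by $\mathbf{b}'$ it acts only on the ordered partition of $\mathbf{b}\setminus\mathbf{b}'$ by splitting parts — that is, it is $\mathrm{id}_{F(\mathbf{b}')}\otimes d$ for the differential $d$ of $(N(\cosimp^\bullet\mathbbm{1})\op)(\mathbf{b}\setminus\mathbf{b}')$, whose $\tilde d^0$ likewise vanishes (cf. the proof of Proposition \ref{prop:compare_Cperm_Norm}).

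The main — indeed essentially the only — obstacle is this first step: extracting from the intricate bookkeeping of Proposition \ref{prop:cosimplicial-structure-CF} the clean statement that, on the normalized complex, $\tilde d^0$ strictly decreases the support while $\tilde d^1,\ldots,\tilde d^t$ preserve it. Once that is in hand everything else is formal. A minor point to keep in mind is that individual coface operators need not land in the normalized subcomplex (only the alternating sum does), but this does not affect the support estimate, which already holds summand by summand.
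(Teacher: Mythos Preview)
Your proposal is correct and follows essentially the same approach as the paper's proof, which is extremely terse (``clear from the construction'', ``by inspection''). Your version makes explicit precisely the structural fact the paper leaves implicit: that among the cofaces, only $\tilde d^0$ can change the support $\mathbf{b}^{(t)}$, and on the normalized complex its contributions either strictly decrease the support or fall outside $N$; this is exactly what underlies the paper's remark that on the subquotient ``the differential behaves as though all non-isomorphisms of $\finj\op$ act as zero''.
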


\begin{proof}
From the construction of $\cosimp^\bullet F$, it is clear that $\filt^s (N (\cosimp^\bullet F)\op)^* (\mathbf{b}) $ is stable under the differential and that one has a finite filtration as given. 

The subquotient $\filt^s /\filt^{s+1}$ only has contributions from terms  with $|\mathbf{b}' | = b-s$.  Moreover, by construction, the differential behaves as though all non-isomorphisms of $\finj\op$ act as zero. By inspection, the subquotient identifies as stated.
\end{proof}

\begin{proof}[Proof of Theorem \ref{thm:norm_vs_kz}]
The natural surjection $
N (\cosimp^\bullet F)\op
\twoheadrightarrow 
\kz F $ of cochain complexes in $\fcatk[\fb\op]$  is given by Proposition \ref{prop:kz_quotient}.
It suffices to show that $(N (\cosimp ^\bullet F)\op)^* (\mathbf{b}) 
\twoheadrightarrow 
(\kz F)^*(\mathbf{b})$
is a quasi-isomorphism, for each $b \in \nat$.
This follows from the spectral sequence associated to the filtration $\filt^\bullet (N (\cosimp^\bullet  F)\op)^* (\mathbf{b})$ that calculates the cohomology of $(N(\cosimp^\bullet F)\op)^*(\mathbf{b})$.  By Lemma \ref{lem:NC_filter}, the $E_0$-page is given by
$$E_0^{s,t}=\filt^s (N (\cosimp^\bullet F)\op)^{s+t} (\mathbf{b})/
\filt^{s+1} (N (\cosimp^\bullet F)\op)^{s+t} (\mathbf{b})
\cong
 \bigoplus 
_{\substack{
\mathbf{b}' \subset \mathbf{b} \\
|\mathbf{b}'|=b-s}}
F(\mathbf{b}') 
\otimes
(N(\cosimp^\bullet \mathbbm{1})\op)^{s+t}  (\mathbf{b}\backslash \mathbf{b}').$$
The $E_1$-page is calculated by using Corollary \ref{cor:cohom_cperm} to identify the cohomology of each complex $(N(\cosimp^\bullet \mathbbm{1})\op)^*  (\mathbf{b}\backslash \mathbf{b}')$ and by applying universal coefficients. In particular, for $|\mathbf{b}'|=b-s$ the cohomology of $(N(\cosimp^\bullet \mathbbm{1})\op)^* (\mathbf{b}\backslash \mathbf{b}')$ is concentrated in degree $s$. 

It follows that the $E_1$-page is concentrated in the line $t=0$. This line, equipped with the differential $d_1$, identifies with the complex $(\kz F)^*(\mathbf{b})$. The spectral sequence degenerates at the $E^2$-page, since there is no space for differentials. The result follows.  
\end{proof}

%%%%%%%%%%%%%%%%%%%%%%%%%%%%%%%%%%%%%%%%%%%%%%%%%%%%%%%%%%%%%
\subsection{Relating to $\finj\op$-cohomology}
\label{FIop-cohom}
In \cite{MR3654111,MR3603074}, the authors define the $\finj$-homology of a $\finj$-module $F$ as the left derived functors of a right exact functor $H_0^{\finj}$. The functor $H_0^{\finj}$ can be defined as the left adjoint of the functor $\fcatk[\fb] \rightarrow \fcatk[\finj]$ given by extension by zero on morphisms (see Definition \ref{defn:H0_finjop} for a precise definition in the dual case). 

In this section, we consider the dual situation to define the $\finj\op$-cohomology of a $\finj\op$-module $F$. This allows us to rephrase Theorem \ref{thm:norm_vs_kz}  in Corollary \ref{cor:kos_FIop_cohom}, using $\finj\op$-cohomology. Theorem \ref{thm:cohomology_NC}  can also be reinterpreted as the computation of the $\finj\op$-cohomology of the $\finj\op$-module  $\kring \hom_\Oz (-, \mathbf{a})$ considered in Proposition \ref{prop:hom_oz_functor_finjop} (see Theorem \ref{FI-op-cohom}).

By Theorem \ref{thm:barr_beck_finjop}, a $\finj\op$-module $F$ can be considered equivalently as a $\fbcmnd$-comodule in $\fcatk[\fb\op]$, with structure morphism $\psi_F : F \rightarrow \fbcmnd F$. Moreover, by Proposition \ref{prop:fbcmnd_coaug}, one has the natural coaugmentation $\eta_F : F \rightarrow \fbcmnd F$. 

The following constructions should be compared with the general framework that is given in Appendix \ref{sect:homextseq}, working with the category of $\finj\op$-modules, which is abelian and has enough injectives (see below for the latter).  

\begin{defn}
\label{defn:H0_finjop}
(Cf. Example \ref{exam:perp_primitives}.)
For $F \in \ob \fcatk[\finj\op]$, let $H^0_{\finj\op}(F) \in \ob \fcatk[\fb\op]$ be the equalizer of the diagram
\[
\xymatrix{
F 
\ar@<.5ex>[r]^{\eta_F} 
\ar@<-.5ex>[r]_{\psi_F}
&
\fbcmnd F,
}
\]
which defines a functor $H^0_{\finj\op} : \fcatk[\finj\op] \rightarrow \fcatk[\fb\op]$.
\end{defn}

\begin{defn}
\label{defn:zext}
Let $\zext : \fcatk[\fb\op] \rightarrow \fcatk[\finj\op]$ be the exact functor given by extension by zero on morphisms (i.e.,  an $\fb\op$-module is considered as an $\finj\op$-module by specifying that non-isomorphisms of $\finj\op$ act as zero).
\end{defn}

\begin{prop}
\label{prop:H0_left_exact}
The functor $H^0_{\finj\op}  : \fcatk[\finj\op] \rightarrow \fcatk[\fb\op]$ is right adjoint to $\zext : \fcatk[\fb\op] \rightarrow \fcatk[\finj\op]$. In particular, it is left exact.
\end{prop}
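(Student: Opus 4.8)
The plan is to establish the adjunction $\zext \dashv H^0_{\finj\op}$ directly by producing a natural isomorphism
\[
\hom_{\fcatk[\finj\op]}(\zext G, F) \cong \hom_{\fcatk[\fb\op]}(G, H^0_{\finj\op} F)
\]
for $G \in \ob \fcatk[\fb\op]$ and $F \in \ob \fcatk[\finj\op]$. First I would unwind the two sides. Since $\zext G$ has underlying $\fb\op$-module $G$ and all non-isomorphisms of $\finj\op$ act by zero, a natural transformation $\zext G \to F$ in $\fcatk[\finj\op]$ is the same datum as a natural transformation $G \to F$ of the underlying $\fb\op$-modules (i.e.\ of $\downarrow F$) subject to the extra constraint coming from the non-invertible morphisms of $\finj$. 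Concretely, for each proper inclusion $\mathbf{b}' \subsetneq \mathbf{b}$ one must have that the composite $G(\mathbf{b}) \to F(\mathbf{b}) \xrightarrow{F(\iota)} F(\mathbf{b}')$ vanishes, where $\iota$ is the $\finj$-morphism $\mathbf{b}' \hookrightarrow \mathbf{b}$ (all $\finj$-morphisms factor as an isomorphism followed by such a standard inclusion, so it suffices to check these).

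The key step is to recognise this vanishing condition as exactly the condition that the $\fb\op$-map $G \to \downarrow F$ lands in the equalizer $H^0_{\finj\op} F$ of $\eta_F, \psi_F : F \rightrightarrows \fbcmnd F$. By Proposition \ref{prop:fbcmnd_coaug}, $\eta_F : F(\mathbf{b}) \to \fbcmnd F(\mathbf{b}) \cong \bigoplus_{\mathbf{b}' \subseteq \mathbf{b}} F(\mathbf{b}')$ is the inclusion of the top summand indexed by $\mathbf{b}' = \mathbf{b}$, while by Proposition \ref{prop:fi_comod} the structure map $\psi_F$ has component $F(\mathbf{b}) \to F(\mathbf{b}')$ given by the $\finj\op$-action along $\mathbf{b}' \subseteq \mathbf{b}$. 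Hence a section $x \in F(\mathbf{b})$ lies in the equalizer precisely when $\psi_F(x) = \eta_F(x)$, i.e.\ when the component of $\psi_F(x)$ in every summand $F(\mathbf{b}')$ with $\mathbf{b}' \subsetneq \mathbf{b}$ is zero — which is exactly the vanishing of $F(\iota)$ applied to $x$ for every proper inclusion. Therefore a map $G \to \downarrow F$ of $\fb\op$-modules factors (necessarily uniquely) through $H^0_{\finj\op} F \hookrightarrow \downarrow F$ if and only if it satisfies the compatibility required to define a map $\zext G \to F$ in $\fcatk[\finj\op]$; chasing this identification through is routine and yields the natural bijection. Naturality in both variables is immediate from the constructions.

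Finally, left exactness of $H^0_{\finj\op}$ follows formally, since a right adjoint between abelian categories preserves all limits, in particular kernels; alternatively it is visible directly because $H^0_{\finj\op}$ is defined as an equalizer (a limit) and $F \mapsto \fbcmnd F$, $F \mapsto \eta_F$, $F \mapsto \psi_F$ are all additive/natural in $F$. The main obstacle I anticipate is purely bookkeeping: one must be careful that every $\finj$-morphism really does reduce to the standard-inclusion case after composing with isomorphisms, and that the $\sym_b\op$-equivariance is respected throughout so that the bijection of hom-sets is an isomorphism of $\kring$-modules compatible with the $\fb\op$-structure — but no genuine difficulty arises, as all the needed explicit descriptions of $\eta_F$, $\psi_F$ and $\fbcmnd$ are already in hand from Section \ref{sect:fbfi}.
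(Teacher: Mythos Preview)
Your argument is correct and takes essentially the same approach as the paper: you identify $H^0_{\finj\op}F$ as the sub-$\fb\op$-module of $F$ on which all proper restrictions vanish, which is precisely the paper's characterization of it as the largest sub-$\finj\op$-module of $F$ lying in the image of $\zext$, and the adjunction follows. Your version simply spells out the hom-set bijection in more detail than the paper's one-line proof.
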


\begin{proof}
One checks that, for $F$ an $\finj\op$-module,  $H^0_{\finj\op}F$ is the largest sub $\finj\op$-module that lies in the image of $\zext$,  hence $H^0_{\finj\op}$ is the right adjoint to $\zext$. This implies, in particular, that $H^0_{\finj\op}$ is left exact. 
\end{proof}

%\begin{rem}
%The definition of $H^0_{\finj\op}$ is dual to that of $H_0^{\finj}$ that is used in defining $\finj$-module homology (cf. \cite{MR3654111,MR3603074} for example).
%\end{rem}

It is a standard fact that the category $\fcatk[\finj\op]$ has enough injectives. (This follows from Yoneda's Lemma, which shows that, for $I$ an injective $\kring$-module and $\mathbf{n}\in \ob\finj$, the $\finj\op$-module $\mathrm{Map}({\hom_{\finj} (\mathbf{n}, -)}, I)$ represents the functor $F \mapsto \hom_\kring (F(\mathbf{n}) , I)$.) 

Thus one can define $\finj\op$-cohomology as follows:

\begin{defn}
\label{defn:fiop_cohomo}
For $s\in \nat$, 
\begin{enumerate}
\item 
let $H^s_{\finj \op}$ denote the $s$-th right derived functor of $H^0_{\finj\op}$; 
\item 
$H^s_{\finj\op} (F)\in \fcatk[\fb\op]$ is the $s$th $\finj\op$-cohomology of the $\finj\op$-module $F$.
\end{enumerate}
\end{defn}

%\begin{rem}
%This is analogous to $\finj$-module homology, $H_*^\finj$, defined as the left derived functors of the right exact functor $H_0^\finj$ (cf. \cite{MR3654111,MR3603074}). 
%\end{rem}

Proposition \ref{prop:fbcmnd_coaug} shows that the coaugmentation $\eta$ of $\fbcmnd$ satisfies Hypothesis \ref{hyp:coaugmentation}, hence one can form the cosimplicial object $\cosimp^\bullet F$, as in Proposition \ref{prop:cosimp_comod}, with respect to the structure  $(\fbcmnd, \eta)$ on $\fcatk[\fb\op]$.

\begin{prop}
\label{prop:FIop-cohom}
For $F \in \ob \fcatk[\finj\op]$, there is a natural isomorphism
\[
 H^*_{\finj\op} (F)\cong \pi^* (\cosimp^\bullet F),  
\]
where the right hand side is the cohomotopy of the cosimplicial object $\cosimp^\bullet F $ of $\fcatk[\fb\op]$. 
\end{prop}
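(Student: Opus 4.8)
The plan is to recognise $\{\pi^n(\cosimp^\bullet -)\}_{n\geq 0}$ as the universal cohomological $\delta$-functor extending $H^0_{\finj\op}$, and then to invoke that $\{H^n_{\finj\op}\}_{n\geq 0}$ is, by Definition \ref{defn:fiop_cohomo}, another such universal $\delta$-functor; the isomorphism will then follow by uniqueness once the two are seen to agree in degree $0$. The degree-zero comparison is immediate: by the explicit form of $\cosimp^\bullet F$ recorded in (\ref{cosimplicial-obj-1}), the associated cochain complex begins $F\xrightarrow{\ \eta_F-\psi_F\ }\fbcmnd F\to\cdots$, so $\pi^0(\cosimp^\bullet F)$ is the kernel of $\eta_F-\psi_F$, i.e. the equalizer of $\eta_F$ and $\psi_F$, which is exactly $H^0_{\finj\op}(F)$ by Definition \ref{defn:H0_finjop}; this identification is natural in $F$.

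Next I would check the $\delta$-functor structure. The comonad $\fbcmnd=\downarrow\circ\uparrow$ is exact: $\downarrow$ is exact, and $\uparrow$ is exact by the identification $\uparrow G(\mathbf{n})\cong\bigoplus_{\mathbf{n}'\subseteq\mathbf{n}}G(\mathbf{n}')$ of Proposition \ref{prop:right_adjoint_fbfi-explicit}. Hence each $\fbcmnd^{n}$ is exact and $F\mapsto\cosimp^\bullet F$ is exact degreewise, so a short exact sequence of $\finj\op$-modules is carried to a short exact sequence of cosimplicial $\fb\op$-modules, hence to a short exact sequence of associated cochain complexes; the resulting long exact cohomology sequence, with its natural connecting morphisms, exhibits $\{\pi^n(\cosimp^\bullet -)\}$ as a cohomological $\delta$-functor $\fcatk[\finj\op]\to\fcatk[\fb\op]$.

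The substantive step is coeffaceability in positive degrees. For $G\in\ob\fcatk[\fb\op]$ the $\finj\op$-module $\uparrow G$ corresponds under Theorem \ref{thm:barr_beck_finjop} to the cofree $\fbcmnd$-comodule $(\fbcmnd G,\Delta_G)$; the general theory of the cosimplicial object attached to a coaugmented comonad (Appendix \ref{subsect:cobar}) supplies an extra codegeneracy on $\cosimp^\bullet(\uparrow G)$, so this cosimplicial object is split and $\pi^{n}(\cosimp^\bullet\uparrow G)=0$ for $n>0$. Since any $F$ embeds into $\uparrow\downarrow F=\uparrow(\downarrow F)$ via the unit of the adjunction $\downarrow\dashv\uparrow$ — a monomorphism, since by Proposition \ref{prop:fi_comod} its composite with the projection of $\bigoplus_{\mathbf{n}'\subseteq\mathbf{n}}F(\mathbf{n}')$ onto the $\mathbf{n}'=\mathbf{n}$ summand is the identity — the functors $\pi^n(\cosimp^\bullet -)$ are coeffaceable for $n>0$, hence $\{\pi^n(\cosimp^\bullet -)\}$ is universal. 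On the other side, $H^0_{\finj\op}$ is right adjoint to the exact functor $\zext$ (Proposition \ref{prop:H0_left_exact}), hence left exact, and $\fcatk[\finj\op]$ has enough injectives, so $\{H^n_{\finj\op}\}$ is the universal $\delta$-functor extending $H^0_{\finj\op}=R^0H^0_{\finj\op}$; the degree-zero comparison above and uniqueness of universal $\delta$-functors then give the natural isomorphism $H^*_{\finj\op}(F)\cong\pi^*(\cosimp^\bullet F)$.

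The step I expect to be the main obstacle is the coeffaceability input, namely producing the contracting cohomotopy on $\cosimp^\bullet$ of a cofree comodule in the presence of the coaugmentation $\eta$ and reconciling it with the Barr--Beck identification of $\uparrow G$ with $(\fbcmnd G,\Delta_G)$; the remaining steps are formal. An equivalent route, avoiding the language of universal $\delta$-functors, is to fix an injective resolution $F\to I^\bullet$ in $\fcatk[\finj\op]$ and form the first-quadrant bicomplex $\fbcmnd^{p}I^{q}$: exactness of each $\fbcmnd^{p}$ collapses one spectral sequence onto $\cosimp^\bullet F$, while the same cofree acyclicity — applied to $I^q$, which is a retract of $\uparrow\downarrow I^q$ — collapses the other onto $H^0_{\finj\op}(I^\bullet)$, whose cohomology computes $H^*_{\finj\op}(F)$, and comparing the two limits gives the result.
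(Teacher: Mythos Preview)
Your proposal is correct and follows essentially the same route as the paper's proof: identify $\pi^0(\cosimp^\bullet F)$ with $H^0_{\finj\op}(F)$, observe that $F\mapsto\pi^*(\cosimp^\bullet F)$ is a cohomological $\delta$-functor (exactness of $\cosimp^\bullet$), and use effaceability on cofree comodules together with the usual universal $\delta$-functor argument. Your write-up simply unpacks more of the details---exactness of $\fbcmnd$, the extra codegeneracy on $\cosimp^\bullet(\uparrow G)$, and the fact that the unit $F\hookrightarrow\uparrow\downarrow F$ is a split monomorphism---whereas the paper compresses these into citations of Proposition~\ref{prop:cosimp_comod} and Corollary~\ref{cor:delta_perp}.
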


\begin{proof}
By definition, $H^0 _{\finj\op}(F) = \pi^0 (\cosimp^\bullet F)$. Moreover, Proposition \ref{prop:cosimp_comod} implies that $F \mapsto \cosimp^\bullet F$ defines an exact functor to cosimplicial objects, so $F \mapsto \pi^* (\cosimp^\bullet F)$ forms a cohomological $\delta$-functor. The usual argument of homological algebra (cf. \cite[Exercise 2.4.5]{MR1269324}, for example), using the effacability property established in  Corollary \ref{cor:delta_perp}, then provides the isomorphism. 
\end{proof}

This allows Theorem \ref{thm:norm_vs_kz} to be restated using $\finj\op$-cohomology:

\begin{cor}
\label{cor:kos_FIop_cohom}
For $F \in \ob \fcatk[\finj\op]$, there is a natural isomorphism
\[
H^*_{\finj\op} (F) \cong H^* (\kz F).
\]
\end{cor}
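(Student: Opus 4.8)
The plan is to obtain the isomorphism by chaining together two facts already in hand — Proposition \ref{prop:FIop-cohom}, which identifies $\finj\op$-cohomology with the cohomotopy of the cosimplicial object $\cosimp^\bullet F$, and Theorem \ref{thm:norm_vs_kz}, which relates the Koszul complex $\kz F$ to the normalized complex $N(\cosimp^\bullet F)\op$ — bridged by the standard comparison between cohomotopy and normalized cochains.

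First I would invoke Proposition \ref{prop:FIop-cohom} to get a natural isomorphism $H^*_{\finj\op}(F)\cong\pi^*(\cosimp^\bullet F)$, where $\pi^*$ denotes cohomotopy in $\fcatk[\fb\op]$. The second step is the homological-algebra input: the cohomotopy of a cosimplicial object in an abelian category is computed by its associated normalized cochain complex, and, since the order-reversing involution of $\ord$ (cf. Notation \ref{nota:opp_simp}) induces on each term only a degree-dependent sign, the normalized complexes of $\cosimp^\bullet F$ and of its opposite $(\cosimp^\bullet F)\op$ have canonically isomorphic cohomology. This yields $\pi^*(\cosimp^\bullet F)\cong H^*\bigl(N(\cosimp^\bullet F)\op\bigr)$, naturally in $F$; the precise statements are those of Appendix \ref{sect:app_norm}. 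Finally, Theorem \ref{thm:norm_vs_kz} provides the natural quasi-isomorphism $N(\cosimp^\bullet F)\op\twoheadrightarrow\kz F$, whence $H^*\bigl(N(\cosimp^\bullet F)\op\bigr)\cong H^*(\kz F)$. Composing the three natural isomorphisms gives the result.

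All three comparisons are natural in $F$: the isomorphism of Proposition \ref{prop:FIop-cohom} is asserted to be natural, the assignment $F\mapsto\cosimp^\bullet F$ is functorial by Proposition \ref{prop:cosimp_comod} and normalization/reversal is natural in the cosimplicial object, and $\kz$ is functorial (Proposition \ref{Koszul-exact}) with the surjection of Theorem \ref{thm:norm_vs_kz} natural. The only step that is not purely formal bookkeeping is the insensitivity of cohomotopy to replacing a cosimplicial object by its opposite; I expect this — specifically the tracking of the signs attached to the reversal isomorphism — to be the one place requiring care, although it is classical and is in any case isolated in the appendix.
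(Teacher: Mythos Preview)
Your proposal is correct and follows essentially the same route as the paper: invoke Proposition \ref{prop:FIop-cohom} to identify $H^*_{\finj\op}(F)$ with $\pi^*(\cosimp^\bullet F)$, pass to the normalized complex and observe that $N(\cosimp^\bullet F)\op$ and $N\cosimp^\bullet F$ have canonically isomorphic cohomology, then apply Theorem \ref{thm:norm_vs_kz}. The paper's proof is terser (it leaves the invocation of Proposition \ref{prop:FIop-cohom} implicit and simply asserts the canonical isomorphism $N(\cosimp^\bullet F)\op\cong N\cosimp^\bullet F$ without dwelling on signs), but the logical content is the same.
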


\begin{proof}
The normalized complex $N (\cosimp^\bullet F)\op $ is canonically isomorphic to $N \cosimp ^\bullet F$, in particular has canonically isomorphic cohomology. Thus the statement follows immediately from Theorem \ref{thm:norm_vs_kz}.
\end{proof}

\begin{rem}
This result is an $\finj\op$-cohomology version of  \cite[Theorem 2]{MR3603074}, which concerns $\finj$-homology.
\end{rem}

\section{Coaugmented cosimplicial objects}
\label{sect:new}
 
The main objective of this paper is to calculate  $\hom_{\fcatk[\finne]} (\kbar^{\otimes a},\kbar^{\otimes b}) $ and, more generally, $\ext^i_{\fcatk[\finne]} (\kbar^{\otimes a},\kbar^{\otimes b}) $, for $a, b \in \nat$  and $ i \in \nat$. As a step in this direction, Theorem \ref{thm:motivate} establishes that, for $i\in \{0,1\}$, these $\ext$-groups are given by the cohomology of the cosimplicial abelian group $\hom_{\fcatk[\Gamma]}  ((t^*)^{\otimes a}, \gcmnd^\bullet (t^*)^{\otimes b} )$ introduced in Proposition \ref{Prop:augmented}. 
 
 In this section we focus principally upon understanding this cosimplicial object.  We give an alternative, highly explicit description of this by using the Dold-Kan equivalence of Theorem \ref{thm:DK_equiv}. More precisely, as $\kring \hom_\Oz (-, \mathbf{a})$ is an $\finj\op$-module (see Proposition \ref{prop:hom_oz_functor_finjop}), we can consider the cosimplicial $\fb\op$-module $\cosimp^\bullet \kring \hom_\Oz (-, \mathbf{a})$ described in (\ref{cosimplicial-obj-1}). The main result of this section is the following:

 \begin{thm}
\label{thm:identify_cosimp_gcmnd_fbcmnd}
For $a \in \nat$, there is an isomorphism of cosimplicial objects in $\fcatk[\fb\op]$:
\[
\hom_{\fcatk[\Gamma]}  ((t^*)^{\otimes a}, \gcmnd^\bullet \twist )
\cong 
[\cosimp^\bullet \kring \hom_\Oz (-, \mathbf{a})]\op
\]
where $-\op$ is the functor introduced in Notation \ref{nota:opp_simp} and 
$\twist$ is the functor in $\fcatk[\Gamma \times \fb\op]$ given by $\twist (\mathbf{b}) := (t^*)^{\otimes b}$ (cf. Definition \ref{defn:twist}).
\end{thm}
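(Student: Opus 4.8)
The plan is to identify both cosimplicial objects degreewise with an explicit $\fb\op$-module built from $\Oz$-morphisms, and then check that the two families of coface/codegeneracy operators agree under this identification. The starting point is the explicit Dold--Kan morphism $\dk_{\mathbf{a},\mathbf{b}}$ of Definition \ref{defn:dk_map}, which by Proposition \ref{prop:explicit_DK_morphism} gives, for each $a,b\in\nat$, a natural isomorphism $\kring\hom_\Oz(\mathbf{b},\mathbf{a})\xrightarrow{\cong}\hom_{\fcatk[\Gamma]}((t^*)^{\otimes a},(t^*)^{\otimes b})$, equivariant for $\sym_a\times\sym_b\op$ (and, as announced in the remark after that proposition, an isomorphism of $\fb\times\fb\op$-modules). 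First I would combine this with the description of the comonad $\gcmnd$ from Proposition \ref{prop:comonad-gamma-explicit}: since $\gcmnd$ is symmetric monoidal (Corollary \ref{cor:perp_sym}) and $\gcmnd(t^*)\cong t^*\oplus\kring$ (apply the counit/unit splitting coming from Proposition \ref{prop:tstar_kbar} together with Lemma \ref{lem:t*_projective}), one gets $\gcmnd^\ell\,\twist(\mathbf{b}) = \gcmnd^\ell\big((t^*)^{\otimes b}\big)\cong\bigoplus (t^*)^{\otimes b'}$ indexed by the subsets $\mathbf{b}'\subseteq\mathbf{b}$, iterated $\ell$ times — this is exactly the combinatorial shape of $(\fbcmnd)^\ell F$ from Proposition \ref{prop:identify_fbcmnd} and Remark \ref{rem:reindexation2}. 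Applying $\hom_{\fcatk[\Gamma]}((t^*)^{\otimes a},-)$ and using $\kring\dk$, the $\ell$-th term of the left-hand cosimplicial object becomes $\bigoplus \kring\hom_\Oz(\mathbf{b}^{(\ell+1)}_\Sigma,\mathbf{a})$, indexed by ordered decompositions $\mathbf{b}=(\ldots(\mathbf{b}^{(\ell+1)}_\Sigma\amalg\mathbf{b}^\Sigma_\ell)\amalg\ldots)\amalg\mathbf{b}^\Sigma_0$, i.e.\ precisely $(\cosimp^\bullet\kring\hom_\Oz(-,\mathbf{a}))^\ell(\mathbf{b})$.

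Next I would match the structure maps. The coface and codegeneracy operators of $\hom_{\fcatk[\Gamma]}((t^*)^{\otimes a},\gcmnd^\bullet\twist)$ are, by the construction recalled in (\ref{cosimplicial-obj-1}) (i.e.\ Proposition \ref{prop:cosimp_comod} applied to the coaugmented comonad $(\gcmnd,\eta^\Gamma)$ on $\fcatk[\Gamma]$, then hom'd into), built from: the counit $\epsilon^\Gamma$, the diagonal $\Delta^\Gamma$, the comodule structure map $\psi$ of $\twist$ regarded as a $\gcmnd$-comodule, and the coaugmentation $\eta^\Gamma$. Each of these I would compute under $\kring\dk$ using the explicit formulas of Proposition \ref{prop:comonad-gamma-explicit}: the counit "forgets" an added basepoint, which on $\hom$-groups corresponds to the summand-projection that is $\epsilon^\Sigma$ in Proposition \ref{prop:identify_fbcmnd}(\ref{prop:identify_fbcmnd2}); the diagonal, sending $+$ to $+_2$, corresponds to the diagonal-type map $\Delta$ of Proposition \ref{prop:identify_fbcmnd}(\ref{prop:identify_fbcmnd4}); the comodule map $\psi$ of $\twist$ — which after $\theta^*$-description is "include a new basepoint into each tensor factor" (Remark \ref{rem:barr_beck}) — corresponds on $\hom_\Oz(-,\mathbf{a})$-terms to the $\finj\op$-structure map $\psi_F$ of Proposition \ref{prop:fi_comod}, using that $\kring\hom_\Oz(-,\mathbf{a})$ carries its $\finj\op$-module structure (Proposition \ref{prop:hom_oz_functor_finjop}); and the coaugmentation matches $\eta$ of Proposition \ref{prop:fbcmnd_coaug}. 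Passing to the opposite simplicial structure (Notation \ref{nota:opp_simp}) accounts for the order-reversal that appears because $\dk$ is contravariant in $\mathbf{b}$ (it sends $\hom_\Oz(\mathbf{b},\mathbf{a})$ to $\hom_{\fcatk[\Gamma]}((t^*)^{\otimes a},(t^*)^{\otimes b})$), so the $\gcmnd$ applied "on the outside" of $\twist$ corresponds to $\fbcmnd$ applied in the reverse order; this is precisely why the right-hand side appears with $-\op$. Finally I would note naturality in $\fb\op$: since $\kring\dk$ is an isomorphism of $\fb\times\fb\op$-modules, the degreewise isomorphisms assemble into an isomorphism of cosimplicial objects in $\fcatk[\fb\op]$, which is the assertion.

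The bookkeeping with the $\fb\op$-actions and with the direct-sum reindexings of Remark \ref{rem:reindexation2} is straightforward but lengthy; the one genuinely delicate point — and the main obstacle — is getting the identification of the comodule structure map $\psi$ of $\twist$ correct, including all signs/orderings, since this is where the contravariance of $\dk$ in $\mathbf{b}$, the symmetric-monoidal structure of $\Oz$ (Lemma \ref{lem:mor_Oz}), and the "add a basepoint to each factor" description of Remark \ref{rem:barr_beck} all interact. I would handle this by reducing to $a=1$ via Lemma \ref{lem:hom_Oz_a=1} and the monoidal decomposition of $\dk$ (Definition \ref{defn:dk_map}(2)), where $\psi$ becomes the single map $t^*\to\gcmnd t^*\cong t^*\oplus\kring$, track it explicitly on the generator $\xi_b$, and then extend multiplicatively; the general-$a$ case then follows by the symmetric-monoidality of $\gcmnd$ and of $\dk$ together with the fact that $\twist$ is a tensor power. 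The remaining compatibilities (counit, diagonal, codegeneracies, vanishing of $\tilde d^{\ell+1}$ on the normalized part — though that last one is only needed later) are then routine verifications matching the formulas of Proposition \ref{prop:cosimplicial-structure-CF} against those of Proposition \ref{prop:cosimp_comod}.
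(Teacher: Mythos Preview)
Your overall strategy---identify both sides degreewise via $\kring\dk$, exploit the symmetric monoidality of $\gcmnd$ to decompose $\gcmnd^\ell\twist$, and reduce the delicate verification to $a=1$---is exactly right and is what the paper does. However, there is a genuine error in how you describe the cosimplicial structure on the left-hand side. You write that it comes from ``Proposition~\ref{prop:cosimp_comod} applied to the coaugmented comonad $(\gcmnd,\eta^\Gamma)$ on $\fcatk[\Gamma]$, then hom'd into'', but there is no coaugmentation $\eta^\Gamma$ of $\gcmnd$ in the paper (and none exists naturally). The cosimplicial object $\hom_{\fcatk[\Gamma]}((t^*)^{\otimes a},\gcmnd^\bullet\twist)$ is instead the construction of Proposition~\ref{prop:coaugmented_cosimp}: the extra cofaces $d^0$ and $d^{t+1}$ come from the $\gcmnd$-comodule structures on \emph{both} the source $(t^*)^{\otimes a}$ and the target $(t^*)^{\otimes b}$, namely $d^0\gamma=(\gcmnd\gamma)\circ\psi_{(t^*)^{\otimes a}}$ and $d^{t+1}\gamma=(\gcmnd^t\psi_{(t^*)^{\otimes b}})\circ\gamma$.

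This misidentification propagates to your matching of structure maps, which is in fact reversed. The paper shows (Proposition~\ref{prop:iso-cosimp}) that under $\kring\dk$ the coface $d^1$ built from $\psi_{(t^*)^{\otimes b}}$ (the comodule structure on the \emph{target}) corresponds to the coaugmentation $\eta$ of $\fbcmnd$, while the coface $d^0$ built from $\psi_{(t^*)^{\otimes a}}$ together with $\gcmnd$ applied to the morphism corresponds to the $\finj\op$-comodule map $\psi_F$. So it is not ``$\psi$ of $\twist$ corresponds to $\psi_F$'' as you write, but rather the opposite pairing. This swap is precisely the source of the $\op$: it is encoded structurally in the paper by first building an isomorphism $\overline\alpha^{\ell}\colon(\gcmnd)^{\ell}\twist\xrightarrow{\cong}(\fbcmnd)^{\ell}\twist$ of augmented simplicial objects (Proposition~\ref{prop:compare_aug_simp_comonads}), where the index reversal of Remark~\ref{rem:reindexation} forces the opposite simplicial structure, and only then applying $\hom_{\fcatk[\Gamma]}((t^*)^{\otimes a},-)$ and $\kring\dk$. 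Your explanation via ``contravariance of $\dk$ in $\mathbf{b}$'' does not capture this. Once you correct the source of the cosimplicial structure and redo the matching as in Proposition~\ref{prop:iso-cosimp}, your reduction to $a=1$ via Lemma~\ref{lem:hom_Oz_a=1} and monoidality goes through exactly as you outline.
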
 
 
Using Theorem \ref{thm:norm_vs_kz} and Corollary \ref{cor:kos_FIop_cohom}, this has the following immediate consequence:

\begin{cor}
\label{cor:relate_finjop_cohom}
For $a \in \nat$, there is an isomorphism of graded $\fb\op$-modules:
\[
H^* ( \hom_{\fcatk[\Gamma]}  ((t^*)^{\otimes a}, \gcmnd^\bullet \twist))
\cong 
H^* (\kz \kring \hom_\Oz (-, \mathbf{a}))
\cong
H^*_{\finj\op} ( \kring \hom_\Oz (-, \mathbf{a}) ) .
\]
\end{cor}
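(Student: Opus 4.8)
The plan is to deduce this purely formally from the results already in hand; in particular no new computation is needed. First I would invoke Theorem~\ref{thm:identify_cosimp_gcmnd_fbcmnd}, which furnishes an isomorphism of cosimplicial $\fb\op$-modules
\[
\hom_{\fcatk[\Gamma]}  ((t^*)^{\otimes a}, \gcmnd^\bullet \twist )
\;\cong\;
[\cosimp^\bullet \kring \hom_\Oz (-, \mathbf{a})]\op .
\]
An isomorphism of cosimplicial objects induces an isomorphism on the associated cochain complexes, hence on cohomotopy groups, so
\[
H^* \big( \hom_{\fcatk[\Gamma]}  ((t^*)^{\otimes a}, \gcmnd^\bullet \twist) \big)
\;\cong\;
H^* \big( [\cosimp^\bullet \kring \hom_\Oz (-, \mathbf{a})]\op \big),
\]
where the right-hand side is computed from the (conormalized or unnormalized) cochain complex of the opposite cosimplicial object.

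Next I would observe that $\kring \hom_\Oz (-, \mathbf{a})$ is an $\finj\op$-module by Proposition~\ref{prop:hom_oz_functor_finjop}, so Theorem~\ref{thm:norm_vs_kz} applies with $F = \kring \hom_\Oz (-, \mathbf{a})$: it gives a natural quasi-isomorphism of complexes of $\fb\op$-modules
\[
N (\cosimp^\bullet F)\op \;\twoheadrightarrow\; \kz F .
\]
Since the cohomology of the cosimplicial object $[\cosimp^\bullet F]\op$ is, by definition of cohomotopy and the Dold--Kan/normalization theorem, the cohomology of its normalized cochain complex $N (\cosimp^\bullet F)\op$, the quasi-isomorphism yields
\[
H^* \big( [\cosimp^\bullet \kring \hom_\Oz (-, \mathbf{a})]\op \big)
\;\cong\;
H^* \big( \kz \kring \hom_\Oz (-, \mathbf{a}) \big).
\]
Finally, Corollary~\ref{cor:kos_FIop_cohom} identifies $H^* (\kz F)$ with the $\finj\op$-cohomology $H^*_{\finj\op}(F)$, giving the last isomorphism
\[
H^* \big( \kz \kring \hom_\Oz (-, \mathbf{a}) \big)
\;\cong\;
H^*_{\finj\op} \big( \kring \hom_\Oz (-, \mathbf{a}) \big).
\]
Chaining these three identifications establishes the corollary.

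There is essentially no obstacle here: every link in the chain is a cited result from the excerpt. The only minor point requiring a word of care is the bookkeeping between a cosimplicial object, its opposite, and its normalized cochain complex — one must be sure that $H^*$ of the cosimplicial object $[\cosimp^\bullet F]\op$ genuinely equals the cohomology of $N(\cosimp^\bullet F)\op$ rather than of $N\cosimp^\bullet F$; but, as already noted in the proof of Corollary~\ref{cor:kos_FIop_cohom}, the opposite-structure functor $-\op$ induces a canonical isomorphism of normalized complexes up to reindexing, so the two have canonically isomorphic cohomology and the statement is unambiguous.
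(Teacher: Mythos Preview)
Your proposal is correct and follows essentially the same route as the paper, which simply records the corollary as an immediate consequence of Theorem~\ref{thm:identify_cosimp_gcmnd_fbcmnd} together with Theorem~\ref{thm:norm_vs_kz} and Corollary~\ref{cor:kos_FIop_cohom}. Your added remark about the bookkeeping between $(\cosimp^\bullet F)\op$ and $\cosimp^\bullet F$ at the level of normalized complexes matches the observation made in the proof of Corollary~\ref{cor:kos_FIop_cohom}.
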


Combining Corollary \ref{cor:relate_finjop_cohom},  Theorem \ref{thm:motivate} and the computation of the cohomology of $\kz \kring \hom_\Oz (-, \mathbf{a})$ that is carried out in Section \ref{sect:cohomology}, leads to the computation of $\hom_{\fcatk[\finne]} (\kbar^{\otimes a},\kbar^{\otimes b})$ and $\ext^1_{\fcatk[\finne]} (\kbar^{\otimes a},\kbar^{\otimes b})$ given in Theorem \ref{thm:complex_Cab}. 

%%%%%%%%%%%%%%%%%%%%%%%%
 \subsection{The cosimplicial abelian group $\hom_{\fcatk[\Gamma]}  ((t^*)^{\otimes a}, \gcmnd^\bullet (t^*)^{\otimes b} )$}
 In this section we apply the general constructions given in Appendix \ref{sect:homextseq} for $\mathcal{C}= \fcatk[\Gamma]$ and  $\gcmnd : \fcatk[\Gamma] \rightarrow \fcatk[\Gamma]$ the comonad introduced in Notation \ref{nota:gcmnd}.

By Proposition \ref{prop:stub_cosimp}, for $a, b \in \nat$,  there is an equalizer: 
\begin{equation}
\label{equalizer}
\xymatrix{
\hom_{\fcatk[\Gamma]_{\gcmnd}}  ((t^*)^{\otimes a},(t^*)^{\otimes b} )\ar[r]
&
\hom_{\fcatk[\Gamma]}  ((t^*)^{\otimes a},(t^*)^{\otimes b} )
 \ar@<.5ex>[r]^-{d^0}
 \ar@<-.5ex>[r]_-{d^1}
 &
\hom_{\fcatk[\Gamma]}  ((t^*)^{\otimes a}, \gcmnd (t^*)^{\otimes b} )
}
\end{equation}
where, for $f \in \hom_{\fcatk[\Gamma]}  ((t^*)^{\otimes a},(t^*)^{\otimes b}) $, $d^0 f := (\perp^\Gamma f) \circ \psi_{(t^*)^{\otimes a}} $, $d^1 f := \psi_{ (t^*)^{\otimes b}} \circ f$ and $\fcatk[\Gamma]_{\gcmnd}$ is the category of $\gcmnd$-comodules in $\fcatk[\Gamma]$.

 The comonad $\gcmnd$ provides  the usual augmented simplicial object ${\gcmnd}^{\bullet +1} (t^*)^{\otimes b}$  (cf. Proposition \ref{prop:simplicial}) and hence an augmented simplicial object $
\hom_{\fcatk[\Gamma]}  ((t^*)^{\otimes a}, \gcmnd^{\bullet +1} (t^*)^{\otimes b} )$ (cf. Lemma \ref{lem:hom_aug_simp}).

\begin{prop}
\label{Prop:augmented}
The augmented simplicial object $ 
\hom_{\fcatk[\Gamma]}  ((t^*)^{\otimes a}, \gcmnd^{\bullet +1} (t^*)^{\otimes b} )$ given by Lemma \ref{lem:hom_aug_simp} extends to a cosimplicial abelian group 
\[
\hom_{\fcatk[\Gamma]}  ((t^*)^{\otimes a}, \gcmnd^\bullet (t^*)^{\otimes b} ).
\]
 by using the structure morphisms $d^0$ and $d^1$ and their generalizations. 
\end{prop}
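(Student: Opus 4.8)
The plan is to recognise the statement as an instance of the general mechanism of Appendix \ref{sect:homextseq}: from a comonad on an abelian category together with a pair of comodules one manufactures a cosimplicial abelian group of $\hom$-groups whose underlying augmented simplicial object is the one attached to the standard comonad resolution. First I would record that both arguments are $\gcmnd$-comodules. Indeed, by Proposition \ref{prop:tstar_kbar} one has $(t^*)^{\otimes a}\cong\theta^*(\kbar^{\otimes a})$ and $(t^*)^{\otimes b}\cong\theta^*(\kbar^{\otimes b})$, so by the Barr--Beck equivalence of Theorem \ref{thm:barr_beck} (see Remark \ref{rem:barr_beck}) they carry canonical $\gcmnd$-comodule structures; write $\psi_{a} : (t^*)^{\otimes a}\to\gcmnd(t^*)^{\otimes a}$ and $\psi_{b} : (t^*)^{\otimes b}\to\gcmnd(t^*)^{\otimes b}$ for the coactions, so that the maps of the equalizer (\ref{equalizer}) are $d^{0}f=(\gcmnd f)\circ\psi_{a}$ and $d^{1}f=\psi_{b}\circ f$.

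Next I would spell out the candidate cosimplicial abelian group $X^{\bullet}$ with $X^{n}:=\hom_{\fcatk[\Gamma]}((t^*)^{\otimes a},\gcmnd^{n}(t^*)^{\otimes b})$: its codegeneracies are post-composition with the iterated counits $\epsilon^{\Gamma}$ of $\gcmnd$; its ``inner'' cofaces $d^{1},\dots,d^{n}$ in degree $n$ are post-composition with the iterated comultiplications $\Delta$ of $\gcmnd$; and its two ``outer'' cofaces are $d^{0}f=(\gcmnd f)\circ\psi_{a}$ and, in degree $n$, $f\mapsto\gcmnd^{n}(\psi_{b})\circ f$. In the bottom degree these outer cofaces are precisely $d^{0}$ and $d^{1}$ from (\ref{equalizer}), and the remaining operators are their evident iterates --- this is the content of the phrase ``$d^{0}$ and $d^{1}$ and their generalizations''.

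The key reduction I would then make is that the codegeneracies together with the inner cofaces are obtained by applying the covariant functor $\hom_{\fcatk[\Gamma]}((t^*)^{\otimes a},-)$ to the face and degeneracy operators of the standard comonad resolution $\gcmnd^{\bullet+1}(t^*)^{\otimes b}$ of Proposition \ref{prop:simplicial}, the augmentation of Lemma \ref{lem:hom_aug_simp} supplying $s^{0}$ on $X^{1}$, under the order reversal of Notation \ref{nota:opp_simp} that is implicit in fitting an augmented simplicial object into a cosimplicial one. Consequently every cosimplicial identity not involving an outer coface is the image of a simplicial identity already known for $\gcmnd^{\bullet+1}(t^*)^{\otimes b}$, so nothing new is needed for these. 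It then remains to check the identities that do involve an outer coface: for $d^{0}$, the relation $d^{1}d^{0}=d^{0}d^{0}$ on $X^{0}$ is exactly the coassociativity of $\psi_{a}$, the relation $s^{0}d^{0}=\id$ is its counitality, and the commutations with the inner cofaces and with the codegeneracies follow from the naturality of $\Delta$, of $\epsilon^{\Gamma}$ and of the endofunctor $\gcmnd$ applied to $\psi_{a}$; symmetrically for the top coface, using the comodule axioms for $\psi_{b}$; and the identity relating $d^{0}$ to the top coface is an instance of bifunctoriality, since $d^{0}$ alters a morphism through its source while the top coface alters it through its target.

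I expect the only genuinely delicate part to be the index bookkeeping in this last step --- keeping straight which iterated counit, comultiplication or coaction implements which cosimplicial operator, and in particular handling the order reversal correctly so that the (co)simplicial relations line up. There is no conceptual obstacle: once the indices are pinned down, each relation is a one-line consequence of a comodule axiom or of naturality, which is exactly why the general construction is isolated in Appendix \ref{sect:homextseq} and merely instantiated here.
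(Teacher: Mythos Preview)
Your proposal is correct and takes essentially the same approach as the paper: both recognise this as a direct instance of Proposition~\ref{prop:coaugmented_cosimp} applied to the comonad $\gcmnd$ with the $\gcmnd$-comodules $(t^*)^{\otimes a}$ and $(t^*)^{\otimes b}$, the paper's proof being a one-line citation of that proposition together with the observation that $\fcatk[\Gamma]$ is abelian and $\gcmnd$ additive, whereas you spell out the operators and identity checks that are already packaged there. One small correction: the mechanism by which the augmented simplicial object sits inside the cosimplicial one is the embedding $\aord\op\hookrightarrow\ord$ of Proposition~\ref{prop:embed_augsimp_cosimp} (faces become codegeneracies, degeneracies become inner cofaces), not the opposite-structure involution of Notation~\ref{nota:opp_simp}.
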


\begin{proof}
This is a direct application of Proposition \ref{prop:coaugmented_cosimp}. It is a cosimplicial abelian group because $\fcatk[\Gamma]$ is abelian and $\gcmnd$ is additive. 
\end{proof}

The interest of  this cosimplicial object is established by the following:

\begin{thm}
\label{thm:motivate}
For $a, b \in \nat$   there are isomorphisms:
\begin{eqnarray*}
\hom_{\fcatk[\finne]} (\kbar^{\otimes a},\kbar^{\otimes b}) 
&\cong & H^0 (\hom_{\fcatk[\Gamma]}  ((t^*)^{\otimes a}, \gcmnd^\bullet (t^*)^{\otimes b} )) 
\\
\ext^1_{\fcatk[\finne]} (\kbar^{\otimes a},\kbar^{\otimes b})
&
\cong 
& 
H^1 (\hom_{\fcatk[\Gamma]}  ((t^*)^{\otimes a}, \gcmnd^\bullet (t^*)^{\otimes b} )). 
\end{eqnarray*}
\end{thm}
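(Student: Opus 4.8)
The plan is to exploit the comonadic description of $\fcatk[\finne]$ provided by the Barr--Beck theorem (Theorem~\ref{thm:barr_beck}), which identifies $\fcatk[\finne]$ with the category $\fcatk[\Gamma]_\gcmnd$ of $\gcmnd$-comodules, together with the general homological machinery developed in Appendix~\ref{sect:homextseq} relating $\ext$-groups in a category of comodules to the cohomology of a cosimplicial object built from the comonad. Concretely, under the equivalence of Theorem~\ref{thm:barr_beck}, the objects $\kbar^{\otimes a}$ and $\kbar^{\otimes b}$ of $\fcatk[\finne]$ correspond to the $\gcmnd$-comodules $\theta^*(\kbar^{\otimes a}) \cong (t^*)^{\otimes a}$ and $\theta^*(\kbar^{\otimes b}) \cong (t^*)^{\otimes b}$ (Proposition~\ref{prop:tstar_kbar}), with their canonical $\gcmnd$-comodule structures $\psi$. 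Hence
\[
\ext^i_{\fcatk[\finne]}(\kbar^{\otimes a}, \kbar^{\otimes b}) \cong \ext^i_{\fcatk[\Gamma]_\gcmnd}((t^*)^{\otimes a}, (t^*)^{\otimes b}),
\]
and the task reduces to computing the right-hand side in low degrees.

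First I would invoke the general statement from the appendix (the one yielding Proposition~\ref{prop:stub_cosimp} and the cosimplicial extension of Proposition~\ref{Prop:augmented}): for a comonad on an abelian category, the $\hom$ in the comodule category is the equalizer (\ref{equalizer}), i.e.\ $\hom_{\fcatk[\Gamma]_\gcmnd}((t^*)^{\otimes a}, (t^*)^{\otimes b}) = H^0$ of the cosimplicial abelian group $\hom_{\fcatk[\Gamma]}((t^*)^{\otimes a}, \gcmnd^\bullet (t^*)^{\otimes b})$. This already gives the $H^0$ isomorphism. For $\ext^1$ the key point is that $(t^*)^{\otimes b}$, being projective in $\fcatk[\Gamma]$ by Proposition~\ref{prop:proj_gen_Gamma}, produces via the comonad a \emph{resolution}: the augmented simplicial object $\gcmnd^{\bullet+1}(t^*)^{\otimes b} \to (t^*)^{\otimes b}$ is, after passing to the associated chain complex, a resolution of the comodule $(t^*)^{\otimes b}$ by cofree (hence relatively-injective, and here genuinely injective-enough) comodules — this is the standard cotriple resolution. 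Applying $\hom_{\fcatk[\Gamma]_\gcmnd}((t^*)^{\otimes a}, -)$ to this resolution and using the cofree-comodule adjunction $\hom_{\fcatk[\Gamma]_\gcmnd}(M, \gcmnd N) \cong \hom_{\fcatk[\Gamma]}(M, N)$ identifies the resulting cochain complex with the normalized complex of the cosimplicial abelian group $\hom_{\fcatk[\Gamma]}((t^*)^{\otimes a}, \gcmnd^\bullet (t^*)^{\otimes b})$. Its cohomology therefore computes $\ext^*_{\fcatk[\Gamma]_\gcmnd}((t^*)^{\otimes a}, (t^*)^{\otimes b})$, and in particular $H^1$ gives $\ext^1$.

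I would assemble the argument in the order: (i) transport via Theorem~\ref{thm:barr_beck} and Proposition~\ref{prop:tstar_kbar} to reduce everything to $\gcmnd$-comodules; (ii) recall from Proposition~\ref{prop:proj_gen_Gamma} that $(t^*)^{\otimes b}$ is projective, so that the cotriple (Godement-type) resolution $\gcmnd^{\bullet+1}(t^*)^{\otimes b}$ consists of objects that are simultaneously cofree $\gcmnd$-comodules and — since $\gcmnd$ is exact and applied to projectives — behave well enough for an $\ext$-computation; (iii) apply $\hom$ out of the comodule $(t^*)^{\otimes a}$, using the cofree/forgetful adjunction to rewrite each term and recognize the cosimplicial object of Proposition~\ref{Prop:augmented}; (iv) conclude that $H^0$ and $H^1$ of that cosimplicial object compute $\hom$ and $\ext^1$ respectively. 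Steps (iii)--(iv) are essentially the content of the appendix material (the "usual argument of homological algebra" pattern already cited in the proof of Proposition~\ref{prop:FIop-cohom}), so the core of the write-up is checking that the cotriple resolution is admissible for computing $\ext^1$ in $\fcatk[\Gamma]_\gcmnd \simeq \fcatk[\finne]$.

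The main obstacle I anticipate is justifying step (ii) carefully: cofree $\gcmnd$-comodules need not be injective in the comodule category, so one cannot directly read off $\ext^1$ as a derived functor from the cotriple resolution without an argument. The clean way around this is to note that what the cotriple resolution computes is \emph{relative} (or comonad) cohomology, and then show that for the projective object $(t^*)^{\otimes b}$ — equivalently for $\kbar^{\otimes b}$ — relative $\ext^i$ agrees with absolute $\ext^i$ for $i \le 1$; this is where the projectivity of $(t^*)^{\otimes b}$ in $\fcatk[\Gamma]$ and the exactness of $\gcmnd$ (Corollary~\ref{cor:perp_sym}) are essential, since they guarantee the relevant comparison spectral sequence collapses in the range needed. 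Alternatively, and perhaps more transparently, one may observe that the cosimplicial object of Proposition~\ref{Prop:augmented} together with the exactness of $\gcmnd$ makes $(a,b) \mapsto H^*(\hom_{\fcatk[\Gamma]}((t^*)^{\otimes a}, \gcmnd^\bullet (t^*)^{\otimes b}))$ into a $\delta$-functor agreeing with $\ext^*$ in degree $0$ and effaceable in positive degrees on a set of projective generators, forcing agreement in degree $1$ as well. I would phrase the final proof along whichever of these two routes the appendix sets up most directly, deferring all the diagram-chasing to the cited appendix lemmas.
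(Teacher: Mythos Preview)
Your broad plan—transport along the Barr--Beck equivalence (Theorem~\ref{thm:barr_beck}) using Proposition~\ref{prop:tstar_kbar}, then appeal to the appendix—matches the paper. But the paper's argument is much shorter than your outline: it simply invokes Theorem~\ref{thm:extbar}, which for \emph{any} $\perp$-comodules $M,N$ identifies $H^0$ and $H^1$ of $\hom_{\calc}(M,\perp^\bullet N)$ with $\hom_\perp(M,N)$ and $\extbar^1_\perp(M,N)$ respectively, by a direct cocycle/coboundary description of split extensions (no resolutions, no derived-functor comparison). The only remaining point is to upgrade $\extbar^1_\perp$ to $\ext^1_\perp$, and here the paper uses projectivity of the \emph{source} $(t^*)^{\otimes a}$ in $\fcatk[\Gamma]$ (Proposition~\ref{prop:proj_gen_Gamma}), so that every $\gcmnd$-comodule extension of $(t^*)^{\otimes a}$ splits in the underlying category; this is exactly the remark preceding Theorem~\ref{thm:extbar}.

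Your write-up has a genuine misattribution: you repeatedly invoke projectivity of the \emph{target} $(t^*)^{\otimes b}$, and this does not do the work you ascribe to it. The augmented simplicial object $\gcmnd^{\bullet+1}(t^*)^{\otimes b}\to (t^*)^{\otimes b}$ is already a resolution in $\calc_\perp$ without any projectivity hypothesis, since the comodule structure map $\psi$ furnishes an extra degeneracy making it contractible in $\calc$ (and the forgetful functor reflects exactness). More importantly, the step ``relative $\ext^1$ coincides with absolute $\ext^1$'' (equivalently, cofree comodules $\gcmnd Y$ are acyclic for $\hom_\perp(M,-)$) requires projectivity of $M$ in $\calc$, not of $N$ or of $Y$: an extension $0\to\gcmnd Y\to E\to M\to 0$ in $\calc_\perp$ splits precisely because projectivity of $M$ splits it in $\calc$, after which cofreeness of $\gcmnd Y$ produces the comodule retraction. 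Since $(t^*)^{\otimes a}$ happens also to be projective your conclusion survives, but as written the justification points to the wrong object. There is also a direction issue lurking in your ``apply $\hom_\perp(M,-)$ to the cotriple resolution'' step: this yields a \emph{simplicial} abelian group, not the cosimplicial object of Proposition~\ref{prop:coaugmented_cosimp}, whose coface maps genuinely use the comodule structures of both $M$ and $N$; the identification you want is not a one-line adjunction. All of this is bypassed by citing Theorem~\ref{thm:extbar} directly, as the paper does.
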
  

\begin{proof}
The identification of the cohomology groups $H^i$, for $i=0,1$, is given by Theorem \ref{thm:extbar}, using the Barr-Beck theorem (Theorem \ref{thm:barr_beck} and Remark \ref{rem:barr_beck}) to translate from $\gcmnd$-comodules to $\fcatk[\finne]$. The fact that the functor $(t^*)^{\otimes a}$ is projective in $\fcatk[\Gamma]$ (by Proposition \ref{prop:proj_gen_Gamma}) implies that 
\[
\extbar^1 _{\fcatk[\finne]}(\kbar^{\otimes a}, \kbar^{\otimes b} ) 
=
\ext^1 _{\fcatk[\finne]}(\kbar^{\otimes a}, \kbar^{\otimes b} ), 
\]
using the notation of Theorem \ref{thm:extbar}.
\end{proof}

 \begin{rem}
 \label{Rem-noyau}
By the Barr-Beck theorem (Theorem \ref{thm:barr_beck} and Remark \ref{rem:barr_beck}) and Proposition \ref{prop:tstar_kbar}, for $a, b \in \nat$, we have:
$$\hom_{\fcatk[\finne]} (\kbar^{\otimes a},\kbar^{\otimes b})  
\cong
\hom_{\fcatk[\Gamma]_{\gcmnd}}  ((t^*)^{\otimes a},(t^*)^{\otimes b} )$$
 so, by  diagram (\ref{equalizer}), the calculation of $\hom_{\fcatk[\finne]} (\kbar^{\otimes a},\kbar^{\otimes b}) $ reduces to the  calculation of the kernel of $d^0-d^1$. Unfortunately, this kernel is inaccessible to direct computation. 
 
 The above isomorphism shows that $\hom_{\fcatk[\finne]} (\kbar^{\otimes a},\kbar^{\otimes b})$ can be calculated using  morphisms of $\gcmnd$-comodules in $\fcatk[\Gamma]$. This motivates the introduction of the simplicial object $ \hom_{\fcatk[\Gamma]}  ((t^*)^{\otimes a}, \gcmnd^{\bullet +1} (t^*)^{\otimes b} )$.
\end{rem}

%%%%%%%%%%%%%%%%%%%%%
\subsection{Structure morphisms for the simplicial object $(\gcmnd)^{\bullet +1} (t^*)^{\otimes b}$}

In this section we give the explicit description of the structure of the augmented simplicial object 
 $
 (\gcmnd)^{\bullet +1} (t^*)^{\otimes b}
 $ obtained from Proposition \ref{prop:simplicial} applied to the comonad $\gcmnd$. 
 
We begin by the cases $b=0$ and $b=1$, exploiting the following basic result, which follows from Theorem \ref{thm:DK_equiv}: 

\begin{lem}
\label{lem:morphisms_tstar}
The identity induces canonical isomorphisms $ 
\hom_{\fcatk[\Gamma]} (\kring, \kring) \cong \kring
$
and 
$ 
\hom_{\fcatk[\Gamma]} (t^*, t^*) \cong \kring.
$  Moreover, 
 $\hom _{\fcatk[\Gamma]} (t^*, \kring) = 0 = \hom _{\fcatk[\Gamma]} (\kring, t^*)$.
\end{lem}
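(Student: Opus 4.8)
The plan is to deduce everything from Pirashvili's Dold–Kan equivalence, Theorem~\ref{thm:DK_equiv}, which gives $\hom_{\fcatk[\Gamma]}((t^*)^{\otimes a},(t^*)^{\otimes b}) \cong \kring\hom_\Oz(\mathbf{b},\mathbf{a})$ naturally in $a$ and $b$. Specializing to the four cases at hand, I first take $a=b=0$: then $(t^*)^{\otimes 0} = \kring$ by convention, and $\hom_{\fcatk[\Gamma]}(\kring,\kring) \cong \kring\hom_\Oz(\mathbf{0},\mathbf{0})$, which is $\kring$ since $\hom_\Oz(\mathbf{0},\mathbf{0}) = \{*\}$ by Remark~\ref{rem:Oz}; moreover the isomorphism is induced by the identity, as recorded in the discussion following Theorem~\ref{thm:DK_equiv} (where the generator for $a=b=0$ is $\id_\kring$). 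Next, for $a=b=1$, Theorem~\ref{thm:DK_equiv} gives $\hom_{\fcatk[\Gamma]}(t^*,t^*) \cong \kring\hom_\Oz(\mathbf{1},\mathbf{1}) = \kring$, again with the identity $\id_{t^*}$ as canonical generator under $\kring\dk_{\mathbf{1},\mathbf{1}}$ (Proposition~\ref{prop:explicit_DK_morphism}).

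For the two vanishing statements, I would use the cases $(a,b) = (1,0)$ and $(a,b) = (0,1)$. Here $\hom_{\fcatk[\Gamma]}(t^*,\kring) \cong \kring\hom_\Oz(\mathbf{0},\mathbf{1}) = 0$ and $\hom_{\fcatk[\Gamma]}(\kring,t^*) \cong \kring\hom_\Oz(\mathbf{1},\mathbf{0}) = 0$, since by Remark~\ref{rem:Oz} both $\hom_\Oz(\mathbf{0},\mathbf{1})$ and $\hom_\Oz(\mathbf{1},\mathbf{0})$ are empty (there is no surjection $\emptyset \twoheadrightarrow \mathbf{1}$, and no map $\mathbf{1} \to \mathbf{0}$ at all). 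Alternatively, the vanishing of $\hom_{\fcatk[\Gamma]}(\kring,t^*)$ and $\hom_{\fcatk[\Gamma]}(t^*,\kring)$ follows directly from Lemma~\ref{lem:Gamma_mod_reduced}: $t^*$ is a reduced $\Gamma$-module while $\kring$ is the constant functor, so under the splitting $\fcatk[\Gamma] \cong \overline{\fcatk[\Gamma]} \times \kring\dash\modules$ these lie in complementary factors, forcing the $\hom$-groups to vanish; this reasoning already appeared in the proof of Lemma~\ref{lem:hom_Oz_a=1} for the case $b=0$. This second route has the virtue of not invoking the full strength of Pirashvili's theorem for the vanishing claims.

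There is essentially no obstacle here: the statement is a direct transcription of special cases of results already established in the excerpt, together with the observation that the relevant $\hom$-sets in $\Oz$ are either a point or empty. The only point requiring a word of care is the identification of the generator: one must note that under $\kring\dk_{\mathbf{a},\mathbf{b}}$ (Definition~\ref{defn:dk_map}, Proposition~\ref{prop:explicit_DK_morphism}) the point $\id_{\mathbf{0}} \in \hom_\Oz(\mathbf{0},\mathbf{0})$ maps to $\id_\kring$ and $\id_{\mathbf{1}} \in \hom_\Oz(\mathbf{1},\mathbf{1})$ maps to $\id_{t^*}$, so that the isomorphisms $\hom_{\fcatk[\Gamma]}(\kring,\kring) \cong \kring$ and $\hom_{\fcatk[\Gamma]}(t^*,t^*) \cong \kring$ are indeed the ones "induced by the identity", as asserted in the Lemma.
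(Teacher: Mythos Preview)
Your proposal is correct and matches the paper's approach: the paper states just before Lemma~\ref{lem:morphisms_tstar} that it ``follows from Theorem~\ref{thm:DK_equiv}'' and gives no further proof, so your detailed unpacking of the four cases via $\kring\hom_\Oz(\mathbf{b},\mathbf{a})$ is exactly what the paper intends. Your alternative route for the vanishing statements via Lemma~\ref{lem:Gamma_mod_reduced} is a nice supplementary observation but not needed.
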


Recall from Section \ref{section:Comparison-FGamma-FFin} the identifications  $\theta^* \overline{\kring} \cong \kring$ and $\theta^* P^\finne_{\mathbf{1}} \cong P^\Gamma_{\mathbf{1}_+}$, together with 
 $\theta^* \kbar \cong t^*$. 
In particular, since $ \kring$ and 
$t^*$ lie in the image of $\theta^*$, they are $\gcmnd$-comodules (see Remark \ref{rem:barr_beck}).

For $b=0$, $(t^*)^{\otimes 0}=\kring$ and $\gcmnd \kring=\kring$. The structure morphisms $\epsilon^\Gamma_{\kring }: \gcmnd \kring \to \kring$ and $\psi: \kring \to \gcmnd \kring$ are the respective identity natural transformations.

For $b=1$, the structure morphisms $\epsilon^\Gamma_{t^*}$ and $\psi$ are identified by the following:

\begin{lem}
\label{lem:gcmnd_t*}
There are isomorphisms
\begin{eqnarray*}
(-)_+^* t^* & \cong & P^\finne _{\mathbf{1}} \\
\gcmnd t^* & \cong & P^\Gamma _{\mathbf{1}_+} \cong t^* \oplus \kring.
\end{eqnarray*}

The $\gcmnd$-comodule structure morphism $\psi : t^* \rightarrow \gcmnd t^* \cong  P^\Gamma _{\mathbf{1}_+} \cong t^* \oplus \kring$ is the canonical inclusion and the counit 
$\epsilon^\Gamma_{t^*} : \gcmnd t^* \rightarrow t^*$ is the canonical projection.

Explicitly, for $(Z,z)$ a finite pointed set, $t^* (Z)$ has generators $[y]-[z]$, for $y \in  Z \backslash \{z\}$ and $\gcmnd t^*  (Z)$ has generators $[y]-[z]$, for $y \in  Z \backslash \{z\}$, and $[z]-[+]$. The morphism $\psi$  sends $[y]-[z]$ to $[y]-[z]$ and $\epsilon^\Gamma_{t^*}$ sends 
$[y]-[z]$ to $[y]-[z]$ and $[z]-[+]$ to $0$.
\end{lem}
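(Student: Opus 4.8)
The plan is to compute $(-)_+^* t^*$ explicitly on objects and morphisms, deduce $\gcmnd t^* = \theta^*(-)_+^* t^*$ together with its direct sum decomposition, and then read off the two structure maps. \emph{First}, for $X$ a non-empty finite set, Definition \ref{defn:t*} gives $t^*(X_+) = \kring[X_+]/\kring[+]$, where $+$ denotes the adjoined basepoint; the images of the generators $[x]$, $x \in X$, form a basis, so the canonical surjection $\kring[X_+] \twoheadrightarrow \kring[X_+]/\kring[+]$ restricts to an isomorphism $P^\finne_{\mathbf{1}}(X) = \kring[X] \stackrel{\cong}{\rightarrow} t^*(X_+)$. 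As any $f : X \to X'$ in $\finne$ induces $f_+ : X_+ \to X'_+$ fixing $+$, these isomorphisms are natural in $X$, so $(-)_+^* t^* \cong P^\finne_{\mathbf{1}}$. Applying $\theta^*$ and invoking $\theta^* P^\finne_{\mathbf{1}} \cong P^\Gamma_{\mathbf{1}_+}$ (Proposition \ref{prop:adjoints_theta}) together with the splitting $P^\Gamma_{\mathbf{1}_+} \cong t^* \oplus \kring$ (Lemma \ref{lem:t*_projective}) gives the displayed isomorphisms. Unwinding, $\gcmnd t^*(Z) = t^*(Z_+) = \kring[Z_+]/\kring[+]$, and since $[+] = 0$ there, the invertible change of basis $[z] \mapsto [z]-[+]$, $[y] \mapsto [y]-[z]$ (for $y \in Z \setminus\{z\}$) applied to $\{[w] : w \in Z\}$ shows that $\{[y]-[z] \mid y \in Z\setminus\{z\}\} \cup \{[z]-[+]\}$ is a basis, with the first family spanning the $t^*$-summand and $[z]-[+]$ spanning the $\kring$-summand.

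\emph{Next}, I would identify the counit using Proposition \ref{prop:comonad-gamma-explicit}(1): evaluated on $(Z,z)$, $\epsilon^\Gamma_{t^*}$ is $t^*(\alpha)$, where $\alpha : Z_+ \to Z$ fixes $Z$ and sends $+ \mapsto z$. This is induced by $\kring[\alpha] : \kring[Z_+] \to \kring[Z]$ with $[w] \mapsto [w]$ for $w \in Z$ and $[+] \mapsto [z]$; passing to the quotients $\kring[Z_+]/\kring[+]$ and $\kring[Z]/\kring[z]$ one reads off $[y]-[z] \mapsto [y]-[z]$ and $[z]-[+] \mapsto 0$. Hence $\epsilon^\Gamma_{t^*}$ is exactly the projection onto the $t^*$-summand.

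\emph{Finally}, to pin down $\psi$ I would use the comodule counit axiom $\epsilon^\Gamma_{t^*} \circ \psi = \mathrm{id}_{t^*}$. Writing $\psi = (p_1 \psi, p_2 \psi) : t^* \to t^* \oplus \kring$ for the projections $p_1, p_2$ of the decomposition above, the previous step gives $p_1 \psi = \epsilon^\Gamma_{t^*} \circ \psi = \mathrm{id}$, while $p_2 \psi \in \hom_{\fcatk[\Gamma]}(t^*, \kring) = 0$ by Lemma \ref{lem:morphisms_tstar}. Therefore $\psi$ is the canonical inclusion, sending $[y]-[z]$ to $[y]-[z]$, which is the last assertion. (Alternatively, one computes $\psi_{t^*} = \theta^*(\eta_{\kbar})$ directly from Remark \ref{rem:barr_beck}, using $t^* \cong \theta^*\kbar$ and $(\eta_{\kbar})_X = \kbar(i_X)$ induced by $X \hookrightarrow X_+$.)

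The one point needing care is the bookkeeping of the two basepoints — the basepoint $z$ of $Z$ and the freshly adjoined $+$ of $Z_+$ — and checking that the change of basis identifying the $t^*$- and $\kring$-summands of $\gcmnd t^*(Z)$ is the one produced in the first step; beyond this, everything is a routine verification.
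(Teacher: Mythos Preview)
Your proposal is correct and follows essentially the same approach as the paper: compute $(-)_+^* t^*$ explicitly, apply $\theta^*$ and the splitting of $P^\Gamma_{\mathbf{1}_+}$, then use Proposition \ref{prop:comonad-gamma-explicit} for $\epsilon^\Gamma_{t^*}$ and the counit axiom together with Lemma \ref{lem:morphisms_tstar} for $\psi$. The only cosmetic difference is that the paper establishes the first isomorphism via the subfunctor description of $t^*$ and Yoneda, whereas you use the quotient description directly; both are immediate.
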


\begin{proof}
As in  Proposition \ref{prop:tstar_kbar}, we  consider $t^*$ as a sub-functor of $P^\Gamma_{\mathbf{1}_+}$. Since $(-)^*_+$ is exact (by Proposition \ref{prop:adjoints_theta}), for $X \in \ob \finne$ we have:  
$$
(-)^*_+ t^* (X)= t^* (X_+) \subset (-)^*_+P^\Gamma_{\mathbf{1}_+}(X)=P^\Gamma_{\mathbf{1}_+}(X_+) \cong \kring [X_+]
$$
 and $t^* (X_+)$ identifies as the submodule generated by the elements $[x]-[+]$, for $x \in X$, by the explicit description given in  Lemma \ref{lem:t*_projective}. By the Yoneda Lemma, there is a morphism $P^\finne_{\mathbf{1}} \rightarrow (-)^*_+ t^*$ determined by the element $[1]- [+] \in t^* (\mathbf{1}_+)$. 
 This identifies as the morphism $P^\finne_{\mathbf{1}} (X) \cong \kring[X] \rightarrow t^* (X_+) \subset \kring [X_+]$ that sends a generator $[x]$ to $[x] - [+]$, for $x\in X$;  this gives the  isomorphism $(-)_+^* t^*  \cong  P^\finne _{\mathbf{1}}$. The identification of $\gcmnd t^*$ then follows from the isomorphism $\theta^* P^\finne_{\mathbf{1}} \cong P^\Gamma_{\mathbf{1}_+}$ given by Proposition \ref{prop:adjoints_theta}; the second isomorphism is given by Lemma \ref{lem:t*_projective}.

By Lemma \ref{lem:morphisms_tstar}, since the composite $\epsilon_{\gcmnd} \psi$ is the identity on $t^*$, to identify these structure morphisms it suffices to show that, under the isomorphism $\gcmnd t^* \cong t^* \oplus \kring$, $\epsilon_{\gcmnd}$ is the canonical projection. This follows from the description of $\epsilon_{\gcmnd}$ given in Proposition \ref{prop:comonad-gamma-explicit}, as seen explicitly as follows.

By Lemma \ref{lem:t*_projective}, $t^* (Z) \subset \kring [Z]$ has generators $[y]-[z]$, for $y \in  Z \backslash \{z\}$, thus $t^* (Z_+)$ has generators: $[y]-[+]$, $y $ as above, and $[z]-[+]$. Writing $([y]-[z]) + ([z] -[+]) = [y]-[+]$, one also has that $t^* (Z_+)$ has  generators $[y]- [z]$ and $[z]-[+]$,  corresponding to generators of $t^* (Z) \subset t^* (Z_+)$ and $\kring$ respectively. The element $[z]-[+]$ generates the kernel of $\epsilon_{\gcmnd}$ and $[y]-[z]$ is sent by $\epsilon_{\gcmnd}$ to $[y]-[z] \in t^* (Z)$, corresponding to the projection to $t^*$.
\end{proof}

Consider the beginning of the augmented simplicial object $(\gcmnd)^{\bullet +1} t^*$:
\[
\xymatrix{
\gcmnd \gcmnd t^* 
\ar@<.75ex>[rr]^{\delta_0 = \epsilon_{\gcmnd t^*} }
\ar@<-.75ex>[rr]_{\delta_1 = \gcmnd \epsilon_{t^*}}
&&
\gcmnd t^* 
\ar[rr]^{\delta_0 = \epsilon_{t^*}}
\ar@<-2ex>@/_1pc/[ll]_{\sigma_0 = \Delta}
&&
t^*. 
}
\] 

The structure morphisms are identified by the following, in which we implicitly use Lemma \ref{lem:morphisms_tstar}: 

\begin{prop}
\label{prop:identify_Delta}
\ 
\begin{enumerate}
\item 
There is an isomorphism 
\[
\gcmnd \gcmnd t^* 
\cong 
t^* \oplus \kring_0 \oplus \kring_1,
\]
where $\kring_i$ is the rank one free $\kring$-module  given by $\kring_i := \ker (\delta_i)$ for $i \in \{0, 1 \}$.
\item 
The diagonal $\Delta : \gcmnd t^* \cong t^* \oplus \kring \rightarrow \gcmnd \gcmnd t^* \cong t^* \oplus \kring_0 \oplus \kring_1$ is given by  the canonical inclusion of $t^*$ and the diagonal map 
$
\kring \rightarrow \kring_0 \oplus \kring_1.
$
\item 
The morphism $\gcmnd \psi : \gcmnd t^* \cong t^* \oplus \kring 
\rightarrow \gcmnd \gcmnd t^* \cong t^* \oplus \kring_0 \oplus \kring_1$ is the  inclusion of the direct summand $t^* \oplus \kring_0$. 
\end{enumerate}
\end{prop}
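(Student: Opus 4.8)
The plan is to reduce everything to the explicit description of the comonad $\gcmnd$ provided by Proposition \ref{prop:comonad-gamma-explicit} and the identification $\gcmnd t^* \cong t^* \oplus \kring$ from Lemma \ref{lem:gcmnd_t*}. First I would establish (1): applying $\gcmnd$ to the short exact sequence $0 \to t^* \to \gcmnd t^* \to \kring \to 0$ (which is the split sequence of Lemma \ref{lem:gcmnd_t*}) and using that $\gcmnd$ is exact and additive (Corollary \ref{cor:perp_sym}) together with $\gcmnd t^* \cong t^* \oplus \kring$ and $\gcmnd \kring \cong \kring$ gives $\gcmnd\gcmnd t^* \cong \gcmnd t^* \oplus \gcmnd \kring \cong t^* \oplus \kring \oplus \kring$. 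To see that the two copies of $\kring$ are exactly $\ker \delta_0$ and $\ker \delta_1$, I would use Proposition \ref{prop:comonad-gamma-explicit}(4), which says $\delta_0 = \gcmnd_1 \epsilon^\Gamma_{t^*}$ sends $+_2 \mapsto +$ and $\delta_1 = \epsilon^{\Gamma}_{\gcmnd_2 t^*}$ sends $+_1 \mapsto y$ (the basepoint), combined with the explicit generators of $\gcmnd \gcmnd t^*(Z) = t^*((Z_{+_1})_{+_2})$. Concretely, evaluating on a pointed set $(Z,z)$, the module $t^*((Z_{+_1})_{+_2})$ has generators $[y]-[z]$ for $y \in Z \backslash \{z\}$, together with $[z]-[+_1]$ and $[+_1]-[+_2]$; the first batch spans the copy of $t^*$, while $[z]-[+_1]$ and $[+_1]-[+_2]$ (after suitable re-choice of basis, as in the proof of Lemma \ref{lem:gcmnd_t*}) span $\kring_1$ and $\kring_0$ respectively, since $\delta_1$ kills $[+_1]-[z]=-([z]-[+_1])$ and $\delta_0$ kills $[+_1]-[+_2]$.

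For (2) and (3) I would argue largely by the rigidity coming from Lemma \ref{lem:morphisms_tstar}: the only freedom in a morphism between sums of copies of $t^*$ and $\kring$ is a scalar on each matching summand, since $\hom(t^*,\kring)=\hom(\kring,t^*)=0$ and $\hom(t^*,t^*)\cong\hom(\kring,\kring)\cong\kring$. So for the diagonal $\Delta : t^*\oplus\kring \to t^*\oplus\kring_0\oplus\kring_1$ it suffices to identify the component maps. The $t^*$-component must be compatible with the counits: since $\delta_0 \circ \Delta = \id$ and $\delta_1 \circ \Delta = \id$ (comonad identities, or rather the simplicial identities $\sigma_0$ is a section of both faces), the $t^*\to t^*$ part is the identity and the $\kring\to\kring_0$, $\kring\to\kring_1$ parts are both the identity — hence $\Delta|_\kring$ is the diagonal $\kring\to\kring_0\oplus\kring_1$. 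This can also be read off directly from Proposition \ref{prop:comonad-gamma-explicit}(3): the diagonal is induced by $Y_+ \to (Y_{+_1})_{+_2}$ sending $+ \mapsto +_2$, so on $t^*$ it sends $[z]-[+]\mapsto [z]-[+_2] = ([z]-[+_1])+([+_1]-[+_2])$, which is precisely the diagonal into $\kring_1\oplus\kring_0$. For (3), $\gcmnd\psi$ is $\gcmnd$ applied to the canonical inclusion $\psi : t^*\hookrightarrow \gcmnd t^*\cong t^*\oplus\kring$; by exactness and additivity of $\gcmnd$ this is the inclusion $\gcmnd t^* \hookrightarrow \gcmnd\gcmnd t^*$ of the summand $\gcmnd t^* \cong t^*\oplus\gcmnd\kring = t^*\oplus\kring$, and matching this against the decomposition $t^*\oplus\kring_0\oplus\kring_1$ via Proposition \ref{prop:comonad-gamma-explicit}(2) (which identifies $\gcmnd_1\gcmnd_2$ so that $\gcmnd_1$ is the outer application, carrying the $+_1$-index) shows the image is $t^*\oplus\kring_0$.

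The main obstacle is bookkeeping: keeping straight which of the two $\kring$ summands is $\ker\delta_0$ versus $\ker\delta_1$, and correspondingly which basepoint ($+_1$ or $+_2$) each summand "remembers". The cleanest way to avoid sign/indexing errors is to fix once and for all the explicit generators $[y]-[z]$, $[z]-[+_1]$, $[+_1]-[+_2]$ of $\gcmnd\gcmnd t^*(Z,z)$ as in Lemma \ref{lem:gcmnd_t*}, verify on these generators that $\delta_0$ ($+_2\mapsto +_1$) and $\delta_1$ ($+_1\mapsto z$) act as claimed, and then read all three structure maps off directly from Proposition \ref{prop:comonad-gamma-explicit}; the representation-theoretic rigidity from Lemma \ref{lem:morphisms_tstar} then serves as a consistency check rather than the primary argument.
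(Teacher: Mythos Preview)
Your approach is essentially the one taken in the paper: work with the explicit generators $[y]-[z]$, $[z]-[+_1]$, $[+_1]-[+_2]$ of $\gcmnd\gcmnd t^*(Z)$ coming from Proposition \ref{prop:comonad-gamma-explicit} and Lemma \ref{lem:gcmnd_t*}, and read off all three structure maps directly; the paper also mentions your rigidity argument via $\delta_i\Delta=\id$ as an alternative route to (2).

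There is, however, exactly the bookkeeping slip you warned yourself against. You write $\delta_0 = \gcmnd_1\epsilon^\Gamma_{t^*}$ and $\delta_1 = \epsilon^\Gamma_{\gcmnd_2 t^*}$, but the simplicial convention (see the diagram preceding the proposition, or Proposition \ref{prop:simplicial} with $\delta_i = \perp^i\epsilon_{\perp^{n-i}X}$) gives $\delta_0 = \epsilon_{\gcmnd t^*} = \epsilon^\Gamma_{\gcmnd_2 t^*}$ and $\delta_1 = \gcmnd\epsilon_{t^*} = \gcmnd_1\epsilon^\Gamma_{t^*}$. Consequently it is $[z]-[+_1]$ that generates $\kring_0$ and $[+_1]-[+_2]$ that generates $\kring_1$, the reverse of what you wrote. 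This does not affect your argument for (2), since the diagonal lands in both summands regardless; but it does matter for (3): tracing $\gcmnd\psi$ on generators sends $[z]-[+]$ to $[z]-[+_1]$, which under the correct labeling lies in $\kring_0$, whereas under your swapped labeling it would land in $\kring_1$ and contradict the statement. The cleanest fix is the one you already have: observe that $\gcmnd\epsilon_{t^*}\circ\gcmnd\psi = \gcmnd(\epsilon_{t^*}\psi) = \id$, so the image of $\gcmnd\psi$ is complementary to $\ker(\gcmnd\epsilon_{t^*}) = \ker\delta_1 = \kring_1$, hence equals $t^*\oplus\kring_0$.
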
 
 
 \begin{proof}
 The isomorphism $\gcmnd t^* \cong t^* \oplus \kring$ of  Lemma \ref{lem:gcmnd_t*} induces an isomorphism  $\gcmnd \gcmnd t^*  \cong t^* \oplus \kring ^{\oplus 2}$. The operators $\delta_0$ and $\delta_1$ give a canonical decomposition, as follows. For $(Z,z)$ a finite pointed set, 
 by Lemma \ref{lem:gcmnd_t*}, $t^* (Z)$ has generators $[y]-[z]$, for $y \in  Z \backslash \{z\}$ and $\gcmnd t^*  (Z)$ has generators $[y]-[z]$, for $y \in  Z \backslash \{z\}$, and $[z]-[+]$. By Proposition \ref{prop:comonad-gamma-explicit}, we have:
  \[
\gcmnd_1 \gcmnd_2  t^* (Z) 
= 
 t^* ((Z_{+_1}) _{+_2}), 
\]
so that $\gcmnd_1 \gcmnd_2  t^* (Z) $ is generated by  $[y]-[+_2]$, for $y \in  Z \backslash \{z\}$, $[y]-[+_2]$, and $[+_1]-[+_2]$. Using the equalities
$$[y]-[+_2]=([y]-[z])+([z]-[+_2]); \qquad [z]-[+_2]=([z]-[+_1])+([+_1]-[+_2])$$
we obtain that $\gcmnd_1 \gcmnd_2  t^* (Z) $ is generated by:
\[
\big\{ ([y]-[z]), ([z] -[+_1]) , ([+_1] -[+_2] ) \  | \ y \in Y\backslash \{ z \} \ \big\}.
\]  
By the description of the counits given in Proposition \ref{prop:comonad-gamma-explicit}, $([z] -[+_1])$ generates the kernel of $\epsilon_{\gcmnd t^*}$ (i.e., $\kring_0$) and $([+_1] -[+_2] )$ generates the kernel of $\gcmnd \epsilon_{t^*}$ (i.e., $\kring_1$).  

By Proposition \ref{prop:comonad-gamma-explicit}, the diagonal $\Delta: \gcmnd t^* \rightarrow \gcmnd_1 \gcmnd_2 t^*$ sends $([y]-[z])$ to $([y]-[z])$ and $([z] - [+])$ to $([z] -[+_2]) =  ([z] -[+_1]) + ([+_1] -[+_2] )$; this identifies $\Delta$. (An alternative argument is to use the fact that the composites $\epsilon_{\gcmnd} \Delta$ and $(\gcmnd \epsilon) \Delta$ are both the identity on $\gcmnd t^*$.)
 
The identification of $\gcmnd \psi$ is similar, by using Lemma \ref{lem:gcmnd_t*}.
\end{proof}
 
For the case $b>1$, we use the fact that the functor $\gcmnd$ is symmetric monoidal, by Corollary \ref{cor:perp_sym}, to obtain the following:

\begin{prop}
\label{prop:calculate_perp}
Let $b\in \nat$.
\begin{enumerate}
\item 
The canonical isomorphism $\gcmnd  t^* \cong t^* \oplus \kring$ induces isomorphisms:
\[
\gcmnd  (t^*)^{\otimes b} 
\cong 
P^\Gamma _{\mathbf{b}_+}
\cong 
\bigoplus_{\mathbf{b}' \subseteq \mathbf{b}} (t^*)^{\otimes b'}
; \qquad
\gcmnd \gcmnd  (t^*)^{\otimes b} 
\cong 
\bigoplus_{\mathbf{b}'' \subseteq \mathbf{b}' \subseteq \mathbf{b}} (t^*)^{\otimes b''}
\]
where $b' = | \mathbf{b}'|$ and $b'' = | \mathbf{b}''|$. 
\item 
\label{prop:calculate_perp-2}
The comodule structure morphism $\psi_{(t^*)^{\otimes b}}:  (t^*)^{\otimes b} \to \gcmnd  (t^*)^{\otimes b} \cong 
\bigoplus_{\mathbf{b}' \subseteq \mathbf{b}} (t^*)^{\otimes b'}$ is the inclusion of the direct summand indexed by $\mathbf{b}'= \mathbf{b}$. 
\item
\label{prop:calculate_perp3}
The counit $\epsilon^\Gamma_{(t^*)^{\otimes b} } : \gcmnd (t^*)^{\otimes b} \cong \bigoplus_{\mathbf{b}' \subseteq \mathbf{b}} (t^*)^{\otimes b'} \rightarrow (t^*)^{\otimes b} $ is the canonical projection. 
\item 
\label{prop:calculate_perp4}
The diagonal $ \Delta : \gcmnd  (t^*)^{\otimes b} 
\cong 
\bigoplus_{\mathbf{b}' \subseteq \mathbf{b}} (t^*)^{\otimes b'} \rightarrow  
\gcmnd \gcmnd  (t^*)^{\otimes b} 
\cong 
\bigoplus_{\mathbf{b}'' \subseteq \tilde{\mathbf{b}}' \subseteq \mathbf{b}} (t^*)^{\otimes b''}$
with component for $ \mathbf{b}' \subseteq \mathbf{b}$  and $\mathbf{b}'' \subseteq \tilde{\mathbf{b}}' \subseteq \mathbf{b}$,    $ (t^*)^{\otimes b'} \rightarrow  
 (t^*)^{\otimes b''}$, the identity if $\mathbf{b}''=\mathbf{b}'$ and zero otherwise. (Here the notation $\tilde{\mathbf{b}}'$ is introduced for the indexing set of $\gcmnd \gcmnd  (t^*)^{\otimes b} $ to avoid confusion.)
\end{enumerate}
\end{prop}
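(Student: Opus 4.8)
The plan is to reduce all four statements to the case $b=1$, which is exactly the content of Lemma~\ref{lem:gcmnd_t*} and Proposition~\ref{prop:identify_Delta}, and then to assemble the general case using that $\gcmnd$ is symmetric monoidal (Corollary~\ref{cor:perp_sym}). The crucial observation is that, for the pointwise tensor product on functor categories, precomposition with any functor preserves $\otimes$ strictly; hence $\gcmnd = \theta^*\circ(-)_+^*$ is \emph{strictly} symmetric monoidal, so that $\gcmnd\big((t^*)^{\otimes b}\big)=(\gcmnd t^*)^{\otimes b}$ and $\gcmnd\gcmnd\big((t^*)^{\otimes b}\big)=(\gcmnd\gcmnd t^*)^{\otimes b}$ on the nose.

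First I would prove part~(1). Substituting $\gcmnd t^*\cong t^*\oplus\kring$ (Lemma~\ref{lem:gcmnd_t*}) into $(\gcmnd t^*)^{\otimes b}$ and distributing the tensor power over the direct sum gives
\[
\gcmnd\big((t^*)^{\otimes b}\big)\;\cong\;(t^*\oplus\kring)^{\otimes b}\;\cong\;\bigoplus_{\mathbf{b}'\subseteq\mathbf{b}}(t^*)^{\otimes b'},
\]
where the summand indexed by $\mathbf{b}'$ records the tensor positions carrying $t^*$, the remaining $\kring$-factors being absorbed as copies of the monoidal unit; since $\gcmnd t^*\cong P^\Gamma_{\mathbf{1}_+}$, Proposition~\ref{prop:coprod_PGamma} identifies this with $P^\Gamma_{\mathbf{b}_+}$. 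Applying the same distributivity to $\gcmnd\gcmnd t^*\cong t^*\oplus\kring_0\oplus\kring_1$ (Proposition~\ref{prop:identify_Delta}) yields $\bigoplus_{\mathbf{b}''\subseteq\mathbf{b}'\subseteq\mathbf{b}}(t^*)^{\otimes b''}$, where the positions carrying $t^*$ form $\mathbf{b}''$, those carrying $\kring_1$ form $\mathbf{b}'\setminus\mathbf{b}''$, and those carrying $\kring_0$ form $\mathbf{b}\setminus\mathbf{b}'$; I would fix this matching of $\kring_0,\kring_1$ to "layers" precisely so as to be compatible with the indexing of Remark~\ref{rem:reindexation2} and Proposition~\ref{prop:identify_fbcmnd}.

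For parts~(\ref{prop:calculate_perp-2})--(\ref{prop:calculate_perp4}), I would use that the comonad structure transformations $\epsilon^\Gamma$, $\Delta$ and the $\gcmnd$-comodule structure $\psi$ on $(t^*)^{\otimes b}=\theta^*(\kbar^{\otimes b})$ (see Proposition~\ref{prop:tstar_kbar} and Remark~\ref{rem:barr_beck}) are all monoidal, the comodule structure on the tensor power being the tensor power of that on $t^*$; hence $\psi_{(t^*)^{\otimes b}}$, $\epsilon^\Gamma_{(t^*)^{\otimes b}}$ and $\Delta_{(t^*)^{\otimes b}}$ are the $b$-fold tensor powers of the corresponding maps for $t^*$. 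Since $\psi_{t^*}\colon t^*\hookrightarrow t^*\oplus\kring$ and $\epsilon^\Gamma_{t^*}\colon t^*\oplus\kring\twoheadrightarrow t^*$ are the canonical inclusion and projection (Lemma~\ref{lem:gcmnd_t*}), their tensor powers are the inclusion of, respectively the projection onto, the summand $\mathbf{b}'=\mathbf{b}$, which is parts~(\ref{prop:calculate_perp-2}) and~(\ref{prop:calculate_perp3}). For part~(\ref{prop:calculate_perp4}), $\Delta_{t^*}$ is the sum of the inclusion of $t^*$ and the diagonal $\kring\to\kring_0\oplus\kring_1$; tensoring, the summand of the source indexed by $\mathbf{b}'$ is carried into those target summands whose $t^*$-positions coincide with $\mathbf{b}'$ (forcing $\mathbf{b}''=\mathbf{b}'$) and whose other positions are split arbitrarily between $\kring_0$ and $\kring_1$ (hitting every $\tilde{\mathbf{b}}'$ with $\mathbf{b}'\subseteq\tilde{\mathbf{b}}'\subseteq\mathbf{b}$), on each of which the induced map is the identity, and zero elsewhere.

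Everything here reduces to monoidal-coherence arguments and distributivity bookkeeping; the one genuinely delicate point, and the step I expect to be the main obstacle, is purely combinatorial: matching the summands obtained by expanding $(t^*\oplus\kring_0\oplus\kring_1)^{\otimes b}$ with the chains $\mathbf{b}''\subseteq\mathbf{b}'\subseteq\mathbf{b}$ fixed in Proposition~\ref{prop:identify_fbcmnd}, so that with this identification the component description in part~(\ref{prop:calculate_perp4}) comes out with exactly the stated orientation.
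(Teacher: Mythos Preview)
Your proposal is correct and follows essentially the same approach as the paper: reduce to the case $b=1$ (Lemma~\ref{lem:gcmnd_t*} and Proposition~\ref{prop:identify_Delta}) via the symmetric monoidality of $\gcmnd$ (Corollary~\ref{cor:perp_sym}), then expand $(t^*\oplus\kring)^{\otimes b}$ and $(t^*\oplus\kring_0\oplus\kring_1)^{\otimes b}$ and read off the structure maps componentwise. The only cosmetic difference is that for $\gcmnd\gcmnd(t^*)^{\otimes b}$ the paper iterates, applying $\gcmnd$ to the already-decomposed $\bigoplus_{\mathbf{b}'\subseteq\mathbf{b}}(t^*)^{\otimes b'}$, whereas you expand $(\gcmnd\gcmnd t^*)^{\otimes b}$ directly; the resulting indexing is the same.
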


\begin{proof}
Using that  $\gcmnd  $ is symmetric monoidal (see Corollary \ref{cor:perp_sym}), we have
$
\gcmnd  (t^* )^{\otimes b}
\cong 
(\gcmnd  t^* )^{\otimes b}
.
$ 
The decomposition of $\gcmnd  (t^* )^{\otimes b}$ follows by considering the isomorphism 
\[
\gcmnd  (t^* )^{\otimes b}
\cong 
(t^* \oplus \kring) ^{\otimes b}
\] 
that is given by using the decomposition of Lemma \ref{lem:gcmnd_t*}. The tensor factors of the tensor product are indexed by the elements of $\mathbf{b}$:  the direct summand indexed by $\mathbf{b}'$ corresponds to the term in which $t^*$ is a tensor factor for $i \in \mathbf{b}'$ and is $\kring$ otherwise. 

The decomposition of $\gcmnd \gcmnd  (t^*)^{\otimes b}$ is obtained by iterating the previous argument, since 
$$
\gcmnd \gcmnd  (t^*)^{\otimes b} \cong \bigoplus_{\mathbf{b}' \subseteq \mathbf{b}}  \gcmnd (t^*)^{\otimes b'}.
$$

The descriptions of $\psi_{(t^*)^{\otimes b}}$ and $\epsilon^\Gamma_{(t^*)^{\otimes b} } $ follow from the descriptions of $\psi : t^* \rightarrow \gcmnd t^* \cong  P^\Gamma _{\mathbf{1}_+} \cong t^* \oplus \kring$ and $\epsilon^\Gamma_{t^*} : \gcmnd t^* \rightarrow t^*$ given in Lemma \ref{lem:gcmnd_t*} and the previous decomposition. 

Since $\gcmnd$ is monoidal,  $ \Delta : \gcmnd (t^*) ^{\otimes b}  \rightarrow  
\gcmnd \gcmnd (t^*) ^{\otimes b}  $ is induced by the diagonal $\Delta : \gcmnd t^* \rightarrow  
\gcmnd \gcmnd t^*$ identified in Proposition \ref{prop:identify_Delta}. Namely,  this corresponds to 
\[
(t^* \oplus \kring) ^{\otimes b} 
\rightarrow 
(t^* \oplus \kring_0 \oplus \kring_1) ^{\otimes b} 
\]
induced by the canonical inclusion of $t^*$ and the diagonal $\kring \rightarrow \kring_0 \oplus \kring_1$. Developing the tensor products, one obtains the description of the component given in the statement.
\end{proof}

Recall that, for $b \in \nat^*$,  $\xi_b: t^* \to (t^*)^{\otimes b}$ is the standard generator given in Lemma \ref{lem:hom_Oz_a=1}.

\begin{prop}
\label{prop:perpfb}
If $b \in \nat^*$,  the morphism 
$$\gcmnd \xi_b : \gcmnd t^*\cong t^* \oplus \kring \to \gcmnd  (t^*)^{\otimes b} 
\cong 
\bigoplus_{\mathbf{b}' \subseteq \mathbf{b}} (t^*)^{\otimes b'}$$
 sends $\kring$ to the copy of $\kring$ indexed by 
 $\emptyset  \subset \mathbf{b}$ and, for any $\emptyset \neq \mathbf{b}' \subseteq \mathbf{b}$, the component 
 $ t^* \rightarrow (t^*)^{\otimes |\mathbf{b}'|}$ is the standard generator $\xi_{ |\mathbf{b}'|}$. 
\end{prop}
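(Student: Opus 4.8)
The plan is to unwind $\gcmnd\xi_b$ pointwise and then reduce to a bookkeeping computation with tensor powers. Since $\gcmnd=\theta^*\circ(-)_+^*$ is precomposition with functors, for a finite pointed set $(Z,z)$ the map $(\gcmnd\xi_b)_{(Z,z)}$ is precisely $(\xi_b)_{(Z_+,+)}\colon t^*(Z_+,+)\to\bigl(t^*(Z_+,+)\bigr)^{\otimes b}$, which by Lemma~\ref{lem:hom_Oz_a=1} sends each element $[w]-[+]$ of the natural basis ($w\in Z$) to $([w]-[+])^{\otimes b}$. This is the only description of $\gcmnd\xi_b$ that I will need.

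Next I would record the two relevant direct sum decompositions in this concrete form. By Lemma~\ref{lem:gcmnd_t*}, $\gcmnd t^*(Z,z)=t^*(Z_+,+)$, and in the splitting $\gcmnd t^*\cong t^*\oplus\kring$ the $t^*$-summand is spanned by the $[y]-[z]$, $y\in Z\setminus\{z\}$, and the $\kring$-summand by $[z]-[+]$; the point to keep in mind is the change of basis $[w]-[+]=([w]-[z])+([z]-[+])$. By Proposition~\ref{prop:calculate_perp}, the summand of $\gcmnd(t^*)^{\otimes b}(Z,z)=\bigl(t^*(Z_+,+)\bigr)^{\otimes b}$ indexed by $\mathbf{b}'\subseteq\mathbf{b}$ consists of the tensors with a factor in the $t^*$-summand of $\gcmnd t^*(Z,z)$ in the slots $i\in\mathbf{b}'$ and a factor in the $\kring$-summand in the remaining slots, and under the identification with $(t^*)^{\otimes|\mathbf{b}'|}(Z,z)$ an occurrence of $[z]-[+]$ in a $\kring$-slot corresponds to $1\in\kring$ while an occurrence of $[y]-[z]$ in a $t^*$-slot corresponds to the basis element $[[y]]\in t^*(Z,z)$.

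The computation then runs as follows. On the $\kring$-summand, $(\gcmnd\xi_b)_{(Z,z)}$ sends the generator $[z]-[+]$ to $([z]-[+])^{\otimes b}$, every factor of which lies in the $\kring$-summand; hence this lands in the summand indexed by $\emptyset$ and corresponds to $1$, which is the first assertion. On the $t^*$-summand, linearity gives $(\gcmnd\xi_b)_{(Z,z)}([y]-[z])=([y]-[+])^{\otimes b}-([z]-[+])^{\otimes b}$, and I would expand $([y]-[+])^{\otimes b}=\bigl(([y]-[z])+([z]-[+])\bigr)^{\otimes b}$ as the sum, over $\mathbf{b}'\subseteq\mathbf{b}$, of the tensor having $[y]-[z]$ in the slots of $\mathbf{b}'$ and $[z]-[+]$ in the other slots. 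Each such summand lies in the $\mathbf{b}'$-component and corresponds to $[[y]]^{\otimes|\mathbf{b}'|}$, whereas the subtracted term $([z]-[+])^{\otimes b}$ contributes only $1$ to the $\emptyset$-component; so the $\emptyset$-components cancel and, for $\emptyset\neq\mathbf{b}'\subseteq\mathbf{b}$, the $\mathbf{b}'$-component of $\gcmnd\xi_b$ restricted to $t^*$ is the map $[[y]]\mapsto[[y]]^{\otimes|\mathbf{b}'|}$, i.e.\ $\xi_{|\mathbf{b}'|}$. Since all of these identifications are natural in $(Z,z)$, the asserted equalities hold in $\fcatk[\Gamma]$.

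The one place where care is genuinely needed — and essentially the whole content of the proof — is the interplay between the natural basis $\{[w]-[+]\}_{w\in Z}$ of $t^*(Z_+,+)$, on which $\xi_b$ acts by sending a basis vector to its $b$-th tensor power, and the basis $\{[y]-[z]\}_{y\neq z}\cup\{[z]-[+]\}$ adapted to the splitting $\gcmnd t^*\cong t^*\oplus\kring$. It is precisely the substitution $[y]-[+]=([y]-[z])+([z]-[+])$ inside the $b$-th tensor power that makes $\gcmnd\xi_b$ spread over all the summands indexed by $\mathbf{b}'\subseteq\mathbf{b}$, rather than landing only in the top summand as a first glance might suggest; keeping track of which tensor factor lands in which summand is the only subtlety.
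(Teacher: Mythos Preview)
Your proof is correct and follows essentially the same route as the paper's own argument: evaluate $\gcmnd\xi_b$ at a pointed set $(Z,z)$ as $(\xi_b)_{(Z_+,+)}$, use the change of basis $[y]-[+]=([y]-[z])+([z]-[+])$ to pass between the natural basis of $t^*(Z_+,+)$ and the one adapted to the splitting $\gcmnd t^*\cong t^*\oplus\kring$, and expand $(([y]-[z])+([z]-[+]))^{\otimes b}-([z]-[+])^{\otimes b}$ to read off the $\mathbf{b}'$-components. Your write-up is slightly more explicit about the bookkeeping of which tensor slot lands in which summand, but the substance is identical.
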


\begin{proof}
By Lemma \ref{lem:gcmnd_t*}, for $(Z,z)$ a finite pointed set, $\gcmnd t^*  (Z)$ has generators $[y]-[z]$, for $y \in  Z \backslash \{z\}$, corresponding to $t^*(Z)$ in the decomposition, and $[z]-[+]$, corresponding to $\kring$ in the decomposition. We have
$$\gcmnd \xi_b ([z]-[+])=([z]-[+])^{\otimes b}$$
which treats the factor $\kring$. For $[y]-[z]$ we have 
$$\gcmnd \xi_b ([y]-[z])=\gcmnd \xi_b (([y]-[+])-([z]-[+]))=\gcmnd \xi_b ([y]-[+])-\gcmnd \xi_b ([z]-[+])$$
$$=([y]-[+])^{\otimes b}-([z]-[+])^{\otimes b}=(([y]-[z])+([z]-[+]))^{\otimes b}-([z]-[+])^{\otimes b}.$$
Developing the tensor product we obtain that the component $t^* \rightarrow (t^*)^{\otimes |\mathbf{b}'|}$ indexed by $\mathbf{b}' \neq \emptyset$ is the map 
\[
([y]-[z])
\mapsto 
 ([y]-[z])^{\otimes |\mathbf{b}'|},
\]
i.e., the standard generator $\xi_{|\mathbf{b}'|}$ of $\hom_{\fcatk[\Gamma]} (t^* , (t^*)^{\otimes |\mathbf{b}'|})$, as required.

\end{proof}

\begin{rem}
\ 
\begin{enumerate}
\item
Above, we gave an explicit description of $\gcmnd (t^*)^{\otimes b}$ and 
$\gcmnd \gcmnd  (t^*)^{\otimes b} $ for $b \in \nat$. The reader's attention is drawn to the fact that, in general, we have no such description of $\gcmnd F$ for $F \in \ob \fcatk[\Gamma]$. This should be compared with Proposition \ref{prop:identify_fbcmnd}, where we give an explicit description of $\fbcmnd G$ and $\fbcmnd \fbcmnd G$ for all $G \in \ob \fcatk[\fb\op]$.
\item
The explicit description of the structure morphisms in simplicial degree $0$ and $1$ for the simplicial object $
 (\gcmnd)^{\bullet +1} (t^*)^{\otimes b}
 $ are used to establish   the isomorphism of augmented simplicial objects, $
(\gcmnd)^{\bullet +1} \twist 
\cong 
\big(
(\fbcmnd)^{\bullet +1} \twist
\big)\op$
in Proposition \ref{prop:compare_aug_simp_comonads}. 
\end{enumerate}
\end{rem}

%%%%%%%%%%%%%%%%%%%%%%%%%%
\subsection{Comparing comonads}

In this section we work with functors in  the category $\fcatk[\Gamma \times \fb\op]$; this  is equivalent to the category of functors from $\Gamma$ to $\fcatk[\fb \op]$ and also to the category of functors from $\fb \op$ to $\fcatk[\Gamma]$. 
Fixing one of the variables, we can consider the following comonads on $\fcatk[\Gamma \times \fb\op]$:
\begin{itemize}
\item 
$\gcmnd: \fcatk[\Gamma \times \fb\op] \to \fcatk[\Gamma \times \fb\op]$ obtained from the comonad introduced in Notation \ref{nota:gcmnd}.
\item 
$\fbcmnd: \fcatk[\Gamma \times \fb\op] \to \fcatk[\Gamma \times \fb\op]$ obtained from the comonad introduced in Notation \ref{nota:fbcmnd}.
\end{itemize}

Here we  establish two results concerning these comonads: in Proposition \ref{prop:comonads_commute} we prove that these two comonads commute and in Proposition \ref{prop:compare_comonads} we construct a natural isomorphism $\gcmnd \twist \cong \fbcmnd \twist$, compatible with the counits, where $\twist$ is the bifunctor  introduced in Definition \ref{defn:twist} that encodes the functors $(t^*)^{\otimes b}$ for $b \in \nat$. 

\begin{prop}
\label{prop:comonads_commute}
The comonads 
$\gcmnd, \fbcmnd: \fcatk[\Gamma \times \fb\op] \to \fcatk[\Gamma \times \fb\op]$
commute up to natural isomorphism. Explicitly, for $F \in \ob \fcatk[\Gamma \times \fb\op]$, there is a natural isomorphism
\[
\gcmnd \fbcmnd F \stackrel{\cong}{\rightarrow} \fbcmnd \gcmnd F. 
\]
This is compatible with the respective counits, in that the following diagrams commute: 
\[
\xymatrix{
\gcmnd \fbcmnd F 
\ar[rr]^\cong 
\ar[dr] _{\epsilon^\Gamma_{\fbcmnd F}} 
&
&
\fbcmnd \gcmnd F
\ar[dl]^{\fbcmnd \epsilon^\Gamma_F}
&
\gcmnd \fbcmnd F 
\ar[rr]^\cong 
\ar[dr]_{\gcmnd \epsilon^\Sigma_F}
&
&
\fbcmnd \gcmnd F
\ar[dl]^{\epsilon^\Sigma_{\gcmnd F}}
\\
&
\fbcmnd F
&
&
&
\gcmnd F . 
}
\]
\end{prop}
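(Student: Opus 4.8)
The plan is to exploit that $\gcmnd$ and $\fbcmnd$ act on the two \emph{separate} variables of $\fcatk[\Gamma \times \fb\op]$, so that their commutation is an instance of the strict commutation of pre-composition and post-composition of functors. Concretely, I would work through the equivalence between $\fcatk[\Gamma \times \fb\op]$ and the category $[\Gamma, \fcatk[\fb\op]]$ of functors $\Gamma \to \fcatk[\fb\op]$, under which a bifunctor $F$ corresponds to $\Phi_F : Y \mapsto F(Y, -)$. The first step is to read off from Proposition \ref{prop:comonad-gamma-explicit} that the endofunctor underlying $\gcmnd$ on $\fcatk[\Gamma]$ is precomposition by the endofunctor $V : \Gamma \to \Gamma$, $(Y, y) \mapsto (Y \amalg \{+\}, +)$, with counit and comultiplication obtained by whiskering $F$ with the natural transformations $\nu : V \Rightarrow \id_\Gamma$ (collapsing the adjoined point to the basepoint) and $\beta : V \Rightarrow V \circ V$ of loc.\ cit.; hence, through the equivalence, $\gcmnd$ becomes the endofunctor $\Phi \mapsto \Phi \circ V$ of $[\Gamma, \fcatk[\fb\op]]$, with counit $- \star \nu$ and comultiplication $- \star \beta$. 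Dually, I would check that $\fbcmnd$ on $\fcatk[\Gamma \times \fb\op]$ arises, by its construction, from the adjunction of Proposition \ref{prop:right_adjoint_fbfi} applied in the $\fb\op$-variable alone, so that through the equivalence it becomes post-composition $\Phi \mapsto \fbcmnd_0 \circ \Phi$ by the comonad on $\fcatk[\fb\op]$ of Notation \ref{nota:fbcmnd}, here denoted $\fbcmnd_0$, with counit $\epsilon^\Sigma \star -$ and comultiplication $\Delta \star -$.

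Granting these two identifications, the rest is formal. Pre-composition with $V$ and post-composition with $\fbcmnd_0$ commute strictly, $(\fbcmnd_0 \circ \Phi) \circ V = \fbcmnd_0 \circ (\Phi \circ V)$, and likewise on natural transformations, since $\gcmnd$ whiskers by $V$ and $\fbcmnd$ whiskers by $\fbcmnd_0$; so $\gcmnd \fbcmnd F = \fbcmnd \gcmnd F$ as endofunctors of $\fcatk[\Gamma \times \fb\op]$, and I would take the natural isomorphism of the statement to be the identity. (As a concrete check, Proposition \ref{prop:identify_fbcmnd} identifies both sides, evaluated at $(Y, \mathbf{b})$, with $\bigoplus_{\mathbf{b}' \subseteq \mathbf{b}} F(Y \amalg \{+\}, \mathbf{b}')$.) With the identity as comparison map, the two triangles of the statement become the equalities $\epsilon^\Gamma_{\fbcmnd F} = \fbcmnd \epsilon^\Gamma_F$ of morphisms $\gcmnd \fbcmnd F \to \fbcmnd F$ and $\gcmnd \epsilon^\Sigma_F = \epsilon^\Sigma_{\gcmnd F}$ of morphisms $\gcmnd \fbcmnd F \to \gcmnd F$; translated through the equivalence these read $(\fbcmnd_0 \circ \Phi) \star \nu = \fbcmnd_0 \star (\Phi \star \nu)$ and $(\epsilon^\Sigma \star \Phi) \star V = \epsilon^\Sigma \star (\Phi \circ V)$, each of which holds by the compatibility of whiskering with composition of functors. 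If a hands-on verification is preferred, one may instead note, using Proposition \ref{prop:comonad-gamma-explicit} and Proposition \ref{prop:identify_fbcmnd}, that through the decomposition $\bigoplus_{\mathbf{b}' \subseteq \mathbf{b}} F(Y \amalg \{+\}, \mathbf{b}')$ both composites in the first triangle act summand-wise by the morphism induced by collapsing the adjoined point to the basepoint, and both composites in the second triangle are the projection onto the summand indexed by $\mathbf{b}' = \mathbf{b}$.

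The step I expect to require the most care is the identification of $\fbcmnd$, through the equivalence $\fcatk[\Gamma \times \fb\op] \simeq [\Gamma, \fcatk[\fb\op]]$, with post-composition by $\fbcmnd_0$: since $\fbcmnd = {\downarrow}{\uparrow}$ with the right adjoint $\uparrow$ built as a right Kan extension (Definition \ref{defn:uparrow}) rather than by an evidently pointwise formula, one must check that its construction genuinely involves only the $\fb\op$-variable and is performed objectwise over $\Gamma$ — this is true (it is immediate from the definition of $\uparrow$, the representing objects $\kring\hom_\finj(\cdot,\mathbf{n})$ involving only $\fb\op$ and $\finj$), but deserves to be spelled out. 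Everything else is bookkeeping with the equivalence and the whiskering calculus; no genuinely hard computation arises, essentially because the statement merely records that two operations carried out on disjoint variables commute.
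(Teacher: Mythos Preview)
Your proposal is correct and follows essentially the same idea as the paper: both arguments exploit that $\gcmnd$ acts only on the $\Gamma$-variable (via precomposition with $(-)_+\circ\theta$) and $\fbcmnd$ only on the $\fb\op$-variable, so that the two operations commute. The paper carries this out by the direct computation you describe as the ``hands-on verification'' (using the explicit decomposition of Proposition~\ref{prop:identify_fbcmnd} and the description of $\gcmnd$ from Proposition~\ref{prop:comonad-gamma-explicit}), whereas you package the same content more abstractly via the pre-/post-composition and whiskering formalism; the substance is the same.
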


\begin{proof}
By Proposition \ref{prop:identify_fbcmnd} part (\ref{item:p1}), for $X \in \ob \Gamma$ and $\mathbf{b} \in \ob \fb$, 
\[
\fbcmnd F (X, \mathbf{b}) 
= 
\bigoplus_{\mathbf{b}' \subseteq \mathbf{b} } F (X, \mathbf{b}').
\]
Hence, by the Definition of $\gcmnd$ (see Notation \ref{nota:gcmnd}), 
\[
\gcmnd \fbcmnd F (X, \mathbf{b})
= 
\fbcmnd F (X_+, \mathbf{b})
= 
 \bigoplus_{\mathbf{b}' \subseteq \mathbf{b} } F (X_+, \mathbf{b}')
\]
and the right hand side is canonically isomorphic to  $\bigoplus_{\mathbf{b}' \subseteq \mathbf{b} } \gcmnd F (X, \mathbf{b}')$ which, in turn, is canonically isomorphic to $ \fbcmnd \gcmnd F (X, \mathbf{b})$. These isomorphisms are natural with respect to $X$ and $\mathbf{b}$. 

The compatibility with the counits is checked by using their explicit  identifications, given in 
Proposition \ref{prop:comonad-gamma-explicit} and  Proposition \ref{prop:identify_fbcmnd} respectively.
\end{proof}

We will apply the comonads $\gcmnd, \fbcmnd$ to the following functor.

\begin{defn}
\label{defn:twist}
Let $\twist$ be the functor in $\fcatk[\Gamma \times \fb\op]$  (considered here as functors from $\fb \op$ to $\fcatk[\Gamma]$)  given by  
\[
\twist (\mathbf{b}) := (t^*)^{\otimes b},
\]
 where $\mathbf{b} \in \ob \fb\op$ and the symmetric group $\sym_b$ acts on the right via place permutations. 
\end{defn}

For $\mathbf{b} \in \ob \fb\op$, by Proposition \ref{prop:identify_fbcmnd} and Proposition \ref{prop:calculate_perp}, there are natural isomorphisms:
$$\fbcmnd \twist (\mathbf{b})
\cong 
\bigoplus_{\mathbf{b}' \subseteq \mathbf{b}} (t^*)^{\otimes b'}; \qquad 
\gcmnd  \twist (\mathbf{b})
\cong 
\gcmnd  (t^*)^{\otimes b} 
\cong 
\bigoplus_{\mathbf{b}' \subseteq \mathbf{b}} (t^*)^{\otimes b'}.$$

This leads to the following:

\begin{prop}
\label{prop:compare_comonads}
There is an isomorphism
$
\overline{\alpha}: 
\gcmnd \twist 
\rightarrow 
\fbcmnd \twist
$
in $\fcatk[\Gamma \times \fb\op]$
defined, for $b \in \nat$, by the isomorphisms in $\fcatk[\Gamma]$ 
$$\overline{\alpha}_\mathbf{b}: \gcmnd  (t^*)^{\otimes b} 
\cong 
\bigoplus_{\mathbf{b}' \subseteq \mathbf{b}} (t^*)^{\otimes b'}
\to 
\fbcmnd \twist (\mathbf{b})
\cong 
\bigoplus_{\mathbf{b}' \subseteq \mathbf{b}} (t^*)^{\otimes b'}$$
given by the identity on each factor $(t^*)^{\otimes b'}.$
Moreover, this isomorphism is compatible with the respective counits, in that the following diagram commutes:
 \[
 \xymatrix{
\gcmnd \twist 
\ar[rr]_{\cong}^{\overline{\alpha}}
\ar[rd]_{\epsilon^\Gamma_{\twist}}
&&
\fbcmnd \twist
\ar[ld]^{\epsilon^\Sigma_{\twist}}
\\
&
\twist.
 }
 \]
\end{prop}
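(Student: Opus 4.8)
The plan is to verify directly that $\overline{\alpha}$, as defined factorwise, is a natural transformation of functors in $\fcatk[\Gamma \times \fb\op]$, and then to check the compatibility with the counits. For the naturality, one must verify compatibility with morphisms in both variables. For the $\fb\op$-variable: given a bijection $g \colon \mathbf{b} \to \mathbf{b}$, one checks that the action of $g$ on $\gcmnd(t^*)^{\otimes b} \cong \bigoplus_{\mathbf{b}' \subseteq \mathbf{b}}(t^*)^{\otimes b'}$ (coming from place permutations, as recorded after Definition~\ref{defn:twist}) and on $\fbcmnd \twist(\mathbf{b}) \cong \bigoplus_{\mathbf{b}'\subseteq\mathbf{b}}(t^*)^{\otimes b'}$ (coming from Proposition~\ref{prop:identify_fbcmnd}) both send the summand indexed by $\mathbf{b}'$ to that indexed by $g^{-1}(\mathbf{b}')$ by the same permutation isomorphism; since $\overline{\alpha}_\mathbf{b}$ is the identity on each summand, it is equivariant. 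For the $\Gamma$-variable: since $\twist$ is constant in $\Gamma$, a morphism $X \to Y$ in $\Gamma$ acts on $\gcmnd \twist$ via $\gcmnd$ and on $\fbcmnd \twist$ via $\fbcmnd$; here one uses that $\gcmnd \twist(X,\mathbf{b}) = \twist(X_+,\mathbf{b}) = (t^*)^{\otimes b}(X_+)$ and tracks the decompositions of Lemma~\ref{lem:gcmnd_t*} and Proposition~\ref{prop:calculate_perp}, observing that under these identifications the $\Gamma$-functoriality is componentwise on the $(t^*)^{\otimes b'}$ factors in a manner that matches the $\fbcmnd$-side verbatim.

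Concretely, the cleanest route is to reduce to the case $b = 1$ by monoidality. Lemma~\ref{lem:gcmnd_t*} identifies $\gcmnd t^* \cong t^* \oplus \kring$; on the other side Proposition~\ref{prop:identify_fbcmnd} gives $\fbcmnd \twist(\mathbf{1}) \cong \twist(\mathbf{1}) \oplus \twist(\mathbf{0}) = t^* \oplus \kring$, and in both cases the $\kring$-summand is the kernel of the respective counit $t^* \oplus \kring \to t^*$ (Lemma~\ref{lem:gcmnd_t*} on one side, Proposition~\ref{prop:identify_fbcmnd}~(\ref{prop:identify_fbcmnd2}) on the other). Thus $\overline{\alpha}_{\mathbf{1}}$ is the identification of these two copies of $t^* \oplus \kring$, and it is tautologically compatible with the counits at $b=1$. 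Since $\gcmnd$ is symmetric monoidal (Corollary~\ref{cor:perp_sym}) and $\fbcmnd$ is symmetric monoidal on the relevant subcategory (the constructions of Section~\ref{sect:fbfi} behave monoidally, and $\twist(\mathbf{b}) = \twist(\mathbf{1})^{\otimes b}$), both $\gcmnd \twist(\mathbf{b})$ and $\fbcmnd \twist(\mathbf{b})$ are obtained by tensoring $b$ copies of $t^* \oplus \kring$ and then expanding, and the counit is in each case the projection $(t^* \oplus \kring)^{\otimes b} \to (t^*)^{\otimes b}$ killing every tensor factor equal to $\kring$. The map $\overline{\alpha}_\mathbf{b}$ is $\overline{\alpha}_{\mathbf{1}}^{\otimes b}$ under these identifications, so the counit square commutes by functoriality of $\otimes$.

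The one point requiring genuine care — and the main (mild) obstacle — is checking that the $\fb\op$-equivariance statements on the two sides really do agree on the nose, i.e.\ that the symmetric-group actions arising respectively from place permutations of $(t^*)^{\otimes b}$ (the definition of $\twist$ as a $\fb\op$-module) and from the $\fb\op$-module structure on $\fbcmnd \twist$ in Proposition~\ref{prop:identify_fbcmnd} are carried to one another by the naive factorwise identity map. This amounts to unwinding Definition~\ref{defn:Cbb}-style bookkeeping: the summand indexed by $\mathbf{b}' \subseteq \mathbf{b}$ carries the $\sym_{b'}$-action by place permutations on $(t^*)^{\otimes b'}$, and $g \in \sym_b$ permutes the summands by $\mathbf{b}' \mapsto g^{-1}(\mathbf{b}')$, acting on the tensor factors via $g|_{g^{-1}(\mathbf{b}')}$; one verifies this is exactly the action coming from place permutations on $(t^*)^{\otimes b}(X_+)$ restricted to the subspace described in Lemma~\ref{lem:gcmnd_t*} and Proposition~\ref{prop:calculate_perp}. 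Once this matching is in place, the remaining verifications are formal, and the compatibility with counits follows as above. $\qed$
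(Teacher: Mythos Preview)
Your proposal is essentially correct and follows the same line as the paper's proof: verify $\sym_b$-equivariance of $\overline{\alpha}_\mathbf{b}$ using the explicit descriptions of the actions (place permutations on the $\gcmnd$-side versus Proposition~\ref{prop:right_adjoint_fbfi-explicit} on the $\fbcmnd$-side), and check counit compatibility from Proposition~\ref{prop:identify_fbcmnd}(\ref{prop:identify_fbcmnd2}) and Proposition~\ref{prop:calculate_perp}(\ref{prop:calculate_perp3}).

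Two minor points worth tightening. First, the discussion of naturality in the $\Gamma$-variable is unnecessary: $\overline{\alpha}_\mathbf{b}$ is \emph{defined} as a morphism in $\fcatk[\Gamma]$ (the identity on each summand $(t^*)^{\otimes b'}$, which is already a $\Gamma$-module), so $\Gamma$-naturality is built in; only the $\fb\op$-equivariance requires checking. Second, the assertion that ``$\fbcmnd$ is symmetric monoidal on the relevant subcategory'' is not something established in the paper and is loose as stated; what you actually use, and what is correct, is the specific identification $\fbcmnd \twist(\mathbf{b}) \cong (t^* \oplus \kring)^{\otimes b}$ with $\sym_b$ acting by place permutations, which you then spell out in your third paragraph. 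That concrete bookkeeping is exactly what the paper invokes, so once you replace the monoidality slogan by this explicit identification, your argument coincides with the paper's.
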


\begin{proof}
For the first part, one checks that the isomorphisms $\overline{\alpha}_\mathbf{b}$ are compatible with the action of the symmetric group $\sym_b$. Here, the action of $\sym_b$ on $\fbcmnd \twist (\mathbf{b})$ is given by Proposition \ref{prop:right_adjoint_fbfi-explicit} (\ref{item:finj_op_structure_uparrow}) and  on 
$\gcmnd  (t^*)^{\otimes b} \cong (\gcmnd  t^*)^{\otimes b}$ is given by place permutations.

The compatibility with the counits follows from the explicit description of the counits given in  Proposition \ref{prop:identify_fbcmnd} (\ref{prop:identify_fbcmnd2}) and Proposition \ref{prop:calculate_perp} (\ref{prop:calculate_perp3}).
\end{proof}

%%%%%%%%%%%%%%%%%%%%%
\subsection{An isomorphism of augmented simplicial objects}
The purpose of this section is to construct an isomorphism of augmented simplicial objects $\gcmnd^{\bullet +1} \twist \cong \big( \fbcmnd ^{\bullet +1} \twist\big) \op$ extending the natural isomorphism $\gcmnd \twist \cong \fbcmnd \twist$ given in Proposition \ref{prop:compare_comonads}. Here $\op$ denotes the opposite simplicial structure (see Notation \ref{nota:opp_simp}).

\begin{defn}
\label{defn:overline_alpha_ell}
For $\ell \in \nat$, let $\overline{\alpha} ^\ell : (\gcmnd)^{\ell} \twist \stackrel{\cong}{\rightarrow} (\fbcmnd)^{\ell} \twist$ be the isomorphism in $\fcatk[\Gamma \times \fb\op]$ defined recursively by:
\begin{enumerate}
\item
$\overline{\alpha}^0 = \id_{\twist}$;
\item 
for $\ell >0$, $\overline{\alpha}^{\ell}$ is the composite:
\[
(\gcmnd)^{\ell} \twist
= (\gcmnd)^{\ell -1 } \gcmnd  \twist
\stackrel{(\gcmnd)^{\ell -1} \overline{\alpha}  }{\longrightarrow} 
(\gcmnd)^{\ell -1 } \fbcmnd  \twist
\cong 
 \fbcmnd (\gcmnd)^{\ell -1 }  \twist
 \stackrel{\fbcmnd \overline{\alpha}^{\ell -1}}{\longrightarrow}
 \fbcmnd (\fbcmnd)^{\ell -1 }  \twist =  (\fbcmnd)^{\ell} \twist,
\]
in which the middle isomorphism is given by Proposition \ref{prop:comonads_commute}. 
\end{enumerate}
In particular, $\overline{\alpha}^1 = \overline{\alpha}$. 
\end{defn}

In order to give an explicit description of $\overline{\alpha} ^\ell$ in Proposition \ref{prop:alpha-l-explicit}, we need the following reindexation of iterates of the comonads  $ \fbcmnd$ and $ \gcmnd$.
\begin{rem}
\label{rem:reindexation}
Recall that by Remark \ref{rem:reindexation2} (\ref{iteration-Sigma}) we have 
\begin{equation}
\label{iteration-Sigma-2}
 \fbcmnd_0 \fbcmnd_1 \ldots \fbcmnd_\ell G (\mathbf{b}) 
\cong
\bigoplus_{\mathbf{b}=(\ldots ((\mathbf{b}^{(\ell+1)}_\Sigma \amalg \mathbf{b}^{\Sigma}_\ell) \amalg \mathbf{b}^{\Sigma}_{\ell-1})) \ldots \amalg \mathbf{b}^{\Sigma}_1) \amalg \mathbf{b}^{\Sigma}_0 }
G (\mathbf{b}^{(\ell+1)}_\Sigma).  
\end{equation}
On the other hand, the decomposition given in Proposition \ref{prop:calculate_perp} can be reindexed as follows:
$$\gcmnd_0 \gcmnd_1  \twist (\mathbf{b}) 
\cong 
\gcmnd_0 (\bigoplus_{\mathbf{b}= \mathbf{b}^{(1)}_\Gamma \amalg \mathbf{b}^{\Gamma}_1 } (t^*)^{\otimes b^{(1)}_\Gamma })
\cong
\bigoplus_{\mathbf{b}= (\mathbf{b}^{(2)}_\Gamma   \amalg \mathbf{b}^{\Gamma}_0) \amalg \mathbf{b}^{\Gamma}_1 } \twist ( b^{(2)}_\Gamma ).
$$
With respect to the decomposition given in Proposition \ref{prop:calculate_perp} we have  $\mathbf{b}^{(2)}_\Gamma  := \mathbf{b}''$, $\mathbf{b}^{\Gamma}_0:=\mathbf{b}'\backslash \mathbf{b}''$ and $\mathbf{b}^{\Gamma}_1:=\mathbf{b}\backslash \mathbf{b}'$. 
More generally, we have:
\begin{equation}
\label{iteration-Gamma}
  \gcmnd_0 \gcmnd_1 \ldots \gcmnd_\ell  \twist (\mathbf{b}) 
\cong
\bigoplus_{\mathbf{b}=(\ldots ((\mathbf{b}^{(\ell+1)}_\Gamma  \amalg \mathbf{b}^{\Gamma}_0) \amalg \mathbf{b}^{\Gamma}_{1})) \ldots \amalg \mathbf{b}^{\Gamma}_{\ell-1}) \amalg \mathbf{b}^{\Gamma}_\ell }
\twist (\mathbf{b}^{(\ell+1)}_\Gamma).  
\end{equation}
\end{rem}

Note the reversal of the order of the indices in the decomposition of $\mathbf{b}$ between (\ref{iteration-Sigma-2}) and  (\ref{iteration-Gamma}). This  explains why it is  the   opposite structure for the augmented simplicial object $ \fbcmnd ^{\bullet +1} \twist$ that arises in Proposition \ref{prop:compare_aug_simp_comonads} and justifies why we consider  $ \fbcmnd_\ell \fbcmnd_{\ell-1} \ldots \fbcmnd_0 \twist $ in the following Proposition.

\begin{prop}
\label{prop:alpha-l-explicit}
For $\ell \in \nat$, the isomorphism $\overline{\alpha} ^\ell : (\gcmnd)^{\ell} \twist \stackrel{\cong}{\rightarrow} (\fbcmnd)^{\ell} \twist$ in $\fcatk[\Gamma \times \fb\op]$
is defined, for $b \in \nat$, by the isomorphisms in $\fcatk[\Gamma]$ 
$$
\xymatrix{
\gcmnd_0 \ldots \gcmnd_{\ell-1}  \twist (\mathbf{b})
 \ar[rr]^{\overline{\alpha}_\mathbf{b}^\ell} 
&& 
 \fbcmnd_{\ell-1}  \ldots \fbcmnd_0  \twist (\mathbf{b}) 
  \ar[d]^\cong
  \\
  \bigoplus_{\mathbf{b}=((\mathbf{b}^{(\ell)}_\Gamma \amalg \mathbf{b}^{\Gamma}_0)\amalg \ldots  \amalg \mathbf{b}^{\Gamma}_{\ell-1}) }
\twist ( \mathbf{b}^{(\ell)}_\Gamma)
\ar[u]^\cong
&& 
\bigoplus_{\mathbf{b}=((\mathbf{b}^{(\ell)}_\Sigma \amalg \mathbf{b}^{\Sigma}_0)  \ldots \amalg  \mathbf{b}^{\Sigma}_{\ell-1} )}
\twist (\mathbf{b}^{(\ell)}_\Sigma)
}
$$
sending the component indexed by $\mathbf{b}=(\ldots ((\mathbf{b}^{(\ell)}_\Gamma \amalg \mathbf{b}^{\Gamma}_0) \amalg \mathbf{b}^{\Gamma}_{1})) \ldots \amalg \mathbf{b}^{\Gamma}_{\ell-2}) \amalg \mathbf{b}^{\Gamma}_{\ell-1}$ to the component with $\mathbf{b}^{(\ell)}_\Sigma=\mathbf{b}^{(\ell)}_\Gamma$ and $\mathbf{b}^{\Sigma}_{i}=\mathbf{b}^{\Gamma}_{i}$ for $i \in \{0, \ldots, \ell-1\}$, where the vertical isomorphisms are given by (\ref{iteration-Gamma}).
\end{prop}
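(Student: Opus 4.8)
The plan is to prove the statement by induction on $\ell$, unwinding the recursive definition of $\overline{\alpha}^\ell$ from Definition \ref{defn:overline_alpha_ell} and tracking each of its constituent isomorphisms on the explicit direct-sum decompositions (\ref{iteration-Sigma-2}) and (\ref{iteration-Gamma}). The cases $\ell=0$ and $\ell=1$ are immediate: $\overline{\alpha}^0=\id_{\twist}$ by definition, while $\overline{\alpha}^1=\overline{\alpha}$ is precisely the isomorphism of Proposition \ref{prop:compare_comonads}, which by construction is the identity on each summand $(t^*)^{\otimes b'}$.

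For the inductive step, suppose the claim holds for $\ell-1$. By Definition \ref{defn:overline_alpha_ell}, $\overline{\alpha}^\ell$ is the composite
\[
(\gcmnd)^{\ell}\twist = (\gcmnd)^{\ell-1}\gcmnd\twist
\longrightarrow
(\gcmnd)^{\ell-1}\fbcmnd\twist
\cong
\fbcmnd(\gcmnd)^{\ell-1}\twist
\longrightarrow
(\fbcmnd)^{\ell}\twist,
\]
where the first arrow is $(\gcmnd)^{\ell-1}\overline{\alpha}$, the middle is the $(\ell-1)$-fold iterate of the commutation isomorphism of Proposition \ref{prop:comonads_commute}, and the last is $\fbcmnd\overline{\alpha}^{\ell-1}$. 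I would analyse these in turn. Since $\gcmnd$ is additive it commutes with the direct sums appearing in (\ref{iteration-Gamma}); applying $\gcmnd_0\cdots\gcmnd_{\ell-2}$ to $\overline{\alpha}$ and invoking Proposition \ref{prop:compare_comonads} therefore shows that $(\gcmnd)^{\ell-1}\overline{\alpha}$ is again the identity on each summand, now indexed by decompositions as in (\ref{iteration-Gamma}) but with the outermost block coming from the factor $\fbcmnd$. Likewise, $\fbcmnd$ is additive, so by the inductive hypothesis $\fbcmnd\overline{\alpha}^{\ell-1}$ is the identity on each summand. Finally, the middle isomorphism is built from the canonical identification $\gcmnd\fbcmnd F\cong\fbcmnd\gcmnd F$ constructed in the proof of Proposition \ref{prop:comonads_commute}, which is simply the reordering of the direct sum $\bigoplus_{\mathbf{b}'\subseteq\mathbf{b}}$ coming from $\fbcmnd$ with the $(-)_+$ coming from $\gcmnd$; iterating it and passing to the fully expanded sums, it identifies the index sets block by block, carrying each $\Gamma$-block to the corresponding $\Sigma$-block. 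Composing the three maps, each of which is ``the identity on each summand'' once the index sets are matched, yields exactly the claimed isomorphism: the summand indexed by $(\mathbf{b}^{(\ell)}_\Gamma;\mathbf{b}^{\Gamma}_0,\dots,\mathbf{b}^{\Gamma}_{\ell-1})$ is sent to the summand with $\mathbf{b}^{(\ell)}_\Sigma=\mathbf{b}^{(\ell)}_\Gamma$ and $\mathbf{b}^{\Sigma}_i=\mathbf{b}^{\Gamma}_i$.

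The main obstacle is the bookkeeping in the middle step: one must check that the $(\ell-1)$-fold iterate of the commutation isomorphism of Proposition \ref{prop:comonads_commute}, written out on the fully expanded direct sums, is precisely the diagonal identification of index sets and does not permute the order of the blocks $\mathbf{b}^{\Gamma}_0,\dots,\mathbf{b}^{\Gamma}_{\ell-1}$. This is exactly where the reversal of indexing order between (\ref{iteration-Sigma-2}) and (\ref{iteration-Gamma}) (flagged in the discussion preceding the statement) must be reconciled with the order in which the factors $\gcmnd$ are pushed past the factors $\fbcmnd$; it is what forces $\fbcmnd_{\ell-1}\cdots\fbcmnd_0\twist$, rather than $\fbcmnd_0\cdots\fbcmnd_{\ell-1}\twist$, to appear as the target. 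With the conventions fixed so that the target is written in that order, the identification is the evident one, and the verification reduces to a diagram chase using the explicit formulas of Proposition \ref{prop:identify_fbcmnd} and Proposition \ref{prop:calculate_perp}, together with the explicit description of the counits used there.
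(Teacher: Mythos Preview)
Your proposal is correct and follows essentially the same approach as the paper: induction on $\ell$, unwinding the recursive definition of $\overline{\alpha}^\ell$ into its three constituents, and tracking each on the explicit direct-sum decompositions. The only difference is organizational: the paper isolates the explicit form of the commutation isomorphism $\gcmnd\fbcmnd\twist\cong\fbcmnd\gcmnd\twist$ as a separate lemma (Lemma~\ref{lm:commute-explicit}), which cleanly dispatches the bookkeeping you flag as the main obstacle, whereas you handle this inline by appealing to the proof of Proposition~\ref{prop:comonads_commute}.
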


To prove this Proposition we need to make explicit the natural isomorphism given in Proposition \ref{prop:comonads_commute} for $F=\twist $. First note that there are isomorphisms
$$
 \gcmnd_0 (\fbcmnd_{1} \twist)(\mathbf{b}) \cong \gcmnd_0 (\bigoplus_{\mathbf{b}= \mathbf{b}^{(1)}_\Sigma \amalg \mathbf{b}^{\Sigma}_1 } (t^*)^{\otimes b^{(1)}_\Sigma }) \cong 
\bigoplus_{\mathbf{b}= (\mathbf{b}^{(2)}_\Gamma   \amalg \mathbf{b}^{\Gamma}_0) \amalg \mathbf{b}^{\Sigma}_1 } (t^*)^{\otimes b^{(2)}_\Gamma }
$$
and
$$
 \fbcmnd_{1} (\gcmnd_0  \twist)(\mathbf{b}) \cong 
\bigoplus_{\mathbf{b}= \mathbf{b}^{(1)}_\Sigma \amalg \mathbf{b}^{\Sigma}_1 } (\gcmnd_0 \twist)( \mathbf{b}^{(1)}_\Sigma  ) \cong 
\bigoplus_{\mathbf{b}= (\mathbf{b}^{(2)}_\Gamma   \amalg \mathbf{b}^{\Gamma}_0) \amalg \mathbf{b}^{\Sigma}_1 } (t^*)^{\otimes b^{(2)}_\Gamma }.
$$

The proof of the following is left to the reader:

\begin{lem}
\label{lm:commute-explicit}
The natural isomorphism
$
\gcmnd_0 \fbcmnd_1 \twist \stackrel{\cong}{\rightarrow} \fbcmnd_1 \gcmnd_0 \twist
$
in $\fcatk[\Gamma \times \fb\op]$ of Proposition \ref{prop:comonads_commute}  is given, for $b \in \nat$, by the isomorphisms in $\fcatk[\Gamma]$: 
$$ 
\gcmnd_0 (\fbcmnd_{1} \twist)(\mathbf{b}) \cong \bigoplus_{\mathbf{b}= (\mathbf{b}^{(2)}_\Gamma   \amalg \mathbf{b}^{\Gamma}_0) \amalg \mathbf{b}^{\Sigma}_1 } (t^*)^{\otimes b^{(2)}_\Gamma } \rightarrow 
 \fbcmnd_{1} (\gcmnd_0  \twist)(\mathbf{b}) \cong \bigoplus_{\mathbf{b}= (\mathbf{b}^{(2)}_\Gamma   \amalg \mathbf{b}^{\Gamma}_0) \amalg \mathbf{b}^{\Sigma}_1 } (t^*)^{\otimes b^{(2)}_\Gamma }
 $$
 determined by the identity on each component indexed by $\mathbf{b}= (\mathbf{b}^{(2)}_\Gamma   \amalg \mathbf{b}^{\Gamma}_0) \amalg \mathbf{b}^{\Sigma}_1 $.
\end{lem}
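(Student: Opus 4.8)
The plan is to unwind the construction of the commutation isomorphism given in the proof of Proposition \ref{prop:comonads_commute} in the special case $F = \twist$, and then to check that, under the explicit direct-sum decompositions of $\gcmnd \twist$ provided by Proposition \ref{prop:calculate_perp}, this isomorphism becomes the claimed componentwise identity.

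First I would record the componentwise description of the general commutation isomorphism. By construction, the isomorphism $\gcmnd \fbcmnd F \stackrel{\cong}{\rightarrow} \fbcmnd \gcmnd F$ of Proposition \ref{prop:comonads_commute}, evaluated at $(X, \mathbf{b}) \in \ob(\Gamma \times \fb\op)$, is the composite of the canonical identifications
\[
\gcmnd \fbcmnd F(X,\mathbf{b}) = \fbcmnd F(X_+, \mathbf{b}) \cong \bigoplus_{\mathbf{b}' \subseteq \mathbf{b}} F(X_+, \mathbf{b}') = \bigoplus_{\mathbf{b}' \subseteq \mathbf{b}} \gcmnd F(X, \mathbf{b}') \cong \fbcmnd \gcmnd F(X, \mathbf{b}),
\]
where the middle isomorphisms use Proposition \ref{prop:identify_fbcmnd} part (\ref{item:p1}) and the definition of $\gcmnd$ (Notation \ref{nota:gcmnd}). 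Thus this isomorphism carries the summand of $\gcmnd \fbcmnd F(X, \mathbf{b})$ indexed by $\mathbf{b}' \subseteq \mathbf{b}$ identically onto the summand of $\fbcmnd \gcmnd F(X, \mathbf{b})$ indexed by the same $\mathbf{b}'$.

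Next I would specialize $F = \twist$ and refine both sides a second time. For each $\mathbf{b}' \subseteq \mathbf{b}$, Proposition \ref{prop:calculate_perp} gives $\gcmnd \twist(\mathbf{b}') = \gcmnd (t^*)^{\otimes b'} \cong \bigoplus_{\mathbf{b}'' \subseteq \mathbf{b}'} (t^*)^{\otimes b''}$, and iterating $\fbcmnd$ introduces the outer decomposition exactly as in Remark \ref{rem:reindexation}. Setting $\mathbf{b}^{\Sigma}_1 := \mathbf{b}\backslash\mathbf{b}'$, $\mathbf{b}^{\Gamma}_0 := \mathbf{b}'\backslash\mathbf{b}''$ and $\mathbf{b}^{(2)}_\Gamma := \mathbf{b}''$, the two layers of decomposition present $\gcmnd_0 \fbcmnd_1 \twist(\mathbf{b})$ precisely as the sum $\bigoplus_{\mathbf{b} = (\mathbf{b}^{(2)}_\Gamma \amalg \mathbf{b}^{\Gamma}_0) \amalg \mathbf{b}^{\Sigma}_1} (t^*)^{\otimes b^{(2)}_\Gamma}$ of the statement, and the same bookkeeping applies verbatim to $\fbcmnd_1 \gcmnd_0 \twist(\mathbf{b})$. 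Combining this with the componentwise description of the previous step yields that the commutation isomorphism is the identity on each summand indexed by a decomposition $\mathbf{b} = (\mathbf{b}^{(2)}_\Gamma \amalg \mathbf{b}^{\Gamma}_0) \amalg \mathbf{b}^{\Sigma}_1$.

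The one point requiring genuine care — and the step I expect to be the main obstacle — is checking that these identifications are morphisms in $\fcatk[\Gamma \times \fb\op]$, i.e. that they are $\sym_b$-equivariant and natural in $X$, rather than just isomorphisms of the underlying families of $\Gamma$-modules. For equivariance one uses that, via Proposition \ref{prop:calculate_perp}, the decomposition of $\gcmnd (t^*)^{\otimes m}$ arises from expanding $(t^* \oplus \kring)^{\otimes m}$ with tensor factors indexed by $\mathbf{m}$, so that every rearrangement above respects the block structure recorded by the ordered decompositions of $\mathbf{b}$; the $\sym_b$-equivariance then follows formally, just as in the proofs of Propositions \ref{prop:right_adjoint_fbfi-explicit} and \ref{prop:compare_comonads}. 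Naturality in $X \in \ob\Gamma$ is automatic, since each identification used above is manifestly natural in $X$.
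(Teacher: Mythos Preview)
Your proposal is correct and is precisely the approach the paper has in mind: the paper sets up the two direct-sum decompositions immediately before the lemma and then explicitly leaves the proof to the reader, so unwinding the commutation isomorphism of Proposition \ref{prop:comonads_commute} summand by summand and matching it against the decomposition of $\gcmnd (t^*)^{\otimes b'}$ from Proposition \ref{prop:calculate_perp} is exactly what is intended. One minor remark: your final paragraph on $\sym_b$-equivariance and naturality in $X$ is more than is strictly needed here, since the isomorphism of Proposition \ref{prop:comonads_commute} is already established as a morphism in $\fcatk[\Gamma \times \fb\op]$; the content of the lemma is only the componentwise identification, not a fresh equivariance check.
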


\begin{proof}[Proof of Proposition \ref{prop:alpha-l-explicit}]
We prove the result by induction on $\ell$. For $\ell=1$, the result is true by Proposition \ref{prop:compare_comonads}. To prove the inductive step, consider the component of $\gcmnd_0 \ldots \gcmnd_{\ell-1}  \twist (\mathbf{b})$ indexed by $(\ldots(\mathbf{b}^{(\ell)}_\Gamma \amalg \mathbf{b}^{\Gamma}_0)\amalg \ldots  \amalg \mathbf{b}^{\Gamma}_{\ell-2}) \amalg \mathbf{b}^{\Gamma}_{\ell-1}$. By Proposition \ref{prop:compare_comonads}, $(\gcmnd)^{\ell -1} \overline{\alpha} $ sends this component to the component indexed by
$$(\ldots (\mathbf{b}^{(\ell)}_\Gamma \amalg \mathbf{b}^{\Gamma}_0)\amalg \ldots  \amalg \mathbf{b}^{\Gamma}_{\ell-2}) \amalg \mathbf{b}^{\Sigma}_{\ell-1}.$$
The result follows from Lemma \ref{lm:commute-explicit} and the inductive hypothesis.
\end{proof}

Proposition \ref{prop:alpha-l-explicit} and Lemma \ref{lm:commute-explicit} yield the following useful result:

\begin{lem}
\label{lem:compose_overline_alpha}
For $\ell, m\in \nat$, there is a commutative diagram of isomorphisms:
\[
\xymatrix{
(\gcmnd)^{\ell +m } \twist
\ar@{=}[r]
\ar[d]_{\overline{\alpha}^{\ell + m}}
&
(\gcmnd)^{\ell}(\gcmnd)^{m } \twist
\ar[r]^{(\gcmnd)^{\ell} \overline{\alpha}^m}
&
(\gcmnd)^{\ell}(\fbcmnd)^{m } \twist
\ar[d]^{\cong}
\\
(\fbcmnd)^{\ell +m } \twist
\ar@{=}[r]
&
(\fbcmnd)^{m}(\fbcmnd)^{\ell } \twist
&
(\fbcmnd)^{m}(\gcmnd)^{\ell } \twist,
\ar[l]^{(\fbcmnd)^{m} \overline{\alpha}^\ell}
}
\] 
in which the right hand vertical morphism is induced by the interchange isomorphism of Proposition \ref{prop:comonads_commute}. 
\end{lem}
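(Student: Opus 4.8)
The strategy is to verify commutativity of the square by reducing every arrow to its explicit ``identity on matching summands'' description. Recall from Proposition~\ref{prop:alpha-l-explicit} that, with respect to the decompositions (\ref{iteration-Gamma}) and (\ref{iteration-Sigma-2}), the isomorphism $\overline{\alpha}^k : (\gcmnd)^k \twist \to (\fbcmnd)^k \twist$ is, evaluated on $\mathbf{b}$, the identity from the summand of $(\gcmnd)^k\twist(\mathbf{b})$ indexed by a nested ordered decomposition $\mathbf{b} = \mathbf{b}^{(k)}_\Gamma \amalg \coprod_i \mathbf{b}^\Gamma_i$ to the summand of $(\fbcmnd)^k\twist(\mathbf{b})$ with the same core $\mathbf{b}^{(k)}_\Sigma = \mathbf{b}^{(k)}_\Gamma$ and the same blocks $\mathbf{b}^\Sigma_i = \mathbf{b}^\Gamma_i$. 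So the first step is to record the analogous componentwise form of the iterated interchange $\iota : (\gcmnd)^\ell(\fbcmnd)^m\twist \to (\fbcmnd)^m(\gcmnd)^\ell\twist$ appearing as the right-hand vertical map. By Lemma~\ref{lm:commute-explicit}, the elementary interchange $\gcmnd\fbcmnd\twist \cong \fbcmnd\gcmnd\twist$ of Proposition~\ref{prop:comonads_commute} is, on each $\twist(\mathbf{b})$, the identity on matching components; iterating, $\iota$ is the identity from the summand of $(\gcmnd)^\ell(\fbcmnd)^m\twist(\mathbf{b})$ indexed by a decomposition of $\mathbf{b}$ into a core together with $\ell$ blocks contributed by the copies of $\gcmnd$ and $m$ blocks contributed by the copies of $\fbcmnd$, onto the summand of $(\fbcmnd)^m(\gcmnd)^\ell\twist(\mathbf{b})$ with the same core, the same $m$ $\fbcmnd$-blocks and the same $\ell$ $\gcmnd$-blocks.

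Granting these descriptions, the second step is immediate bookkeeping. Fix a summand of $(\gcmnd)^{\ell+m}\twist(\mathbf{b})$ indexed by $\mathbf{b} = \mathbf{b}^{(\ell+m)} \amalg \coprod_i \mathbf{b}_i$. The composite $(\fbcmnd)^m\overline{\alpha}^\ell \circ \iota \circ (\gcmnd)^\ell\overline{\alpha}^m$ acts as follows: $(\gcmnd)^\ell\overline{\alpha}^m$ converts the inner $m$ blocks from $\gcmnd$-type to $\fbcmnd$-type (identity on underlying sets); $\iota$ regroups the $(\gcmnd)^\ell(\fbcmnd)^m$ stacking as $(\fbcmnd)^m(\gcmnd)^\ell$ (again identity on underlying sets); and $(\fbcmnd)^m\overline{\alpha}^\ell$ converts the remaining $\ell$ blocks to $\fbcmnd$-type. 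The net map is the identity onto the correspondingly indexed summand of $(\fbcmnd)^{\ell+m}\twist(\mathbf{b})$, which by Proposition~\ref{prop:alpha-l-explicit} is exactly $\overline{\alpha}^{\ell+m}$ on that summand. Since this holds for every summand and every $\mathbf{b}$, the diagram commutes.

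The one point requiring care — and which I regard as the main obstacle — is the claim in the first step that the iterated interchange $\iota$ is well defined, that is, independent of the order in which the elementary interchanges of Proposition~\ref{prop:comonads_commute} are composed, and is given by the stated formula. This is a coherence statement for the commuting pair $(\gcmnd,\fbcmnd)$; it holds because, after evaluation at an object $(X,\mathbf{b})$ of $\Gamma\times\fb\op$, $\gcmnd$ acts only through the first variable and $\fbcmnd$ only through the second, so each elementary interchange is literally an identity map of $\kring$-modules merely reindexing a double direct sum (cf.\ the proofs of Proposition~\ref{prop:comonads_commute} and Lemma~\ref{lm:commute-explicit}), and every iterated-interchange diagram then commutes automatically. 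An alternative to the componentwise argument is an induction on $m$: the case $m=1$ is precisely the recursive clause of Definition~\ref{defn:overline_alpha_ell} defining $\overline{\alpha}^{\ell+1}$, and the inductive step follows from the naturality of $\iota$ together with the same coherence property, used to factor the interchange for $(\gcmnd)^\ell(\fbcmnd)^{m+1}\twist$ through those for one $\fbcmnd$ and for $m$ copies of $\fbcmnd$.
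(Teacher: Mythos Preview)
Your proposal is correct and takes essentially the same approach as the paper: the paper's entire proof is the one-line assertion that the lemma follows from Proposition~\ref{prop:alpha-l-explicit} and Lemma~\ref{lm:commute-explicit}, i.e.\ from the explicit ``identity on matching summands'' descriptions of $\overline{\alpha}^k$ and of the elementary interchange. Your write-up simply unpacks this, and your coherence remark for the iterated interchange (that $\gcmnd$ and $\fbcmnd$ act on separate variables, so each elementary swap is a literal identity on reindexed direct sums) is exactly the reason the paper can be so terse.
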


By Proposition \ref{prop:simplicial}, we have the  augmented simplicial objects  $(\gcmnd)^{\bullet +1} \twist$ and $(\fbcmnd) ^{\bullet +1} \twist$ in $\fcatk[\Gamma \times \fb\op]$ associated to the comonads $\gcmnd$ and $\fbcmnd$ respectively. In the following, $\op$ denotes the opposite augmented simplicial structure, as in Notation \ref{nota:opp_simp}.

\begin{prop}
\label{prop:compare_aug_simp_comonads}
The isomorphisms $\overline{\alpha}^{\ell+1} : (\gcmnd)^{\ell+1} \twist \stackrel{\cong}{\rightarrow} (\fbcmnd)^{\ell+1} \twist$ for $\ell \geq -1$ give  an  isomorphism of  augmented simplicial objects in $\fcatk[\Gamma \times \fb\op]$ 
\[
(\gcmnd)^{\bullet +1} \twist 
\cong 
\big(
(\fbcmnd)^{\bullet +1} \twist
\big)\op
\]
extending the identity on $\twist$ in degree $-1$.
\end{prop}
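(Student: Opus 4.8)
The plan is to verify that the family of isomorphisms $\overline{\alpha}^{\bullet+1}$ is compatible with all the face and degeneracy operators of the two augmented simplicial objects, where the target carries the opposite structure. Concretely, writing $\delta_i^\Gamma, \sigma_j^\Gamma$ for the face and degeneracy operators of $(\gcmnd)^{\bullet+1}\twist$ (obtained by inserting counits $\epsilon^\Gamma$ and diagonals $\Delta$ in the appropriate slots, as in Proposition \ref{prop:simplicial}) and $\delta_i^\Sigma, \sigma_j^\Sigma$ for those of $(\fbcmnd)^{\bullet+1}\twist$, the opposite structure on the target has face operators $\tilde\delta_i = \delta_{\ell-i}^\Sigma$ and degeneracies $\tilde\sigma_j = \sigma_{\ell-j}^\Sigma$ (Notation \ref{nota:opp_simp}). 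So I must check, for each relevant $i$, the two squares
\[
\xymatrix{
(\gcmnd)^{\ell+1}\twist \ar[r]^{\overline{\alpha}^{\ell+1}} \ar[d]_{\delta_i^\Gamma} & (\fbcmnd)^{\ell+1}\twist \ar[d]^{\delta_{\ell-i}^\Sigma} \\
(\gcmnd)^{\ell}\twist \ar[r]_{\overline{\alpha}^{\ell}} & (\fbcmnd)^{\ell}\twist
}
\qquad
\xymatrix{
(\gcmnd)^{\ell+1}\twist \ar[r]^{\overline{\alpha}^{\ell+1}} \ar[d]_{\sigma_j^\Gamma} & (\fbcmnd)^{\ell+1}\twist \ar[d]^{\sigma_{\ell-j}^\Sigma} \\
(\gcmnd)^{\ell+2}\twist \ar[r]_{\overline{\alpha}^{\ell+2}} & (\fbcmnd)^{\ell+2}\twist.
}
\]

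The strategy is to reduce everything to two elementary compatibilities and then bootstrap. First, the outer faces and degeneracies: the augmentation-facing face $\delta_0^\Gamma = \epsilon^\Gamma_{\twist}$ matches the reversed top face $\delta_{-1}^\Sigma$-level augmentation $\epsilon^\Sigma_{\twist}$ by the counit-compatibility square of Proposition \ref{prop:compare_comonads}; more generally the faces applying $\epsilon^\Gamma$ in the outermost slot correspond, after opposing, to $\epsilon^\Sigma$ in the outermost slot of the target, and this is handled by Proposition \ref{prop:compare_comonads} together with naturality. Second, the interior faces and all degeneracies are built by applying $\gcmnd$ (resp. $\fbcmnd$) to lower-level structure maps and inserting $\Delta$ or $\epsilon$; for these I use Lemma \ref{lem:compose_overline_alpha}, which expresses how $\overline{\alpha}^{\ell+m}$ decomposes through $(\gcmnd)^\ell\overline{\alpha}^m$, the interchange isomorphism of Proposition \ref{prop:comonads_commute}, and $(\fbcmnd)^m\overline{\alpha}^\ell$. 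The point of Lemma \ref{lem:compose_overline_alpha} is precisely that it lets an operator acting in the $k$-th slot from one end be traded, under $\overline{\alpha}$, for the same operator acting in the corresponding slot from the other end — which is exactly the index-reversal built into $-\op$. Thus the verification becomes a diagram chase combining Lemma \ref{lem:compose_overline_alpha}, the naturality of $\overline{\alpha}^\ell$ in its functor argument, the counit-compatibility of $\overline{\alpha}$ and of the interchange isomorphism (both squares in Proposition \ref{prop:comonads_commute}), and the standard fact that $\Delta$, $\epsilon$ satisfy the comonad axioms.

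A clean way to organize this is to proceed by induction on $\ell$, the base case $\ell = -1$ (resp. $\ell=0$) being the identity on $\twist$ (resp. Proposition \ref{prop:compare_comonads}). In the inductive step, write each face/degeneracy of $(\gcmnd)^{\ell+1}\twist$ either as $\gcmnd$ applied to a face/degeneracy of $(\gcmnd)^{\ell}\twist$ (the ``interior'' operators, indices $1\le i\le\ell$ and all codegeneracies except the outermost) or as the counit $\epsilon^\Gamma$ in the outermost slot (index $i=0$). For the interior operators, Definition \ref{defn:overline_alpha_ell} gives $\overline{\alpha}^{\ell+1} = (\fbcmnd\,\overline{\alpha}^\ell)\circ(\text{interchange})\circ(\gcmnd\,\overline{\alpha})$; the operator commutes past $\gcmnd\,\overline{\alpha}$ and the interchange isomorphism (because both are built naturally and the interchange is compatible with structure maps by Proposition \ref{prop:comonads_commute}), and then commutes past $\fbcmnd\,\overline{\alpha}^\ell$ by the inductive hypothesis, landing on $\fbcmnd$ applied to the corresponding operator of $(\fbcmnd)^\ell\twist$ — which, after the index reversal is tracked through, is the required operator $\delta_{\ell+1-i}^\Sigma$ or $\sigma_{\ell+1-j}^\Sigma$. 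For the outermost face $\epsilon^\Gamma$, one uses instead the other recursive expression $\overline{\alpha}^{\ell+1} = \fbcmnd\,\overline{\alpha}^\ell \circ (\cdots)$ relative to the \emph{innermost} $\gcmnd$, together with Proposition \ref{prop:compare_comonads}.

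I expect the main obstacle to be purely bookkeeping: keeping track of the index reversal so that ``outermost $\gcmnd$'' on the source consistently corresponds to ``innermost $\fbcmnd$'' on the (opposite) target, and making sure the two recursive descriptions of $\overline{\alpha}^{\ell+1}$ in Definition \ref{defn:overline_alpha_ell} and Lemma \ref{lem:compose_overline_alpha} are deployed at the right ends. There is no genuine mathematical difficulty beyond the three structural inputs already isolated — Proposition \ref{prop:comonads_commute} (the interchange isomorphism and its counit-compatibility), Proposition \ref{prop:compare_comonads} (the degree-one case and its counit-compatibility), and Lemma \ref{lem:compose_overline_alpha} (associativity of the iterated $\overline{\alpha}$'s) — but assembling them into a full check of all simplicial identities requires care. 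Given the explicit description in Proposition \ref{prop:alpha-l-explicit} (each $\overline{\alpha}^\ell$ is, on the reindexed direct-sum decompositions, the identity matching $\mathbf{b}^{(\ell)}_\Gamma$ with $\mathbf{b}^{(\ell)}_\Sigma$ and $\mathbf{b}^\Gamma_i$ with $\mathbf{b}^\Sigma_i$), an alternative and perhaps more transparent route is to write out $\delta_i^\Gamma$ and $\sigma_j^\Gamma$ on these decompositions using Proposition \ref{prop:calculate_perp}, write $\delta_i^\Sigma$, $\sigma_j^\Sigma$ using Proposition \ref{prop:cosimplicial-structure-CF} (or rather its simplicial precursor), and observe directly that the reindexing dictionary intertwines them after the order reversal; this makes the index-reversal explicit and sidesteps some of the abstract chasing, at the cost of a slightly longer computation.
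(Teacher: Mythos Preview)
Your proposal is correct and follows essentially the same approach as the paper: induction on simplicial degree, with the base cases supplied by Proposition \ref{prop:compare_comonads} and the inductive step organized via the recursive description of $\overline{\alpha}^{\ell+1}$ together with Lemma \ref{lem:compose_overline_alpha} and the interchange compatibilities of Proposition \ref{prop:comonads_commute}. The only minor difference is that the paper treats the degeneracies separately and handles their base case (the diagonal $\Delta$ in simplicial degrees $0$ and $1$) by the explicit reindexed description of Proposition \ref{prop:alpha-l-explicit} --- precisely what you flag as the ``alternative route'' --- whereas you leave the outermost degeneracy $\sigma_0$ implicit; you should make that case explicit, since it does not fall under either of your two named cases.
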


\begin{proof}
By Proposition \ref{prop:compare_comonads} and the construction of the morphisms $\overline{\alpha}^{\ell+1}$, these give  an isomorphism of the underlying graded objects. It remains to check compatibility with the respective augmented simplicial structures.  

We begin by proving the compatibility with the face maps by induction on the simplicial degree.

The compatibility in simplicial degrees $-1$ and $0$ is an immediate consequence of Proposition \ref{prop:compare_comonads}. For the compatibility in simplicial degrees $0$ and $1$, by definition of the face maps given in Proposition \ref{prop:simplicial}, we have to show that the following two diagrams commute:
\[
\xymatrix{
\gcmnd \gcmnd \twist
\ar[r]_\cong^{\overline{\alpha}^2} 
\ar[d]_{\delta_0^\Gamma=\epsilon_{\gcmnd \twist}}
&
\fbcmnd \fbcmnd \twist
\ar[d]^{\widetilde{\delta_0}^\Sigma=\delta_1^\Sigma=\fbcmnd \epsilon_{\twist}}
&
&
&
\gcmnd \gcmnd \twist
\ar[r]_\cong^{\overline{\alpha}^2} 
\ar[d]_{\delta_1^\Gamma=\gcmnd \epsilon_\twist}
&
\fbcmnd \fbcmnd \twist
\ar[d]^{\widetilde{\delta_1}^\Sigma=\delta_0^\Sigma=\epsilon_{\fbcmnd \twist}}
\\
\gcmnd \twist
\ar[r]^\cong_{\overline{\alpha}} 
&\fbcmnd \twist
&
&
&
\gcmnd \twist
\ar[r]^\cong_{\overline{\alpha}} 
&\fbcmnd \twist,
}
\]
where Notation \ref{nota:opp_simp} is used for the opposite simplicial structure maps.

For the left hand square, using the definition of $\overline{\alpha}^2$, 
this follows from the commutativity of the following:
\[
\xymatrix{
\gcmnd \gcmnd \twist
\ar[d]_{\epsilon_{\gcmnd \twist}}
\ar[r]^{\gcmnd \overline{\alpha}}
&
\gcmnd \fbcmnd \twist
\ar[d]|{\epsilon_{\fbcmnd \twist}}
\ar[r]^{\cong}
&
\fbcmnd  \gcmnd  \twist
\ar[d]|{\fbcmnd \epsilon_{\twist}}
\ar[r]^{\fbcmnd \overline{\alpha}}
&
\fbcmnd  \fbcmnd \twist
\ar[ld]^{\fbcmnd \epsilon_{\twist}}
\\
\gcmnd \twist
\ar[r]_{\overline{\alpha}}
&
\fbcmnd \twist
\ar@{=}[r]
&
\fbcmnd \twist.
}
\]
Here the left hand square commutes by naturality of $\epsilon$; the middle square is commutative by  Proposition \ref{prop:comonads_commute}; the right hand triangle commutes by applying $\fbcmnd$ to the commutative triangle of Proposition \ref{prop:compare_comonads}.

For the second square, one proceeds similarly, by establishing the commutativity of the diagram:
\begin{equation}
\label{preuve:DC}
\xymatrix{
\gcmnd \gcmnd \twist
\ar[rd]_{\gcmnd \epsilon_{ \twist}}
\ar[r]^{\gcmnd \overline{\alpha}}
&
\gcmnd \fbcmnd \twist
\ar[d]|{\gcmnd \epsilon_{\twist}}
\ar[r]^{\cong}
&
\fbcmnd  \gcmnd  \twist
\ar[d]|{ \epsilon_{\gcmnd \twist}}
\ar[r]^{\fbcmnd \overline{\alpha}}
&
\fbcmnd  \fbcmnd \twist
\ar[d]^{ \epsilon_{\fbcmnd\twist}}
\\
&
\gcmnd \twist 
\ar@{=}[r]
&
\gcmnd \twist 
\ar[r]_{\overline{\alpha}}
&
\fbcmnd\twist.
}
\end{equation}

Assume that the compatibility in simplicial degrees $i-1$ and $i$ for $i\leq n-1$ is satisfied for the face maps. We have to show that the following diagrams commute, for $i\in \{0, \ldots, n\}$:
\[
\xymatrix{
(\gcmnd)^{n+1} \twist
\ar[r]_\cong^{\overline{\alpha}^{n+1}} 
\ar[d]_{\delta_i^\Gamma=(\gcmnd)^i  \epsilon_{(\gcmnd)^{n-i} \twist}}
&
(\fbcmnd)^{n+1}  \twist
\ar[d]^{\widetilde{\delta_i}^\Sigma=\delta_{n-i}^\Sigma=(\fbcmnd)^{n-i} \epsilon_{(\fbcmnd)^{i} \twist}}
\\
(\gcmnd)^{n} \twist
\ar[r]^\cong_{\overline{\alpha}^n} 
&(\fbcmnd)^{n}  \twist.
}
\]
For $i=n$, the proof of the commutativity of the diagram is similar to that for the diagram (\ref{preuve:DC}).

For $i \neq n$, using Lemma \ref{lem:compose_overline_alpha}, this follows from the commutative diagram
\\ 
\scalebox{0.8}{
$
\xymatrix{
 (\gcmnd)^{i} \gcmnd (\gcmnd)^{n-i} \twist
\ar[d]|{(\gcmnd)^i  \epsilon^\Gamma_{(\gcmnd)^{n-i} \twist}}
\ar[r]_{(\gcmnd)^{i} \gcmnd \overline{\alpha}^{n-i}}
&
 (\gcmnd)^{i} \gcmnd (\fbcmnd)^{n-i} \twist
\ar[d]|{(\gcmnd)^{i}  \epsilon^\Gamma_{(\fbcmnd)^{n-i} \twist}}
\ar[r]^{\cong}
&
 (\gcmnd)^{i} (\fbcmnd)^{n-i}  \gcmnd \twist
\ar[d]|{ (\gcmnd)^{i} (\fbcmnd)^{n-i} \epsilon_{\twist}}
\ar[r]^{\cong}
&
(\fbcmnd)^{n-i}   (\gcmnd)^{i} \gcmnd \twist
\ar[d]|{(\fbcmnd)^{n-i}  (\gcmnd)^{i} \epsilon_{\twist}}
\ar[r]_{(\fbcmnd)^{n-i} \overline{\alpha}^{i+1}}
&
(\fbcmnd)^{n-i}   (\fbcmnd)^{i+1}  \twist
\ar[d]|{(\fbcmnd)^{n-i} \epsilon_{(\fbcmnd)^{i} \twist}}
\\
 (\gcmnd)^{i} (\gcmnd)^{n-i} \twist
 \ar[r]_{(\gcmnd)^{i} \overline{\alpha}^{n-i}}
 &
  (\gcmnd)^{i} (\fbcmnd)^{n-i} \twist
  \ar@{=}[r]
   &
  (\gcmnd)^{i} (\fbcmnd)^{n-i} \twist
  \ar[r]_{\cong}
  &
(\fbcmnd)^{n-i}    (\gcmnd)^{i} \twist
\ar[r]_{(\fbcmnd)^{n-i} \overline{\alpha}^{i}}
&
(\fbcmnd)^{n-i}   (\fbcmnd)^{i}  \twist,
}
$
}
\\
where the left hand square commutes by naturality of $\epsilon$, the second square commutes by iterating Proposition \ref{prop:comonads_commute}, the third square commutes by naturality of the isomorphism $\gcmnd \fbcmnd F \stackrel{\cong}{\rightarrow} \fbcmnd \gcmnd F$ and the last square is seen to commute by applying $(\fbcmnd)^{n-i} $ to the commutative diagram obtained by the inductive  hypothesis in simplicial degrees $i-1$ and $i$.

We now consider the degeneracies; these are induced by the respective coproducts $\Delta$. 
For the compatibility in simplicial degrees $0$ and $1$, we require to show that  the diagram 
\begin{equation}
\label{proof:DC-1}
\xymatrix{
\gcmnd_0 \twist 
\ar[d]_{\Delta}
\ar[r]_\cong^{\overline{\alpha}}
&
\fbcmnd_0 \twist 
\ar[d]^{\Delta}
\\
\gcmnd_0 \gcmnd_1 \twist 
\ar[r]^\cong_{\overline{\alpha}^2}
&
\fbcmnd_1 \fbcmnd_0 \twist
}
\end{equation}
commutes.

 Using the reindexation given in  Remark \ref{rem:reindexation}, by Proposition \ref{prop:compare_comonads}, for $b \in \nat$, the map 
$$
\overline{\alpha}_\mathbf{b}: 
\gcmnd_0  \twist (\mathbf{b})
\cong 
\bigoplus_{\mathbf{b}= \mathbf{b}^{(1)}_\Gamma \amalg \mathbf{b}^{\Gamma}_0 } 
\twist ( \mathbf{b}^{(1)}_\Gamma)
\to 
\fbcmnd_0 \twist (\mathbf{b})
\cong 
\bigoplus_{\mathbf{b}= \mathbf{b}^{(1)}_\Sigma \amalg \mathbf{b}^{\Sigma}_0 } 
\twist (\mathbf{b}^{(1)}_\Sigma)
$$
sends the component indexed by $\mathbf{b}= \mathbf{b}^{(1)}_\Gamma \amalg \mathbf{b}^{\Gamma}_0$ to the component with $\mathbf{b}^{(1)}_\Sigma =\mathbf{b}^{(1)}_\Gamma$ and $\mathbf{b}^{\Sigma}_0 = \mathbf{b}^{\Gamma}_0$.

Proposition \ref{prop:identify_fbcmnd}  identifies $\Delta_\mathbf{b} : \fbcmnd_0 \twist (\mathbf{b})\rightarrow \fbcmnd_1 \fbcmnd_0 \twist (\mathbf{b})$ as the morphism 
\[
\bigoplus_{\mathbf{b}= \mathbf{b}^{(1)}_\Sigma \amalg \mathbf{b}^{\Sigma}_0} 
\twist (\mathbf{b}^{(1)}_\Sigma )
\rightarrow 
\bigoplus _{\mathbf{b}= \mathbf{b}^{(2)}_\Sigma \amalg \mathbf{b}^{\Sigma}_0 \amalg \mathbf{b}^{\Sigma}_1} 
\twist ( \mathbf{b}^{(2)}_\Sigma),
\]
where the component indexed by the pair of decompositions $(\mathbf{b}= \mathbf{b}^{(1)}_\Sigma \amalg \mathbf{b}^{\Sigma}_1, \mathbf{b}= \mathbf{b}^{(2)}_\Sigma \amalg \mathbf{b}^{\Sigma}_0 \amalg \mathbf{b}^{\Sigma}_1)$ is zero unless $\mathbf{b}^{(1)}_\Sigma= \mathbf{b}^{(2)}_\Sigma$, when it is the identity morphism $\twist (\mathbf{b}^{(1)}_\Sigma) \to \twist (\mathbf{b}^{(2)}_\Sigma)$.

By Proposition \ref{prop:calculate_perp} (\ref{prop:calculate_perp4}), the diagonal $ \Delta_\mathbf{b} : \gcmnd_0  \twist (\mathbf{b}) 
\rightarrow  
\gcmnd_0 \gcmnd_1 \twist (\mathbf{b})
$
identifies as the morphism
\[
\bigoplus_{\mathbf{b}= \mathbf{b}^{(1)}_\Gamma \amalg \mathbf{b}^{\Gamma}_0 } 
\twist ( \mathbf{b}^{(1)}_\Gamma)
\rightarrow 
\bigoplus _{\mathbf{b}= \mathbf{b}^{(2)}_\Gamma \amalg \mathbf{b}^{\Gamma}_0 \amalg \mathbf{b}^{\Gamma}_1} 
\twist (\mathbf{b}^{(2)}_\Gamma),
\]
where the component indexed by the pair of decompositions $(\mathbf{b}= \mathbf{b}^{(1)}_\Gamma \amalg \mathbf{b}^{\Gamma}_0, \mathbf{b}= \mathbf{b}^{(2)}_\Gamma \amalg \mathbf{b}^{\Gamma}_0 \amalg \mathbf{b}^{\Gamma}_1)$ is zero unless $\mathbf{b}^{(1)}_\Gamma= \mathbf{b}^{(2)}_\Gamma$, when it is the identity morphism $\twist (\mathbf{b}^{(1)}_\Gamma) \to  \twist (\mathbf{b}^{(2)}_\Gamma) $.  

Proposition \ref{prop:alpha-l-explicit} identifies $\overline{\alpha}^2_\mathbf{b} : \gcmnd_0 \gcmnd_1 \twist (\mathbf{b}) \to \fbcmnd_1 \fbcmnd_0 \twist(\mathbf{b}) $
as the morphism
$$
\bigoplus_{\mathbf{b}=(\mathbf{b}^{(2)}_\Gamma \amalg \mathbf{b}^{\Gamma}_0)\amalg \mathbf{b}^{\Gamma}_{1} }
\twist (\mathbf{b}^{(2)}_\Gamma) 
\to 
\bigoplus_{\mathbf{b}=((\mathbf{b}^{(2)}_\Sigma \amalg \mathbf{b}^{\Sigma}_0)  \amalg  \mathbf{b}^{\Sigma}_{1} }
\twist (\mathbf{b}^{(2)}_\Sigma) 
$$
sending the component indexed by $\mathbf{b}=(\mathbf{b}^{(2)}_\Gamma \amalg \mathbf{b}^{\Gamma}_0)\amalg \mathbf{b}^{\Gamma}_{1}$ to the component with $\mathbf{b}^{(2)}_\Sigma=\mathbf{b}^{(2)}_\Gamma$ and $\mathbf{b}^{\Sigma}_{i}=\mathbf{b}^{\Gamma}_{i}$ for $i \in \{0, 1\}$. The commutativity of the diagram (\ref{proof:DC-1}) follows.

The compatibility for degeneracy maps in higher simplicial degree is proved by induction, using 
 the naturality of $\Delta$, Proposition \ref{prop:comonads_commute} and a similar commutative diagram as for the face maps.
\end{proof}

%%%%%%%%%%%%%%%%%%%%%%%%%%%%%%%%%%%%%%%%%%%%%%%%%%%%%%%%%%%%%%%%%%%%%%%%%%%%%%%%%%%%%%%%%%%%%%
\subsection{Proof of Theorem \ref{thm:identify_cosimp_gcmnd_fbcmnd}}

We begin by constructing the cosimplicial object $[\cosimp^\bullet \kring \hom_\Oz (-, \mathbf{a})]\op$, appearing in the statement of Theorem \ref{thm:identify_cosimp_gcmnd_fbcmnd}, as a particular case of (\ref{cosimplicial-obj-1}). For this we introduce the $\finj\op$-module $\kring \hom_\Oz (-, \mathbf{a})$.

\begin{prop}
\label{prop:hom_oz_functor_finjop}
For $a \in \nat$, the association $\mathbf{b} \mapsto \kring \hom_\Oz (\mathbf{b}, \mathbf{a})$ defines a functor  in $\fcatk[\finj\op]$, where a generator $f\in \hom_\Oz (\mathbf{b}, \mathbf{a})$ is sent by a morphism $i : \mathbf{b}' \hookrightarrow \mathbf{b}$ of $\finj$ to the generator $f \circ i $ given by restriction along $i$, if this is a morphism of $\Oz$, and to zero otherwise.  
\end{prop}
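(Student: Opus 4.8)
The plan is to exhibit the claimed object as a quotient of an evident $\finj\op$-module, which makes both the well-definedness and the functoriality automatic. Fix $a \in \nat$.

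First I would note that ordinary precomposition of maps makes $\mathbf{b} \mapsto \kring\hom_\fin(\mathbf{b},\mathbf{a})$ into an $\fin\op$-module; restricting along the inclusion $\finj\op \hookrightarrow \fin\op$ yields an $\finj\op$-module $R$ with $R(\mathbf{b}) = \kring\hom_\fin(\mathbf{b},\mathbf{a})$, on which a morphism $i : \mathbf{b}' \hookrightarrow \mathbf{b}$ of $\finj$ acts by $[g] \mapsto [g\circ i]$ for $g : \mathbf{b}\to\mathbf{a}$.

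Next I would let $S \subseteq R$ be the graded $\kring$-submodule with $S(\mathbf{b})$ spanned by the non-surjective maps $\mathbf{b} \to \mathbf{a}$, and check that $S$ is a subfunctor of $R$: if $g : \mathbf{b}\to\mathbf{a}$ is not surjective and $i : \mathbf{b}' \hookrightarrow \mathbf{b}$, then $g(i(\mathbf{b}')) \subseteq g(\mathbf{b}) \subsetneq \mathbf{a}$, so $g\circ i$ is again not surjective and hence $[g\circ i] \in S(\mathbf{b}')$. This stability of non-surjectivity under precomposition by injections is the one point that genuinely needs to be observed; everything else is formal.

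Finally, the quotient $\finj\op$-module $R/S$ does the job. The partition of $\hom_\fin(\mathbf{b},\mathbf{a})$ into $\hom_\Oz(\mathbf{b},\mathbf{a})$ and the set of non-surjective maps identifies $R(\mathbf{b})/S(\mathbf{b})$ canonically with $\kring\hom_\Oz(\mathbf{b},\mathbf{a})$, and under this identification a morphism $i : \mathbf{b}' \hookrightarrow \mathbf{b}$ sends a generator $f \in \hom_\Oz(\mathbf{b},\mathbf{a})$ to the class of $f\circ i$, i.e.\ to $f\circ i$ if the latter is still surjective (equivalently, is a morphism of $\Oz$) and to $0$ otherwise --- exactly the prescription in the statement. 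Functoriality (identities to identities, composites of injections to composites of the induced maps) is inherited from $R$ together with the subfunctor property of $S$. I do not anticipate any real obstacle here: the substance of the argument is precisely the remark above that non-surjectivity is preserved by precomposition with injections.
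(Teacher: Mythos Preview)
Your proof is correct and rests on exactly the same observation as the paper's: non-surjectivity is preserved under precomposition by injections. The paper states this directly and says ``it is clear that the above defines a functor,'' whereas you package the same fact as the statement that the non-surjective maps form an $\finj\op$-subfunctor of $\kring\hom_\fin(-,\mathbf{a})$ and pass to the quotient; this is a slightly more explicit framing of the identical idea rather than a genuinely different argument.
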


\begin{proof}
The case $a=0$ follows immediately from the fact that $\hom_\Oz (\mathbf{b}, \mathbf{0})$ is empty unless $\mathbf{b} = \mathbf{0}$. 

For $a>1$, $f\circ i $ belongs to $\Oz$ if and only if it is surjective. Since non-surjectivity is preserved under restriction, it is clear that the above defines a functor on $\finj\op$.
\end{proof}

\begin{rem}
We have $ \kring \hom_\Oz (-, \mathbf{0})=\mathbbm{1}$ where $\mathbbm{1}$ is the $\finj\op$-module introduced in Notation \ref{notation-1}.
\end{rem}

The association $(\mathbf{a},\mathbf{b}) \mapsto \hom_{\fcatk[\Gamma]} ((t^*)^{\otimes a},  (t^*)^{\otimes b})$, with the place permutation action on $(t^*)^{\otimes a}$ and $(t^*)^{\otimes b}$, defines a $\fb \times \fb\op$-module. Proposition \ref{prop:explicit_DK_morphism} can be rephrased as follows: 

\begin{prop}
\label{prop:iso-FB-mod}
The maps $\dk$ induce an isomorphism of $\fb \times \fb\op$-modules:
$$\kring \dk :\  \kring \hom_\Oz (-, -) \xrightarrow{\cong} \hom_{\fcatk[\Gamma]} ({t^*}^{\otimes},  \twist).$$ 
\end{prop}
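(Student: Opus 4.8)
The plan is to upgrade the pointwise isomorphisms $\kring\dk_{\mathbf{a},\mathbf{b}}$ of Proposition \ref{prop:explicit_DK_morphism} to an isomorphism of $\fb\times\fb\op$-modules. Proposition \ref{prop:explicit_DK_morphism} already gives, for each fixed pair $(\mathbf{a},\mathbf{b})$, a $\sym_a\times\sym_b\op$-equivariant isomorphism $\kring\dk_{\mathbf{a},\mathbf{b}}\colon \kring\hom_\Oz(\mathbf{b},\mathbf{a})\xrightarrow{\cong}\hom_{\fcatk[\Gamma]}((t^*)^{\otimes a},(t^*)^{\otimes b})$. Since a $\fb\times\fb\op$-module is, up to isomorphism, determined by its values on the skeleton $\{(\mathbf{a},\mathbf{b})\}_{a,b\in\nat}$ together with the actions of the automorphism groups $\sym_a\times\sym_b\op$, the only thing that remains is to check that the collection $\{\kring\dk_{\mathbf{a},\mathbf{b}}\}$ is natural with respect to the arbitrary bijections $\mathbf{a}\xrightarrow{\cong}\mathbf{a}'$ and $\mathbf{b}\xrightarrow{\cong}\mathbf{b}'$ — equivalently (using the skeleton), that it is compatible with the $\sym_a\times\sym_b\op$-actions, which is precisely the equivariance already asserted. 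So at the level of the \emph{statement} the content is a repackaging; the substance of the proof is to verify that the construction $\dk_{\mathbf{a},\mathbf{b}}$ of Definition \ref{defn:dk_map} is genuinely functorial in $\fb\times\fb\op$ and not merely defined on a skeleton.

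Concretely, I would proceed as follows. First, recall that both sides are bifunctors on $\fb\times\fb\op$: the left-hand side because disjoint union and composition of surjections are natural in bijections of source and target, and the right-hand side because the place-permutation actions on $(t^*)^{\otimes a}$ and $(t^*)^{\otimes b}$ assemble $\hom_{\fcatk[\Gamma]}((t^*)^{\otimes-},(t^*)^{\otimes-})$ into a bifunctor. Second, I would observe that the map $\kring\dk$ is built from three pieces, each of which is manifestly natural: the symmetric-monoidal decomposition of Lemma \ref{lem:mor_Oz} (natural in the $\sym_{b_i}$ and in $\sym_a$ by construction of the induced representation), the canonical generator $\xi_{b_i}$ of $\hom_{\fcatk[\Gamma]}(t^*,(t^*)^{\otimes b_i})$ from Lemma \ref{lem:hom_Oz_a=1} (which is $\sym_{b_i}$-invariant and hence respects the bijections permuting the $b_i$ elements), and the tensor-product/external-product structure that turns the product over $i$ into the place-permutation module structure on the $a$ tensor factors. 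Chasing a generator $f\colon\mathbf{b}\twoheadrightarrow\mathbf{a}$ through $\dk_{\mathbf{a},\mathbf{b}}$ and then through a pair of bijections $(\sigma,\tau)\in\sym_a\times\sym_b$, and comparing with first applying $(\sigma,\tau)$ and then $\dk$, reduces — via Lemma \ref{lem:surjections_fibres}, which identifies surjections with ordered partitions by fibres — to the bookkeeping identity that relabelling the target by $\sigma$ permutes the blocks of the fibre partition and relabelling the source by $\tau$ permutes the elements within blocks, exactly matching the corresponding permutations of tensor factors and of the internal structure of each $\xi_{b_i}$. This is the equivariance already recorded in Proposition \ref{prop:explicit_DK_morphism}, now read as naturality.

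The main obstacle, such as it is, is purely organisational: one must be careful that the isomorphism of Lemma \ref{lem:mor_Oz} is stated as a $\sym_b\op$-equivariant isomorphism for \emph{fixed} $a$, and that the induction $\uparrow_{\prod\sym_{b_i}}^{\sym_b}$ interacts correctly with the left $\sym_a$-action that permutes the factors indexed by $i\in\mathbf{a}$ (and correspondingly permutes the blocks of a given type among the tensor factors of $(t^*)^{\otimes a}$); keeping track of this double action — the $\sym_a$ permuting tensor slots and the $\sym_b$ permuting the underlying set — while checking that $\dk$ intertwines them is the only point requiring attention. Once this is in hand, the fact that $\kring\dk_{\mathbf{a},\mathbf{b}}$ is an isomorphism for every $(\mathbf{a},\mathbf{b})$ (Proposition \ref{prop:explicit_DK_morphism}, itself a consequence of Pirashvili's Theorem \ref{thm:DK_equiv}) upgrades the natural transformation $\kring\dk$ to a natural isomorphism of $\fb\times\fb\op$-modules, which is the assertion.
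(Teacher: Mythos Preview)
Your proposal is correct and matches the paper's approach: the paper does not give a separate proof of this proposition but simply presents it as a rephrasing of Proposition~\ref{prop:explicit_DK_morphism}, relying on exactly the observation you make that a $\fb\times\fb\op$-module is determined on the skeleton by the $\sym_a\times\sym_b\op$-actions. Your write-up is in fact more detailed than what the paper provides.
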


By Proposition \ref{prop:cosimp_comod}, the augmented simplicial object  underlying $\cosimp^\bullet \kring \hom_\Oz (-, \mathbf{a})$ identifies as  $
(\fbcmnd)^{\bullet+1} \kring \hom_{\Oz} ( - , \mathbf{a} )$. In the following Proposition, we deduce from Proposition \ref{prop:compare_aug_simp_comonads} that the underlying augmented simplicial objects of the cosimplicial objects appearing in Theorem \ref{thm:identify_cosimp_gcmnd_fbcmnd} are isomorphic.

\begin{prop}
\label{cor:iso_aug_simp}
For $a \in \nat$, there is an isomorphism of augmented simplicial objects in 
$\fcatk[\fb \op]$:
\begin{eqnarray*}
\hom_{\fcatk[\Gamma]} ((t^*) ^{\otimes a}, (\gcmnd)^{\bullet+1} \twist  ) 
&\cong &
\big(
(\fbcmnd)^{\bullet+1} \kring \hom_{\Oz} ( - , \mathbf{a} )\big)\op.
\end{eqnarray*} 
\end{prop}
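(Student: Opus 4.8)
The plan is to deduce Proposition~\ref{cor:iso_aug_simp} from the already-constructed isomorphism of augmented simplicial objects $(\gcmnd)^{\bullet+1}\twist \cong \big((\fbcmnd)^{\bullet+1}\twist\big)\op$ in $\fcatk[\Gamma\times\fb\op]$ given by Proposition~\ref{prop:compare_aug_simp_comonads}, by applying the functor $\hom_{\fcatk[\Gamma]}((t^*)^{\otimes a},-)$ to it and then using the Dold--Kan identification of Proposition~\ref{prop:iso-FB-mod}. First I would note that, since $(t^*)^{\otimes a}$ lies in $\fcatk[\Gamma]$ (with its place-permutation $\sym_a$-action) and $(\gcmnd)^{\ell+1}\twist$, $(\fbcmnd)^{\ell+1}\twist$ lie in $\fcatk[\Gamma\times\fb\op]$, the assignment $\ell\mapsto \hom_{\fcatk[\Gamma]}((t^*)^{\otimes a},(\gcmnd)^{\ell+1}\twist)$ is an augmented simplicial object in $\fcatk[\fb\op]$ (this is Lemma~\ref{lem:hom_aug_simp} applied in the bifunctor setting, the $\fb\op$-variable being carried along untouched), and likewise for $\fbcmnd$. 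Applying the additive functor $\hom_{\fcatk[\Gamma]}((t^*)^{\otimes a},-)$ to the isomorphism of augmented simplicial objects of Proposition~\ref{prop:compare_aug_simp_comonads} — which is an isomorphism of functors in $\fcatk[\Gamma\times\fb\op]$, hence in particular respects the $\fb\op$-structure — yields an isomorphism of augmented simplicial objects in $\fcatk[\fb\op]$
\[
\hom_{\fcatk[\Gamma]}((t^*)^{\otimes a},(\gcmnd)^{\bullet+1}\twist)
\;\cong\;
\hom_{\fcatk[\Gamma]}\big((t^*)^{\otimes a},\big((\fbcmnd)^{\bullet+1}\twist\big)\op\big).
\]

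Next I would address the right-hand side. Since the opposite-simplicial-structure functor $-\op$ of Notation~\ref{nota:opp_simp} merely relabels face and degeneracy operators and commutes with applying an additive functor levelwise, one has $\hom_{\fcatk[\Gamma]}((t^*)^{\otimes a},(-)\op)\cong\big(\hom_{\fcatk[\Gamma]}((t^*)^{\otimes a},-)\big)\op$ as augmented simplicial objects. Thus the right-hand side identifies with $\big(\hom_{\fcatk[\Gamma]}((t^*)^{\otimes a},(\fbcmnd)^{\bullet+1}\twist)\big)\op$. It then remains to identify $\ell\mapsto\hom_{\fcatk[\Gamma]}((t^*)^{\otimes a},(\fbcmnd)^{\ell+1}\twist)$ with $(\fbcmnd)^{\bullet+1}\kring\hom_\Oz(-,\mathbf{a})$ as augmented simplicial objects in $\fcatk[\fb\op]$. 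Here I would invoke Proposition~\ref{prop:iso-FB-mod}: the maps $\kring\dk$ give an isomorphism of $\fb\times\fb\op$-modules $\kring\hom_\Oz(-,-)\cong\hom_{\fcatk[\Gamma]}({t^*}^{\otimes},\twist)$, so in particular, evaluating the first $\fb$-variable at $\mathbf{a}$, an isomorphism of $\fb\op$-modules $\kring\hom_\Oz(-,\mathbf{a})\cong\hom_{\fcatk[\Gamma]}((t^*)^{\otimes a},\twist)$. Since $\fbcmnd$ is a comonad on $\fcatk[\fb\op]$ acting on the $\fb\op$-variable, and since the functor $\hom_{\fcatk[\Gamma]}((t^*)^{\otimes a},-)$ commutes with $\fbcmnd$ applied in that variable — because $\fbcmnd$ is built from (co)limits in the $\fb\op$-direction that $\hom$ out of a fixed object in the orthogonal $\Gamma$-direction preserves — one gets, by induction on $\ell$ using the isomorphisms of Proposition~\ref{prop:identify_fbcmnd} on the $\fbcmnd$-side and Proposition~\ref{prop:calculate_perp} on the side of $\gcmnd\twist\cong\fbcmnd\twist$, a levelwise isomorphism $\hom_{\fcatk[\Gamma]}((t^*)^{\otimes a},(\fbcmnd)^{\ell+1}\twist)\cong(\fbcmnd)^{\ell+1}\kring\hom_\Oz(-,\mathbf{a})$, compatible with the augmented simplicial structure since the structure maps of $(\fbcmnd)^{\bullet+1}$ are functorially induced from $\Delta,\epsilon,\eta$ of the comonad. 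Combining this with the displayed isomorphism and the $-\op$ identification gives the claim.

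The main obstacle I anticipate is bookkeeping the compatibility of $\hom_{\fcatk[\Gamma]}((t^*)^{\otimes a},-)$ with the comonad $\fbcmnd$ at the level of \emph{structure maps}, not merely objects: one must check that the isomorphism $\hom_{\fcatk[\Gamma]}((t^*)^{\otimes a},\fbcmnd(-))\cong\fbcmnd\hom_{\fcatk[\Gamma]}((t^*)^{\otimes a},-)$ is natural and intertwines $\fbcmnd\Delta$, $\fbcmnd\epsilon$ with the corresponding maps, so that the induced map on iterated cosimplicial/simplicial objects really is a map of augmented simplicial objects. A clean way to handle this is to observe that $\fbcmnd=\downarrow\circ\uparrow$ where $\uparrow G(\mathbf{n})\cong\bigoplus_{\mathbf{n}'\subseteq\mathbf{n}}G(\mathbf{n}')$ is a finite direct sum (Proposition~\ref{prop:right_adjoint_fbfi-explicit}), and finite direct sums in the $\fb\op$-variable are preserved by $\hom_{\fcatk[\Gamma]}((t^*)^{\otimes a},-)$ because this is an additive functor in a variable orthogonal to the summation; the explicit descriptions in Proposition~\ref{prop:identify_fbcmnd} then make the structure-map compatibility a direct, if slightly tedious, verification. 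Alternatively, and perhaps more economically, one can avoid re-proving these compatibilities from scratch by simply noting that the isomorphism of Proposition~\ref{prop:iso-FB-mod} is an isomorphism in $\fcatk[\fb\times\fb\op]$, that $(\fbcmnd)^{\bullet+1}$ is a functor on $\fb\op$-modules, and that Proposition~\ref{prop:cosimp_comod} already identifies the underlying augmented simplicial object of $\cosimp^\bullet F$ with $(\fbcmnd)^{\bullet+1}F$ naturally in $F$; applying this to $F=\kring\hom_\Oz(-,\mathbf{a})\cong\hom_{\fcatk[\Gamma]}((t^*)^{\otimes a},\twist)$ and using Proposition~\ref{prop:compare_aug_simp_comonads} gives the isomorphism directly, with the $\gcmnd$-side appearing because $\hom_{\fcatk[\Gamma]}((t^*)^{\otimes a},(\gcmnd)^{\ell+1}\twist)$ is the augmented simplicial object associated to the comonad $\gcmnd$ on $\fcatk[\Gamma]$ (here one uses the explicit computations of Propositions~\ref{prop:calculate_perp} and~\ref{prop:perpfb} to see that applying $\hom_{\fcatk[\Gamma]}((t^*)^{\otimes a},-)$ to $(\gcmnd)^{\bullet+1}\twist$ recovers the intended simplicial object). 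I would present the argument in this second, more structural form to keep the proof short.
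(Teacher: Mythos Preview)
Your proposal is correct and follows essentially the same route as the paper: apply $\hom_{\fcatk[\Gamma]}((t^*)^{\otimes a},-)$ to the isomorphism of Proposition~\ref{prop:compare_aug_simp_comonads}, commute this additive functor past $\fbcmnd$ (since $\fbcmnd$ is a finite direct sum in the orthogonal $\fb\op$-variable), and then invoke the Dold--Kan identification of Proposition~\ref{prop:iso-FB-mod}. The paper's proof is simply the terse version of your ``second, more structural form''; your discussion of the structure-map compatibility is more explicit than what the paper writes, but the underlying argument is the same.
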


\begin{proof}
By Proposition \ref{prop:compare_aug_simp_comonads}, there is an isomorphism of augmented simplicial objects in $\fcatk[\fb \op]$:
\[
\hom_{\fcatk[\Gamma]} ((t^*) ^{\otimes a}, (\gcmnd)^{\bullet+1} \twist  ) 
\cong 
 \hom_{\fcatk[\Gamma]} ((t^*)^{\otimes a}, \big((\fbcmnd)^{\bullet+1} \twist\big)\op).
\]

Now, for any $G \in \fcatk[\Gamma \times \fb \op]$, since $\hom_{\fcatk[\Gamma]} ((t^*) ^{\otimes a}, -)$ is an additive functor, there is a natural isomorphism
\[
\hom_{\fcatk[\Gamma]} ((t^*) ^{\otimes a}, \fbcmnd G)
\cong 
\fbcmnd \hom_{\fcatk[\Gamma]} ((t^*) ^{\otimes a}, G),
\]
since the $\Gamma$-module and $\fb\op$-structures commute. This extends to give  an isomorphism of augmented simplicial objects:
\[
\hom_{\fcatk[\Gamma]} ((t^*)^{\otimes a}, \big((\fbcmnd)^{\bullet+1} \twist\big)\op)
\cong 
\big( (\fbcmnd)^{\bullet+1} \hom_{\fcatk[\Gamma]} ((t^*)^{\otimes a},  \twist)\big)\op.
\]

The isomorphism of the statement follows from Proposition \ref{prop:iso-FB-mod}.
\end{proof}

By Proposition \ref{Prop:augmented}, the augmented simplicial structure on 
  $\hom_{\fcatk[\Gamma]} ((t^*) ^{\otimes a},  {\gcmnd}^{\bullet} \twist  )$ described in Proposition \ref{cor:iso_aug_simp} extends to a cosimplicial structure, where the additional structure arises from the two morphisms $d^0, d^1$ of $\fcatk[\fb\op]$ given in (\ref{equalizer}). Composing with the isomorphisms  $\overline{\alpha}^{\ell+1}  : (\gcmnd)^{\ell+1} \twist \stackrel{\cong}{\rightarrow} (\fbcmnd)^{\ell+1} \twist$, gives  a cosimplicial structure  on  $\hom_{\fcatk[\Gamma]} ((t^*) ^{\otimes a},  {\fbcmnd}^{\bullet} \twist  )$. For example, in cosimplicial degrees $0$ and $1$ this cosimplicial structure gives the two morphisms of $\fcatk[\fb\op]$:
\begin{equation*}
\xymatrix{
\hom_{\fcatk[\Gamma]}  ((t^*)^{\otimes a},(t^*)^{\otimes b} )
 \ar@<.5ex>[r]^- {d^0}
 \ar@<-.5ex>[r]_-{d^1}
 &
\hom_{\fcatk[\Gamma]}  ((t^*)^{\otimes a}, \fbcmnd(\twist)(\mathbf{b}) )
}
\end{equation*}
where, for $f \in \hom_{\fcatk[\Gamma]}  ((t^*)^{\otimes a},(t^*)^{\otimes b}) $, $d^0 f := \overline{\alpha} \circ (\gcmnd  f) \circ \psi_{(t^*)^{\otimes a}} $ and $d^1 f := \overline{\alpha} \circ \psi_{(t^*)^{\otimes b}} \circ  f$. 

We begin by proving that the cosimplicial objects $\hom_{\fcatk[\Gamma]}  ((t^*)^{\otimes a}, \fbcmnd^\bullet \twist )$ and $[\cosimp^\bullet \kring \hom_\Oz (-, \mathbf{a})]\op$ are isomorphic in cosimplicial degrees $0$ and $1$ using the maps $\dk_{\mathbf{a},\mathbf{b}}$ introduced in Proposition \ref{prop:explicit_DK_morphism}.

\begin{prop}
\label{prop:iso-cosimp}
For $a \in \nat$, we have the following commutative diagrams in $\fcatk[\fb\op]$:
$$\xymatrix{
\hom_{\fcatk[\Gamma]}  ((t^*)^{\otimes a}, \twist) \ar[rr]^-{d^1} & &\hom_{\fcatk[\Gamma]}  ((t^*)^{\otimes a},  \fbcmnd \twist) \cong \fbcmnd \hom_{\fcatk[\Gamma]}  ((t^*)^{\otimes a}, \twist)\\
 \kring\hom_\Oz(-,\mathbf{a}) \ar[u]^{\kring\dk_{\mathbf{a},-}}_{\cong} \ar[rr]_-{d^0=\eta_{\kring\hom_\Oz(-,\mathbf{a}) }}& &\fbcmnd  \kring\hom_\Oz(-,\mathbf{a})  \ar[u]_{\fbcmnd \kring\dk_{\mathbf{a},-}}^{\cong};
}$$

$$\xymatrix{
\hom_{\fcatk[\Gamma]}  ((t^*)^{\otimes a}, \twist) \ar[rr]^-{d^0} & &\hom_{\fcatk[\Gamma]}  ((t^*)^{\otimes a},  \fbcmnd \twist) \cong \fbcmnd \hom_{\fcatk[\Gamma]}  ((t^*)^{\otimes a}, \twist)\\
\kring\hom_\Oz(-,\mathbf{a}) \ar[u]^{\kring\dk_{\mathbf{a},-}}_{\cong} \ar[rr]_-{d^1=\psi_{\kring\hom_\Oz(-,\mathbf{a}) }}& &\fbcmnd  \kring\hom_\Oz(-,\mathbf{a})  \ar[u]_{\fbcmnd \kring\dk_{\mathbf{a},-}}^{\cong}.
}$$

\end{prop}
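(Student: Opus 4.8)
The plan is to verify the two squares by chasing a generator $f \in \hom_\Oz(\mathbf{b}, \mathbf{a})$ around each diagram and comparing the resulting elements of $\fbcmnd \hom_{\fcatk[\Gamma]}((t^*)^{\otimes a}, \twist)(\mathbf{b}) \cong \bigoplus_{\mathbf{b}' \subseteq \mathbf{b}} \hom_{\fcatk[\Gamma]}((t^*)^{\otimes a}, (t^*)^{\otimes b'})$, using the explicit decompositions of $\fbcmnd$ from Proposition \ref{prop:identify_fbcmnd} and of $\gcmnd \twist$ from Proposition \ref{prop:calculate_perp}. Throughout we identify $\hom_{\fcatk[\Gamma]}((t^*)^{\otimes a}, \fbcmnd \twist (\mathbf{b}))$ with $\fbcmnd \hom_{\fcatk[\Gamma]}((t^*)^{\otimes a}, \twist)(\mathbf{b})$ via the additivity isomorphism used in the proof of Proposition \ref{cor:iso_aug_simp}.

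\textbf{First square.} Going down-then-right, $f$ is sent by $\eta_{\kring \hom_\Oz(-,\mathbf{a})}$ to the class $\dk_{\mathbf{a},\mathbf{b}}(f)$ placed in the summand indexed by $\mathbf{b}' = \mathbf{b}$ (Proposition \ref{prop:fbcmnd_coaug}), and all other summands vanish; applying $\fbcmnd \kring \dk_{\mathbf{a},-}$ leaves this unchanged. Going right-then-down, $f$ is sent by $\kring \dk_{\mathbf{a},\mathbf{b}}$ to a morphism $\phi_f : (t^*)^{\otimes a} \to (t^*)^{\otimes b}$, and then $d^1 \phi_f = \overline{\alpha} \circ \psi_{(t^*)^{\otimes b}} \circ \phi_f$. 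By Proposition \ref{prop:calculate_perp} (\ref{prop:calculate_perp-2}), $\psi_{(t^*)^{\otimes b}}$ is the inclusion of the summand indexed by $\mathbf{b}' = \mathbf{b}$, and $\overline{\alpha}_{\mathbf{b}}$ is the identity on each factor (Proposition \ref{prop:compare_comonads}); hence $d^1 \phi_f$ is precisely $\phi_f$ sitting in the top summand, which matches. The one point needing care is the identification of $\hom_{\fcatk[\Gamma]}((t^*)^{\otimes a}, \fbcmnd \twist)$ with $\fbcmnd \hom_{\fcatk[\Gamma]}((t^*)^{\otimes a}, \twist)$ as $\fb\op$-modules, for which one invokes that the $\Gamma$-module and $\fb\op$-module structures commute, as in the proof of Proposition \ref{cor:iso_aug_simp}.

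\textbf{Second square.} This is the substantive one. Going down-then-right, $f$ is sent by $\psi_{\kring \hom_\Oz(-,\mathbf{a})}$, which by Proposition \ref{prop:fi_comod} has component $\kring \hom_\Oz(\mathbf{b}, \mathbf{a}) \to \kring \hom_\Oz(\mathbf{b}', \mathbf{a})$ given by restriction of $f$ along $\mathbf{b}' \hookrightarrow \mathbf{b}$, landing in zero unless $f|_{\mathbf{b}'}$ is still surjective; applying $\fbcmnd \kring \dk_{\mathbf{a},-}$ gives $\bigoplus_{\mathbf{b}' \subseteq \mathbf{b}} \dk_{\mathbf{a},\mathbf{b}'}(f|_{\mathbf{b}'})$ (with the convention that the term vanishes when $f|_{\mathbf{b}'}$ is not surjective). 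Going right-then-down, $d^0 \phi_f = \overline{\alpha} \circ (\gcmnd \phi_f) \circ \psi_{(t^*)^{\otimes a}}$; here $\psi_{(t^*)^{\otimes a}}$ is the canonical inclusion (Proposition \ref{prop:calculate_perp} (\ref{prop:calculate_perp-2})) and $\gcmnd \phi_f$ is computed componentwise on $\gcmnd (t^*)^{\otimes b} \cong \bigoplus_{\mathbf{b}' \subseteq \mathbf{b}} (t^*)^{\otimes b'}$ using Proposition \ref{prop:perpfb}, which describes $\gcmnd \xi_{b}$ on each summand. The key combinatorial input is Lemma \ref{lem:mor_Oz} together with Definition \ref{defn:dk_map}: writing $f$ in terms of its fibres $\mathbf{b}_i = f^{-1}(i)$, the morphism $\phi_f$ is the tensor product $\bigotimes_{i=1}^a \xi_{b_i}$, and applying $\gcmnd$ and projecting to the summand $\mathbf{b}' \subseteq \mathbf{b}$ produces, by Proposition \ref{prop:perpfb}, exactly $\bigotimes_{i=1}^a \xi_{|\mathbf{b}_i \cap \mathbf{b}'|}$ when every $\mathbf{b}_i \cap \mathbf{b}' \neq \emptyset$ and $0$ otherwise --- and $\mathbf{b}_i \cap \mathbf{b}' = (f|_{\mathbf{b}'})^{-1}(i)$, so this is $\dk_{\mathbf{a},\mathbf{b}'}(f|_{\mathbf{b}'})$ precisely when $f|_{\mathbf{b}'}$ is surjective. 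Finally $\overline{\alpha}$ is the identity on each summand.

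\textbf{Main obstacle.} The delicate step is the bookkeeping in the second square: one must reconcile the indexing of $\gcmnd \twist$ by subsets $\mathbf{b}' \subseteq \mathbf{b}$ (arising from the $(t^* \oplus \kring)^{\otimes b}$ decomposition of Proposition \ref{prop:calculate_perp}) with the fibre-partition description of surjections $\mathbf{b} \twoheadrightarrow \mathbf{a}$ (Lemma \ref{lem:surjections_fibres}, Lemma \ref{lem:mor_Oz}), and check that the vanishing patterns agree --- i.e. that ``$\gcmnd \xi_{b_i}$ has zero component on $\mathbf{b}'$ when $\mathbf{b}_i \cap \mathbf{b}' = \emptyset$'' matches ``$f|_{\mathbf{b}'}$ fails to be surjective''. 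Once the dictionary between these two combinatorial descriptions is set up, the commutativity is a direct comparison of the two explicit formulas, and the equivariance with respect to $\sym_b\op$ follows from the equivariance already built into $\dk$ (Proposition \ref{prop:explicit_DK_morphism}) and into $\overline{\alpha}$ (Proposition \ref{prop:compare_comonads}).
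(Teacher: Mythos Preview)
Your proposal is correct and follows essentially the same approach as the paper: both verify the squares by chasing generators and appealing to the explicit descriptions of $\eta$, $\psi$, $\overline{\alpha}$, and $\gcmnd \xi_b$ (Propositions \ref{prop:fbcmnd_coaug}, \ref{prop:fi_comod}, \ref{prop:compare_comonads}, \ref{prop:calculate_perp}, \ref{prop:perpfb}). The only organisational difference is that the paper first treats $a=1$ explicitly and then reduces $a>1$ to this case via the symmetric monoidal property of $\gcmnd$ and the decomposition of Lemma \ref{lem:mor_Oz}, whereas you fold the reduction in by writing $\phi_f = \bigotimes_i \xi_{b_i}$ directly; this is the same argument in slightly compressed form. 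Two small points worth making explicit in a polished write-up: the exceptional case $a=0$ (where $\hom_\Oz(\mathbf{b},\mathbf{0})$ is empty for $b>0$) should be noted separately, and the identity $\phi_f = \bigotimes_i \xi_{b_i}$ holds literally only when the fibres $f^{-1}(i)$ are consecutive blocks of $\mathbf{b}$, so the general case really does rest on the $\sym_b\op$-equivariance you invoke at the end.
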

\begin{proof}
It suffices to prove that the diagrams are commutative for $\mathbf{b} \in \ob \fb\op$. 

The case $a=0$ is exceptional and can be treated directly; the details are left to the reader.

Let us treat the case $a=1$. In this case, for $\mathbf{b}=\mathbf{0}$ there is nothing to prove, so we may assume that $b>0$. By Definition \ref{defn:dk_map},  $\kring \dk_{\mathbf{1}, \mathbf{b}}$ sends the unique surjection $\mathbf{b} \twoheadrightarrow \mathbf{1}$ to the standard generator $\xi_b: t^* \rightarrow (t^*) ^{\otimes b}$.
Using Proposition \ref{prop:calculate_perp} (\ref{prop:calculate_perp-2}) and Proposition \ref{prop:compare_comonads}
$$d^1(\xi_b)=\overline{\alpha}_\mathbf{b} \circ \psi_{(t^*)^{\otimes b}} \circ  \xi_b: t^* \to \fbcmnd (t^*) ^{\otimes b} \cong  \bigoplus_{\mathbf{b}' \subseteq \mathbf{b}} (t^*)^{\otimes b'}$$
is equal to 
$$t^* \xrightarrow{\xi_b} (t^*) ^{\otimes b}\xrightarrow{i_b}\bigoplus_{\mathbf{b}' \subseteq \mathbf{b}} (t^*)^{\otimes b'}$$
where $i_b$ is the inclusion to the factor indexed by $\mathbf{b}' = \mathbf{b}$. By Proposition  \ref{prop:fbcmnd_coaug} and the functoriality of $\fbcmnd$, this is equal to the other composition in the first commutative diagram.

For the second commutative diagram, using Lemma \ref{lem:gcmnd_t*} and Proposition \ref{prop:perpfb} the map
$$d^0(\xi_b)=\overline{\alpha} \circ (\gcmnd  \xi_b) \circ \psi_{t^*}: t^* \to \fbcmnd (t^*) ^{\otimes b} \cong  \bigoplus_{\mathbf{b}' \subseteq \mathbf{b}} (t^*)^{\otimes b'}$$
 has component 
 $ t^* \rightarrow (t^*)^{\otimes |\mathbf{b}'|}$, the standard generator $\xi_{ |\mathbf{b}'|}$ for any $\emptyset \neq \mathbf{b}' \subseteq \mathbf{b}$. By Proposition \ref{prop:fi_comod} and the functoriality of $\fbcmnd$ this is equal to the other composition in the second commutative diagram.

 The case $a > 1$ can be deduced from the case $a=1$ using the $\sym_b \op$-equivariant isomorphism given in Lemma \ref{lem:mor_Oz} as follows. 
 
As in the Lemma, for $a, b  \in \nat^*$ there is a $\sym_b\op$-equivariant isomorphism 
\[
\bigoplus_{\substack { (b_i)\in (\nat^*)^{a} \\ \sum_{i=1}^a b_i = b }}
\big( \bigotimes_{i=1}^a  \hom_{\fcatk[\Gamma]} (t^*, (t^*)^{\otimes b_i})\big)
 \uparrow_{\prod_{i=1}^a \sym_{b_i}}^{\sym_b} 
\stackrel{\cong}{\rightarrow }
\hom_{\fcatk[\Gamma]} ((t^*)^{\otimes a }, (t^*)^{\otimes b} ).
\]
 This is compatible with the isomorphism of Lemma \ref{lem:mor_Oz} via the isomorphism of Proposition \ref{prop:explicit_DK_morphism}.
 
Hence, for $a>1$, it suffices to check the commutativity of the diagrams restricted to the following submodules
that are compatible via $\kring  \dk_{\mathbf{a}, \mathbf{b}}$: 
\begin{eqnarray}
\label{eqn:restrict_tensor_fcatkGamma}
\nonumber
\bigotimes_{i=1}^a  \kring \hom_{\Oz} (\mathbf{b_i},\mathbf{1})
&\subset &
\kring \hom_{\Oz} (\mathbf{b}, \mathbf{a}) 
\\
\bigotimes_{i=1}^a  \hom_{\fcatk[\Gamma]} (t^*, (t^*)^{\otimes b_i})
&\subset&
\hom_{\fcatk[\Gamma]} ((t^*)^{\otimes a }, (t^*)^{\otimes b} )
\end{eqnarray}
 for each sequence $(b_i)$. Here, the inclusion in (\ref{eqn:restrict_tensor_fcatkGamma}) is given by the tensor product of morphisms. 

The upper horizontal morphisms $d^0$, $d^1$ in the statement of the Proposition are defined using $\gcmnd$, which is symmetric monoidal, by Corollary \ref{cor:perp_sym}. Using this, their restriction to the submodule in (\ref{eqn:restrict_tensor_fcatkGamma}) can be calculated directly from the case $a=1$. The result follows by direct verification. 
 \end{proof}
 
%%%%%%%%%%%%%%%%%%%%%%%%%%%%%%%%%%%%%%%%%%%%%%%%%%%%%%%%%%%%%%%%%%%%%%%%%%%%%%%%%%%% 
\begin{proof}[Proof of Theorem \ref{thm:identify_cosimp_gcmnd_fbcmnd}]
We require to establish   the isomorphism of cosimplicial $\fb\op$-modules:
\[
\hom_{\fcatk[\Gamma]}  ((t^*)^{\otimes a}, \gcmnd^\bullet \twist )
\cong 
\cosimp^\bullet \kring \hom_\Oz (-, \mathbf{a})  \op.
\]
Proposition \ref{cor:iso_aug_simp} has already established that the underlying augmented simplicial objects are isomorphic. It remains to check that the first and last coface operators correspond. 

In cosimplicial degree $0$ and $1$ the isomorphism is given by Proposition \ref{prop:iso-cosimp}. The higher cosimplicial degrees are deduced from this case, proceeding as in the passage to higher simplicial degrees in the proof of Proposition \ref{prop:compare_aug_simp_comonads}. 
\end{proof}

\begin{rem}
The result of Theorem \ref{thm:identify_cosimp_gcmnd_fbcmnd} is stronger than stated, since it is also equivariant with respect to automorphisms of $\mathbf{a}$. Namely, there is an isomorphism of cosimplicial  $\fb \times \fb\op$-modules:
\[
\hom_{\fcatk[\Gamma]}  (\twist , \gcmnd^\bullet \twist )
\cong 
\cosimp^\bullet \kring \hom_\Oz (-, -)  \op.
\]
Here, it is important to recall that $\twist$ is considered as an object of $\fcatk[\Gamma \times \fb\op]$; this allows the two variances to be distinguished correctly.
\end{rem}

\section{The cohomology of the Koszul complex of $\kring \hom_\Oz (-, \mathbf{a}) $}
\label{sect:cohomology}

The purpose of this section is to  calculate, for $a \in \nat$, the cohomology of the Koszul complex of the $\finj\op$-module $\kring \hom_\Oz (-, \mathbf{a})$ introduced  in Proposition \ref{prop:hom_oz_functor_finjop}.
Evaluated upon $\mathbf{b}$, this calculation   is achieved in Theorem \ref{thm:cohomology_NC}, with information on the action of the symmetric groups given in Theorem \ref{thm:cohom_equivariance}. 

By Corollary \ref{cor:relate_finjop_cohom} and Theorem \ref{thm:motivate}, the cohomology of    this Koszul complex gives access to the computation of $\hom_{\fcatk[\finne]} (\kbar^{\otimes a},\kbar^{\otimes b})$ and $\ext^1_{\fcatk[\finne]} (\kbar^{\otimes a},\kbar^{\otimes b})$ (see Theorem \ref{thm:complex_Cab}) which are the main objects of interest of this paper.

For typographical simplicity we introduce the following notation:

\begin{nota}
\label{nota:Kz_Cba}
For $a \in \nat$, let $\cbb (- , \mathbf{a})$ denote the complex $\kz \kring \hom_\Oz (-, \mathbf{a})$ of $\fb\op$-modules,  so that 
$
\cbb (\mathbf{b}, \mathbf{a}) 
= 
\kz \kring \hom_\Oz (-, \mathbf{a}) (\mathbf{b})
$. Explicitly, for $t \in \nat$:
 $$\cbb (\mathbf{b} , \mathbf{a})^t =\bigoplus_{\substack{ \mathbf{b}^{(t)}\subset \mathbf{b} 
\\
|\mathbf{b}^{(t)}| = b-t
}}
\kring \hom _\Oz (\mathbf{b}^{(t)}, \mathbf{a} )
\otimes \orient (\mathbf{b} \backslash \mathbf{b}^{(t)} ).
 $$
 where $\orient$ is given in Notation \ref{nota:orient} and the differential $d: \cbb (\mathbf{b} , \mathbf{a})^t \to \cbb (\mathbf{b} , \mathbf{a})^{t+1}$ is the morphism of $\kring$-modules given on  $f \otimes \alpha$, where $f \in \hom _\Oz (\mathbf{b}^{(t)}, \mathbf{a} )$ for $\mathbf{b}^{(t)}\subset \mathbf{b}$ such that $|\mathbf{b}^{(t)}| = b-t$
 and $\alpha \in \orient (\mathbf{b}\backslash \mathbf{b}^{(t)})$, by 
\[
d \big ([f] \otimes \alpha  \big) 
= 
\sum_{\substack{y \in \mathbf{b}^{(t)} \\ f \circ \iota_y \in \hom _\Oz(\mathbf{b}^{(t)} \backslash \{ y\}, \mathbf{a})}} 
[f \circ \iota_y]
\otimes (y \wedge \alpha), 
\]
where $\iota_y : \mathbf{b}^{(t)} \backslash \{ y\}
\hookrightarrow \mathbf{b}^{(t)}$ is the inclusion. 
\end{nota}

\begin{rem}
By Proposition \ref{Koszul-exact}, the construction $\kz$ is functorial. This implies that $\cbb( -, -)$ is a complex  of $\fb \times \fb\op$-modules, since the symmetric group $\sym_a$ acts on $\kring \hom_\Oz(-, \mathbf{a})$ by automorphisms of $\finj\op$-modules. 
\end{rem}

 First we record the following exceptional cases:

\begin{lem}
\label{lem:case_a=b}
For $a \in \nat$, 
\begin{enumerate}
\item
$\cbb (\mathbf{b}, \mathbf{a}) =0$ for $b <a$;
\item 
$\cbb (\mathbf{b}, \mathbf{0})\cong \orient (\mathbf{b})$, concentrated in cohomological degree $b$, which thus coincides with the cohomology;
\item 
 $\mathbb{C}(\mathbf{a}, \mathbf{a}) \cong \kring[\sym_a]$, concentrated in cohomological degree zero, which thus coincides with the cohomology. 
\end{enumerate}
\end{lem}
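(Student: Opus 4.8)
\textbf{Proof plan for Lemma \ref{lem:case_a=b}.}

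The plan is to prove the three items by direct inspection of the explicit description of $\cbb(\mathbf{b},\mathbf{a})$ given in Notation \ref{nota:Kz_Cba}, using the basic facts about $\hom_\Oz$ recorded in Remark \ref{rem:Oz} and Proposition \ref{prop:hom_oz_functor_finjop}.

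For item (1), recall that $\cbb(\mathbf{b},\mathbf{a})^t = \bigoplus_{|\mathbf{b}^{(t)}|=b-t} \kring\hom_\Oz(\mathbf{b}^{(t)},\mathbf{a}) \otimes \orient(\mathbf{b}\setminus\mathbf{b}^{(t)})$. Each summand has $|\mathbf{b}^{(t)}| = b-t \leq b < a$, and there is no surjection from a set of cardinality strictly less than $a$ onto $\mathbf{a}$ (for $a>0$; while for $a=0$ the condition $b<a$ is vacuous), so $\hom_\Oz(\mathbf{b}^{(t)},\mathbf{a})=\emptyset$ and every summand vanishes. Hence $\cbb(\mathbf{b},\mathbf{a})=0$.

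For item (2), with $\mathbf{a}=\mathbf{0}$ we use that $\hom_\Oz(\mathbf{b}^{(t)},\mathbf{0})$ is a one-point set if $\mathbf{b}^{(t)}=\emptyset$ and empty otherwise (Remark \ref{rem:Oz}). So the only nonzero summand occurs when $b-t=0$, i.e. $t=b$ and $\mathbf{b}^{(t)}=\emptyset$, giving $\cbb(\mathbf{b},\mathbf{0})^b \cong \kring\otimes\orient(\mathbf{b}) \cong \orient(\mathbf{b})$ in cohomological degree $b$ and zero elsewhere; a complex concentrated in a single degree is its own cohomology. One should also check compatibility with the $\sym_b\op$-action, but this is immediate from Lemma \ref{lem:Cbb_action}(1) applied with $t=b$, which identifies $\cbb(\mathbf{b},\mathbf{0})^b$ with $\orient(\mathbf{b})$ as $\sym_b\op$-modules.

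For item (3), with $b=a$, a subset $\mathbf{b}^{(t)}\subseteq\mathbf{a}$ with $|\mathbf{b}^{(t)}|=a-t$ can surject onto $\mathbf{a}$ only if $a-t\geq a$, i.e. $t=0$ and $\mathbf{b}^{(t)}=\mathbf{a}$. Thus $\cbb(\mathbf{a},\mathbf{a})$ is concentrated in cohomological degree zero, where it equals $\kring\hom_\Oz(\mathbf{a},\mathbf{a})\otimes\orient(\emptyset) \cong \kring\hom_\Oz(\mathbf{a},\mathbf{a})$; by Lemma \ref{lem:surjections_fibres} (or directly), the surjections $\mathbf{a}\twoheadrightarrow\mathbf{a}$ are exactly the bijections, so this is $\kring[\sym_a]$. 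Again the complex is concentrated in one degree, so it coincides with its cohomology. None of the three items presents a genuine obstacle; the only point requiring a modicum of care is the bookkeeping of which summands survive and the degenerate conventions when $a=0$ or $b=0$, which is why the statement is recorded separately as a lemma rather than folded into the general computation of Section \ref{sect:cohomology}.
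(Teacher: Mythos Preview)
Your proof is correct and follows the natural direct verification from the explicit description in Notation~\ref{nota:Kz_Cba}; the paper does not give a proof of this lemma, treating it as an immediate consequence of the definitions, so your argument is exactly what is implicitly intended.
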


This allows us to concentrate on the case $b> a>0$ henceforth. 

%%%%%%%%%%%%%%%%%%%%%%%%%%%%%%%%%%%%%%%%%%%%%%%%%%%%
\subsection{The case $a=1$}
This forms one of the initial steps of a double induction in the proof of Theorem \ref{thm:cohomology_NC}.

\begin{prop}
\label{prop:case_a=1_b>1}
For $a=1$ and  $b \in \nat^*$,
\[
H^* (\mathbb{C}(\mathbf{b}, \mathbf{1})) \cong 
\left\{
\begin{array}{ll}
\orient (\mathbf{b}) & *= b-1 \\
0 & \mbox{ otherwise.}
\end{array}
\right.
\]  
\end{prop}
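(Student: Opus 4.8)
The plan is to recognise $\cbb(\mathbf{b},\mathbf{1})$ as the brutal truncation of an acyclic Koszul complex and then to read off its cohomology from the associated short exact sequence of complexes. This is one of the base cases of the double induction in the proof of Theorem~\ref{thm:cohomology_NC}, so I do not expect any deep input.

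First I would make the complex completely explicit. By Remark~\ref{rem:Oz}, the set $\hom_\Oz(\mathbf{b}',\mathbf{1})$ is a singleton for every non-empty subset $\mathbf{b}'\subseteq\mathbf{b}$ and is empty for $\mathbf{b}'=\emptyset$, so its canonical generator identifies $\kring\hom_\Oz(\mathbf{b}',\mathbf{1})$ with $\kring$ whenever $\mathbf{b}'\neq\emptyset$. Writing $S:=\mathbf{b}\setminus\mathbf{b}^{(t)}$ and recalling Notation~\ref{nota:Kz_Cba}, this yields
\[
\cbb(\mathbf{b},\mathbf{1})^t \;\cong\; \bigoplus_{\substack{S\subsetneq\mathbf{b}\\ |S|=t}} \orient(S) \qquad (0\le t\le b-1),
\]
where $\orient(S)$ is viewed as $\Lambda^{|S|}(\kring S)\subseteq\Lambda^{|S|}(\kring\mathbf{b})$. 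Inspecting the differential of Notation~\ref{nota:Kz_Cba}, one checks that under these identifications it carries $\alpha\in\orient(S)$ to $\bigl(\sum_{y\in\mathbf{b}\setminus S}y\bigr)\wedge\alpha$: indeed $f\circ\iota_y$ is again surjective onto $\mathbf{1}$ precisely when $\mathbf{b}^{(t)}\setminus\{y\}\neq\emptyset$, which holds for every $y\in\mathbf{b}^{(t)}$ as soon as $t\le b-2$, while the differential out of degree $b-1$ vanishes. Since $y\wedge\alpha=0$ for $y\in S$, this differential coincides with $e\wedge(-)$, where $e:=\sum_{y\in\mathbf{b}}y\in\kring\mathbf{b}$.

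Next I would introduce the full Koszul complex $\mathbb{K}^\bullet:=\bigl(\Lambda^\bullet(\kring\mathbf{b}),\,e\wedge(-)\bigr)$, placed in cohomological degrees $0,\dots,b$. Two facts finish the proof. First, $\cbb(\mathbf{b},\mathbf{1})$ is precisely the brutal truncation of $\mathbb{K}^\bullet$ that discards the top term $\mathbb{K}^b=\Lambda^b(\kring\mathbf{b})=\orient(\mathbf{b})$ (so that the differential out of degree $b-1$ becomes zero, in agreement with the above); since $\mathbb{K}^b$, placed in degree $b$, is a subcomplex $A$ of $\mathbb{K}^\bullet$ (the differential $\mathbb{K}^b\to\mathbb{K}^{b+1}$ being zero), one obtains a short exact sequence of complexes of $\kring[\sym_b]$-modules
\[
0 \longrightarrow A \longrightarrow \mathbb{K}^\bullet \longrightarrow \cbb(\mathbf{b},\mathbf{1}) \longrightarrow 0,
\]
with $A$ equal to $\orient(\mathbf{b})$ concentrated in cohomological degree $b$. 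Second, $\mathbb{K}^\bullet$ is acyclic: picking $\phi\in(\kring\mathbf{b})^{*}$ with $\phi(e)=1$ (for instance $\phi=y_0^{*}$ for a fixed $y_0\in\mathbf{b}$, which is legitimate since $\mathbf{b}$ is a $\kring$-basis of $\kring\mathbf{b}$), the interior product $\iota_\phi$ satisfies $e\wedge\iota_\phi(-)+\iota_\phi\bigl(e\wedge(-)\bigr)=\phi(e)\cdot\mathrm{id}=\mathrm{id}$, so it is a contracting homotopy for $\mathbb{K}^\bullet$.

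Finally, the long exact cohomology sequence of the displayed short exact sequence, together with $H^{*}(\mathbb{K}^\bullet)=0$, gives connecting isomorphisms $H^{t}(\cbb(\mathbf{b},\mathbf{1}))\cong H^{t+1}(A)$ for all $t$; as $A$ is concentrated in degree $b$ with $H^{b}(A)=\orient(\mathbf{b})$, this equals $\orient(\mathbf{b})$ for $t=b-1$ and $0$ otherwise, and the isomorphism is $\sym_b$-equivariant since all maps involved are. The only mildly delicate points are the sign bookkeeping when identifying the Koszul differential with $e\wedge(-)$, handled by the vanishing $y\wedge\alpha=0$ for $y\in S$, and the verification that the top term of $\mathbb{K}^\bullet$ is a subcomplex; neither is a real obstacle.
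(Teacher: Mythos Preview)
Your proof is correct and, once unpacked, is essentially the same argument as the paper's. The paper obtains the short exact sequence by applying the exact functor $\kz$ (Proposition~\ref{Koszul-exact}) to the short exact sequence of $\finj\op$-modules $0\to\mathbbm{1}\to\kring\to\kring\hom_\Oz(-,\mathbf{1})\to 0$; evaluated at $\mathbf{b}$, this yields precisely your $0\to A\to\mathbb{K}^\bullet\to\cbb(\mathbf{b},\mathbf{1})\to 0$, since $(\kz\kring)(\mathbf{b})$ is exactly your $\mathbb{K}^\bullet$ and $(\kz\mathbbm{1})(\mathbf{b})=\orient(\mathbf{b})$ in degree $b$ is your $A$. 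The paper's explicit null-homotopy $h$ on $\kz\kring$ (sending $\omega(\mathbf{b}'')$ to $\omega(\mathbf{b}''\setminus\{1\})$ or $0$ according to whether $1\in\mathbf{b}''$) is the interior product $\iota_{1^*}$, which is your $\iota_\phi$ for $y_0=1$. The only difference is packaging: the paper uses the functorial language of $\kz$ on $\finj\op$-modules, which integrates smoothly with the rest of Section~\ref{sect:cohomology}, whereas you work directly with the exterior algebra; both lead to the same complexes, the same short exact sequence, and the same contracting homotopy.
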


\begin{proof}
In the case $b=1$, the result follows from Lemma \ref{lem:case_a=b}, hence suppose that $b>1$.

There is a short exact sequence of $\finj\op$-modules 
\[
0
\rightarrow 
\mathbbm{1}
\rightarrow 
\kring 
\rightarrow 
\kring \hom_\Oz (-, \mathbf{1}) 
\rightarrow 
0,
\]
where $\kring$ denotes the constant $\finj\op$-module and $\mathbbm{1}$ is as in Notation \ref{notation-1}. By Proposition \ref{Koszul-exact} this gives rise to a short exact sequence of cochain complexes in $\fcatk[\fb\op]$

\[
0
\rightarrow 
\kz \mathbbm{1}
\rightarrow 
\kz \kring 
\rightarrow 
\kz \kring \hom_\Oz (-, \mathbf{1}) 
\rightarrow 
0
\]
which induces a long exact sequence in cohomology. 

Since $\kz \mathbbm{1}=\cbb (-, \mathbf{0})$, by Lemma \ref{lem:case_a=b}, for $b \in \nat^*$, $(\kz \mathbbm{1})(\mathbf{b})$ has cohomology concentrated in cohomological degree $b$, where it is isomorphic to $\orient (\mathbf{b})$. 

We prove that $\kz \kring$ has zero cohomology.   This is proved as for classical Koszul complexes by exhibiting a chain null-homotopy as follows. Firstly, $(\kz \kring )^t(\mathbf{b})$ identifies as 
\[
\bigoplus_{\substack{\mathbf{b}'' \subset \mathbf{b} \\ |\mathbf{b}''| = t}}
\orient (\mathbf{b}''),
\]
noting that this includes the module $\kring$ in degree $t=b$.

Define a chain null-homotopy $h: (\kz \kring )^{t}(\mathbf{b}) \to (\kz \kring )^{t-1}(\mathbf{b})$ using the generators introduced in Notation \ref{nota:orient} by 
\[
\omega (\mathbf{b}'') \mapsto 
\left\{
\begin{array}{ll}
0 & 1 \not \in \mathbf{b}''\\
\omega (\mathbf{b}'' \backslash \{ 1 \})& 1 \in \mathbf{b}'',
\end{array}
\right.
\]
where $\mathbf{b}'' \subset \mathbf{b}$ such that  $|\mathbf{b}''| = t$.

Note that, for $\mathbf{b}'' \subset \mathbf{b}$ such that $1 \in \mathbf{b}''$ and $x \in \mathbf{b} \setminus \mathbf{b}''$, we have
$$
h(1 \wedge x \wedge \omega(\mathbf{b}''  \setminus \{1\}))=x \wedge \omega(\mathbf{b}''  \setminus \{1\}).
$$

The following calculations show that  $dh + hd = \mathrm{id}$, as required:
\begin{enumerate}
\item 
If $1 \not\in \mathbf{b}''$, $d \circ h(\omega( \mathbf{b}''))=d(0)=0$ and 
$$
h \circ d(\omega( \mathbf{b}''))=h(\sum_{\substack{ x \in \mathbf{b} \setminus\mathbf{b}'' }} x \wedge \omega( \mathbf{b}''))=\sum_{\substack{ x \in \mathbf{b} \setminus\mathbf{b}'' }} h(x \wedge \omega( \mathbf{b}''))=h(1 \wedge\omega( \mathbf{b}''))= h(\omega ( \mathbf{b}'' \cup\{1 \}))= \omega (\mathbf{b}'' ). 
$$
\item 
If $1 \in \mathbf{b}''$,
\begin{eqnarray*}
d \circ h(\omega( \mathbf{b}''))&=&d(\omega(\mathbf{b}'' \setminus \{1\}))
= \sum_{\substack{ x \in (\mathbf{b} \setminus\mathbf{b}'') \cup \{1\} }} x \wedge \omega(\mathbf{b}'' \setminus \{1\})\\
&=& 1 \wedge \omega(\mathbf{b}'' \setminus \{1\}) + \sum_{\substack{ x \in \mathbf{b} \setminus\mathbf{b}'' }} x \wedge \omega(\mathbf{b}'' \setminus \{1\})= \omega(\mathbf{b}'') + \sum_{\substack{ x \in \mathbf{b} \setminus\mathbf{b}'' }} x \wedge \omega(\mathbf{b}'' \setminus \{1\})
\end{eqnarray*}
and 
\begin{eqnarray*}
h \circ d(\omega( \mathbf{b}''))&=& h(\sum_{\substack{ x \in \mathbf{b} \setminus\mathbf{b}'' }} x \wedge \omega(\mathbf{b}''))=\sum_{\substack{ x \in \mathbf{b} \setminus\mathbf{b}'' }} h(x \wedge \omega(\mathbf{b}''))\\
&=&\sum_{\substack{ x \in \mathbf{b} \setminus\mathbf{b}'' }} h(x \wedge 1 \wedge  \omega(\mathbf{b}'' \setminus \{1\})) = \sum_{\substack{ x \in \mathbf{b} \setminus\mathbf{b}'' }} -h(1 \wedge x \wedge  \omega(\mathbf{b}'' \setminus \{1\}))\\
&=&\sum_{\substack{ x \in \mathbf{b} \setminus\mathbf{b}'' }} -x \wedge  \omega(\mathbf{b}'' \setminus \{1\}).
\end{eqnarray*}
\end{enumerate} 
\end{proof}

%%%%%%%%%%%%%%%%%%%%%%%%%%%%%%%%%%%%%%%%%%%%%%%%%%%%%%%%%%%%%%%%%%%%%%%%%%%%%%%%
\subsection{The top cohomology of $\mathbb{C}(\mathbf{b}, \mathbf{a})$}

The other ingredient to the double induction used in the proof of Theorem \ref{thm:cohomology_NC} is the identification of the cohomology in  the top degree $b-a$:

\begin{prop}
\label{prop:top_degree}
If $b>a>0$, then there is a $\sym_a \times \sym_b\op$-equivariant surjection of chain complexes 
\begin{eqnarray}
\label{eqn:cbb_surjection}
\cbb (\mathbf{b}, \mathbf{a}) 
\twoheadrightarrow 
\orient (\mathbf{a}) \otimes \orient (\mathbf{b}), 
\end{eqnarray}
where $\orient (\mathbf{a}) \otimes \orient (\mathbf{b})$ is placed in cohomological degree $b-a$. 

This induces an isomorphism of $\sym_a \times \sym_b\op$-modules:
$$
H^{b-a} (\mathbb{C}(\mathbf{b}, \mathbf{a})) \cong \orient (\mathbf{a}) \otimes\orient (\mathbf{b}) .
$$
In particular, for $\mathbf{b}' \subset \mathbf{b}$ such that $|\mathbf{b}'|=a-1$, $(u,v) \in  (\mathbf{b}\setminus \mathbf{b}')^2 $, $f_1 \in \hom _\Oz (\mathbf{b}' \cup \{u\}, \mathbf{a} )$,  $f_2 \in \hom _\Oz (\mathbf{b}' \cup \{v\}, \mathbf{a} )$ such that $f_1| _{\mathbf{b}'}=f_2|_{ \mathbf{b}'} $ and $f_1(u)=f_2(v)$ then the cohomology classes of the generators 
$[f_1] \otimes \omega_1 $ and $[f_2] \otimes \omega_2$ (where $\omega_1$, $\omega_2$ denote the respective canonical orientation classes) are equal, up to a possible sign $\pm 1$.  
\end{prop}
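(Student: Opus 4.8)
The plan is to identify the top cohomology $H^{b-a}(\cbb(\mathbf{b},\mathbf{a}))$ by constructing the explicit surjection \eqref{eqn:cbb_surjection} onto a complex concentrated in the single degree $b-a$, and then checking that this surjection is a quasi-isomorphism in that degree. First I would recall that $\cbb(\mathbf{b},\mathbf{a})^{b-a}$ is a direct sum over subsets $\mathbf{b}^{(b-a)}\subset\mathbf{b}$ of cardinality $a$ of $\kring\hom_\Oz(\mathbf{b}^{(b-a)},\mathbf{a})\otimes\orient(\mathbf{b}\setminus\mathbf{b}^{(b-a)})$; since a surjection from an $a$-element set to $\mathbf{a}$ is a bijection, each such summand is $\kring\aut(\mathbf{b}^{(b-a)})\otimes\orient(\mathbf{b}\setminus\mathbf{b}^{(b-a)})$, and there is a canonical $\sym_a\times\sym_b\op$-equivariant map sending a bijection $\sigma:\mathbf{b}^{(b-a)}\xrightarrow{\cong}\mathbf{a}$ to $\sgn(\sigma)\cdot\omega(\mathbf{a})\otimes\big(\omega(\mathbf{b}^{(b-a)})\wedge\omega(\mathbf{b}\setminus\mathbf{b}^{(b-a)})\big)\in\orient(\mathbf{a})\otimes\orient(\mathbf{b})$, where $\omega(\mathbf{b}^{(b-a)})\wedge\omega(\mathbf{b}\setminus\mathbf{b}^{(b-a)})$ is the reordering of $\omega(\mathbf{b})$. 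This is manifestly surjective, and placing $\orient(\mathbf{a})\otimes\orient(\mathbf{b})$ in degree $b-a$ with zero differential (the complex $\cbb(\mathbf{b},\mathbf{a})$ vanishes above degree $b-a$, since then $|\mathbf{b}^{(t)}|<a$) makes it trivially a map of complexes; one must verify that the composite $\cbb(\mathbf{b},\mathbf{a})^{b-a-1}\xrightarrow{d}\cbb(\mathbf{b},\mathbf{a})^{b-a}\to\orient(\mathbf{a})\otimes\orient(\mathbf{b})$ is zero.

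The key verification is that $d$ followed by the projection vanishes. A generator of $\cbb(\mathbf{b},\mathbf{a})^{b-a-1}$ is $[f]\otimes\alpha$ with $f\in\hom_\Oz(\mathbf{b}',\mathbf{a})$, $|\mathbf{b}'|=a+1$, $\alpha\in\orient(\mathbf{b}\setminus\mathbf{b}')$; its differential is $\sum_{y}[f\circ\iota_y]\otimes(y\wedge\alpha)$ summed over those $y\in\mathbf{b}'$ for which $f\circ\iota_y$ is still surjective, i.e. $y$ is not the unique element of its fibre $f^{-1}(f(y))$ when that fibre has size $\geq 2$; concretely, since $|\mathbf{b}'|=a+1=|\mathbf{a}|+1$, exactly one fibre of $f$ has two elements, say $\{u,v\}$, and the only surviving terms in $d([f]\otimes\alpha)$ are $y=u$ and $y=v$. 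The restrictions $f\circ\iota_u$ and $f\circ\iota_v$ are bijections $\mathbf{b}'\setminus\{u\}\to\mathbf{a}$ and $\mathbf{b}'\setminus\{v\}\to\mathbf{a}$, and they project to $\orient(\mathbf{a})\otimes\orient(\mathbf{b})$ as $\pm\sgn(f\circ\iota_u)\,\omega(\mathbf{a})\otimes(\text{reordered }\omega(\mathbf{b}))$ and similarly for $v$; a short sign bookkeeping — transposing $u$ and $v$ and tracking both the effect on $\sgn$ and on the wedge $u\wedge v=-v\wedge u$ in the orientation factor — shows these two contributions cancel. This is the step I expect to be the main obstacle: the signs must be organized carefully, and it is cleanest to reduce to the case $\mathbf{b}'=\{u,v\}\cup(\text{a fixed }(a-1)\text{-subset})$ using equivariance and then compute directly.

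Once the projection is shown to be a map of complexes, it is a surjection of complexes, hence induces a surjection $H^{b-a}(\cbb(\mathbf{b},\mathbf{a}))\twoheadrightarrow\orient(\mathbf{a})\otimes\orient(\mathbf{b})$; since the target in degree $b-a$ is the top of the complex, $H^{b-a}(\cbb(\mathbf{b},\mathbf{a}))=\mathrm{coker}\big(d:\cbb^{b-a-1}\to\cbb^{b-a}\big)$, and to get the isomorphism it suffices to show the kernel of the projection map in degree $b-a$ is contained in $\mathrm{im}(d)$. For this I would argue as follows: the kernel is spanned by differences $[\sigma_1]\otimes\alpha_1 - (\pm)[\sigma_2]\otimes\alpha_2$ where $\sigma_1,\sigma_2$ are bijections with the same projection, which — via the description of $d$ in the exceptional case above — are precisely the boundaries of the generators $[f]\otimes\alpha$ with $f$ having a single doubleton fibre; connecting two bijections that agree after projecting by a chain of such elementary moves (this is again essentially the connectivity used in the permutohedron argument of Proposition \ref{prop:acyclicity}, or can be checked directly since any two orderings differ by transpositions) shows the kernel equals $\mathrm{im}(d)$. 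The final clause of the statement about $[f_1]\otimes\omega_1$ and $[f_2]\otimes\omega_2$ being cohomologous up to sign is then immediate: both are cocycles in top degree, and their images under the projection agree up to sign by the explicit formula (since $f_1,f_2$ agree on $\mathbf{b}'$ and $f_1(u)=f_2(v)$ forces the two bijections to have equal signs up to the sign of the transposition relating $\mathbf{b}'\cup\{u\}$ and $\mathbf{b}'\cup\{v\}$ inside $\mathbf{b}$), so their difference lies in the kernel $=\mathrm{im}(d)$.
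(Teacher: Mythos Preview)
Your proposal is correct and follows essentially the same approach as the paper: define the projection in top degree via a sign-weighted map from bijections to $\orient(\mathbf{a})\otimes\orient(\mathbf{b})$, check it annihilates boundaries by analysing the unique doubled fibre of a surjection from an $(a{+}1)$-set, and conclude the isomorphism by showing the kernel of the projection coincides with the image of $d$. The paper writes the map as $[f]\otimes\omega(\mathbf{b}\setminus\mathbf{b}')\mapsto \omega(\mathbf{a})\otimes\big(f^{-1}(1)\wedge\cdots\wedge f^{-1}(a)\wedge\omega(\mathbf{b}\setminus\mathbf{b}')\big)$, which is your $\sgn(\sigma)$ formula unpacked, and derives the ``in particular'' clause directly from the explicit boundary $d([g]\otimes\omega)$ rather than as a consequence of the isomorphism; but these are cosmetic reorderings of the same argument.
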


\begin{proof}
The result is proved by analysing the tail of the complex $\mathbb{C}(\mathbf{b}, \mathbf{a})$:
\[
\xymatrix{
\mathbb{C}(\mathbf{b}, \mathbf{a})^{b-a-1} 
\ar[r]^-d
\ar@{=}[d]
& 
\mathbb{C}(\mathbf{b}, \mathbf{a}) ^{b-a}
\ar@{=}[d]
\\
\bigoplus_{\substack{ \tilde{\mathbf{b}}\subset \mathbf{b} 
\\
|\tilde{\mathbf{b}}| = a+1
}}
\kring \hom _\Oz (\tilde{\mathbf{b}}, \mathbf{a} ) 
\otimes 
\orient (\mathbf{b} \backslash \tilde{\mathbf{b}} ) 
\ar[r]
&
\bigoplus_{\substack{ \mathbf{b}'\subset \mathbf{b} 
\\
|\mathbf{b}'| = a
}}
\kring \hom _\Oz (\mathbf{b}', \mathbf{a} )
\otimes \orient (\mathbf{b} \backslash \mathbf{b}' ), 
}
\]

The surjection (\ref{eqn:cbb_surjection}) is defined by the $\kring$-module morphism:
\[
\psi: \kring \hom _\Oz (\mathbf{b}', \mathbf{a} )
\otimes \orient (\mathbf{b} \backslash \mathbf{b}' )
\rightarrow 
\orient (\mathbf{a}) 
\otimes 
\orient (\mathbf{b}) 
\]
that sends a generator $[f] \otimes \omega (\mathbf{b}\backslash \mathbf{b}')$,
 where $f$ is a bijection $f: \mathbf{b}' \stackrel{\cong}{\rightarrow} \mathbf{a}$, to 
 $$
\omega (\mathbf{a}) \otimes \big( f^{-1}(1) \wedge \ldots \wedge f^{-1} (a) \wedge \omega (\mathbf{b}\backslash \mathbf{b}')\big ) \in \orient (\mathbf{a})\otimes   \orient (\mathbf{b}) .
 $$

To see that this sends boundaries to zero, consider a generator $[g] \otimes \omega (\mathbf{b}\backslash \tilde {\mathbf{b}})$, where $g: \tilde{\mathbf{b}} \twoheadrightarrow \mathbf{a}$ with $|\tilde{\mathbf{b}} | = a+1$. For such a $g$, there exists a unique $i \in \mathbf{a}$ such that  $|g^{-1} (i)|=2$, with  all other fibres of cardinal one; write $g^{-1} (i) = \{u, v \}$. Then:
\[
d ([g]  \otimes \omega (\mathbf{b}\backslash \tilde {\mathbf{b}}))
= 
 [g|_{\tilde{\mathbf{b}} \backslash \{v \}}] \otimes (v \wedge   \omega (\mathbf{b}\backslash \tilde {\mathbf{b}}))
+ 
 [g|_{\tilde{\mathbf{b}} \backslash \{ u \}}] \otimes (u \wedge   \omega (\mathbf{b}\backslash \tilde {\mathbf{b}})).
\]
Now the images under $\psi$ of the two terms of the right hand side differ only by the transposition of the terms $u$, $v$ in the wedge product in $\orient (\mathbf{b})$. The sum of these images is therefore zero in $\orient (\mathbf{a})\otimes \orient (\mathbf{b})$. 

In particular, for $\mathbf{b}' \subset \mathbf{b}$ such that $|\mathbf{b}'|=a-1$ and $u, v, f_1, f_2$ as in the statement, taking $\tilde{\mathbf{b}}=\mathbf{b}'\cup\{u,v\}$ and $g \in  \hom _\Oz (\tilde{\mathbf{b}}, \mathbf{a} )$ such that $g(u)=g(v)$ and $g|_{\tilde{\mathbf{b}} \backslash \{u,v \}}=f_1|_{\mathbf{b}'}=f_2|_{\mathbf{b}'}$ we have
$$
d ([g]  \otimes \omega (\mathbf{b}\backslash \tilde {\mathbf{b}}))
= 
 [f_1] \otimes   (v \wedge   \omega (\mathbf{b}' \cup \{u\})) 
+ 
 [f_2] \otimes (u \wedge   \omega (\mathbf{b}' \cup \{v\}))=0.
$$
Thus the cohomology classes of the generators 
$[f_1] \otimes \omega_1 $ and $[f_2] \otimes \omega_2$ are equal, up to a possible sign $\pm 1$.  

One deduces that the cohomology $H^{b-a} (\cbb (\mathbf{b}, \mathbf{a}))$ is generated by the class of the generator $[\id_{\mathbf{a}}] \otimes \omega (\mathbf{b}\backslash \mathbf{a})$, where $\mathbf{a}$ is considered as a subset of $\mathbf{b}$ by the canonical inclusion. This element is sent to the generator $\omega (\mathbf{a}) \otimes \omega (\mathbf{b})$ of $\orient (\mathbf{a}) \otimes \orient (\mathbf{b})$ by the surjection. It follows that   the surjection induces an isomorphism in cohomology.

Finally, by construction, the surjection of chain complexes is $\sym_a \times \sym_b\op$-equivariant, hence the induced isomorphism in cohomology is one of $\sym_a \times\sym_b\op$-modules.
\end{proof}

%%%%%%%%%%%%%%%%%%%%%%%%%%%%%%%%%%%%%%%%%%%%%%%%%%%%%%%%%%%%%%%%%%%%%%%%%%%%%%%%
\subsection{The cohomology of $\mathbb{C}(\mathbf{b}, \mathbf{a})$}
\label{subsect:thm_cohom}

We begin by considering the case $\mathbf{b}=\mathbf{a+1}$.

\begin{prop}
\label{cor:cohomology_b=a+1}
For $a \in \nat$, 
\[
H^*(\mathbb{C}(\mathbf{a+1}, \mathbf{a}))
\cong 
\left\{
\begin{array}{ll}
\kring ^{\oplus \epfn (a+1, a)} 
& *=0 \\
\kring
& *=1 \\
0 & *>1, 
\end{array}
\right.
\]
where 
\[
\epfn (a+1, a) = 
\left\{
\begin{array}{ll}
0 & a \in \{0,1\} \\
(a-2) \frac{(a+1)!}{2} +1  & a >1.
\end{array}
\right.
\]
In particular, the cohomology is $\kring$-free, concentrated in degrees $0$ and $1$.
\end{prop}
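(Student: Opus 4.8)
The plan is to treat the complex $\mathbb{C}(\mathbf{a+1},\mathbf{a})$ by combining the general structural input already assembled (Theorem \ref{thm:norm_vs_kz}, Corollary \ref{cor:kos_FIop_cohom}, Theorem \ref{thm:motivate}) with the two ``boundary'' computations Proposition \ref{prop:case_a=1_b>1} (the case $a=1$) and Proposition \ref{prop:top_degree} (the top cohomology in degree $b-a$), here specialized to $b=a+1$, so the top degree is $1$. First I would dispose of the small cases $a=0$ and $a=1$ directly: for $a=0$, Lemma \ref{lem:case_a=b}(2) gives $\mathbb{C}(\mathbf{1},\mathbf{0})\cong\orient(\mathbf{1})\cong\kring$ concentrated in degree $1$, matching $\epfn(1,0)=0$; for $a=1$, Proposition \ref{prop:case_a=1_b>1} with $b=2$ gives cohomology $\orient(\mathbf{2})\cong\kring$ concentrated in degree $1$, matching $\epfn(2,1)=0$. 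So henceforth assume $a>1$.

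For $a>1$ the complex $\mathbb{C}(\mathbf{a+1},\mathbf{a})$ lives in cohomological degrees $0$ and $1$ only: by Notation \ref{nota:Kz_Cba}, $\mathbb{C}(\mathbf{a+1},\mathbf{a})^t$ is a sum over subsets $\mathbf{b}^{(t)}\subset\mathbf{a+1}$ of cardinality $a+1-t$, and $\kring\hom_\Oz(\mathbf{b}^{(t)},\mathbf{a})=0$ once $a+1-t<a$, i.e. once $t>1$. Thus the whole complex is a two-term complex
\[
\mathbb{C}(\mathbf{a+1},\mathbf{a})^0 \xrightarrow{\ d\ } \mathbb{C}(\mathbf{a+1},\mathbf{a})^1,
\]
where $\mathbb{C}(\mathbf{a+1},\mathbf{a})^0=\kring\hom_\Oz(\mathbf{a+1},\mathbf{a})$ and $\mathbb{C}(\mathbf{a+1},\mathbf{a})^1=\bigoplus_{i\in\mathbf{a+1}}\kring\hom_\Oz(\mathbf{a+1}\setminus\{i\},\mathbf{a})\otimes\orient(\{i\})$, the latter being a sum of $(a+1)$ copies of $\kring\hom_\Oz(\mathbf{a},\mathbf{a})\cong\kring[\sym_a]$ (via Lemma \ref{lem:case_a=b}(3)), hence free of rank $(a+1)\cdot a!=(a+1)!$. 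Then $H^1$ is the cokernel of $d$ and $H^0=\ker d$. Proposition \ref{prop:top_degree} (with $b=a+1$) already identifies $H^1\cong\orient(\mathbf{a})\otimes\orient(\mathbf{a+1})\cong\kring$ as claimed. It remains to compute $H^0=\ker d$, and for this the Euler--Poincar\'e characteristic is the cleanest route: since both terms are $\kring$-free of finite rank, $\operatorname{rk} H^0-\operatorname{rk} H^1=\operatorname{rk}\mathbb{C}^0-\operatorname{rk}\mathbb{C}^1$. Now $\operatorname{rk}\mathbb{C}^0=|\hom_\Oz(\mathbf{a+1},\mathbf{a})|$ is the number of surjections $\mathbf{a+1}\twoheadrightarrow\mathbf{a}$, which is $\binom{a+1}{2}\cdot a!=\frac{(a+1)!}{2}$ (choose the unique two-element fibre, then order the $a$ fibres). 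Hence $\operatorname{rk} H^0=\frac{(a+1)!}{2}-(a+1)!+1=-( a-2)\cdot\frac{(a+1)!}{2}$\,\dots\ wait: $\frac{(a+1)!}{2}-(a+1)!=-\frac{(a+1)!}{2}$, so one must instead read off $\operatorname{rk}\mathbb{C}^1$ correctly and also confirm $H^t=0$ for $t>1$, which is immediate since the complex vanishes there.

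The delicate point — and the main obstacle — is getting the ranks and the freeness of $H^0$ right, rather than merely its rank. Freeness is not automatic from the Euler characteristic; here one argues that $H^0=\ker d$ is a submodule of the free module $\mathbb{C}^0$ and, crucially, that the quotient $\mathbb{C}^0/\ker d=\operatorname{im} d\subseteq\mathbb{C}^1$ is itself free (being a submodule of a free module over\dots — no, that fails for general $\kring$). The correct strategy is to observe that $H^1=\operatorname{coker} d\cong\kring$ is free, hence the short exact sequence $0\to\operatorname{im} d\to\mathbb{C}^1\to H^1\to0$ splits, so $\operatorname{im} d$ is a free direct summand of $\mathbb{C}^1$ of rank $(a+1)!-1$; then $0\to\ker d\to\mathbb{C}^0\to\operatorname{im} d\to0$ with $\operatorname{im} d$ free also splits, exhibiting $\ker d=H^0$ as a direct summand of $\mathbb{C}^0$, hence free, of rank $\frac{(a+1)!}{2}-\big((a+1)!-1\big)=-\frac{(a+1)!}{2}+1$. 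Since this must be non-negative we have mislabelled a rank: the honest computation (which I would carry out carefully) is $\operatorname{rk} H^0=\operatorname{rk}\mathbb{C}^0-\operatorname{rk}\operatorname{im} d=\operatorname{rk}\mathbb{C}^0-(\operatorname{rk}\mathbb{C}^1-\operatorname{rk} H^1)$, and matching this against the asserted value $(a-2)\frac{(a+1)!}{2}+1$ forces $\operatorname{rk}\mathbb{C}^1=(a-1)\frac{(a+1)!}{2}$ — consistent with $\mathbb{C}^1$ decomposing not merely as $(a+1)$ copies of $\kring[\sym_a]$ but with multiplicities recorded by Lemma \ref{lem:Cbb_action} / Proposition \ref{prop:NcosimpF_b_action}, i.e. $\mathbb{C}(\mathbf{a+1},\mathbf{a})^1\cong\big(\kring\hom_\Oz(\mathbf{a},\mathbf{a})\otimes\orient(\mathbf{1}')\big)\uparrow^{\sym_{a+1}\op}_{\sym_a\op\times\sym_1\op}$, whose rank is $(a+1)\cdot a!=(a+1)!$ — so in fact the right bookkeeping is via $\epfn(a+1,a)=\operatorname{rk}\ker d$ computed from the inductive formula of Proposition \ref{cor:inductive_rho}. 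Thus the proof I would write: (i) reduce to $a>1$ and to the two-term complex; (ii) invoke Proposition \ref{prop:top_degree} for $H^1\cong\kring$; (iii) use freeness of $H^1$ to split off $\operatorname{im} d$ and conclude $H^0$ is $\kring$-free; (iv) compute $\operatorname{rk} H^0$ by Euler characteristic, with $\operatorname{rk}\mathbb{C}^0=\frac{(a+1)!}{2}$ and $\operatorname{rk}\mathbb{C}^1=(a+1)!$, giving $\operatorname{rk} H^0=\frac{(a+1)!}{2}-(a+1)!+1$; reconciling the sign shows the intended identity must read $\operatorname{rk} H^0=(a-2)\frac{(a+1)!}{2}+1$ once $\operatorname{rk}\mathbb{C}^1$ is taken with its correct value $(a-1)\frac{(a+1)!}{2}+\cdots$, so the honest final step is to recompute $\operatorname{rk}\mathbb{C}^1$ from $\hom_\Oz$-counts and the $\orient$-twist, which I expect to be the genuinely fiddly part.
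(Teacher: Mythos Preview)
Your approach is exactly the paper's: reduce to a two-term complex, invoke Proposition~\ref{prop:top_degree} for $H^1\cong\kring$, deduce freeness of $\operatorname{im} d$ and hence of $H^0=\ker d$ by splitting, and read off the rank by Euler characteristic. The confusion in your write-up is not a genuine gap in strategy but a single arithmetic slip that sends you in circles.

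The error is in the rank of $\mathbb{C}^0$. You write $\binom{a+1}{2}\cdot a!=\frac{(a+1)!}{2}$, but
\[
\binom{a+1}{2}\cdot a! \;=\; \frac{(a+1)a}{2}\cdot a! \;=\; \frac{a}{2}(a+1)!,
\]
which is precisely the value the paper records. Your computation of $\operatorname{rk}\mathbb{C}^1=(a+1)\cdot a!=(a+1)!$ is correct. With the right $\operatorname{rk}\mathbb{C}^0$, the Euler characteristic gives
\[
\operatorname{rk} H^0 \;=\; \frac{a}{2}(a+1)! - (a+1)! + 1 \;=\; \frac{(a-2)}{2}(a+1)! + 1,
\]
exactly the asserted $\epfn(a+1,a)$ for $a>1$. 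There is nothing wrong with $\operatorname{rk}\mathbb{C}^1$, and no need to appeal to Proposition~\ref{cor:inductive_rho} (which in any case is proved \emph{after} this statement). Your freeness argument via splitting off the free cokernel $H^1\cong\kring$ is correct and is what the paper's one-line ``observing that the image of the differential is a free $\kring$-module'' unpacks to.

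One minor simplification: the paper treats all $a>0$ uniformly (Proposition~\ref{prop:top_degree} applies once $b>a>0$), so the separate appeal to Proposition~\ref{prop:case_a=1_b>1} for $a=1$ is unnecessary, though harmless.
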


\begin{proof}
The case $a=0$ is given by Lemma \ref{lem:case_a=b}.

For $a>0$, by Proposition \ref{prop:top_degree},  $H^1  (\mathbb{C}(\mathbf{a+1}, \mathbf{a})) \cong \kring$ 
and the complex $\mathbb{C} (\mathbf{a+1}, \mathbf{a})$ is concentrated in degrees $0$ and $1$, with terms that are free $\kring$ modules with:
\begin{eqnarray*}
\krank \mathbb{C}(\mathbf{a+1}, \mathbf{a})^0 &=& a! \binom{a+1}{2} = \frac{a}{2} (a+1)! 
\\
\krank \mathbb{C}(\mathbf{a+1}, \mathbf{a})^1 &= &a! (a+1) = (a+1)!,
\end{eqnarray*}
by direct calculation. The result follows by observing that the image of the differential is a free $\kring$-module of rank $ \krank \mathbb{C}(\mathbf{a+1}, \mathbf{a})^1 - 1$ and hence deducing that the kernel of the differential is a free $\kring$-module of rank $\krank \mathbb{C}(\mathbf{a+1}, \mathbf{a})^0 -  \krank \mathbb{C}(\mathbf{a+1}, \mathbf{a})^1 +1$. (This can be viewed as an Euler-Poincar\'e characteristic argument.)
\end{proof}

In order to compute the cohomology of $\mathbb{C}(\mathbf{b}, \mathbf{a})$ for $\mathbf{b} \geq \mathbf{a+2}$ we will consider several subcomplexes of $\mathbb{C}(\mathbf{b}, \mathbf{a})$. To construct these subcomplexes, we consider particular families of subsets of $\hom_\Oz (\mathbf{b}', \mathbf{a})$, for $\mathbf{b}' \subseteq \mathbf{b}$, which we will refer to as families of subsets of  $\hom_\Oz (-, \mathbf{a})$, leaving the restriction to subsets of $\mathbf{b}$ implicit.

\begin{defn}
\label{def:subcx_generation}
 Let  $ X (\mathbf{b}') \subseteq \hom_\Oz (\mathbf{b}', \mathbf{a})$, for $\mathbf{b}' \subseteq \mathbf{b}$, be  a family of subsets of $\hom_\Oz (-, \mathbf{a})$, denoted simply by $X$. We say that $X$ is stable under restriction when, if $f \in X(\mathbf{b}')$ and $f|_{\mathbf{b}''}$ is surjective for $\mathbf{b}'' \subset \mathbf{b}'$, then $f|_{\mathbf{b}''} \in X(\mathbf{b}'')$. 
\end{defn}

\begin{nota}
For $b>0$, denote the element $ b \in \mathbf{b}$ by $x$; this choice ensures that $\mathbf{b} \backslash \{ x \} = \mathbf{b-1}$.
\end{nota}

\begin{exam}
\label{exemple-famille}
The following families of subsets of $\hom_\Oz (- , \mathbf{a})$ are stable under restriction.
\begin{enumerate}
\item The family $X^1$ such that 
\[
X^1(\mathbf{b}') =
\left\{
\begin{array}{ll}
\hom_\Oz (\mathbf{b}', \mathbf{a}) & \textrm{if}\  x \not\in \mathbf{b}',\\
\emptyset & \textrm{if} \ x\in \mathbf{b}'.
\end{array}
\right.
\]
\item For $y \in \mathbf{a}$, the family $\widetilde{X}_y^2$ such that
\[
\widetilde{X}_y^2(\mathbf{b}') =
\left\{
\begin{array}{ll}
\hom_\Oz (\mathbf{b}', \mathbf{a}) & \textrm{if}\  x \not\in \mathbf{b}',\\
\{ f \in \hom_\Oz (\mathbf{b}', \mathbf{a})| f(x)=y \} & \textrm{if} \ x\in \mathbf{b}'.
\end{array}
\right.
\]
\item 
For $y \in \mathbf{a}$, the family $\widetilde{X}_{[1,y]}^2$ such that
\[
\widetilde{X}_{[1,y]}^2(\mathbf{b}') =
\left\{
\begin{array}{ll}
\hom_\Oz (\mathbf{b}', \mathbf{a}) & \textrm{if}\  x \not\in \mathbf{b}',\\
\{ f \in \hom_\Oz (\mathbf{b}', \mathbf{a})| 1 \leq f(x) \leq y \} & \textrm{if} \ x\in \mathbf{b}'.
\end{array}
\right.
\]
\item 
For $y \in \mathbf{a}$, the family $X_{y}^2$ such that
\[
X_{y}^2(\mathbf{b}') =
\left\{
\begin{array}{ll}
\emptyset & \textrm{if}\  x \not\in \mathbf{b}',\\
\{ f \in \hom_\Oz (\mathbf{b}', \mathbf{a})| f^{-1}(y)=\{x\} \}& \textrm{if} \ x\in \mathbf{b}'.
\end{array}
\right.
\]
\end{enumerate}
By construction, there are  inclusions of families of subsets
$$\begin{array}{ccc}
X^2_y \subset& \widetilde{X^2_y} &\subset  \hom_\Oz (-, \mathbf{a})\\
 & \cup& \\
  & X^1
\end{array}$$
and
$$\widetilde{X}^2_1=\widetilde{X}^2_{[1,1]}\subset \widetilde{X}^2_{[1,2]}\subset \ldots \subset  \widetilde{X}^2_{[1,a-1]}\subset \widetilde{X}^2_{[1,a]}=\hom_\Oz (-, \mathbf{a}).$$
\end{exam}

A family of subsets of $\hom_\Oz (-, \mathbf{a})$ stable under restriction gives rise to a subcomplex of $\mathbb{C}(\mathbf{b}, \mathbf{a})$:

\begin{lem}
\label{lem:subcx_generation}
If  $ X$ is a family  of subsets of $\hom_\Oz (- , \mathbf{a})$ stable under restriction, then the free $\kring$-submodules, for $t \in \nat$,  
$$
\bigoplus_{\substack{ \mathbf{b}^{(t)}\subset \mathbf{b} 
\\
|\mathbf{b}^{(t)}| = b-t
}}
\kring X (\mathbf{b}^{(t)})
\otimes \orient (\mathbf{b} \backslash \mathbf{b}^{(t)} )
\subset 
\cbb (\mathbf{b}, \mathbf{a})^t
 $$
form a subcomplex of $\cbb (\mathbf{b}, \mathbf{a})$, called the subcomplex {\em generated} by $X$.
\end{lem}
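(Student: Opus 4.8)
The plan is to verify directly that the proposed $\kring$-submodules are stable under the differential $d$ of $\cbb(\mathbf{b},\mathbf{a})$ described in Notation \ref{nota:Kz_Cba}. This is essentially a bookkeeping exercise, so I will only indicate the mechanism rather than grind through it.

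First I would fix $t \in \nat$ and a subset $\mathbf{b}^{(t)} \subset \mathbf{b}$ with $|\mathbf{b}^{(t)}| = b-t$, together with a generator $[f] \otimes \alpha$ where $f \in X(\mathbf{b}^{(t)})$ and $\alpha \in \orient(\mathbf{b}\backslash\mathbf{b}^{(t)})$. By the explicit formula for $d$, we have
\[
d\bigl([f]\otimes\alpha\bigr) = \sum_{\substack{y \in \mathbf{b}^{(t)} \\ f\circ\iota_y \in \hom_\Oz(\mathbf{b}^{(t)}\backslash\{y\},\mathbf{a})}} [f\circ\iota_y]\otimes(y\wedge\alpha),
\]
so each summand is indexed by some $y \in \mathbf{b}^{(t)}$ for which the restriction $f|_{\mathbf{b}^{(t)}\backslash\{y\}} = f\circ\iota_y$ is still surjective onto $\mathbf{a}$, i.e. defines a morphism of $\Oz$. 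The point is simply that, since $X$ is stable under restriction (Definition \ref{def:subcx_generation}), for every such $y$ the element $f\circ\iota_y$ lies in $X(\mathbf{b}^{(t)}\backslash\{y\})$; hence $[f\circ\iota_y]\otimes(y\wedge\alpha)$ lies in the factor of $\cbb(\mathbf{b},\mathbf{a})^{t+1}$ indexed by $\mathbf{b}^{(t+1)} = \mathbf{b}^{(t)}\backslash\{y\}$ inside the proposed submodule. Therefore $d$ carries the $t$-th proposed submodule into the $(t+1)$-th.

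Next I would observe that the proposed graded $\kring$-submodules are $\kring$-free with the evident basis (the $[f]\otimes\alpha$ with $f \in X(\mathbf{b}^{(t)})$ and $\alpha$ running over a basis of the rank-one module $\orient(\mathbf{b}\backslash\mathbf{b}^{(t)})$), and that this is a sub-$\nat$-graded object of $\cbb(\mathbf{b},\mathbf{a})$. Combined with the stability under $d$ just checked, this exhibits it as a subcomplex in the category of $\kring$-modules; since $d^2 = 0$ on the ambient complex $\cbb(\mathbf{b},\mathbf{a})$ (Proposition \ref{prop:kz_ch_cx}), it is automatically a cochain complex. This establishes the claim.

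There is no serious obstacle here; the only thing to be careful about is the logical direction of the stability condition. One must note that the summands appearing in $d([f]\otimes\alpha)$ correspond precisely to those $y$ for which $f\circ\iota_y$ is \emph{already} surjective, which is exactly the hypothesis under which Definition \ref{def:subcx_generation} guarantees $f\circ\iota_y \in X(\mathbf{b}^{(t)}\backslash\{y\})$; the non-surjective restrictions, which could a priori leave the family, do not contribute to $d$ at all. (If one wished, one could also remark that the same argument shows the association $X \mapsto$ (subcomplex generated by $X$) is compatible with the inclusions of families listed in Example \ref{exemple-famille}, giving the corresponding inclusions of subcomplexes, but this is not needed for the statement.)
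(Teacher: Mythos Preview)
Your argument is correct and is precisely the approach the paper takes: the paper's proof is a one-line remark that the claim is clear from the explicit description of the differential in Notation \ref{nota:Kz_Cba}, and you have simply unpacked that remark in detail.
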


\begin{proof}
This is clear from the definition of the differential of the complex $\cbb (\mathbf{b}, \mathbf{a})$ that is  recalled in Notation \ref{nota:Kz_Cba}.
\end{proof}

Applying this lemma to the families introduced in Example \ref{exemple-famille} we obtain the following subcomplexes of $\mathbb{C}:= \mathbb{C} (\mathbf{b}, \mathbf{a})$.

\begin{defn}
\label{defn:subcomplexes}
Suppose that $b>a>0$ and set $x:=b \in \mathbf{b}$. 
\begin{enumerate}
\item 
Let $\mathbb{D}$ be the subcomplex of $\mathbb{C}$ generated by $X^1$. 
\item 
For $y \in \mathbf{a}$, let
\begin{enumerate}
\item 
$\widetilde{\mathbb{C}_y}$ be the subcomplex of $\mathbb{C}$ generated by $\widetilde{X}_y^2$;
 \item 
$ \filt_{y} \cbb$  be the subcomplex of $\mathbb{C}$ generated by $\widetilde{X}_{[1,y]}^2$;
 \item 
$\mathbb{C}_y$  be the subcomplex of $\mathbb{C}$ generated by $X^2_y$. 
\end{enumerate} 
\end{enumerate}
\end{defn}

The inclusions of families of subsets give the following inclusions of complexes
$$\begin{array}{ccc}
\mathbb{C}_y \subset& \widetilde{\mathbb{C}_y} &\subset  \mathbb{C}\\
 & \cup& \\
  & \mathbb{D}
\end{array}$$
and the increasing filtration of subcomplexes
\begin{eqnarray}
\label{filtration}
\widetilde {\mathbb{C}_1}= 
\filt_1 \cbb
\subset 
\filt_2 \cbb 
\subset 
\ldots 
\subset 
\filt_{a-1} \cbb
\subset 
\filt_{a} \cbb = \cbb.
\end{eqnarray}

These complexes are related by the following Lemma.

\begin{lem}
\label{lem:decompose_C}
For $a>0$ and $y \in \mathbf{a}$, there is a short exact sequence of complexes:
\begin{eqnarray}
\label{eqn:ses_D_filt}
0
\rightarrow 
\mathbb{D} 
\rightarrow 
\filt_y \cbb \oplus \widetilde{\cbb_{y+1}}
\rightarrow 
\filt_{y+1} \cbb
\rightarrow 
0.
\end{eqnarray}
\end{lem}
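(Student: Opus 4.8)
The plan is to exhibit the short exact sequence degreewise, identifying each term as a free $\kring$-module on an explicit set of generators indexed by pairs $(\mathbf{b}^{(t)}, f)$ with $f$ lying in one of the families $X^1$, $\widetilde{X}^2_{[1,y]}$, $\widetilde{X}^2_{[1,y+1]}$, $\widetilde{X}^2_{y+1}$, and then checking exactness at the level of these indexing sets. First I would record that all four complexes $\mathbb{D}$, $\filt_y\cbb$, $\widetilde{\cbb_{y+1}}$, $\filt_{y+1}\cbb$ are subcomplexes of $\cbb(\mathbf{b},\mathbf{a})$ by Lemma \ref{lem:subcx_generation}, so that it suffices to produce the maps on the underlying graded $\kring$-modules compatibly with the differential — but since all maps will be induced by inclusions and a difference of inclusions, compatibility with $d$ is automatic from the fact that each family is stable under restriction. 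The map $\mathbb{D}\to\filt_y\cbb\oplus\widetilde{\cbb_{y+1}}$ is $(\iota_1,\iota_2)$, the pair of inclusions (both valid since $X^1\subseteq\widetilde{X}^2_{[1,y]}$ and $X^1\subseteq\widetilde{X}^2_{y+1}$); the map $\filt_y\cbb\oplus\widetilde{\cbb_{y+1}}\to\filt_{y+1}\cbb$ is $(u,v)\mapsto j_1(u)-j_2(v)$, the difference of the two inclusions into $\filt_{y+1}\cbb$.

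The core of the argument is then the purely combinatorial claim that, for each subset $\mathbf{b}'\subseteq\mathbf{b}$, the sequence of sets-turned-free-modules
\[
0\to \kring X^1(\mathbf{b}')\to \kring\widetilde{X}^2_{[1,y]}(\mathbf{b}')\oplus \kring\widetilde{X}^2_{y+1}(\mathbf{b}')\to \kring\widetilde{X}^2_{[1,y+1]}(\mathbf{b}')\to 0
\]
is exact, after which tensoring with the free module $\orient(\mathbf{b}\backslash\mathbf{b}')$ and summing over $\mathbf{b}^{(t)}$ preserves exactness. I would split into the two cases $x\in\mathbf{b}'$ and $x\notin\mathbf{b}'$. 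If $x\notin\mathbf{b}'$, all three ``big'' families equal $\hom_\Oz(\mathbf{b}',\mathbf{a})$ and $X^1(\mathbf{b}')$ also equals it, so the sequence reads $0\to M\to M\oplus M\to M\to 0$ with the diagonal followed by the difference — exact. If $x\in\mathbf{b}'$, then $X^1(\mathbf{b}')=\emptyset$, $\widetilde{X}^2_{[1,y]}(\mathbf{b}')=\{f:1\le f(x)\le y\}$, $\widetilde{X}^2_{y+1}(\mathbf{b}')=\{f:f(x)=y+1\}$, and $\widetilde{X}^2_{[1,y+1]}(\mathbf{b}')=\{f:1\le f(x)\le y+1\}$, so the middle-to-right map is the fold map of a disjoint-union decomposition $\{f(x)\le y\}\amalg\{f(x)=y+1\}=\{f(x)\le y+1\}$, which is a bijection, hence an isomorphism on free modules; and the left term is zero, matching injectivity-with-trivial-kernel. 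This establishes exactness degreewise.

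The last point to verify is that the maps just defined are genuinely maps of \emph{complexes}, i.e.\ commute with the differential $d$ recalled in Notation \ref{nota:Kz_Cba}; this is immediate because $d$ acts by restriction along the inclusions $\iota_y:\mathbf{b}^{(t)}\backslash\{y\}\hookrightarrow\mathbf{b}^{(t)}$ and the inclusions of families intertwine restriction with restriction. I do not expect a genuine obstacle here — the only subtlety is keeping careful track of which family is a subfamily of which, and in particular noticing that $\widetilde{X}^2_{y+1}$ is indeed a subfamily of $\widetilde{X}^2_{[1,y+1]}$ (since $f(x)=y+1$ implies $1\le f(x)\le y+1$) and disjoint from $\widetilde{X}^2_{[1,y]}$ inside it, which is exactly what makes the fold map a bijection. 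The mild bookkeeping over the index $\mathbf{b}^{(t)}$ and the orientation tensor factor is routine and can be dispatched in a sentence, since tensoring with a rank-one free module and taking direct sums are exact.
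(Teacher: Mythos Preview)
Your proof is correct and follows essentially the same approach as the paper's. Both arguments rest on the combinatorial observation that, for each $\mathbf{b}'\subseteq\mathbf{b}$, one has $\widetilde{X}^2_{[1,y]}(\mathbf{b}')\cap\widetilde{X}^2_{y+1}(\mathbf{b}')=X^1(\mathbf{b}')$ and $\widetilde{X}^2_{[1,y]}(\mathbf{b}')\cup\widetilde{X}^2_{y+1}(\mathbf{b}')=\widetilde{X}^2_{[1,y+1]}(\mathbf{b}')$; the paper packages this as a pushout identification $\filt_y\cbb\cup_{\mathbb{D}}\widetilde{\cbb_{y+1}}\cong\filt_{y+1}\cbb$ using auxiliary modules $A^t_y$ and $B^t_{y+1}$ (which are precisely your ``$x\in\mathbf{b}'$'' summands), while you verify exactness directly on generators via the same two-case split.
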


\begin{proof}
The inclusions $\mathbb{D} \hookrightarrow \filt_y \cbb$ and $\mathbb{D} \hookrightarrow \widetilde{\cbb_{y+1}}$  induce short exact of complexes:
$$0 \to \mathbb{D} \to \filt_y \cbb \to  \filt_y \cbb/ \mathbb{D}  \to 0; \qquad 0 \to \mathbb{D} \to \widetilde{\cbb_{y+1}} \to  \widetilde{\cbb_{y+1}} /\mathbb{D} \to  0.$$
Considering the pushout of $\mathbb{D} \hookrightarrow \filt_y \cbb$ and $\mathbb{D} \hookrightarrow \widetilde{\cbb_{y+1}}$ in the category of chain complexes we obtain a short exact sequence of complexes 
$$
0
\rightarrow 
\mathbb{D} 
\rightarrow 
\filt_y \cbb \oplus \widetilde{\cbb_{y+1}}
\rightarrow 
\filt_y \cbb \cup_{\mathbb{D}} \widetilde{\cbb_{y+1}}
\rightarrow 
0.
$$
For $t \in \nat$, consider the following $\kring$-modules
$$A^t_y:=
\bigoplus_{\substack{ \mathbf{b}^{(t)}\subset \mathbf{b} 
\\
|\mathbf{b}^{(t)}| = b-t
\\
x \in \mathbf{b}^{(t)}
}}
\kring \{ f \in \hom_\Oz (\mathbf{b}^{(t)}, \mathbf{a})| 1 \leq f(x) \leq y \}
\otimes \orient (\mathbf{b} \backslash \mathbf{b}^{(t)} )
 $$
$$B^t_{y+1}:=
\bigoplus_{\substack{ \mathbf{b}^{(t)}\subset \mathbf{b} 
\\
|\mathbf{b}^{(t)}| = b-t
\\
x \in \mathbf{b}^{(t)}
}}
\kring \{ f \in \hom_\Oz (\mathbf{b}^{(t)}, \mathbf{a})|  f(x)= y+1 \}
\otimes \orient (\mathbf{b} \backslash \mathbf{b}^{(t)} ).
 $$
We have the following isomorphisms of $\kring$-modules
$$ (\filt_y \cbb/ \mathbb{D})^t \cong A^t_y; \qquad (\widetilde{\cbb_{y+1}} /\mathbb{D})^t\cong B^t_y;$$
$$  A^{t}_{y+1} \cong A^t_y \oplus B^t_{y+1}; \qquad (\filt_y \cbb)^t \cong \mathbb{D}^t \oplus A^t_y; \qquad(\widetilde{\cbb_{y+1}})^t\cong \mathbb{D}^t \oplus B^t_{y+1}.$$
We deduce that 
$$(\filt_y \cbb \cup_{\mathbb{D}} \widetilde{\cbb_{y+1}})^t\cong \mathbb{D}^t \oplus A^t_y\oplus B^t_{y+1}\cong \mathbb{D}^t \oplus A^{t}_{y+1} \cong (\filt_{y+1} \cbb)^t.$$
Using the definition of the differential of $\cbb$ recalled in Notation \ref{nota:Kz_Cba} we see that this isomorphism of $\kring$-modules gives an isomorphism of complexes.
\end{proof}

The complexes $\mathbb{D}$ and $\mathbb{C}_y$, for $y \in \mathbf{a}$, are identified in terms of complexes of the form $\cbb (\mathbf{b}', \mathbf{a}')$ as follows, where $[1]$ denotes the shift of cohomological degree (i.e., $\mathbb{C}[1]^t:= \mathbb{C}^{t-1}$ by Notation \ref{nota:shift}):

\begin{lem}
\label{lem:identify_subcomplexes}
For $a, b \in \nat^*$, we have the following isomorphisms of complexes
\begin{enumerate}
\item 
$\mathbb{D} \cong \mathbb{C}(\mathbf{b-1}, \mathbf{a})[1]$; 
\item 
for $y \in \mathbf{a}$, $\mathbb{C}_y \cong  \mathbb{C}(\mathbf{b-1}, \mathbf{a-1})$.  
\end{enumerate}
\end{lem}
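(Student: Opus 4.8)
\textbf{Proof strategy for Lemma \ref{lem:identify_subcomplexes}.}
The plan is to produce, in each case, an explicit bijection between the indexing data of the generators of the two complexes, and then check that this bijection is compatible with the differentials as recalled in Notation \ref{nota:Kz_Cba}. Throughout, recall that $x := b \in \mathbf{b}$ is the chosen element, so $\mathbf{b} \backslash \{x\} = \mathbf{b-1}$.

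For the first isomorphism, the complex $\mathbb{D}$ is generated by the family $X^1$, which on a subset $\mathbf{b}^{(t)} \subseteq \mathbf{b}$ records $\hom_\Oz(\mathbf{b}^{(t)}, \mathbf{a})$ when $x \notin \mathbf{b}^{(t)}$ and is empty otherwise. Hence a generator $[f] \otimes \alpha$ of $\mathbb{D}^t$ has $f \in \hom_\Oz(\mathbf{b}^{(t)}, \mathbf{a})$ with $\mathbf{b}^{(t)} \subseteq \mathbf{b-1}$ and $\alpha \in \orient(\mathbf{b} \backslash \mathbf{b}^{(t)})$; since $x \in \mathbf{b} \backslash \mathbf{b}^{(t)}$, one can factor $\orient(\mathbf{b} \backslash \mathbf{b}^{(t)}) \cong \orient(\mathbf{b-1} \backslash \mathbf{b}^{(t)}) \otimes \orient(\{x\})$ by writing $\alpha$ as $\pm\, x \wedge \beta$. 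First I would set $\mathbf{b}^{(t)}$, viewed inside $\mathbf{b-1}$, to have cardinality $(b-1) - (t-1)$, so the generator $[f] \otimes \beta$ naturally lives in $\mathbb{C}(\mathbf{b-1}, \mathbf{a})^{t-1} = \mathbb{C}(\mathbf{b-1},\mathbf{a})[1]^t$. The map sending $[f] \otimes \alpha \mapsto [f] \otimes \beta$ (with the sign chosen by always placing $x$ first in the wedge) is then a degreewise isomorphism of $\kring$-modules. Comparing differentials: in $\mathbb{C}(\mathbf{b}, \mathbf{a})$ the differential sums over $y \in \mathbf{b}^{(t)}$ with $f \circ \iota_y$ surjective; since $x \notin \mathbf{b}^{(t)}$, this is exactly the sum over $y \in \mathbf{b}^{(t)} \subseteq \mathbf{b-1}$, which is the differential of $\mathbb{C}(\mathbf{b-1},\mathbf{a})$. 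The wedge with $x$ is carried along unchanged, and one checks the sign conventions match; the degree shift $[1]$ accounts for the extra factor $\orient(\{x\})$ contributing $1$ to the cohomological degree.

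For the second isomorphism, $\mathbb{C}_y$ is generated by $X^2_y$, which is nonempty only on subsets $\mathbf{b}^{(t)}$ containing $x$, where it consists of those $f \in \hom_\Oz(\mathbf{b}^{(t)},\mathbf{a})$ with $f^{-1}(y) = \{x\}$. The key observation is that such an $f$ is equivalent to the datum of its restriction $f|_{\mathbf{b}^{(t)} \backslash \{x\}}$, which is a surjection $\mathbf{b}^{(t)} \backslash \{x\} \twoheadrightarrow \mathbf{a} \backslash \{y\}$, i.e.\ an element of $\hom_\Oz(\mathbf{b}^{(t)} \backslash \{x\}, \mathbf{a-1})$ after the order-preserving identification $\mathbf{a} \backslash \{y\} \cong \mathbf{a-1}$; conversely any such surjection extends uniquely to an $f$ in $X^2_y(\mathbf{b}^{(t)})$ by sending $x$ to $y$. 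Writing $\mathbf{b}^{(t)} \backslash \{x\} \subseteq \mathbf{b-1}$ of cardinality $(b-1) - t$ and noting $\orient(\mathbf{b} \backslash \mathbf{b}^{(t)}) = \orient((\mathbf{b-1}) \backslash (\mathbf{b}^{(t)} \backslash \{x\}))$ (since $x \in \mathbf{b}^{(t)}$, removing $x$ from $\mathbf{b}^{(t)}$ does not change the complement in passing from $\mathbf{b}$ to $\mathbf{b-1}$), this gives a degreewise isomorphism onto $\mathbb{C}(\mathbf{b-1}, \mathbf{a-1})^t$. For the differentials, the sum defining $d$ on $\mathbb{C}_y$ runs over $z \in \mathbf{b}^{(t)}$ with $f \circ \iota_z$ still surjective and still in $X^2_y$; removing $z = x$ would destroy surjectivity onto $y$ (as $f^{-1}(y) = \{x\}$), so in fact $z$ ranges over $\mathbf{b}^{(t)} \backslash \{x\}$, and $f \circ \iota_z$ surjective onto $\mathbf{a}$ is equivalent to $f|_{\mathbf{b}^{(t)} \backslash \{x\}} \circ \iota_z$ surjective onto $\mathbf{a-1}$. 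Thus the differential matches that of $\mathbb{C}(\mathbf{b-1}, \mathbf{a-1})$.

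I do not anticipate a serious obstacle here; the content is entirely bookkeeping. The main point requiring care is the sign conventions in the orientation modules: in the first case one must fix once and for all that the generator of $\orient(\{x\})$ is pulled to the front of each wedge product, and verify that the coface maps $\iota_z^*$ introduce the same signs on both sides (they do, because the relative order of the elements of $\mathbf{b-1}$ is unchanged). In the second case there is no sign subtlety at all, since the element $x$ is simply deleted from the domain and plays no role in the orientation factor. Both verifications are routine given the explicit formula for $d$ in Notation \ref{nota:Kz_Cba}.
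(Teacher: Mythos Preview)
Your proposal is correct and follows essentially the same approach as the paper: both arguments exhibit the explicit bijection of generators (reindexing subsets of $\mathbf{b}$ not containing $x$ as subsets of $\mathbf{b-1}$ for $\mathbb{D}$, and deleting the singleton fibre $\{x\}$ over $y$ for $\mathbb{C}_y$) and then observe that the differentials match. One cosmetic remark: since $x=b$ is the maximal element, placing $x$ \emph{last} in the wedge (rather than first) aligns with the canonical orientation class $\omega$ and makes the sign check entirely transparent---this is precisely what the paper means by ``the choice $x=b$ ensures that the signs \ldots are compatible.''
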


\begin{proof}
The isomorphism $\mathbb{D}^t \cong \mathbb{C}(\mathbf{b-1}, \mathbf{a})^{t-1}$ is induced by the obvious map
 noting that 
$$\mathbf{b}^{(t)}\subset \mathbf{b} \textrm{ such that } \ 
|\mathbf{b}^{(t)}| = b-t
 \textrm{ and } x \not\in \mathbf{b}^{(t)}  \iff \mathbf{b}^{(t)}\subset \mathbf{b-1} \textrm{ such that } \ 
|\mathbf{b}^{(t)}| = (b-1)-(t-1)
.$$

The isomorphism $\mathbb{C}_y^t \cong  \mathbb{C}(\mathbf{b-1}, \mathbf{a-1})^t$ is induced by the map $\mathbb{C}_y^t \to  \mathbb{C}(\mathbf{b-1}, \mathbf{a-1})^t$ forgetting the fibre above $y$ together with the bijection of ordered sets 
$\mathbf{a} \setminus\{y \}\cong \mathbf{a-1}$ 

This proves that the underlying graded $\kring$-modules are isomorphic. The choice $x=b$ ensures that the signs appearing in the differentials are compatible with these isomorphisms.
\end{proof}

Crucially, this also allows us to understand the cohomology of $\widetilde{\mathbb{C}_y}$, by the following result:

\begin{prop}
\label{prop:equivalence_cohom}
If $b >a>0$, then 
$j: \mathbb{C}_y \hookrightarrow \widetilde{\mathbb{C}_y}$ is a quasi-isomorphism.
\end{prop}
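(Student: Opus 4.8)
The plan is to show that the quotient complex $\widetilde{\mathbb{C}_y}/\mathbb{C}_y$ is acyclic, which—together with the short exact sequence $0 \to \mathbb{C}_y \to \widetilde{\mathbb{C}_y} \to \widetilde{\mathbb{C}_y}/\mathbb{C}_y \to 0$ and the associated long exact sequence in cohomology—immediately gives that $j$ is a quasi-isomorphism. So first I would identify the quotient complex explicitly. By the description in Notation \ref{nota:Kz_Cba} and the definitions of the families $X^2_y \subseteq \widetilde{X}^2_y$, the quotient $\widetilde{\mathbb{C}_y}/\mathbb{C}_y$ in degree $t$ is the direct sum, over subsets $\mathbf{b}^{(t)} \subset \mathbf{b}$ of cardinality $b-t$, of $\kring \big(\widetilde{X}^2_y(\mathbf{b}^{(t)}) \setminus X^2_y(\mathbf{b}^{(t)})\big) \otimes \orient(\mathbf{b}\backslash\mathbf{b}^{(t)})$. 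A surjection $f \in \widetilde{X}^2_y(\mathbf{b}^{(t)})$ fails to lie in $X^2_y(\mathbf{b}^{(t)})$ precisely when either $x \notin \mathbf{b}^{(t)}$, or $x \in \mathbf{b}^{(t)}$ with $f(x) = y$ but $|f^{-1}(y)| \geq 2$.

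Next I would exhibit an explicit contracting homotopy on $\widetilde{\mathbb{C}_y}/\mathbb{C}_y$, in the same spirit as the null-homotopy constructed in the proof of Proposition \ref{prop:case_a=1_b>1}. The natural candidate is the map $h$ that inserts the element $x = b$: on a generator $[f]\otimes \alpha$ with $x \notin \mathbf{b}^{(t)}$, send it to $[\tilde f]\otimes \beta$ where $\tilde f$ is the unique extension of $f$ to $\mathbf{b}^{(t)}\cup\{x\}$ with $\tilde f(x) = y$, and $\beta$ is obtained by removing $x$ from $\alpha$ (with appropriate sign); on a generator with $x \in \mathbf{b}^{(t)}$, send it to zero. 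One checks that $h$ is well-defined on the quotient complex: the image $\tilde f$ lies in $\widetilde{X}^2_y$ since $\tilde f(x)=y$, and it does not lie in $X^2_y$ precisely because $|\tilde f^{-1}(y)| \geq 2$ (as $f$ was already surjective onto $\mathbf{a}$, so $y$ had a preimage in $\mathbf{b}^{(t)}$). Then a direct computation of $dh + hd$, splitting into the cases $x \in \mathbf{b}^{(t)}$ and $x \notin \mathbf{b}^{(t)}$ exactly as in Proposition \ref{prop:case_a=1_b>1}, should give the identity on $\widetilde{\mathbb{C}_y}/\mathbb{C}_y$; the terms where the differential restricts $f$ along $\iota_x$ (removing $x$) pair up against the homotopy terms, and the surjectivity constraints are preserved throughout because restriction of a non-surjection stays non-surjective.

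The main obstacle I anticipate is bookkeeping of the surjectivity conditions and signs in the homotopy identity. Concretely: when computing $hd([f]\otimes\alpha)$ for $x \notin \mathbf{b}^{(t)}$, the differential $d$ sums over $y' \in \mathbf{b}^{(t)}$ with $f\circ\iota_{y'}$ still surjective, and one must verify that after applying $h$ (which re-inserts $x$ with value $y$) the resulting terms either cancel against $dh([f]\otimes\alpha)$ or reconstruct $[f]\otimes\alpha$ itself; the subtlety is that $h$ kills any term in which the differential has already removed enough to make $y$ have only $x$ as preimage, but such a term would lie in $\mathbb{C}_y$ and hence be zero in the quotient. Keeping track of which summands survive in the quotient, and checking the sign conventions coming from $\orient$ (the choice $x = b$ is what makes the signs work, as in Lemma \ref{lem:identify_subcomplexes}), is where care is needed. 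Once the homotopy identity is verified degree by degree, acyclicity of $\widetilde{\mathbb{C}_y}/\mathbb{C}_y$ follows, and hence $j$ is a quasi-isomorphism.
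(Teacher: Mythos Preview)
Your approach is correct and genuinely different from the paper's. Both arguments reduce to showing that the cokernel $\widehat{\cbb_y} := \widetilde{\cbb_y}/\cbb_y$ is acyclic, but the paper does this structurally rather than by an explicit homotopy: it observes that $\mathbb{D}$ is a subcomplex of $\widehat{\cbb_y}$, identifies the quotient $\widehat{\cbb_y}/\mathbb{D}$ with $\mathbb{D}[-1]$ via the map that removes $x$ from the domain of $f$, and then checks that the connecting morphism in the long exact sequence of $0 \to \mathbb{D} \to \widehat{\cbb_y} \to \mathbb{D}[-1] \to 0$ is an isomorphism (indeed, under the identification it is the identity). Your contracting homotopy $h$ is precisely the inverse of that removal map, so the two arguments are dual: the paper recognises $\widehat{\cbb_y}$ as the cone of an isomorphism, while you write down the standard null-homotopy of such a cone directly.

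Your route is more elementary and self-contained, mirroring the null-homotopy in Proposition~\ref{prop:case_a=1_b>1}; the paper's route is more conceptual and makes the subcomplex $\mathbb{D}$ do the organising work. One small point of phrasing: in your discussion of the ``subtlety'', it is not $h$ that kills the problematic terms (where removing some $w \neq x$ from $\tilde f$ leaves $y$ with only $x$ as preimage) but the passage to the quotient by $\cbb_y$; you say this correctly at the end of the sentence, so the substance is fine. With the sign convention $h([f]\otimes (x\wedge\alpha')) := [\tilde f]\otimes \alpha'$, the verification that $dh+hd=\id$ goes through cleanly in both the $x\in\mathbf{b}^{(t)}$ and $x\notin\mathbf{b}^{(t)}$ cases.
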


\begin{proof}
It suffices to check that the cokernel $\widehat{\mathbb{C}_y}$ of $\mathbb{C}_y \hookrightarrow \widetilde{\mathbb{C}_y}$ has zero cohomology.

Using the notation introduced in the proof of Lemma \ref{lem:decompose_C}, we have: $\widetilde{\cbb_{y}}^t\cong \mathbb{D}^t \oplus B^t_{y}$ and $\mathbb{C}_y^t$ is a submodule of $B^t_{y}$ so that
$$
\widehat{\mathbb{C}_y}^t \cong \mathbb{D}^t \oplus B^t_{y}/ \mathbb{C}_y^t
$$
and $\mathbb{D}$  is a subcomplex of $\widehat{\mathbb{C}_y}$. So $\mathbb{D} \hookrightarrow \widehat{\mathbb{C}_y}$ induces a short exact sequence of complexes 
\begin{eqnarray}
\label{ses-Prop6.15}
0
\rightarrow 
\mathbb{D}
\rightarrow 
\widehat{\mathbb{C}_y}
\rightarrow 
\widehat{\mathbb{C}_y}/\mathbb{D}
\rightarrow 
0
\end{eqnarray}
where $(\widehat{\mathbb{C}_y}/\mathbb{D})^t \cong B^t_{y}/ \mathbb{C}_y^t$.

We have the following isomorphisms
\begin{eqnarray*}
(\widehat{\mathbb{C}_y}/\mathbb{D}
)^t
& \cong &
 B^t_{y}/ \mathbb{C}_y^t
\\
&\cong&
\bigoplus_{\substack{ \mathbf{b}^{(t)}\subset \mathbf{b} 
\\
|\mathbf{b}^{(t)}| = b-t
\\
x \in \mathbf{b}^{(t)}
}}
\kring \{ f \in \hom_\Oz (\mathbf{b}^{(t)}, \mathbf{a})|  f(x)= y \}/\kring \{ f \in \hom_\Oz (\mathbf{b}^{(t)}, \mathbf{a})|  f^{-1}(y)= \{ x\} \}
\otimes \orient (\mathbf{b} \backslash \mathbf{b}^{(t)} )
\\
& \cong & \bigoplus_{\substack{ \mathbf{b}^{(t)}\subset \mathbf{b} 
\\
|\mathbf{b}^{(t)}| = b-t
\\
x \in \mathbf{b}^{(t)}
}}
\kring \{ f \in \hom_\Oz (\mathbf{b}^{(t)}, \mathbf{a})|  f(x)= y \mathrm{\  and\  } |f^{-1}(y)| \geq 2\}
\otimes \orient (\mathbf{b} \backslash \mathbf{b}^{(t)} )
\\
&
\cong & 
\bigoplus_{\substack{ \mathbf{b}^{(t+1)}\subset \mathbf{b} 
\\
|\mathbf{b}^{(t+1)}| = b-(t+1)
\\
x \not\in \mathbf{b}^{(t+1)}
}}
\kring \{ f \in \hom_\Oz (\mathbf{b}^{(t+1)}, \mathbf{a}) \}
\otimes \orient (\mathbf{b} \backslash \mathbf{b}^{(t+1)} )
\\
&\cong &
\mathbb{D}^{t+1},
\end{eqnarray*}
where the fourth isomorphism is obtained taking the restriction given by removing the element $x=b$ from $\mathbf{b}^{(t)} $. These isomorphisms of $\kring$-modules are compatible with the differential of the complexes, so that we have an isomorphism of complexes:  $\widehat{\mathbb{C}_y}/\mathbb{D} \cong \mathbb{D}[-1]$.

The connecting morphism of the long exact sequence associated to (\ref{ses-Prop6.15})  is induced by the restriction given by removing the element $x=b$ from $\mathbf{b}'$. This operation corresponds to the isomorphism between $\widehat{\mathbb{C}_y}/\mathbb{D}$ and $\mathbb{D}[-1]$; so the connecting morphism is  an isomorphism, which implies the acyclicity of  $\widehat{\mathbb{C}_y}$, as required. 
\end{proof}

\begin{thm}
\label{thm:cohomology_NC}
If  $b >a >0 $, then $H^* (\mathbb{C}(\mathbf{b}, \mathbf{a})) $ 
is the free graded $\kring$-module 
\[
H^* (\mathbb{C}(\mathbf{b}, \mathbf{a})) \cong 
\left\{
\begin{array}{ll}
\kring & *= b-a\\
0 & \mbox{ $0<* < b-a$ or $*> b-a$} \\
\kring ^{\oplus \epfn (b,a)} & *= 0 
\end{array}
\right.
\]
where $\epfn (b,a) =   \chi (\mathbb{C(\mathbf{b}, \mathbf{a})}) - (-1)^{b-a}$, for  $\chi$  the Euler-Poincar\'e characteristic 
of the complex $\mathbb{C(\mathbf{b}, \mathbf{a})}$:
\[
\chi (\mathbb{C(\mathbf{b}, \mathbf{a})})
= 
\sum_{t=0}^{b-a}
(-1)^t 
\binom{b}{t}
|\hom_\Oz (\mathbf{b-t}, \mathbf{a}) |.
\]
\end{thm}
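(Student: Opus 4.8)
The plan is to reduce the whole statement to the single vanishing assertion $H^n(\mathbb{C}(\mathbf{b},\mathbf{a})) = 0$ for $0 < n < b-a$. Indeed $\mathbb{C}(\mathbf{b},\mathbf{a})^t = 0$ for $t > b-a$, since $\hom_\Oz(\mathbf{b-t},\mathbf{a}) = \emptyset$ in that range, so there is no cohomology above degree $b-a$; Proposition \ref{prop:top_degree} gives $H^{b-a}(\mathbb{C}(\mathbf{b},\mathbf{a})) \cong \kring$; and once the intermediate vanishing is established the complex is cohomologically concentrated in degrees $0$ and $b-a$, so that $H^0$ is pinned down by an Euler-Poincar\'e characteristic argument exactly as in the proof of Proposition \ref{cor:cohomology_b=a+1}. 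Working first over $\kring = \zed$, where $\mathrm{im}(d^0)\subseteq \mathbb{C}(\mathbf{b},\mathbf{a})^1$ is a subgroup of a finitely generated free abelian group hence free, the short exact sequence $0 \to H^0 \to \mathbb{C}(\mathbf{b},\mathbf{a})^0 \to \mathrm{im}(d^0) \to 0$ splits, so $H^0(\mathbb{C}(\mathbf{b},\mathbf{a}))$ is free of rank $\chi(\mathbb{C}(\mathbf{b},\mathbf{a})) - (-1)^{b-a}$, the Euler characteristic being $\sum_{t=0}^{b-a}(-1)^t\binom{b}{t}|\hom_\Oz(\mathbf{b-t},\mathbf{a})|$ since each $\orient(\mathbf{b}\backslash\mathbf{b}^{(t)})$ has rank one; the statement for arbitrary $\kring$ then follows from the universal coefficient theorem, $\mathbb{C}(\mathbf{b},\mathbf{a})$ being degreewise $\zed$-free with $\zed$-free cohomology.

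It remains to prove the intermediate vanishing, which I would do by double induction on $(a,b)$ ordered lexicographically in $a$ then $b$; the cases $a = 1$ (Proposition \ref{prop:case_a=1_b>1}) and $b = a+1$ (Proposition \ref{cor:cohomology_b=a+1}) serve as base cases, so assume $a \geq 2$ and $b \geq a+2$. By Lemma \ref{lem:identify_subcomplexes}, $\mathbb{D} \cong \mathbb{C}(\mathbf{b-1},\mathbf{a})[1]$ and $\mathbb{C}_y \cong \mathbb{C}(\mathbf{b-1},\mathbf{a-1})$ for $y \in \mathbf{a}$, and by Proposition \ref{prop:equivalence_cohom} the inclusion $\mathbb{C}_y \hookrightarrow \widetilde{\mathbb{C}_y}$ is a quasi-isomorphism. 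Feeding in the inductive hypothesis for $(a,b-1)$ and for $(a-1,b-1)$ (both smaller, reducing to the base cases when $b = a+2$ or $a = 2$), together with Proposition \ref{prop:top_degree}, one obtains $H^n(\widetilde{\mathbb{C}_y}) = 0$ for $0 < n < b-a$, and $H^n(\mathbb{D}) = 0$ for $1 < n < b-a$ with $H^{b-a}(\mathbb{D}) \cong \kring$. I would then climb the filtration (\ref{filtration}), $\widetilde{\mathbb{C}_1} = \filt_1\cbb \subset \cdots \subset \filt_a\cbb = \mathbb{C}(\mathbf{b},\mathbf{a})$, proving by induction on $y$ (for $1 \leq y \leq a-1$) that $H^n(\filt_y\cbb) = 0$ for $0 < n < b-a$: the case $y = 1$ is the assertion for $\widetilde{\mathbb{C}_1}$, and the passage from $y$ to $y+1$ uses the long exact cohomology sequence of the short exact sequence (\ref{eqn:ses_D_filt}) of Lemma \ref{lem:decompose_C}, namely $0 \to \mathbb{D} \to \filt_y\cbb \oplus \widetilde{\cbb_{y+1}} \to \filt_{y+1}\cbb \to 0$.

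In this long exact sequence, for $0 < n < b-a-1$ the neighbouring terms $H^n(\mathbb{D})$, $H^n(\filt_y\cbb)$, $H^n(\widetilde{\cbb_{y+1}})$ and $H^{n+1}(\mathbb{D})$ all vanish, forcing $H^n(\filt_{y+1}\cbb) = 0$. The one delicate case is $n = b-a-1$, where $H^{n+1}(\mathbb{D}) = H^{b-a}(\mathbb{D}) \cong \kring$ is nonzero; here the argument requires showing that $H^{b-a}(\mathbb{D}) \to H^{b-a}(\filt_y\cbb) \oplus H^{b-a}(\widetilde{\cbb_{y+1}})$ is injective, so that the connecting map out of $H^{b-a-1}(\filt_{y+1}\cbb)$ vanishes. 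This is the main obstacle. I would settle it by noting that the composite $\mathbb{D} \hookrightarrow \filt_y\cbb \hookrightarrow \mathbb{C}(\mathbf{b},\mathbf{a})$ induces an isomorphism on $H^{b-a}$: by Proposition \ref{prop:top_degree} the group $H^{b-a}(\mathbb{C}(\mathbf{b},\mathbf{a})) \cong \kring$ is generated by the class of $[\id_\mathbf{a}] \otimes \oclass(\mathbf{b}\backslash\mathbf{a})$, and since $a < b$ the distinguished point $x = b$ does not lie in $\mathbf{a}$, so this element already lies in the summand of $\mathbb{D}^{b-a}$ indexed by $\mathbf{a}$, where by Lemma \ref{lem:identify_subcomplexes} and Proposition \ref{prop:top_degree} applied to $(a,b-1)$ it represents a generator of $H^{b-a}(\mathbb{D}) \cong \kring$. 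Thus $H^{b-a}(\mathbb{D}) \to H^{b-a}(\filt_y\cbb)$ is already injective, which is enough. Taking $y = a$ yields $H^n(\mathbb{C}(\mathbf{b},\mathbf{a})) = 0$ for $0 < n < b-a$, closing the induction; the remaining bookkeeping on ranks and freeness is that of the first paragraph.
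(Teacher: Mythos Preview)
Your proof is correct and follows essentially the same strategy as the paper: double induction with base cases $a=1$ and $b=a+1$, then an inner induction along the filtration $\filt_y\cbb$ using the long exact sequence of (\ref{eqn:ses_D_filt}). Two small remarks. First, your assertion that $H^n(\mathbb{D})$ vanishes for all $0<n<b-a-1$ is false at $n=1$, since $H^1(\mathbb{D})\cong H^0(\mathbb{C}(\mathbf{b-1},\mathbf{a}))$ is typically nonzero; fortunately this term is irrelevant, as only $H^n(\filt_y\cbb)\oplus H^n(\widetilde{\cbb_{y+1}})$ and $H^{n+1}(\mathbb{D})$ flank $H^n(\filt_{y+1}\cbb)$. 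Second, for the delicate injectivity at $n=b-a-1$ the paper proves that the component $H^{b-a}(\mathbb{D})\to H^{b-a}(\widetilde{\cbb_{y+1}})$ is an isomorphism by comparing explicit generators inside $\widetilde{\cbb_{y+1}}$ (invoking the second part of Proposition~\ref{prop:top_degree} and Proposition~\ref{prop:equivalence_cohom}); your alternative, factoring $\mathbb{D}\hookrightarrow\filt_y\cbb\hookrightarrow\mathbb{C}(\mathbf{b},\mathbf{a})$ and using that the same cocycle $[\id_\mathbf{a}]\otimes\oclass(\mathbf{b}\backslash\mathbf{a})$ generates top cohomology at both ends, is a clean simplification. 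The paper handles freeness by carrying it through the induction over arbitrary $\kring$, whereas you work over $\zed$ and invoke universal coefficients; both are fine.
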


\begin{proof}
We consider the proposition $\mathcal{P}(b,a)$: the cohomology of $\mathbb{C}= \mathbb{C}(\mathbf{b}, \mathbf{a})$ is free as a $\kring$-module concentrated in degrees $0$ and $b-a$, with $H^{b-a}(\mathbb{C})$ of rank one.

We prove that  $\mathcal{P}(b,a)$ is true for all  $b >a \geq 1$ by double induction upon $1 \leq a$ and upon $1 \leq b-a$.

The initial case  $\mathcal{P}(b,1)$ is treated by Proposition  \ref{prop:case_a=1_b>1} and 
the initial case $\mathcal{P}(a+1,a)$ is covered by Proposition \ref{cor:cohomology_b=a+1}.
The inductive step is to show that, given a pair $(b,a)$ with $a>1$ and $b-a >1$ and  such that  $\mathcal{P}(b-1,a)$ and  $\mathcal{P}(b-1,a-1)$ are true then  $\mathcal{P}(b,a)$ holds.

For this, we will prove, by induction on $y$,  the following stronger result: if $\mathcal{P}(b-1,a)$ and  $\mathcal{P}(b-1,a-1)$ are true  then for all $y \in \{1, \ldots, a\}$ the cohomology of $\filt_{y} \cbb$ is free as a $\kring$-module concentrated in degrees $0$ and $b-a$, with $H^{b-a}(\filt_{y} \cbb)$ of rank one. The required result is then obtained for $y=a$.

For $y=1$, by 
Lemma \ref{lem:identify_subcomplexes} and Proposition \ref{prop:equivalence_cohom} we have 
$$H^* (\filt_1 \cbb) = H^*(\widetilde {\mathbb{C}_1}) \cong H^*( {\mathbb{C}_1}) \cong H^*(\mathbb{C} (\mathbf{b-1}, \mathbf{a-1}) )$$
so the result follows by the assumption on $\mathcal{P}(b-1, a-1)$. 

To prove the inductive step we consider the long exact sequence in cohomology associated to the short exact sequence (\ref{eqn:ses_D_filt}) given in Lemma \ref{lem:decompose_C}.
\begin{eqnarray}
\label{les-preuve}
\ldots \rightarrow H^t(\mathbb{D}) \rightarrow H^t(\filt_y \cbb) \oplus H^t( \widetilde{\cbb_{y+1}})
\rightarrow 
H^t(\filt_{y+1} \cbb)  \rightarrow H^{t+1}(\mathbb{D}) \rightarrow \ldots
\end{eqnarray}

Lemma \ref{lem:identify_subcomplexes} together with the inductive hypothesis for $\mathcal{P}(b-1, a)$ implies that $H^* (\mathbb{D})$ is concentrated in cohomological degrees $1$ and $b-a$, with  $H^{b-a} (\mathbb{D}) \cong \kring$ and $H^{1} (\mathbb{D})$ a free $\kring$-module.
 Moreover, as for the case $y=1$, the cohomology of $\widetilde {\mathbb{C}_{y+1}}$ is concentrated in cohomological degrees zero and $b-a$, with $H^{b-a} (\widetilde{\mathbb{C}_{y+1}}) \cong \kring$ and $H^{0} (\widetilde{\mathbb{C}_{y+1}})$ a free $\kring$-module. So using the inductive hypothesis on $H^{*}(\filt_{y} \cbb)$, we obtain that the exact sequence (\ref{les-preuve}) reduces to the following:
\begin{itemize}
\item $
H^t(\filt_{y+1} \cbb)=0$ for $1\leq t\leq b-a-2$ and $t>b-a$;
\item 
the  exact sequence:
\begin{eqnarray}
\label{ses1-preuve}
0 \rightarrow H^0(\filt_y \cbb) \oplus H^0( \widetilde{\cbb_{y+1}})
\rightarrow 
H^0(\filt_{y+1} \cbb)  \rightarrow H^{1}(\mathbb{D}) \rightarrow 0
\end{eqnarray}
in which $H^{1}(\mathbb{D})$ and $H^{0} (\widetilde{\mathbb{C}_{y+1}})$ are free $\kring$-modules; 
\item 
the exact sequence, in which $d$ is induced by the inclusions $\mathbb{D} \subset \filt_y \cbb$ and $\mathbb{D} \subset\widetilde{\cbb_{y+1}}$
\begin{eqnarray}
\label{ses2-preuve}
\\
\nonumber
0 \rightarrow H^{b-a-1}(\filt_{y+1} \cbb) \rightarrow H^{b-a}(\mathbb{D}) \xrightarrow{d} H^{b-a}(\filt_y \cbb) \oplus H^{b-a}( \widetilde{\cbb_{y+1}}) \rightarrow H^{b-a}(\filt_{y+1} \cbb)\rightarrow 0.
\end{eqnarray}
\end{itemize}

Using the exact sequence (\ref{ses1-preuve}): by the inductive hypothesis $H^0(\filt_y \cbb)$ is a free $\kring$-module; since $H^{1}(\mathbb{D})$ is a free $\kring$-module,  the short exact sequence splits as $\kring$-modules giving that $H^0 (\filt_{y+1}\cbb) $ is $\kring$-free, as required. 

Consider the exact sequence (\ref{ses2-preuve}): recall that $H^{b-a}(\mathbb{D}) \cong H^{b-a}( \widetilde{\cbb_{y+1}}) \cong \kring$. We prove that  the inclusion $i: \mathbb{D} \hookrightarrow \widetilde {\mathbb{C}_{y+1}}$ induces an isomorphism $H^{b-a}(\mathbb{D}) \xrightarrow{\cong} H^{b-a}( \widetilde{\cbb_{y+1}})$. 
 
The cocycle $[\id_\mathbf{a}] \otimes \omega (\mathbf{b}\backslash \mathbf{a}) \in \mathbb{D}^{b-a}$ generates  $H^{b-a} (\mathbb{D})$. We will see that the cohomology class of $i([\id_\mathbf{a}] \otimes \omega (\mathbf{b}\backslash \mathbf{a})) \in \widetilde{\cbb_{y+1}}^{b-a}$ generates $H^{b-a}( \widetilde{\cbb_{y+1}})$.

 Consider $\mathbf{b}'=\mathbf{a}\setminus \{y+1\}$ and  $f \in \hom _\Oz (\mathbf{b}' \cup \{x\}, \mathbf{a} )$ given by $f|_{\mathbf{b}'}=\id_\mathbf{a}|_{ \mathbf{b}'}$ and $f(x)=y+1$, the cocycle $[f] \otimes \omega (\mathbf{b} \backslash (\{x \} \cup \mathbf{b}')) \in \mathbb{C}_{y+1}^{b-a}$ generates $H^{b-a}( {\mathbb{C}_{y+1}})$. By Proposition \ref{prop:equivalence_cohom}, $j([f] \otimes \omega (\mathbf{b} \backslash (\{x \} \cup \mathbf{b}'))) \in \widetilde{\mathbb{C}_{y+1}}^{b-a}$ generates $H^{b-a}(\widetilde{\mathbb{C}_{y+1}})$.

 Adapting the second part of Proposition \ref{prop:top_degree} to the complex $\widetilde {\mathbb{C}_{y+1}}$ we obtain that the cohomology classes of $i([\id_\mathbf{a}] \otimes \omega (\mathbf{b}\backslash \mathbf{a}))$ and $j([f] \otimes \omega (\mathbf{b} \backslash (\{x \} \cup \mathbf{b}')))$ are equal (up to sign) (where $j: \mathbb{C}_{y+1} \hookrightarrow \widetilde {\mathbb{C}_{y+1}}$). We deduce that $i([\id_\mathbf{a}] \otimes \omega (\mathbf{b}\backslash \mathbf{a}))$ generates $H^{b-a}( \widetilde{\cbb_{y+1}})$.

It follows from the exact sequence that $H^{b-a-1}(\filt_{y+1} \cbb)=0$ and $H^{b-a}(\filt_y \cbb) \cong H^{b-a}(\filt_{y+1} \cbb)$ which is of rank one by the recursive hypothesis.

 It remains to identify the rank of the free $\kring$-module $H^0 (\mathbb{C})$. 
By the first part of the proof, $\mathbb{C}(\mathbf{b}, \mathbf{a})$ is a complex of free $\kring$-modules of finite rank, concentrated in degrees between $0$ and $b-a$. So we have
$$
\chi (\mathbb{C(\mathbf{b}, \mathbf{a})})
=
\sum_{t=0}^{b-a}(-1)^t 
\krank \mathbb{C(\mathbf{b}, \mathbf{a})}^t=
\sum_{t=0}^{b-a}(-1)^t \krank
H^t(\mathbb{C(\mathbf{b}, \mathbf{a})}).$$
By the explicit form of the complex $\mathbb{C}(\mathbf{b}, \mathbf{a})$ given in Notation \ref{nota:Kz_Cba} we have 
\[
\chi (\mathbb{C(\mathbf{b}, \mathbf{a})})
= 
\sum_{t=0}^{b-a}
(-1)^t 
\binom{b}{t}
|\hom_\Oz (\mathbf{b-t}, \mathbf{a}) |.
\]
Since $H^* (\mathbb{C})$ is concentrated in degrees $0$ and 
$b-a$, we obtain
$$\krank H^0(\mathbb{C(\mathbf{b}, \mathbf{a})})
=\chi (\mathbb{C(\mathbf{b}, \mathbf{a})})-(-1)^{b-a} \krank H^{b-a}(\mathbb{C(\mathbf{b}, \mathbf{a})})$$
and $\krank H^{b-a}(\mathbb{C(\mathbf{b}, \mathbf{a})})=1$ by Proposition \ref{prop:top_degree}.
\end{proof}

\begin{rem}
\label{rem-extra-case}
By Lemma \ref{lem:case_a=b},  for $a, b \in \nat$, $H^* (\mathbb{C}(\mathbf{b}, \mathbf{a}))=0$ if $b<a$, $H^* (\mathbb{C}(\mathbf{b}, \mathbf{0}))$ is $\kring$, concentrated in degree $b$ and $H^* (\mathbb{C}(\mathbf{a}, \mathbf{a}))$ is $ \kring[\sym_a]$ concentrated in degree $0$.
\end{rem}

\begin{rem}
The long exact sequence in cohomology given by \cite[Theorem 1]{MR3603074} suggests an alternative approach to proving Theorem \ref{thm:cohomology_NC}, based upon a similar strategy. Namely, choosing the element $x$ in the above proof corresponds to using the shift functor of $\finj\op$-modules in \cite{MR3603074}.
 
To implement this strategy using \cite[Theorem 1]{MR3603074} requires showing that the relevant connecting morphism is surjective in cohomology.
\end{rem}

By Lemma  \ref{lem:case_a=b} and Theorem \ref{thm:cohomology_NC}, for $(b,a) \in  \nat \times  \nat$,  $H^0(\mathbb{C(\mathbf{b}, \mathbf{a})})$ is a free finitely-generated $\kring$-module. So we have a function $\epfn: \nat \times  \nat \to \nat$ given by
$$\epfn(b,a)=\krank H^0(\mathbb{C(\mathbf{b}, \mathbf{a})}).$$
This function satisfies the following Proposition.

\begin{prop}
\label{cor:inductive_rho}
The function $\epfn: \nat \times  \nat \to \nat$  is determined by:
\begin{eqnarray}
\label{cor:inductive_rho-eq-1}
\epfn (b,0) &=& \left\{
\begin{array}{ll}
1 &  b=0 \\
0 & \mbox{otherwise} .
\end{array}
\right.
\\
\label{cor:inductive_rho-eq0}
\epfn (b,a) &=& 0  \qquad \mbox{for $b<a$} \\
\label{cor:inductive_rho-eq1}
\epfn (a,a) &=& a! \\
\label{cor:inductive_rho-eq2}
\epfn (b,1) &=& 0 \qquad \mbox{ for $b>1$} \\
\label{cor:inductive_rho-eq3}
\epfn (a+1, a) &=& (a-2) \frac{(a+1)!}{2} +1  \qquad \mbox{ for $a >1$}\\
\label{cor:inductive_rho-eq4}
\epfn (b,a) &=& a \epfn (b-1, a-1) + (a-1)\epfn (b-1,a) \qquad \mbox{ for $b-1>a>1$}.
\end{eqnarray}
\end{prop}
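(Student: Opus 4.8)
The plan is to obtain equations (\ref{cor:inductive_rho-eq-1})--(\ref{cor:inductive_rho-eq3}) directly from the computations already carried out, and to derive the recursion (\ref{cor:inductive_rho-eq4}) from the filtration used in the proof of Theorem \ref{thm:cohomology_NC}. Throughout, $\epfn(b,a)$ is well defined because $H^0(\cbb(\mathbf{b},\mathbf{a}))$ is a finitely generated free $\kring$-module, by Lemma \ref{lem:case_a=b} and Theorem \ref{thm:cohomology_NC}.

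First I would dispatch the boundary cases. Equations (\ref{cor:inductive_rho-eq-1}), (\ref{cor:inductive_rho-eq0}) and (\ref{cor:inductive_rho-eq1}) are immediate from Lemma \ref{lem:case_a=b}: the complex $\cbb(\mathbf{b},\mathbf{0})$ is $\orient(\mathbf{b})$ placed in cohomological degree $b$, so its $H^0$ has rank $1$ when $b=0$ and $0$ otherwise; $\cbb(\mathbf{b},\mathbf{a})=0$ for $b<a$; and $\cbb(\mathbf{a},\mathbf{a})\cong\kring[\sym_a]$ is concentrated in degree $0$, of rank $a!$. Equation (\ref{cor:inductive_rho-eq2}) is the case $*=0$ of Proposition \ref{prop:case_a=1_b>1}, since for $b>1$ the cohomology is concentrated in degree $b-1>0$. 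Equation (\ref{cor:inductive_rho-eq3}) is the explicit value of $\epfn(a+1,a)$ recorded in Proposition \ref{cor:cohomology_b=a+1}.

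The substance is the recursion (\ref{cor:inductive_rho-eq4}), valid for $b-1>a>1$. Here I would reuse the filtration $\widetilde{\cbb_1}=\filt_1\cbb\subset\dots\subset\filt_a\cbb=\cbb$ of (\ref{filtration}) together with the short exact sequences appearing in the proof of Theorem \ref{thm:cohomology_NC}. By the case $y=1$ of that proof, combined with Proposition \ref{prop:equivalence_cohom} and Lemma \ref{lem:identify_subcomplexes}, $H^0(\filt_1\cbb)\cong H^0(\cbb(\mathbf{b-1},\mathbf{a-1}))$ has rank $\epfn(b-1,a-1)$. For each $1\le y\le a-1$ the exact sequence (\ref{ses1-preuve}) splits, since $H^1(\mathbb{D})$ is $\kring$-free, whence
\[
\krank H^0(\filt_{y+1}\cbb)=\krank H^0(\filt_y\cbb)+\krank H^0(\widetilde{\cbb_{y+1}})+\krank H^1(\mathbb{D}).
\]
By Proposition \ref{prop:equivalence_cohom} and Lemma \ref{lem:identify_subcomplexes}, $\widetilde{\cbb_{y+1}}$ is quasi-isomorphic to $\cbb_{y+1}\cong\cbb(\mathbf{b-1},\mathbf{a-1})$, so $\krank H^0(\widetilde{\cbb_{y+1}})=\epfn(b-1,a-1)$; and $\mathbb{D}\cong\cbb(\mathbf{b-1},\mathbf{a})[1]$ gives $H^1(\mathbb{D})\cong H^0(\cbb(\mathbf{b-1},\mathbf{a}))$ of rank $\epfn(b-1,a)$. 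Telescoping from $y=1$ to $y=a-1$ and using $\filt_a\cbb=\cbb$ then yields
\[
\epfn(b,a)=\epfn(b-1,a-1)+(a-1)\bigl(\epfn(b-1,a-1)+\epfn(b-1,a)\bigr)=a\,\epfn(b-1,a-1)+(a-1)\epfn(b-1,a).
\]

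The only delicate point is bookkeeping: one must check that each $\kring$-module occurring in (\ref{ses1-preuve}) is free of finite rank, so that ranks are additive along the sequence; this is exactly the inductive statement proved inside Theorem \ref{thm:cohomology_NC}, and the hypotheses $b-1>a>1$ keep us in the range where that theorem and Propositions \ref{prop:equivalence_cohom} and \ref{cor:cohomology_b=a+1} apply. No argument beyond a careful reuse of the proof of Theorem \ref{thm:cohomology_NC} should be needed.
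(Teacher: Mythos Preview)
Your proposal is correct and follows essentially the same route as the paper: the boundary equations are read off from Lemma~\ref{lem:case_a=b}, Proposition~\ref{prop:case_a=1_b>1}, and Proposition~\ref{cor:cohomology_b=a+1}, and the recursion is obtained by telescoping the rank equalities coming from the short exact sequence~(\ref{ses1-preuve}) along the filtration~(\ref{filtration}), using Lemma~\ref{lem:identify_subcomplexes} and Proposition~\ref{prop:equivalence_cohom} to identify $\krank H^1(\mathbb{D})=\epfn(b-1,a)$ and $\krank H^0(\widetilde{\cbb_{y}})=\epfn(b-1,a-1)$. Your remark that the needed freeness and vanishing for the intermediate $\filt_y\cbb$ is the internal induction inside the proof of Theorem~\ref{thm:cohomology_NC} is exactly the justification the paper relies on implicitly.
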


\begin{proof}
The equalities (\ref{cor:inductive_rho-eq-1}),  (\ref{cor:inductive_rho-eq0}) and (\ref{cor:inductive_rho-eq1}) are consequences of Lemma \ref{lem:case_a=b};  (\ref{cor:inductive_rho-eq2}) follows from Proposition \ref{prop:case_a=1_b>1} and (\ref{cor:inductive_rho-eq3}) from Proposition \ref{cor:cohomology_b=a+1}.

To prove the recursive expression (\ref{cor:inductive_rho-eq4}), observe that the short exact sequence  (\ref{ses1-preuve}) given in the proof of Theorem \ref{thm:cohomology_NC} implies the following equality for all $y \in \{1, \ldots, a-1 \}$:
$$
\krank H^0(\filt_{y+1} \cbb)= \krank  H^0(\filt_y \cbb)+ \krank  H^0( \widetilde{\cbb_{y+1}})+ \krank H^{1}(\mathbb{D}).
$$
Taking the sum over $y$, for $1 \leq y \leq a-1$, of these equalities, we obtain:
$$\epfn (b,a)=\krank H^0(\filt_{a} \cbb)= \krank  H^0(\filt_1 \cbb)+ \sum_{y=1}^{a-1}\krank  H^0( \widetilde{\cbb_{y+1}})+ \sum_{y=1}^{a-1}\krank H^{1}(\mathbb{D})$$
$$=  \sum_{y=1}^{a}\krank  H^0( \widetilde{\cbb_{y}})+ \sum_{y=1}^{a-1}\krank H^{1}(\mathbb{D}).$$
By Lemma \ref{lem:identify_subcomplexes} we have: 
$\krank H^{1}(\mathbb{D})=\epfn (b-1,a)$, and by Lemma  \ref{lem:identify_subcomplexes}  and Proposition \ref{prop:equivalence_cohom} we have: $\krank  H^0( \widetilde{\cbb_{y}})= \krank  H^0({\cbb_{y}})=\epfn(b-1,a-1)$.
\end{proof}

\begin{cor}
\label{cor2:inductive_rho}
The function $\epfn: \nat \times  \nat \to \nat$  satisfies the following equalities:
\begin{eqnarray}
\label{cor:inductive_rho-eq5}
\epfn (b,2) &=& 1 \qquad \mbox{ for $b>2$}\\
\label{cor:inductive_rho-eq6}
\epfn (b,3) &=& 2^{b}-3 \qquad \mbox{ for $b>3$}.
\end{eqnarray}
\end{cor}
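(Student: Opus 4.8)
The plan is to derive both identities directly from the recursive description of $\epfn$ established in Proposition \ref{cor:inductive_rho}, using induction on $b$ in each case. Since equation (\ref{cor:inductive_rho-eq4}) expresses $\epfn(b,a)$ in terms of $\epfn(b-1,a-1)$ and $\epfn(b-1,a)$, the value $\epfn(b,2)$ is governed by $\epfn(b-1,1)$ and $\epfn(b-1,2)$, and $\epfn(b,3)$ by $\epfn(b-1,2)$ and $\epfn(b-1,3)$; so the $a=2$ computation feeds into the $a=3$ computation, and both reduce to bookkeeping with the explicit initial values already recorded in Proposition \ref{cor:inductive_rho}.

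First I would treat $\epfn(b,2)$. The base case $b=3$ is covered by (\ref{cor:inductive_rho-eq3}), which gives $\epfn(3,2) = (2-2)\tfrac{3!}{2} + 1 = 1$. For $b \geq 4$ one has $b - 1 > 2$, so (\ref{cor:inductive_rho-eq4}) applies and reads $\epfn(b,2) = 2\,\epfn(b-1,1) + \epfn(b-1,2)$; since $b-1 > 1$, equation (\ref{cor:inductive_rho-eq2}) gives $\epfn(b-1,1) = 0$, whence $\epfn(b,2) = \epfn(b-1,2)$. An immediate induction then yields $\epfn(b,2) = 1$ for all $b > 2$, which is (\ref{cor:inductive_rho-eq5}).

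Next I would treat $\epfn(b,3)$, using the previous step as input. The base case $b = 4$ is again given by (\ref{cor:inductive_rho-eq3}): $\epfn(4,3) = (3-2)\tfrac{4!}{2} + 1 = 13 = 2^{4} - 3$. For $b \geq 5$ one has $b - 1 > 3$, so (\ref{cor:inductive_rho-eq4}) gives $\epfn(b,3) = 3\,\epfn(b-1,2) + 2\,\epfn(b-1,3)$; since $b - 1 > 2$, the first step gives $\epfn(b-1,2) = 1$, so $\epfn(b,3) = 3 + 2\,\epfn(b-1,3)$. Assuming inductively that $\epfn(b-1,3) = 2^{b-1} - 3$, this becomes $3 + 2(2^{b-1} - 3) = 2^{b} - 3$, completing the induction and establishing (\ref{cor:inductive_rho-eq6}).

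There is no serious obstacle here; the only points requiring a moment's care are verifying that the range hypothesis $b - 1 > a > 1$ of (\ref{cor:inductive_rho-eq4}) is met at each inductive step (so the recursion is legitimately applicable), and correctly matching the base cases $b = a+1$ with formula (\ref{cor:inductive_rho-eq3}) rather than with (\ref{cor:inductive_rho-eq4}). Everything else is elementary arithmetic.
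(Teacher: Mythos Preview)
Your proof is correct and follows essentially the same approach as the paper: establish the base cases via (\ref{cor:inductive_rho-eq3}), then use the recursion (\ref{cor:inductive_rho-eq4}) together with the already-known values of $\epfn(-,1)$ and $\epfn(-,2)$. The only cosmetic difference is that for (\ref{cor:inductive_rho-eq6}) the paper solves the recurrence $\epfn(b,3) = 3 + 2\,\epfn(b-1,3)$ by a telescoping sum, whereas you do it by a direct induction; your version is slightly cleaner.
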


\begin{proof}
To prove (\ref{cor:inductive_rho-eq5}), note that $\epfn (3,2)=1$ by (\ref{cor:inductive_rho-eq3}) and, for $b>3$, we have $b-1>2>1$;  thus, by (\ref{cor:inductive_rho-eq4}) and (\ref{cor:inductive_rho-eq2}), we have
$$\epfn(b,2)=2\epfn(b-1,1)+ \epfn(b-1,2)=\epfn(b-1,2)$$
so $\epfn(b,2)=\epfn (3,2)=1$.

To prove (\ref{cor:inductive_rho-eq6}), note that $\epfn (4,3)=13$ by (\ref{cor:inductive_rho-eq3}) and for $b>4$, we have $b-1>3>1$; thus, by (\ref{cor:inductive_rho-eq4}) and (\ref{cor:inductive_rho-eq5}), we have
$$\epfn(b,3)=3\epfn(b-1,2)+ 2  \epfn(b-1,3)=3+ 2\epfn(b-1,3).$$
Taking the linear combination of these equalities for $b\geq y \geq 5$ we obtain:
$$\sum_{y=5}^{b} 2^{b-y} \epfn(y,3)=3\sum_{y=5}^{b} 2^{b-y}+ 2 \sum_{y=5}^{b} 2^{b-y} \epfn(y-1,3)= 3\sum_{y=0}^{b-5} 2^{y}+ \sum_{y=4}^{b-1} 2^{b-y} \epfn(y,3)$$
So $\epfn(b,3)=3(2^{b-4}-1)+2^{b-4} \epfn(4,3)=  2^{b}-3$.
\end{proof}

%%%%%%%%%%%%%%%%%%%%%%%%%%%%%%%%%%%%%%%%%%%%%%%%%%%%%%%%%%%%%%%%%%%%%%%%%%%%
\subsection{The cohomology as a representation}
\label{subsect:cohom_rep}

Recall that $H^* (\mathbb{C}(\mathbf{b}, \mathbf{a}) )$ takes values in graded $\sym_a \times \sym_b\op$-modules. Proposition \ref{prop:top_degree} identifies
this representation in the top degree, $* = b-a$. 

Working in the Grothendieck group of finitely-generated $\sym_a \times \sym_b\op$-modules, Theorem \ref{thm:cohomology_NC} (and its proof) give further information, beyond the dimension of $H^0 (\mathbb{C} (\mathbf{b}, \mathbf{a}))$. In the following, the class of a representation $M$ in the Grothendieck group is denoted by $[M]$.

\begin{thm}
\label{thm:cohom_equivariance}
Suppose that $b > a \in \nat^*$, then there is an equality in the Grothendieck group of finitely-generated $\sym_a \times \sym_b\op$-modules:
\begin{eqnarray*}
[H^0(\mathbb{C} (\mathbf{b}, \mathbf{a}))]
&=& 
(-1)^{b-a+1} [\orient (\mathbf{a}) \otimes \orient (\mathbf{b}) ] 
+ 
\sum_{t=0}^{b-a}(-1)^t[\mathbb{C}(\mathbf{b}, \mathbf{a})^t]
\\
&=&
(-1)^{b-a+1} [\orient (\mathbf{a}) \otimes \orient (\mathbf{b})] 
\\
&&
+ 
\sum_{t=0}^{b-a}(-1)^t
\big[\big(
\kring \hom_\Oz (\mathbf{b - t}, \mathbf{a}) 
\otimes 
\orient (\mathbf{b}\backslash (\mathbf{b-t}))
\big)\uparrow _{\sym_{b-t}\op \times \sym_t \op}^{\sym_b\op}
\big].
\end{eqnarray*}

In particular, if $\kring = \rat$, then this identifies the representation $H^0(\mathbb{C} (\mathbf{b}, \mathbf{a}))$ up to isomorphism.
\end{thm}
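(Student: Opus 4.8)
The plan is to deduce the equivariant statement (Theorem \ref{thm:cohom_equivariance}) directly from the numerical Theorem \ref{thm:cohomology_NC} together with Proposition \ref{prop:top_degree}, by running an Euler--Poincar\'e characteristic argument inside the Grothendieck group $R(\sym_a \times \sym_b\op)$ rather than over $\kring$ itself. The essential input is that $\kz$ is an exact functor valued in cochain complexes (Proposition \ref{Koszul-exact}), that $\sym_a$ acts on $\kring \hom_\Oz(-, \mathbf{a})$ by automorphisms of $\finj\op$-modules, and that $\sym_b\op$ acts via the $\fb\op$-structure of $\cbb(-,\mathbf{a})$; hence each term $\cbb(\mathbf{b},\mathbf{a})^t$ is naturally a $\sym_a \times \sym_b\op$-module, with the induced-module description supplied by Lemma \ref{lem:Cbb_action}. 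This makes $\cbb(\mathbf{b},\mathbf{a})^\bullet$ a bounded complex of finitely-generated $\sym_a \times \sym_b\op$-modules (it is concentrated in degrees $0$ through $b-a$ by Lemma \ref{lem:case_a=b} and the explicit form in Notation \ref{nota:Kz_Cba}), whose differentials are $\sym_a \times \sym_b\op$-equivariant.

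First I would record the standard fact that for a bounded complex $C^\bullet$ of finitely-generated modules over a group ring, $\sum_t (-1)^t [C^t] = \sum_t (-1)^t [H^t(C^\bullet)]$ in the Grothendieck group; this follows by the usual splitting of $C^\bullet$ into short exact sequences $0 \to Z^t \to C^t \to B^{t+1}\to 0$ and $0 \to B^t \to Z^t \to H^t \to 0$, which are exact sequences of modules even when they do not split as such, so that passing to the Grothendieck group is legitimate. Applying this to $C^\bullet = \cbb(\mathbf{b},\mathbf{a})^\bullet$ gives
\[
\sum_{t=0}^{b-a}(-1)^t[\cbb(\mathbf{b},\mathbf{a})^t] = \sum_{t=0}^{b-a}(-1)^t[H^t(\cbb(\mathbf{b},\mathbf{a}))].
\]
By Theorem \ref{thm:cohomology_NC} the cohomology is concentrated in degrees $0$ and $b-a$, so the right-hand side equals $[H^0(\cbb(\mathbf{b},\mathbf{a}))] + (-1)^{b-a}[H^{b-a}(\cbb(\mathbf{b},\mathbf{a}))]$. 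Now Proposition \ref{prop:top_degree} identifies the top cohomology $\sym_a\times\sym_b\op$-equivariantly as $\orient(\mathbf{a}) \otimes \orient(\mathbf{b})$; substituting and rearranging yields exactly the first displayed equality of the theorem. The second displayed equality is then just the substitution of the induced-module description $\cbb(\mathbf{b},\mathbf{a})^t \cong \big(\kring\hom_\Oz(\mathbf{b-t},\mathbf{a}) \otimes \orient(\mathbf{b}\backslash(\mathbf{b-t}))\big)\uparrow_{\sym_{b-t}\op \times \sym_t\op}^{\sym_b\op}$ from Lemma \ref{lem:Cbb_action} (noting that the $\sym_a$-action is carried along on the first tensor factor, untouched by the induction over $\sym_b$).

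For the final sentence, when $\kring = \rat$ the group algebra $\rat[\sym_a \times \sym_b\op]$ is semisimple, so the Grothendieck group is free on the irreducibles and a class determines the isomorphism type of a module; moreover $H^0(\cbb(\mathbf{b},\mathbf{a}))$ is an actual module (not merely a virtual one), and Theorem \ref{thm:cohomology_NC} guarantees the higher cohomology vanishes so there is no ambiguity coming from cancellation against $H^{b-a}$. Hence the computed class pins down $H^0(\cbb(\mathbf{b},\mathbf{a}))$ up to isomorphism. I do not expect a serious obstacle here: the only point requiring a little care is the bookkeeping of the two commuting group actions through Lemma \ref{lem:Cbb_action} and Proposition \ref{prop:top_degree} — in particular checking that the surjection of Proposition \ref{prop:top_degree} really is $\sym_a \times \sym_b\op$-equivariant (it is, by its construction via the map $\psi$ there) — and confirming that all modules in sight are finitely generated over $\kring$ so that the Grothendieck-group formalism applies, which is immediate from the explicit finite direct-sum descriptions.
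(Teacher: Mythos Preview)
Your proposal is correct and follows essentially the same approach as the paper: an Euler--Poincar\'e characteristic argument in the Grothendieck group, using Theorem \ref{thm:cohomology_NC} for the concentration in degrees $0$ and $b-a$, Proposition \ref{prop:top_degree} for the equivariant identification of the top cohomology, and Lemma \ref{lem:Cbb_action} for the induced-module description of the terms, with semisimplicity over $\rat$ giving the final sentence. You have in fact spelled out more of the routine details (equivariance of differentials, finite generation, the standard Grothendieck-group identity) than the paper does.
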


\begin{proof}
Passage to the Grothendieck group allows the Euler-Poincar\'e characteristic argument used in the proof of Theorem \ref{thm:cohomology_NC} to be applied to give the first stated equality. The second expression then follows from Lemma 
\ref{lem:Cbb_action}.

The category of  $\rat[\sym_a \times \sym_b \op]$-modules is semi-simple, so this suffices to identify the representation up to isomorphism.
\end{proof}

\begin{rem}
For $a < b \in \nat$,  $0 \leq t \leq b-a$ and $\kring =\rat$, the permutation $\sym_a \times \sym_{b-t}\op$-modules
$
\rat \hom_\Oz (\mathbf{b-t}, \mathbf{a})
$
arising in the statement of Theorem \ref{thm:cohom_equivariance} can be identified by standard methods of representation theory, so $H^0 (\mathbb{C} (\mathbf{b}, \mathbf{a}))$ can be calculated effectively in this case.
\end{rem}

\begin{exam}
\label{exam:H0_equivariance}
Consider the case $a \in \nat^*$ and $b =a+1$, so that:
\begin{eqnarray*}
\mathbb{C}(\mathbf{a+1}, \mathbf{a})^0&=& \kring \hom_\Oz (\mathbf{a+1},\mathbf{a}) 
\\
 \mathbb{C}(\mathbf{a+1}, \mathbf{a})^1&\cong& \kring \sym_{a+1},
 \end{eqnarray*}
 where the action in degree zero is the natural one  and, in degree one, $\sym_a$ acts on the left by restriction along $\sym_a \subset \sym_{a+1}$.
 
Theorem \ref{thm:cohom_equivariance} gives the equality in the Grothendieck group of $\sym_a \times \sym_{a+1}\op$-modules:
\[
[H^0(\mathbb{C}(\mathbf{a+1}, \mathbf{a}))]
=[\orient(\mathbf{a}) \otimes \orient(\mathbf{a+1})] 
+ 
[\kring \hom_\Oz (\mathbf{a+1},\mathbf{a}) ]
-
[ \kring \sym_{a+1}].
\]
(The case $a=0$ is deduced from Lemma \ref{lem:case_a=b}.)
Consider the case $a=2$ and restrict to the $\sym_2$-action (for the inclusion of the first factor in  $ \sym_2 \times \sym_3\op$). Both $\kring \sym_3$ and $\kring \hom_\Oz (\mathbf{3}, \mathbf{2})$ are free $\kring \sym_2$-modules of rank three; this gives the equality in the Grothendieck group of $\sym_2$-modules:
\[
[H^0(\mathbb{C}(\mathbf{3}, \mathbf{2}))]
= [\sgnrep_{\sym_2}],
\]
by identifying $\orient (\mathbf{2})$ as the signature representation. 

For $\kring=\rat$, this shows that $H^0(\mathbb{C}(\mathbf{3}, \mathbf{2}))$ is not a permutation representation as a $\sym_2$-module and hence cannot be a permutation representation as a $\sym_2 \times \sym_3\op$-module. 
\end{exam}

\section{Computation of $\hom_\finne$ and $\ext^1_\finne$}
\label{sect:homext}

In this section we combine our previous results to prove our main result, Theorem \ref{thm:complex_Cab}, calculating the groups that interest us. From this, we deduce properties of the objects $\kbar^{\otimes a} \in \ob \fcatk[\finne]$. Recall that the function $\epfn: \nat \times  \nat \to \nat$  is defined before Proposition \ref{cor:inductive_rho}.

\begin{thm}
\label{thm:complex_Cab}
For $a,b \in \nat$, there are isomorphisms
\begin{eqnarray*}
\hom_{\fcatk[\finne]}(\kbar^{\otimes a}, \kbar^{\otimes b} ) 
& \cong & \kring^{\oplus \epfn (b,a)} 
\\
\ext^1 _{\fcatk[\finne]}(\kbar^{\otimes a}, \kbar^{\otimes b} )
&\cong & 
\left\{ 
\begin{array}{ll}
\kring & b = a+1 \\
0 & \mbox{otherwise.}
\end{array}
\right.
\end{eqnarray*}
\end{thm}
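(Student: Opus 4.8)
The strategy is to assemble the machinery developed in the earlier sections into a single chain of identifications. First I would recall that Theorem~\ref{thm:motivate} identifies, for all $a,b \in \nat$,
\[
\hom_{\fcatk[\finne]} (\kbar^{\otimes a},\kbar^{\otimes b}) \cong H^0 (\hom_{\fcatk[\Gamma]}  ((t^*)^{\otimes a}, \gcmnd^\bullet (t^*)^{\otimes b} )),
\qquad
\ext^1_{\fcatk[\finne]} (\kbar^{\otimes a},\kbar^{\otimes b}) \cong H^1 (\hom_{\fcatk[\Gamma]}  ((t^*)^{\otimes a}, \gcmnd^\bullet (t^*)^{\otimes b} )).
\]
Then, by Corollary~\ref{cor:relate_finjop_cohom} (which is Theorem~\ref{thm:identify_cosimp_gcmnd_fbcmnd} combined with Theorem~\ref{thm:norm_vs_kz} and Corollary~\ref{cor:kos_FIop_cohom}), the cohomology of the cosimplicial object $\hom_{\fcatk[\Gamma]}  ((t^*)^{\otimes a}, \gcmnd^\bullet \twist)$, evaluated on $\mathbf{b}$, is naturally isomorphic to $H^*(\kz \kring \hom_\Oz (-, \mathbf{a}))(\mathbf{b}) = H^*(\mathbb{C}(\mathbf{b},\mathbf{a}))$ in the notation of Section~\ref{sect:cohomology}. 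So everything reduces to the computation of $H^*(\mathbb{C}(\mathbf{b},\mathbf{a}))$ in degrees $0$ and $1$.

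Next I would invoke the computation of $H^*(\mathbb{C}(\mathbf{b},\mathbf{a}))$ carried out in Section~\ref{sect:cohomology}: Lemma~\ref{lem:case_a=b} handles the degenerate cases $b<a$, $a=0$, and $a=b$; Theorem~\ref{thm:cohomology_NC} (together with Remark~\ref{rem-extra-case}) handles $b>a>0$. Assembling these, $H^0(\mathbb{C}(\mathbf{b},\mathbf{a}))$ is always the free $\kring$-module of rank $\epfn(b,a)$ — this is precisely how $\epfn$ was defined — giving the first isomorphism of the theorem. For $H^1$, one reads off from Theorem~\ref{thm:cohomology_NC} that $H^1(\mathbb{C}(\mathbf{b},\mathbf{a})) = 0$ unless $b - a = 1$, in which case Proposition~\ref{cor:cohomology_b=a+1} gives $H^1(\mathbb{C}(\mathbf{a+1},\mathbf{a})) \cong \kring$; and the degenerate cases ($b \le a$, $a=0$) contribute nothing in degree $1$ since there $\mathbb{C}$ is concentrated in a single degree which is $0$ (when $b\le a$, using $b=a$) or $b$ (when $a=0$, and $b=1$ would give degree $1$ — I need to check this does not produce a spurious class). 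Here I should be careful: when $a=0$, $\mathbb{C}(\mathbf{b},\mathbf{0}) \cong \orient(\mathbf{b})$ concentrated in degree $b$, so $H^1 \ne 0$ precisely when $b=1$, i.e. $b = a+1$ with $a=0$ — consistent with the stated answer $\kring$ for $b=a+1$. So the cases knit together correctly.

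\textbf{Main obstacle.} The genuinely hard work — the computation of $H^*(\mathbb{C}(\mathbf{b},\mathbf{a}))$ via the double induction of Theorem~\ref{thm:cohomology_NC}, the identification of the top cohomology in Proposition~\ref{prop:top_degree}, and the cellular-complex argument of Proposition~\ref{prop:acyclicity} — is already done in the earlier sections, so the proof of Theorem~\ref{thm:complex_Cab} itself is a bookkeeping exercise. The one point requiring a little care is the passage from the cohomology of the cosimplicial \emph{$\fb\op$-module} $\hom_{\fcatk[\Gamma]}  ((t^*)^{\otimes a}, \gcmnd^\bullet \twist)$ evaluated on $\mathbf{b}$ to the cohomology of the cosimplicial \emph{abelian group} $\hom_{\fcatk[\Gamma]}  ((t^*)^{\otimes a}, \gcmnd^\bullet (t^*)^{\otimes b})$ appearing in Theorem~\ref{thm:motivate}: these agree because evaluation at $\mathbf{b}$ is exact and $\twist(\mathbf{b}) = (t^*)^{\otimes b}$, so cohomology commutes with it. Modulo this remark, the proof is simply: quote Theorem~\ref{thm:motivate}, quote Corollary~\ref{cor:relate_finjop_cohom} to replace the cosimplicial cohomology by $H^*(\mathbb{C}(\mathbf{b},\mathbf{a}))$, and quote Lemma~\ref{lem:case_a=b} together with Theorem~\ref{thm:cohomology_NC}, Proposition~\ref{cor:cohomology_b=a+1} and Remark~\ref{rem-extra-case} to read off the answer in degrees $0$ and $1$.
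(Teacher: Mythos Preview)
Your proposal is correct and follows essentially the same route as the paper: invoke Theorem~\ref{thm:motivate}, pass to the Koszul complex via Corollary~\ref{cor:relate_finjop_cohom} (equivalently Theorem~\ref{thm:identify_cosimp_gcmnd_fbcmnd}), and then read off degrees $0$ and $1$ from Lemma~\ref{lem:case_a=b} and Theorem~\ref{thm:cohomology_NC}. Your explicit check of the boundary case $a=0$, $b=1$ and your remark on evaluation at $\mathbf{b}$ are helpful sanity checks that the paper leaves implicit.
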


\begin{proof}[Proof of Theorem \ref{thm:complex_Cab}]
By Theorem \ref{thm:motivate}, for $a, b \in \nat$ there are isomorphisms:
\begin{eqnarray*}
\hom_{\fcatk[\finne]} (\kbar^{\otimes a},\kbar^{\otimes b}) 
&\cong & H^0 (\hom_{\fcatk[\Gamma]}  ((t^*)^{\otimes a}, \gcmnd^\bullet (t^*)^{\otimes b} )) 
\\
\ext^1_{\fcatk[\finne]} (\kbar^{\otimes a},\kbar^{\otimes b})
&
\cong 
& 
H^1 (\hom_{\fcatk[\Gamma]}  ((t^*)^{\otimes a}, \gcmnd^\bullet (t^*)^{\otimes b} )). 
\end{eqnarray*}

By Theorem \ref{thm:identify_cosimp_gcmnd_fbcmnd} (or its Corollary \ref{cor:relate_finjop_cohom}),  there is an isomorphism in graded $\fb\op$-modules:
\[
H^* ( \hom_{\fcatk[\Gamma]}  ((t^*)^{\otimes a}, \gcmnd^\bullet \twist))
\cong 
H^* (\kz \kring \hom_\Oz (-, \mathbf{a}))
\]
By Lemma \ref{lem:case_a=b}, for $b<a$, $\kz \kring \hom_\Oz (-, \mathbf{a}) (\mathbf{b})=0$, $\kz \kring \hom_\Oz (-, \mathbf{a}) (\mathbf{a})=\kring[\sym_a]$, concentrated in cohomological degree zero and $\kz \kring \hom_\Oz (-, \mathbf{0}) (\mathbf{b})=\kring$  concentrated in cohomological degree $b$. For $b>a >0$, the calculation of these cohomology modules is given by Theorem \ref{thm:cohomology_NC}.
\end{proof}

Theorem \ref{thm:complex_Cab} gives the following:

\begin{cor}
\label{cor:non-proj_inj}
For $a \in \nat$,
\begin{enumerate}
\item 
$\overline{\kring} = \kbar^{\otimes 0}$ is injective if and only if $\kring$ is an injective $\kring$-module;
\item 
$\kbar^{\otimes a}$ is not injective if $a>0$; 
\item 
$\kbar^{\otimes a}$ is not projective.
\end{enumerate}
\end{cor}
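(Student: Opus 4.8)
\textbf{Proof plan for Corollary \ref{cor:non-proj_inj}.}

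The three statements should follow quickly from Theorem \ref{thm:complex_Cab} together with the comparison to the pointed situation via $\theta^*$. For the first statement, the key input is Proposition \ref{prop:overline_k_injective}: the constant functor $\overline{\kring}$ is injective in $\fcatk[\finne]$ precisely when $\kring$ is an injective $\kring$-module; since $\overline{\kring} = \kbar^{\otimes 0}$ by our convention that $(\kbar)^{\otimes 0} = \overline{\kring}$, this is exactly the claim. So the first point requires no new work.

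For the second statement, I would argue by producing a nontrivial $\ext^1$ out of $\kbar^{\otimes a}$. Taking $b = a+1$ in Theorem \ref{thm:complex_Cab}, we have $\ext^1_{\fcatk[\finne]}(\kbar^{\otimes a}, \kbar^{\otimes a+1}) \cong \kring \neq 0$. Since $\kbar^{\otimes a+1}$ is a submodule (indeed a tensor power of $\kbar$), this alone does not immediately say $\kbar^{\otimes a}$ is non-injective — that requires a nonvanishing $\ext^1$ \emph{into} $\kbar^{\otimes a}$. The cleaner route is: the short exact sequence (\ref{eqn:def_kbar}) tensored appropriately, or more directly the observation from Proposition \ref{prop:ses_kbar_not_split} that (\ref{eqn:def_kbar}) represents a nonzero class in $\ext^1_{\fcatk[\finne]}(\overline{\kring}, \kbar)$. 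For higher tensor powers, one can tensor (\ref{eqn:def_kbar}) with $\kbar^{\otimes (a-1)}$ (using exactness of $\otimes$ over the flat module $\kbar^{\otimes(a-1)}$ when $\kring$ is nice, or better, argue via $\theta^*$) to detect that $\kbar^{\otimes a}$ admits nonsplit extensions, hence is not injective. Alternatively, and I think most robustly, apply Theorem \ref{thm:complex_Cab} with the roles reversed: $\ext^1_{\fcatk[\finne]}(\kbar^{\otimes (a-1)}, \kbar^{\otimes a}) \cong \kring \neq 0$ shows $\kbar^{\otimes a}$ is not injective for $a \geq 1$.

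For the third statement, that $\kbar^{\otimes a}$ is never projective, I would use duality or — cleaner — the faithful exact functor $\theta^*$ is the wrong direction here, so instead I would directly invoke Theorem \ref{thm:complex_Cab}: $\ext^1_{\fcatk[\finne]}(\kbar^{\otimes a}, \kbar^{\otimes a+1}) \cong \kring \neq 0$, which immediately shows $\kbar^{\otimes a}$ is not projective. For $a = 0$ this recovers the first assertion of Proposition \ref{prop:ses_kbar_not_split}. The main subtlety — and the only place any care is needed — is making sure the relevant $\ext$-group is genuinely nonzero over an \emph{arbitrary} commutative ring $\kring$ (not just a field), but Theorem \ref{thm:complex_Cab} already gives the answer $\kring$ integrally, since $\epfn(a+1,a)$ and the $\ext^1$ computation are stated for general $\kring$. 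Thus no further hypotheses are needed, and the corollary follows formally.
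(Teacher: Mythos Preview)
Your proposal is correct and matches the paper's own proof: part (1) is exactly Proposition \ref{prop:overline_k_injective}, and parts (2) and (3) follow directly from the single computation $\ext^1_{\fcatk[\finne]}(\kbar^{\otimes a}, \kbar^{\otimes (a+1)}) \cong \kring$ of Theorem \ref{thm:complex_Cab} (applied with $a$ for non-projectivity, and with $a-1$ in place of $a$ for non-injectivity when $a>0$). Your detour through tensoring the short exact sequence (\ref{eqn:def_kbar}) is unnecessary---the direct $\ext^1$ argument you land on at the end is precisely what the paper does, and it works over any $\kring$ without further hypotheses.
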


\begin{proof}
The argument employed in the proof of
Proposition \ref{prop:overline_k_injective} implies that $\overline{\kring} = \kbar^{\otimes 0}$ is injective if and only if $\kring$ is an injective $\kring$-module.

The non-vanishing of 
$$\ext^1_{\fcatk[\finne]} (\kbar^{\otimes a} , \kbar^{\otimes a+1} )\cong \kring $$ 
for any $a\in \nat$, given by Theorem \ref{thm:complex_Cab}, implies the remaining non-projectivity and non-injectivity statements.
\end{proof}

For the next Corollary we use the following Notation:

\begin{nota}
\label{nota:Rba}
For $a \in \nat^*$ and $b \in \nat^*$, let 
$$
R_{b,a} : 
\kring \hom_\Oz (\mathbf{b}, \mathbf{a}) 
\rightarrow 
 \bigoplus _{\substack{\mathbf{b'}\subsetneq \mathbf{b} \\
 |\mathbf{b'}| = |\mathbf{b}|-1 }}
 \kring \hom_\Oz (\mathbf{b'}, \mathbf{a}) 
$$ 
denote the map induced by restriction along subsets $\mathbf{b'} \subsetneq \mathbf{b}$ with $ |\mathbf{b'}| = |\mathbf{b}|-1$. 
\end{nota}

\begin{cor}
\label{cor:Rba}
For $a \in \nat^*$ and $b \in \nat^*$, 
we have 
$$
\hom_{\fcatk[\finne]} (\kbar^{\otimes a},\kbar^{\otimes b}) \cong H^0 (\kz \kring \hom_\Oz (-, \mathbf{a}) (\mathbf{b})) \cong \ker(R_{b,a}).
$$
In particular, this allows us to consider $\hom_{\fcatk[\finne]} (\kbar^{\otimes a},\kbar^{\otimes b}) $ as a submodule of $\kring \hom_\Oz (\mathbf{b}, \mathbf{a})$, equipped with its natural   $\sym_a \times \sym_b\op$-action.
\end{cor}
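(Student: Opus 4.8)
The proof of Corollary \ref{cor:Rba} will essentially be a matter of assembling the identifications already established and reading off the explicit form of the Koszul differential in low degrees.

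\textbf{Plan.} First, I would recall that by Theorem \ref{thm:motivate} combined with Corollary \ref{cor:relate_finjop_cohom} (equivalently Theorem \ref{thm:identify_cosimp_gcmnd_fbcmnd}), there is an isomorphism
\[
\hom_{\fcatk[\finne]} (\kbar^{\otimes a},\kbar^{\otimes b})
\cong
H^0\big(\hom_{\fcatk[\Gamma]}((t^*)^{\otimes a}, \gcmnd^\bullet (t^*)^{\otimes b})\big)
\cong
H^0\big(\kz \kring \hom_\Oz (-, \mathbf{a})\big)(\mathbf{b})
=
H^0\big(\cbb(\mathbf{b}, \mathbf{a})\big),
\]
using Notation \ref{nota:Kz_Cba}. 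This gives the first isomorphism for free, and all of these isomorphisms are $\sym_a \times \sym_b\op$-equivariant (the equivariance of $\kz$ in the first variable is noted in the remark following Notation \ref{nota:Kz_Cba}, and that in the second variable comes from the $\fb\op$-module structure; see also the remark after the proof of Theorem \ref{thm:identify_cosimp_gcmnd_fbcmnd}).

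\textbf{Second step: identify $H^0$ with $\ker(R_{b,a})$.} Since $\cbb(\mathbf{b},\mathbf{a})$ is a cochain complex concentrated in non-negative degrees, $H^0(\cbb(\mathbf{b},\mathbf{a})) = \ker\big(d : \cbb(\mathbf{b},\mathbf{a})^0 \to \cbb(\mathbf{b},\mathbf{a})^1\big)$. Here $\cbb(\mathbf{b},\mathbf{a})^0 = \kring\hom_\Oz(\mathbf{b},\mathbf{a}) \otimes \orient(\emptyset) \cong \kring\hom_\Oz(\mathbf{b},\mathbf{a})$, and $\cbb(\mathbf{b},\mathbf{a})^1 = \bigoplus_{\mathbf{b}' \subset \mathbf{b},\,|\mathbf{b}'|=b-1} \kring\hom_\Oz(\mathbf{b}',\mathbf{a}) \otimes \orient(\mathbf{b}\backslash\mathbf{b}')$. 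By the explicit formula for $d$ recalled in Notation \ref{nota:Kz_Cba}, the component of $d[f]$ landing in the summand indexed by $\mathbf{b}' = \mathbf{b}\backslash\{y\}$ is $[f\circ\iota_y] \otimes (y)$, which is $[f|_{\mathbf{b}'}]$ (tensored with the canonical generator of the rank-one module $\orient(\{y\})$) when $f|_{\mathbf{b}'}$ is still surjective, and zero otherwise. After the canonical identification $\orient(\{y\}) \cong \kring$, this is precisely the component of $R_{b,a}$ indexed by $\mathbf{b}'$, as in Notation \ref{nota:Rba} (restriction along subsets of cardinality $b-1$, with non-surjective restrictions sent to zero via the $\finj\op$-module structure of Proposition \ref{prop:hom_oz_functor_finjop}). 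Hence $d = R_{b,a}$ under these identifications, so $H^0(\cbb(\mathbf{b},\mathbf{a})) = \ker(R_{b,a})$. The final sentence of the Corollary follows since the inclusion $\ker(R_{b,a}) \hookrightarrow \kring\hom_\Oz(\mathbf{b},\mathbf{a})$ is $\sym_a\times\sym_b\op$-equivariant, the action on $\kring\hom_\Oz(\mathbf{b},\mathbf{a})$ being the evident permutation action on source and target.

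\textbf{Main obstacle.} There is no deep obstacle here — the content has all been done earlier. The one point requiring a little care is the bookkeeping of the orientation modules and signs: one must check that matching up the degree-$0$ and degree-$1$ terms of $\cbb(\mathbf{b},\mathbf{a})$ with $\kring\hom_\Oz(\mathbf{b},\mathbf{a})$ and $\bigoplus_{|\mathbf{b}'|=b-1}\kring\hom_\Oz(\mathbf{b}',\mathbf{a})$ respectively is compatible, as $\sym_b\op$-modules, with the permutation action appearing in Notation \ref{nota:Rba} (where no sign twists occur). This is immediate because $\orient(\emptyset) \cong \kring$ is the trivial representation and each $\orient(\{y\})$ is rank one with trivial $\sym_1$-action, so the sign subtleties that matter elsewhere in Section \ref{sect:cohomology} are invisible in degrees $0$ and $1$; one simply records this observation rather than grinding through it.
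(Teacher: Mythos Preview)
Your proposal is correct and follows essentially the same approach as the paper's own proof, which simply cites Theorems \ref{thm:motivate} and \ref{thm:identify_cosimp_gcmnd_fbcmnd} for the first isomorphism and refers to Definition \ref{defn:Cbb} for the second. You have spelled out in more detail what the paper leaves implicit (the explicit identification of the degree-$0$ differential with $R_{b,a}$ and the equivariance check), but the underlying argument is the same.
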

\begin{proof}
For the first isomorphism, combine Theorems \ref{thm:motivate} and \ref{thm:identify_cosimp_gcmnd_fbcmnd} as in the beginning of the proof of Theorem  \ref{thm:complex_Cab}. For the second, use Definition \ref{defn:Cbb}.
\end{proof}

As in Section \ref{subsect:cohom_rep}, the statement of Theorem \ref{thm:complex_Cab} can be refined by taking into account the natural $ \sym_a \times \sym_b\op $-action. 
%Note that we are viewing  $\hom_{\fcatk[\finne]}(\kbar^{\otimes a}, \kbar^{\otimes b} )$ as a submodule of $\hom_{\Oz} (\mathbf{b }, \mathbf{a})$, equipped with its natural   $\sym_a \times \sym_b\op$-action.

\begin{prop}
\label{prop:hom_equivariance}
For $a<b \in \nat^*$, there is an equality in the Grothendieck group of $\sym_a \times \sym_b\op$-modules:
\begin{eqnarray*}
[\hom_{\fcatk[\finne]}(\kbar^{\otimes a}, \kbar^{\otimes b} )]
&=&
(-1)^{b-a+1} [\sgnrep_{\sym_a} \otimes \sgnrep_{\sym_b}] 
\ + 
\\
&&
\sum_{t=0}^{b-a}(-1)^t
\big[\big(
\kring \hom_\Oz (\mathbf{b - t}, \mathbf{a}) 
\otimes 
\sgnrep_{\sym_t} 
\big)\uparrow _{\sym_{b-t}\op \times \sym_t \op}^{\sym_b\op}
\big].
\end{eqnarray*}
\end{prop}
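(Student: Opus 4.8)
The plan is to deduce Proposition \ref{prop:hom_equivariance} directly from the equivariant calculation already carried out for the Koszul complex, by tracking the $\sym_a \times \sym_b\op$-action through the chain of isomorphisms established in the preceding sections. Concretely, by Corollary \ref{cor:Rba} (or the combination of Theorem \ref{thm:motivate} with Theorem \ref{thm:identify_cosimp_gcmnd_fbcmnd} and Corollary \ref{cor:relate_finjop_cohom}), we have an isomorphism
\[
\hom_{\fcatk[\finne]}(\kbar^{\otimes a}, \kbar^{\otimes b} )
\cong
H^0(\mathbb{C}(\mathbf{b}, \mathbf{a})),
\]
and the point is that this isomorphism is $\sym_a \times \sym_b\op$-equivariant. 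The $\sym_a$-action comes from automorphisms of $\mathbf{a}$ acting on $\kring \hom_\Oz(-, \mathbf{a})$ as automorphisms of $\finj\op$-modules (cf. the remark after Notation \ref{nota:Kz_Cba}), and the $\sym_b\op$-action is the intrinsic $\fb\op$-module structure on $\mathbb{C}(-, \mathbf{a})$; both are compatible with all the comparison isomorphisms because $\twist$ is a functor in $\fcatk[\Gamma \times \fb\op]$ and the Koszul construction $\kz$ is functorial (Proposition \ref{Koszul-exact}), as spelled out in the remark at the end of Section \ref{sect:new}.

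First I would invoke Theorem \ref{thm:cohom_equivariance}, which gives the equality in the Grothendieck group of $\sym_a \times \sym_b\op$-modules
\[
[H^0(\mathbb{C}(\mathbf{b}, \mathbf{a}))]
=
(-1)^{b-a+1}[\orient(\mathbf{a}) \otimes \orient(\mathbf{b})]
+
\sum_{t=0}^{b-a}(-1)^t
\big[\big(\kring \hom_\Oz(\mathbf{b-t}, \mathbf{a}) \otimes \orient(\mathbf{b}\backslash(\mathbf{b-t}))\big)\uparrow_{\sym_{b-t}\op \times \sym_t\op}^{\sym_b\op}\big].
\]
Then the only remaining work is to rewrite the right-hand side in the form claimed. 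For this I would use the identification of $\orient(S)$ with the sign representation $\sgnrep_{\sym_{|S|}}$ (noted after Notation \ref{nota:orient}): thus $\orient(\mathbf{a}) \otimes \orient(\mathbf{b}) \cong \sgnrep_{\sym_a} \otimes \sgnrep_{\sym_b}$ as $\sym_a \times \sym_b\op$-modules, and $\orient(\mathbf{b}\backslash(\mathbf{b-t})) \cong \sgnrep_{\sym_t}$ as a $\sym_t\op$-module (here one must be a touch careful: the complement $\mathbf{t}'$ of the canonical inclusion $\mathbf{b-t} \subset \mathbf{b}$ carries the induced order, and the chosen orientation class is a generator, so the $\sym_t\op$-module is $\sgnrep_{\sym_t}$ on the nose). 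Substituting these identifications termwise yields exactly the stated formula; combining with the equivariant isomorphism of the previous paragraph completes the proof.

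The main obstacle, such as it is, lies entirely in the bookkeeping of the equivariance: one must be sure that the isomorphism $\hom_{\fcatk[\finne]}(\kbar^{\otimes a}, \kbar^{\otimes b}) \cong H^0(\mathbb{C}(\mathbf{b}, \mathbf{a}))$ intertwines the two natural actions, and in particular that the $\sym_a$-action on $\hom_{\fcatk[\finne]}$ coming from place permutations of $\kbar^{\otimes a}$ matches the $\sym_a$-action on $\kring \hom_\Oz(-, \mathbf{a})$ coming from automorphisms of $\mathbf{a}$ under the Dold--Kan identification. This is already essentially recorded — Proposition \ref{prop:iso-FB-mod} states that the maps $\dk$ assemble into an isomorphism of $\fb \times \fb\op$-modules, and the stronger equivariant form of Theorem \ref{thm:identify_cosimp_gcmnd_fbcmnd} is flagged in the remark following that theorem — so the argument amounts to citing these statements and observing that every intermediate comparison (Theorem \ref{thm:norm_vs_kz}, the filtration argument of Lemma \ref{lem:NC_filter}, and the quotient map of Proposition \ref{prop:kz_quotient}) is natural in $\mathbf{a}$, hence $\sym_a$-equivariant. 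No genuinely new computation is required; the content of the proof is this compatibility check together with the elementary translation of orientation modules into sign representations.
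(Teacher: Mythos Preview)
Your proposal is correct and matches the paper's approach: the paper does not give an explicit proof of Proposition \ref{prop:hom_equivariance}, but the sentence immediately preceding it (``As in Section \ref{subsect:cohom_rep}, the statement of Theorem \ref{thm:complex_Cab} can be refined by taking into account the natural $\sym_a \times \sym_b\op$-action'') indicates exactly the argument you outline --- invoke Theorem \ref{thm:cohom_equivariance} via the equivariant identification $\hom_{\fcatk[\finne]}(\kbar^{\otimes a}, \kbar^{\otimes b}) \cong H^0(\mathbb{C}(\mathbf{b}, \mathbf{a}))$ and then translate orientation modules into sign representations.
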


\appendix 

 \section{Normalized (co)chain complexes}
 \label{sect:app_norm}

This short appendix fixes our conventions on the normalized cochain complex associated to a cosimplicial object in an abelian category $\cala$. We start by reviewing the slightly more familiar simplicial case.  

First recall (see \cite[Chapter 8]{MR1269324}, for example) that there is an {\em unnormalized} chain complex $C(A_\bullet)$ associated to a simplicial object $A_\bullet$ in $\cala$ with  $C(A_\bullet)_n = A_n$ and differential given by the alternating sum of the face maps. 

The normalized chain complex $N (A _\bullet)$ is defined (see \cite[Definition 8.3.6]{MR1269324}) by 
\[
N_n (A_\bullet) := \bigcap _{i=0} ^{n-1} \ker (\partial_i : A_n \rightarrow A_{n-1}),
\]
where $\partial_i$ denote the face maps, and the differential is given by $d:= (-1)^n \partial_n$. The  inclusion $N (A_\bullet) \rightarrow C(A_\bullet)$ induces an isomorphism in homology.

The chain complex $C(A_\bullet)$ has an acyclic subcomplex $D(A_\bullet) \subset C(A_\bullet)$, where $D(A_\bullet)$ is the subcomplex generated by the degenerate simplices. There is an isomorphism:
 \[
N (A_\bullet) 
\stackrel{\cong}{\rightarrow}
C(A_\bullet) / D (A_\bullet), 
\]
and hence the right hand side gives an alternative definition of the normalized complex.

In the cosimplicial setting, for $C^\bullet$ a cosimplicial object in $\cala$, the associated unnormalized cochain complex $(C^*, d)$ has differential $d$ given by the alternating sum of the coface maps.

\begin{defn}
\label{defn:normalized1}
For $C^\bullet$ a cosimplicial object in $\cala$, the  normalized cochain complex $(\tilde{N} (C^\bullet), \tilde{d})$,  is given by 
\[
\tilde{ N }(C^\bullet)^n:= \mathrm{Coker} \Big( 
 \bigoplus _{i=0}^{n-1}
 C^{n-1}\stackrel{d^i}{\rightarrow} C^n
 \Big )
\]
with differential $\tilde{d} := (-1)^{n+1} d^{n+1} : \tilde{N} (C^\bullet)^n
\rightarrow
\tilde{N} (C^\bullet)^{n+1}$.
\end{defn}

It can be more convenient to work with the isomorphic complex $N (C^\bullet)$ that is given as a subcomplex of the cochain complex $(C^*, d)$; this is analogous to working with the quotient complex $C(A_\bullet) / D (A_\bullet)$ in the simplicial case.

\begin{defn}
\label{defn:normalized}
For $C^\bullet$ a cosimplicial object in $\cala$, let $(N(C^\bullet), d)$ denote the subcomplex of $(C^* , d)$ given by:
\[
N (C^\bullet)^n
:= 
\ker 
\Big ( 
C^n
\stackrel{s^j}{\rightarrow} 
\bigoplus_{j=0}^{n-1} 
C^{n-1}
\Big ).
\]
\end{defn}

As in the simplicial case, one has an isomorphism between the respective constructions of the normalized cochain complex:

\begin{prop}
\label{prop:iso_normalized_cosimp}
The inclusion $(N(C^\bullet), d) \hookrightarrow (C^* ,d)$ is a cohomology equivalence and induces an isomorphism of cochain complexes 
$
(N(C^\bullet), d)
\stackrel{\cong}{\rightarrow} 
(\tilde{N} (C^\bullet), \tilde{d}).
$
\end{prop}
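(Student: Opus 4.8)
The plan is to reduce the statement to the classical facts about the normalized chain complex of a simplicial object recalled at the start of this appendix, via the observation that a cosimplicial object in $\cala$ is precisely a simplicial object in the abelian category $\cala\op$. First I would set up the dictionary: let $A_\bullet$ be the simplicial object of $\cala\op$ with $A_n := C^n$, whose face maps $\partial_i : A_n \to A_{n-1}$ are the coface operators $d^i : C^{n-1} \to C^n$ of $C^\bullet$ read in $\cala\op$, and whose degeneracies $\sigma_j : A_n \to A_{n+1}$ are the codegeneracy operators $s^j : C^{n+1} \to C^n$ read in $\cala\op$. Since kernels in $\cala$ are cokernels in $\cala\op$ and conversely, this identification turns the subobject $N(C^\bullet)^n = \ker\big(C^n \xrightarrow{(s^j)_{j<n}} \bigoplus_{j<n} C^{n-1}\big)$ of $C^n$ into the quotient $A_n \big/ \sum_{j<n} \mathrm{im}(\sigma_j) = C(A_\bullet)_n / D(A_\bullet)_n$, and it turns the quotient $\tilde N(C^\bullet)^n = \mathrm{coker}\big(\bigoplus_{i<n} C^{n-1} \xrightarrow{(d^i)} C^n\big)$ of $C^n$ into the subobject $\bigcap_{i<n} \ker(\partial_i) = N_n(A_\bullet)$. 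Using the cosimplicial identities $d^{j}d^{i} = d^{i}d^{j-1}$ for $i<j$ one then checks that the differentials correspond to the classical ones under the sign conventions fixed in Definitions \ref{defn:normalized1} and \ref{defn:normalized}; this is routine bookkeeping.

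With the dictionary in place the proposition becomes the $\cala\op$-dual of two classical statements, both recalled above. The inclusion $N(C^\bullet)\hookrightarrow C^*$ dualizes to the canonical surjection $C(A_\bullet)\twoheadrightarrow C(A_\bullet)/D(A_\bullet)$, which is a quasi-isomorphism because $D(A_\bullet)$ is acyclic; hence $N(C^\bullet)\hookrightarrow C^*$ is a cohomology equivalence. The composite $N(C^\bullet)\hookrightarrow C^*\twoheadrightarrow \tilde N(C^\bullet)$ dualizes to the composite $N(A_\bullet)\hookrightarrow C(A_\bullet)\twoheadrightarrow C(A_\bullet)/D(A_\bullet)$, which is the classical isomorphism $N(A_\bullet)\xrightarrow{\cong} C(A_\bullet)/D(A_\bullet)$ (equivalently, $A_n = N_n(A_\bullet)\oplus D_n(A_\bullet)$ in $\cala\op$); hence $N(C^\bullet)\to \tilde N(C^\bullet)$ is an isomorphism of cochain complexes. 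All the simplicial inputs are exactly those quoted in this appendix (cf. \cite[\S8.3]{MR1269324}).

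The only point that needs care (there is no conceptual obstacle) is making the $\cala\op$-dualization and the attendant sign conventions entirely precise — in particular checking that the residual differential $(-1)^{n+1}d^{n+1}$ on $\tilde N(C^\bullet)$ corresponds to the convention $(-1)^n\partial_n$ on $N(A_\bullet)$, and that both the inclusion $N(C^\bullet)\hookrightarrow C^*$ and the projection $C^*\twoheadrightarrow \tilde N(C^\bullet)$ are genuine morphisms of cochain complexes. If one prefers a self-contained argument avoiding the opposite category, the same conclusions follow from the Eilenberg--Zilber-type direct-sum decomposition $C^n \cong N(C^\bullet)^n \oplus \sum_{i<n}\mathrm{im}(d^i)$, established internally in $\cala$ by the usual induction on $n$ using the cosimplicial identities; both assertions of the proposition can then be read off directly from this decomposition.
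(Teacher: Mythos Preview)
Your proposal is correct and aligns with the paper's approach: the paper does not actually give a proof of this proposition, stating it as a standard fact introduced by the sentence ``As in the simplicial case, one has an isomorphism between the respective constructions of the normalized cochain complex.'' Your argument via the $\cala\op$-dictionary is precisely the dualization the paper is gesturing at, spelled out in more detail than the paper itself provides.
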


\section{Cosimplicial objects from comonads}
\label{cosimplicial}

In this appendix, we associate to a comonad two natural coaugmented cosimplicial objects of a different nature. The first construction is a cobar-type cosimplicial resolution, extending the canonical augmented simplicial object recalled in Proposition \ref{prop:simplicial}, and the second one encodes the notion of morphisms of comodules.

 \begin{nota}
 For a category $\calc$, let $(\perp, \Delta, \epsilon)$ be a comonad, where $\Delta : \perp \rightarrow \perp\perp $ and $\epsilon : \perp \rightarrow \id$ are the structure natural transformations.  
\begin{enumerate}
\item  
 The structure morphism of a $\perp$-comodule $M$ is denoted $\psi_M : M \rightarrow \perp M$ and this $\perp$-comodule  by $(M, \psi_M)$.
\item 
The category of $\perp$-comodules in $\calc$ is denoted by $\calc_\perp$ and 
the morphisms  in $\calc_\perp$   by $\hom_\perp (-,- ) $.
\item 
Morphisms in $\calc$ are denoted simply by $\hom (-,-)$ to that there is a forgetful map
$ \hom_\perp (-,- )\rightarrow \hom (-,-)$.
\end{enumerate}
\end{nota}

We record the standard fact:

\begin{prop}
\label{prop:comodules_abelian}
Let $\calc$ be an abelian category equipped with a comonad $(\perp, \Delta, \epsilon)$ such that $\perp$ is exact. Then the category $\calc_\perp$ of $\perp$-comodules in $\calc$ has a unique abelian category structure for which the forgetful functor $\calc_\perp \rightarrow \calc$ is exact.
\end{prop}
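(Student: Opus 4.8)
The statement to prove is Proposition \ref{prop:comodules_abelian}, the standard fact that the category $\calc_\perp$ of $\perp$-comodules in an abelian category $\calc$, for an exact comonad $(\perp, \Delta, \epsilon)$, carries a unique abelian structure making the forgetful functor $U : \calc_\perp \to \calc$ exact. My plan is to proceed by explicitly constructing kernels and cokernels in $\calc_\perp$ and then invoking the characterization of abelian categories, rather than appealing to a general monadicity-style black box; this keeps the argument self-contained and makes the exactness of $U$ transparent.

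First I would observe that $\calc_\perp$ is additive: the zero object $0$ of $\calc$ has a unique comodule structure, finite products/coproducts of comodules are computed in $\calc$ with the componentwise structure map (using that $\perp$, being exact, preserves finite biproducts so $\psi_{M \oplus N} = \psi_M \oplus \psi_N$ is again coassociative and counital), and $\hom_\perp$-sets are subgroups of the $\hom$-groups in $\calc$, with composition bilinear. Next, the key step: given a morphism $f : (M,\psi_M) \to (N,\psi_N)$ in $\calc_\perp$, form $k : K \to M$, the kernel of $f$ in $\calc$. Since $\perp$ is exact, $\perp k : \perp K \to \perp M$ is the kernel of $\perp f$. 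Because $\psi_M \circ k$ followed by $\perp f$ equals $(\perp f)(\psi_M) k = \psi_N (f k) = 0$, the universal property of $\perp k$ produces a unique $\psi_K : K \to \perp K$ with $(\perp k)\psi_K = \psi_M k$. One then checks $\psi_K$ is coassociative and counital — each identity is verified by post-composing with the relevant monomorphism ($\perp \perp k$, resp.\ $k$) and using the corresponding identity for $\psi_M$ together with naturality of $\Delta$ and $\epsilon$; monicity of these maps (exactness of $\perp$ again) forces the identities to hold. This makes $(K,\psi_K)$ a comodule and $k$ a comodule morphism, and a short diagram chase shows it is the kernel of $f$ in $\calc_\perp$; by construction $U$ sends it to the kernel in $\calc$. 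The cokernel is dual: take $c : N \to C$ the cokernel of $f$ in $\calc$, note $\perp c$ is the cokernel of $\perp f$ by exactness, and produce $\psi_C$ via the universal property of $\perp c$ from $(\perp c)\psi_N : N \to \perp C$, which kills $f$; the comodule axioms follow by pre-composing with the epimorphism $c$ (resp.\ $\perp c$).

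With kernels and cokernels in hand, it remains to verify the abelian axiom that every monomorphism is a kernel and every epimorphism a cokernel. Here I would use that $U$ is faithful and, by the constructions above, preserves and reflects kernels and cokernels, hence reflects monos and epis; given a monic $f$ in $\calc_\perp$, $Uf$ is monic in $\calc$, so $Uf$ is the kernel of its cokernel $U(\mathrm{coker}\, f)$ in $\calc$, and lifting this along $U$ (using that $U$ preserves the relevant kernel) shows $f$ is the kernel of $\mathrm{coker}\, f$ in $\calc_\perp$; the epimorphism case is dual. This establishes that $\calc_\perp$ is abelian and that $U$, preserving both kernels and cokernels, is exact. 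Finally, uniqueness: any abelian structure on $\calc_\perp$ for which $U$ is exact must have the same kernels and cokernels as the ones constructed (since exact functors preserve them and $U$ is faithful, a kernel in $\calc_\perp$ is determined by its image), and an additive category is determined as an abelian category by its kernels and cokernels together with the ambient additive structure — which is itself forced since $U$ must preserve biproducts.

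The routine verifications — coassociativity and counitality of the induced structure maps $\psi_K$, $\psi_C$, and the diagram chases identifying the universal properties — are the bulk of the work but are mechanical. The only point requiring genuine care, and thus the main obstacle, is the systematic use of exactness of $\perp$: it is needed repeatedly to know that $\perp$ applied to a kernel/cokernel diagram is again a kernel/cokernel diagram, which is precisely what lets the universal properties produce the comodule structure maps and forces the comodule axioms via mono/epi cancellation. I would make this explicit once at the start and then apply it uniformly.
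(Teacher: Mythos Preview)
Your proposal is essentially correct and follows the standard approach. The paper does not actually prove this proposition: it is introduced with ``We record the standard fact'' and stated without proof, so there is no argument to compare against.

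Two small slips worth noting. First, in the cokernel construction you write ``produce $\psi_C$ via the universal property of $\perp c$'', but the map $\psi_C : C \to \perp C$ is obtained from the universal property of $c$ (the cokernel of $f$ in $\calc$) applied to $(\perp c)\psi_N$; the fact that $\perp c$ is the cokernel of $\perp f$ is used afterwards, when verifying coassociativity and counitality by pre-composition with the epimorphism. Second, you say $U$ ``reflects monos and epis'' but then use the implication in the other direction (monic in $\calc_\perp$ implies monic in $\calc$); this is $U$ \emph{preserving} monos, and it follows because $U$ preserves kernels and, in an additive category with kernels, a morphism is monic iff its kernel is zero. With these corrections the argument goes through.
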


Recall the  natural augmented simplicial object $\perp^{\bullet +1} X$ associated to any  $X \in \ob \calc$. 

\begin{prop}\cite[8.6.4]{MR1269324} 
\label{prop:simplicial}
For a  comonad $(\perp, \Delta, \epsilon)$ on the category $\calc$ and $X \in \ob \calc$, there is a natural augmented simplicial object $\perp^{\bullet +1} X$, which is $\perp^{n+1} X$ in simplicial degree $n$ and has face operators induced by $\epsilon$ and degeneracies by $\Delta$: 
\begin{eqnarray*}
\delta_i := \perp^i \epsilon_{\perp^{n-i}X}& \ :\ & \perp^{n+1}X \rightarrow \perp^n X \\
\sigma_i :=  \perp^i \Delta_{\perp^{n-i} X}& \ :\ &  \perp^{n+1}X \rightarrow \perp^{n+2} X. 
\end{eqnarray*}

The augmentation is given by $\epsilon _X : \perp X \rightarrow X$. 
\end{prop}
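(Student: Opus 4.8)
The statement is the classical simplicial bar construction attached to a comonad, recalled here from \cite[8.6.4]{MR1269324}; the plan is to verify it directly. First I would note that each proposed structure map $\delta_i = \perp^i \epsilon_{\perp^{n-i}X}$ and $\sigma_i = \perp^i \Delta_{\perp^{n-i}X}$ is a genuine morphism of $\calc$, which is immediate from the functoriality of $\perp$ together with the fact that $\epsilon$ and $\Delta$ are natural transformations; likewise naturality in $X$ is automatic, since every structure map is assembled functorially from $\perp$, $\epsilon$ and $\Delta$.

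The substance is then the case-by-case check of the simplicial identities, which splits cleanly according to which comonad axiom (or mere naturality) is invoked. For $\delta_i\delta_j = \delta_{j-1}\delta_i$ with $i<j$: after factoring out the common outer power $\perp^i$, both composites become iterates of $\epsilon$, and the identity reduces to the naturality square of $\epsilon$ applied to a morphism of the form $\epsilon_{\perp^{\bullet}X}$, so that no axiom beyond naturality is needed. Dually, $\sigma_i\sigma_j = \sigma_{j+1}\sigma_i$ with $i<j$ follows from naturality of $\Delta$, whereas the remaining case $i=j$ is exactly the coassociativity axiom $\Delta_{\perp Y}\circ\Delta_Y = \perp\Delta_Y\circ\Delta_Y$ with $Y = \perp^{n-i}X$ (again with an outer $\perp^i$ prepended). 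For the mixed relations $\delta_i\sigma_j$, the generic ranges $i<j$ and $i>j+1$ are once more instances of the naturality of $\epsilon$ against $\Delta$, while the two degenerate cases $i=j$ and $i=j+1$ are precisely the two counit identities $\epsilon_{\perp Y}\circ\Delta_Y = \id_{\perp Y} = \perp\epsilon_Y\circ\Delta_Y$, giving $\delta_i\sigma_j = \id$.

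Finally I would check the augmentation: that $\epsilon_X : \perp X \to X$ coequalizes $\delta_0 = \epsilon_{\perp X}$ and $\delta_1 = \perp\epsilon_X$ on $\perp^2 X$, i.e. $\epsilon_X\circ\epsilon_{\perp X} = \epsilon_X\circ\perp\epsilon_X$, which is just naturality of $\epsilon$ applied to the morphism $\epsilon_X$ itself. Assembling these verifications shows that $\perp^{\bullet+1}X$ is an augmented simplicial object, naturally in $X$.

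There is no genuine obstacle here, since everything is forced by the comonad axioms and naturality; the only point demanding care is the bookkeeping of indices in the case analysis — in particular keeping straight that, among the degenerate cases of the $\delta\sigma$ relations, the case $i=j$ uses one counit identity and $i=j+1$ the other, and that the single non-naturality case of the $\sigma\sigma$ relations is coassociativity. (Alternatively, one may simply cite \cite[8.6.4]{MR1269324}.)
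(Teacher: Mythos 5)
The paper does not give a proof of this proposition; it simply records the classical statement with a citation to \cite[8.6.4]{MR1269324}. Your direct verification is correct and is exactly the standard case analysis one would carry out: the $\delta\delta$ identities reduce to naturality of $\epsilon$, the $\sigma\sigma$ identity with $i<j$ to naturality of $\Delta$ and the case $i=j$ to coassociativity, the degenerate $\delta\sigma$ cases $i=j$ and $i=j+1$ to the two counit identities, and the augmentation check to the naturality square of $\epsilon$ applied to $\epsilon_X$. One small precision worth noting in your treatment of the generic $\delta\sigma$ cases: the range $i<j$ unwinds to a naturality square of $\epsilon$ against a $\Delta$-built morphism, whereas the range $i>j+1$ unwinds to a naturality square of $\Delta$ against an $\epsilon$-built morphism; your phrase ``naturality of $\epsilon$ against $\Delta$'' glosses over this symmetry but the underlying diagrams are the right ones. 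In short, your proof is correct and supplies what the paper only cites.
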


The augmented simplicial object $\perp^{\bullet +1} X$ of Proposition \ref{prop:simplicial} has the form :
\[
\xymatrix{
\ar@{.>}[rr]
&&
\perp \perp \perp X
\ar@<-2ex>@/_1pc/@{.>}[ll]
\ar@<-1ex>[rr]
\ar[rr]
\ar@<1ex>[rr]
&&
\perp \perp X
\ar@<-2ex>@/_1pc/[ll]
\ar@<-1ex>@/_1pc/[ll]
\ar@<.75ex>[rr]|{ \epsilon_{\perp X}}
\ar@<-.75ex>[rr]|{ \perp \epsilon_X}
&&
\perp X
\ar@<-1ex>@/_1pc/[ll]|{\Delta_X}
\ar[rr]|{ \epsilon_X}
&&
X
}
\]

%%%%%%%%%%%%%%%%%%%%%%%%%%%%%%%%%%%%%%%%%%%%%%%%%%%%%%%%%%%%%%%%%%%%%%%
\subsection{A cobar-type cosimplicial construction} 
\label{subsect:cobar}

In this section we show that, when $X \in \ob \calc$ is a $\perp$-comodule and the comonad $\perp$ is equipped with a coaugmentation $\eta: \id \rightarrow \perp$ satisfying Hypothesis \ref{hyp:coaugmentation}, then the  augmented simplicial object $\perp^{\bullet +1} X$  given in Proposition \ref{prop:simplicial} can be extended  naturally to a  coaugmented cosimplicial object. This is a cobar-type cosimplicial construction for $\perp$-comodules.

For this,  we recall that there is an augmented simplicial object underlying any cosimplicial object. (See Notation \ref{nota:ordinals} for the notation used.)

\begin{prop}
\label{prop:embed_augsimp_cosimp}
There is an embedding of categories $\aord\op \hookrightarrow \ord$ given by 
$[n] \mapsto [n+1]$  on objects,  for $-1 \leq n \in \zed$ and 
$\delta_i \mapsto  s_i$, 
$\sigma_j  \mapsto  d_{j+1}$. 
 In particular, for any category $\calc$, restriction induces a functor 
$\ord \calc \rightarrow \aord \op \calc$ from the category $\ord \calc$ of  cosimplicial objects in $\calc$  to the category  $\aord \op \calc$ of  augmented simplicial objects in $\calc$.
\end{prop}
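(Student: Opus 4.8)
The statement to be proved is Proposition \ref{prop:embed_augsimp_cosimp}, namely that the prescribed assignment $[n] \mapsto [n+1]$, $\delta_i \mapsto s_i$, $\sigma_j \mapsto d_{j+1}$ defines a functor $\aord\op \hookrightarrow \ord$, and moreover that it is an embedding, so that restriction along it sends a cosimplicial object to an augmented simplicial object. The plan is to verify this directly on generators and relations. The categories $\aord\op$ and $\ord$ both have well-known presentations: $\ord$ is generated by the cofaces $d_i : [n] \to [n+1]$ and codegeneracies $s_j : [n+1] \to [n]$ subject to the cosimplicial identities, and $\aord\op$ is generated by the (opposite) faces $\delta_i$ and degeneracies $\sigma_j$ subject to the opposite of the simplicial identities. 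So the first step is simply to write down both sets of identities.

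First I would record the target relations among the $d_{j+1}$ and $s_i$ inside $\ord$ (with the index shifts applied) and the source relations among the $\delta_i, \sigma_j$ in $\aord\op$, and then check them off one by one. Concretely: the identity $\sigma_i \sigma_j = \sigma_{j+1}\sigma_i$ for $i \le j$ in $\aord\op$ must map to $d_{j+2} d_{i+1} = d_{i+1} d_{j+1}$ for $i+1 \le j+1$, i.e. the cosimplicial coface identity $d_k d_\ell = d_\ell d_{k-1}$ for $\ell < k$ with $k = j+2$, $\ell = i+1$; the identity $\delta_i \delta_j = \delta_j \delta_{i-1}$ for $i>j$ (the opposite-simplicial face identity, which is $\delta_j \delta_i = \delta_i \delta_{j+1}$ read the other way) must map to the codegeneracy identity $s_i s_j = s_j s_i$ for the appropriate index range; and the mixed relations $\sigma_j \delta_i$ must map to the correct instance of the mixed cosimplicial identity relating $d_{j+1}$ and $s_i$, with the three cases ($i < j$, $i \in \{j, j+1\}$, $i > j+1$) of the mixed simplicial identity matching the three cases of the mixed cosimplicial identity after the shift. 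This is the routine bookkeeping core of the argument; I would present it compactly rather than exhaustively, since each case is a one-line substitution.

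Having checked that the assignment respects all relations, functoriality follows. To see it is an \emph{embedding} (faithful and injective on objects), I would note that on objects $[n] \mapsto [n+1]$ is clearly injective, and on morphisms one can argue via the normal-form / epi-mono factorization in $\ord$: every morphism of $\ord$ factors uniquely as a composite of codegeneracies followed by cofaces, and the image of $\aord\op$ consists exactly of those morphisms $[m+1] \to [n+1]$ preserving the top element $m+1 \mapsto n+1$ (equivalently, whose factorization never involves $s_0$ or $d_0$ in the relevant sense). Distinct morphisms of $\aord\op$, having distinct normal forms, map to distinct morphisms of $\ord$. Finally, the last sentence of the statement is immediate: precomposition (restriction) along any functor $\aord\op \to \ord$ induces a functor $\ord\calc \to \aord\op\calc$ for every category $\calc$.

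The main obstacle is purely notational: keeping the index shifts straight across the three translations involved (opposite simplicial $\leftrightarrow$ simplicial via Notation \ref{nota:opp_simp}, simplicial $\leftrightarrow$ cosimplicial by dualizing, and the $+1$ shift in the object assignment) so that each simplicial identity lands on the correct cosimplicial identity with the correct case division. There is no conceptual difficulty — the content is that $\aord\op$ sits inside $\ord$ as the subcategory of last-element-preserving maps — but the verification must be done carefully to avoid off-by-one errors in the coface/codegeneracy indices.
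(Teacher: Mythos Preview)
Your proposal is correct. The direct verification on generators and relations that you outline works exactly as you describe, and the index bookkeeping you sketch is right (each simplicial identity in $\aord\op$ does land on the corresponding cosimplicial identity after the shift $\delta_i \mapsto s_i$, $\sigma_j \mapsto d_{j+1}$).

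The paper, however, does not carry out this verification. Its entire proof is a one-line reference: the argument is said to be analogous to the proof that Connes' cyclic category is isomorphic to its opposite, as given in \cite[Proposition 6.1.11]{MR1600246}. So the comparison is: you give a self-contained elementary proof by checking the presentation, while the paper offloads the work to a known structural result about $\ord$-like categories. Your approach buys independence from the cited source and makes the index correspondence explicit (which is actually useful later in the paper when the cosimplicial structures are manipulated); the paper's approach is terser and signals to the reader that this is a standard combinatorial fact of the same flavour as cyclic self-duality, rather than something specific to the present setting.
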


\begin{proof}
Analogous to the proof  that Connes' cyclic category is isomorphic to its opposite given in \cite[Proposition 6.1.11]{MR1600246}.
\end{proof}

To define the coaugmented cosimplicial object of Proposition \ref{prop:cosimp_comod} we require that the comonad $\perp$ satisfies the following:

\begin{hyp}
\label{hyp:coaugmentation}
Suppose that there exists a natural transformation $\eta : \id \rightarrow \perp$ such that 
\begin{enumerate}
\item 
$\epsilon \circ \eta = 1_\calc$;
\item 
the following diagram commutes:
\[
\xymatrix{
\id 
\ar[r]^{\eta}
\ar[d]_{\eta}
&
\perp
\ar[d]^{\Delta}
\\
\perp 
\ar[r]_{\eta_{\perp}}
&
\perp \perp .
}
\]
\end{enumerate}
\end{hyp}

The augmented simplicial object of Proposition \ref{prop:simplicial} extends to give the following cosimplicial object in $\calc$.

\begin{prop}
\label{prop:cosimp_comod}
Let $(\perp, \Delta, \epsilon)$ be a comonad on the category $\calc$, equipped with a coaugmentation $\eta$ satisfying Hypothesis \ref{hyp:coaugmentation}. 
For $(X,\psi_X)  \in \ob \calc_\perp$, there is a natural cosimplicial object $\cosimp^\bullet X$ in $\calc$ with  $\cosimp^n X = \perp ^n X$ in cosimplicial degree $n$ and structure morphisms
\[
\xymatrix{
X
\ar@<.75ex>[rr]|{d^0= \eta_X}
\ar@<-.75ex>[rr]| {d^1= \psi_X}
&&
\perp X
\ar@/_1pc/@<-.5ex>[ll]|{s^0 = \epsilon_X}
\ar@<1.5ex>[rr]|{d^0= \eta_{\perp X}}
\ar[rr]|{d^1 = \Delta_X}
\ar@<-1.5ex>[rr]| {d^2= \perp \psi_X
}
&&
\perp \perp X
\ar@/_1pc/@<-2.5ex>[ll]|{s^0 = \epsilon_{\perp X}}
\ar@/_1pc/@<-1ex>[ll]|{s^1 = \perp \epsilon_X}
\ar@{.>}[rr]
&&
\ldots
\ar@{.>}@<-1ex>@/_1pc/[ll]
}
\]
given explicitly as follows: 
$d^i_{\perp^l X} \in \hom_\calc (\perp^l X, \perp^{l+1} X)$ for $0 \leq i \leq l+1$ and $s^j_{\perp^{l+1} X} \in \hom_\calc (\perp^{l+1} X , \perp ^l X)$ for $0 \leq j \leq l$, 
\begin{eqnarray*}
d^i_{\perp^l X} 
&=& 
\left\{
\begin{array}{ll}
\eta_{\perp^l X}  & i =0 \\
\perp^{i-1} \Delta_{\perp^{l-i} X}& 0< i < l+1\\
\perp^l \psi_X & i = l+1; 
\end{array}
\right.
\\
s^j_{\perp^{l+1} X} 
&= &
\perp^j \epsilon_{\perp^{l-j} X}.
\end{eqnarray*}

The underlying augmented simplicial object given by Proposition \ref{prop:embed_augsimp_cosimp} is the augmented simplicial object $\perp^{\bullet +1}X$ of Proposition \ref{prop:simplicial}.

If $\calc$ is abelian and $\perp$ is exact, then $\cosimp^\bullet$ is an exact functor $\calc_\perp  \rightarrow \ord \calc$. 
\end{prop}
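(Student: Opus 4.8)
The plan is to establish the cosimplicial identities for the families $\{d^i\}$ and $\{s^j\}$ by a direct case analysis, organised according to how many of the two \emph{exceptional} coface maps — the coaugmentation $d^0 = \eta$ and the terminal coface $d^{l+1} = \perp^l \psi_X$ — occur in the relation being checked. Once the identities are in hand, the remaining two assertions (the underlying augmented simplicial object, and exactness) are essentially formal.

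First I would dispose of the block of identities involving only the codegeneracies $s^j = \perp^j \epsilon$ and the \emph{interior} cofaces $d^i = \perp^{i-1}\Delta$ with $1 \le i \le l$. Under the embedding $\aord\op \hookrightarrow \ord$ of Proposition \ref{prop:embed_augsimp_cosimp}, these are exactly the face maps $\delta_j = \perp^j \epsilon$ and degeneracy maps $\sigma_{i-1} = \perp^{i-1}\Delta$ of the augmented simplicial object $\perp^{\bullet+1}X$ of Proposition \ref{prop:simplicial}, and the relevant cosimplicial identities translate into the simplicial identities for that object, i.e.\ into the counit and coassociativity axioms of $\perp$. Hence this block is automatic. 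Moreover this very computation proves the assertion about the underlying augmented simplicial object: by the way the $d^i$ and $s^j$ are defined, restricting $\cosimp^\bullet X$ along $\aord\op \hookrightarrow \ord$ returns precisely $\perp^{\bullet+1}X$.

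Next I would treat the identities in which $d^0$ or $d^{l+1}$ appears. The equality $s^0 d^0 = \id$ is the first clause $\epsilon \circ \eta = 1$ of Hypothesis \ref{hyp:coaugmentation}; the coface identity $d^1 d^0 = d^0 d^0$ unfolds to $\Delta \circ \eta = \eta_\perp \circ \eta$, the commuting square of the second clause of Hypothesis \ref{hyp:coaugmentation}; and the mixed relations $s^j d^0 = d^0 s^{j-1}$ and $d^i d^0 = d^0 d^{i-1}$ are naturality of $\eta$ with respect to $\epsilon$, respectively $\Delta$. Symmetrically, after peeling off the outer $\perp^{(\cdot)}$, the identities involving $d^{l+1} = \perp^l \psi_X$ reduce to the two comodule axioms for $(X, \psi_X)$ — the counit axiom $\epsilon_X \circ \psi_X = \id_X$ (this gives $s^l d^{l+1} = \id$) and coassociativity $\Delta_X \circ \psi_X = (\perp \psi_X) \circ \psi_X$ (this gives the top relation $d^{l+2} d^{l+1} = d^{l+1} d^{l+1}$) — together with naturality of $\psi$, $\Delta$, $\epsilon$. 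The single identity simultaneously featuring $d^0$ and $d^{l+1}$ is naturality of $\eta$ applied to $\psi_X$, and naturality of $X \mapsto \cosimp^\bullet X$ is clear since every structure map is a component of a natural transformation. I expect the main obstacle to be purely organisational: keeping the reindexing $\sigma_j \leftrightarrow d^{j+1}$ and the index ranges straight so that no boundary instance of a cosimplicial identity is overlooked. There is no conceptual content beyond the three lists of axioms above.

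Finally, for exactness, assume $\calc$ abelian and $\perp$ exact. By Proposition \ref{prop:comodules_abelian} the category $\calc_\perp$ is abelian and the forgetful functor $\calc_\perp \to \calc$ is exact, so a short exact sequence of $\perp$-comodules becomes a short exact sequence in $\calc$ after forgetting; applying the exact functor $\perp^n$ in each cosimplicial degree $n$ then shows that $X \mapsto \cosimp^\bullet X$ carries short exact sequences to short exact sequences in $\ord\calc$, exactness there being tested degreewise. Thus $\cosimp^\bullet : \calc_\perp \to \ord\calc$ is exact.
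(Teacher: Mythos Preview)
Your proposal is correct and follows essentially the same approach as the paper: both observe that the interior cosimplicial identities are already encoded in the augmented simplicial object $\perp^{\bullet+1}X$, and then reduce the remaining identities (those involving the exceptional cofaces $d^0=\eta$ and $d^{l+1}=\perp^l\psi_X$) to Hypothesis \ref{hyp:coaugmentation}, the comodule axioms, and naturality. Your treatment is in fact slightly more explicit than the paper's, in particular you spell out the exactness argument, which the paper leaves implicit.
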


\begin{proof}
The construction in the statement is natural with respect to the $\perp$-comodule $X$. Moreover, by inspection, the underlying augmented simplicial object is $\perp^{\bullet +1} X$. Hence, to verify the cosimplicial identities it suffices to check those involving $\eta$ or $\psi$. Moreover, it is straightforward to deduce these from those for the displayed part of the structure. 

The required identities follow from: 
\begin{eqnarray*}
\perp \psi_X \circ \psi_X &= &\Delta_X \circ \psi_X, \mbox{ by coassociativity of $\psi_X$}
\\
\eta_{\perp X} \circ \psi_X &=& \perp \psi_X \circ \eta_X, \mbox{ by naturality of $\eta$}
\\
\Delta_X \circ \eta_X &=& \eta_{\perp X} \circ \eta_X, \mbox{ by Hypothesis \ref{hyp:coaugmentation} (2)}
\\
\epsilon_X \circ \psi_X &=& 1_X, \mbox{ by the counital property of $\psi_X$}
\\
\epsilon_X \circ \eta_X &=& 1_X, \mbox{ by Hypothesis \ref{hyp:coaugmentation} (1)}
\\
\epsilon_{\perp X} \circ \perp \psi_X &=& \psi_X \circ \epsilon_X, \mbox{ by naturality of $\epsilon$}
\\
\perp \epsilon_X \circ \eta_{\perp X}
&=&
\eta_X \circ \epsilon_X, \mbox{ by naturality of $\eta$.}
 \end{eqnarray*}
\end{proof}

The above construction defines a cohomology theory for $\perp$-comodules in $\calc$:

\begin{defn}
\label{defn:perp_cohom}
Let $(\perp, \Delta, \epsilon)$ be a comonad on the  abelian category $\calc$ such that $\perp $ is exact,  equipped with a coaugmentation $\eta$ satisfying Hypothesis \ref{hyp:coaugmentation}. For $(X,\psi_X) \in \ob \calc_\perp$ and $n \in \nat$, let $H^n_\perp (X)$ be the $n$-th cohomology  of the complex associated to $\cosimp^\bullet X$. 
\end{defn}

\begin{exam}
\label{exam:perp_primitives}
Using the notation of Definition \ref{defn:perp_cohom}, one has the identification 
\[
H^0_\perp (X) = \mathrm{equalizer} \big(
\xymatrix{
X 
\ar@<.75ex>[r]|-{\eta_X} 
\ar@<-.75ex>[r]|-{\psi_X}
&
\perp X
}
\big). 
\]
This can be considered as the primitives of the $\perp$-comodule $X$ (with respect to $\eta$). 
\end{exam}

In order to justify considering the above as a cohomology theory for $\perp$-comodules, it is important to understand its behaviour on cofree $\perp$-comodules; namely, for $Y \in \ob \calc$, one considers the cofree comodule $(\perp Y, \Delta_Y)$  in $\calc_\perp$. By Proposition \ref{prop:cosimp_comod}, we have the cosimplicial object $\cosimp^\bullet (\perp Y)$.

\begin{prop}
\label{prop:cobar_perp}
Let $(\perp, \Delta, \epsilon)$ be a comonad on the category $\calc$, equipped with a coaugmentation $\eta$ satisfying Hypothesis \ref{hyp:coaugmentation}. For $Y \in \ob \calc$, the cosimplicial object $\cosimp^\bullet (\perp Y)$ associated to $(\perp Y, \Delta_Y) \in \ob \calc_\perp$ is coaugmented via 
\[
Y \stackrel{\eta_Y} {\rightarrow} \perp Y = \cosimp ^0 (\perp Y).
\] 
Moreover, the counit $\eta_Y : \perp Y \rightarrow Y$ equips $\cosimp ^\bullet (\perp Y)$ with an extra codegeneracy. 

If $\calc$ is abelian, the extra codegeneracy provides a chain nulhomotopy, so that the complex associated to the coaugmented  cosimplicial object $Y \rightarrow \cosimp^\bullet( \perp Y)$ is acyclic. 
\end{prop}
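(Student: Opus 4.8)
The statement to prove is Proposition~\ref{prop:cobar_perp}: for $Y \in \ob \calc$, the cosimplicial object $\cosimp^\bullet(\perp Y)$ associated to the cofree $\perp$-comodule $(\perp Y, \Delta_Y)$ is coaugmented via $\eta_Y : Y \to \perp Y = \cosimp^0(\perp Y)$, the counit $\epsilon_Y : \perp Y \to Y$ provides an extra codegeneracy, and (when $\calc$ is abelian) the resulting augmented/coaugmented complex is acyclic. The overall strategy is the standard one for cobar-type resolutions: exhibit the extra codegeneracy explicitly, verify the simplicial-type identities relating it to the existing coface and codegeneracy operators, and then invoke the classical fact that a cosimplicial object with an extra codegeneracy (equivalently, a contracting codegeneracy) yields a cochain complex that is null-homotopic.

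\textbf{Step 1: the coaugmentation.} First I would check that $\eta_Y : Y \to \cosimp^0(\perp Y) = \perp Y$ is a coaugmentation, i.e.\ that $d^0 \circ \eta_Y = d^1 \circ \eta_Y$ as maps $Y \to \cosimp^1(\perp Y) = \perp\perp Y$. By Proposition~\ref{prop:cosimp_comod}, for the comodule $(\perp Y, \Delta_Y)$ we have $d^0 = \eta_{\perp Y}$ and $d^1 = \psi_{\perp Y} = \Delta_Y$. So the required identity is $\eta_{\perp Y} \circ \eta_Y = \Delta_Y \circ \eta_Y$, which is precisely Hypothesis~\ref{hyp:coaugmentation}(2) evaluated at $Y$.

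\textbf{Step 2: the extra codegeneracy.} Define $s^{-1} := \epsilon_{\perp^{\ell-1} Y} : \perp^\ell Y \to \perp^{\ell-1} Y$ in cosimplicial degree $\ell$ (for $\ell \geq 1$), with $s^{-1} : \cosimp^0(\perp Y) = \perp Y \to Y$ given by $\epsilon_Y$ matching the coaugmentation target. I would then verify the extra-codegeneracy identities: $s^{-1} d^0 = \id$ (this is $\epsilon \circ \eta = 1$, Hypothesis~\ref{hyp:coaugmentation}(1), since the outermost $d^0$ is $\eta$), the commutation relations $s^{-1} d^i = d^{i-1} s^{-1}$ for $i \geq 1$ and $s^{-1} s^j = s^{j-1} s^{-1}$ for $j \geq 0$. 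These all follow from the naturality of $\epsilon$ and the comonad/comodule axioms, exactly as in the list of identities at the end of the proof of Proposition~\ref{prop:cosimp_comod}; the key point is that $s^{-1}$ strips off the \emph{outermost} $\perp$ while all the $d^i$ with $i \geq 1$ and all the $s^j$ with $j \geq 0$ act on inner factors, so they slide past each other by naturality, and the only interaction with $d^0 = \eta$ is the unital identity. This is bookkeeping, not a difficulty.

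\textbf{Step 3: null-homotopy.} With the extra codegeneracy $s^{-1}$ in hand, the coaugmented cochain complex $C^*$ with $C^{-1} = Y$, $C^n = \perp^{n+1} Y$, and differential the alternating sum of cofaces, acquires a contracting homotopy $h^n := (-1)^{n} s^{-1} : C^n \to C^{n-1}$ (sign convention to be fixed so that $dh + hd = \id$); this is the dual of the classical fact (cf.\ \cite[8.4.6--8.4.8]{MR1269324} for the simplicial case) that an augmented simplicial object with extra degeneracy has augmentation a chain homotopy equivalence. One computes $d h + h d = \id$ on each $C^n$ directly from the identities of Step 2; abelianness of $\calc$ is used only here, to make sense of the alternating-sum differential and of $dh + hd$. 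Acyclicity of the coaugmented complex is then immediate.

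\textbf{Main obstacle.} I expect no serious obstacle: the proof is entirely formal. The one point requiring a little care is the sign convention in Step 3 — getting $h^n = \pm s^{-1}$ with the right signs so that $dh + hd$ telescopes to the identity rather than to zero or $2\cdot\id$ — and, relatedly, making sure the ``extra codegeneracy'' is indexed consistently with the conventions of Appendix~\ref{sect:app_norm} and Proposition~\ref{prop:embed_augsimp_cosimp} (where the augmented simplicial structure underlying a cosimplicial object sends $\sigma_j \mapsto d_{j+1}$, so that $s^{-1}$ here corresponds to the top face operator of the underlying augmented simplicial object, matching the extra degeneracy statement of Proposition~\ref{prop:simplicial}'s dual). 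Once the indexing is pinned down, everything reduces to the comonad axioms and Hypothesis~\ref{hyp:coaugmentation}, already verified in the proof of Proposition~\ref{prop:cosimp_comod}.
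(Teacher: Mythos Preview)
Your proposal is correct and follows essentially the same approach as the paper: verify the coaugmentation using Hypothesis~\ref{hyp:coaugmentation}(2), exhibit the extra codegeneracy via the counit $\epsilon$, and use Hypothesis~\ref{hyp:coaugmentation}(1) to obtain the contracting homotopy. The paper's proof is considerably terser (three sentences, leaving the identity checks implicit), but the underlying argument is identical; your explicit Step~2 verification of the extra-codegeneracy identities and the care about indexing/signs in Step~3 are exactly the details the paper suppresses.
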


\begin{proof}
The second condition of Hypothesis  \ref{hyp:coaugmentation} ensures that $Y \stackrel{\eta_Y}{\rightarrow} \perp Y$ equalizes $d^0$ and $d^1$, hence defines a coaugmentation of the cosimplicial object $\cosimp^\bullet (\perp Y)$.

The counit $ \perp Y \stackrel{\eta_Y}{\rightarrow} Y$ therefore provides an extra codegeneracy of the coaugmented cosimplicial object. Since $\epsilon_Y \circ \eta_Y$ is the identity on $Y$ by the first condition of Hypothesis  \ref{hyp:coaugmentation},  the extra codegeneracy defines a chain nulhomotopy.
\end{proof}

The formal properties of $H^*_\perp$ are summarized by:

\begin{cor}
\label{cor:delta_perp}
Let $(\perp, \Delta, \epsilon)$ be a comonad on the  abelian category $\calc$ such that $\perp $ is exact,  equipped with a coaugmentation $\eta$ satisfying Hypothesis \ref{hyp:coaugmentation}.

Then $H^* _\perp$ is a $\delta$-functor on $\calc$ such that, for $Y \in \ob \calc$:
\[
H^n_\perp  (\perp Y)
\cong 
\left\{
\begin{array}{ll}
Y & n=0 \\
0 & n>0.
\end{array}
\right.
\] 
\end{cor}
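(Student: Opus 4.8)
The plan is to derive the corollary from the two structural properties of the cobar construction $\cosimp^\bullet$ already in hand: its exactness as a functor on $\perp$-comodules (Proposition \ref{prop:cosimp_comod}) and the acyclicity of the coaugmented cobar complex on a cofree comodule (Proposition \ref{prop:cobar_perp}). Recall that, by Definition \ref{defn:perp_cohom}, $H^n_\perp(X)$ is the $n$-th cohomology of the cochain complex associated to the cosimplicial object $\cosimp^\bullet X$, so $H^*_\perp$ is a priori a sequence of functors on the category $\calc_\perp$ of $\perp$-comodules, which is abelian by Proposition \ref{prop:comodules_abelian}.

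For the $\delta$-functor property, I would first observe that the passage from a cosimplicial object of $\calc$ to its associated cochain complex is an exact functor $\ord\calc \to \cochain(\calc)$, since it is performed degreewise, the differential being a fixed alternating sum of (natural) coface maps. Precomposing with the exact functor $\cosimp^\bullet \colon \calc_\perp \to \ord\calc$ of Proposition \ref{prop:cosimp_comod}, one finds that every short exact sequence $0 \to X' \to X \to X'' \to 0$ in $\calc_\perp$ yields a short exact sequence of cochain complexes in $\calc$. The usual long exact sequence in cohomology, together with the naturality of its connecting morphisms $\delta^n \colon H^n_\perp(X'') \to H^{n+1}_\perp(X')$, then exhibits $(H^*_\perp, \delta)$ as a cohomological $\delta$-functor on $\calc_\perp$. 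This step is purely formal.

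For the computation on cofree comodules, I would fix $Y \in \ob\calc$ and consider $(\perp Y, \Delta_Y) \in \ob\calc_\perp$. By Proposition \ref{prop:cobar_perp}, the cosimplicial object $\cosimp^\bullet(\perp Y)$ is coaugmented by $\eta_Y \colon Y \to \cosimp^0(\perp Y) = \perp Y$, and the counit $\epsilon_Y \colon \perp Y \to Y$ furnishes an extra codegeneracy, so that the augmented cochain complex
\[
0 \longrightarrow Y \xrightarrow{\eta_Y} \cosimp^0(\perp Y) \xrightarrow{d} \cosimp^1(\perp Y) \xrightarrow{d} \cdots
\]
is acyclic, the contracting homotopy being supplied by the extra codegeneracy. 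Acyclicity says exactly that $\eta_Y$ is a monomorphism identifying $Y$ with $\ker\bigl(d \colon \cosimp^0(\perp Y) \to \cosimp^1(\perp Y)\bigr)$ and that $H^n$ of the complex associated to $\cosimp^\bullet(\perp Y)$ vanishes for $n \geq 1$. Since the differential $\cosimp^0(\perp Y) \to \cosimp^1(\perp Y)$ is $d^0 - d^1 = \eta_{\perp Y} - \Delta_Y$, its kernel is the equalizer of $\eta_{\perp Y}$ and $\psi_{\perp Y} = \Delta_Y$, which by Example \ref{exam:perp_primitives} is precisely $H^0_\perp(\perp Y)$. Hence $H^0_\perp(\perp Y) \cong Y$ and $H^n_\perp(\perp Y) = 0$ for $n > 0$, as required.

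The substance of the corollary is really contained in Proposition \ref{prop:cobar_perp}, so I do not expect a genuine obstacle; the only point demanding a little care is the bookkeeping that matches ``the complex associated to the coaugmented cosimplicial object'' of Proposition \ref{prop:cobar_perp} with the complex computing $H^*_\perp$ in Definition \ref{defn:perp_cohom}, and checking that the contracting homotopy coming from the extra codegeneracy is compatible with the identification between the unnormalized and normalized cochain complexes (Appendix \ref{sect:app_norm}), so that one may freely work with whichever is convenient. I would finish by recording the consequence used afterwards: since $\psi_X \colon X \hookrightarrow \perp X$ is a split monomorphism into a cofree comodule, the vanishing statement makes $H^*_\perp$ effaceable, which is the form in which the corollary is invoked in Proposition \ref{prop:FIop-cohom}.
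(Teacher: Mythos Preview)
Your proposal is correct and follows essentially the same approach as the paper: exactness of $\cosimp^\bullet$ (Proposition \ref{prop:cosimp_comod}) yields the $\delta$-functor structure via the long exact sequence in cohomology, and Proposition \ref{prop:cobar_perp} gives the acyclicity on cofree comodules. Your treatment is simply more detailed than the paper's terse two-line proof, in particular you spell out the identification $H^0_\perp(\perp Y)\cong Y$ via Example \ref{exam:perp_primitives} and record the effaceability consequence explicitly, both of which the paper leaves implicit.
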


\begin{proof}
By Proposition \ref{prop:cosimp_comod}, $\cosimp^\bullet $ is an exact functor 
 from $\calc_\perp$ to $\ord \calc$. In particular, it sends a short exact sequence of $\perp$-comodules to a short exact sequence of $\ord \calc$. Taking cohomotopy therefore gives a long exact sequence in cohomology, thus giving the $\delta$-functor structure. 
 
Proposition \ref{prop:cobar_perp} then shows that this $\delta$-functor is effacable (see  \cite[Exercise 2.4.5]{MR1269324}, for example, for this notion) by cofree $\perp$-comodules. 
\end{proof}

%%%%%%%%%%%%%%%%%%%%%%%%%%%%%%%%%%%%%%%%%%%%%%%%%%%%%%%%%%%%%%%%%%%%%%
\subsection{The cosimplicial object $\hom (M, \perp^\bullet N)$}
\label{sect:homextseq}

In Proposition \ref{prop:coaugmented_cosimp} we show that, given a comonad $\perp$ on a category $\calc$, the natural inclusion of morphisms of $\perp$-comodules  extends to a natural coaugmented cosimplicial object. 

In the abelian setting, when $\perp$ is an additive functor, this gives rise to an associated complex. Theorem \ref{thm:extbar} gives an interpretation of the first two cohomology groups.

 \begin{defn}
 \label{defn:stub_cosimp}
 For $M, N \in \ob \calc_\perp$, define the natural transformations:
 \[
 \xymatrix{
 \hom (M,N) 
 \ar@<.5ex>[r]^{d^0}
 \ar@<-.5ex>[r]_{d^1}
 &
 \hom (M, \perp N) 
 \ar@<-1ex>@/_1pc/[l]|{s^0}
 }
 \]
 by $d^0 \gamma := (\perp \gamma)\psi_M $; $d^1 \gamma := \psi_N \gamma$ and $s^0 \zeta:= \epsilon_N \zeta$ for $\gamma \in \hom (M,N) $ and $\zeta \in \hom (M, \perp N)$ corresponding to composites in the (non-commutative) diagram:
 \[
 \xymatrix{
 M \ar[r]^{\psi_M} 
 \ar[d]_\gamma
 &
 \perp M 
 \ar[d]^{\perp \gamma}
 \\
 N 
 \ar[r]
_{\psi_N} 
&
\perp N
\ar[r]_{\epsilon_N}
&
N. 
 }
 \] 
 \end{defn}
 
\begin{prop}
\label{prop:stub_cosimp}
Using the notation of Definition \ref{defn:stub_cosimp}, 
\begin{enumerate}
\item 
$
\xymatrix{
\hom_\perp (M,N)
\ar[r]
&
\hom (M,N)
 \ar@<.5ex>[r]^{d^0}
 \ar@<-.5ex>[r]_{d^1}
 &
 \hom  (M,\perp N)
}
$
is an equalizer, where $\hom_\perp (M,N) \rightarrow \hom (M,N)$ is the canonical inclusion; 
\item 
there are equalities 
$
s^0 d^0 = \mathrm{id} = s^0 d^1.
$
\end{enumerate} 
\end{prop}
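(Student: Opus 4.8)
\textbf{Proof plan for Proposition \ref{prop:stub_cosimp}.}

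The plan is to verify the two statements by unwinding the definitions of $d^0$, $d^1$, $s^0$ given in Definition \ref{defn:stub_cosimp} and applying the comonad and comodule axioms together with naturality. For the second statement, which is the simpler one, I would compute $s^0 d^0 (\gamma) = \epsilon_N \circ (\perp \gamma) \circ \psi_M$. By naturality of the counit $\epsilon : \perp \to \id$, one has $\epsilon_N \circ (\perp \gamma) = \gamma \circ \epsilon_M$, so this becomes $\gamma \circ \epsilon_M \circ \psi_M = \gamma$, using the counit axiom $\epsilon_M \circ \psi_M = \id_M$ for the $\perp$-comodule $(M, \psi_M)$. Similarly, $s^0 d^1 (\gamma) = \epsilon_N \circ \psi_N \circ \gamma = \gamma$, again by the counit axiom, this time for $(N, \psi_N)$. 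Hence $s^0 d^0 = \id = s^0 d^1$.

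For the first statement, I would first observe that $\hom_\perp(M,N) \to \hom(M,N)$ is injective since it is a forgetful map on morphism sets, and that its image consists exactly of those $\gamma \in \hom(M,N)$ for which the defining square of a comodule morphism commutes, i.e. $\psi_N \circ \gamma = (\perp \gamma) \circ \psi_M$. But $\psi_N \circ \gamma = d^1 \gamma$ and $(\perp \gamma) \circ \psi_M = d^0 \gamma$, so the image is precisely $\{\gamma : d^0 \gamma = d^1 \gamma\}$, which is the equalizer of $d^0$ and $d^1$ as a subset of $\hom(M,N)$. To conclude that the diagram is an equalizer diagram in the ambient category (here, sets or abelian groups), I would check the universal property: given any $f : Z \to \hom(M,N)$ with $d^0 f = d^1 f$, the element $f$ factors through $\hom_\perp(M,N)$ because each value $f(z)$ satisfies the comodule morphism condition; uniqueness of the factorization is immediate from injectivity of the inclusion.

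There is no real obstacle here; the statement is essentially a repackaging of the definition of a morphism of comodules, and the only points requiring care are bookkeeping ones: making sure the square in Definition \ref{defn:stub_cosimp} is read correctly (it is deliberately non-commutative, and commutativity of the relevant triangle/square is exactly the comodule-morphism condition), and invoking the correct instance of naturality of $\epsilon$ and the correct comodule counit axiom in each of the two identities of part (2). I would present the argument in a few lines, citing the counit axiom for $\perp$-comodules and naturality of $\epsilon$ explicitly.
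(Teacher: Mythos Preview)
Your proposal is correct and follows essentially the same approach as the paper: the paper's proof is a terse two-sentence version of exactly what you wrote, invoking the definition of a morphism of $\perp$-comodules for part (1) and the counit axiom together with naturality of $\epsilon$ for part (2). Your expanded version makes the verifications explicit, which is fine.
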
 
 
\begin{proof}
The first statement is by definition of a morphism of $\perp$-comodules, corresponding to the commutativity of the square in Definition \ref{defn:stub_cosimp}. The second is a straightforward verification, using the fact that $\epsilon_N  \psi_N$ is the identity on $N$ (likewise for $M$) and the naturality of $\epsilon$.
\end{proof} 

The following is clear:

\begin{lem}
\label{lem:hom_aug_simp}
For $M, N \in \ob \calc_\perp$, the natural augmented simplicial object $\perp ^{\bullet +1} N$ in $\calc$ given by Proposition \ref{prop:simplicial} induces a natural augmented simplicial structure:  
 $$\hom (M, \perp ^{\bullet +1} N).$$
\end{lem}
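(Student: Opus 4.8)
The plan is to deduce the statement from the general fact that functors preserve (augmented) simplicial objects, so there is essentially nothing to prove beyond unwinding the definitions. First I would record the elementary principle: for any functor $\Phi \colon \calc \to \cald$ and any augmented simplicial object $A_\bullet \colon \aord\op \to \calc$, the composite $\Phi A_\bullet \colon \aord\op \to \cald$ is an augmented simplicial object in $\cald$, with face, degeneracy and augmentation morphisms the images under $\Phi$ of those of $A_\bullet$. Indeed, the defining (augmented) simplicial identities are equalities between composites of the structure morphisms of $A_\bullet$, and any functor preserves such equalities.

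Then I would apply this with $\Phi = \hom_\calc (M, -)$ and $A_\bullet = \perp^{\bullet +1} N$, the augmented simplicial object of Proposition \ref{prop:simplicial} associated to $N$ (only the underlying object of $N \in \ob \calc_\perp$ is used). Concretely, $\hom (M, \perp^{\bullet +1} N)$ has $\hom_\calc (M, \perp^{n+1} N)$ in simplicial degree $n \geq 0$, face maps $\hom_\calc (M, \delta_i)$ and degeneracies $\hom_\calc (M, \sigma_j)$ with $\delta_i, \sigma_j$ as in Proposition \ref{prop:simplicial}, and augmentation $\hom_\calc (M, \epsilon_N) \colon \hom_\calc (M, \perp N) \to \hom_\calc (M, N)$, the term $\hom_\calc (M, N)$ sitting in degree $-1$. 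When $\calc$ is abelian and $\perp$ additive (the case of interest), $\hom_\calc (M, -)$ takes values in abelian groups, so one obtains an augmented simplicial abelian group.

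For the asserted naturality, I would note that $\hom_\calc (-, -)$ is a bifunctor and $N \mapsto \perp^{\bullet +1} N$ is functorial (Proposition \ref{prop:simplicial}), so $(M, N) \mapsto \hom (M, \perp^{\bullet +1} N)$ is a functor, contravariant in $M$ and covariant in $N$, with values in augmented simplicial objects; in particular a morphism $M' \to M$ in $\calc$ induces a morphism of augmented simplicial objects $\hom (M, \perp^{\bullet +1} N) \to \hom (M', \perp^{\bullet +1} N)$, and similarly in $N$. There is no genuine obstacle here; the only point worth flagging is that this lemma yields merely the augmented \emph{simplicial} structure, the two further coface operators $d^0, d^1$ of Definition \ref{defn:stub_cosimp} being extra data, and that assembling everything into the cosimplicial object of Proposition \ref{Prop:augmented} is carried out separately.
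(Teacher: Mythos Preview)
Your proposal is correct and matches the paper's approach: the paper simply states that the lemma ``is clear'' and gives no proof, and your argument spells out precisely the routine verification behind this --- that $\hom_\calc(M,-)$, being a functor, carries the augmented simplicial object $\perp^{\bullet+1}N$ to an augmented simplicial object in sets (or abelian groups). Your additional remarks on naturality and on the distinction from the extra cofaces $d^0,d^{t+1}$ added in Proposition~\ref{prop:coaugmented_cosimp} are accurate and helpful context.
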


The following result shows that this extends (as in Proposition \ref{prop:embed_augsimp_cosimp}) to a cosimplicial object:

\begin{prop}
\label{prop:coaugmented_cosimp}
For $M, N \in \ob \calc_\perp$, there is a natural cosimplicial object 
$\hom (M, \perp^\bullet N)$ given by $\hom(M, \perp^t N)$ in cosimplicial degree $t$, such that:
\begin{enumerate}
\item 
the underlying augmented simplicial object is that of Lemma \ref{lem:hom_aug_simp}; 
\item 
the remaining coface maps $d^0, d^{t+1} :\hom(M, \perp^t N) \rightrightarrows 
 \hom(M, \perp^{t+1} N)$ are given by 
 \begin{eqnarray*}
 d^0 \gamma &=& (\perp \gamma) \psi_M \\
 d^{t+1} \gamma &=& (\perp^t \psi_N)\gamma 
\end{eqnarray*}
for $\gamma \in \hom(M, \perp^t N)$;
\item 
the canonical inclusion $\hom_\perp (M,N) \rightarrow \hom (M,N)$ defines a coaugmentation of the cosimplicial object $\hom (M, \perp^\bullet N)$.
\end{enumerate}
If $\calc$ is an abelian category and $\perp $ is an additive functor, then this gives a coaugmented cosimplicial abelian group.
\end{prop}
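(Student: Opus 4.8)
\textbf{Proof proposal for Proposition \ref{prop:coaugmented_cosimp}.}

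The plan is to reduce the statement to Proposition \ref{prop:cosimp_comod}, which already produces a coaugmented cosimplicial object $\cosimp^\bullet X$ from a $\perp$-comodule $X$ when $\perp$ carries a coaugmentation $\eta$ satisfying Hypothesis \ref{hyp:coaugmentation}. The issue is that $\hom(M, \perp^\bullet N)$ is built from the \emph{augmented simplicial} object $\perp^{\bullet+1} N$ of Proposition \ref{prop:simplicial}, which is associated to the bare object $N \in \ob\calc$ (not using any coaugmentation of $\perp$), and then one applies the contravariant functor $\hom(M, -)$. So the cosimplicial structure here cannot come from a coaugmentation of $\perp$ in the sense of Hypothesis \ref{hyp:coaugmentation}; instead it comes from the fact that $M$ and $N$ are themselves $\perp$-comodules. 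Concretely, first I would observe that, by Lemma \ref{lem:hom_aug_simp}, applying $\hom(M,-)$ to $\perp^{\bullet+1}N$ gives an augmented simplicial abelian group (or set, in the non-additive case) whose structure maps are $\hom(M, \delta_i)$ and $\hom(M,\sigma_j)$; these reverse variance, so the simplicial face maps $\delta_i = \perp^i \epsilon_{\perp^{n-i}N}$ of $\perp^{\bullet+1}N$ become the \emph{codegeneracies} $s^i := \hom(M, \perp^i \epsilon_{\perp^{n-i}N})$ of $\hom(M,\perp^\bullet N)$, and the degeneracies $\sigma_j = \perp^j \Delta_{\perp^{n-j}N}$ become interior cofaces $d^{j+1} := \hom(M, \perp^j \Delta_{\perp^{n-j}N})$, exactly matching the embedding $\aord\op \hookrightarrow \ord$ of Proposition \ref{prop:embed_augsimp_cosimp}.

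Next I would supply the two \emph{new} coface maps $d^0$ and $d^{t+1}$, as in the statement: $d^0\gamma = (\perp\gamma)\psi_M$ uses the comodule structure of $M$, and $d^{t+1}\gamma = (\perp^t\psi_N)\gamma$ uses that of $N$. The point of Proposition \ref{prop:stub_cosimp} is precisely that in cosimplicial degree $0$ and $1$ these maps, together with $s^0 = \hom(M,\epsilon_N)$, already satisfy the needed identities $s^0 d^0 = \id = s^0 d^1$, and that $\ker(d^0 - d^1)$ recovers $\hom_\perp(M,N)$, which gives the coaugmentation in item (3). To verify the full set of cosimplicial identities in all degrees, I would note that the identities \emph{not} involving $d^0$ or $d^{t+1}$ hold because they are image under $\hom(M,-)$ of the simplicial identities for $\perp^{\bullet+1}N$ (Proposition \ref{prop:simplicial}), reindexed via Proposition \ref{prop:embed_augsimp_cosimp}. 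The remaining identities—those relating $d^0$, or $d^{t+1}$, to the interior cofaces, to the codegeneracies, and to each other—reduce, after cancelling the common factor $\gamma$ or pulling it through a naturality square, to a short list of equalities among $\psi_M$, $\psi_N$, $\Delta$, $\epsilon$: coassociativity of the comodule structure maps ($\perp\psi_N \circ \psi_N = \Delta_N \circ \psi_N$ and the iterated versions), counitality ($\epsilon_N\psi_N = \id$, $\epsilon_M\psi_M = \id$), and naturality of $\Delta$ and $\epsilon$. This is the same bookkeeping as in the proof of Proposition \ref{prop:cosimp_comod} (indeed one can phrase $\hom(M,\perp^\bullet N)$ as obtained by applying $\hom(M,-)$ to a ``two-sided'' cobar construction and cite that proof almost verbatim), so I would present it by listing the relevant identities rather than grinding each one out.

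Finally, for the last sentence: if $\calc$ is abelian and $\perp$ additive, then each $\hom(M,\perp^t N)$ is an abelian group, and every structure map $d^i$, $s^j$ is additive in $\gamma$ (for the interior maps and codegeneracies this is because $\hom(M,-)$ lands in abelian groups and post-composition is additive; for $d^0$ and $d^{t+1}$ it is because $\perp$, hence $\perp\gamma \mapsto (\perp\gamma)\psi_M$, is additive and pre/post-composition is additive). Hence the diagram lands in cosimplicial abelian groups, and the coaugmentation $\hom_\perp(M,N)\hookrightarrow \hom(M,N)$ is a group homomorphism, so we obtain a coaugmented cosimplicial abelian group. The main obstacle is purely organizational: keeping the variance-reversal and the reindexing $[n]\mapsto[n+1]$ straight so that the simplicial identities of $\perp^{\bullet+1}N$ translate correctly into the cosimplicial identities not involving $d^0,d^{t+1}$, and then checking that the two genuinely new cofaces interact correctly with everything—but no new idea beyond Propositions \ref{prop:simplicial}, \ref{prop:embed_augsimp_cosimp}, \ref{prop:cosimp_comod} and \ref{prop:stub_cosimp} is required.
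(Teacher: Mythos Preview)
Your proposal is correct and follows essentially the same approach as the paper: both reduce to checking the cosimplicial identities involving the new cofaces $d^0$ and $d^{t+1}$ (the others being inherited from the augmented simplicial structure of Lemma~\ref{lem:hom_aug_simp}), observe via Proposition~\ref{prop:stub_cosimp} that it suffices to verify these in low degrees, and then derive them from the comonad and comodule axioms (coassociativity, counitality, naturality of $\Delta$ and $\epsilon$). Your write-up is somewhat more explicit about the reindexing via Proposition~\ref{prop:embed_augsimp_cosimp} and the analogy with Proposition~\ref{prop:cosimp_comod}, whereas the paper simply states that ``because of the nature of the construction'' the low-degree identities suffice and works one example; but the mathematical content is the same.
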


\begin{proof}
Proposition \ref{prop:stub_cosimp} establishes the cosimplicial structure up to cosimplicial degree one together with the coaugmentation.

Because of the nature of the construction,  it is  sufficient to verify the cosimplicial identities for 
\begin{eqnarray*}
d^i d^j &:& \hom (M,N) \rightarrow \hom (M, \perp\perp  N) \\
s^i d^j &:& \hom (M, \perp N) \rightarrow \hom (M, \perp N).
\end{eqnarray*}

These are consequences of the axioms for a comonad and comodules over this comonad. For example, $d^0 d^0 \gamma$ is, by definition, the composite
$$
\big(\perp ( (\perp \gamma) \psi_M)\big) \psi _M
= 
(\perp \perp \gamma ) (\perp \psi_M) \psi_M
$$
whereas $d^1 d^0 \gamma$ is the composite 
$
\Delta _N (\perp \gamma) \psi_M.
$

That these coincide follows from the commutativity of the following diagram:
\[
\xymatrix{
M \ar[r]^{\psi_M} 
\ar[d]_{\psi_M}
&
\perp M 
\ar[d]|{\Delta_M} 
\ar[r]^{\perp \gamma} 
&
\perp N 
\ar[d]^{\Delta_N}
\\
\perp M 
\ar[r]_{\perp \psi _M}
&
\perp \perp M 
\ar[r]_{\perp \perp \gamma}
&
\perp \perp N,
}
\]
where the left hand square commutes by coassociativity  for $M$ and the right hand square by naturality of $\Delta$. 
The remaining verifications are similar and are left to the reader. 

Finally, if $\calc$ is abelian and $\perp$ is additive, then each $\hom(M, \perp^t N)$ is an abelian group, the functor $\perp$ induces a group morphism $\hom (-,-) \rightarrow \hom(\perp -, \perp -)$ so that, together with biadditivity of composition, the above construction yields a coaugmented cosimplicial abelian group. 
\end{proof} 

\begin{exam}
\label{exam:cosimp_t=1}
For $t=1$, the morphisms $d^i$ correspond to the three possible composites in the (non-commutative) diagram 
\[
\xymatrix{
M 
\ar[r]^{\psi_M} 
\ar[d]_\zeta 
&
\perp M 
\ar[d]^{\perp \zeta}
\\
\perp N
\ar@<.3ex>[r] ^{\Delta_N} 
\ar@<-.3ex>[r]_{\perp \psi_N} 
&
\perp \perp N
}
\]
and $s^0, s^1$  are induced respectively by the composite with  $\epsilon _{\perp N} ,  \perp \epsilon_N
 : \perp \perp N \rightrightarrows \perp N$. 
\end{exam}

\begin{cor}
\label{cor:aug_cochain}
Suppose that $\calc$ is abelian and $\perp $ is additive, then the complex associated to the coaugmented cosimplicial object of Proposition \ref{prop:coaugmented_cosimp} for $M, N \in \ob \calc_\perp$ yields a natural coaugmented cochain complex in abelian groups:
\[
0
\rightarrow 
\hom_\perp (M, N) 
\rightarrow 
\hom (M, N) 
\rightarrow 
\hom (M, \perp N)
\rightarrow 
\hom (M, \perp\perp N) 
\rightarrow
\ldots ,
\]
with $\hom (M, \perp^t N)$ placed in cohomological degree $t$. 
\end{cor}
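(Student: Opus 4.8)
The statement to be established is Corollary \ref{cor:aug_cochain}, which asserts that the coaugmented cosimplicial abelian group produced by Proposition \ref{prop:coaugmented_cosimp} gives rise, in the usual way, to a coaugmented cochain complex. The plan is to invoke the general machinery relating cosimplicial objects in an abelian category to cochain complexes, specialized to the abelian category of abelian groups. All the real content has already been assembled in Proposition \ref{prop:coaugmented_cosimp}: it produces, for $M, N \in \ob \calc_\perp$, a cosimplicial abelian group $\hom(M, \perp^\bullet N)$ with $t$-th term $\hom(M, \perp^t N)$, together with a coaugmentation $\hom_\perp(M,N) \to \hom(M,N)$ that equalizes the two cofaces $d^0, d^1$ out of cosimplicial degree $0$. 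So the argument is essentially a matter of unwinding definitions.

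First I would recall the standard construction: to any cosimplicial object $C^\bullet$ in an abelian category one associates the unnormalized cochain complex $(C^*, d)$ with $C^t$ in degree $t$ and differential $d = \sum_{i=0}^{t+1} (-1)^i d^i : C^t \to C^{t+1}$, the alternating sum of the coface maps (this is exactly the convention fixed in Appendix \ref{sect:app_norm}). The cosimplicial identities among the $d^i$ — which Proposition \ref{prop:coaugmented_cosimp} guarantees hold for $\hom(M, \perp^\bullet N)$ — imply $d \circ d = 0$ by the usual sign cancellation, so $(\hom(M, \perp^\bullet N), d)$ is a genuine cochain complex, with $\hom(M, \perp^t N)$ in cohomological degree $t$ and zero in negative degrees.

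Next I would incorporate the coaugmentation. By Proposition \ref{prop:coaugmented_cosimp}(3) the canonical inclusion $\iota : \hom_\perp(M,N) \to \hom(M,N)$ satisfies $d^0 \iota = d^1 \iota$, hence $d \circ \iota = (d^0 - d^1)\iota = 0$; placing $\hom_\perp(M,N)$ in degree $-1$ and using $\iota$ as the degree $(-1 \to 0)$ differential therefore extends $(\hom(M,\perp^\bullet N), d)$ to a coaugmented cochain complex
\[
0 \to \hom_\perp(M,N) \xrightarrow{\iota} \hom(M,N) \xrightarrow{d} \hom(M,\perp N) \xrightarrow{d} \hom(M,\perp\perp N) \to \cdots .
\]
Finally, exactness at $\hom(M,N)$ — i.e. that $\iota$ identifies $\hom_\perp(M,N)$ with $\ker(d^0 - d^1) = \ker d$ in degree $0$ — is precisely the equalizer statement of Proposition \ref{prop:stub_cosimp}(1), so the sequence is exact at the coaugmentation spot; this justifies writing the leading $0 \to \hom_\perp(M,N)$. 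Naturality in $M$ and $N$ is inherited termwise from the naturality already recorded in Proposition \ref{prop:coaugmented_cosimp}, and additivity of $\perp$ ensures every term and every differential is a homomorphism of abelian groups.

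\textbf{Main obstacle.} There is essentially no obstacle: the only point requiring care is bookkeeping of the sign conventions, namely checking that the alternating-sum differential built from the cofaces $d^i$ of Proposition \ref{prop:coaugmented_cosimp} squares to zero and that the coaugmentation $\iota$ lands in its kernel. Both are immediate from, respectively, the cosimplicial identities and Proposition \ref{prop:stub_cosimp}, so the corollary is a formal consequence of the results already proved; the proof is a one-line citation of the standard passage from cosimplicial objects to cochain complexes together with Proposition \ref{prop:stub_cosimp}(1) for exactness at the coaugmentation.
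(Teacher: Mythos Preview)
Your proposal is correct and matches the paper's treatment: the paper gives no proof for this corollary, regarding it as an immediate formal consequence of Proposition \ref{prop:coaugmented_cosimp} and the standard passage from cosimplicial abelian groups to their associated unnormalized cochain complexes (Appendix \ref{sect:app_norm}). Your additional remark about exactness at $\hom(M,N)$ via Proposition \ref{prop:stub_cosimp}(1) is correct but goes slightly beyond what the bare statement requires; it is, however, exactly the $H^0$ identification recorded later in Theorem \ref{thm:extbar}.
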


In the rest of this section we assume:

\begin{hyp}
The category $\calc$ is abelian and $\perp$ is exact.
\end{hyp}

\begin{rem}
Proposition \ref{prop:comodules_abelian} implies that the category $\calc_\perp$ of $\perp$-comodules in $\calc$ is an abelian category and the forgetful functor $\calc_\perp \rightarrow \calc$ is exact. 
\end{rem}

In order to give an interpretation of the first and second cohomology groups of the cochain complex obtained in  Corollary \ref{cor:aug_cochain} we need the following notation.

\begin{nota}
Denote by
\begin{enumerate}
\item 
$\ext^1 (-,-)$ the group defined via classes of extensions in $\calc$; 
\item  
$\ext^1_\perp (-, -)$ the group of classes of extensions in $\calc_\perp$. 
\end{enumerate} 
\end{nota}

The following is clear:

\begin{prop}
\label{prop:Ext_perp_restrict}
The forgetful functor induces a natural transformation
\[
\ext^1_\perp (-, -)
\rightarrow 
\ext^1 (-, -).
\] 
\end{prop}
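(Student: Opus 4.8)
The statement to prove is Proposition~\ref{prop:Ext_perp_restrict}: the forgetful functor $\calc_\perp \rightarrow \calc$ induces a natural transformation $\ext^1_\perp (-,-) \rightarrow \ext^1 (-,-)$. The plan is to check that the forgetful functor sends short exact sequences of $\perp$-comodules to short exact sequences in $\calc$, that this assignment respects the Baer sum and the equivalence relation defining extension classes, and that it is natural in both variables.

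First I would recall that, by Proposition~\ref{prop:comodules_abelian} (invoked via the Remark preceding the statement), the category $\calc_\perp$ is abelian and the forgetful functor $U : \calc_\perp \rightarrow \calc$ is exact. Consequently, given a $\perp$-comodule extension
\[
0 \rightarrow N \rightarrow E \rightarrow M \rightarrow 0
\]
in $\calc_\perp$, applying $U$ yields a short exact sequence $0 \rightarrow UN \rightarrow UE \rightarrow UM \rightarrow 0$ in $\calc$, hence a class in $\ext^1 (UM, UN)$. Since $U$ is additive and exact, it carries a morphism of extensions (a commutative ladder with identities on the outer terms) to a morphism of extensions in $\calc$; in particular equivalent $\perp$-comodule extensions are sent to equivalent extensions in $\calc$, so the assignment on equivalence classes is well defined. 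The same exactness-and-additivity of $U$ shows that pullbacks and pushouts of extensions are preserved, and therefore the Baer sum is preserved, so the induced map $\ext^1_\perp (M,N) \rightarrow \ext^1 (UM, UN)$ is a homomorphism of abelian groups.

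Finally I would verify naturality: for morphisms $f : M' \rightarrow M$ and $g : N \rightarrow N'$ in $\calc_\perp$, the induced maps on $\ext^1_\perp$ are given by pullback along $f$ and pushout along $g$, and likewise on $\ext^1$ along $Uf$, $Ug$; since $U$ preserves pullbacks and pushouts of extensions (again because it is exact and additive), the relevant squares commute. This gives the natural transformation as claimed. I do not expect any genuine obstacle here: the entire content is the exactness and additivity of the forgetful functor recorded in Proposition~\ref{prop:comodules_abelian}, and the argument is the standard functoriality of $\ext^1$ under an exact additive functor; the only mild bookkeeping is checking compatibility with the Baer sum, which follows formally from preservation of the (co)cartesian squares used to define it.
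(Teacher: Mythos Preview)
Your proposal is correct and is exactly the standard argument the paper has in mind: the paper states the proposition with no proof, prefacing it by ``The following is clear,'' so your unpacking via exactness and additivity of the forgetful functor (Proposition~\ref{prop:comodules_abelian}) is precisely the intended justification.
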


\begin{nota}
Write $\extbar^1_\perp (-,-)$ for the kernel of the 
transformation $\ext^1_\perp (-, -)
\rightarrow 
\ext^1 (-, -)$.
\end{nota}

\begin{rem}
If the underlying object of the $\perp$-comodule $M$ is projective in $\calc$, then $\extbar^1_\perp (M, -) \cong \ext^1_\perp (M, -)$.
\end{rem}

\begin{thm}
\label{thm:extbar}
For $M, N \in \ob \calc_\perp$,  the first two cohomology groups of the  cochain complex $(\hom (M, \perp^\bullet N),d)$  associated to the cosimplicial object of Proposition \ref{prop:coaugmented_cosimp}  identify as:
\begin{eqnarray*}
H^0 (\hom (M, \perp^\bullet N)) & \cong & \hom_\perp (M,N) \\
H^1 (\hom (M, \perp^\bullet N) ) &\cong & \extbar^1_\perp (M,N).
\end{eqnarray*}
\end{thm}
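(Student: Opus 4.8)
\textbf{Proof proposal for Theorem \ref{thm:extbar}.}

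The plan is to compute $H^0$ and $H^1$ of the cochain complex $(\hom(M,\perp^\bullet N),d)$ directly from the explicit formulas for the coface maps given in Proposition \ref{prop:coaugmented_cosimp}, and then match each of these against a standard description: $H^0$ against the equalizer description of $\hom_\perp(M,N)$, and $H^1$ against classes of $\perp$-comodule extensions that split in $\calc$. The identification of $H^0$ is immediate: by Proposition \ref{prop:stub_cosimp}(1) the sequence $\hom_\perp(M,N)\to\hom(M,N)\rightrightarrows\hom(M,\perp N)$ is an equalizer, and $H^0$ of the cochain complex is by definition the kernel of $d = d^1-d^0$, so $H^0(\hom(M,\perp^\bullet N))\cong\hom_\perp(M,N)$. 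This just needs the observation that ``equalizer of $d^0,d^1$'' and ``kernel of $d^1-d^0$'' coincide in an abelian category.

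The substantive part is the identification of $H^1$. First I would set up the correspondence in one direction: given a $1$-cocycle $\zeta\in\hom(M,\perp N)$, i.e. $d\zeta = d^2\zeta - d^1\zeta + d^0\zeta = 0$ with the three maps as in Example \ref{exam:cosimp_t=1} (namely $d^0\zeta = (\perp\zeta)\psi_M$, $d^1\zeta = \Delta_N\zeta$, $d^2\zeta = (\perp\psi_N)\zeta$), I would construct an extension of $\perp$-comodules
\[
0 \to N \to E_\zeta \to M \to 0
\]
whose underlying sequence in $\calc$ is split. Concretely, take $E_\zeta = N\oplus M$ as an object of $\calc$, with the split inclusion and projection, and define a comodule structure map $\psi_{E_\zeta}: N\oplus M \to \perp(N\oplus M)\cong \perp N\oplus \perp M$ by the ``upper triangular'' matrix with diagonal entries $\psi_N,\psi_M$ and off-diagonal entry $\zeta: M\to\perp N$. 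The cocycle condition on $\zeta$ is exactly what is needed for $\psi_{E_\zeta}$ to be coassociative and counital, so this does define a $\perp$-comodule, and by construction it lies in $\extbar^1_\perp(M,N)$. Conversely, given a class in $\extbar^1_\perp(M,N)$, choose a splitting in $\calc$ of the underlying extension; transporting the comodule structure of the middle term through this splitting produces, by the same matrix bookkeeping, a map $\zeta\in\hom(M,\perp N)$ which is a cocycle. I would then check that changing the splitting changes $\zeta$ by a coboundary $d\eta = d^1\eta - d^0\eta$ for $\eta\in\hom(M,N)$ (a different splitting differs from the first by an element of $\hom(M,N)$, and the computation of how $\zeta$ transforms under this change is precisely the formula for $d^0,d^1$ in degree $0$), so the class $[\zeta]\in H^1$ is well-defined and independent of choices. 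The two constructions are mutually inverse by inspection.

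The main obstacle is the sign and coherence bookkeeping in both directions of the $H^1$ identification: one must verify that the cocycle condition $d^2\zeta - d^1\zeta + d^0\zeta = 0$ translates \emph{exactly} into coassociativity of the triangular $\psi_{E_\zeta}$ (with the counital axiom coming for free, or from a separate short check using $s^0$), and that coboundaries correspond exactly to the ambiguity in the $\calc$-splitting. This is where Proposition \ref{prop:stub_cosimp}(2), the identities $s^0d^0 = \id = s^0d^1$, is useful, as it pins down the counital part. A secondary point worth stating carefully is that an extension of $\perp$-comodules is Yoneda-equivalent to one whose underlying object is literally $N\oplus M$ precisely when it is split in $\calc$, which is exactly the condition defining $\extbar^1_\perp$; this is what makes the bijection land in the kernel of $\ext^1_\perp(M,N)\to\ext^1(M,N)$ rather than in all of $\ext^1_\perp(M,N)$. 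Everything else is routine diagram-chasing with the comonad and comodule axioms, carried out just as in the verification of the cosimplicial identities in Proposition \ref{prop:coaugmented_cosimp}.
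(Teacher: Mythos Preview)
Your approach is essentially the paper's: identify $H^0$ via the equalizer of Proposition \ref{prop:stub_cosimp}, and for $H^1$ set up the bijection between cocycles and $\calc$-split extensions via the upper-triangular comodule structure on $N\oplus M$, with coboundaries corresponding to change of $\calc$-splitting. The one place you are slightly loose is the counit axiom: the unnormalized cocycle condition $d^0\zeta-d^1\zeta+d^2\zeta=0$ encodes only coassociativity of $\psi_{E_\zeta}$, not counitality; the paper handles this by passing to the normalized subcomplex $\overline{\hom(M,\perp N)}=\ker(\hom(M,\epsilon_N))$, where $\epsilon_N\zeta=0$ holds by construction and is exactly the counit condition for $E_\zeta$.
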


\begin{proof}
By Proposition \ref{prop:coaugmented_cosimp}, 
the beginning of the unnormalized cochain complex associated to $\hom (M, \perp^\bullet N)$ is of the form 
\begin{eqnarray}
\label{eq:cx_H0H1}
\hom (M, N) 
\stackrel{\mathfrak{d}_0}{\rightarrow}
\hom (M, \perp N)
\stackrel{\mathfrak{d}_1}{\rightarrow}
\hom (M, \perp \perp N), 
\end{eqnarray}
with $\hom (M,N)$ placed in cohomological degree zero, 
where 
\begin{eqnarray*}
\mathfrak{d}_0 \gamma &:=& (\perp \gamma) \psi_M - \psi_N \gamma ;
\\
\mathfrak{d}_1  \zeta &:=& \big( (\perp \zeta) \psi_M  + (\perp \psi_N) \zeta \big ) - \Delta_N \zeta  .
\end{eqnarray*}

The complex $(\hom (M, \perp^\bullet N),d)$ has the same cohomology as its 
 normalized complex (as in Definition \ref{defn:normalized}, this is defined as a subcomplex); in degree zero these coincide, whereas in degree one $\hom (M, \perp N)$ is replaced by  the kernel 
$$
\overline{\hom (M, \perp N) }
:= 
\ker \{
\hom (M, \perp N) \stackrel{\hom (M, \epsilon_N) } {\longrightarrow } \hom (M, N)\}.$$ 
Thus, to calculate the cohomology in degrees $0$ and $1$, in the complex (\ref{eq:cx_H0H1}), one can replace $\hom (M, \perp N) $ by $\overline{\hom (M, \perp N) }$.

The identification of the kernel of $\mathfrak{d}_0$ (and hence of $H^0 ( \hom (M, \perp^\bullet N))$)  follows from Proposition \ref{prop:stub_cosimp}. It remains to show that  $H^1( \hom (M, \perp^\bullet N))$ is isomorphic to $\extbar^1_\perp (M, N)$.

An element of $\extbar^1_\perp (M, N)$ represents an equivalence class of extensions of $\perp$-comodules of the form 
\[
0 
\rightarrow 
N 
\rightarrow 
E 
\rightarrow 
M 
\rightarrow 
0
\]
such that the underlying short exact sequence splits in $\calc$, so that $E \cong M \oplus N$. Fix a choice of this splitting. 

It follows that the structure morphism $ \psi_E : E \rightarrow \perp E$ is determined by $\psi_N$, $\psi_M$ and a morphism $\zeta \in \hom (M, \perp N)$. 
 Indeed, such a $\zeta$ gives a comodule structure on $E$ (with respect to the given splitting in $\calc$ as above) if and only if $\epsilon_N \zeta=0$ and $\mathfrak{d}_1 \zeta =0$, since the first condition is equivalent to the counit axiom and the second to the coassociativity axiom.

Hence, the above construction defines a surjection:
\[
\ker \big\{ \overline{\hom (M, \perp N)}
\stackrel{\mathfrak{d}_1}{\rightarrow}
\hom (M, \perp \perp N) \}
\rightarrow 
\extbar^1 _\perp (M, N).
\]
It remains to show that  the kernel of this surjection is the image of $\mathfrak{d}_0$.  

The morphism $\zeta$ maps to $0$ in $\extbar^1 _\perp (M, N)$ if and only if there exists a retract 
of $N \hookrightarrow E$ in $\perp$-comodules. With respect to the given isomorphism $E \cong M \oplus N$ in $\calc$, this is determined by a morphism $\gamma : M \rightarrow N$ in $\calc$.

It is straightforward to check that $\gamma$ induces a morphism of comodules $E_\zeta \rightarrow N$ (where $E_\zeta $ denotes $E$ equipped with the $\perp$-comodule structure corresponding to $\zeta$) if and only if $\zeta = - \mathfrak{d}_0 \gamma$.
\end{proof}

In the rest of this section we study the extra structure obtained 
when the category $\calc$ is tensor abelian with respect to $\otimes$ and the functor $\perp$ is symmetric monoidal.

The following is standard:

\begin{lem}
\label{lem:symm_mon_perp}
The tensor product induces a symmetric monoidal structure on $\calc_\perp$; the comodule structure morphism of $M \otimes N$ is 
\[
M \otimes N \stackrel{\psi_M \otimes \psi_N} {\rightarrow } (\perp M) \otimes (\perp N)
\cong \perp (M\otimes N).
\]
\end{lem}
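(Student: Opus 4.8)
\textbf{Proof proposal for Lemma \ref{lem:symm_mon_perp}.}

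The plan is to transport the symmetric monoidal structure of $\calc$ to $\calc_\perp$ along the fact that $\perp$ is symmetric monoidal, and then to verify that the stated comodule structure on $M \otimes N$ is the one making the forgetful functor $\calc_\perp \to \calc$ strong symmetric monoidal. First I would recall that, since $\perp : \calc \to \calc$ is symmetric monoidal, it comes equipped with a natural isomorphism $\mu_{X,Y} : (\perp X) \otimes (\perp Y) \xrightarrow{\cong} \perp(X \otimes Y)$ and a unit morphism for $\perp$ applied to the monoidal unit, compatible with the associativity and symmetry constraints of $\calc$; moreover, the comonad structure maps $\Delta$ and $\epsilon$ of $\perp$ are monoidal natural transformations (this is part of what it means for a comonad to be symmetric monoidal, and in our application it follows from Corollary \ref{cor:perp_sym}, respectively Proposition \ref{prop:identify_fbcmnd}, for the two comonads $\gcmnd$ and $\fbcmnd$ of interest).

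The key steps, in order, are: (1) define the candidate structure map $\psi_{M \otimes N}$ as the composite $M \otimes N \xrightarrow{\psi_M \otimes \psi_N} (\perp M) \otimes (\perp N) \xrightarrow{\mu_{M,N}} \perp(M \otimes N)$; (2) check the counit axiom $\epsilon_{M \otimes N} \circ \psi_{M \otimes N} = \mathrm{id}_{M \otimes N}$, which follows from the counit axioms for $\psi_M$ and $\psi_N$ together with the monoidality of $\epsilon$ (i.e.\ the compatibility of $\epsilon$ with $\mu$ and with the unit constraints); (3) check the coassociativity axiom, comparing $\perp(\psi_{M \otimes N}) \circ \psi_{M \otimes N}$ with $\Delta_{M \otimes N} \circ \psi_{M \otimes N}$, which reduces, after repeated use of the naturality of $\mu$ and its compatibility with $\Delta$, to the coassociativity axioms for $\psi_M$ and $\psi_N$ tensored together; (4) verify that the associativity and symmetry isomorphisms of $\calc$, applied to comodules, are morphisms of $\perp$-comodules — again a diagram chase using naturality of $\mu$ and the axioms making $\perp$ a \emph{symmetric} monoidal functor; (5) identify the monoidal unit of $\calc_\perp$ as the monoidal unit of $\calc$ equipped with its canonical $\perp$-comodule structure coming from the unit morphism of $\perp$. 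Each of these is a routine but slightly tedious coherence diagram; I would present the counit and coassociativity checks explicitly (they are short) and leave the constraint-compatibility verifications to the reader, or simply invoke the general principle that a strong symmetric monoidal endofunctor equipped with monoidal comonad structure induces a symmetric monoidal structure on its category of coalgebras, for which the forgetful functor is strong symmetric monoidal.

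The main obstacle, such as it is, is purely bookkeeping: writing down the large coassociativity pentagon/hexagon that mixes the comonad multiplication $\Delta$, the monoidal constraint $\mu$ of $\perp$, and the associativity constraint of $\calc$, and checking it commutes by decomposing it into naturality squares and instances of the monoidal-functor axioms. There is no conceptual difficulty, and in the concrete situations of the paper ($\calc = \fcatk[\Gamma]$ with $\gcmnd$, or $\fcatk[\fb\op]$ with $\fbcmnd$) the relevant $\mu$ is the evident isomorphism, so the diagrams collapse to triviality. Hence I would keep the proof to a few lines, stating the candidate structure map, noting that the counit and coassociativity axioms follow by tensoring the corresponding axioms for $M$ and $N$ and using monoidality of $\epsilon$ and $\Delta$, and remarking that the constraints of $\calc$ lift to $\calc_\perp$ because $\perp$ is symmetric monoidal.
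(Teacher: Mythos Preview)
Your proposal is correct and is exactly the standard argument one would give. The paper itself does not prove this lemma at all: it simply introduces the statement with ``The following is standard:'' and moves on, so your outline is already considerably more detailed than what the paper provides.
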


The behaviour of (relative) extensions under tensor product with a comodule is described by the following:

\begin{prop}
\label{prop:A_otimes_extbar}
Let $A, M,N$ be $\perp$-comodules. Then 
\begin{enumerate}
\item 
the functor $A \otimes - : \calc \rightarrow \calc $ induces a morphism of abelian groups 
\[
\extbar^1 _\perp (M, N) 
\rightarrow 
\extbar^1 _\perp (A \otimes M, A \otimes N);
\]
\item 
under this morphism, the extension represented by $\zeta : M \rightarrow \perp N$ is sent to the extension represented by 
\[
\psi _A \otimes \zeta : A \otimes M \rightarrow (\perp A) \otimes (\perp N) \cong \perp (A \otimes N). 
\] 
\end{enumerate}
\end{prop}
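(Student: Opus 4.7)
The plan is to deduce both statements from the explicit cocycle description of $\extbar^1_\perp(M,N)$ established in the proof of Theorem \ref{thm:extbar}, in which the class of an extension split as $M \oplus N$ in $\calc$ is represented by the ``off-diagonal'' component $\zeta : M \to \perp N$ of the structure morphism $\psi_E : E \to \perp E \cong \perp M \oplus \perp N$. The bridge between tensoring by $A$ and the comodule structure is supplied by Lemma \ref{lem:symm_mon_perp}, which provides the symmetric monoidal structure on $\calc_\perp$.

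First I would check functoriality. Given an extension $0 \to N \to E \to M \to 0$ in $\calc_\perp$ representing a class of $\extbar^1_\perp(M,N)$, it splits in $\calc$ by hypothesis. Applying the exact functor $A \otimes -$ yields a short exact sequence $0 \to A \otimes N \to A \otimes E \to A \otimes M \to 0$ that is still split in $\calc$, and by Lemma \ref{lem:symm_mon_perp} the arrows are morphisms of $\perp$-comodules. Passage to equivalence classes is routine, so this gives the required natural morphism of abelian groups $\extbar^1_\perp (M,N) \to \extbar^1_\perp (A\otimes M, A\otimes N)$.

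For the second statement, fix a splitting $E \cong M \oplus N$ in $\calc$ so that $\psi_E$ restricted to $M$ equals $(\psi_M, \zeta)$ in the decomposition $\perp E \cong \perp M \oplus \perp N$. Tensoring with $A$ induces a splitting $A \otimes E \cong (A \otimes M) \oplus (A \otimes N)$ in $\calc$, and under the iterated monoidal isomorphism
\[
\perp (A \otimes E) \cong \perp A \otimes \perp E \cong \perp (A \otimes M) \oplus \perp (A \otimes N),
\]
the structure morphism $\psi_{A\otimes E} = \psi_A \otimes \psi_E$ restricted to $A \otimes M$ has second component $\psi_A \otimes \zeta : A \otimes M \to \perp A \otimes \perp N \cong \perp(A \otimes N)$. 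By Theorem \ref{thm:extbar} this is precisely the cocycle representing the image of $[\zeta]$.

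There is no real conceptual obstacle; the work is bookkeeping with the monoidal isomorphism $\perp (X \otimes Y) \cong \perp X \otimes \perp Y$ and the direct-sum decompositions. As a sanity check, one may observe that the assignment on cocycles factors as the natural composite $\zeta \mapsto (\psi_A \otimes \perp N) \circ (A \otimes \zeta)$, so that it automatically intertwines the coboundary operator $\mathfrak{d}_0$ of Corollary \ref{cor:aug_cochain} applied to $\gamma : M \to N$ with $\mathfrak{d}_0$ applied to $A \otimes \gamma : A \otimes M \to A \otimes N$, confirming that the construction descends to a well-defined map on $H^1$.
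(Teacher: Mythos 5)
Your proof is correct and spells out exactly the analysis that the paper's terse proof (``follows from the definitions and by analysis of (the proof of) Theorem \ref{thm:extbar}'') is pointing to: using the split cocycle picture, Lemma \ref{lem:symm_mon_perp}, and the monoidal isomorphism to read off the off-diagonal component $\psi_A \otimes \zeta$. One small point: where you invoke ``the exact functor $A \otimes -$'', only additivity is actually needed, since the underlying short exact sequence is already split in $\calc$; this keeps the argument valid even when $\otimes$ is not biexact.
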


\begin{proof}
The result follows from the definitions and by analysis of (the proof of) Theorem \ref{thm:extbar}.
\end{proof}

%%%%%%%%%%%%%%%%%%%%%%%%%%%%%%%%%%%%%%%%%%%%%%%%%%%%%%%%%%%%%%%%%%%%%%%%%%%%%%%%
%\bibliographystyle{amsalpha}
%\bibliography{fin_mod.bib}
\providecommand{\bysame}{\leavevmode\hbox to3em{\hrulefill}\thinspace}
\providecommand{\MR}{\relax\ifhmode\unskip\space\fi MR }
% \MRhref is called by the amsart/book/proc definition of \MR.
\providecommand{\MRhref}[2]{%
  \href{http://www.ams.org/mathscinet-getitem?mr=#1}{#2}
}
\providecommand{\href}[2]{#2}

\end{document}